\documentclass[11pt,reqno]{amsart}
\usepackage{amsmath,amsthm,amsfonts,amssymb,mathrsfs,bm,graphicx,stmaryrd}
\usepackage{mathtools}
\usepackage{dsfont}
\usepackage{multicol}
\usepackage[colorlinks=true,linkcolor=blue]{hyperref}
\usepackage{xcolor}

\usepackage[letterpaper,hmargin=1.0in,vmargin=1.0in]{geometry}
\parskip 	\smallskipamount

\newtheorem{theorem}{Theorem}[section]
\newtheorem{lemma}[theorem]{Lemma}
\newtheorem{corollary}[theorem]{Corollary}
\newtheorem{proposition}[theorem]{Proposition}

\newtheorem{maintheorem}{Theorem}

\def\N{\mathbb{N}}
\def\P{\mathbb{P}}
\def\Z{\mathbb{Z}}
\def\R{\mathbb{R}}

\def\E{\mathbb{E}}
\newcommand{\cA}{\mathcal{A}}
\newcommand{\cB}{\mathcal{B}}
\newcommand{\cC}{\mathcal{C}}
\newcommand{\cD}{\mathcal{D}}
\newcommand{\cE}{\mathcal{E}}
\newcommand{\cF}{\mathcal{F}}

\newcommand{\cH}{\mathcal{H}}
\newcommand{\cI}{\mathcal{I}}
\newcommand{\cJ}{\mathcal{J}}
\newcommand{\cK}{\mathcal{K}}
\newcommand{\cL}{\mathcal{L}}
\newcommand{\cM}{\mathcal{M}}

\newcommand{\cP}{\mathcal{P}}
\newcommand{\cQ}{\mathcal{Q}}
\newcommand{\cR}{\mathcal{R}}
\newcommand{\cS}{\mathcal{S}}
\newcommand{\cT}{\mathcal{T}}
\newcommand{\cU}{\mathcal{U}}
\newcommand{\cV}{\mathcal{V}}
\newcommand{\cW}{\mathcal{W}}
\newcommand{\cX}{\mathcal{X}}

\newcommand{\cZ}{\mathcal{Z}}

\newcommand{\origin}{\mathbf{0}}

\newcommand{\uk}{\underline{k}}

\newcommand{\Var}{\mathrm{Var}}

\newcommand{\fK}{{\mathfrak{K}}}

\newcommand{\rX}{{\mathcal{X}}}
\newcommand{\hrX}{\widehat{{\mathcal{X}}}}
\newcommand{\hV}{\widehat{V}}
\newcommand{\somega}{\omega_\star}
\newcommand{\comega}{\omega_\circ}
\newcommand{\bomega}{\underline{\boldsymbol{\omega}}}

\begin{document}

\title[Rotationally invariant FPP]{Rotationally invariant first passage percolation: Breaking the $n/\log n$ Variance Barrier}

\author{Riddhipratim Basu}
\address{Riddhipratim Basu, International Centre for Theoretical Sciences, Tata Institute of Fundamental Research, Bangalore, India} 
\email{rbasu@icts.res.in}

\author{Vladas Sidoravicius}
\address{Vladas Sidoravicius, Courant Institute of Mathematical Sciences, New York and NYU-ECNU Institute of Mathematical Sciences at NYU Shanghai}
\email{vs1138@nyu.edu}

\author{Allan Sly}
\address{Allan Sly, Department of Mathematics, Princeton University, Princeton, NJ, USA}
\email{allansly@princeton.edu}

\date{}
\begin{abstract}
    For first passage percolation (FPP) on Euclidean lattices $\Z^d$ with $d\ge 2$, it is expected that the variance of the first passage time between two points grows sublinearly in the distance with a universal exponent strictly smaller than $1$. Following Kesten's $O(n)$ upper bound \cite{Kes93} on the variance, Benjamini, Kalai and Schramm \cite{BKS04} used hypercontractivity to obtain an improvement of a factor of $\log n$ when passage times take two values with equal probability. This was later extended to more general classes of passage time distributions. However, unlike in exactly solvable planar models in last passage percolation where the variance is known to be $\Theta(n^{2/3})$,  the best known upper bound for the variance of passage times has remained $O(n/\log n)$ in all non-trivial variants of FPP. For a class of rotationally invariant Riemannian FPP on the plane, we show that the variance is $O(n^{1-\varepsilon})$ for some $\varepsilon>0$. Our argument uses fluctuation estimates for passage times and geodesics derived in \cite{BSS23} together with a multi-scale argument to establish that the geodesic exhibits disorder chaos, i.e., upon resampling a small fraction of the underlying randomness, the updated geodesic has on average a small overlap with the original one; this, established at a large number of scales, leads to a polynomial improvement of the variance bound. 
\end{abstract}
\maketitle

\tableofcontents

\section{Introduction}
First passage percolation (FPP) is a well-known model for shortest distances in a random environment, typically on a graph, where the graph distance is perturbed by assigning independent and identically distributed non-negative random lengths to each edge. It was introduced first by Hammersley and  Welsh~\cite{HW65} on Euclidean lattices $\Z^d$ and has been one of the most extensively studied models in probability theory over the last sixty years. Despite initial progress on the first order behaviour using subadditivity \cite{HW65,K73,R73} that culminated in the celebrated Cox-Durrett shape theorem  \cite{CD81}, and important recent progresses such as sub-linear variance of passage times in many models~\cite{BKS04,BR08,DHS13} (see the excellent monograph~\cite{ADH15} for a comprehensive description of the state of the field), understanding finer properties of the metric and the geodesics remain largely out of reach.

A major difficulty in studying the lattice FPP is that the limit shape remains poorly understood. Beyond basic properties like convexity, compactness, and the symmetries inherited from the lattice, virtually nothing about the limit shape is known for general edge length (passage time) distributions. Although it is believed that, under mild conditions, the limit shape is strictly convex with a uniformly curved boundary, there are still no examples of canonical lattice FPP for which this has been verified. A program dating back to the nineties, initiated by Newman and co-authors \cite{New95,NP95,LNP96}, investigated various properties of FPP metric and geodesics, either conditional on some unverified (but believed to be true) assumptions on the limit shape, or for some non-lattice model with additional symmetries which make the limit shape explicit. To this end a number of \emph{rotationally invariant} FPP models have been defined and studied by various authors \cite{VW90,VW92, VW93, HN97, HN98, HN01, LW10, LW14}. The rotational symmetry implies that the limit shape must be a scalar multiple of the Euclidean ball, and many stronger results (e.g.\ upper bound of transversal fluctuations of geodesics, improved lower bound for variance in the planar case, existence of semi-infinite geodesics) have been proved for those models. Nevertheless, until recently, even for rotationally invariant models, finer results such as the scaling relation between the longitudinal and transversal fluctuation exponents (KPZ relation) were known only under further (unverified) assumptions such as exponential concentration of passage times at the standard deviation scales \cite{Cha11, Ale20, Ale21}.    

 In \cite{BSS23}, we initiated a program to study a class of rotationally invariant FPP models which covers all of the well-known rotationally invariant models of FPP. For such models, we established stretched exponential concentration for passage times at the standard deviation scale and as a consequence gave a first unconditional proof of the KPZ scaling relation between the scaling exponents for passage times fluctuation and transversal fluctuation of geodesics for those models. In this paper, we further this program by focusing on improving the upper bound of passage time fluctuations in one of the classes of models considered in \cite{BSS23}, namely Riemannian FPP. 

 It is believed that for standard first passage percolation in $d\ge 2$ dimensions, there exists a universal fluctuation exponent $\chi=\chi(d)\in [0,1/2)$ such that under mild conditions on the passage time distribution, the fluctuations of the passage times between two points at distance $n$ scales as $n^{\chi}$ (in two dimensions, from the theory of Kardar-Parisi-Zhang (KPZ) universality class \cite{KPZ86} it is expected that $\chi=1/3$). Towards this, Kesten \cite{Kes93} used a Poinc\'are inequality argument to show that the variance (of passage times between two points at distance $n$) is $O(n)$. The next major breakthrough came in early 2000s when Benjamini, Kalai and Schramm \cite{BKS04} used hypercontractivity to show that when the passage time distribution takes two values with equal probability then the upper bound on the variance can be improved by a factor of $\log n$. There has been a large number of subsequent results (more details later) over the last 20 years weakening the hypothesis of \cite{BKS04}, and similar improvements were obtained in many related models, but until now there were no further improvements on the variance bound of $O(\frac{n}{\log n})$. 

In this paper we show that for a class of rotationaly invariant models on the plane, called Riemannian FPP, the variance is upper bounded by $n^{1-\varepsilon}$ for some $\varepsilon>0$, thus providing a first proof of $\chi <1/2 $ (provided the exponent exists) beyond a few exactly solvable models of planar last passage percolation (where $\chi=1/3$ is known). Riemannian FPP was introduced by Lagatta and Wehr~\cite{LW10}. In this model, one considers the random environment to be a smooth positive isotropic random field on the plane.  For nice paths one defines the length of a path by integrating the field along the path and the first passage time between two points is defined by taking the infimum of lengths of paths joining the points. We will further assume that the field is bounded away from $0$ and $\infty$ and satisfies the FKG inequality, i.e., increasing functions of the field are positively correlated (a formal definition and construction of such fields are given later).  Let $X_n$ denote the passage time from the origin to $(n,0)$. The following theorem is the main result of this paper. 

 \begin{maintheorem}
     \label{t:theorem}
     For a class of Riemannian FPP on $\R^2$ as defined below, there exists $\varepsilon, n_0>0$ such that for all  $n\ge n_0$
     $$\Var (X_n) \le n^{1-\varepsilon}.$$
 \end{maintheorem}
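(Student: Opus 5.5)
The plan is to write the variance through dynamical (resampling) interpolation and reduce the bound to a statement of \emph{disorder chaos} for geodesics, as the abstract indicates. Discretize the field into i.i.d.\ building blocks $\omega=(\omega_v)$ indexed by unit cells, which is possible for the construction of the field given later. Let $\omega^t$ be the configuration in which each block has been independently resampled with probability $1-e^{-t}$. The standard spectral or Ornstein--Uhlenbeck identity gives
\[
\Var(X_n)=\int_0^\infty e^{-t}\sum_v \E\big[\partial_v X_n(\omega)\,\partial_v X_n(\omega^t)\big]\,dt .
\]
Here $\partial_v X_n$ is bounded by a constant times the indicator that the geodesic $\gamma$ passes through cell $v$; this uses boundedness of the field and finite range of dependence. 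Hence the summand is controlled by the expected overlap $\E|\gamma(\omega)\cap\gamma(\omega^t)|$. For $t\ge t_0$ we bound it crudely by $n$. The gain comes from showing that for $t$ in a window $[n^{-\delta},1]$ the overlap is at most $n^{1-\varepsilon'}$. More precisely, the plan is to prove $\E|\gamma\cap\gamma^t|\le C n\,t^{-c}\wedge n$ for some $c>0$, down to $t=n^{-\delta}$. Integrating against the derivative form of Kesten's bound then yields $n^{1-\varepsilon}$. This is exactly the step where hypercontractivity only gives a $\log n$ gain and a polynomial chaos estimate gives a polynomial one.

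The chaos estimate will be proved by a multi-scale argument. Fix scales $r_k=n^{k\theta}$. Split the geodesic into segments of length $r$. I would show that on a segment of length $r$, resampling a fraction $t$ of the noise changes the relative weights of competing near-geodesics by an amount comparable to their fluctuation. Concretely, the weight changes by order $\sqrt{t r}$ when the fluctuations are of order $r^{\chi}$, and more robustly this follows from the FKG inequality and the resampling of a positive density of cells in the segment. The fluctuation and concentration estimates of \cite{BSS23} supply the needed inputs: stretched exponential concentration at scale $\sigma(r)$, transversal fluctuation bounds, and the KPZ-type relation. Rotational invariance guarantees that the geodesic has no preferred direction, so these estimates apply uniformly to all segments.

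From these inputs I expect to derive the following. With probability bounded below, there are two disjoint near-optimal paths across a segment whose difference in passage time has a spread-out law, so that a small resampling swaps the optimizer. Consequently each segment of $\gamma$ is avoided by $\gamma^t$ with probability at least $\eta>0$, at the scale $r$ where $t\approx \sigma(r)^2/r$ (up to logs). Iterating over $K\asymp\log n$ scales, each scale multiplies the expected overlap by $(1-\eta)$. The reason is that, conditioned on coarser-scale agreement, the finer-scale decorrelation is approximately independent thanks to the spatial independence of the field. This gives $n(1-\eta)^{K}=n^{1-\varepsilon}$.

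The main obstacle is the single-scale step. One must show that a small resampling actually moves the geodesic with uniformly positive probability at every scale, and without a priori knowing that $\chi<1/2$. If $\sigma(r)^2\asymp r$ there is no separation of scales, and that is precisely the case we need to rule out. I would argue by contradiction or bootstrap. Suppose the variance at scale $r$ is close to linear. Then by the Poincar\'e or chaos identity the geodesic must be stable under resampling. Stability plus \cite{BSS23}'s transversal bounds then forces the existence of nearly disjoint competitors at comparable cost, and these can be flipped by resampling a set of positive probability, using FKG to raise the field along one competitor and lower it along the other. That yields a contradiction at a positive fraction of scales. This scale-by-scale dichotomy is the delicate heart of the argument.
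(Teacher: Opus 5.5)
Your reduction via the interpolation formula is fine in principle, but the step you call ``the delicate heart'' is where the whole proof has to happen, and as written it is circular. Decorrelating a segment at resampling rate $t=n^{-\delta}$ needs a scale $r$ with $\sigma(r)^2/r\lesssim t$, which presupposes $\Var(X_r)\le r^{1-\delta'}$ on a range of scales. By the chaos--superconcentration equivalence, polynomial chaos at $t=n^{-\delta}$ is essentially a restatement of the theorem, so the formula does not reduce the problem.

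The proposed bootstrap gives no mechanism. Stability of $\gamma$ under resampling does not produce nearly disjoint competitors at comparable cost; in the paper such competitors and barriers are constructed directly as positive-probability events ($\cI^-$, $\cI^+$, $\cJ$). FKG also cannot ``raise the field along one competitor and lower it along the other'': that event is not monotone, and the real difficulty is doing it \emph{conditionally on where the geodesic is}. For the same reason, ``finer-scale decorrelation is approximately independent by spatial independence'' ignores that conditioning on the location of the global object $\gamma$ biases the field. The paper handles this with a polymer bound over all paths with controlled $\tau_1$, plus a Glauber resampling argument that keeps the geodesic fixed.

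The missing idea is renormalization at a \emph{fixed} resampling fraction. After passing to the column-restricted distance $\rX$, the paper coarse-grains into $n\times W_n$ blocks, reveals them at i.i.d.\ uniform times, and splits each block's contribution at time $1-\kappa$.
\begin{itemize}
\item The final $\kappa$ portion is bounded by $\kappa$ times a block Efron--Stein/Kesten bound $C_1M\Var(\rX_n)$ (Proposition~\ref{p:uij.bounds}).
\item The rest is bounded, via Cauchy--Schwarz, by the block chaos quantity $\E\sum_{i,j}\P[\cU_{ij}\mid\cF_{1-\kappa}]^2\le\epsilon M$ (Proposition~\ref{p:chaos}).
\end{itemize}
With $\kappa=1/\lceil 4C_1\rceil$ this gives $\Var(\rX_{Mn})\le\frac M2\Var(\rX_n)$ (Theorem~\ref{t:varianceM.growth}). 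The exponent then comes from iterating over $n,Mn,M^2n,\ldots$, not from a $(1-\eta)^{\log n}$ gain. Within one step the paper only gets overlap $M\exp(-\log M/(\log_2\log_2 M)^6)$, using scales separated by $2^{(\log_2\log_2M)^5}$.

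Because $\kappa$ is fixed, the flip you want is available with no knowledge of the fluctuation exponent (Lemma~\ref{l:gadget}). Since $\omega_0$ and $\omega_1$ are independent, with positive probability a rectangle is a barrier for $\omega_0$ and has a very good crossing for $\omega_1$. Pigeonholing over the $\kappa^{-1}$ intermediate fields $\omega_{i\kappa}$, each consecutive pair distributed as $(\omega_0,\omega_\kappa)$, gives with probability at least $\kappa\delta_1$ a single step in which the best crossing drops by a large, $n$-independent multiple of $Q_r$, at every scale $r$. At your rate $t=n^{-\delta}$ the same pigeonhole gives probability only of order $n^{-\delta}$, which is exactly why the microscopic formulation stalls.
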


We shall make some remarks on the choice of the model and role of the specific assumptions in Theorem \ref{t:theorem} and potential extensions in Section \ref{s:extension}.

\subsection{Riemannian FPP}
We now formally define the Riemannian FPP model. Fix a radially symmetric (i.e., $K(x)$ is depends on $x$ only through $|x|$), nonnegative, smooth $(C^{\infty})$, bounded kernel $K:\R^2\to\R$ that vanishes outside the unit ball.  We will take $\omega$ to be either a Gaussian white noise or a homogeneous Poisson point Process on $\R^2$. We then write.
\[
\Xi(x):=\int K(x-y) \omega(d y).
\]
In the case where $\omega$ is Gaussian white noise this means the stochastic integral
\[
\Xi(x)=\int K(x-y) dB(y)
\]
where $dB(y)$ is two dimensional Gaussian white noise. In the case that $\omega$ is a Poisson point process this means
\[
\Xi(x)=\sum_{y\in \Pi} K(x-y)
\]
where $\Pi$ is the set of points in the Poisson point Process. To ensure that we have a bounded, positive field we fix a monotone increasing smooth function  $\psi:\R\to (d_1,d_2)$ such that $\psi(\R)$ is supported on $[d_1,d_2]$ for some $0<d_1<d_2<\infty$. Then we set $\Psi(x)=\psi(\Xi(x))$ to be the underlying environment which the paths traverse. Observe that $\Psi$ satisfies the conditions set forth above, it is a smooth random field that is invariant under the symmetries of $\R^2$ (owing to the isotropic nature of Gaussian white noise and Poisson process on $\R^2$ and the fact that $K$ is chosen to be radially symmetric), is $2$-dependent, as $K$ is supported on the unit ball, and satisfies the FKG inequality. That $\Psi$ satisfies the FKG inequality is standard in the case where $\omega$ is a Poisson point process, see e.g.\ \cite[Lemma 2.1]{J84}. For the case of the Gaussian white noise, notice that, for $x_1,x_2\in \R^2$, we have $\mbox{Cov}(\Xi(x_1),\Xi(x_2))=\int{K(x_1-y)K(x_2-y)}dy \ge 0$ as $K$ is assumed to be non-negative. Therefore, by Pitt's Theorem \cite{Pit82}, (all finite dimensional marginals of) $\Xi$ satisfies the FKG inequality and so does $\Psi$ since $\psi$ is monotone increasing (see, e.g., \cite[Proposition 2]{Bar05}).

For any path $\gamma: [0,1]\mapsto \R^2$ such that $\gamma$ is piecewise $C^1$, we define the passage time of $\gamma$ by \[
X_{\gamma}:= \int_{\gamma} \Psi(x) dx =  \int_{0}^{1} \Psi(\gamma(t))|\dot{\gamma}(t)| dt.
\]
This definition can be extended to all bounded variation paths by taking suitable limits. Notice that for two paths $\gamma_1$ and $\gamma_2$ such that the end point of $\gamma_1$ is the starting point of $\gamma_2$ the concatenated path $\gamma$ satisfies
$$X_{\gamma}=X_{\gamma_1}+X_{\gamma_2}.$$
Finally, we define the first passage time  
$$X_{uv}:=\inf_{\gamma: \gamma(0)=u, \gamma(1)=v} X_{\gamma}$$
by taking infimum over all such paths from $u$ to $v$. This defines a random Riemannian metric on $\R^2$. 

The following properties are basic (see e.g.\ \cite{LW10,LW14} where a more general class of random fields were considered): there exists $\mu\in (0,\infty)$ such that for any $u\in \R^2$,

$$\lim_{|v|\to \infty} \frac{X_{u,u+v}}{|v|}=\mu $$
almost surely and in $L^1$. One can also upgrade this to a \emph{shape theorem}. 

Also for $u,v\in \R^2$, there is a geodesic attaining the infimum in the definition of $X_{uv}$ \cite{LW14}. We expect that the geodesic between two fixed points to be almost surely unique but this will not be required in our arguments. 

\textbf{We shall henceforth scale the field so that $\mu=1$}. It is easy to see that this does not lead to any loss of generality. Denoting $X_{(0,0),(n,0)}$ by $X_n$, it was shown in \cite{BSS23} that there exists $C,\theta'>0$ such that
$$ \P\left(\dfrac{|X_n-n|}{\rm{SD} (X_n)}\ge x\right)\le \exp(1-Cx^{\theta'})$$
for all $x>0$ where $\mbox{SD}(X_n)$ denotes the standard deviation of $X_n$. It was also shown that the geodesics between two points at distance $n$ have transversal fluctuations at the scale $\sqrt{n \rm{SD}(X_n)}$. Further results we shall need from \cite{BSS23} will be recalled later. 

\subsection{Related literature and main ideas}
As mentioned above, the study of fluctuations in first passage percolation goes back a long time. The classically studied variant of FPP considers putting i.i.d.\ passage times on the nearest neighbour edges of $\Z^d$. The classical shape theorem (see e.g.\ \cite{CD81}) shows that under mild conditions on the passage time distribution, first passage times between two vertices at distance $n$ in $\Z^d$ has fluctuations $o(n)$. The first non-trivial upper bound on the variance of passage times was obtained by Kesten in \cite{Kes93} who showed that if the passage times on the edges have finite second moment, then the variance of the passage times between two points at distance $n$ is $O(n)$. Kesten's argument to bound the variance uses a variant of the Efron-Stein inequality. His method of looking at the Doob martingale for the first passage time also gives exponential concentration of passage times as scale $\sqrt{n}$. Kesten's martingale argument is also the starting point of our proof and we shall describe this below in some detail. 

In \cite[Remark 1]{Kes93} Kesten remarks \begin{quote}
    \emph{The next problem one should attack now is to show that \ldots (\mbox{$X_n$}) behaves ``subdiffusively"; that is \ldots ($\Var(X_n)$) ~$\le n ^{1-\varepsilon}$ for some $\varepsilon>0$.}
\end{quote}
 Unfortunately, this question turned out to be very difficult and remains open to date. 

 Kesten's argument can also be interpreted as a Poincar\'e inequality on the product space (for the Markov semi-group where each edge weight is independently resampled at rate $1$; see \cite{Chabook}). It is by now well-known that one way to improve upon the Poincar\'e inequality for the product spaces is to use hypercontractivity and the Talagrand's $L^1-L^2$ inequality or the log-Sobolev inequality. Indeed, essentially the only known improvements to Kesten's upper bound of the variance in FPP so far have been established using this method. The first breakthrough was by Benjamini, Kalai and Schramm in \cite{BKS04} where it was shown that if the passage times take two values $a,b\in (0,\infty)$ with probability $1/2$ each then one has $\Var (X_n)=O(\frac{n}{\log n})$. A number of follow up works (see e.g.\ \cite{BR08, DHS13}) successively weakened the hypothesis on the passage time distribution (see \cite{ADH15} for a history of the developments) and the current best known result from \cite{DHS13} requires $2+$ log moments of the passage time distribution and that the size of the atom at $0$ (if any) is less than the bond percolation critical probability to conclude that $\Var (X_n)=O(\frac{n}{\log n})$. This argument uses a version of the log-Sobolev inequality. Exponential concentration for the passage time at the scale $\sqrt{\frac{n}{\log n}}$ was established in~\cite{DHS14} (one can also establish Gaussian concentration at scale $\sqrt{n}$ using Talagrand's inequality; see~\cite{ADH15} for more details). There has however not been any further improvements on the upper bound on the variance. 

 As mentioned before, it is expected that under mild conditions the passage times fluctuations are expected to diverge as $n^{\chi}$ for some universal exponent $\chi(d)$; it is also expected that $\chi>0$ in low dimensions, and $\chi(2)=\frac{1}{3}$. There are some special cases where the fluctuations are known to be of constant order, e.g.\ along a direction within the percolation cone when the passage time distribution has a large atom at the positive infimum of its support; see \cite{NP95} for a discussion of such cases. Excluding these cases, 
a lower bound of order $\log n$ is known in dimension 2 under mild conditions, \cite{NP95} (see also \cite{Z08}).

 Since the work \cite{BKS04}, the method of using hypercontractivity and Talagrand's method or log-Sobolev inequality to prove improved variance upper bounds (compared to the bound obtained by Poincar\'e inequality/ Efron-Stein inequality) has found applications in different variants of first passage percolation as well as in many related models: last passage percolation, spin glasses, directed polymers, random surface growth to name only a few. While discussing all these developments is beyond the scope of our articles; a representative sample of references exploring this line of work for the interested reader is given here: \cite{BT15, Chabook, AZ23,CN19,G12,Dem24,DG24,CL24}. However, in all of these cases, this method gives an improvement of a factor of $\log n$ over the Efron-Stein bound whereas in most of the cases the actual order of the variance is expected to a smaller by a polynomial factor. 

We also note that to obtain the logarithmic improvement for the variance upper bound in FPP on $\Z^d$, it is also necessary to show that influence of most of the individual edges (i.e., $\P(e\in \gamma)$ where $\gamma$ is the geodesic) is small; this is non-trivial. Benjamini-Kalai-Schramm \cite{BKS04} circumvented this by an averaging trick, and most of the follow up works used versions of the same trick as well. Only very recently it was shown in \cite{DEP23} that with high probability the number of edges with large influence is indeed small; this however did not lead to any improvement on the upper bound of the variance. 

\subsubsection{Idea of Proof}
We shall give a high level overview of the ideas that go into the proof of Theorem \ref{t:theorem}; a more detailed outline of the argument will be provided in the next section. As mentioned above, the starting point of our argument is Kesten's proof \cite{Kes93}, therefore we start by taking a more detailed look at the same.

\noindent
\textbf{A short recap of Kesten's argument.} Consider any enumeration of the set of edges $e_1,e_2,\ldots$. Denoting by $X_n$ the passage time between the two points at distance $n$, consider the Doob Martingale $M_{i}=\E[X_n\mid \cF_i]$ where $\cF_i$ is the $\sigma$-algebra generated by the edge weights on $e_1,\ldots, e_i$. Then

$$ \Var (X_n)=\E \left[\sum_{i} \E[(M_{i}-M_{i-1})^2\mid \cF_{i-1}] \right].$$

One can think of the $i$-th term in the above sum as the contribution from resampling the passage time on the edge $e_i$. Let us denote $\Delta_i:=M_i-M_{i-1}$ and focus on the term $\Delta_i^2$. Writing the passage times of the edges as $\mathbf{X}=\{X(e_i)\}$; let $\mathbf{X}^i$ denote the environment where $X(e_i)$ is replaced by an independent copy $X'(e_i)$ and let $X^i_n$ denotes the passage time from $\origin$ to $(n,0)$ computed in the environment $\mathbf{X}^i$. Clearly, 

$$\Delta_i^2 = (\widetilde{\E}(X_n-X_n^i))^2$$
where $\widetilde{\E}$ averages over $X'(e_i), X(e_{i+1}), \ldots$. By exchangeability of the environments, it suffices to only consider the case  when $X'(e)\ge X(e)$ (so that $X_n\ne X_n^i$ only if $e_i$ is on the geodesic $\gamma$ (for the purpose of this exposition let us assume the geodesic is almost surely unique, the argument works even without this assumption) in the environment $\mathbf{X}$ and in that case $ X'(e_i)-X(e_i)\ge X_n^i- X_n\ge 0$), so our task reduces to upper bounding 
$$(\widetilde{\E}[(X'(e_i)-X(e_i))I_i)])^2$$
where $I_i$ denotes the indicator that the edge $e_i\in \gamma$ (where $\widetilde{\E}$ now denotes the expectation just over $X'(e)$). Next one uses the Cauchy-Schwarz inequality a to upper bound this by 
\begin{equation}
    \label{e:kestenloss}
    \widetilde{\E}[(X'(e_i)-X(e_i))I_i)^2]\widetilde{\E}[I^2_i]
\end{equation}
At this point the indicator in the first term is upper bounded by $1$, and the noticing that $\widetilde{\E}{[I_i]}=\P(e_i\in \gamma\mid \cF_i)$ eventually one gets the bound 

$$\E[\Delta_i^2\mid \cF_{i-1}]\le C\P[e_i\in \gamma\mid \cF_{i-1}]$$
for some $C>0$. Summing over all $i$, finally we get 
$$\Var (X_n) \le C \E[\#\{e:e\in \gamma\}].$$
It is a classical fact (see e.g.\ \cite{Kes86}) that the expected number of edges on the geodesic is $O(n)$, and this completes the proof. 

Kesten already comments (see \cite[Remark 1]{Kes93}) that to get improved upper bounds on the variance one should try and improve on the step after the Cauchy-Schwarz inequality in \eqref{e:kestenloss} where we upper bound the indicator $I_i$ by $1$. Indeed, as explained above, we expect that the influence of the individual edges are small, so upper bounding these indicators by 1 leads to a loss. To get a better bound for the variance one might want to try and come up with better bounds for 
$$\sum_i \P[e_i\in \gamma \mid \cF_{i-1}]^2.$$
Indeed, that a better bound on the above leads to an improved variance bound can be formalised using the principle that \emph{superconcentration and chaos are equivalent}; we explain these notions now.

\noindent
\textbf{Superconcentration and Chaos.}
Notice that
$$\P[e_i\in \gamma \mid \cF_{i-1}]^2=\P[e_i\in \gamma\cap \gamma'\mid \cF_{i-1}]$$
where $\gamma'$ is the geodesic in the environment where the weights of the edges $e_i,e_{i+1},\ldots$ are each independently resampled. One therefore would expect that if the expected intersection of the geodesics before and after resampling a small fraction of edge weights is $o(n)$, this will lead to a $o(n)$ upper bound on the variance. This intuition was formalised by Chatterjee \cite{Cha08,Chabook} where he showed that \emph{superconcentration} (the phenomenon that one gets a variance upper bound which is of a smaller order than the Poincar\'e inequality bound) is equivalent to \emph{disorder chaos} for the geodesic (in our context it refers to the phenomenon where resampling a small fraction of the randomness leads to the geodesic after resampling having a microscopic overlap on average with the geodesic before resampling; Chatterjee's result is more general). More precisely we have the following in the context of first passage percolation: the statements 

\begin{quote}
    \emph{For every $\varepsilon>0$ we have $\Var X_n \le \varepsilon n$ for all $n$ large}
\end{quote}

and 

\begin{quote}
    \emph{For every $\varepsilon,\delta>0$ we have 
    $\E[\#\{e:e\in \gamma \cap \gamma^{\varepsilon}\}]\le \delta n$ for all $n$ large
    where $\gamma^{\varepsilon}$ denotes the geodesic after resampling each edge weight independently with probability $\varepsilon$}
\end{quote}
are equivalent. Strictly speaking, Chatterjee did not work with the independent resampling semigroup in \cite{Chabook} but the above statement essentially follows from his arguments; see also \cite{ADS23} for more details on this connection in the context of FPP. 

In most examples in \cite{Chabook} and other results in the literature, one usually proves superconcentration (by using Talagrand's $L^1-L^2$ inequality, say) and derives chaotic behaviour as a consequence of the above equivalence. Our strategy is to go in the other direction, we first establish the chaotic nature of the geodesic at different scales and then get an improved variance estimate from there via a multi-scale argument.

The road map, roughly, is as follows. We fix large a $M$, and then show that the following holds for all $n$ sufficiently large depending on $M$. Let $W_n$ denote the typical transversal fluctuation scale at distance $n$ (more details about this in the following section). We then tile the plane by $n\times W_n$ rectangles. We show by a percolation argument (this is the most difficult part of the proof) that for any $\kappa,\epsilon>0$ when we resample the randomness (Gaussian White Noise or Poisson process) in each of these rectangles independently with probability $\kappa$ then the expected number of these rectangles that intersect the geodesics both before and after the resampling is $\le \epsilon M$ (see Proposition \ref{p:chaos}). A block version of the \emph{chaos implies superconcentration} principle will then imply that 

$$\Var (X_{Mn}) \le \frac{M}{2} \Var X_n.$$

Repeating the same argument at exponentially growing scales gives for all $k\ge 1$

{$$\Var (X_{M^k}) \le C\left(\frac{M}{2}\right)^k = C(M^k)^{1-\frac{\log 2}{\log M}}$$
which completes the proof.}

As mentioned above, the technical core of our argument is proving the block version of the chaos statement described above. This requires a multi-scale argument which in turn requires some complicated geometric constructions as well as several estimates established in \cite{BSS23}. We shall start by recalling these results and giving a more detailed outline of the technical steps of our argument in the next section.

\subsection{Role of Assumptions and possible extensions}
\label{s:extension}

It is natural to ask whether our general framework could establish improved variance bounds under more general assumptions. We make some comments here regarding to what extent it might be possible.

As already mentioned, this paper crucially depends on \cite{BSS23} where several critical estimates were proved for a class of general rotationally invariant planar FPP models including the Riemannian FPP and our choice of model is primarily dictated by this. However, results of \cite{BSS23} were proved under a class of hypotheses which are valid for a number of other well-known rotationally invariant FPP models, where the underlying graph is constructed based on a Possion point process. These models include the Howard-Newman model, distances in supercritical random geometric graph and Voronoi FPP (see \cite{BSS23} for the detailed definitions of these models). However, beyond the assumptions of \cite{BSS23} this paper also critically requires the model to satisfy the FKG inequality, which is used many times throughout the percolation arguments used to establish the chaotic nature of geodesics. As defined in \cite{BSS23}, the Howard-Newman model and the model of distances in random geometric graph do not satisfy the FKG inequality, mainly because the definitions there were making a natural discrete model artificially into a continuum model for the sake of a unified treatment. One can, however, define minor variants of those models that indeed satisfy the FKG inequality and we expect that under minor modifications our arguments will prove improved variance bounds for those models as well. It might also be possible to prove our result under the general assumptions of \cite{BSS23} together with the FKG inequality. The Voronoi FPP, however, fails the FKG inequality in a more serious manner and we do not expect the current arguments to extend to include that model. Certain parts of our estimates, e.g. the ones in Section \ref{s:1.2proof} do not depend on the FKG inequality, and can be made to work for all rotationally invariant models.  

We also expect our results to hold in all dimensions $d\ge 2$, with the model definitions extended to $\R^d$ in an obvious way. Despite the fact that \cite{BSS23} uses planarity in several crucial points, it requires planarity only to show that the variance cannot grow too slowly i.e., it grows at least polynomially.  Furthermore, it does not use other consequences of planarity such as ordering of geodesics. Since in this paper we are only concerned with upper bounding the variance, failing to have a lower bound on the variance is not an obstacle.  As a result, one may suitably modify the arguments to establish sublinear variance bounds in higher dimensions as well. 

It might also be possible to relax the assumption of rotational invariance and prove our results for lattice models with the additional assumption that the limit shape is uniformly curved. Again, \cite{BSS23} uses rotational invariance crucially, but as we are not attempting to prove concentration at the correct standard deviation scale as there, and only at the scale $n^{1/2-\varepsilon}$, it might be possible to (non-trivially) adjust our arguments to cover this case under the same general framework. 

Due to the already substantial length of this manuscript, we have refrained from pursuing details of any of the above possible extensions here. They might be taken up elsewhere in the future.

\subsection*{Acknowledgments}Riddhipratim Basu is supported by a MATRICS grant (MTR/2021/000093) from ANRF (formerly SERB), Govt.~of India, DAE project no.~RTI4019 via ICTS, and the Infosys Foundation via the Infosys-Chandrasekharan Virtual Centre for Random Geometry of TIFR. Allan Sly is supported by a Simons Investigator grant.

\section{Preliminaries and an overview of the argument}
\label{s:prelim}

In this Section we shall set up basic notation, recall relevant estimates from \cite{BSS23} and provide a detailed sketch of the proof of Theorem \ref{t:theorem}. Recall that we are working with rotationally invariant Riemannian FPP model under the hypothesis as described in the previous section scaled such that $\lim_{n\to \infty} n^{-1}\E X_n =1$.

\subsection{Relevant results from \cite{BSS23}}

\subsubsection{Concentration for passage times}
We start with recalling is the following result from \cite{BSS23} where we established stretched exponential concentration bounds for passage times in Riemannian FPP at the standard deviation scale. 

\begin{theorem}[{\cite[Theorem 1, Theorem 3, (6), Lemma 7.3]{BSS23}}]
\label{t:all}
There exist constants $\alpha_*,\theta>0$ and an increasing sequence $\{Q_n\}_{n\in \R_+}$
such that for all $n\ge 1$ and all $x>0$
$$ \P\left( \frac{|X_n-n|}{Q_n}\ge x\right) \le \exp(1-x^{\theta})$$ and the sequence $Q_n$ satisfies the following properties:
\begin{itemize}
    \item $Q_n=\Theta (\sqrt{\Var (X_n)})$.
    \item $n^{\alpha_*} \le Q_n =O(\sqrt{n})$.
    \item  For all $n'>n$ sufficiently large and all $\delta>0$
      $$ \left(\frac{n'}{n}\right)^{\alpha_*} Q_n \le Q_{n'} \le \left(\frac{n'}{n}\right)^{1/2+\delta}Q_n.$$
\end{itemize}
Further, the non-negative sequence $A_n:=\E X_n-n$ satisfies 
$$A_n=\Theta (Q_n).$$
\end{theorem}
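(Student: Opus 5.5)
This theorem is a compilation of results from \cite{BSS23}, so the plan is to assemble them rather than reprove them. We adapt the definitions where needed so that all listed properties hold for a single sequence $Q_n$.

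\textbf{Step 1: choosing $Q_n$.} I would define $Q_n$ as the concentration scale of \cite{BSS23}. A natural choice is a regularised quantity such as $Q_n:=\sup_{m\le n}\sqrt{\Var X_m}$, or the scale built in \cite[(6)]{BSS23}, possibly multiplied by a large constant. Monotonicity is then automatic.

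\textbf{Step 2: comparability and the tail bound.} The identity $Q_n=\Theta(\sqrt{\Var X_n})$ needs near-monotonicity of the variance: $\Var X_m\lesssim \Var X_n$ for $m\le n$. I would derive this from the stretched exponential concentration at scale $\mathrm{SD}$ (\cite[Theorem 1]{BSS23}) together with the polynomial growth estimates below. The tail bound for $|X_n-n|/Q_n$ splits into two pieces:
\begin{itemize}
\item the centred bound $|X_n-\E X_n|$ at scale $\mathrm{SD}(X_n)$, which is \cite[Theorem 1]{BSS23};
\item the mean shift $A_n=\E X_n-n$, which must be shown to be $O(Q_n)$.
\end{itemize}
Combining the two, after enlarging constants and using the $e^{1}$ prefactor to absorb small $x$, gives the stated form $\exp(1-x^\theta)$.

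\textbf{Step 3: the mean shift.} I would show $A_n\ge 0$ by subadditivity: $\E X_{2n}\le 2\E X_n$ and $\E X_n/n\to 1$ give $\E X_n\ge n$. The upper bound $A_n=O(Q_n)$ would follow from the reverse near-superadditivity $2\E X_n\le \E X_{2n}+CQ_n$, iterated dyadically. Summing $\sum_k 2^{-k}Q_{2^kn}$ converges because $Q_{2^k n}\le 2^{k(1/2+\delta)}Q_n$. The lower bound $A_n\gtrsim Q_n$ is \cite[Lemma 7.3]{BSS23}, where curvature and rotational invariance force a deficit of the order of the fluctuations.

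\textbf{Step 4: growth bounds.} The bound $Q_n=O(\sqrt n)$ is Kesten's Efron--Stein argument adapted to the field, since the geodesic has length $O(n)$. The lower bound $n^{\alpha_*}\le Q_n$ and the ratio bound $(n'/n)^{\alpha_*}Q_n\le Q_{n'}$ are the planar polynomial lower bound of \cite[Theorem 3]{BSS23}. The upper ratio bound $Q_{n'}\le (n'/n)^{1/2+\delta}Q_n$ would come from concatenation: split the segment of length $n'$ into $n'/n$ pieces and use approximate independence of far-apart pieces together with the transversal fluctuation control.

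\textbf{Main obstacle.} The hardest step is the two-sided polynomial comparison of $Q$ across scales, especially the lower exponent $\alpha_*$. It uses planarity and a multi-scale argument from \cite{BSS23}, and here I would simply invoke that paper.
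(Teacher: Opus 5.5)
Your overall strategy is the same as the paper's, which gives no proof and imports everything from \cite{BSS23} with $Q_n$ the scale constructed there. The gap is in the one ingredient you propose to argue yourself: the upper ratio bound $Q_{n'}\le (n'/n)^{1/2+\delta}Q_n$ does not follow from concatenation.

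\emph{Why concatenation only gives exponent $1$.} Let $m=n'/n$.
\begin{itemize}
\item Splitting along the straight line gives $X_{n'}-n'\le\sum_{i=1}^m(X^{(i)}-n)$. The mean of the right side is $mA_n\asymp mQ_n$, because $A_n=\Theta(Q_n)$.
\item Splitting the geodesic at its crossings of the lines $x=in$ controls each piece only up to $O(Q_n)$, since each piece is a minimum over a window of endpoints. This again sums to order $mQ_n$.
\end{itemize}
These two bounds confine $X_{n'}$ to a window of width $\asymp mQ_n$, i.e.\ exponent $1$. That is too weak even for $W_{Mn}\le M^{7/8}W_n$, which needs exponent $3/4$.

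Appealing to ``approximate independence of far-apart pieces'' cannot close this. The crossing points are global functions of the environment, so $X_{n'}$ is not close to a sum of independent pieces, and fluctuations are not expected to add (the conjectured ratio is $m^{1/3}$).

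\emph{What actually gives $\sqrt m$.} You need a block martingale/Efron--Stein bound:
\begin{itemize}
\item resampling an $n\times W_n$ block moves $X_{n'}$ only if the geodesic passes near it;
\item the change has stretched-exponential tail at scale $Q_n$;
\item with high probability the geodesic touches $O(m)$ blocks, by a polymer estimate like Proposition~\ref{p:perc1}.
\end{itemize}
This is Corollary~\ref{c:kesten}. However, that proof and the machinery behind it already use an a priori ratio bound with exponent below $1$; for example, the fourth-moment step $\E(\rX_{Mn}-\E\rX_{Mn})^4=O(M^3Q_n^4)$ uses exponent $3/4$. That exponent is exactly what \cite[Lemma 7.3]{BSS23} supplies. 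The exponent $1/2+\delta$ comes from the remark following its proof, which is what the paper relies on.

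\emph{Attributions and smaller points.}
\begin{itemize}
\item Per the paper, \cite[Lemma 7.3]{BSS23} is this upper ratio bound (stated with exponent $3/4$), not the lower bound $A_n\gtrsim Q_n$. The point the paper flags as delicate is this upper exponent, not $\alpha_*$.
\item A pointwise bound $n^{\alpha_*}\le Q_n$ does not by itself give $(n'/n)^{\alpha_*}Q_n\le Q_{n'}$. You must quote the scale-by-scale form.
\item The rest is sound as an assembly. $A_n\ge 0$ follows from subadditivity (Fekete). $Q_n=O(\sqrt n)$ follows from Kesten's Efron--Stein argument. $A_n=O(Q_n)$ follows from dyadic near-superadditivity, or more simply by integrating the tail, since the concentration from \cite{BSS23} quoted in the introduction is already stated for $|X_n-n|$.
\item Redefining $Q_n$ as a running supremum of standard deviations is unnecessary. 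For that choice, $Q_n=\Theta(\mathrm{SD}(X_n))$ requires $\mathrm{SD}(X_m)\lesssim\mathrm{SD}(X_n)$ for $m\le n$, which you can only get by going through the increasing scale of \cite{BSS23} anyway.
\end{itemize}
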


Lemma 7.3 in \cite{BSS23} states the upper bound on $Q_{n'}/Q_{n}$ with the exponent $3/4$ instead of $\frac{1}{2}+\delta$; however it was commented immediately following the proof of that Lemma that any exponent larger then $1/2$ will suffice for the proof. In fact, we shall show (see Proposition \ref{p:uij.bounds} and Corollary \ref{c:kesten}) that $\frac{Q_{n'}}{Q_{n}}\le C\sqrt{\frac{n'}{n}}$ for some $C>0$. 

The proof in \cite{BSS23} required an involved definition of $Q_n$, and we continue to use the same definition and notation here. However, for the purposes of this paper we could just take $Q_n$ to be the standard deviation of $X_n$. 

To enable the multi-scale argument one needs to update the point-to-point concentration bounds in the above theorem to passage times across blocks of suitably chosen size. From the KPZ relation (see e.g.\ \cite{NP95,Cha11}) between the fluctuation exponent of the passage times and the \emph{transversal fluctuation} of geodesics  it is expected (proved also in \cite{BSS23}, see below) that the scale of transversal fluctuation at distance $n$ is given by 
\begin{equation}
    \label{e:tfdefn}
    W_n= \sqrt{nQ_n}. 
\end{equation}
Notice that Theorem \ref{t:all} implies that 
$$ n^{1/2+\alpha_*/2}\le W_n =O(n^{3/4})$$ 
and for $M,n$ large 
\begin{equation}\label{eq:basic.W.bounds}
   M^{1/2+\alpha/2}W_n \le W_{Mn}\le M^{7/8}W_{n}. 
\end{equation}
Due to the self-similar nature of the predicted scaling limit of planar FPP models, it is natural to consider rectangles and  parallelograms of size $n\times W_n$ and as in \cite{BSS23} these boxes will be the building blocks of our renormalization argument. 

Let us introduce some notations that we shall need throughout this paper. For $x,y,y'\in \R$ with $y\le y'$, Let $\ell_{x,y,y'}$ denote line segment $\{x\}\times [y,y']$. Following \cite{BSS23} we also define canonical parallelograms $\cP_{i,k,k',n,W_n}$ whose left and right sides are 
$\ell_{(i-1)n,kW_n (k+1)W_n}$ and $\ell_{in,k'W_n (k'+1)W_n}$. When $n$ (and $W_n$) is clear from the context, we shall denote these parallelograms by $\cP_{i,k,k'}$. For such a parallelogram $\cP$ we shall denote its right and left sides by $L_{\cP}$ and $R_{\cP}$ respectively. For the special case of rectangles with $k=k'=j-1$ we shall define 
$$\Lambda_{ij}=[(i=1)n,in]\times[(j-1)W_n,jW_n].$$
These will be the building blocks of our multi-scale arguments.

\subsubsection{Transversal fluctuation estimates}
The next result from \cite{BSS23} shows that across an $n\times W_n$ parallelogram whose slope is not too high; the exponential concentration result from Theorem \ref{t:all} still holds.

\begin{proposition}[{\cite[Lemma 4.9]{BSS23}}]
    \label{p:para}
    For $\theta$ as in Theorem \ref{t:all}, there exists $C>0$ such that for $0\le k \le n/W_n$ and $z>0$ we have for all $n$ sufficiently large

    $$\P\left(\max_{u\in L_{\cP_{1,0,k}}} \max_{v\in R_{\cP_{1,0,k}}} |X_{uv}-|u-v||\ge zQ_n \right) \le  \exp(1-Cz^{\theta}).$$
\end{proposition}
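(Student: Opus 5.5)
The plan is a chaining argument that upgrades the point-to-point bound of Theorem \ref{t:all} to a uniform bound over all pairs $u\in L_{\cP}$, $v\in R_{\cP}$, with $\cP=\cP_{1,0,k}$.

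\textbf{Reduction to a single direction.} By rotational invariance, Theorem \ref{t:all} applies to $X_{uv}$ for any fixed pair $u,v$ at Euclidean distance $|u-v|$. Since $k\le n/W_n$, every such pair has $|u-v|\in[n,\sqrt2 n+2W_n]$. The monotonicity and growth bounds on $Q$ give $Q_{|u-v|}\le CQ_n$, and $A_{|u-v|}=\Theta(Q_n)$. Hence for each fixed pair,
\[
\P\bigl(|X_{uv}-|u-v||\ge zQ_n\bigr)\le \exp(1-cz^\theta),
\]
after absorbing the mean shift $A_{|u-v|}$ into $z$ when $z$ is large; for bounded $z$ the bound is trivial after adjusting constants.

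\textbf{Lower deviations.} Here the chaining is the standard dyadic one. Discretize $L_{\cP}$ and $R_{\cP}$ at scales $W_n2^{-j}$ for $j=0,1,\dots,J$, stopping at a unit mesh, which is fine because the field is bounded between $d_1$ and $d_2$. For $u$ in the left side, let $u_j$ be its nearest point at level $j$, and similarly $v_j$ for $v$. The triangle inequality gives
\[
X_{uv}\ge X_{u_0v_0}-\sum_j \bigl(X_{u_ju_{j+1}}+X_{v_jv_{j+1}}\bigr).
\]
This is too lossy as it stands, since $\sum_j X_{u_ju_{j+1}}\approx W_n\gg Q_n$. So the chaining must compare long paths rather than short increments. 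Write
\[
X_{u_{j+1}v_{j+1}}-X_{u_jv_j},
\]
which involves two paths of length about $n$ whose endpoints differ by $W_n2^{-j}$, together with the Euclidean differences $|u_{j+1}-v_{j+1}|-|u_j-v_j|$. Control this increment by a bound of the form
\[
\P\bigl(|X_{u'v'}-X_{uv}-(|u'-v'|-|u-v|)|\ge z Q_n 2^{-j\beta}\bigr)\le \exp(1-cz^\theta)
\]
for some $\beta>0$. That is the key step.

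\textbf{Proving the increment bound.} A naive union bound over $2^{2j}$ pairs against $\exp(-cz^\theta)$, with $z$ growing like a power of $j$, already sums. So it suffices to obtain a weak increment bound: the increment is at most $Q_n$ times a polynomially summable factor in $j$, with stretched exponential tails. I would get this by splitting each geodesic at distance about $\ell_j$ from its endpoint, where $W_{\ell_j}\approx W_n2^{-j}$, and applying Theorem \ref{t:all} at scale $\ell_j$. Then $Q_{\ell_j}\le Q_n(\ell_j/n)^{\alpha_*}$, and $\ell_j/n$ decays geometrically in $j$ by \eqref{eq:basic.W.bounds}.

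\textbf{Upper deviations.} These are easier. Use $X_{uv}\le X_{u\hat u}+X_{\hat u\hat v}+X_{\hat v v}$ with $\hat u,\hat v$ at distance about $\ell$ inward along the segment, and run the same multiscale decomposition.

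\textbf{Main obstacle.} The hardest step is the lower-deviation increment bound. Its difficulty is ensuring that the geodesic from $u$ to $v$ cannot benefit from a nearby favorable region beyond order $Q_n$. For this I would rely on transversal fluctuation control: the geodesic stays within $O(W_n)$ of the segment $uv$. Combining this with the fixed-scale estimates and the union bound over the $O(2^{2j})$ pairs at each scale gives the stated $\exp(1-Cz^\theta)$ bound.
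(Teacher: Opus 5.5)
\textbf{Gap: the lower-deviation increment bound.} Your reduction to single pairs is fine. Your constructive dyadic chaining for upper deviations is also fine in spirit; it is the mechanism behind Lemma~\ref{l:khat} and Proposition~\ref{p:constraine}, provided you draw $\hat u,\hat v$ from a fixed grid rather than from the segment $uv$, so that the union bounds are finite. The gap is the increment bound you yourself call the key step: the sketch does not produce it.

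Take $k=0$, $u=(0,0)$, $u'=(0,\delta)$ with $\delta=W_n2^{-j}\approx W_{\ell_j}$. Let $w$ be the point of $\gamma_{uv}$ at horizontal distance $\ell_j$, at transversal offset $h$ from the segment $uv$. From $X_{u'v}-X_{uv}\le X_{u'w}-X_{uw}$, the centered increment is at most the centered difference at scale $\ell_j$ plus the geometric term $(|u'-w|-|u-w|)-(|u'-v|-|u-v|)\approx \delta^2/(2\ell_j)-\delta h/\ell_j$.

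\emph{Global control is not enough.} With only $h=O(W_n)$, as you propose, $\delta h/\ell_j\approx W_{\ell_j}W_n/\ell_j$. This exceeds $Q_n=W_n^2/n$ by the factor $(W_{\ell_j}/\ell_j)/(W_n/n)$. Since $m\mapsto W_m/m=\sqrt{Q_m/m}$ is decreasing and $\ell_j/n$ decays geometrically, this factor grows geometrically in $j$, so the increments are not summable. You would need \emph{local} transversal control $h=O(W_{\ell_j})$ near the endpoint, as in Lemma~\ref{c:local.trans}.

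\emph{The split point is random.} Even granting local control, you must bound $(X_{u'w}-|u'-w|)-(X_{uw}-|u-w|)$ simultaneously for all $w$ in a window of width $\gtrsim W_{\ell_j}$. That is a point-to-interval version of the proposition itself at scale $\ell_j$. Theorem~\ref{t:all} yields it only after a union bound over $\asymp W_{\ell_j}$ points. For small $j$ this costs a $\log n$ that stretched-exponential tails at fixed $z$ cannot absorb. The scheme would therefore have to become an induction on scales with careful bookkeeping of constants.

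\emph{Circularity.} In \cite{BSS23} the transversal fluctuation estimates (Theorem~\ref{t:tfold}, Lemma~\ref{c:local.trans}) appear after the parallelogram lemma being proved, and standard proofs of them rely on uniform parallelogram bounds of exactly this kind (cf.\ Lemma~\ref{l:paraexplicit}). Leaning on them here is therefore likely circular.

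\textbf{A fix: step back.} The standard route to the lower deviations needs no continuity estimate and no transversal input. Let $u_c,v_c$ be the midpoints of $L_{\cP},R_{\cP}$, fix $\epsilon>0$, and set $u^*=u_c-\epsilon(v_c-u_c)$ and $v^*=v_c+\epsilon(v_c-u_c)$. For every $u\in L_{\cP}$ and $v\in R_{\cP}$, the broken line $u^*,u,v,v^*$ is longer than $|u^*-v^*|$ by only $O(Q_n/\epsilon)$. Also $X_{u^*v^*}\le X_{u^*u}+X_{uv}+X_{vv^*}$.

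Hence if $X_{uv}\le|u-v|-zQ_n$ for some pair and $z\ge C/\epsilon$, one of the following holds:
\begin{enumerate}
\item $X_{u^*v^*}\le|u^*-v^*|-zQ_n/2$. This is a single point-to-point lower tail, controlled by Theorem~\ref{t:all}.
\item $\sup_{u\in L_{\cP}}(X_{u^*u}-|u^*-u|)\ge zQ_n/8$, or the analogue at $v^*$. This is an upper deviation at scale $\epsilon n$, which your constructive chaining already handles.
\end{enumerate}
This is the same step-back device used in the proof of Lemma~\ref{l:righttailbasicrect}.
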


Notice that by Pythagoras' theorem, for $u,v$, as in the above result $|u-v|=n+\Theta(k^2)Q_n$ for large values of $k$. This fact is useful for us at multiple points of the argument so we record this explicitly. 

\begin{lemma}[{\cite[Lemma 3.4]{BSS23}}]
\label{l:paraexplicit}
In the set-up of Proposition \ref{p:para} there exists $C>0$ such that for $0\le k \le n/W_n$ and $z>0$ we have for all $n$ sufficiently large
$$\P\left(\min_{u\in L_{\cP_{1,0,k}}} \min_{v\in R_{\cP_{1,0,k}}} X_{uv}- n-\frac{k^2}{32}Q_n\le -zQ_n \right) \le  \exp(1-Cz^{\theta}).$$
\end{lemma}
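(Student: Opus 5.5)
The plan is to compare the minimal passage time across $\cP_{1,0,k}$ with the Euclidean distance between its sides, and then use Proposition \ref{p:para} to control the error. First I will record the geometry. For $u\in L_{\cP_{1,0,k}}$ and $v\in R_{\cP_{1,0,k}}$ the horizontal displacement is $n$. The vertical displacement lies in $[(k-1)W_n,(k+1)W_n]$. Hence for $k\ge 2$ it is at least $kW_n/2$.

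Since $kW_n\le n$, we have $\sqrt{1+t}\ge 1+t/4$ for $t\le 1$. Pythagoras then gives
$$|u-v|\ge n+\frac{(kW_n/2)^2}{4n}=n+\frac{k^2W_n^2}{16n}=n+\frac{k^2}{16}Q_n,$$
using $W_n^2=nQ_n$ from \eqref{e:tfdefn}. This holds uniformly over $u,v$. For $k\in\{0,1\}$ the term $k^2Q_n/32$ is at most $Q_n/32$, and $|u-v|\ge n$ trivially.

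Next I write
$$X_{uv}-n-\frac{k^2}{32}Q_n=(X_{uv}-|u-v|)+\Big(|u-v|-n-\frac{k^2}{32}Q_n\Big).$$
For $k\ge2$ the second bracket is at least $\frac{k^2}{32}Q_n\ge0$. For $k\le1$ it is at least $-Q_n/32$. So on the event in the statement, for some $u,v$ we have $X_{uv}-|u-v|\le -(z-1/32)Q_n$. For $z\ge 1/16$, say, this means $\max_{u,v}|X_{uv}-|u-v||\ge (z/2)Q_n$. Proposition \ref{p:para} bounds this probability by $\exp(1-C2^{-\theta}z^\theta)$.

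For $z<1/16$ I will shrink $C$ so that $\exp(1-Cz^\theta)\ge1$ is false only where needed. In fact $\exp(1-Cz^\theta)\ge e^{1-C}\ge 1$ whenever $C\le1$ and $z\le 1$, so the bound is trivial there. Renaming the constant then finishes the argument.

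The one point needing care is that the Pythagorean gain must be uniform over all pairs $(u,v)$, including when $k$ is small. This is why the constant is $1/32$ rather than something sharper: it absorbs the $\pm W_n$ slack in the heights of the sides. I do not expect any further obstacle, since Proposition \ref{p:para} already supplies the uniform control over all endpoint pairs.
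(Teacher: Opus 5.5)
Your proof is correct and follows the route the paper indicates: the paper presents this lemma as a consequence of Proposition \ref{p:para} combined with the Pythagorean gain $|u-v|-n\gtrsim k^2Q_n$, which is exactly your argument using $W_n^2=nQ_n$. Your handling of $k\in\{0,1\}$ and of small $z$ by shrinking the constant is also fine.
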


The two other estimates that we shall need are about transversal fluctuations of geodesics as alluded to above. We need some more notation.

For $v_1\in\ell_{0,0,W_n},v_2\in\ell_{n,0,W_n}$
\[
\Upsilon_{n,v_1,v_2,z}=\bigg\{\gamma':\gamma'(0)=v_1,\gamma'(1)=v_2, \ \sup_t\inf_{x\in[0,n]} |\gamma'(t) - (x,0)|\geq zW_n\bigg\}
\]
denote the set of paths that travel at least distance $zW_n$ from the line segment joining $(0,0)$ and $(n,0)$.   The set $\Upsilon_{n,\origin,(n,0),z}$  contains the paths that have transversal fluctuations at least $zW_n$. 

We have the following theorem which says that for geodesics across $\cP_{1,0,0}$ the maximal transversal fluctuation is of the order $W_n$. 

\begin{theorem}[{\cite[Lemma 5.3, Theorem 5.4]{BSS23}}]
    \label{t:tfold}
    There exists $D, \theta_0>0$ such that for any $n\geq 1$ we have that for all $z\geq 0$,
\[
\P\left[\exists v_1\in\ell_{0,0,W_n},v_2\in\ell_{n,0,W_n}: \gamma_{v_1v_2}\in \Upsilon_{n,v_1,v_2,z}  \right]\leq \exp(1-Dz^{\theta_0}).
\]
\end{theorem}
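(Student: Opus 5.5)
The plan is to follow the standard transversal fluctuation argument from \cite{BSS23}, built on the concentration estimates of Theorem \ref{t:all}, Proposition \ref{p:para} and Lemma \ref{l:paraexplicit}. Consider a geodesic $\gamma_{v_1v_2}$ with $v_1\in\ell_{0,0,W_n}$, $v_2\in\ell_{n,0,W_n}$ that reaches height $zW_n$. A detour of height $h$ over horizontal extent $m$ costs, by Pythagoras, an excess length of order $h^2/m$ over the straight line. The fluctuations of passage times over extent $m$ are only of order $Q_m$. Since $W_n^2/n=Q_n$, a detour of height $zW_n$ over extent $n$ costs about $z^2Q_n$. This beats the fluctuation $zQ_n$ allowed by the union bound, so the concentration tail $\exp(-Cz^{\theta})$ translates into $\exp(-Dz^{\theta_0})$.

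The concrete steps are as follows.

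\textbf{Step 1: reduce to a one-sided event.} By symmetry it suffices to consider excursions above height $zW_n$. Working with all $v_1,v_2$ at once, we control the point-to-point passage times uniformly over the segments using Proposition \ref{p:para}. This gives the upper bound $X_{v_1v_2}\le n+z^{1/2}Q_n$ outside probability $\exp(1-Cz^{\theta/2})$.

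\textbf{Step 2: cover the possible excursions at dyadic scales.} Suppose the path reaches height $zW_n$ at some horizontal location $x$. It must cross the vertical lines at distance $n$-dependent scales on either side. A chaining argument over dyadic scales $m=2^{-j}n$ decomposes the path near its highest point into pieces crossing parallelograms $\cP_{i,k,k',m,W_m}$ of slope $|k-k'|$.

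\textbf{Step 3: bound the cost of each piece.} For each piece, Lemma \ref{l:paraexplicit} lower bounds the passage time by $m+\frac{(k-k')^2}{32}Q_m-yQ_m$, except on an event of probability $\exp(1-Cy^{\theta})$. Summing the Pythagorean excesses along the pieces, the total is at least of order $z^2Q_n$. Convexity makes a single straight detour the cheapest, and the rescaling identity $W_m^2/m=Q_m$ together with the growth bounds on $Q$ from Theorem \ref{t:all} keeps the scales consistent.

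\textbf{Step 4: union bound.} Take a union bound over the polynomially many (in $z$ and $2^j$) parallelograms at each scale. Choose $y=y_j$ growing geometrically in $j$, and use that $Q_m\ge (m/n)^{1/2+\delta}Q_n$, so that the error terms sum to at most $z^{2}Q_n/2$.

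Combining the steps, the path length exceeds $n+cz^2Q_n>n+z^{1/2}Q_n$, which contradicts geodesicity.

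The main obstacle is the multi-scale bookkeeping in Steps 2--4. A naive single-scale union bound fails, because steep short segments near the endpoints give entropy that is not controlled by the curvature gain. I would first try the \cite{BSS23} approach of restricting to the scale at which the excursion first exceeds $z W_n/2$. That approach combines the point-to-line bound of Proposition \ref{p:para} with a summable choice $y_j=z^{1/2}2^{j\epsilon}$, using $\alpha_*>0$ to ensure the summability. Degenerate small $z$ is trivial by taking $D$ small.
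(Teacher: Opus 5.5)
The paper does not prove this statement. It imports it from \cite{BSS23} (Lemma 5.3 and Theorem 5.4), so your sketch has to stand on its own. Your heuristic is right, and so are several ingredients:
\begin{itemize}
\item bounding each piece from below by Lemma \ref{l:paraexplicit} at its own scale;
\item keeping $O(1)$ pieces per dyadic scale (e.g.\ splitting $[0,x]$ as in Lemma \ref{l:dyadic.seq}), so that budgets $y_jQ_{2^{-j}n}$ with $y_j=z^{1/2}2^{\epsilon j}$, $\epsilon<\alpha_*$, sum to $O(z^{1/2}Q_n)$;
\item Cauchy--Schwarz to add up the Pythagorean gains.
\end{itemize}

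\textbf{The gap: the union bound in Step~4.} The parallelograms involved are those whose vertical sides contain the crossing points of $\gamma_{v_1v_2}$ with the dyadic lines. There are ``polynomially many in $z$ and $2^j$'' of them only if you already know these crossing points lie in a window of height $O(zW_n)$, which is essentially the conclusion. Anchoring the decomposition at the highest point gives no such window. That point can be at any height, the path may dip arbitrarily low before reaching it, and it is unconstrained afterwards. So with a budget $y_j$ that does not depend on height, the union bound runs over infinitely many parallelograms. You need an extra idea here.

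\emph{One repair.} Stop at the first time $\tau$ at which the geodesic leaves the strip $\{|y|<zW_n\}$, say at $\gamma(\tau)=(x_\tau,\pm zW_n)$. Use the first crossings of the dyadic lines before $\tau$; these are automatically ordered along the path, so backtracking is harmless. This confines all pieces and gives, for $x_\tau\in[0,n]$, $X_{v_1,\gamma(\tau)}\ge x_\tau+cz^2Q_n-Cz^{1/2}Q_n$. You then still need $X_{\gamma(\tau),v_2}\ge n-x_\tau-Cz^{1/2}Q_n$ uniformly over the exit point on a \emph{horizontal} segment. No segment-to-segment estimate such as Proposition \ref{p:para} or Lemma \ref{l:paraexplicit} gives this. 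One way is
\[
X_{(x,zW_n),v_2}\ge X_{(0,zW_n),v_2}-X_{(0,zW_n),(x,zW_n)},
\]
together with a uniform bound on $\sup_{x\in[0,n]}(X_{(0,zW_n),(x,zW_n)}-x)$. That bound needs its own dyadic chaining along the horizontal line, as in the proof of Lemma \ref{l:khat}.

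\emph{Another repair.} Keep the highest point, but let the budget grow with the height index $k$ of the parallelogram, e.g.\ $y_{j,k}=z^{1/2}2^{\epsilon j}+|k|^{a}$ for small $a>0$. This is paid for by the global curvature cost of reaching that height.

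Either way, excursions past $x=-zW_n$ or $x=n+zW_n$ also count in $\Upsilon$, since it measures distance to the segment, and they must be treated. Your ``by symmetry'' step only exchanges up and down.

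\textbf{Smaller corrections.}
\begin{itemize}
\item In Step~4 the growth bound is used in the wrong direction. The inequality $Q_m\ge (m/n)^{1/2+\delta}Q_n$ cannot make $\sum_j y_jQ_{m_j}$ small; that needs $Q_m\le (m/n)^{\alpha_*}Q_n$, as your last paragraph says. The lower bound on $Q_m$ is what controls the entropy $zW_n/W_m\le z(n/m)^{3/4+\delta/2}$.
\item Lemma \ref{l:paraexplicit} only covers slopes $|k-k'|\le m/W_m$ and large $m$. Steep pieces and the residual piece below the smallest scale need a separate crude bound.
\item The obstacle you single out is not the real one. In units of $Q_x$, the curvature cost of climbing $zW_n$ over horizontal distance $x$ is of order $z^2(W_n/W_x)^2$, which grows as $x$ shrinks. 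So short steep pieces near the endpoints are the easy case. What forces a multiscale scheme is that lower-bounding $n/m$ pieces at one fine scale $m$ loses $(n/m)yQ_m\ge y(n/m)^{1/2-\delta}Q_n\gg Q_n$. A single coarse scale, on the other hand, cannot see where inside a column the excursion occurs.
\end{itemize}
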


The final result we need from \cite{BSS23} is about local transversal fluctuations. It says that the fluctuation of the geodesics at distance $n$ from the endpoints for geodesics between two points at distance $n'\gg n$ is still typically of the order $W_n$. 

For integers $k,x,y$ with $k\geq 0$ and $x+2^k < x'$ define
\begin{align*}
\Xi^{(n),R}_{x,x',y,k,z}:=
\Big\{\gamma':&\gamma'(0)\in\ell_{xn,yW_n,(y+1)W_n},\gamma'(1)\in\ell_{x'n,(y-\lceil (x'-x)^{{89/100}}\rceil)W_n,(y+\lceil (x'-x)^{{89/100}}\rceil)W_n},\\ &\sup\{|w|:((x+2^k)n,yW_n+w) \in \gamma'\}\geq z2^{9k/10}W_n\Big\}.
\end{align*}

We have the following lemma.

\begin{lemma}[{\cite[Corollary 8.3]{BSS23}}]
\label{c:local.trans}
There exist absolute constants $D,z_0, \theta_0$ such that for all $z\geq z_0$ and all $k\in [1,\lfloor \log_2 (x-x')-1\rfloor]$,
\begin{equation}\label{eq:local.trans.proof.corol}
\P\Big[\inf_{\gamma' \in \Xi^{(n),R}_{x,x',y,k,z}} X_{\gamma'}-X_{\gamma'(0),\gamma'(1)} \geq \frac{{z^{1/3}}}{4000} Q_{n}\Big] \leq \exp(1-Dz^{\theta_0}).
\end{equation}
\end{lemma}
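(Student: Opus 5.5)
The plan is not to re-derive this estimate from scratch. It is imported verbatim as \cite[Corollary 8.3]{BSS23}, so the proof consists of checking that the objects in \eqref{eq:local.trans.proof.corol} coincide with those in \cite{BSS23}, and that our normalisation does not change the constants.

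\textbf{Step 1: matching the objects.} I would first check the following items one by one.
\begin{itemize}
\item The set of paths $\Xi^{(n),R}_{x,x',y,k,z}$ is defined exactly as in \cite{BSS23}. This includes the window $\lceil (x'-x)^{89/100}\rceil W_n$ for the right endpoint and the deviation threshold $z2^{9k/10}W_n$ at the vertical line through $(x+2^k)n$.
\item The scales $W_n=\sqrt{nQ_n}$ and $Q_n$ are the same sequences as in \cite{BSS23}, as recorded after Theorem~\ref{t:all}.
\item The range of $k$ is the same. In the statement, $k\in[1,\lfloor \log_2(x-x')-1\rfloor]$ should be read with $x'-x$ in place of $x-x'$, since $x+2^k<x'$.
\item Rescaling the field so that $\mu=1$ only multiplies $\Psi$ by a deterministic constant. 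It therefore changes $Q_n$ and the passage times by the same factor and leaves the ratio in \eqref{eq:local.trans.proof.corol} unchanged.
\end{itemize}
With these identifications, the displayed probability, with the threshold $\frac{z^{1/3}}{4000}Q_n$ and the bound $\exp(1-Dz^{\theta_0})$, is the statement of that corollary. The constants $D,z_0,\theta_0$ are the ones produced there.

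\textbf{Step 2: how the corollary is proved in \cite{BSS23}.} If one wanted to reconstruct the argument, the route is a dyadic chaining. The steps are as follows.
\begin{itemize}
\item Cover the horizontal stretch from $xn$ to $(x+2^{k})n$ by scales $2^j n$, $j\le k$.
\item At each scale, use Proposition~\ref{p:para} and Lemma~\ref{l:paraexplicit} across the $2^jn\times W_{2^jn}$ parallelograms. These control the passage times of path segments between fixed vertical lines, with slopes up to the allowed range.
\item Use Theorem~\ref{t:tfold} to control the transversal behaviour of the geodesics $\gamma_{\gamma'(0),\gamma'(1)}$ between the endpoints.
\end{itemize}
The exponent $9/10$ is chosen strictly larger than the growth exponent of $W_{2^kn}/W_n$. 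By \eqref{eq:basic.W.bounds} together with Theorem~\ref{t:all}, this growth is at most $2^{k(1/2+\delta)\cdot 1/2+k/2}$. The gap between the two exponents lets the per-scale estimates be summed in $j$ with a stretched-exponential tail in $z$. The union bound over the $O((x'-x)^{89/100})$ endpoint windows is absorbed by the same gap.

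\textbf{Main obstacle.} The delicate point is Step 1, more precisely the direction and normalisation of the event in \eqref{eq:local.trans.proof.corol}. I would check against the formulation in \cite{BSS23} exactly which event on the infimum of the excess $X_{\gamma'}-X_{\gamma'(0),\gamma'(1)}$ is bounded there. Once that is matched with the event as displayed here, the constant $\frac{1}{4000}$ and the exponent $z^{1/3}$ transfer unchanged, and nothing further needs to be proved.
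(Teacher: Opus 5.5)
Your approach matches the paper's: the lemma is not proved here but quoted from \cite[Corollary 8.3]{BSS23}. Your reading of the $k$-range, with $x'-x$ in place of $x-x'$, is also correct.

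However, the point you defer at the end has a definite answer, and it is not a matter of matching conventions. The inequality inside $\P[\cdot]$ is reversed, so the statement as displayed is false and no citation can prove it.
A path in $\Xi^{(n),R}_{x,x',y,k,z}$ must lie at height at least $z2^{9k/10}W_n$ away from $yW_n$ somewhere on the line $\{x=(x+2^k)n\}$. By contrast, the straight segment from its start to any admissible endpoint crosses that line within $O(W_n+2^k(x'-x)^{-11/100}W_n)=O(2^{89k/100}W_n)$ of $yW_n$, using $2^{k+1}\le x'-x$. This is what the window exponent $89/100<9/10$ is for.
The forced detour therefore costs order $z^2 2^{4k/5}Q_n$ in Euclidean length. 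This is far above the local fluctuation scale $Q_{2^kn}\le 2^{k(1/2+\delta)}Q_n$, so for large $z$ the event $\{\inf_{\gamma'}X_{\gamma'}-X_{\gamma'(0),\gamma'(1)}\ge \frac{z^{1/3}}{4000}Q_n\}$ is the typical one.
The intended statement bounds $\P\big[\inf_{\gamma'}X_{\gamma'}-X_{\gamma'(0),\gamma'(1)}\le \frac{z^{1/3}}{4000}Q_n\big]$ by $\exp(1-Dz^{\theta_0})$. It is the only version consistent with the remark right after the lemma (with high probability no geodesic between these windows has such a local deviation). It is also how \cite[Lemma 8.2]{BSS23} is used in Appendix~\ref{s:proxytrans}. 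You should state this correction explicitly; only then does ``nothing further needs to be proved'' hold.

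Two smaller points.
\begin{enumerate}
\item Your normalisation argument is inaccurate. Replacing $\Psi$ by $c\Psi$ multiplies passage times and $Q_n$ by $c$, but multiplies $W_n=\sqrt{nQ_n}$ by $\sqrt{c}$. So the family $\Xi^{(n),R}_{x,x',y,k,z}$ itself changes, and nothing is literally invariant. The right justification is that $c\psi$ is again smooth, increasing and bounded away from $0$ and $\infty$, so the normalised model is itself in the class covered by \cite{BSS23}.
\item In your optional Step 2, a union bound over the $\lceil (x'-x)^{89/100}\rceil$ endpoint windows cannot be absorbed by a gain that depends only on $k$. For fixed $k$, $x'-x$ is unbounded, while the bound must be uniform in it. Since the lemma enters purely as a citation this does not affect your proof, but that sketch as written would not reconstruct it.
\end{enumerate}
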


Observe that this lemma implies that with large probability all the geodesics starting from $\ell_{xn,yW_n,(y+1)W_n}$ and ending at $\ell_{x'n,(y-\lceil (x'-x)^{{89/100}}\rceil)W_n,(y+\lceil (x'-x)^{{89/100}}\rceil)W_n}$ has local transversal fluctuation at location $(x+1)n$ of the order of $W_n$. By symmetry, a similar result holds for local transversal fluctuations to the left of the right endpoint of geodesics.

\noindent

\subsubsection{Constrained passage times}
Note that Theorem \ref{t:tfold} says that the typical transversal fluctuation of a geodesic of length $n$ is of order $W_n$. One might therefore expect that analogues of Proposition \ref{p:para} should remain true if we consider only paths contained in an appropriate $n\times W_n$ rectangle. This bound on constrained passage times was not proved in \cite{BSS23} as it was not needed there, but is not too hard to prove from Proposition \ref{p:para} and Theorem \ref{t:tfold} (In fact, similar estimates have been proved and been useful in related models before; see e.g.\ \cite[Proposition 12.2]{BSS14}. That result is actually slightly stronger and simultaneously controls passage time from all points on the left to all point on the right of an $r\times W_r$). Bounds on constrained passage times will be needed here, and we shall prove the following theorem in Appendix \ref{s:proxy}; we expect this to be useful and of independent interest. 
\begin{proposition}
    \label{p:constraine}
    There exist $C,\theta_6>0$ such that for all $r$ sufficiently large and all $z\ge 0$ we have 
    $$\P\left(\inf_{\substack{\gamma'(0)=(0,0)\\ \gamma'(1)=(r,0)\\ \gamma'\subset [0,r]\times[0,W_r]}} X_{\gamma'}\ge r+zQ_r\right)\le \exp(1-Cz^{\theta_6}).$$
\end{proposition}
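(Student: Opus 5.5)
The plan is to build an explicit path inside $[0,r]\times[0,W_r]$ by concatenating geodesics between well-chosen intermediate points. Each piece will stay inside the rectangle with high probability, by Theorem \ref{t:tfold}. The total passage time will be controlled using Proposition \ref{p:para} and Lemma \ref{l:paraexplicit} (upper tail), together with Pythagoras to bound the Euclidean excess.

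\textbf{The bulk.} Fix a large constant $M$. Put $u_i=(ir/M,\,W_r/2)$ for $1\le i\le M-1$ and concatenate the geodesics $\gamma_{u_iu_{i+1}}$. Each has length $r/M$, and $W_{r/M}\le M^{-1/2-\alpha_*/2}W_r$ by \eqref{eq:basic.W.bounds}. Hence, by Theorem \ref{t:tfold} applied at scale $r/M$, with probability at least $1-M\exp(1-D(zM^{1/2})^{\theta_0})$ none of these geodesics leaves the strip $\{W_r/4\le y\le 3W_r/4\}$. By Theorem \ref{t:all} and a union bound, their total passage time is at most $(M-2)r/M+zQ_r$ outside an event of probability $M\exp(1-c z^{\theta})$, using $Q_{r/M}\le Q_r$.

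\textbf{The two ends.} The ends need a multi-scale staircase, since the endpoints lie on the boundary $y=0$ and a geodesic started there may dip below it. Near the origin, let $x_j=2^{-j}r/M$ for $0\le j\le J$, with $J$ chosen so that $x_J$ is of order $1$. Let $h_0=W_r/2$ and $h_j=j^2W_{x_j}$ for $j\ge1$, adjusting constants so that $h_j\le W_r/2$. Consider the points $v_j=(x_j,h_j)$ and join consecutive points $v_{j+1},v_j$ by geodesics. The piece $v_{j+1}\to v_j$ has horizontal length $x_j/2$ and transversal fluctuation of order $W_{x_j}$. Thus, by Theorem \ref{t:tfold} (used via a parallelogram version, with the same rotational-invariance argument as in \cite{BSS23}), it stays above height $0$ except with probability about $\exp(-c j^{\theta_0})$, because $h_{j+1}\gg jW_{x_j}$ up to constants coming from $W_{x_{j+1}}\ge 2^{-1}W_{x_j}$. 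Its Euclidean length exceeds $x_j/2$ by $O\big(h_j^2/x_j\big)=O(j^4Q_{x_j})$. By Proposition \ref{p:para} its passage time exceeds its Euclidean length by more than $(z+j)Q_{x_j}$ only with probability $\exp(1-C(z+j)^\theta)$. Since $Q_{x_j}\le 2^{-j\alpha_*}Q_r$ by Theorem \ref{t:all}, the sum $\sum_j (j^4+z+j)Q_{x_j}$ is $O((1+z)Q_r)$, and the failure probabilities sum to $\exp(1-cz^{\theta''})$. The last piece from $(0,0)$ to $v_J$ is taken to be a straight segment, which costs $O(1)\cdot d_2$. The right end is handled symmetrically, and the joints $v_0\to u_1$ are already part of the scheme, since $v_0=u_1$.

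\textbf{Conclusion and main obstacle.} Summing everything, the constructed path lies in the rectangle and has passage time at most $r+C(M)(1+z)Q_r$ outside an event of probability $\exp(1-cz^{\theta_6})$. Absorbing constants gives the claim; for $z$ below a constant the bound is trivial after adjusting $C$. The main obstacle is the end staircase. Theorem \ref{t:tfold} is stated for horizontal $n\times W_n$ blocks with endpoints on $\ell_{0,0,W_n}$, whereas the pieces here are sloped. One needs the tilted version, which is obtained by rotation invariance since the slopes $h_j/x_j\to0$ are small. One also has to verify that the heights $h_j$ both clear the fluctuations (with a $j$-dependent margin, for summability) and keep the slope cost $\sum_j h_j^2/x_j$ at $O(Q_r)$. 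The polynomial growth of $Q$ in Theorem \ref{t:all} is exactly what makes this geometric sum converge.
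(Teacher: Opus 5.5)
Your overall architecture (concatenating geodesics between intermediate points, with a dyadic staircase near the corners) is close to the paper's. However, there is a genuine gap in the probability bookkeeping. The events that the geodesic pieces stay inside $[0,r]\times[0,W_r]$ are purely geometric, and in your construction their failure probabilities do not depend on $z$.

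\emph{Bulk.} Leaving the strip $\{W_r/4\le y\le 3W_r/4\}$ requires a deviation $W_r/4=z'W_{r/M}$ with $z'\approx \frac14 M^{1/2+\alpha_*/2}$. So Theorem \ref{t:tfold} gives a failure probability of order $M\exp(1-D(M^{1/2}/4)^{\theta_0})$, not $M\exp(1-D(zM^{1/2})^{\theta_0})$. With $M$ fixed, this is a fixed positive number.

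\emph{Ends.} Here it is worse. The containment failures $\exp(-cj^{\theta_0})$ sum to an $O(1)$ constant, which is not even small because of the first few $j$. Only the passage-time failures $\exp(1-C(z+j)^{\theta})$ sum to something like $\exp(1-cz^{\theta''})$.

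So the argument as written gives at best $\exp(1-cz^{\theta})+p$, with $p$ independent of $z$. This does not give stretched-exponential decay in $z$.

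The missing idea is that the fineness of the construction must grow with $z$. In the paper this is Lemma \ref{l:consp2p}. To join two points at horizontal distance $r$ inside a rectangle with vertical margin $\delta W_r$, it uses $K=z^{1/10}$ equally spaced intermediate points. Then:
\begin{itemize}
\item each geodesic piece fluctuates on scale $W_{r/K}\le K^{-1/2-\alpha_*/2}W_r$, so the containment failure is $K\exp(-K^{\theta'})$, which decays in $z$;
\item the allowed excess per piece, $zQ_r/K\ge zK^{\alpha_*-1}Q_{r/K}$, still has stretched-exponential tails by Theorem \ref{t:all}.
\end{itemize}
The paper first reduces, by reflection symmetry, to the half from $(0,0)$ to $(r/2,W_r/2)$. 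It then uses the corner staircase $u_k=(r/2^k,\frac12 W_{r/2^{k-1}})$, with heights of order the local fluctuation scale. It applies Lemma \ref{l:consp2p} to each step with a $k$-dependent threshold, so that the step failure probabilities are $\exp(-C(z2^{\alpha_*k/2})^{\theta_7})$. All failure probabilities, containment included, are therefore summable and decay in $z$.

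You could repair your proof along the same lines. For example, take $M$ a small power of $z$ (restricting to $z\le r^{\beta}$, as the paper does). Let the staircase margins, or the number of sub-pieces per step, grow polynomially in $z+j$. Then check that the Euclidean excess stays $O(zQ_r)$.

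A lesser point: do not run the staircase down to $x_J=O(1)$, where the large-scale estimates are not available. Stop at $x_J\approx Q_r$ and close with a straight segment, whose cost is then $O(Q_r)$.
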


\subsection{Conforming paths and modified distances}
One of the technical inconveniences in the analysis of FPP models is that paths are allowed to backtrack. To circumvent the resulting complications, we shall define a modified distance function between two points by looking only at paths which are not allowed to backtrack to a vertical column (of width $n$) to the left after entering a column to the right. Also, the passage times across different columns are calculated based on independent randomness. Let us now make things precise. 

The modified distance will depend on a parameter $n$  which will be taken to be large depending on other parameters but will be fixed, and also on a parameter $\beta$ which will be taken to be sufficiently small (not depending on $n$ and $M$; see below). To reduce notational overhead we shall suppress the $n$ and $\beta$-dependence in the modified model. 

For $i\in \Z$, let $\Lambda_i$ denote the column
$$\Lambda_i=[(i-1)n,in]\times \R.$$
At this point we need to introduce the probability space we shall work with. While the notation below adds some complexity, it ensures that the weights of  a path contained in different columns are independent. Since the random field used to compute the weights of paths is $2$-dependent this is not quite true; therefore we enlarge the probability space.

For $\omega$ considered as above (i.e., $\omega$ a rate $1$ Poisson point process on $\R^2$, or a Gaussian White Noise on $\R^2$) and for $i\in \Z$, let $\omega^{\Lambda_i}$ denote a random field on $\R^2$ which is equal to $\omega$ in distribution and and such that $\omega^{\Lambda_i}=\omega$ on $\Lambda_i$ and $\omega=\omega_i$ on $\R^2\setminus \Lambda_i$ where $\omega_i$'s are independent copies of $\omega$. We shall work with the $\Z\times \R^2$ valued field 
$$\omega_*=\{\omega^{\Lambda_i}:i\in \Z\}.$$
The point of introducing this field is that for paths contained in the column $\Lambda_i$ we shall compute its weight in the field $\omega^{\Lambda_i}$. For any path $\gamma$ contained in the column $\Lambda_i$, $X^{\Lambda_i}_{\gamma}$ shall denote its weight computed in the field $\omega^{\Lambda_i}$ (i.e., $X_\gamma(\omega^{\Lambda_i})=X^{\Lambda_i}_{\gamma}(\omega)$). 
Observe that this definition ensures that for paths $\gamma_i$ and $\gamma_j$ contained in $\Lambda_i$ and $\Lambda_j$ respectively, $X^{\Lambda_i}_{\gamma_i}$ and $X^{\Lambda_j}_{\gamma_j}$ are independent if $i\ne j$. For $u,v\in \Lambda_i$, we call a path $\gamma$ from $\gamma(0)=u$ to $\gamma(1)=v$ to be a \textbf{conforming path} if $\gamma\subset \Lambda_i$. We define now the \textbf{restricted distance} $\rX_{uv}$ for points $u,v\in \Lambda_i$ by 
\[
\rX_{uv} = \inf_{\substack{\gamma \subset \Lambda_i\\ \gamma(0)=u, \gamma(1)=v}} X_{\gamma}^{\Lambda_i},
\]
i.e., the infimum is taken over all conforming paths. Note that we do not define the restricted distance between pairs of points $(in,y),(in,y')$ (we will call such a pair a \textbf{vertical boundary pair}) since pair is both in $\Lambda_i$ and $\Lambda_{i+1}$.

We now extend this notion to all pairs of points $u,v\in [0,Mn]\times [-n^{\beta}W_n, n^{\beta}W_n]$. Here $\beta$ is a parameter of our construction and is chosen sufficiently small (such that $n^{\beta}W_n\ll n$ and satisfying several other conditions, but chosen independently of $n$).

For a non-vertical boundary pair $u=(u_1,u_2)\in \Lambda_{i_1}, v=(v_1,v_2) \in \Lambda_{i_2}$ with   $i_1< i_2$ in two separate columns and $|u_2|,|v_2|\le n^{\beta}W_n$, a path $\gamma$ from $u$ to $v$ is called {\bf conforming} if there exists a sequence of times $t_{1},\ldots,t_{i_2-i_1}$ such that 

\begin{itemize}
    \item $t_0=0,t_{i_2-i_1+1}=1$.
    \item For $1\leq i \leq i_2-i_1$ we have $\gamma(t_i)=(in,y_i)$ with $|y_i|\leq n^\beta W_n$.
    \item For $0\leq i \leq i_2-i_1$ we have $\gamma([t_i,t_{i+1}]) \subset \Lambda_i$.
\end{itemize}
The weight of a conforming path is
\[
\rX_\gamma = \inf_{\{t_i\}} \sum_{i} X_{\gamma([t_i,t_{i+1}])}^{\Lambda_i}
\]
where the infimum is over choices of $t_i$ satisfying the conforming property.  The restricted distance $\rX_{uv}$ denotes the infimum over all conforming paths.  There may be multiple paths which achieve the infimum (it is easy to see that the infimum is attained), we will select the topmost path as the canonical choice (by planarity, whenever we have two such paths such that one is not above the other, by  appropriately switching from one path to the other at crossing points, we can construct a weight minimising path that lies above both, and hence the topmost path exists), this will be denoted $\gamma_{uv}$ and called the \emph{conforming geodesic} or simply the geodesic between $u$ and $v$.  When there are multiple choices of $t_i$ that achieve the infimum in the topmost path we will fix the topmost values of $t_i$ to be the \textbf{canonical choice}. For any general conforming path, we shall define the points $t_i$ to be its \emph{canonical} intersection with the line $\{x=in\}$. From now on we shall whenever we refer to \emph{the} point where a geodesic intersects a column boundary, we shall always be referring to the canonical choice of the geodesic and its canonical intersection with the column boundaries.

{
Even with the canonical choice, it is still a downside to the definition of a conforming path that the conforming geodesic may intersect the line $x=in$ for  a segment of positive measure.  We say a path is \emph{strongly conforming} if it intersects each line $x=in$ in (at most) one point.  The following lemma shows that any conforming path can be approximated with arbitrary accuracy be strongly conforming paths, this will be useful in the later constructions and analysis.}

\begin{lemma}\label{l:strongly.conforming}

{
Let $\gamma(t)=(\gamma_1(t),\gamma_2(t))$ be a conforming path such that $\gamma_1(0)<\gamma_1(1)$.  For any $\epsilon>0$ there exists a strongly conforming path $\gamma'(t)=(\gamma'_1(t),\gamma'_2(t))$ such that $\gamma(0)=\gamma'(0),\gamma(1)=\gamma'(1)$ and $\gamma_2(t)=\gamma'_2(t)$ for all $t\in[0,1]$ and
\[
\cX_\gamma\geq \cX_{\gamma'}-\epsilon.
\]
}
\end{lemma}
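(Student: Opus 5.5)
The plan is to keep the second coordinate fixed and perturb only the first coordinate, pulling each column piece of $\gamma$ slightly toward the centre line of its column. This moves every point that lies on a column boundary at an interior time of its piece strictly into the open column, while the canonical crossing points stay where they are.

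\textbf{Setup.} Fix the canonical times $0=t_0<t_1<\dots<t_{m}<t_{m+1}=1$ for $\gamma$, where $m=i_2-i_1$, and relabel the columns so that the $j$-th piece $\gamma([t_j,t_{j+1}])$ lies in the column $[a_j,b_j]\times\R$ (of width $n$) with weight computed in the field $\omega^{\Lambda_{i_1+j}}$. Let $c_j$ be the centre of that column, so $\gamma_1(t)\in[a_j,b_j]$ on $[t_j,t_{j+1}]$. We may assume the infimum defining $\cX_\gamma$ is attained at these $t_j$, or within $\epsilon/3$ of it.

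\textbf{Construction.} For $\delta\in(0,1)$ and small $\tau>0$, let $\eta_j$ be the tent function on $[t_j,t_{j+1}]$,
$$\eta_j(t)=\min\Big(1,\tfrac{t-t_j}{\tau},\tfrac{t_{j+1}-t}{\tau}\Big).$$
On $[t_j,t_{j+1}]$ set
$$\gamma'_1(t)=\gamma_1(t)+\delta\eta_j(t)\big(c_j-\gamma_1(t)\big),\qquad \gamma_2'=\gamma_2.$$

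\textbf{Checking the conforming properties.} Since $\gamma'_1$ is a convex combination of $\gamma_1(t)\in[a_j,b_j]$ and $c_j$, the piece stays in the closed column. For $t\in(t_j,t_{j+1})$ the weight on $c_j$ is $\delta\eta_j(t)>0$, so $\gamma'_1(t)\in(a_j,b_j)$. At the times $t_j$ nothing changes, so the endpoints and crossing points are preserved and $\gamma'$ is continuous. The $y$-coordinates $y_j=\gamma_2(t_j)$ are unchanged, so the constraint $|y_j|\le n^\beta W_n$ still holds. Consequently $\gamma'$ meets each line $x=in$ only at the single time $t_j$ where it crosses that line, and it is strongly conforming.

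\textbf{Length bound.} On a piece,
$$\dot\gamma'_1=(1-\delta\eta_j)\dot\gamma_1+\delta\dot\eta_j\,(c_j-\gamma_1).$$
Since $|c_j-\gamma_1|\le n/2$ and $\int|\dot\eta_j|\le 2$, this gives
$$\mathrm{len}(\gamma')\le \mathrm{len}(\gamma)+(m+1)\,\delta n.$$
Moreover $|\gamma'(t)-\gamma(t)|\le \delta n/2$ for all $t$.

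\textbf{Weight bound.} Compute the weight of $\gamma'$ using the same times $t_j$; this gives an upper bound for $\cX_{\gamma'}$. On piece $j$, bound $|X^{\Lambda}_{\gamma'}-X^{\Lambda}_{\gamma}|$ by
$$\sup_{|x-x'|\le\delta n/2,\ x,x'\in K}|\Psi^{\Lambda}(x)-\Psi^{\Lambda}(x')|\cdot \mathrm{len}(\gamma)+d_2\cdot(\text{extra length}),$$
where $K$ is a compact set containing $\gamma$ and $\Psi^{\Lambda}$ is the field on that column. Each $\Psi^{\Lambda}$ is smooth (indeed $\psi$ has bounded derivative and $\Xi$ is smooth), so it is uniformly continuous on $K$. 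There are finitely many pieces, so the total error tends to $0$ as $\delta\to0$, and choosing $\delta$ small gives $\cX_\gamma\ge\cX_{\gamma'}-\epsilon$.

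\textbf{Main obstacle.} The point needing care is regularity. If $\gamma$ is only of bounded variation rather than piecewise $C^1$, the length computation should be done via the total-variation formula; the inequality $\mathrm{TV}(\gamma')\le \mathrm{TV}(\gamma)+(m+1)\delta n$ still holds because $\eta_j$ is Lipschitz with total variation $2$. Alternatively, one can first approximate $\gamma$ by a piecewise $C^1$ path through the definition of $X_\gamma$ for BV paths, which costs a further $\epsilon/3$.
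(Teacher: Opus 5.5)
Your proposal is correct and is essentially the paper's argument: the paper also keeps $\gamma_2$ fixed and, on each column piece, contracts $\gamma_1$ toward the column centre by the factor $1-\delta+2\delta|t-\tfrac12|$, which is your $1-\delta\eta_j$ with a pure tent in place of your trapezoid, and then lets $\delta\to0$. Your explicit length and continuity estimate, and your choice of crossing times that nearly attain the infimum defining $\cX_\gamma$, only fill in details the paper leaves implicit. One caveat: your fallback of first approximating $\gamma$ by a piecewise $C^1$ path would change $\gamma_2$, which the statement requires to stay fixed, so use the total-variation route instead.
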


The proof of this lemma is given in Appendix \ref{s:proxy}.

For the rest of this paper we shall work with the modified distance $\rX_{uv}$. The following lemma guarantees $\rX_{uv}$ is a sufficiently good proxy for $X_{uv}$ so that it suffices to prove the chaos bounds described in the previous section for the restricted distance $\rX_{uv}$.

\begin{lemma}
\label{l:proxy}
There exists $\epsilon>0$ such that for all integers $M\geq 1$ and all $n\geq n(M)$ we have that
\begin{align*}
\P\Big[\sup_{u,v \in [0,nM]\times[-\frac12 n^\beta W_n,\frac12 n^\beta W_n]} |X_{uv}-\rX_{uv}| \geq n^{-\epsilon} Q_n\Big] \leq \exp(-n^{\theta_1}),\\
{\P\Big[\sup_{u,v \in [0,nM]\times[- n^\beta W_n, n^\beta W_n]} X_{uv}-\rX_{uv} \geq n^{-\epsilon} Q_n\Big] \leq \exp(-n^{\theta_1}),}
\end{align*}
where the supremum is taken over all pairs that are not vertical boundary pairs.
\end{lemma}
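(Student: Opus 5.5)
Write $S=[0,nM]\times[-n^\beta W_n,n^\beta W_n]$ and $S'=[0,nM]\times[-\frac12 n^\beta W_n,\frac12 n^\beta W_n]$. The proof compares two things: the original field $\omega$ with the column fields $\omega^{\Lambda_i}$, and arbitrary paths with conforming ones. Since $K$ is supported on the unit ball, $\Psi$ computed from $\omega^{\Lambda_i}$ agrees with $\Psi$ computed from $\omega$ on $\{x\in\Lambda_i:\mathrm{dist}(x,\partial\Lambda_i)\ge 1\}$. The two fields can differ only in vertical strips of width $2$ around the lines $x=in$, and there both take values in $[d_1,d_2]$.

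\textbf{Step 1: the boundary strips cost little.} I would show that for any conforming path $\gamma$ from $u$ to $v$,
\[
|\rX_\gamma-X_\gamma|\le (d_2-d_1)\,\mathrm{len}(\gamma\cap \text{strips}).
\]
Geodesics (both kinds) have length at most $(d_2/d_1)|u-v|\le Cn M$, by comparing with the straight line. A geodesic could still wander along a strip for a long time, though. I would rule this out on a high-probability event: a geodesic segment of Euclidean length $\ell$ spent inside a width-$2$ strip of height $\ell$ gains nothing systematically. One option is to apply Lemma~\ref{l:strongly.conforming} together with a straightening argument to replace the segment by a nearly straight crossing of the strip, which costs $O(1)$ per crossing. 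The other is to bound the number of crossings by a union bound over a grid, using Proposition~\ref{p:para}. The simplest route: show each column boundary is crossed $O(1)$ times in total length $O(n^{\beta'})$ with $n^{\beta'}\ll n^{-\epsilon}Q_n$. This is possible because $Q_n\ge n^{\alpha_*}$ and there are only $M$ boundaries.

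\textbf{Step 2: $X_{uv}\le \rX_{uv}+n^{-\epsilon}Q_n$ on $S$.} This is the one-sided statement. Take the conforming geodesic and evaluate it in $\omega$. By Step 1, its weight changes by at most $M\cdot O(\text{strip length})$. I need a high-probability bound on the strip length, say $n^{\epsilon'}$ with $\epsilon'<\alpha_*$. To get it, I would cover each strip segment $\{in\}\times[-n^\beta W_n,n^\beta W_n]$ by unit boxes. Using the upper bound $X\le d_2\,\cdot$length and the lower bound $X\ge d_1\,\cdot$length, a path lingering for length $L$ inside a strip can be shortcut by the vertical chord. Replacing a lingering piece by a straight crossing plus a vertical segment along the boundary, inside a single column, changes the weight by a controlled amount. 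The cleanest version: restrict to strongly conforming near-geodesics, which is allowed by Lemma~\ref{l:strongly.conforming}. These hit each line once. Then bound the length within distance $1$ of the line via smoothness and boundedness of $\Psi$: a strongly conforming minimizer near a line behaves like a local geodesic of a $C^1$-bounded metric, so its length in the strip is $O(1)$ unless it is nearly tangent. Tangency events I would control with a union bound and a stretched-exponential tail.

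\textbf{Step 3: the reverse inequality on $S'$.} Here the true geodesic may backtrack across columns or leave $S$. Theorem~\ref{t:tfold}, applied at scale $nM$ together with a union bound, keeps $\gamma_{uv}$ inside $S$ with probability $1-\exp(-n^{c})$ for $u,v\in S'$, since $n^\beta W_n\gg W_{nM}$. Backtracking is handled by planarity: if the geodesic crosses $x=in$ several times, replace the excursion between its first and last crossings by the vertical segment along the line. That vertical segment could be long, so I would instead use Theorem~\ref{t:tfold} and Lemma~\ref{c:local.trans} locally. With high probability, all geodesics between points of $S$ at horizontal distance $\le 2$ near a boundary have vertical extent $O(n^{\epsilon''})$, by a Euclidean comparison: $X\ge d_1|\cdot|$. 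The final crossing structure therefore changes the weight by at most $O(Mn^{\epsilon''})$ per boundary.

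\textbf{Main obstacle.} The main difficulty is controlling how much time geodesics spend near the column boundaries, uniformly over all pairs $u,v$ with only stretched-exponential failure probability. I would discretize the endpoints on a $n^{-10}$-grid, use Lipschitz continuity of distances ($|X_{uv}-X_{u'v}|\le d_2|u-u'|$) to pass to all pairs, and union bound over polynomially many pairs.
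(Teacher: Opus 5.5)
Your Step 1 inequality is correct, but Steps 2 and 3 both rest on an estimate you never prove. You need that, with probability $1-\exp(-n^{c})$ and uniformly over pairs, the conforming geodesic and the true geodesic spend Euclidean length within distance $1$ of the lines $x=in$ that is at most a power of $n$ smaller than $n^{-\epsilon}Q_n$. You also need the first and last crossings of each such line by the true geodesic to be equally close. None of your proposed mechanisms gives this.

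\begin{itemize}
\item The chord shortcut only shows that a piece lingering in a strip has length comparable to its vertical extent. It does not bound that extent, since the chord is itself a long vertical run inside the strip.
\item Lemma~\ref{l:strongly.conforming} produces approximants rather than minimizers, and it keeps the vertical coordinate fixed. So it does not reduce the time spent near a line: a path meeting $x=in$ only once can still run parallel to it at distance $10^{-3}$ for a long stretch.
\item Where the column constraint is active, the conforming geodesic is tangent to the boundary by construction. Smoothness and boundedness of $\Psi$ say nothing about how long this lasts, and no estimate in the paper gives a stretched-exponential tail for such ``tangency events''.
\item In Step 3, the claim that geodesics between points at horizontal distance at most $2$ have vertical extent $O(n^{\epsilon''})$ is false as stated: take two far-apart points on the same vertical line. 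Likewise $X\ge d_1|\cdot|$ cannot exclude a large gap between the first and last crossings $(in,y)$, $(in,y')$.
\item Excluding that gap needs a detour-cost argument using concentration at a small mesoscopic scale $n^{\delta}$. Run at the global scale, such an argument only rules out $|y-y'|\gg Q_{Mn}$, which is far above $n^{-\epsilon}Q_n$.
\end{itemize}

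The missing idea is to avoid controlling time near the boundaries at all, which is what the paper does. For $u,v$ in one column it compares $X_{uv}$ with the \emph{unconstrained} passage time $X^{\Lambda_i}_{uv}$ in the field $\omega^{\Lambda_i}$ (Lemma~\ref{l:resamplecompare}).

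\begin{itemize}
\item Since $(X_{uv},X^{\Lambda_i}_{uv})$ is an exchangeable pair, only $X^{\Lambda_i}_{uv}-X_{uv}$ needs an upper bound.
\item To get it, the $X$-geodesic is cut at its last crossing of $x=(i-1)n+\frac14 n^{\delta}$ and its first crossing of $x=in-\frac14 n^{\delta}$. The middle piece has the same weight in both fields.
\item The end pieces join points of a region of diameter $O(n^{\delta})$, with Theorem~\ref{t:tfold} controlling the heights of the cut points. There, Theorem~\ref{t:all} with a union bound puts both $X$ and $X^{\Lambda_i}$ within $Q_{n^\delta}\le n^{-4\epsilon}Q_n$ of Euclidean distance.
\item Combined with the trivial bound $X^{\Lambda_i}_{uv}\le \rX_{uv}$, this gives the in-column form of the second statement.
\item The reverse in-column bound (Lemma~\ref{l:proxy3column}) uses transversal fluctuations for endpoints at distance at least $n^\beta W_n$ from the column boundaries. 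For endpoints closer than that it uses the constrained estimate Proposition~\ref{p:constraine}.
\item Both statements then follow by summing over the $M$ columns along the crossing points of the conforming geodesic, respectively the true geodesic. Theorem~\ref{t:tfold} keeps the true geodesic inside the strip.
\end{itemize}

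Your route might be completed by proving the missing local geodesic estimates, but as written its central step is absent.
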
 
Since $cr\le X_r,\rX_r \leq Cr$ deterministically for some $0<c<C<\infty$, it follows that the versions of Theorem \ref{t:all}, Proposition \ref{p:para}, and Lemma \ref{l:paraexplicit} hold at all length scales $r$ between $n$ and $Mn$, with possible different exponents in the stretched exponential tails. As these results will be extensively quoted throughout the remainder of the paper we shall now state them explicitly.

\begin{proposition}
    \label{p:paraestimateconforming}
    There exist $\theta_2>0, C>0$ such that for all $n\le r\le Mn$ that are integer multiples of $n$ and all integers $i$ with $0\le ir\le Mn-r$ we have
    \begin{enumerate}
        \item[(i)] For $j_1<j_2$ with $|j_1W_r|, |j_2W_r|\le \frac{1}{2}n^{\beta}W_n$ and for $z>0$
        $$\P\left(\max_{u\in \ell_{ir, j_1W_r, (j_1+1)W_r}} \max_{v\in \ell_{(i+1)r, (j_2-1)W_r, j_2W_r}} |\rX_{uv}-|u-v||\ge zQ_r \right) \le  \exp(1-Cz^{\theta_2}).$$
        \item[(ii)] For $j_1<j_2$ with $|j_1W_r|, |j_2W_r|\le n^{\beta}W_n$ and for $z>0$
        $$\P\left(\max_{u\in \ell_{ir, j_1W_r, (j_1+1)W_r}} \max_{v\in \ell_{(i+1)r, (j_2-1)W_r, j_2W_r}} \rX_{uv}- |u-v|\le -zQ_r \right) \le  \exp(1-Cz^{\theta_2}).$$
    \end{enumerate}

\end{proposition}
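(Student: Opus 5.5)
The plan is to transfer the estimates for the unrestricted metric (Theorem \ref{t:all}, Proposition \ref{p:para}, Lemma \ref{l:paraexplicit}) to $\rX$ at scale $r$, using Lemma \ref{l:proxy}. That lemma is two-sided on the inner window $|y|\le \frac12 n^\beta W_n$ and one-sided ($X_{uv}-\rX_{uv}\le n^{-\epsilon}Q_n$) on the full window. This explains why (i), a two-sided bound, is stated on the smaller window, while (ii), a lower-tail bound for $\rX$, is stated on the larger one. The passage from $X$ to $\rX$ costs at most $n^{-\epsilon}Q_n\le Q_r$, since $Q$ is increasing, outside an event of probability $\exp(-n^{\theta_1})$.

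First I need the unrestricted statement at scale $r$ for segments of length $W_r$ at heights $j_1W_r$ and $j_2W_r$. By translation invariance, and by reflection invariance when $j_2<j_1$, this is the parallelogram $\cP_{1,0,k}$ at scale $r$ with $k=j_2-j_1-1$ (or a neighbouring value). For $k\le r/W_r$, Proposition \ref{p:para} applies directly. For larger $k$ the slope is large, but $|u-v|$ is still at most a constant times $r$, because $|j W_r|\le n^\beta W_n\ll n\le r$ when $\beta$ is small. I would handle this in one of two ways: split $\cP$ into parallelograms of bounded slope at a smaller scale, or note that in the window considered we have $k W_r\le 2n^\beta W_n$, so $k\le 2n^\beta W_n/W_r\ll r/W_r$, which puts us within range anyway. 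The second is cleaner; it needs $n^\beta W_n\le r/W_r\cdot W_r=r$, which holds for small $\beta$.

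Next, on the intersection of the good events, I get $|\rX_{uv}-|u-v||\le |X_{uv}-|u-v||+n^{-\epsilon}Q_n$. Then
$$\P\big(\max|\rX_{uv}-|u-v||\ge zQ_r\big)\le \P\big(\max|X_{uv}-|u-v||\ge (z-1)Q_r\big)+\exp(-n^{\theta_1}).$$
For $z\ge 2$ the first term is at most $\exp(1-C(z/2)^\theta)$. For $z\le 2$ the bound is trivial once the constant is adjusted. The error term $\exp(-n^{\theta_1})$ has to be absorbed into $\exp(1-Cz^{\theta_2})$, which is only possible when $z\le n^{c}$. For large $z$ I will instead use the deterministic bounds $cr\le \rX_{uv}\le C'|u-v|$, together with $|u-v|\le 2r$. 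These force $|\rX_{uv}-|u-v||\le C''r$, so the probability is zero once $zQ_r>C''r$, that is, once $z>C''r/Q_r\ge r^{1/2}/C$. Between $n^{c}$ and this threshold there is a possible gap. I close it by choosing $\theta_2$ small enough that $C z^{\theta_2}\le n^{\theta_1}$ for all $z\le C'' r/Q_r\le Mn$; this works because $M$ is fixed and $n\ge n(M)$, so $\exp(-n^{\theta_1})\le \exp(1-Cz^{\theta_2})$ on that whole range.

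Part (ii) is argued in the same way, now with the one-sided version of Lemma \ref{l:proxy}: $\rX_{uv}\ge X_{uv}-n^{-\epsilon}Q_n$ on the full window. This reduces the lower tail of $\rX_{uv}-|u-v|$ to the lower tail of $X_{uv}-|u-v|$ from Proposition \ref{p:para}; Lemma \ref{l:paraexplicit} gives an even sharper form of that bound.

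The main obstacle is bookkeeping rather than anything conceptual. There are two points to watch: reconciling the scale-$r$ segments with the restriction $0\le k\le r/W_r$ in Proposition \ref{p:para}, and uniformly absorbing the $\exp(-n^{\theta_1})$ error across all $z$ using the deterministic Lipschitz bounds, uniformly in $r\in[n,Mn]$.
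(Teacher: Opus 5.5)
Your proposal is correct and follows the paper's own route. The paper justifies this proposition in one line: Proposition \ref{p:para} at scale $r$ is transferred to $\rX$ via the two-sided (inner window) and one-sided (full window) bounds of Lemma \ref{l:proxy}, and the deterministic bounds $cr\le \rX, X\le Cr$ absorb the $\exp(-n^{\theta_1})$ error for large $z$ at the cost of a smaller exponent. Your write-up supplies exactly these details, including the check that $k=j_2-j_1-1\le 2n^{\beta}W_n/W_r\ll r/W_r$.
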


{We shall also need to control fluctuations of the conforming geodesics. To this end we shall need the following versions of Theorem \ref{t:tfold} and Lemma \ref{c:local.trans} for conforming geodesics. Recall the notation $\Upsilon_{n,v_1,v_2,z}$ from Theorem \ref{t:tfold}. For $w\in \R$, $v_1\in \ell_{0,w,W_n}$, $v_2\in \ell_{Mn,w,W_n}$ let $\Upsilon^{w}_{Mn,v_1,v_2,z}$ denote the set of paths 
\[
\Upsilon^{w}_{Mn,v_1,v_2,z}=\bigg\{\gamma':\gamma'(0)=v_1,\gamma'(1)=v_2, \ \sup_t\inf_{x\in[0,Mn]} |\gamma'(t) - (x,w)|\geq zW_{Mn}\bigg\}.
\]
Let $\gamma_{v_1,v_2}$ denote the conforming geodesic from $v_1,v_2$. We have the following analogue of Theorem \ref{t:tfold} for conforming geodesic. 
\begin{lemma}
    \label{l:proxytrans:intro}
    There exists $D>0, \epsilon>0, \theta_2>0, z_0>0$ such that for all integers $M\geq 1$ and all $n\geq n(M)$ and $z\in [z_0,n^{\epsilon}]$, we have for all $w\in [-\frac{1}{2}n^{\beta}W_n,\frac{1}{2}n^{\beta}W_n]$ we have 
    $$\P(\exists v_1\in \ell_{0,w,W_n}, v_2\in \ell_{Mn,w,W_n} : \gamma_{v_1,v_2}\in \Upsilon^{w}_{Mn,v_1,v_2,z})\le \exp(1-Dz^{\theta_2}).$$
\end{lemma}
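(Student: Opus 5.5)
We will transfer the transversal fluctuation bound of Theorem \ref{t:tfold} (applied at scale $Mn$) from the true metric to the conforming metric. The tools are Lemma \ref{l:proxy}, which says $\rX$ and $X$ differ by at most $n^{-\epsilon}Q_n$ on the relevant region, and Proposition \ref{p:paraestimateconforming}, which supplies the concentration needed to beat that error.

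The key point is that Theorem \ref{t:tfold} is really proved through an energy statement, as in Lemma \ref{c:local.trans}: with probability at least $1-\exp(1-Dz^{\theta_0})$, every path $\gamma'\in\Upsilon^{w}_{Mn,v_1,v_2,z}$ (for every pair $v_1,v_2$) satisfies
\[
X_{\gamma'}\ge X_{v_1v_2}+c z^{c'}Q_{Mn}.
\]
If only the event form of Theorem \ref{t:tfold} is available, we recover the energy form directly. A path that strays distance $zW_{Mn}$ from the segment must cross some vertical line at height at least $zW_{Mn}$. Split the path at that crossing point $x$ and apply Lemma \ref{l:paraexplicit} to each of the two pieces, at scale equal to the piece's horizontal length and with a union bound over the slope index. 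By Pythagoras each piece pays an extra $\gtrsim z^2 Q_{Mn}$, since $W_r^2/r=Q_r$. Proposition \ref{p:para} gives $X_{v_1v_2}\le Mn+O(z)Q_{Mn}$ with the required probability, so the excess is at least of order $z^2Q_{Mn}$. Translation invariance in the vertical direction lets us take $w=0$.

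Now take $v_1,v_2$ and suppose the conforming geodesic $\gamma_{v_1,v_2}$ lies in $\Upsilon^w$. We need two facts.
\begin{itemize}
\item The conforming geodesic stays inside $[0,Mn]\times[-n^\beta W_n,n^\beta W_n]$ by definition. Lemma \ref{l:strongly.conforming} lets us replace it by a strongly conforming path with essentially the same weight, and for such a path we have $X_{\gamma}\le \rX_\gamma+o(Q_n)$. This comparison, $X_\gamma$ versus $\rX_\gamma$ for a path (rather than a point-to-point distance), is exactly what the proof of Lemma \ref{l:proxy} provides.
\item By the second display of Lemma \ref{l:proxy}, $X_{v_1v_2}\ge \rX_{v_1v_2}-n^{-\epsilon}Q_n$.
\end{itemize}
Combining these,
\[
X_{\gamma}\le X_{v_1v_2}+2n^{-\epsilon}Q_n,
\]
which contradicts the energy bound as soon as $cz^{c'}Q_{Mn}>2n^{-\epsilon}Q_n$. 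Since $Q_{Mn}\ge Q_n$, this holds for every $z\ge z_0$. The failure probability is $\exp(1-Dz^{\theta})+\exp(-n^{\theta_1})$, and the restriction $z\le n^{\epsilon}$ lets us absorb the second term into the first after adjusting the exponent $\theta_2$.

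The main obstacle is the path-level comparison $X_\gamma$ versus $\rX_\gamma$, because Lemma \ref{l:proxy} is stated for distances rather than for paths. I would handle it as follows.
\begin{itemize}
\item Note that the fields $\omega^{\Lambda_i}$ and $\omega$ agree on each column except within distance $2$ of the column boundaries, because $K$ is supported on the unit ball.
\item Near the boundaries, modify the path locally so that it crosses each boundary transversally and quickly. This is the content of Lemma \ref{l:strongly.conforming}.
\item The resulting discrepancy is $O(M)$ in total. This is $o(Q_n)$ because $Q_n\ge n^{\alpha_*}$.
\end{itemize}
If this route proves delicate, the fallback is to redo the argument of Theorem \ref{t:tfold} directly for $\rX$. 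That proof uses only the parallelogram estimates, and those hold for $\rX$ by Proposition \ref{p:paraestimateconforming}.
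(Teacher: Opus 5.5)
Your top-level plan is the paper's: an energy form of the transversal fluctuation bound for $X$ (the paper cites \cite[Lemma 5.3]{BSS23}), a path-level comparison between $\rX$ and $X$ along the conforming geodesic, and the distance bound $\rX_{v_1v_2}\le X_{v_1v_2}+n^{-\epsilon}Q_n$. That bound comes from the first, two-sided display of Lemma \ref{l:proxy}; the second display gives the opposite inequality.

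The gap is the step you yourself flag as the main obstacle, and your proposed fix does not work.
\begin{itemize}
\item Lemma \ref{l:strongly.conforming} does not make the path cross the boundary quickly. It keeps the vertical coordinate $\gamma_2(t)$ unchanged and only pulls the path horizontally toward the middle of the column. So it gives no control on how much length the column geodesic $\gamma_i$ spends within distance $1$ of the lines $x=(i-1)n$ and $x=in$, which is exactly where $\Psi$ and $\Psi^{\Lambda_i}$ differ.
\item The per-column discrepancy is at most $(d_2-d_1)$ times that length, and there is no deterministic $O(1)$ bound on it. Nothing prevents a column geodesic from running a long vertical distance along the boundary strip.
\item The proof of Lemma \ref{l:proxy} does not supply this either. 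It compares the distances $X_{u_{i-1}u_i}$ and $\rX_{u_{i-1}u_i}$, not the $X$-weight of the particular conforming path. You need the latter, because membership in $\Upsilon^{w}$ is a property of that path.
\item Conforming paths are not confined to $|y|\le n^\beta W_n$. Only their crossings of the lines $x=in$ are constrained; inside a column they may leave the strip.
\end{itemize}

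The missing ingredient is probabilistic, and it is the content of Lemma \ref{l:proxylocal1}. Fix $\delta$ with $n^{3\delta}\le Q_n$. The claim is that with probability $1-\exp(-n^{\theta_1})$, simultaneously for all entry and exit points $u_{i-1},u_i$ on the column boundaries, the following holds. The points $v_{i-1}$ and $v_i$ where $\gamma_i$ crosses $x=(i-1)n+n^\delta$ and $x=in-n^\delta$ lie within vertical distance $n^\delta$ of $u_{i-1}$ and $u_i$; here $v_{i-1},v_i$ are chosen so that the middle piece stays between these two lines.

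Given this, the comparison follows.
\begin{itemize}
\item The middle piece stays at distance $n^\delta>1$ from the boundary, so its weight is the same in both fields.
\item Each end piece is a geodesic between points at distance $O(n^\delta)$. By $d_1\ell(\cdot)\le X\le d_2|\cdot|$ it has Euclidean length $O(n^\delta)$.
\item The per-column error is therefore $O(n^\delta)\ll n^{-\delta}Q_n$.
\end{itemize}

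The paper proves the height control in three steps:
\begin{itemize}
\item the local transversal fluctuation bound \cite[Lemma 8.2]{BSS23} at scale $n^\delta$, where $W_{n^\delta}=O(n^{3\delta/4})$;
\item Theorem \ref{t:all} together with Proposition \ref{p:constraine}, to produce a competing near-optimal path with small height change near each boundary;
\item a discretization of endpoints combined with the ordering of topmost geodesics, to make the statement uniform over all endpoints.
\end{itemize}

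Your fallback, rerunning the transversal fluctuation argument for $\rX$, does not bypass this as stated. Proposition \ref{p:paraestimateconforming} only controls $\rX$ between points on lines $x=ir$ with $r$ a multiple of $n$, while the large deviation may occur strictly inside a column.
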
}

Notice that by changing the constants if necessary we can assume that $\theta_2$ in Lemma \ref{l:proxytrans:intro} is the same as $\theta_2$ in Proposition \ref{p:paraestimateconforming}.

Next we shall need a version of the local transversal fluctuation result Lemma \ref{c:local.trans}. This result shows that the the fluctuations of the conforming geodesic $\gamma$ from $(0,0)$ to $(Mn,0)$ at a distance $r$ from either endpoint is of the order $W_r$. Let us define the events 

$$A^{R}_{r,z}=\{\exists (r,s)\in \gamma: |s|\ge zW_r\};$$
$$A^{L}_{r,z}=\{\exists (Mn-r,s)\in \gamma: |s|\ge zW_r\}.$$
We have the following lemma. 

\begin{lemma}
\label{l:localtransproxy:intro}
     There exist $\epsilon>0, \theta_2>0, z_0>0$ such that for all integers $M\geq 1$ and all $n\geq n(M)$ and $z\in [z_0,n^{\epsilon}]$ and $r=kn$ where $1\le k \le M^{1/100}$ we have that
     $$\P(A^R_{r,z}), \P(A^{L}_{r,z})\le  \exp(-z^{\theta_2}).$$
\end{lemma}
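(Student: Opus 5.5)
We reduce the statement to its unmodified analogue, Lemma \ref{c:local.trans}, using the proxy estimate Lemma \ref{l:proxy}. By symmetry (reflect $x\mapsto Mn-x$) it suffices to bound $\P(A^R_{r,z})$. On $A^R_{r,z}$, the conforming geodesic $\gamma$ from $(0,0)$ to $(Mn,0)$ passes through some point $(r,s)$ with $|s|\ge zW_r$. We first rule out large global excursions. By Lemma \ref{l:proxytrans:intro} with $w=0$, except on an event of probability at most $\exp(1-Dz'^{\theta_2})$ with $z'=n^{\epsilon/2}$ (say), $\gamma$ stays within $n^{\epsilon/2}W_{Mn}\le \frac12 n^{\beta}W_n$ of the $x$-axis, once $n\ge n(M)$ and $\epsilon$ is small relative to $\beta$. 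Hence all points of $\gamma$ lie in the window where Lemma \ref{l:proxy} applies, and this contributes only $\exp(-n^{c})\le \frac13\exp(-z^{\theta_2})$ for $z\le n^{\epsilon}$.

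On the good event $G$ of Lemma \ref{l:proxy}, we have $|X_{uv}-\rX_{uv}|\le n^{-\epsilon}Q_n$ for all relevant $u,v$. We then argue that $\gamma$ is a near-geodesic for $X$. Write $\gamma$ as a concatenation of column pieces. Using Lemma \ref{l:strongly.conforming}, the weight $X_\gamma$ computed in the original field differs from $\rX_\gamma$ only through the $2$-dependence near the column boundaries. To avoid this, we work directly with endpoints: set $u=(0,0)$, $v=(Mn,0)$, and let $p=(r,s)$. Then
\begin{equation*}
X_{up}+X_{pv}\le \rX_{up}+\rX_{pv}+2n^{-\epsilon}Q_n=\rX_{uv}+2n^{-\epsilon}Q_n\le X_{uv}+3n^{-\epsilon}Q_n .
\end{equation*}
Concatenating $X$-geodesics from $u$ to $p$ and from $p$ to $v$ therefore gives a path $\gamma'$ with $\gamma'(0)=u$ and $\gamma'(1)=v$, passing through $(r,s)$ with $|s|\ge zW_r$, and with excess $X_{\gamma'}-X_{uv}\le 3n^{-\epsilon}Q_n$.

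We now apply Lemma \ref{c:local.trans}, after rescaling, at scale $n$ with $x=0$, $x'=M$, $y=0$, and $k$ chosen so that $2^k\approx r/n\le M^{1/100}$. The endpoint condition $|0|\le \lceil M^{89/100}\rceil W_n$ holds trivially. Since $W_r\asymp (r/n)^{1/2+\cdot}W_n\ge c\,2^{k(1/2+\alpha_*/2)}W_n$, the condition $|s|\ge zW_r$ implies $|s|\ge z''2^{9k/10}W_n$ with $z''\ge c z\,2^{-k(2/5-\alpha_*/2)}$. This is the main obstacle: the exponent $9/10$ in Lemma \ref{c:local.trans} is weaker than the actual scaling of $W_r$. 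Since $k\le \frac{1}{100}\log_2 M$, we lose at most a factor $M^{1/250}$ in $z''$.

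The first fix is to apply the lemma instead at base scale $n_1=r$: take $n$ replaced by $r$, $x'=Mn/r$, and $k=0$, or the smallest allowed $k=1$ at scale $r/2$. In that case $2^{9k/10}$ is an absolute constant, so $z''\asymp z$. The length ratio $Mn/r\ge M^{99/100}$ is large, so the lemma's range condition on $k$ holds. The integer-grid requirements can be met by choosing $r/2$ as the base scale, since $r=kn$ is an integer multiple.

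The lemma then says that every path through such a point has excess at least $\frac{z''^{1/3}}{4000}Q_{r/2}$, except with probability $\exp(1-Dz''^{\theta_0})$. Since $Q_{r/2}\ge Q_n$ while the near-geodesic has excess at most $3n^{-\epsilon}Q_n$, the two are incompatible for $z\ge z_0$. Summing the three error terms and shrinking $\theta_2$ and raising $z_0$ to absorb the constants gives $\P(A^R_{r,z})\le \exp(-z^{\theta_2})$. The case of $A^L$ follows by the symmetric version of Lemma \ref{c:local.trans} noted after its statement.
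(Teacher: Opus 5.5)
Your argument is correct in substance, but it takes a different route from the paper.

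\emph{The paper's route.} The paper proves the more general Lemma~\ref{c:localproxytrans} by quoting \cite[Lemma 8.2]{BSS23}. That result already works at scale $W_r$: every path through some $(r,s)$ with $|s|\ge zW_r$ has excess at least $z^{1/4}Q_r$. The paper then shows that the conforming geodesic $\gamma$ \emph{itself} is an $X$-near-geodesic. Lemma~\ref{l:proxylocal1}, applied in each of the $M$ columns, gives $X_\gamma\le \rX_\gamma+Mn^{-\epsilon}Q_n$, and Lemma~\ref{l:proxy} gives $\rX_\gamma=\rX_{uv}\le X_{uv}+n^{-\epsilon}Q_n$.

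\emph{Your route.} You first localize $\gamma$ with Lemma~\ref{l:proxytrans:intro}. You then replace the path-wise comparison by a point-to-point one: you build the auxiliary $X$-path $u\to p\to v$ through the bad point, using only Lemma~\ref{l:proxy} and the splitting inequality $\rX_{up}+\rX_{pv}\le\rX_{uv}$ for $p\in\gamma$. Finally, you extract the $W_r$-scale bound from the stated Corollary~\ref{c:local.trans} by running it at base scale $r/2$ with $k=1$. This cleanly removes the $2^{9k/10}$ versus $W_r$ mismatch you identified.

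Your route relies only on results stated in the paper and avoids Lemma~\ref{l:proxylocal1}. The paper's route is shorter, needs no localization, and handles general endpoints $(in,jW_n)$ at once.

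One justification needs correcting. At base scale $r/2$ the right endpoint forces $x'=Mn/(r/2)=2M/k$, which is not an integer for general $k\le M^{1/100}$. Your remark that $r=kn$ is an integer multiple concerns the wrong ratio. Integrality is automatic when $M$ and $r/n$ are powers of $2$, as in the application. Otherwise a small patch is needed, for example:
\begin{itemize}
\item End the auxiliary path at the last crossing of $\gamma$ with the line $x=\lfloor 2M/k\rfloor r/2$. By Lemma~\ref{l:proxytrans:intro}, its height lies in the corollary's endpoint window with probability $1-\exp(-M^{c})$, which suffices for $z\le M^{2}$.
\item For $z>M^{2}$, use the global bound directly: $zW_r\ge zM^{-7/8}W_{Mn}\ge z^{9/16}W_{Mn}$.
\item For very small $M$, the admissible range of $k$ in Corollary~\ref{c:local.trans} is empty; these cases are also covered by the global bound.
\end{itemize}
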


We shall also need a stronger version of this local transversal fluctuation estimate which will consider the local transversal fluctuation estimate near the right end point of an initial segment of $\gamma$ and the left endpoint of a terminal segment of $\gamma$; see Lemma \ref{c:localproxytrans}.

Observe that Lemma \ref{l:proxytrans:intro} and Lemma \ref{l:localtransproxy:intro} are not immediate from Lemma \ref{l:proxy} and the corresponding transversal fluctuations results in the original model from \cite{BSS23}. Proofs of these results will be provided in Appendix \ref{s:proxytrans}. 

{We shall also need the constrained passage time estimate for the restricted distance. The following analogue of Proposition \ref{p:constraine} will be proved in Appendix \ref{s:proxytrans}.
\begin{proposition}
    \label{p:constrainerX}
    There exist $C,\theta_6>0$ such that for all $r$ sufficiently large and all $z\ge 0$ we have 
    $$\P\left(\inf_{\substack{\gamma'(0)=(0,0)\\ \gamma'(1)=(r,0)\\ \gamma'\subset [0,r]\times[0,W_r]}} \rX_{\gamma'}\ge r+zQ_r\right)\le \exp(1-Cz^{\theta_6}).$$
\end{proposition}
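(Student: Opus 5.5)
Proposition \ref{p:constrainerX} will be deduced from its unrestricted counterpart, Proposition \ref{p:constraine}, by comparing restricted and unrestricted weights of paths confined to the strip $[0,r]\times[0,W_r]$. The key observation is that for a path $\gamma'$ contained in $[0,r]\times[0,W_r]$ and crossing the column boundaries $x=in$, its restricted weight $\rX_{\gamma'}$ is computed column by column in the fields $\omega^{\Lambda_i}$. Inside $\Lambda_i$ at distance more than $1$ from its vertical boundaries, the field $\Psi$ computed from $\omega^{\Lambda_i}$ agrees with the one computed from $\omega$, because $K$ is supported on the unit ball. The two weights can therefore differ only on the pieces of $\gamma'$ lying in the strips $\{|x-in|\le 1\}$.

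The plan is as follows, in order.

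\textbf{Step 1.} Take the optimal confined path $\gamma^*$ for the unrestricted problem. Using Proposition \ref{p:constraine}, we know that with probability at least $1-\exp(1-Cz^{\theta_6})$ it has weight at most $r+\frac z2 Q_r$.

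\textbf{Step 2.} Modify $\gamma^*$ so that it crosses each strip $\{|x-in|\le 1\}$ efficiently. Since the confinement region is $[0,r]\times[0,W_r]$, we can replace each maximal excursion of $\gamma^*$ inside such a strip, between its first entry and last exit, by a straight horizontal segment plus a vertical segment. The extra cost is bounded by $d_2$ times the vertical displacement, which is not directly small.

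A cleaner route is to use Lemma \ref{l:strongly.conforming}'s idea together with a direct bound instead. Any path in the strip has restricted weight at most $(d_2/d_1)$ times its length in the strips, and the strips contribute only at boundary crossings. So it suffices to find, with high probability, a near-optimal confined path whose total length inside $\bigcup_i\{|x-in|\le1\}$ is at most $C(r/n+1)\cdot \mathrm{polylog}$. Actually the number of strips is $r/n$, which may be as large as $M$ but is bounded.

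\textbf{Step 3.} Force short crossings by a surgery. In each strip, replace the portion of $\gamma^*$ between its first hit of $x=in-1$ and its last hit of $x=in+1$ by the segment joining these two points within the rectangle. The deviation of these points in height is at most $W_r$, which is too large. To handle this, choose the surgery points using local transversal control. By Lemma \ref{l:proxy} (or Proposition \ref{p:para} applied at scale $n$), the unrestricted weights between points at height difference $h$ across width $O(1)$ cost at least $c\,h$. Hence an optimal path cannot make long vertical excursions inside a strip of width $2$ without paying proportionally, and replacing such an excursion by a vertical segment costs at most $d_2/d_1$ times what the original paid. Summing over the at most $r/n+1$ strips, the restricted weight of the surgered path is at most $(d_2/d_1)$ times the unrestricted weight of $\gamma^*$ restricted to strips, plus $O(r/n)$. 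The strip portions should typically contribute $O(r/n)$, which is negligible compared to $Q_r\ge r^{\alpha_*}$ once $r$ is large relative to $M$.

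\textbf{Main obstacle.} The hard step is controlling the total strip contribution of the confined optimal path with stretched exponential tails. For this I would use a union bound over a discretization of heights in each strip, together with the boundedness $d_1\le\Psi\le d_2$, to show that with probability at least $1-\exp(-r^{c})$ any near-geodesic confined path spends $O(\log r)$ length in each strip. If this fails, the fallback is to repeat the original proof of Proposition \ref{p:constraine} from Appendix \ref{s:proxy} verbatim for $\rX$. That proof would use Proposition \ref{p:paraestimateconforming} and Lemma \ref{l:proxytrans:intro} in place of Proposition \ref{p:para} and Theorem \ref{t:tfold}: concatenate conforming geodesics across $r/2^k$-scale sub-parallelograms staying inside the strip, with a chaining sum of errors $\sum_k 2^k Q_{r/2^k}\lesssim Q_r$ via the growth bounds in Theorem \ref{t:all}.
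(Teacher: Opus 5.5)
Your main route, deducing the statement from Proposition~\ref{p:constraine} by surgery near the lines $x=in$, has a genuine gap. It sits exactly at the step you flag as the main obstacle.

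\begin{itemize}
\item Because $d_1\le\Psi\le d_2$ gives only a multiplicative comparison, replacing the strip portions of the $X$-optimal confined path $\gamma^*$ can cost up to $(d_2/d_1-1)$ times their total weight.
\item So you need $\gamma^*$ to spend total length $O(Q_r)$ within distance $1$ of the column boundaries, with stretched exponential failure probability.
\item You also need $\gamma^*$ never to return to a strip after reaching the next one. Otherwise your windows (first hit of $x=in-1$ to last hit of $x=in+1$) overlap, the surgery is not well defined, and the output need not be conforming.
\item Boundedness of $\Psi$ plus a union bound cannot deliver this. It shows vertical travel of height $h$ costs at least $d_1h$ everywhere, but says nothing about whether the optimal path chooses to make such a move inside a strip.
\end{itemize}

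What is actually needed is a small-scale geometric estimate for \emph{confined} $X$-geodesics near fixed vertical lines. A steep excursion of height $h$ loses order $h$ compared with spreading the displacement out, and this must beat fluctuations of order $Q_h\ll h$ uniformly over locations, while also coping with the boundary of the rectangle (the endpoints lie on it). The estimates you cite do not provide this, and you do not supply it.

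Your fallback is the paper's proof. The paper reruns the proof of Proposition~\ref{p:constraine} for $\rX$, with Proposition~\ref{p:paraestimateconforming} in place of Theorem~\ref{t:all} and Lemma~\ref{l:proxytrans:intro} in place of Theorem~\ref{t:tfold}. The confined path is assembled from unconstrained conforming geodesics shown to stay inside the rectangle, so confined geodesics never need to be analysed.

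However, the bookkeeping you describe is wrong: $\sum_k 2^kQ_{r/2^k}\lesssim Q_r$ is false. Theorem~\ref{t:all} gives $Q_{r/2^k}\ge 2^{-k(1/2+\delta)}Q_r$, hence $2^kQ_{r/2^k}\ge 2^{k(1/2-\delta)}Q_r$. Indeed, concatenating $2^k$ geodesics at scale $r/2^k$ between fixed intermediate points already has mean excess of order $2^kQ_{r/2^k}\gg Q_r$.

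The construction that works never uses a number of pieces growing with $r$ at a single scale:
\begin{itemize}
\item In Lemma~\ref{l:consp2p} one concatenates only $K=z^{1/10}$ geodesics between endpoints lying well inside a slightly enlarged rectangle, each with budget $(z/K)Q_r$.
\item To reach the corner $(0,0)$, one uses a single piece per dyadic scale, from $u_{k+1}$ to $u_k=(r/2^k,\tfrac12W_{r/2^{k-1}})$, with budgets proportional to $z2^{-\alpha_*k/2}Q_r$ summing to $O(zQ_r)$.
\item A straight segment of length $O(Q_r)$ covers the last stretch near the corner.
\item The endpoint $(r,0)$ is then reached through $(r/2,W_r/2)$ by reflection and the triangle inequality.
\end{itemize}
The total error is therefore of order $\sum_kQ_{r/2^k}\lesssim Q_r$, not $\sum_k2^kQ_{r/2^k}$. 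With this correction your fallback proves the proposition.
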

}

\subsection{Completing the proof of Theorem \ref{t:theorem}}
The main result we need for the restricted distance is the following theorem.

\begin{theorem}\label{t:varianceM.growth}
There exists a constant $M_0$ such that for $M\geq M_0$ and all sufficiently large $n$ (depending on $M$),
\[
\Var(\rX_{Mn}) \leq \frac12 M\Var(\rX_{n}).
\]
\end{theorem}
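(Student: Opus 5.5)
Here $\rX_{Mn}$ and $\rX_n$ denote the restricted passage times from $(0,0)$ to $(Mn,0)$ and to $(n,0)$. I would prove the statement by a block-martingale (Kesten-type) decomposition combined with a disorder-chaos estimate for the conforming geodesic at the block scale.

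\textbf{Step 1 (martingale decomposition).} Tile $[0,Mn]\times[-n^{\beta}W_n,n^{\beta}W_n]$ by the rectangles $\Lambda_{ij}$ of size $n\times W_n$. Order the blocks and let $\cF_k$ be generated by the noise $\omega_*$ restricted to the first $k$ blocks; the randomness outside the strip can be absorbed, since Lemma~\ref{l:proxytrans:intro} makes its contribution negligible. Write $\Var(\rX_{Mn})=\sum_k \E[\Delta_k^2]$ with $\Delta_k=\E[\rX_{Mn}\mid\cF_k]-\E[\rX_{Mn}\mid\cF_{k-1}]$.

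Next use the semigroup interpolation representation, in the form of Chatterjee's block version. Let $\omega^{t}$ be obtained by resampling each block independently with probability $1-e^{-t}$. Then
\[
\Var(\rX_{Mn}) = \int_0^\infty e^{-t}\sum_{B}\E\big[D_B\rX_{Mn}\, D_B\rX_{Mn}^{t}\big]\,dt ,
\]
where $D_B$ is the effect of resampling block $B$. By the FKG property and exchangeability, we may reduce to the increasing direction. Then $D_B\rX_{Mn}$ vanishes unless the geodesic $\gamma$ passes through $B$, and it is bounded by the increase of a crossing time of $B$. So each summand is at most
\[
\E\big[Y_B\,Y_B^{t}\,\mathbf 1\{B\cap\gamma\ne\emptyset,\ B\cap\gamma^{t}\ne\emptyset\}\big],
\]
where $Y_B$ is the maximal change in passage time across $B$ (or a bounded neighbourhood of $B$).

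\textbf{Step 2 (size of a single block contribution).} By Proposition~\ref{p:paraestimateconforming} and Proposition~\ref{p:constrainerX}, $Y_B$ has stretched exponential tails at scale $Q_n$. Hence $\E[Y_B^2]\le C Q_n^2\le C'\Var(\rX_n)$, using $Q_n=\Theta(\mathrm{SD}(\rX_n))$, which follows from Theorem~\ref{t:all} and Lemma~\ref{l:proxy}. Hölder's inequality then gives
\[
\Var(\rX_{Mn})\le C\Var(\rX_n)\int_0^\infty e^{-t}\,\E\big[\#\{B: B\cap\gamma\neq\emptyset,\ B\cap\gamma^{t}\ne\emptyset\}\big]^{1-\delta}\,M^{\delta}\,dt .
\]
This uses that the number of blocks hit by $\gamma$ is $O(M)$ with good tails, by Lemma~\ref{l:proxytrans:intro} and the transversal fluctuation estimates.

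\textbf{Step 3 (chaos).} The remaining input is the block chaos statement: for every $\kappa>0$ and $\epsilon>0$ there is $M_0$ such that for $M\ge M_0$ and $n$ large,
\[
\E\#\{B:B\cap\gamma\ne\emptyset,\ B\cap\gamma^{\kappa}\ne\emptyset\}\le\epsilon M .
\]
Small $t$, where the overlap is trivially $O(M)$, contribute only $O(tM)$ to the integral. Choosing the cutoff $t_0$ small and then $\epsilon$ small gives $\Var(\rX_{Mn})\le (C t_0+C\epsilon^{1-\delta})M\Var(\rX_n)\le \tfrac12 M\Var(\rX_n)$.

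\textbf{Main obstacle.} Proving the chaos statement is the hard part, and I expect it to need a multi-scale percolation argument. The idea is to show that at many intermediate scales $r=kn$, resampling a $\kappa$-fraction of blocks creates, with probability bounded below, a "bad" region near $\gamma$ that forces $\gamma^{\kappa}$ to detour by order $W_r$ and decouple. The lower-bound ingredients are the FKG inequality together with Lemma~\ref{l:paraexplicit} and Lemma~\ref{l:localtransproxy:intro}. Decoupled pieces then overlap only on a vanishing fraction of blocks. The step I would attempt first is to show that at a single scale, $\gamma$ and $\gamma^\kappa$ separate by $W_r$ with probability $\ge c(\kappa)$. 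I would then iterate this over the $\log M$ available scales to push the overlap fraction below $\epsilon$.
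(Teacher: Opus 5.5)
Your overall architecture matches the paper's. The semigroup-interpolation identity in Step~1 is an acceptable substitute for the paper's Doob martingale with uniform reveal times $T_{ij}$, with your cutoff $t_0$ playing the role of the split at time $1-\kappa$. The reduction to the increasing direction is also valid, but it comes from exchangeability of the original and replacement values in $B$, which gives $\E[ab]\le 2\E[a^-b^-]$ for the two block differences. FKG plays no role there.

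\textbf{The gap in Step~2.} The increase $(\rX^{B}_{Mn}-\rX_{Mn})^+$ is controlled only by the change in the crossing time of column $i$ between the geodesic's own entry and exit points, at heights $J_{i-1},J_i$. These may be many block-heights away from $B$, because of large vertical jumps or overhangs. A local $Y_B$ built from crossings of $B$ or of a fixed neighbourhood of $B$ does not dominate this. Taking a maximum over all possible entry and exit heights instead destroys the scale $Q_n$.

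Even with a valid majorant, $\E\bigl[\sum_{B\cap\gamma\neq\emptyset}Y_B^2\bigr]\le CMQ_n^2$ does not follow from per-block tails plus a count of blocks, because the blocks are selected by the environment. The count itself is also not supplied by Lemma~\ref{l:proxytrans:intro}: it bounds the maximal height of $\gamma$, not its total vertical variation $\sum_i|J_i-J_{i-1}|$.

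The paper fills exactly this hole with the polymer estimate Proposition~\ref{p:perc1}, for path sums whose tails worsen with $|k-k'|$, and the $\tau_2$ bound Proposition~\ref{p:tau2perc}. Together these yield Proposition~\ref{p:uij.bounds}. The Efron--Stein sum along $\gamma$ is at most $C_1M\Var(\rX_n)$, and the fourth-power sum has stretched-exponential tails above $C_1MQ_n^4$. The fourth-power bound is what your H\"older step needs, since your factor $M^\delta$ comes from a H\"older inequality over the $O(M)$ blocks hit by $\gamma$.

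The $M$-independence of $C_1$ is essential. It fixes $t_0$ (the paper's $\kappa=1/\lceil 4C_1\rceil$) before $M\to\infty$. Any $\log M$ loss would force $t_0\to 0$ with $M$, outside the range of the fixed-$\kappa$ chaos bound.

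\textbf{Step~3 is a heuristic that misses the central difficulty.} A single-scale statement that $\gamma$ and $\gamma^\kappa$ separate with probability at least $c(\kappa)$ cannot simply be iterated. What is needed is Theorem~\ref{t:Pplus}: at every scale $r_\ell=\Phi^\ell n$, separation occurs at a $\delta_0$-fraction of any $\Phi$ consecutive columns along the geodesic, with failure probability $M^{-90}$. A union bound over positions and the $\ell_{\max}\asymp\log M/(\log_2\log_2 M)^5$ scales then leaves an unseparated fraction $(1-\delta_0)^{\ell_{\max}}$.

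The obstruction is that the local event must hold at the geodesic's own, environment-dependent, location. Its key ingredient is a channel about $\sqrt{\alpha}\,W_r$ below $\gamma$ that is not too good before resampling but very good after. This is Lemma~\ref{l:gadget}, obtained by an intermediate-value argument over $\kappa^{-1}$ interpolation steps and coupled with the $\cM$ event. That ingredient is neither likely nor monotone, so neither a percolation bound nor FKG applies to it.

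The paper handles this in two steps. Barrier events force $\gamma$ through the central region and $\gamma'$ either through the channel or away altogether (Lemma~\ref{c:path.sepatated}). A Glauber-type resampling of the relevant regions is then performed, under which the geodesic is provably unchanged (Section~\ref{s:glauber}). Your sketch has no substitute for these devices, so the chaos input, and with it the theorem, remains unproved.
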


The rest of the paper is devoted to the proof of Theorem \ref{t:varianceM.growth}. Before proceeding to describe how this is achieved we quickly show how this implies Theorem \ref{t:theorem} as sketched in the previous section.

The following result is immediate from Theorem \ref{t:varianceM.growth} and Lemma \ref{l:proxy}. 

\begin{theorem}\label{t:varianceM.growthoriginal}
There exists a constant $M_0$ such that for $M\geq M_0$ and all sufficiently large $n$,
\[
\Var(X_{Mn}) \leq \frac{3M}{5}\Var(X_{n}).
\]
\end{theorem}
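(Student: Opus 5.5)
The plan is to transfer the variance comparison of Theorem \ref{t:varianceM.growth} from $\rX$ to $X$. By Lemma \ref{l:proxy}, the two passage times are uniformly close at both scales $n$ and $Mn$ on an event of overwhelming probability. Since the variances are of order $Q_n^2$, which is only polynomially large, a perturbation of size $n^{-\epsilon}Q_n$ is negligible.

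Concretely, write $X_{Mn}=\rX_{Mn}+D_{Mn}$ with $D_{Mn}:=X_{Mn}-\rX_{Mn}$, and similarly $D_n:=X_n-\rX_n$. The endpoints $\origin$, $(n,0)$ and $(Mn,0)$ lie in $[0,nM]\times[-\frac12 n^\beta W_n,\frac12 n^\beta W_n]$ and are not vertical boundary pairs. So Lemma \ref{l:proxy} gives $|D_{Mn}|,|D_n|\le n^{-\epsilon}Q_n$ off an event $B$ with $\P(B)\le 2\exp(-n^{\theta_1})$. For $D_n$ one may apply Lemma \ref{l:proxy} with $M=1$, or simply use the same $M$, since $(n,0)$ lies in the box. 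On $B$ we use the deterministic bounds $cr\le X_r,\rX_r\le Cr$, which give $|D_{Mn}|\le CMn$. Hence
\[
\E[D_{Mn}^2]\le n^{-2\epsilon}Q_n^2+C^2M^2n^2\cdot 2e^{-n^{\theta_1}}\le 2n^{-2\epsilon}Q_n^2
\]
for $n\ge n(M)$, and the same bound holds for $\E[D_n^2]$.

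Next I use the elementary inequalities $\sqrt{\Var(X_{Mn})}\le\sqrt{\Var(\rX_{Mn})}+\sqrt{\E D_{Mn}^2}$ and $\sqrt{\Var(\rX_n)}\le\sqrt{\Var(X_n)}+\sqrt{\E D_n^2}$. Theorem \ref{t:all} gives $\Var(X_n)\ge cQ_n^2$, so $\sqrt{\E D_n^2}\le C'n^{-\epsilon}\sqrt{\Var X_n}$, and therefore $\Var(\rX_n)\le(1+C'n^{-\epsilon})^2\Var(X_n)$. Combining this with Theorem \ref{t:varianceM.growth} yields
\[
\sqrt{\Var X_{Mn}}\le \sqrt{\tfrac M2}\,(1+C'n^{-\epsilon})\sqrt{\Var X_n}+C'n^{-\epsilon}\sqrt{\Var X_n}.
\]
Squaring, $\Var(X_{Mn})\le(\frac M2+o_n(1)\cdot M)\Var(X_n)$. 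Since $\frac12<\frac35$, this is at most $\frac{3M}{5}\Var(X_n)$ once $n$ is large depending on $M$.

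There is no real obstacle here. The only points needing care are checking that the hypotheses of Lemma \ref{l:proxy} apply at both scales with the same $n$, and using the lower bound $\Var(X_n)\gtrsim Q_n^2$ from Theorem \ref{t:all} to absorb the additive $n^{-\epsilon}Q_n$ errors multiplicatively. The error at scale $Mn$ is measured in units of $Q_n$, not $Q_{Mn}$, so it is also small relative to $\Var(X_n)$. That is exactly the comparison we need.
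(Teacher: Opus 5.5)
Your argument is correct and is essentially the paper's proof. Lemma \ref{l:proxy}, together with the deterministic bound $X_r,\rX_r\le Cr$ on the exceptional event, controls $\E(X_r-\rX_r)^2$ at $r=n,Mn$, and then Theorem \ref{t:varianceM.growth} plus $\Var(X_n)=\Theta(Q_n^2)$ absorbs the small additive errors. The only difference is cosmetic: you compare standard deviations via Minkowski's inequality, whereas the paper bounds $|\Var(X_r)-\Var(\rX_r)|\le \Var(X_r-\rX_r)+2\sqrt{\Var(X_r)\Var(X_r-\rX_r)}$, which additionally uses the crude upper bound $\Var(X_{Mn})=O(M^2Q_n^2)$.
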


\begin{proof}
    Since for $r=n, Mn$ by Cauchy-Schwarz inequality we have

    \begin{eqnarray*}
       |\Var(X_{r})- \Var (\rX_r)|  &\le & \Var(X_r-\rX_r)+2|\mbox{Cov}(X_r,X_r-\rX_r)|\\
       &\le &\Var(X_r-\rX_r)+2\sqrt{\Var(X_r)\Var(X_r-\rX_r)}
    \end{eqnarray*}
    Since for these two values of $r$, we deterministically have $0\le X_r, \rX_r \le Cr$ for some $C>0$ we have by Lemma \ref{l:proxy}
    $$\Var(X_r-\rX_r)\le \E(X_r-\rX_r)^2 \le n^{-2\epsilon}Q_n^2+C^2M^2n^2\exp(-n^{\theta_1})\le n^{-\epsilon}Q_n^2$$
    by taking $n$ sufficiently large. From Theorem \ref{t:all} it also follows that 
    $\Var(X_r)\le C^2M^2Q_n^2$ for some $C>0$ and hence 
    $\sqrt{\Var(X_r)\Var(X_r-\rX_r)}\le CMn^{-\epsilon/2}Q_n^2\le n^{-\epsilon/4}Q_n^2$ by choosing $n$ sufficiently large. Combining these estimates we get that for $r=n,Mn$
    $$|\Var(X_{r})- \Var (\rX_r)|\le n^{-\epsilon/8}Q_n^2$$
    for all $n$ sufficiently large. The result now follows from Theorem \ref{t:varianceM.growth} and the properties of $Q_n$ listed in Theorem \ref{t:all}.
\end{proof}

We can now complete the proof of Theorem \ref{t:theorem}.

\begin{proof}[Proof of Theorem \ref{t:theorem}]
    Fix $M$ such that the conclusion of Theorem \ref{t:varianceM.growthoriginal} holds for all $n\ge n_0$. It follows from Theorem \ref{t:varianceM.growthoriginal} that for all $k\in \N$ we have 
    
    {    $$\Var X_{M^kn_0} \le \left(\frac{3M}5\right)^k \Var X_{n_0} = M^{k(1-\frac{\log \frac53}{\log M})} \Var X_{n_0} \le C(M^k n_0)^{1-\frac{\log \frac53}{\log M}}$$}
for some $C>0$ where the final inequality comes from $\Var X_n \le Cn$ (Kesten's bound). Therefore, Theorem \ref{t:theorem} holds for all $n=M^kn_0$. For $M^{k}n_0< n < M^{k+1} n_0$ the result then follows from properties of $Q_n$ listed in Theorem \ref{t:all} (and the fact that $M$ is fixed). 
\end{proof}

\subsection{Sketch of the Chaos argument}
The rest of the paper is devoted to the proof of Theorem \ref{t:varianceM.growth} (we also provide the proof of Lemma \ref{l:proxy} and several other auxiliary estimates stated above). For the remainder of this paper (except while we are proving Lemmas \ref{l:proxy}, \ref{l:proxytrans:intro}, \ref{l:localtransproxy:intro}) \textbf{we shall only work with conforming paths and restricted distances and therefore we shall not mention these qualifiers explicitly from now on}. As alluded to above, we shall prove Theorem \ref{t:varianceM.growth} by the \emph{chaos implies superconcentration} principle. We shall show that for any arbitrarily small $\kappa>0$, once the randomness in a $\kappa$ fraction of the $n\times W_n$ boxes $\Lambda_{ij}$ are replaced by an independent copy, then the expected number of such blocks that the geodesic from $(0,0)$ to $(Mn,0)$ passes through both before and after the resampling is $o(M)$. To make a precise statement we need to introduce some notation.  

Recall that we are working with an enhanced random field $\omega_*$ over $\Z\times\R^2$.  Let us also define a second independent copy $\comega$ over $\Z\times\R^2$.  We will be interested in the effect of resampling part of the field $\somega$ and replacing it with $\comega$.  Combined we will write $\bomega = (\somega,\comega)$ as the pair of fields on $\Z\times\R^2$. 

For each $(i,j)\in\Z^2$ we let $T_{ij}$ be IID random variables chosen uniformly in $[0,1]$.  We will partition $\R^2$ into blocks of size $n\times W_n$, we denote $\Lambda_{ij}^+ = \{i\}\times \R\times [(j-1)W_n,jW_n]$. For $t\in (0,1)$, let us define the random field $\omega_t$ by 

\begin{equation}\label{eq:omega.interpolation}
\omega_t(\Lambda_{ij}^+) = \begin{cases}
\somega(\Lambda_{ij}^+) &\hbox{if } T_{ij}\leq 1-t,\\
\comega(\Lambda_{ij}^+) &\hbox{if } T_{ij}> 1-t,
\end{cases} 
\end{equation}
which corresponds to replacing blocks $\Lambda_{ij}^+$ each independently with probability $t$. For some fixed $\kappa$ to be determined later we let $\cX'$ denote the restricted distance with respect to the field $\omega_\kappa$. 

{Note that the field $\omega_{*}$ on $\Lambda_{ij}^+$ only affects the restricted distance within $\Lambda_i$ and so only $\{i\} \times [(i-1)n-1,in+1] \times [(j-1)W_n,jW_n])$  is actually relevant.  Changing the field in $\Lambda_{ij}^+$ only affects paths that pass through the block $[(i-1)n,in]\times [(j-1)W_n-1,jW_n+1]$.}

We shall estimate the variance of $\rX_{Mn}$ by revealing the field block by block and considering the associated Doob martingale. We will let $\cF_t$ be the filtration 
\[
\cF_t=\sigma\bigg(\Big\{ T_{ij},\bomega(\Lambda_{ij}^+):T_{ij}\leq t\Big\}\bigg).
\]
We let $\cU_{ij}$ denote the event that the geodesic (in the $\rX$ distance) $\gamma$ from $\origin$  to $(Mn,0)$ passes within distance 1 of $\Lambda_{ij}:=[(i-1)n,in]\times [(j-1)W_n, jW_n]$ within $\Lambda_i$, i.e.,
\[
\cU_{ij} = \Big\{ d(\gamma\cap \Lambda_i,\Lambda_{ij}) \leq 1 \Big \}.
\]
The value of $\rX_\gamma$ is only affected by the field in $\Lambda_{ij}^+$ if $\gamma$ passes through $[(i-1)n,in]\times [(j-1)W_n-1,jW_n+1]$.  So on the event $\cU_{ij}^c$, replacing $\somega(\Lambda_{ij}^+)$  with $\comega(\Lambda_{ij}^+)$ won't change the value of $\rX_\gamma$.

In order to analyze the Doob martingale $\E[\rX_{Mn}\mid \cF_{t}]$ it will be useful to consider the field with only $\somega(\Lambda_{ij}^+)$ updated.  We define the field $\somega^{ij}$ by
\[
\somega^{ij} (\Lambda_{ij}^+) = \comega(\Lambda_{ij}^+), \qquad \somega^{ij} ((\Lambda_{ij}^+)^c) = \somega((\Lambda_{ij}^+)^c),
\]
and let $\rX^{ij}$ be conforming distances with respect to $\somega^{ij}$.  Note that on the event $\cU_{ij}^c$ we have that
\begin{equation}\label{eq:resampling.decreasing}
\rX^{ij}_{Mn} - \rX_{Mn} \leq 0
\end{equation}
since the optimal conforming path for $\somega$ does not come within distance $1$ of $\Lambda_{ij}$ and so $$\rX_{Mn}=\rX_{\gamma}=\rX^{ij}_{\gamma}\geq \rX^{ij}_{Mn}.$$

{From this point onward we shall work with some fixed but large integer $M$ (that will actually be taken to be a power of a power of $2$ depending on several other parameters to be defined later) and some sufficiently large $n$ depending on all parameters as well as $M$. In all our subsequent estimates, whenever we mention some constants it would be understood that the constants will be independent of $n$ and $M$ but they could depend on the other parameters. This will not be explicitly mentioned each time.}

\subsubsection{Proof of Theorem \ref{t:varianceM.growth}: Chaos Bounds}
In order to establish the variance bound on $\rX_{Mn}$ in Theorem \ref{t:varianceM.growth} we will need two key propositions. The first one is the following. 
\begin{proposition}\label{p:uij.bounds}
There exist constants $C_1,C_2, \theta_3>0$ such that
\begin{align*}
\E\Bigg[ \sum_{i=1}^M \sum_{j=-M}^M  I(\cU_{ij})(\rX^{ij}_{Mn} - \rX_{Mn})^2 \Bigg] &\leq C_1 M \Var(\rX_{n}), \\
\P\Bigg[ \sum_{i=1}^M \sum_{j=-M}^M  I(\cU_{ij})((\rX^{ij}_{Mn} - \rX_{Mn})^+)^4  \geq (C_1 M + z) Q_n^4 \Bigg] &\leq \exp(1-C_2 z^{\theta_3}),\\
\P\bigg[\sum_{i=1}^M \Big(\sum_{j=-M}^M  I(\cU_{ij})\Big)^2 \geq C_1 M + z \bigg] &\leq \exp(1-C_2 z^{\theta_3}).
\end{align*}
\end{proposition}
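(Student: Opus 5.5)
The plan is to control each resampling effect $\rX^{ij}_{Mn}-\rX_{Mn}$ by the passage-time fluctuation of a single $n\times W_n$ block, and then sum over blocks. Let $\gamma$ be the geodesic from $\origin$ to $(Mn,0)$, and let $\gamma$ enter and exit column $\Lambda_i$ at $(\,(i-1)n,y_{i-1})$ and $(in,y_i)$; these are the canonical boundary intersections. Call the event that the geodesic stays in the strip $|y|\le zW_{Mn}$ the \emph{good event}; by Lemma \ref{l:proxytrans:intro} its complement has stretched exponentially small probability.

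\textbf{Step 1: per-column upper bound.} Resampling the field in column $i$ only changes the weights of path pieces inside $\Lambda_i$. Hence we can compare with the competitor path that agrees with $\gamma$ outside $\Lambda_i$ and uses, inside $\Lambda_i$, the new geodesic from $(\,(i-1)n,y_{i-1})$ to $(in,y_i)$. This gives
\[
(\rX^{ij}_{Mn}-\rX_{Mn})^+\le \rX^{ij}_{u_iv_i}-\rX_{u_iv_i},\qquad u_i=((i-1)n,y_{i-1}),\ v_i=(in,y_i).
\]
The right-hand side is at most
\[
D_i:=\max_{u,v}\bigl(\,|\rX^{ij}_{uv}-|u-v||+|\rX_{uv}-|u-v||\,\bigr),
\]
where the maximum runs over $u,v$ on the column boundaries within a window around $(y_{i-1},y_i)$.

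To handle the dependence on the random location of $\gamma$, fix a grid of windows of height $W_n$ on both boundaries, with slope index $k=|j_2-j_1|$, and take a union bound using Proposition \ref{p:paraestimateconforming}(i),(ii). The slope penalty $\Theta(k^2)Q_n$ from Lemma \ref{l:paraexplicit} shows that steep windows are unlikely to carry the geodesic. The outcome is that $D_i/Q_n$ has stretched exponential tails, uniformly in $i$, up to a $\log M$ factor from the union bound. Removing that factor is addressed in Step 3.

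\textbf{Step 2: counting blocks per column.} To bound $N_i=\sum_j I(\cU_{ij})$, note that within a column the geodesic crosses a vertical distance $|y_i-y_{i-1}|$. Each additional block of $W_n$ it traverses costs extra length. Concretely, by Lemma \ref{l:paraexplicit}, a column segment whose vertical range spans $k$ blocks costs at least $n+ck^2Q_n$, while a direct route costs $n+O(Q_n)$. So $N_i\ge k$ forces a deviation of size $k^2Q_n$, and $\P(N_i\ge k)\le \exp(-ck^{2\theta})$. Note this must account for the full vertical range inside the column, including excursions, not just the displacement $|y_i-y_{i-1}|$. Controlling that range uses Theorem \ref{t:tfold} applied to the column in place of the slope alone.

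\textbf{Step 3: summing over columns.} The columns use independent fields $\omega^{\Lambda_i}$. The geodesic location is global, but we can dominate it: bound $N_i$ and $D_i$ by the worst case over all window pairs $(j_1,j_2)$ in column $i$, weighted by the penalty for slope $|j_1-j_2|$ and the penalty for being far from the origin. These column-maxima are independent across $i$ and have stretched exponential tails. Concentration for sums of independent variables with stretched exponential tails then gives the second and third bounds: $\sum_i N_i^2$ and $\sum_i N_iD_i^4$ are at most $C_1M+z$ outside probability $\exp(-cz^{\theta_3})$.

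The first bound follows by taking expectations and using $Q_n^2=\Theta(\Var\rX_n)$. For the unsigned square in the first bound, we use exchangeability of $\somega$ and $\comega$ on $\Lambda_{ij}^+$. On $\cU_{ij}$, the negative part $(\rX_{Mn}-\rX^{ij}_{Mn})^+$ is bounded the same way, by swapping the roles of the two fields. This uses the geodesic of $\somega^{ij}$; we restrict to $\cU_{ij}$ and use $\cU_{ij}^c$ together with \eqref{eq:resampling.decreasing} to handle the remaining case.

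\textbf{Main obstacle.} The main difficulty is the uniformity over windows. The union over $O(M^2)$ window pairs per column must not cost a $\log M$ factor. I would try to absorb it using the slope penalty together with global transversal fluctuation control. The control of $y_i$ is only at scale $W_{Mn}$, which is much larger than $W_n$. So the weighting in $j$ must come from the column-local slope cost, not from where the geodesic sits globally, which keeps the variables in Step 3 independent with uniform tails.
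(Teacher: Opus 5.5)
Your Step 1 (rerouting inside $\Lambda_i$ between the canonical entry and exit points) and your exchangeability argument for the first bound agree with the paper. Step 3, however, has a genuine gap. It is exactly the obstacle you flag, and your proposed fix does not resolve it.

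\textbf{Why column maxima fail.} What has to be controlled is $\sum_i F_i(J_{i-1},J_i)$. The functions $F_i(k,k')$ are independent across $i$, but the indices are chosen by the global geodesic. You could dominate $F_i(J_{i-1},J_i)$ by a column maximum $G_i=\max_{k,k'}[F_i(k,k')-\phi(k,k')]$, which does give independent variables. But if the weight depends only on the slope $|k-k'|$, then $G_i$ is still a maximum over all vertical positions $k$. It is therefore of order a power of $\log M$ (or of $\log n$), not $O(1)$.

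A position penalty does not help either. The only a priori control on $J_i$ is at scale $W_{Mn}/W_n\ge M^{1/2}$, so any penalty you can sum along the geodesic to $O(M)$ must be essentially flat over at least $M^{1/2}$ positions. The $\log M$ loss then comes back, and no weighting makes the column variables both independent and uniformly stretched-exponential.

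Union-bounding directly over coarse paths $\uk$ fails as well. There are at least $e^{cM}$ relevant paths, while with only stretched-exponential tails a fixed path exceeds $CM$ with probability merely about $e^{-cM^{\xi}}$.

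\textbf{What the paper uses instead.} The missing ingredient is the path-maximization (polymer) estimate, Proposition~\ref{p:perc1}. Suppose the $\mathcal{V}_{i,k,k'}$ are independent in $i$, with tails that degrade like $1+|k-k'|^{1+\delta}$. Then the maximum of $\sum_i\mathcal{V}_{i,k_{i-1},k_i}$ over all $\uk$ with $k_0=0$ and $\tau_2(\uk)\le RM$ is $O(M)+z$, with a stretched-exponential tail in $z$.

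Its proof splits values into dyadic levels $2^j$. At each intermediate level it coarse-grains $\uk$ at mesoscopic scale $w_j=2^{7j/10}$, recording only coarse heights and the positions and sizes of large jumps. The resulting entropy $\exp(cRM\log w_j/w_j^2)$ is then beaten by a Binomial/Chernoff bound on the number of level-$j$ exceedances.

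This is combined with the a priori bound $\tau_2(\gamma)\le C_7M+z$ (Proposition~\ref{p:tau2perc}). That bound itself comes from the same polymer estimate, applied to $-(Z_{i,k,k'}-r)/Q_r+(k-k')^2/32$. You need $\tau_2$ control, not just $\tau_1$, because the per-column quantities degrade with $|k-k'|$: the number of blocks spanned, and the sum of resampling effects over $j$ between $k$ and $k'$.

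\textbf{Step 2 misplaces the cost.} A column crossing from $u_i$ to $v_i$ should be compared with $|u_i-v_i|$, not with $n$. So the displacement $|J_i-J_{i-1}|$ is not penalized locally at all; it is controlled only globally, through $\tau_2$. Only the overhang beyond $[J_{i-1}\wedge J_i,J_{i-1}\vee J_i]$ is a local quantity. The paper controls it by sandwiching the column segment between geodesics joining equal heights on the two sides of the column, and then applying Lemma~\ref{l:overhang}.
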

An immediate corollary is a block version of Kesten's bound, which shows that in going from scale $n$ to scale $Mn$, the variance grows by a factor of $O(M)$.
\begin{corollary}
\label{c:kesten}
For some $C>0$ we have that
\[
\Var(\rX_{Mn}) \leq CM Q_n^2.
\]
\end{corollary}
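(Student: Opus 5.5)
We derive Corollary \ref{c:kesten} from the first bound of Proposition \ref{p:uij.bounds} through the Doob martingale along the filtration $\cF_t$, in the spirit of Kesten's argument recalled in the introduction.

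\emph{Step 1: variance as a sum over blocks.} We reveal blocks in the order of the i.i.d.\ uniform times $T_{ij}$. Since $\rX_{Mn}$ depends on infinitely many blocks, we first truncate to the blocks with $1\le i\le M$ and $|j|\le M$. Blocks with $|j|>M$ have $|j|W_n\gg n^\beta W_n$ for suitable $\beta$, so their effect on $\rX_{Mn}$ (through the conforming restriction) is controlled by Lemma \ref{l:proxytrans:intro}. That lemma shows the geodesic stays within $O(W_{Mn})\le M^{7/8}W_n$ of the axis except with stretched-exponentially small probability, and this contributes a negligible error. The Efron--Stein inequality, which is equivalent to summing the squared martingale increments, then gives
\[
\Var(\rX_{Mn})\le \frac12\sum_{i,j}\E\big[(\rX^{ij}_{Mn}-\rX_{Mn})^2\big]+\text{(negligible)}.
\]

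\emph{Step 2: reduce to the event $\cU_{ij}$.} The pair $(\somega,\somega^{ij})$ is exchangeable. Hence
\[
\E[(\rX^{ij}_{Mn}-\rX_{Mn})^2]=2\E[((\rX^{ij}_{Mn}-\rX_{Mn})^+)^2].
\]
By \eqref{eq:resampling.decreasing}, the positive part vanishes on $\cU_{ij}^c$. Therefore
\[
\E[(\rX^{ij}_{Mn}-\rX_{Mn})^2]\le 2\E[I(\cU_{ij})(\rX^{ij}_{Mn}-\rX_{Mn})^2].
\]
Summing over $i,j$ and applying the first bound of Proposition \ref{p:uij.bounds} yields $\Var(\rX_{Mn})\le C_1M\Var(\rX_n)+o(Q_n^2)$.

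\emph{Step 3: compare $\Var(\rX_n)$ with $Q_n^2$.} By Lemma \ref{l:proxy} and the computation in the proof of Theorem \ref{t:varianceM.growthoriginal}, $\Var(\rX_n)=\Var(X_n)+O(n^{-\epsilon/8}Q_n^2)$. Theorem \ref{t:all} gives $\Var(X_n)=\Theta(Q_n^2)$, so $\Var(\rX_n)\le CQ_n^2$, which completes the argument.

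The only delicate point is the truncation in Step 1. If the proposition's sum over $|j|\le M$ is meant to capture all relevant blocks, then Lemma \ref{l:proxytrans:intro} is needed to show that the complementary terms contribute $o(Q_n^2)$ in total. This uses the tail bound on transversal fluctuations together with the crude bound $|\rX^{ij}_{Mn}-\rX_{Mn}|\le CMn$.
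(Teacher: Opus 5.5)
Your skeleton is the paper's: Efron--Stein over the blocks, exchangeability of $\rX_{Mn}$ and $\rX^{ij}_{Mn}$ to pass to the positive part, \eqref{eq:resampling.decreasing} to insert $I(\cU_{ij})$, the first bound of Proposition~\ref{p:uij.bounds} for $|j|\le M$, and $\Var(\rX_n)\le CQ_n^2$. The gap is in the step you flag yourself, the blocks with $|j|>M$.

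\textbf{The premise is false.} You say these blocks satisfy $|j|W_n\gg n^\beta W_n$, but $n$ is chosen large after $M$ is fixed, so $n^\beta\gg M$. The blocks with $M<|j|\le n^\beta$ therefore lie inside the strip where conforming paths cross column boundaries, and their contribution has to be estimated.

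\textbf{The proposed estimate does not close.} For $M<|j|\lesssim n^{\epsilon}W_{Mn}/W_n$, Lemma~\ref{l:proxytrans:intro} gives $\P[\cU_{ij}]\le C\exp(-|jM^{-9/10}|^{\epsilon_1})$. This decays in $M$ but not in $n$. Multiplying by the deterministic bound $(CMn)^2$ gives a total of order $M^{O(1)}n^2e^{-M^{c}}$. The target is $CMQ_n^2\le C'Mn$. Since $n\to\infty$ with $M$ fixed, your bound is not even $O(MQ_n^2)$. The crude bound only works in the far range $|j|\gtrsim n^{\epsilon}W_{Mn}/W_n$, where $\P[\cU_{ij}]\le \exp(-n^{c})$ by monotonicity in $z$; also $\cU_{ij}$ is empty once $|j|W_n\gg Mn$. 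The intermediate range is the problem.

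\textbf{The fix.} You need to control $\rX^{ij}_{Mn}-\rX_{Mn}$ at the fluctuation scale rather than the Lipschitz scale. The paper does this in \eqref{eq:var.decomp.big.trans} using Cauchy--Schwarz, $(x+y)^4\le 8x^4+8y^4$, and $\rX^{ij}_{Mn}\stackrel{d}{=}\rX_{Mn}$:
\[
\E\big[I(\cU_{ij})((\rX^{ij}_{Mn}-\rX_{Mn})^+)^2\big]\le \P[\cU_{ij}]^{1/2}\Big(16\,\E\big[(\rX_{Mn}-\E\rX_{Mn})^4\big]\Big)^{1/2}.
\]
Here $\E[(\rX_{Mn}-\E\rX_{Mn})^4]=O(Q_{Mn}^4)=O(M^{3}Q_n^4)$ by Theorem~\ref{t:all}, transferred to $\rX$ through Lemma~\ref{l:proxy}. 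In that transfer the crude bound is harmless, because the exceptional event has probability $\exp(-n^{\theta_1})$, which does decay in $n$. Summing $\P[\cU_{ij}]^{1/2}$ over $1\le i\le M$ and $|j|>M$ then gives a contribution $O(M^{-1}Q_n^2)$. With this replacement your argument is complete.
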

\begin{proof}
By the Efron-Stein Inequality,
\begin{align*}
\Var(\rX_{Mn}) &\leq   \frac12  \sum_{i=1}^M \sum_{j=-\infty}^\infty  \E[(\rX^{ij}_{Mn} - \rX_{Mn})^2]\\
&= \sum_{i=1}^M \sum_{j=-\infty}^\infty \E[((\rX^{ij}_{Mn} - \rX_{Mn})^+)^2]\\
&= \sum_{i=1}^M \sum_{j=-\infty}^\infty  \E[I(\cU_{ij})((\rX^{ij}_{Mn}-\rX_{Mn})^+ )^2]\\
&\leq CMQ_n^2 + \sum_{i=1}^M \sum_{|j|>M}  \E[I(\cU_{ij})((\rX^{ij}_{Mn}-\rX_{Mn})^+ )^2]
\end{align*}
where the first equality holds by exchangeability of $\rX$ and $\rX^{ij}$, the second equality by~\eqref{eq:resampling.decreasing} and the last inequality follows by the first estimate in Proposition~\ref{p:uij.bounds}.  By Lemma \ref{l:proxytrans} and the fact that $W_{Mn}\ll M^{9/10}W_n$ we have that for $|j|\geq M$,
\begin{equation}\label{eq:trans.Uij}
\P[\cU_{ij}] \leq C\exp(-|j M^{-9/10}|^{\epsilon_1})
\end{equation}
and so
\begin{align}\label{eq:var.decomp.big.trans}
\sum_{i=1}^M \sum_{|j|>M}  \E[I(\cU_{ij})((\rX^{ij}_{Mn}-\rX_{Mn})^+ )^2] 
&\leq  \sum_{i=1}^M \sum_{|j|>M}  \P[I(\cU_{ij})]^{1/2} \Big(16\E(\rX_{Mn}-\E \rX_{Mn})^4]\Big)^{1/2}\nonumber\\
&\leq  \sum_{|j|>M}  C'\exp(-\frac12|j M^{-9/10}|^{\epsilon_1}) Q_n^2 \leq C'' M^{-1} Q_n^2
\end{align}
where the first inequality is by Cauchy-Schwartz and the fact that $(x+y)^4 \leq 8x^4+8y^4$ and the second is that $\E[\E(\rX_{Mn}-\E \rX_{Mn})^4]=O(Q_{Mn}^4)=O(M^3 Q_n)$ by the conditions on $Q_n$ in Theorem~\ref{t:all}. This completes the proof.
\end{proof}

The proof of Proposition \ref{p:uij.bounds}, given in Section \ref{s:1.2proof}, is based on a stretched exponential polymer estimate; see Proposition \ref{p:perc1}. This is more technically complicated than a similar estimate developed in \cite[Proposition 4.1]{BSS23}.

Our ultimate goal will be to show that in fact the growth of the variance is sublinear (i.e., the order $M$ term in Corollary \ref{c:kesten} can be upgraded to $\epsilon M$ for arbitrarily small $\epsilon$) and so we need the following estimate which says that the location of the path is chaotic. 
We show that even for times close to 1, there is still great uncertainty in the location of the path.
\begin{proposition}\label{p:chaos}
For any $\kappa>0,\epsilon>0$ there exist constants $M_0$ such that for $M\geq M_0$, and all $n\ge n_0(M)$
\[
\E\Bigg[ \sum_{i=1}^M \sum_{j=-M}^M  \P[\cU_{ij} | \cF_{1-\kappa}]^2 \Bigg] \leq \epsilon M.
\]
\end{proposition}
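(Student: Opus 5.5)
We will prove Proposition \ref{p:chaos} by showing that, given $\cF_{1-\kappa}$, the conditional law of the geodesic $\gamma$ is spread out, i.e., for most columns $i$ no single block $\Lambda_{ij}$ carries a large conditional probability. Since $\sum_j \P[\cU_{ij}\mid \cF_{1-\kappa}]^2 \le \max_j \P[\cU_{ij}\mid\cF_{1-\kappa}]\cdot \sum_j \P[\cU_{ij}\mid \cF_{1-\kappa}]$, and the third estimate of Proposition \ref{p:uij.bounds} gives $\E\sum_i\sum_j \P[\cU_{ij}\mid\cF_{1-\kappa}] \le \E \sum_i \sum_j I(\cU_{ij}) = O(M)$ with stretched exponential tails, it suffices to show the following. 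For all but an $\epsilon$-fraction of columns $i$ (in expectation), $\max_j \P[\cU_{ij}\mid \cF_{1-\kappa}]\le \delta$ for a suitably small $\delta=\delta(\epsilon)$. Equivalently, take an independent copy $\gamma'$ of the geodesic sampled from the conditional law given $\cF_{1-\kappa}$: it is the geodesic in a field agreeing with $\somega$ on blocks with $T_{ij}\le 1-\kappa$ and freshly resampled on the others. The left side of Proposition \ref{p:chaos} is then $\E\#\{(i,j): \cU_{ij}\text{ holds for }\gamma\text{ and }\gamma'\}$, and the goal is to show that $\gamma$ and $\gamma'$ share blocks in at most $\epsilon M$ columns on average.

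The mechanism is multi-scale. At an intermediate scale $r=Ln$ (with $1\ll L\ll M$, and several scales $L_1\ll L_2\ll\cdots$ if needed), we tile $[0,Mn]$ into columns of width $r$. In each such mesoscopic column the region the geodesics traverse is an $r\times W_r$ parallelogram, which contains about $L\cdot W_r/W_n \gg 1$ small blocks, of which a $\kappa$ fraction are resampled. The key local claim is the following. With probability bounded below by some $p(\kappa)>0$, uniformly over the conditioning, the resampling in a mesoscopic column creates two competing routes through that column whose passage times differ by less than $Q_r$ but whose locations differ by order $W_r$. By Proposition \ref{p:paraestimateconforming} and Lemma \ref{l:paraexplicit}, routes displaced by $O(W_r)$ cost only $O(Q_r)$ extra. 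The freshly resampled blocks contribute an independent fluctuation of order $\sqrt{\kappa}\,Q_r$ (by the block Kesten bound Corollary \ref{c:kesten} in reverse, using the lower bounds on $Q$ from Theorem \ref{t:all}). Hence, conditionally, neither route is preferred with probability close to $1$. We would make this precise by constructing explicit barrier and favourable events via FKG. We increase the field, which lengthens paths, on an $O(W_r)$-wide band. We decrease it along two separated corridors, using Proposition \ref{p:constrainerX} to control constrained passage times along those corridors. The resampled field then decides between the corridors with conditional probability at least $p$ each.

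Given this local claim, we run a percolation/renormalization argument across the $M/L$ mesoscopic columns. The geodesic $\gamma$ must cross every column. Its local transversal fluctuations (Lemma \ref{l:localtransproxy:intro} and Lemma \ref{l:proxytrans:intro}) confine where it can go, so we can restrict to polynomially many candidate entry/exit windows. Among these, a stretched exponential polymer estimate, in the style of Proposition \ref{p:perc1}, shows that the geodesic cannot avoid "good" mesoscopic columns. In all but an $\epsilon$ fraction of columns it passes through a good configuration where its conditional position is split. Once $\gamma$ and $\gamma'$ separate by order $W_r\gg W_n$ in a column, they share no $n\times W_n$ blocks there. Iterating over scales pushes the conditional overlap down to $\delta$.

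The main obstacle is the local claim under arbitrary conditioning. The geodesic is a global object, so showing that resampling inside one mesoscopic column genuinely makes the route choice uncertain requires decoupling that column from the rest of the path. This decoupling must survive the adversarial information in $\cF_{1-\kappa}$ and the entry/exit point dependence. I would first try to handle it by taking the conditional probability maximum over all entry/exit points in $O(W_r)$ windows, using the simultaneous point-to-point control of Proposition \ref{p:paraestimateconforming}. FKG then lowers the favourable event's probability uniformly, and the polymer/percolation estimate absorbs the columns where this fails.
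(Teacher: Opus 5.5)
You have the right reformulation and overall shape: two conditionally i.i.d.\ geodesics, so the left side equals $\sum_{i,j}\P[\cU_{ij}\cap\cU'_{ij}]$, closed off by Cauchy--Schwarz against the third estimate of Proposition~\ref{p:uij.bounds}, plus a multi-scale separation argument. But the step that carries the whole argument fails. Your favourable local configuration (separated corridors, barriers, and a resampled region that decides between them) can only be shown to have probability bounded below by some small $p$, not close to $1$. Polymer estimates such as Proposition~\ref{p:perc1} or Lemma~\ref{l:abstractperc} only show that every admissible path meets few columns where a \emph{rare} bad event occurs. To force every path through a $p$-fraction of good columns, you would need a bound like $e^{-cp\Phi}$ to beat the entropy $e^{c\Phi\log H}$, which fails for small $p$. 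Since $\gamma$ is a function of the field, the configuration it sees is biased, and nothing stops it from systematically missing configurations of probability $p$. Maximising over entry/exit windows does not fix this. Neither does FKG, since your favourable event is not monotone and conditioning on where $\gamma$ goes is not an increasing event in general.

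The missing idea is the paper's split of $\cP_{i,j}$ into two parts. The likely parts are handled by percolation (Lemma~\ref{l:Pminus.perc}). The unlikely parts are handled by Glauber-resampling their regions conditionally on $\cS_{i,j}$, i.e.\ on the likely events holding with the geodesic's position fixed. For the outer barriers, given the outside field, $\cS_{i,j}$ reduces to the increasing event $\cL_{i,j}$ that $\gamma$ avoids the region, so FKG gives conditional probability at least $\delta_C/2$. For the non-monotone central event $\cB_{i,j}$, the deterministic Lemma~\ref{c:path.sepatated} shows that on $\cB_{i,j}$ the geodesic is unchanged, so the conditioning can be dropped; this gives at least $\delta_A\delta_B/32$. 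Exchangeability of the resampling operator then yields Theorem~\ref{t:Pplus}: a $\delta_0$-fraction of \emph{every} window of $\Phi$ columns at \emph{every} scale, with probability $1-M^{-90}$. Your scale iteration needs exactly this window-uniform form, since it drives the coverage $1-(1-\delta_0)^{\ell}$ over $\ell_{\max}\to\infty$ scales. No single scale can give ``all but an $\epsilon$ fraction''.

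The source of the resampling effect is also unjustified. Corollary~\ref{c:kesten} is an upper bound and cannot be reversed into a lower bound on how much the resampled blocks move a passage time. Even an averaged conditional-variance lower bound would not show that each of two near-tied routes wins with conditional probability bounded below. No such statement can hold uniformly over $\cF_{1-\kappa}$ anyway, since the unresampled part can make one route decisively better.

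The paper avoids near-ties altogether. Lemma~\ref{l:gadget} interpolates from $\omega_0$ to $\omega_1$ in $\kappa^{-1}$ steps of $\kappa$-resampling and uses pigeonhole to find a deterministic $\alpha$ with $\P[\cI^+_{i,j,h,-(\alpha-\alpha^{9/10})}\cap\cI^{'-}_{i,j,h,-(\alpha+\alpha^{9/10})}]\ge\delta$. This is a gadget that is a barrier before resampling and a highway after. The height $h$ is chosen so that the robustness event $\cM$ holds. On $\cP_{i,J_i}$ separation is then deterministic: $\gamma$ is Type~1 and $\gamma'$ is Type~3 or~6. Finally, you also need to treat $i\le 2M^{99/100}$ and $i\ge M-2M^{99/100}$ separately, as in Lemma~\ref{l:chaosbasic2}, because both geodesics are pinned at the endpoints.
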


We next prove Theorem \ref{t:varianceM.growth} assuming Proposition \ref{p:uij.bounds} and Proposition \ref{p:chaos}.

\begin{proof}[Proof of Theorem \ref{t:varianceM.growth}]
With $C_1$ as in  Proposition~\ref{p:uij.bounds}, set $\kappa=\frac1{\lceil 4C_1\rceil}$ so that
\begin{equation}\label{eq:simple.var.decomp}
\kappa\E\Bigg[ \sum_{i=1}^M \sum_{j=-M}^M  I(\cU_{ij})(\rX^{ij}_{Mn} - \rX_{Mn})^2 \Bigg] \leq \frac14 M \Var(\rX_{n}).
\end{equation}
First let us write
\begin{align*}
\cF_t^{-ij}&=\sigma\bigg(\Big\{ T_{i'j'},\bomega(\Lambda_{i'j'}):T_{i'j'}\leq t(i',j'), \neq (i,j)\Big\}\bigg),\\
\cF_t^{+ij} &= \sigma\bigg( \cF_t^{-ij} \cup \Big \{\bomega(\Lambda_{ij})\Big\}\bigg),
\end{align*}
which corresponds to $\cF_t$ with $\bomega(\Lambda_{ij})$ removed and added respectively and $T_{ij}$ removed. Setting
\[
H^{ij}(t)=\E[\cX_{Mn} \mid \cF_t^{+ij}] - \E[\cX_{Mn} \mid \cF_t^{-ij}]
\]
which is the change in the conditional expectation if we added information of block $(i,j)$ at time $t$.
Now the Doob martingale $\E[\cX_{Mn} \mid \cF_t]$ is a jump process which changes at times $\{T_{ij}\}$ with jumps $H^{ij}(T_{ij})$.
We can write its total variance as the expected squared sum of the jumps so,
\begin{align}
\Var(\rX_{Mn}) 
& =\sum_{i=1}^M \sum_{j=-\infty}^{\infty} \E\Big[\big(H^{ij}(T_{ij})\big)^2 \Big].
\end{align}
Now let
\[
\widetilde{G}^{ij}(t)=\E[(\rX^{ij}_{Mn}-\rX_{Mn}) \mid \cF_{t}^{+ij}]
\]
and
\[
G^{ij}(t)=\E[I(\cU_{ij})(\rX^{ij}_{Mn}-\rX_{Mn})^+ \mid \cF_{t}^{+ij}].
\]
Observe now that 
 \[
  \widetilde{G}^{ij}(t) = (\E[\rX^{ij}_{Mn} \mid \cF_{t}^{+ij}] - \E[\cX_{Mn} \mid \cF_t^{-ij}])-(\E[\rX_{Mn} \mid \cF_{t}^{+ij}] - \E[\cX_{Mn} \mid \cF_t^{-ij}])
 \]
and notice that the right hand side is the difference of two random variables with the same law as $H^{ij}(t)$ which are conditionally independent given $\cF_t^{-ij}$. It therefore follows that 
$$ \E[(\widetilde{G}^{ij}(t))^2]= 2\E [H^{ij}(t)]^2.$$
Notice next that by exchangeability of $\rX^{ij}_{Mn}$ and $\rX_{Mn}$ it also follows that 

\begin{align*}
\E[(\widetilde{G}^{ij}(t))^2]
&= \E\left[ \big(\E[(\rX^{ij}_{Mn}-\rX_{Mn})^{+}\mid \cF_{t}^{+ij}]-\E[(\rX^{ij}_{Mn}-\rX_{Mn})^{-}\mid \cF_{t}^{+ij}]\big)^2\right]\\
&\leq 2\E\left[ \big(\E[(\rX^{ij}_{Mn}-\rX_{Mn})^{+}\mid \cF_{t}^{+ij}]\big)^2+\big(\E[(\rX^{ij}_{Mn}-\rX_{Mn})^{-}\mid \cF_{t}^{+ij}]\big)^2\right]\\
&= 4\E\left[ \big(\E[(\rX^{ij}_{Mn}-\rX_{Mn})^{+}\mid \cF_{t}^{+ij}]\big)^2\right]\\
&= 4\E\left[ \big(\E[I(\cU_{ij})(\rX^{ij}_{Mn}-\rX_{Mn})^{+}\mid \cF_{t}^{+ij}]\big)^2\right]=4\E[(G^{ij}(t))^2]
\end{align*}
Using \eqref{eq:resampling.decreasing},  it follows that
\[
\E[(G^{ij}(t))^2] \geq 2\E[(H^{ij}(t))^2].
\]
Since  $G^{ij}(t)$ and $H^{ij}(t)$ are independent of $T_{ij}$ we also have that
\[
\E[(G^{ij}(T_{ij}))^2] \geq 2\E[(H^{ij}(T_{ij}))^2].
\]
Hence,
\begin{align}\label{eq:var.decompA}
\Var(\rX_{Mn}) 
& \leq 2\sum_{i=1}^M \sum_{j=-\infty}^{\infty} \E\Big[\big(G^{ij}(T_{ij})\big)^2 \Big]\nonumber\\
& \leq2\sum_{i=1}^M \sum_{j=-M}^{M} \E\Big[\big(G^{ij}(T_{ij})\big)^2 \Big]+ 2\sum_{i=1}^M \sum_{|j|>M} \E\Big[I(\cU_{ij})((\rX^{ij}_{Mn}-\rX_{Mn})^+)^2  \Big]\nonumber\\
& \leq 2\sum_{i=1}^M \sum_{j=-M}^M \E\Big[\big(G^{ij}(T_{ij})\big)^2 \Big] + 2C'' M^{-1} Q_n^2
\end{align}
where the first inequality follows from Jensen's Inequality and the second is by equation~\eqref{eq:var.decomp.big.trans}. Now,
\begin{align}\label{eq:cond.exp.time.comp}
\E\Big[\big(G^{ij}(T_{ij})\big)^2 \Big]
&=\int_0^1 \E\Big[\big(G^{ij}(t)\big)^2 \Big] dt\leq \kappa\E\Big[\big(G^{ij}(1)\big)^2 \Big] + (1-\kappa)\E\Big[\big(G^{ij}(1-\kappa)\big)^2 \Big]\nonumber\\
&= \kappa \E\Big[\big(I(\cU_{ij})(\rX^{ij}_{Mn}-\rX_{Mn})^+ \big)^2 \Big] \nonumber\\
&\qquad +(1-\kappa)\E\Big[\E[I(\cU_{ij})(\rX^{ij}_{Mn}-\rX_{Mn})^+ \mid \cF_{1-\kappa}^{+ij}] ^2 \Big],
\end{align}
where the first equality follows from the fact that $T_{ij}$ is independent of $G^{ij}(t)$ and the first inequality is due to $G^{ij}(t)$ being a martingale so its second moment is increasing.  Also, for any random variable $Y$ measurable with respect to $\bomega$,
{\begin{align*}
\E\Big[\big(\E[Y \mid \cF_{1-\kappa}] \big)^2 \Big] &
= (1-\kappa)\E\Big[\big(\E[Y \mid \cF_{1-\kappa}^{+ij}] \big)^2 \Big] + \kappa \E\Big[\big(\E[Y \mid \cF_{1-\kappa}^{-ij}] \big)^2 \Big]
\end{align*}}
and so
\begin{align}\label{eq:cond.exp.remove.ij}
(1-\kappa)\E\Big[\big(\E[I(\cU_{ij})(\rX^{ij}_{Mn}-\rX_{Mn})^+ \mid \cF_{1-\kappa}^{+ij}] \big)^2 \Big] &
\leq \E\Big[\big(\E[I(\cU_{ij})(\rX^{ij}_{Mn}-\rX_{Mn})^+ \mid \cF_{1-\kappa}] \big)^2 \Big].
\end{align}
Hence by equations~\eqref{eq:simple.var.decomp}, \eqref{eq:var.decompA}, \eqref{eq:cond.exp.time.comp} and \eqref{eq:cond.exp.remove.ij} we have that,
\begin{align}\label{eq:var.decompB}
\Var(\rX_{Mn}) \leq \frac14 M \Var(\rX_{n}) + C'' M^{-1} Q_n^2 + \sum_{i=1}^M \sum_{j=-M}^M \E\Big[\big(\E[I(\cU_{ij})(\rX^{ij}_{Mn}-\rX_{Mn})^+ \mid \cF_{1-\kappa}] \big)^2 \Big]  .
\end{align}
Now
\begin{align}\label{eq:var.decompC}
&\sum_{i=1}^M \sum_{j=-M}^M \E\Big[\big(\E[I(\cU_{ij})(\rX^{ij}_{Mn}-\rX_{Mn})^+ \mid \cF_{1-\kappa}] \big)^2 \Big]\nonumber\\
&\qquad\leq \sum_{i=1}^M \sum_{j=-M}^M \E\Big[\E[I(\cU_{ij})\big((\rX^{ij}_{Mn}-\rX_{Mn})^+ \big)^2 \mid \cF_{1-\kappa}]\E[I(\cU_{ij}) \mid \cF_{1-\kappa}] \Big]\nonumber\\
&\qquad = \sum_{i=1}^M \sum_{j=-M}^M \E\Big[I(\cU_{ij})\big((\rX^{ij}_{Mn}-\rX_{Mn})^+ \big)^2 \P[\cU_{ij} \mid \cF_{1-\kappa}]  \Big]\nonumber\\
&\qquad \leq  \E\Bigg[\bigg(\sum_{i=1}^M \sum_{j=-M}^M I(\cU_{ij})\big((\rX^{ij}_{Mn}-\rX_{Mn})^+ \big)^4\bigg)^{1/2}  \bigg(\sum_{i=1}^M \sum_{j=-M}^M \P[\cU_{ij} \mid \cF_{1-\kappa}] ^2\bigg)^{1/2}  \Bigg]\nonumber\\
&\qquad \leq  \E\Bigg[\bigg(\sum_{i=1}^M \sum_{j=-M}^M I(\cU_{ij})\big((\rX^{ij}_{Mn}-\rX_{Mn})^+ \big)^4\bigg)  \bigg(\sum_{i=1}^M \sum_{j=-M}^M \P[\cU_{ij} \mid \cF_{1-\kappa}] ^2\bigg)  \Bigg]^{1/2}
\end{align}
where the first inequality is by Cauchy-Schwartz, the equality is by the tower property of conditional expectation, the second inequality is another application of Cauchy-Schwartz and the third is by Jensen's inequality.
By the tail bound from Proposition~\ref{p:uij.bounds} we have for some $C_3>0$,
\begin{equation}
\E\Bigg[\bigg(\bigg(\sum_{i=1}^M \sum_{j=-M}^M I(\cU_{ij})\big((\rX^{ij}_{Mn}-\rX_{Mn})^+ \big)^4\bigg) - C_3 M Q_n^4\bigg)^+  \Bigg] \leq M^{-3} Q_n^4
\end{equation}
and so
\begin{align}\label{eq:var.decompD}
&\E\Bigg[\bigg(\sum_{i=1}^M \sum_{j=-M}^M I(\cU_{ij})\big((\rX^{ij}_{Mn}-\rX_{Mn})^+ \big)^4\bigg)  \bigg(\sum_{i=1}^M \sum_{j=-M}^M \P[\cU_{ij} \mid \cF_{1-\kappa}] ^2\bigg)  \Bigg]\nonumber\\
&\qquad \leq \E\Bigg[C_3 M Q_n^4 \bigg(\sum_{i=1}^M \sum_{j=-M}^M \P[\cU_{ij} \mid \cF_{1-\kappa}] ^2\bigg)  \Bigg]\nonumber\\
&\qquad \qquad + \E\Bigg[\bigg(\bigg(\sum_{i=1}^M \sum_{j=-M}^M I(\cU_{ij})\big((\rX^{ij}_{Mn}-\rX_{Mn})^+ \big)^4\bigg) - C_3 M Q_n^4\bigg)^+  3M^2 \Bigg]\nonumber\\
&\qquad \leq C_3 M Q_n^4 \E\bigg[ \sum_{i=1}^M \sum_{j=-M}^M \P[\cU_{ij} \mid \cF_{1-\kappa}] ^2\bigg] + 3M^{-1} Q_n^4
\end{align}
where for the first inequality we have used in the second term that $\P(\cU_{i,j}\mid \cF_{1-\kappa})\le 1$ and there are at most $(2M+1)M$ terms in the sum. Combining equations~\eqref{eq:var.decompB}, \eqref{eq:var.decompC} and \eqref{eq:var.decompD} we have that
\begin{align*}
\Var(\rX_{Mn}) 
\leq \frac14 M \Var(\rX_{n}) + C'' M^{-1} Q_n^2 + \Bigg( C_3 M Q_n^4 \E\bigg[ \sum_{i=1}^M \sum_{j=-M}^M \P[\cU_{ij} \mid \cF_{1-\kappa}] ^2\bigg] + 3M^{-1} Q_n^4 \Bigg)^{1/2}.
\end{align*}
By Proposition~\ref{p:chaos} (applied for $\epsilon$ sufficiently small depending on $C_3$), for all large enough $M$ we have that
\[
\Var(\rX_{Mn}) 
\leq \frac12 M \Var(\rX_{n})
\]
completing the proof.
\end{proof}

It now remains to prove the two key estimates: Proposition \ref{p:uij.bounds} and Proposition \ref{p:chaos}. In the remainder of this subsection we shall give a high level overview of the proof of Proposition \ref{p:chaos} which is the most technically challenging part of this paper. 

\subsubsection{Argument for Proposition \ref{p:chaos}: multi-scale construction}
Recall the random field $\omega_{\kappa}$ constructed in \eqref{eq:omega.interpolation} from $\omega_*$ by replacing the randomness in each block $\Lambda^{+}_{ij}$ independently with probability $\kappa$ by the corresponding randomness in $\comega$. Recall also that we denoted by $\rX'$ the restricted distance functions with respect to the field $\omega_{\kappa}$. We shall, many times through the course of this paper, consider pairs of events which are the same except that one is defined for passage times $\rX$ and the other using the passage times $\rX'$. In such cases, if we denote the former event by $\cA$, we shall use $\cA'$ to denote the latter event. Recall the event $\cU_{ij}$. Let us consider the corresponding event $\cU'_{ij}$, i.e., 
\[
\cU'_{ij} = \Big\{ d(\gamma'\cap \Lambda_i,\Lambda_{ij}) \leq 1 \Big \}
\]
where $\gamma'$ is the conforming geodesic for the restricted distance $\rX'_{Mn}$. Observe now that conditional on $\cF_{1-\kappa}$, $\cU_{ij}$ and $\cU'_{ij}$ are conditionally i.i.d.\, and therefore 

$$\P[\cU_{ij}\cap \cU'_{ij}]=\E[\P[\cU_{ij}\mid \cF_{1-\kappa}]^2].$$
It follows that the left hand side in the inequality in the statement of Proposition \ref{p:chaos} equals 
$$\sum_{i=1}^{M}\sum_{j=-M}^{M} \P[\cU_{ij}\cap\cU'_{ij}].$$
It, therefore, follows that proving Proposition \ref{p:chaos} is equivalent to showing a block version of \emph{disorder chaos} for the conforming geodesic. To show this, we shall treat the values of $i$ close to $0$ and $M$ differently from the values of $i$ in the bulk. It is not particularly difficult to show that (see Lemma \ref{l:chaosbasic2}) given $\epsilon,\kappa>0$ and $M$ sufficiently large, we have for all $n$ sufficiently large 
\begin{equation}
    \label{eq:edge1}
\sum_{i=1}^{2M^{99/100}}\sum_{j} \P[\cU_{ij}\cap\cU'_{ij}]\le \epsilon M/4;
\end{equation}

\begin{equation}
    \label{eq:edge2}
\sum_{i=M-2M^{99/100}}^{M}\sum_{j} \P[\cU_{ij}\cap\cU'_{ij}]\le \epsilon M/4.
\end{equation}
The most difficult part of the construction is to deal with the bulk values of $i$ between $2M^{99/100}$ and $M-2M^{99/100}$. To do this we need a multi-scale argument and look at the paths at many different intermediate length scales $1\ll r \ll M$. 

\noindent
\textbf{Parameters of the multi-scale construction.}
In the set-up of Proposition \ref{p:chaos}, let us fix $\epsilon>0, \kappa\in (0,1)$. Without loss of generality we shall assume $\kappa^{-1}$ is an integer. We shall also choose $M$ sufficiently large such that $\log_2\log_2 M$ is an integer and set 

\begin{equation}
    \label{eq:scale1}
    \Phi=2^{(\log_2\log_2 M)^{5}}.
\end{equation}
We will shall work at a series of scales 

\begin{equation}
    \label{eq:scale2}
   r_\ell = \Phi^\ell n= 2^{\ell(\log_2 \log_2 M)^5}n 
\end{equation}
for $\ell=0,1,\ldots, \ell_{\max}=\frac1{100}\lfloor \frac{\log_2 M}{(\log_2 \log_2 M)^5} \rfloor$. Later, for each length scale $r_{\ell}$, we will define a collection of events, each of which implicitly depends on $n,M,\ell$ but for notational simplicity we will suppress this most of the times. To denote the position at which the geodesic enters the $i$th column at scale $r_\ell$ we set 

\begin{equation}
    \label{eq:jdefn}
J_i^{n,M,\ell}=\lfloor y_{i\Phi^\ell}/W_{r_\ell} \rfloor,
\end{equation}
where $y_i$ is the (canonical) intersection of the geodesic $\gamma$ and the line $x=in$.

\begin{center}
\begin{figure}[htbp!]
\includegraphics[width=5in]{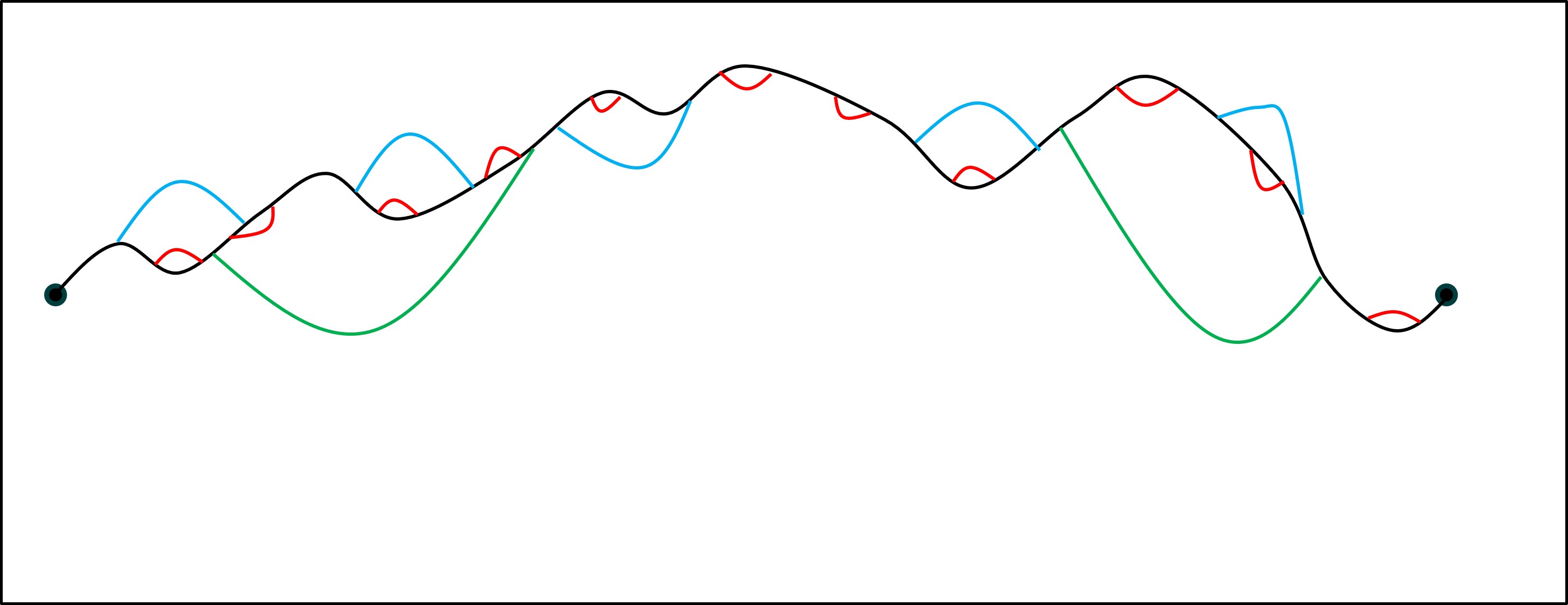}
\caption{Construction of alternative paths at different scales. Proposition \ref{p:chaos} states that once a small fraction of the $n\times W_n$ blocks have their randomness resamples, the expected number of blocks that intersect the geodesic from $\bf 0$ to $(Mn,0)$ both before and after resampling is $o(M)$; this essentially boils down to showing that on most vertical columns of width $n$, the geodesics before and after resampling are separated. To show the latter fact, we prove Theorem \ref{t:Pplusintro} which shows that at every given length scale with large probability there are a constant fraction ($\delta_0$) of locations where the geodesics before and after resampling are separated. In the figure, the black path shows the original geodesic whereas the red, green, and blue paths show alternative paths at different scales which are better then the original geodesic (or any other path passing close to the original geodesic at that location) after the resampling, thus guaranteeing the geodesics before and after are separated at that location. Starting from the largest scale $\ell_{\max}$, (which corresponds to columns of width $n\Phi^{\ell_{\max}}\approx M^{1/100}n$) we find a constant fraction of columns where geodesics (before and after resampling) are separated; then we find another constant fraction of columns at then next scale, and so on, eventually showing that geodesics are separated at most columns since $\ell_{\max}$ was chosen appropriately large.}
\label{f:multiscale}
\end{figure}
\end{center}

Fix $0\le \ell \le \ell_{\max}$. For all $i$ such that $2M^{99/100}n\le ir_{\ell} \le (M-2M^{99/100})n$ and $j$ such that $jW_{r_{\ell}}\in [-2MW_{n},2MW_{n}]$ we shall define an event $\cP_{i,j}^{n,M,\ell}$. The definition of the event $\cP_{i,j}$ spans several pages and is not suitable to elaborate on here.  The key idea is that the event $\cP_{i,j}$ creates two different highways such that if the optimal path passes through the region then it will take one highway before the resampling and a different highway, well separated from the first, after resampling. As a consequence, on the event $\cP_{i,J_{i}}^{n,M,\ell}\cap \{|J_{i}^{n,M,\ell}|\le M^{8/10}\}$ we have that 
$$\sum_{i': ir_{\ell}\le i'n \le (i+1)r_{\ell}}\sum_{j} I(\cU_{i'j}\cap \cU'_{i'j})=0,$$
that is, this event ensures that on the $i$-th column at scale $r_{\ell}$ (which corresponds to the columns indexed by $i'$ satisfying $ir_{\ell}\le i'n \le (i+1)r_{\ell}$ at scale $r_0=n$) the geodesic $\gamma$ before the resampling (i.e., with respect to the distance function $\rX$) and the geodesic $\gamma'$ after the resampling (i.e., with respect to the distance function $\rX'$) do not pass within distance $1$ of the same blocks $\Lambda_{ij}$. The main technical estimate regarding these events is that for each $\ell \le \ell_{\max}$, the event that $P^{n,M,\ell}_{i,J^{n,M,\ell}_i}$ occurs at a positive fraction of consecutive locations everywhere in the bulk with large probability (i.e., the complement of this event has probability going to $0$ as a large negative power of $M$). We now state this result. 

\begin{theorem}\label{t:Pplusintro}
There exists $\delta_0$ and $M_0$ such that for all $M\geq M_0$ and all $n$ sufficiently large and $\ell\le \ell_{\max}$ and $2M^{99/100}n\leq  ir_\ell \leq (M-2M^{99/100})n-\Phi$, 
\[
\P\Bigg[\sum_{i'=i}^{i+\Phi-1} I(\cP_{i',J^{n,M,\ell}_i}^{n,M,\ell}, |J^{n,M,\ell}_{i'}| \leq M^{8/10}) \leq \delta_0\Phi \Bigg] \leq M^{-90}.
\]
\end{theorem}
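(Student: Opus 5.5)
The plan is a percolation-type argument over the $\Phi$ consecutive columns at scale $r_\ell$. Since the events $\cP^{n,M,\ell}_{i',j}$ are not yet defined, I assume they have two features.
\begin{itemize}
\item[(a)] Each $\cP_{i',j}$ is measurable with respect to $\bomega$ and the $T$'s in a bounded neighbourhood of the block of width $r_\ell$ and height $O(W_{r_\ell})$ around $(i',j)$. It only constrains the environment there: two well-separated ``highways'' (constrained paths, as in Proposition \ref{p:constrainerX}), one of which is made favourable by the resampled blocks.
\item[(b)] For every fixed $(i',j)$ with $|j|\le M^{8/10}$, $\P(\cP_{i',j})\ge p_0>0$, uniformly in $M$, $n$ and $\ell$.
\end{itemize}
Feature (b) would be proved by an FKG argument. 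Increasing and decreasing requirements on the field in disjoint subregions are made to hold simultaneously with positive probability using Proposition \ref{p:paraestimateconforming}, Lemma \ref{l:paraexplicit} and Proposition \ref{p:constrainerX}. The independence of the resampling variables $T_{ij}$ makes the resampled region behave as required with probability bounded below, depending on $\kappa$.

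The difficulty is that the relevant index $J_{i'}$ is random and determined by the geodesic, so the indicators $I(\cP_{i',J_{i'}})$ are neither independent nor stationary. I would decouple by a union bound over possible locations. First, by Lemma \ref{l:proxytrans:intro} and the global transversal fluctuation bound $W_{Mn}\le M^{7/8}W_n$, $|J_{i'}|\le M^{8/10}$ fails with probability at most $M^{-100}$, after a further sliding of scales. Second, and crucially, the local transversal fluctuation estimates (Lemma \ref{l:localtransproxy:intro} and its strengthened form, Lemma \ref{c:localproxytrans}) show that across $\Phi$ consecutive columns the geodesic's entry heights change by at most $\Phi^{9/10}$ units of $W_{r_\ell}$ per step, except with small probability. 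So the sequence $(J_{i'})_{i'=i}^{i+\Phi-1}$ is, with high probability, a ``Lipschitz-like'' path in a strip.

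I then define a good set $G$ of locations $(i',j)$ where $\cP_{i',j}$ holds. The event in the theorem fails only if some admissible path $(j_{i'})$ through the strip meets $G$ in fewer than $\delta_0\Phi$ columns. Using (a), I would thin the columns into $O(1)$ residue classes so that events in distinct columns of a class are independent, with the $2$-dependence of the field handled by the enlarged space $\omega_*$. Within a class, a fixed path meets $G$ a $\mathrm{Bin}(\Phi/C,p_0)$ number of times. The probability that a fixed path meets $G$ fewer than $\delta_0\Phi$ times is then $\exp(-c\Phi)$ once $\delta_0<p_0/(2C)$.

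The number of admissible paths is at most $(CM^{8/10})\cdot(C\Phi^{9/10})^{\Phi}=\exp(O(\Phi\log\Phi)+O(\log M))$. This is where the main obstacle lies: $\exp(-c\Phi)$ does not beat $\Phi^{\Phi}$. So I would not union bound naively. Instead I would use a greedy/Peierls bound at a coarser level: group columns into blocks of $K$ columns, with $K$ a large constant, and require $\cP$ to hold on a whole horizontal window of height $\Phi^{9/10}K$ around the path. Concretely, I would define ``good $K$-blocks'' where $\cP_{i',j}$ holds for a positive fraction of $i'$ for \emph{all} $j$ in an interval of length $O(K^{9/10})$, i.e. a mesoscopic block-event. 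Its probability can be made close to $1$ by choosing the $\cP$ events with $p_0$ close to $1$, which is achievable by requiring the highways only with probability tending to $1$ as the internal parameters grow. Then the path count per block is $O(1)$ while the block failure probability is small, so a Peierls sum gives $\exp(-c\Phi/K)\le M^{-90}$, since $\Phi=2^{(\log_2\log_2 M)^5}\gg (\log M)^{C}$.

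Combining this with the transversal fluctuation bounds, which cost $M^{-100}$, yields the claim. The step I expect to be hardest is establishing that $\cP$ holds with probability close to $1$ uniformly over a window of heights, because that requires the highway construction itself to be robust to where the geodesic enters.
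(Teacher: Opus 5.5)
There is a genuine gap at the step where you make the events likely. You correctly see that a union bound over admissible height sequences $(j_{i'})$ only closes if the per-location event fails with probability small enough to beat the entropy. But your claim that $\P(\cP_{i',j})$, or a block version of it, can be pushed close to $1$ by tuning internal parameters is unsupported, and it fails for the ingredients that actually produce separation.

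The central ingredient is the gadget $\cB^{(4)}_{i,j}$. For the geodesic to switch, a detour at height $\approx\sqrt{\alpha}W_r$ must, after resampling, offset its transversal penalty of order $\alpha Q_r$. So $\rX'$ must have a crossing shorter than typical by about $\alpha Q_r$ while $\rX$ has none. By Lemma \ref{l:cI.bound} this has probability at most of order $\exp(-c\alpha^{\theta_5})$, and Lemma \ref{l:gadget} only bounds it below by a small $\delta_A$. The barrier events $\cB^{(7)}_{i,j}$ and $\cD^{(5)}_{i,j}$ are likewise bounded below only by small constants $\delta_B,\delta_C$, which decrease as the barriers are strengthened. (If a single-scale separation event of probability near $1$ were available, one scale would already give Proposition \ref{p:chaos}; the iteration over $\ell$ exists because each scale yields only a fraction $\delta_0$.)

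Your coarse-graining does not get around this. Asking, for each $j$ in a window separately, that $\cP_{i',j}$ hold for a positive fraction of $i'$ says nothing about $\cP_{i',j_{i'}}$ along a sequence that moves within the block. At each step such a sequence can pick one of the $O(1)$ nearby heights where $\cP$ fails. Asking it for all admissible sequences just reproduces the entropy-versus-$p_0$ problem inside each block.

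Two smaller points. First, $\Phi^{9/10}$ bounds the total displacement $|J_{i+\Phi}-J_i|$, not each step. The usable entropy control is $\sum_{i'}|J_{i'+1}-J_{i'}|\le H_1\Phi$ (Lemma \ref{l:tau14.1}), giving $\exp(c\Phi\log H_1)$ sequences, and only events of probability close to $1$ can beat that. Second, your locality assumption (a) fails for the paper's $\cP$, which contains the nonlocal wing conditions $\cW^{loc}_{i,j}$, $\cW^{glo}_{i,j}$ needed to control passage times from $\origin$ to the column.

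The missing idea is the Glauber resampling step. The paper uses your percolation/union-bound argument only for the likely part $\cP^-_{i,j}$ (Lemma \ref{l:Pminus.perc}). There, failure probabilities are made small by taking $L_0$ large, and the unlikely pieces enter only through conditional events such as $\{\P[\cB^{(2)}_{i,j},\cB^{(3)}_{i,j}\mid\bomega(V^c_{i,j})]\ge\frac12\}$.

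For $\cD_{i'\pm2,j}$ and $\cB_{i',j}$, the paper takes $\Delta=\Phi/\xi$ well-separated indices. It resamples the field in $V'_{i'-2,j}$ (resp.\ $V_{i',j}$) conditionally on $\cS_{i',j}$, which pins the geodesic.
\begin{itemize}
\item For the outer barriers, remaining in $\cS_{i',j}$ is equivalent to the geodesic staying away from the resampled region. That is an increasing event there, so FKG gives conditional probability at least $\delta_C/2$.
\item For the non-monotone central event, the path-type analysis (Lemma \ref{c:path.sepatated}) shows that any resampled configuration lying in $\cB_{i',j}$ has a geodesic of Type 1 or 6. Such a geodesic avoids $\hV_{i',j}$ and is therefore unchanged. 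So the conditioning can be dropped, and the resampled configuration lies in $\cB_{i',j}$ with probability at least $\delta_A\delta_B/32$.
\end{itemize}
Exchangeability of the pair (configuration, resampled configuration) then transfers a binomial stochastic domination back to the original field. This deals directly with the bias of the environment near the self-selected geodesic, and it needs no union bound over paths for the events of small probability.
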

The number $\delta_0$ will be a function of several other parameters involved in the construction of the event $\cP_{i,j}$ and a more precise version of the above result is stated in Theorem \ref{t:Pplus} later. Given Theorem \ref{t:Pplusintro}, completing the proof of Proposition \ref{p:chaos} is not difficult; by taking a union bound over all values of $\ell\le \ell_{\max}$ we get that the geodesics before and after resampling can pass through the same blocks only at $(1-\delta_0)^{\ell_{\max}}$ fraction of columns at length scale $n$; see Figure \ref{f:multiscale}. By local transversal fluctuation estimates (Lemma \ref{l:localtransproxy:intro}), it is also easy to show that a geodesic is unlikely to pass through too many blocks in a single column. Combining all these, and using that $\ell_{\max}$ is chosen suitably large one gets (see Lemma \ref{l:chaosbasic}) that 
\[
\sum_{i=1}^M \sum_{j=-M}^M  \P[\cU_{ij},\cU_{ij}'] \leq \exp\Big(-\frac{\log M}{(\log_2 \log_2 M)^6}\Big) M.
\]
This completes the proof of Proposition \ref{p:chaos} as discussed above. 

Proof of Theorem \ref{t:Pplusintro} is the most technically challenging part of this paper. Without going into further details about the definition of $\cP_{i,j}$, let us just mention that $\cP_{i,j}$ consists of several different parts. The sub-events can roughly be divided into three categories. First, the likely events; we use a percolation argument to show that it is highly likely that these likely events occur at most locations along the geodesic (see Lemma \ref{l:Pminus.perc}). The second part consists of monotone events, which even though unlikely can be shown to occur at a constant fraction of locations along the geodesic by using the FKG inequality. The third type of event deals with the existence of an alternative path away from the geodesic in the environment before the resampling which becomes the geodesic after updating the randomness in each block independently with probability $\kappa$; this event is neither likely nor monotone and needs to be handled by a delicate resampling analysis. Combining these different ideas and analysis we eventually establish Theorem \ref{t:Pplusintro}.

\subsection{Outline of the rest of the paper}
The rest of this paper is primarily devoted to the proofs of Proposition \ref{p:uij.bounds} and Proposition \ref{p:chaos} (as well as the proofs of some auxiliary results such as Lemma \ref{l:proxy}) and is organised as follows. We first start with the proof of Proposition \ref{p:uij.bounds} in Section \ref{s:1.2proof}. The arguments here depend on a general polymer estimate Proposition \ref{p:perc1} and its consequences proved later in Appendix \ref{s:perc}. We next move on to the proof of Proposition \ref{p:chaos}. Following the strategy outlined above we first define a number of events at different length scales and state their probability bounds in Section \ref{s:events}. The two main results of this section are Theorem \ref{t:Pplus} (which is the more precise version of Theorem \ref{t:Pplusintro} stated above) and Lemma \ref{l:separatedfirst} which states that on the event $\cP_{i,J_i}$, the geodesics before and after the resampling are separated at location~$i$. Assuming these two results, the proof of Proposition \ref{p:chaos} is completed in Section \ref{s:chaos.estimate}. Section \ref{s:separation} is purely deterministic and furnishes the proof of Lemma \ref{l:separatedfirst}. The next two sections deal with the proof of Theorem \ref{t:Pplus}. Section \ref{s:likely} deals with the likely events in the definition of $\cP_{i,j}$ while the delicate analysis of the other events is carried out via a resampling argument in Section \ref{s:glauber}.
Section \ref{s:letter} provides the probability estimates  for the events defined in Section \ref{s:events}. The last three sections are appendices that provide the proofs of several auxiliary estimates. Proofs of Lemma \ref{l:proxy} showing that restricted passage times approximate the passage times well  
is provided in Section \ref{s:proxy}. This proof requires Proposition \ref{p:constraine} whose proof is also provided in the same section. Section \ref{s:proxytrans} is devoted to results about the transversal fluctuations of conforming geodesics and proves Lemma \ref{l:proxytrans:intro}, Lemma \ref{l:localtransproxy:intro}, and Proposition \ref{p:constrainerX}. Finally, Section \ref{s:perc} provides the proof of the polymer estimate Proposition \ref{p:perc1} and its several consequences.

{\textbf{Interdependence of Sections.}
The finals three sections of the paper (Appendices \ref{s:proxy}, \ref{s:proxytrans} and \ref{s:perc}) are self-contained in the sense that they only depend on results from \cite{BSS23} and can be read linearly, they do not require any results from Sections \ref{s:1.2proof} to \ref{s:letter}, while these seven sections use results from the final three sections (as well as other results from \cite{BSS23}). 
}

\bigskip

\section{Proof of Proposition \ref{p:uij.bounds}}

\label{s:1.2proof}
This section is devoted to the proof of the first of the two remaining major estimates, namely Proposition \ref{p:uij.bounds}, which also has the shorter proof. Recall that Proposition \ref{p:uij.bounds} has three parts. All three parts involve tail estimates of summing certain quantities over the blocks $(i,j)$ such that $\cU_{ij}$ holds, that is blocks within distance $1$ of the {conforming geodesic} from $\bf0$ to $(Mn,0)$. Proofs of all three estimates are via an abstract stretched exponential polymer estimate which shows that a polymer moving in field of weights with stretched exponential tail also has stretched exponential tail. We state this result first.

\subsection{Stretched exponential polymer estimate}

Denote a set of length $M$ {integer sequence} by
\[
\mathfrak{K}_M=\{(k_0,\ldots,k_M)\in\Z^{M+1}: k_0=0\}.
\]
and set  
\[
\tau_2(\uk):=\sum_{i=1}^M |k_i-k_{i-1}|^2.
\]
We have the following general maximization estimate over sums indexed by paths.

\begin{proposition}
\label{p:perc1}
For $1\le i\le M, k,k'\in \Z$, let $\mathcal{V}_{i,k,k'}$ be random variables such that for some $0<\xi<1$ and for some $0<\delta<1/100$, we have
\[
\P[\mathcal{V}_{i,k,k'} \geq z]\leq C_1\exp\Big(-C_2 \big(z/(1+|k-k'|^{1+\delta})\big)^{\xi}\Big)
\]

Assume further that the collections of variables $\mathfrak{Z}_i=\{\mathcal{V}_{i,k',k'}\}_{k,k'}$ are independent for different~$i$. Then there exist $C_3,C_4,C_5,C_6$ depending on $C_1,C_2$ and $\xi,\delta$ but not depending on $M$ such that for all $R\geq 1$ and $z>0$ we have
\[
\P\left[\max_{\substack{\uk\in \mathfrak{K}_{M}\\ \tau_2(\uk)\leq R M}} \sum_{i=1}^M \mathcal{V}_{i,k_{i-1}, k_{i}} \geq (C_3+ C_4R^{3/4})M + z \right] \leq C_5\exp\left(-C_6 z^{\xi/4}\right ).
\]
\end{proposition}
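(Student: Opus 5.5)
The plan is a union bound over coarse-grained classes of paths combined with a truncation argument; the loss from $\xi$ to $\xi/4$ in the tail exponent is the price of the truncation. First normalise: set $Y_{i,k,k'}:=\cV_{i,k,k'}/(1+|k-k'|^{1+\delta})$, so that the $Y$'s have uniformly stretched exponential tails $\P[Y\ge z]\le C_1e^{-C_2z^{\xi}}$, and for a path $\uk$ write $d_i=|k_i-k_{i-1}|$ and $w_i=1+d_i^{1+\delta}$. Then $\sum_i\cV_{i,k_{i-1},k_i}=\sum_i w_iY_{i,k_{i-1},k_i}$. The constraint $\tau_2(\uk)\le RM$ gives, by H\"older and $\delta<1/100$, the bound $\sum_i w_i\le M+M^{(1-\delta)/2}(RM)^{(1+\delta)/2}\le C(1+R^{(1+\delta)/2})M$. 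This is well below $R^{3/4}M$, and the gap between the exponents $(1+\delta)/2$ and $3/4$ is the slack that will pay for entropy. It also gives $\max_i d_i\le\sqrt{RM}$.

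\textbf{Step 1: coarse-graining the paths.} Encode a path by its sequence of increments $k_i-k_{i-1}$. Since $k_0=0$ and $\tau_2(\uk)\le RM$, the number of such paths is at most $\exp(C M\log(2+R))$: count the dyadic scale of each $d_i$, then the sign and the position within that dyadic scale. A naive union bound over this many paths is too costly for the bulk of the sum. So I would instead tile the $k$-axis at step $i$ into intervals of length comparable to the dyadic scale of $d_i$, and replace $Y_{i,k_{i-1},k_i}$ by
\[
\widehat Y_i:=\max\{Y_{i,a,b}: a,b \text{ in the tiles containing } k_{i-1},k_i\}.
\]
The resulting coarse paths form a family of cardinality $\exp(CM\log(2+R))$. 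Each $\widehat Y_i$ is a maximum of polynomially many (in $d_i$) variables with stretched exponential tails, so it is still stretched-exponentially tailed with mean $O((\log(2+d_i))^{1/\xi})$. This polylogarithmic mean is absorbed into $w_i$ at the cost of replacing $1+\delta$ by a slightly larger exponent, still below $3/2$. Because the collections $\mathfrak{Z}_i$ are independent across $i$, the $\widehat Y_i$ along a fixed coarse path are independent.

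\textbf{Step 2: fixed coarse path with truncation.} For a fixed coarse path, split $\widehat Y_i=\widehat Y_i\wedge L+(\widehat Y_i-L)^+$ with $L$ a large constant.
\begin{itemize}
\item[(a)] The bounded parts $w_i(\widehat Y_i\wedge L)$ have sum with mean at most $C(1+R^{(1+\delta)/2})M$. Bernstein's inequality with variance proxy $L^2\sum_i w_i^2$ and maximal term $L\sqrt{RM}^{1+\delta}$ shows that exceeding the mean by $C_4R^{3/4}M+z/2$ has probability small enough to beat the entropy $e^{CM\log(2+R)}$. This is where $R^{3/4}$ rather than $R^{(1+\delta)/2}$ is needed, and $C_4$ is chosen large.
\item[(b)] For the excess parts, I would further decompose dyadically: let $N_s$ be the number of $i$ with $\widehat Y_i\in[2^s,2^{s+1})$, weighted by $w_i$. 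The event that the excess sum exceeds $M+z/2$ forces, for some $s$, a weighted count $\gtrsim (M+z)2^{-s}s^{-2}$. By independence across $i$, the probability is bounded by a binomial-type tail $\binom{M}{m}e^{-cm2^{s\xi}}$. Summed over the entropy of coarse paths and over $s$, this yields $C_5\exp(-C_6z^{\xi/4})$.
\end{itemize}
The small-$m$ regime, where a single huge $\widehat Y_i$ carries the excess, gives tail $\exp(-c(z/w_i)^{\xi})$ with $w_i\le(RM)^{(1+\delta)/2}$. I expect this regime to be where the exponent degrades: one has to trade $R$ and $M$ against $z$, using $z\gtrsim R^{3/4}M$ whenever $w_i$ is large, and I anticipate landing at $\xi/4$.

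\textbf{Main obstacle.} The hard step is Step 2(b) together with the entropy bookkeeping of Step 1. The coarse-graining must be fine enough that $\widehat Y_i$ keeps uniformly stretched tails and independence across $i$, yet coarse enough that the count of coarse paths, $\exp(O(M\log R))$, is dominated both by the Bernstein bound in (a) and by the binomial tails in (b). I would first try tiles of length $\max(1,d_i/4)$ and check that the polylogarithmic inflation of the means is harmless. If it is not, the fallback is to allow $R$-dependent constants and use only that the $R^{3/4}$ headroom exceeds every polylogarithmic loss.
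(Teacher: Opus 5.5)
Your architecture is close to the paper's: replace each summand by a maximum over a tile, use independence across $i$, union-bound over coarse objects, and decompose the values dyadically. The problem is that the tile size does not depend on the level, and this breaks Step~2(b).

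\textbf{Why Step 2(b) fails.} Tiles at the scale of $d_i$ do not reduce entropy. For paths with bounded steps there is no coarse-graining at all, and, as you note, the coarse family still has $\exp(cM\log(2+R))$ members. Fix a dyadic level $2^s$ with $1\ll 2^s\ll M$. Even when all $w_i\asymp 1$, the event you must exclude for a fixed coarse path is that about $m\approx M2^{-s}s^{-2}$ of the independent $\widehat Y_i$ reach $2^s$. Its probability is about
\[
\binom{M}{m}e^{-cm2^{s\xi}}=\exp\bigl(-cM2^{-s(1-\xi)}s^{-2}(1+o(1))\bigr).
\]
The exponent is $o(M)$, so the union bound over $e^{cM}$ coarse paths diverges. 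No choice of thresholds repairs this. Beating $e^{cM}$ needs $m_s\gtrsim M2^{-s\xi}$, so the admissible contribution $2^sm_s\gtrsim M2^{s(1-\xi)}$ grows with $s$. Summed over the levels $2^s\lesssim\log(RM)$, which are too low for a union bound over the polynomially many individual variables, it is not $O(R^{3/4}M)$ uniformly in $M$.

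Your Bernstein step in (a) has the same defect: in the worst case the maximal term $L(RM)^{(1+\delta)/2}$ only gives an exponent of order $M^{(1-\delta)/2}$. There it is harmless, because $\sum_i w_i(\widehat Y_i\wedge L)\le L\sum_i w_i\le CLR^{3/4}M$ holds deterministically.

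\textbf{The missing idea.} The coarse-graining must become coarser as the level increases, so that at level $2^j$ its entropy is small compared with the cost of the roughly $R^{3/4}M2^{-1.01j}$ exceedances affordable there. For $R^{3/4}\lesssim 2^j\lesssim\log^C(RM)+z$ the paper sets $w=w_j=2^{7j/10}$ and records only three things: $\lfloor k_{iw^2}/w^3\rfloor$, the set of steps with $|k_i-k_{i-1}|\ge w$, and the values of $\uk$ at the endpoints of those jumps. This record has at most $\exp(cRM\log w/w^2)$ possibilities (Lemma~\ref{l:mesocount}). Given the record:
\begin{itemize}
\item the at most $RM/w^2$ large-jump steps are bounded trivially by $2^{j+1}$;
\item at every other step, $(k_{i-1},k_i)$ ranges over a deterministic set of size at most $8w^4$.
\end{itemize}
So the level-$j$ indicators, maximised over compatible paths, are dominated by independent Bernoulli variables with parameter $8w^4p(w)\le C_1e^{-C_22^{j\xi/4}}$, using $w^{1+\delta}\ll 2^j$ (Lemma~\ref{l:fixedj}). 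A Chernoff bound then beats $\exp(cRM\log w/w^2)$.

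\textbf{The few-huge-summands regime.} Your treatment here is only sketched, and you cannot simply assume $z\gtrsim R^{3/4}M$. The paper handles all levels $2^j\ge\log^C(RM)+z$ at once. It applies H\"older again, $\sum_i d_i^{3/2}\le R^{3/4}M$, so that $\tfrac12 d_i^{3/2}$ can be absorbed at every step. What remains are the events $\{\cV_{i,k,k'}\ge 2^j\vee\tfrac12|k-k'|^{3/2}\}$. Since $2^j\vee\tfrac12 d^{3/2}\ge 2^{j/4}(1+d^{1+\delta})$, each has probability at most $C_1e^{-C_22^{j\xi/4}}$ uniformly in $d$. A union bound over the $O(R^{1/2}M^{5/2})$ reachable triples then closes the argument (Lemma~\ref{l:jmax}), and this is where the exponent $\xi/4$ comes from.
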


The proof of Proposition \ref{p:perc1} is provided in Appendix \ref{s:perc}. The way we apply the above proposition will be the following. For the conforming geodesic $\gamma$ (we shall keep using the notation $\gamma$ for this geodesic) from $\bf0$ to $(Mn,0)$ we define $$J_{i}=\lfloor y_i/W_n \rfloor$$ where $(in,y_i)$ is the {canonical} point where the geodesic $\gamma$ intersects the line $\{x=in\}$ (note that this is the special case $\ell=0$ of the notation $J^{n,M,\ell}_{i}$ introduced earlier). For a general conforming path $g$ from $\bf0$ to $(Mn,0)$ we shall define $\uk=\uk(g)$ by setting $k_i=\lfloor\frac{y_i}{W_n}\rfloor$ where $(in, y_i)$ is the canonical point where $g$ intersects the line $\{x=in\}$. We shall bound sums of quantities along locations on the geodesic by summing over the locations given by $\uk$ and maximizing over a large class of $\uk$ such that it is unlikely that $\uk(\gamma)$ is not contained in this class. 

To implement this strategy we shall also need to bound 
$$\tau_2(\gamma)=\sum_{i=1}^{M} |J_i-J_{i-1}|^2$$
which measures the fluctuation of the geodesic. The following result will also be proved in Section~\ref{s:perc} using Proposition \ref{p:perc1}. 

\begin{proposition}
    \label{p:tau2perc}
    There exist $C_7,c, \theta_4>0$ such that for all $z\ge 0$, we have 
    $$\P\left(\sum_{i=1}^{M} (J_{i}-J_{i-1})^2\ge C_7M+z\right)\le \exp(1-cz^{\theta_4}).$$
\end{proposition}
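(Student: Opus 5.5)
Since $\gamma$ is the geodesic, any conforming path $g$ from $\origin$ to $(Mn,0)$ has $\rX_g\ge \rX_\gamma$. A path whose column crossings $\uk=\uk(g)$ have large $\tau_2(\uk)$ pays a quadratic penalty in passage time, and Proposition~\ref{p:perc1} controls the fluctuations that could offset this penalty. The plan is to make this comparison quantitative.

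\emph{Step 1: penalised variables.} For each column $i$ and $k,k'$ define
\[
\mathcal{V}_{i,k,k'}:=\Big(n+c_0|k-k'|^2Q_n-\min_{u\in \ell_{(i-1)n,kW_n,(k+1)W_n},\,v\in\ell_{in,k'W_n,(k'+1)W_n}}\rX_{uv}\Big)^+ / Q_n .
\]
By Pythagoras, $|u-v|-n\ge c|k-k'|^2Q_n$ for $|k-k'|\ge2$, since $W_n^2/n=Q_n$. So Proposition~\ref{p:paraestimateconforming}(ii), or Lemma~\ref{l:paraexplicit} in its conforming form, gives stretched exponential tails for $\mathcal{V}_{i,k,k'}$ at scale $1$, uniformly in $k,k'$, after choosing $c_0$ small. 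Large slopes $|k-k'|\gtrsim n/W_n$ are handled crudely via $\rX_{uv}\ge c|u-v|$. Restricted distances in different columns use independent fields $\omega^{\Lambda_i}$, so the collections $\mathfrak{Z}_i$ are independent, and the hypothesis of Proposition~\ref{p:perc1} holds, even with $\delta=0$ slack.

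\emph{Step 2: lower bound along the geodesic.} Splitting $\gamma$ at its canonical crossings gives
\[
\rX_{Mn}=\sum_i \rX_{\gamma_i}\ge \sum_i\big(n+c_0|J_i-J_{i-1}|^2Q_n-\mathcal{V}_{i,J_{i-1},J_i}Q_n\big)=Mn+c_0\tau_2(\gamma)Q_n-Q_n\sum_i \mathcal{V}_{i,J_{i-1},J_i}.
\]
The endpoints $(0,0),(Mn,0)$ sit in boxes $k_0=0$ and $k_M\in\{0,-1\}$, which is harmless.

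\emph{Step 3: upper bound.} Take a competitor path that stays near the $x$-axis, for example the concatenation of constrained geodesics through the boxes $[(i-1)n,in]\times[0,W_n]$. Proposition~\ref{p:constrainerX} gives each piece cost $\le n+Z_iQ_n$ with independent stretched exponential $Z_i$. Hence $\rX_{Mn}\le Mn+(CM+z)Q_n$ with probability $\ge 1-\exp(1-cz^{\theta})$.

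\emph{Step 4: dyadic peeling.} Combining Steps 2 and 3, on the good event
\[
c_0\tau_2(\gamma)\le CM+z+\max_{\tau_2(\uk)\le \tau_2(\gamma)}\sum_i \mathcal{V}_{i,k_{i-1},k_i}.
\]
Peel dyadically: for $R=2^m$, on $\{RM<\tau_2(\gamma)\le 2RM\}$ apply Proposition~\ref{p:perc1} with parameter $2R$. With probability $\ge 1-C_5e^{-C_6 z^{\xi/4}}$ the maximal sum is $\le (C_3+C_4(2R)^{3/4})M+z$. The inequality then reads $c_0RM\le C'M+C'R^{3/4}M+2z$, which fails once $R\ge R_0$ unless $z\gtrsim RM$.

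Summing over $m$ gives that $\tau_2(\gamma)\ge C_7M+z$ requires either the Step 3 event or a Proposition~\ref{p:perc1} event to fail at deviation $\gtrsim z$. A union bound over the $O(\log z)$ dyadic scales yields $\exp(1-cz^{\theta_4})$. For very large $z$, say beyond $M^{2}n^2$ where $\tau_2$ is deterministically bounded by path length, the union is empty.

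\emph{Main obstacle.} The hard part is Step 1 for large slopes and near the endpoints. Proposition~\ref{p:paraestimateconforming} is stated only for $|jW_r|\le\frac12 n^\beta W_n$, and the quadratic gain must hold uniformly over all slopes. First try: treat slopes with $|k-k'|\ge n^{\beta/2}$, or $|k|$ beyond the allowed window, via the deterministic bound $\rX\ge c|u-v|$ together with Lemma~\ref{l:proxytrans:intro} to exclude them. This costs only $\exp(-n^{c})$, which is absorbed for all $z\le n^{c'}$; larger $z$ are trivial since $\tau_2\le C M^2 n^2/W_n^2$.
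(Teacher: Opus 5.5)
Your proof is correct and is essentially the paper's argument, which the paper runs through the more general Lemma \ref{l:tau2percgen}: the same penalised variables $\mathcal{V}_{i,k,k'}=-(Z_{i,k,k'}-r)/Q_r+(k-k')^2/32$, the same lower bound along the geodesic, and dyadic peeling in $\tau_2$ via Proposition \ref{p:perc1} (packaged as Corollary \ref{c:tau2perc} with $\lambda=1/64$). The one difference is that the paper bounds $\rX_{Mn}$ from above by point-to-point concentration (Theorem \ref{t:all} and Lemma \ref{l:proxy}) rather than by your constrained competitor path; two small corrections are that your ``main obstacle'' is moot, since conforming crossings satisfy $|J_i|\le n^{\beta}$ with $n^{\beta}W_n\ll n$ so steep slopes never occur, and that at dyadic scale $R$ you should take the deviation of order $RM$ rather than a fixed $z$, so that the sum over scales is geometric (the number of scales is not $O(\log z)$).
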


An estimate quite similar to Proposition \ref{p:perc1} was established in \cite[Proposition 4.1]{BSS23} but with a couple of crucial differences. In \cite[Proposition 4.1]{BSS23}, the tails of $\mathcal{V}_{i,k,k'}$ did not depend of $|k-k'|$ whereas in Proposition \ref{p:perc1} the tail estimate gets worse as $|k-k'|$ grows. This necessitates us having to maximize over a restricted class of $\uk$ with bounded $\tau_2$ fluctuation whereas in \cite[Proposition 4.1]{BSS23} the maximum was over the class of $\uk$ with $$\tau_1(\uk):=\sum_{i} |k_i-k_{i-1}| \le RM.$$
Since 
$$M \left(\sum_{i} (J_{i}-J_{i-1})^2 \right)\ge \left(\sum_{i} |J_{i}-J_{i-1}| \right)^2$$ Proposition \ref{p:tau2perc} implies that there exists $R>0$ such that for all $z>0$ we have 
\begin{equation}
    \label{e:tau1est}
    \P\left(\sum_{i} |J_{i}-J_{i-1}|\ge RM+z\right)\le \exp(1-cz^{\theta_4}).
\end{equation}
As the reader will observe below, \cite[Proposition 4.1]{BSS23} will suffice for some of our estimates in the proof of Proposition~\ref{p:uij.bounds} below but the stronger Proposition~\ref{p:perc1} (and its consequence Proposition \ref{p:tau2perc}) will be required in a number of other estimates.

\subsection{Proof of the second estimate}

{Out of the three estimates in Proposition \ref{p:uij.bounds}, the second one is the most complicated, so we shall start with that. The other two estimates will follow from similar but somewhat simpler arguments.}

Observe that every block $(i,j)$ for $j$ between $J_{i-1}$ and $J_i$ must have $I(\cU_{ij})=1$. In addition there can be also be blocks outside this interval with $I(\cU_{ij})=1$. The latter blocks will be referred to as \emph{overhangs} and for all the estimates we shall bound the contribution of the blocks between $J_{i-1}$ and $J_{i}$ and the overhang blocks separately; see Figure \ref{f:multiscaleP}.
\begin{center}
\begin{figure}[htbp!]
\includegraphics[width=6in]{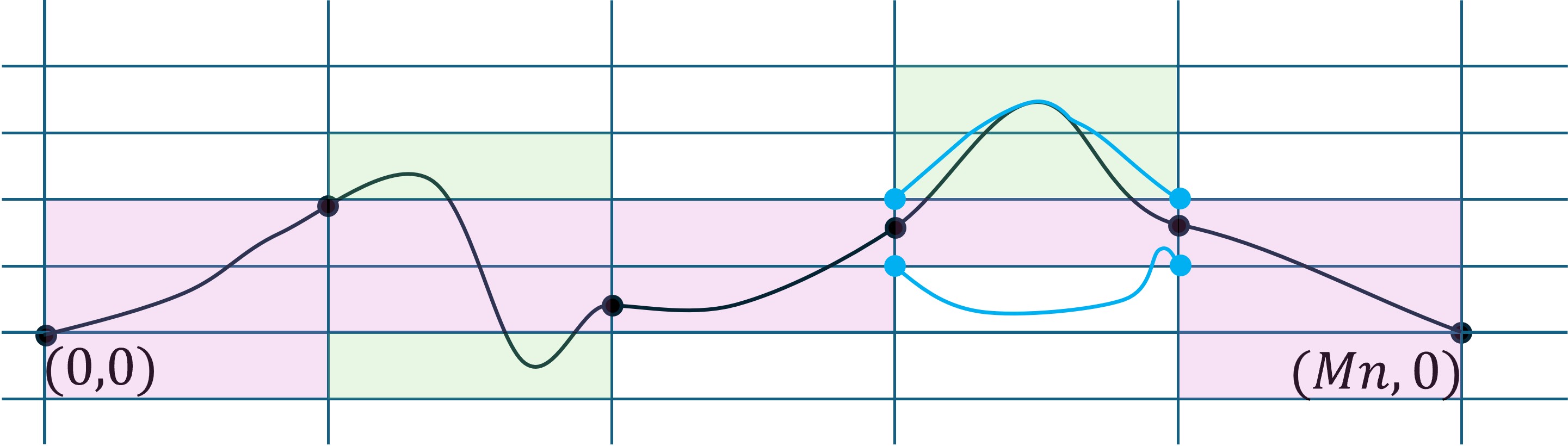}
\caption{Building blocks of the multiscale estimate: the plane is divided into boxes $\Lambda_{i,j}=[(i-1)n,in]\times[(j-1)W_n,jW_n]$. The event that the geodesic comes within distance $1$ of $\Lambda_{ij}$ is denoted $\cU_{ij}$. Proposition \ref{p:uij.bounds} has various estimates showing certain weighted sums of $I(\cU_{ij})$ cannot be too large. To this end we denote the $j$-index of the block where the geodesic $\gamma$ depicted in the picture enters the column $\Lambda_i$ by $J_{i-1}$. Therefore all the blocks $\Lambda_{ij}$ with $j$ between $J_{i-1}$ and $J_{i}$ will witness $\cU_{ij}$ (these blocks are marked in pink in the figure). However, there  may also other other blocks in $\Lambda_i$ where $\cU_{ij}$ holds. These blocks, called overhang blocks, are marked in green in the figure. The contributions from these two types of blocks are controlled separately by observing that the segment of the geodesics within a column remains sandwiched between the two blue paths marked in the figure. The number of overhang blocks can then be controlled just by looking at the transversal fluctuations of the blue paths.}
\label{f:multiscaleP}
\end{figure}
\end{center}

For convenience of notation let us define 
$$Y_{i,j}=((\rX^{ij}_{Mn}-\rX_{Mn})^+)^{4} I(-M\le j\le M).$$
Recall that we want to show  
$$\P\left(\sum_{i=1}^{M}\sum_{j\in \Z} I(\cU_{ij})Y_{i,j} \ge (CM+z)Q_n^4\right)\le \exp(1-cz^{\theta_3})$$
for some $\theta_3>0$. Notice that trivially

$$\sum_{i=1}^{M}\sum_{j\in \Z} I(\cU_{ij})Y_{i,j}= O(M^{6}n^{4})$$
and therefore, by adjusting the choice of $\theta_3$ if needed and choosing $n\ge M$, it suffices to prove the tail bound for values of $z\le n^{\delta}$ for some $\delta>0$ (where $\delta$ will be chosen sufficiently small compared to $\beta$). From now on we shall work with this choice of $z$ only. 

Notice next that by Lemma \ref{l:proxytrans:intro} and by our choice of $z$ it follows that with probability at least $1-\exp(-z^{\theta_3})$ (for $\theta_3$ sufficiently small) we have $|J_i|\le n^{\delta}$ for all $i$. We shall therefore work on this event from now on. 

To control the overhang blocks, we make the following definitions: for $1\le i\le M$ and $j\in \Z$, consider points $u_{i,j}=(in,(j+1)W_n)$. Define $S^{+}_{i,j}$ to be the maximum index $j'$ such that the geodesic $\gamma_{ij}$ from $u_{i-1,j+1}$ to $u_{i,j+1}$ comes within distance 1 of $\Lambda_{i,j'}$. Similarly, let $S^{-}_{i,j}$ denote the minimum index $j'$ such that the geodesic from $u_{i-1,j}$ to $u_{i,j}$ comes within distance 1 of $\Lambda_{i,j'}$. 

Notice that by planarity and ordering of geodesics $\gamma$ must lie above $\gamma_{i,J_{i-1}\wedge J_{i}}$ and below $\gamma_{i,J_{i-1}\vee J_{i}}$. It therefore follows that the set $\{j: I(\cU_{ij}=1)\}$ is contained in the interval $[S^{-}_{i, J_{i-1}\wedge J_i}, S^{+}_{i,J_{i-1}\vee J_{i}}]$. We shall therefore divide this set into three parts by considering its intersections with $[J_{i-1}\wedge J_i, J_{i-1}\vee J_i]$, $[S^{-}_{i, J_{i-1}\wedge J_i},J_{i-1}\wedge J_i]$ and $[J_{i-1}\vee J_i, S^{+}_{i,J_{i-1}\vee J_{i}}]$ where the last two are the overhang blocks. The following lemma, which is an immediate consequence of Lemma \ref{l:proxytrans}, will be used to control the sums over overhang blocks. 

\begin{lemma}
    \label{l:overhang}
    There exists $\theta'>0$ such that for each $i\in [1,M]$ and $k\in [-n^{\delta},n^{\delta}]$ and for all $z$ sufficiently large we have 
    $$\P(S^{-}_{i,k}\le k-z)\le \exp(1-z^{\theta'});$$
    $$\P(S^{+}_{i,k}\ge k+z)\le \exp(1-z^{\theta'}).$$
\end{lemma}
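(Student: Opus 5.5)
The plan is to treat $S^{+}_{i,k}$; the bound for $S^{-}_{i,k}$ then follows by reflecting in the horizontal axis, which preserves the law of the field. By definition $S^{+}_{i,k}$ is governed by the single conforming geodesic $\gamma_{i,k}$ from $u_{i-1,k+1}=((i-1)n,(k+2)W_n)$ to $u_{i,k+1}=(in,(k+2)W_n)$, which lies in the column $\Lambda_i$. Both endpoints are at height $(k+2)W_n$, so the geodesic travels horizontally across one column of width $n$. The event $\{S^{+}_{i,k}\ge k+z\}$ means that $\gamma_{i,k}$ comes within distance $1$ of $\Lambda_{i,j'}$ for some $j'\ge k+z$. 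That block sits at height at least $(k+z-1)W_n$, so $\gamma_{i,k}$ must rise at least $(z-3)W_n-1\ge \frac{z}{2}W_n$ above the segment joining its endpoints once $z$ is large. In the notation of Theorem~\ref{t:tfold}, this says $\gamma_{i,k}\in\Upsilon^{w}_{n,v_1,v_2,z/2}$ with $w=(k+2)W_n$ (shifted horizontally by $(i-1)n$), i.e.\ a transversal fluctuation event at scale $n$ for a geodesic of length $n$.

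To bound this, apply the conforming version Lemma~\ref{l:proxytrans:intro} (this is presumably the Lemma~\ref{l:proxytrans} the paper cites) with $M=1$, after translating by $(i-1)n$ horizontally. Several points need checking.

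\begin{itemize}
\item \textbf{Horizontal translation.} The restricted distance on a single column $\Lambda_i$ uses the field $\omega^{\Lambda_i}$, which has the same law as $\omega$. By construction the law of the conforming geometry inside one column does not depend on $i$, so translation is harmless.
\item \textbf{Height of the endpoints.} We have $|w|\le (n^{\delta}+2)W_n\le \frac12 n^{\beta}W_n$, because $\delta$ was chosen small compared to $\beta$.
\item \textbf{Range of $z$.} The lemma holds only for $z\in[z_0,n^{\epsilon}]$. For $z>n^{\epsilon}$ we use monotonicity in $z$. Alternatively, note that any path reaching height $n^{\epsilon}W_n$ already fails the $z=n^{\epsilon}$ event, and beyond that the trivial bound coming from $X\ge d_1\cdot$length is enough: rising $zW_n\gg n$ costs far more than the straight line. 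This gives a superpolynomially small bound, which can be absorbed after reducing $\theta'$.
\end{itemize}

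Together these yield $\P(S^{+}_{i,k}\ge k+z)\le \exp(1-Dz^{\theta_2}/2^{\theta_2})$. Choosing $\theta'<\theta_2$ and $z$ large absorbs the constant $D2^{-\theta_2}$.

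The main obstacle I expect is the mismatch between the event here and the one controlled by Lemma~\ref{l:proxytrans:intro}. That lemma, at $M=1$, measures fluctuation in units of $W_{n}$ and concerns geodesics between the segments $\ell_{0,w,W_n}$ and $\ell_{n,w,W_n}$. Our endpoints are single points on these segments, so they are covered, but one should confirm that the ``within distance $1$'' slack and the boundary-column effects do not matter. If the $M=1$ case is not directly available, the fallback is the proxy comparison, Lemma~\ref{l:proxy}, combined with Theorem~\ref{t:tfold} for the unrestricted model. A path with large transversal fluctuation has an excess weight of order $zQ_n$ by the local estimate, Lemma~\ref{c:local.trans}, and this excess dominates the $n^{-\epsilon}Q_n$ proxy error, so the conforming geodesic cannot be such a path either.
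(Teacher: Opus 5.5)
Your proposal is correct and follows the paper's approach: the paper records this lemma as an immediate consequence of the transversal fluctuation bound for conforming geodesics (Lemma~\ref{l:proxytrans}, i.e.\ Lemma~\ref{l:proxytrans:intro}) applied across a single column, exactly as you do with $M=1$. Your additional checks fill in details the paper leaves implicit: invariance under translation in $i$, $|w|\le \frac12 n^{\beta}W_n$ from $\delta\ll\beta$, and the deterministic bound on vertical excursion (coming from $\Psi\in(d_1,d_2)$), which handles $z>n^{\epsilon}$.
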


We shall divide the sum $\sum_{j\in \Z} I(\cU_{ij})Y_{i,j}$ into three parts. Observe that

$$\sum_{j\in \Z} I(\cU_{ij})Y_{i,j}\le \sum_{j=S^{-}_{i,J_{i-1}\wedge J_i}}^{J_{i-1}\wedge J_i-1}Y_{i,j}+ \sum_{j=J_{i-1}\wedge J_i}^{J_{i-1}\vee J_i}Y_{i,j}+ \sum_{j=J_{i-1}\vee J_i+1}^{S^{+}_{i,J_{i-1}\vee J_i}}Y_{i,j}.$$

For brevity of notation let us denote the three terms in the right hand side above by $A_{i}, B_{i}$ and $C_{i}$ respectively. The second inequality in Proposition \ref{p:uij.bounds} follows from the next three lemmas which control  $\sum A_{i}, \sum B_{i}$ and $\sum C_{i}$ respectively.  

\begin{lemma}
    \label{l:uij1}
    There exist constants $C,c, \theta_3>0$ such that for $z\in (0,n^{\delta})$ we have 
$$\P\left(\sum_{i} A_{i}\ge (CM+z)Q_n^{4}\right)\le \exp(1-cz^{\theta_3}).$$
\end{lemma}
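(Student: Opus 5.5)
We will control $\sum_i A_i$ by reducing it to an application of Proposition \ref{p:perc1}. Here $A_i$ is the sum of $Y_{i,j}$ over the lower overhang blocks $j\in[S^-_{i,k},k-1]$ with $k=J_{i-1}\wedge J_i$. The idea is to bound $A_i$ by a random variable depending only on column $i$ and on the pair $(J_{i-1},J_i)$. So for $k,k'\in\Z$ we set
\[
\cV_{i,k,k'} := Q_n^{-4}\sum_{j=S^-_{i,k\wedge k'}}^{k\wedge k'-1} \widetilde Y_{i,j},
\]
where $\widetilde Y_{i,j}$ is a column-local surrogate for $Y_{i,j}$, described next.

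The surrogate is needed because $Y_{i,j}$ itself depends on the whole environment. On $\cU_{ij}$, resampling block $(i,j)$ can raise $\rX_{Mn}$ by at most the cost of rerouting the geodesic within column $\Lambda_i$. Concretely, we bound $(\rX^{ij}_{Mn}-\rX_{Mn})^+$ by the maximum, over points $u,v$ on the left and right boundaries of $\Lambda_i$ within $O(|J_{i-1}-J_i|+\text{overhang})W_n$ of the block, of $\rX^{ij}_{uv}-\rX_{uv}$. That is in turn at most the excess $\max_{u,v}\bigl(\rX^{ij}_{uv}-|u-v|\bigr)+\max_{u,v}\bigl(|u-v|-\rX_{uv}\bigr)$. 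This surrogate is measurable with respect to column $i$ of $\bomega$, so the families $\mathfrak Z_i$ are independent across $i$.

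For the tail hypothesis we combine three bounds: Proposition \ref{p:paraestimateconforming} (both parts, applied to $\rX$ and to $\rX^{ij}$, which has the same law), Lemma \ref{l:paraexplicit}, and Lemma \ref{l:overhang} for the number of overhang terms. Together these should give
\[
\P[\cV_{i,k,k'}\ge z]\le C\exp\bigl(-c(z/(1+|k-k'|^{1+\delta}))^{\xi}\bigr).
\]
There are two sources of growth in $|k-k'|$. The slope of the parallelogram adds about $|k-k'|^2Q_n$ to $|u-v|$, but this contribution cancels in the difference and only the fluctuations survive, which are $O(Q_n)$ uniformly in the slope. The number of overhang summands has stretched exponential tails independent of $|k-k'|$. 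A fourth power of a stretched exponential variable is still stretched exponential with a smaller exponent, and a sum of $S$ such terms, with $S$ itself having stretched exponential tails, is handled by a union bound. The resulting exponent $\xi$ is small but positive.

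Having the tail bound, we note that on $\{\tau_2(\uk(\gamma))\le RM\}$ we have $\sum_iA_i\le Q_n^4\max_{\tau_2(\uk)\le RM}\sum_i\cV_{i,k_{i-1},k_i}$. Proposition \ref{p:perc1} then gives the tail bound $(C_3+C_4R^{3/4})M+z$. It remains to remove the restriction on $\tau_2$. We apply Proposition \ref{p:tau2perc} with $R=C_7+z/M$, which shows $\tau_2(\gamma)$ is at most $RM$ except with probability $\exp(1-cz^{\theta_4})$. The resulting term $C_4R^{3/4}M$ is at most $C'M+C'z^{3/4}M^{1/4}$, which is absorbed by $CM+z$ once we adjust constants (with $M^{1/4}\le z^{1/4}$ or $z\lesssim M$, handled separately). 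We also restrict to $|J_i|\le n^\delta$ via Lemma \ref{l:proxytrans:intro}, and to $z\le n^\delta$ as in the text.

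The main obstacle is the localization step. We must show that the effect of resampling a single block on $\rX_{Mn}$ is dominated by a quantity that is local to column $i$ and depends only on $(J_{i-1},J_i)$, while the actual geodesic entry and exit points are random and correlated with the environment. We will handle this by taking suprema over all boundary points in the relevant windows, which is why the tail is allowed to degrade like $|k-k'|^{1+\delta}$, and by using the conformity of paths, which confines rerouting to $\Lambda_i$.
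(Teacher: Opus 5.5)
Your proposal is correct and follows the paper's proof. The paper bounds $A_i\le Z_{i,J_{i-1},J_i}$ via $\rX^{ij}_{Mn}-\rX_{Mn}\le \rX^{ij}_{uv}-\rX_{uv}$ at the geodesic's crossing points $u,v$, gets a tail bound for $Z_{i,k,k'}$ uniform in $|k-k'|$ from Proposition~\ref{p:paraestimateconforming} and Lemma~\ref{l:overhang} (Lemma~\ref{l:uij1aux}), and applies Proposition~\ref{p:perc1} with $R$ replaced by $R+z/M$ after using Proposition~\ref{p:tau2perc}, exactly as you do; the only simplification is that you need not enlarge the boundary windows by $|J_{i-1}-J_i|$ or the overhang, since the unit windows $\ell_{(i-1)n,kW_n,(k+1)W_n}$ and $\ell_{in,k'W_n,(k'+1)W_n}$ already contain the crossing points.
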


\begin{lemma}
    \label{l:uij2}
    There exist constants $C,c, \theta_3>0$ such that for $z\in  (0,n^{\delta})$ we have 
$$\P\left(\sum_{i} B_{i}\ge (CM+z)Q_n^{4}\right)\le \exp(1-cz^{\theta_3}).$$
\end{lemma}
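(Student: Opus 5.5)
We need to prove Lemma \ref{l:uij2}: tail bound on $\sum_i B_i$, where $B_i=\sum_{j=J_{i-1}\wedge J_i}^{J_{i-1}\vee J_i} Y_{i,j}$, with $Y_{i,j}=((\rX^{ij}_{Mn}-\rX_{Mn})^+)^4$.

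We plan to reduce $B_i$ to a sum over the path indices $k_{i-1},k_i$ of local random variables, independent across columns, and then invoke Proposition \ref{p:perc1} together with Proposition \ref{p:tau2perc}.

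\textbf{Step 1: a local bound on each $Y_{i,j}$.} On $\cU_{ij}$, compare the geodesic with an alternative path $g$ that agrees with $\gamma$ outside column $\Lambda_i$ and, inside $\Lambda_i$, detours around the block $\Lambda_{ij}$. Concretely, $g$ goes from $(( i-1)n,y_{i-1})$ to $(in,y_i)$ staying at vertical distance at least $2$ from $[(j-1)W_n,jW_n]$, for instance through a corner point at height $(J_{i-1}\vee J_i+2)W_n$. Since $g$ avoids the resampled region, we get
$$(\rX^{ij}_{Mn}-\rX_{Mn})^+\le \rX_{g|_{\Lambda_i}}-\rX_{y_{i-1},y_i}\le V_{i,J_{i-1},J_i},$$
where $V_{i,k,k'}$ denotes the maximum, over $u\in\ell_{(i-1)n,kW_n,(k+1)W_n}$ and $v\in\ell_{in,k'W_n,(k'+1)W_n}$, of the cost of the detour through the two fixed points above and below the $k,k'$ range, minus $\rX_{uv}$.

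Since we must dodge blocks in both directions, we take the minimum of an upper detour and a lower detour. Each detour cost is at most a sum of passage times across two parallelogram-like pieces, each with slope at most $|k-k'|+O(1)$ in units of $W_n$. By Proposition \ref{p:paraestimateconforming}(i), Lemma \ref{l:paraexplicit} for the lower tail of $\rX_{uv}$, and Pythagoras, we obtain
$$\P\big(V_{i,k,k'}\ge zQ_n\big)\le C\exp\big(-c\,(z/(1+|k-k'|^2))^{\theta}\big).$$
The variables $V_{i,k,k'}$ depend only on the field in column $i$, so the collections $\{V_{i,k,k'}\}_{k,k'}$ are independent across $i$. This is the gain from the conforming construction.

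\textbf{Step 2: bound $B_i$ by a local variable.} We have
$$B_i\le (|J_i-J_{i-1}|+1)\,V_{i,J_{i-1},J_i}^4=:Q_n^4\,\mathcal V_{i,J_{i-1},J_i}.$$
To fit the hypothesis of Proposition \ref{p:perc1}, which requires growth $|k-k'|^{1+\delta}$ with $\delta<1/100$, the crude slope factor is too big. We therefore use a sharper detour: route only through height $\max(k,k')+2$, going first horizontally and then along the geodesic geometry. The excess over $|u-v|$ is then $O(|k-k'|^2)Q_n$ in the worst case, but $\rX_{uv}$ itself already pays the Pythagorean term $\tfrac{(k-k')^2}{32}Q_n$. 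The difference of the two therefore has tail controlled by fluctuations only, of order $Q_n$ times a constant. So $V_{i,k,k'}$ has tails uniform in $k-k'$ up to the multiplicative factor.

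After taking fourth powers and multiplying by $(|k-k'|+1)$, we get $\mathcal V$ with $\P(\mathcal V_{i,k,k'}\ge z)\le C\exp(-c(z/(1+|k-k'|))^{\theta/4})$. This satisfies the hypothesis of Proposition \ref{p:perc1} with $\xi=\theta/4$.

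\textbf{Step 3: percolation.} On the event $\{\tau_2(\uk(\gamma))\le RM\}$, Proposition \ref{p:perc1} bounds
$$\sum_i\mathcal V_{i,J_{i-1},J_i}\le\max_{\tau_2(\uk)\le RM}\sum_i\mathcal V_{i,k_{i-1},k_i}$$
by $(C_3+C_4R^{3/4})M+z$, outside probability $C_5e^{-C_6z^{\xi/4}}$.

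For the complementary event we use Proposition \ref{p:tau2perc} with $R=C_7+z/M$. Since $R^{3/4}M\le C M+z$, this gives the claimed bound. The restriction $z\le n^\delta$ and $|J_i|\le n^\delta$ keep all points inside the strip where Proposition \ref{p:paraestimateconforming} applies.

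\textbf{Main obstacle.} The delicate step is Step 2: obtaining a detour estimate whose tail degrades only like $|k-k'|^{1+\delta}$ rather than quadratically. This requires carefully cancelling the Pythagorean cost of the detour against that of $\rX_{uv}$, uniformly over the endpoints $u,v$.
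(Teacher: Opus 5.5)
Your Step 1 fails for exactly the blocks that make up $B_i$. A conforming path from $u=((i-1)n,y_{i-1})$ to $v=(in,y_i)$ that stays in $\Lambda_i$ must, by continuity, pass through every height between $y_{i-1}$ and $y_i$. It therefore enters each band $[(i-1)n,in]\times[(j-1)W_n-1,jW_n+1]$ lying between the heights of $u$ and $v$, as well as the bands containing $u$ and $v$. Up to boundary indices, these are precisely the $j\in[J_{i-1}\wedge J_i,J_{i-1}\vee J_i]$ summed in $B_i$. Your route through a corner at height $(J_{i-1}\vee J_i+2)W_n$ crosses all of them too. The blocks in $B_i$ are the ones \emph{no} competitor inside column $i$ can avoid. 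Avoiding them would mean leaving column $i$, e.g.\ travelling vertically along a column boundary, which costs order $|J_i-J_{i-1}|W_n\gg Q_n$. So your $V_{i,k,k'}$ does not bound $(\rX^{ij}_{Mn}-\rX_{Mn})^+$, and the cancellation of Pythagorean costs in Step 2 has nothing to rest on. The detour idea could only ever address overhang blocks, which are $A_i$ and $C_i$, not $B_i$.

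The missing idea is that no detour is needed. Take as competitor the $\rX^{ij}$-geodesic between the same two points $u,v\in\gamma$. The parts of $\gamma$ outside column $i$ are unaffected by the resampling, so $\rX^{ij}_{Mn}-\rX_{Mn}\le\rX^{ij}_{uv}-\rX_{uv}$. Hence $B_i\le\widetilde{Z}_{i,J_{i-1},J_i}$, where
$\widetilde{Z}_{i,k,k'}=\sum_{j=k\wedge k'}^{k\vee k'}\max_{u,v}((\rX^{ij}_{uv}-\rX_{uv})^+)^4$.
Both $\rX^{ij}_{uv}$ and $\rX_{uv}$ are centered at the same $|u-v|$, and Proposition \ref{p:paraestimateconforming} applies to each. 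The Pythagorean terms therefore cancel exactly, and $\max_{u,v}(\rX^{ij}_{uv}-\rX_{uv})^+$ has stretched-exponential tails at scale $Q_n$, uniformly in $|k-k'|$.

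A union bound over the $1+|k-k'|$ values of $j$ then gives
$\P(\widetilde{Z}_{i,k,k'}\ge zQ_n^4)\le(1+|k-k'|)\exp(1-c(z/(1+|k-k'|))^{\theta_3})\le\exp(1-c(z/(1+|k-k'|^{1+\epsilon}))^{\theta_3})$.
This is the hypothesis of Proposition \ref{p:perc1}, and $\widetilde{Z}_{i,k,k'}$ depends only on the column-$i$ fields, so it is independent across $i$. With this replacement your Step 3 goes through as you wrote it: Proposition \ref{p:perc1} on $\{\tau_2\le RM+z\}$, Proposition \ref{p:tau2perc} for the complement, and $z\le n^{\delta}$ to keep $|k_i|\le n^{\delta}$. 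The ``main obstacle'' you identify simply does not arise.
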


\begin{lemma}
    \label{l:uij3}
    There exist constants $C,c, \theta_3$ such that for $z\in (0,n^{\delta})$ we have 
$$\P\left(\sum_{i} C_{i}\ge (CM+z)Q_n^4\right)\le \exp(1-cz^{\theta_3}).$$
\end{lemma}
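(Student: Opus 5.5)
The plan mirrors Lemma \ref{l:uij1}: by reflection symmetry, the overhang above the geodesic in Lemma \ref{l:uij3} is handled exactly like the overhang below it. The idea is to bound each $C_i$ by a random variable $Q_n^4\,\mathcal{V}_{i,J_{i-1},J_i}$ that depends only on the randomness in column $\Lambda_i$. We then apply the polymer estimate, Proposition \ref{p:perc1}, restricted to paths with $\tau_2(\uk)\le RM$, and use Proposition \ref{p:tau2perc} to show that the geodesic's index sequence $(J_i)$ lies in this class with high probability.

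\textbf{Step 1: localizing $Y_{i,j}$.} Fix $i$ and $j$ with $\cU_{ij}$. Let $u=(( i-1)n,y_{i-1})$ and $v=(in,y_i)$ be the canonical crossing points of $\gamma$. Using Lemma \ref{l:strongly.conforming} if needed, we may treat $\gamma\cap\Lambda_i$ as a conforming path from $u$ to $v$ inside $\Lambda_i$. Resampling $\Lambda^+_{ij}$ only changes $\rX$ inside column $i$. So, keeping $\gamma$ outside $\Lambda_i$ and replacing the middle piece by the new geodesic from $u$ to $v$, we get
\[
(\rX^{ij}_{Mn}-\rX_{Mn})^+\le \rX^{ij}_{uv}-\rX_{uv}\le \max_{u,v}\big|\rX^{ij}_{uv}-|u-v|\big|+\max_{u,v}\big|\rX_{uv}-|u-v|\big| .
\]
Here both maxima run over $u\in\ell_{(i-1)n,J_{i-1}W_n,(J_{i-1}+1)W_n}$ and $v\in\ell_{in,J_iW_n,(J_i+1)W_n}$. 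By Proposition \ref{p:paraestimateconforming}, each maximum has a stretched exponential tail at scale $Q_n$, uniformly in the slope $|J_i-J_{i-1}|\le n^{\delta}$. This uniformity holds because we measure the deviation from $|u-v|$, not from $n$. The field $\somega^{ij}$ has the same law as $\somega$, so the same bound applies to the first maximum.

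\textbf{Step 2: the column variable.} For $k,k'\in[-n^\delta,n^\delta]$, write $K=k\vee k'$ and define
\[
\mathcal{V}_{i,k,k'}=Q_n^{-4}\sum_{j=K+1}^{S^+_{i,K}}\Big(\max_{u,v}\big|\rX^{ij}_{uv}-|u-v|\big|+\max_{u,v}\big|\rX_{uv}-|u-v|\big|\Big)^4 ,
\]
with $\mathcal{V}_{i,k,k'}=0$ outside this range of $k,k'$. Then $C_i\le Q_n^4\,\mathcal{V}_{i,J_{i-1},J_i}$. The tail of $\mathcal{V}$ comes from two ingredients. First, by Lemma \ref{l:overhang}, the overhang length $S^+_{i,K}-K$ has a stretched exponential tail. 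Second, each summand is a fourth power of a variable with stretched exponential tail. A union bound over the at most $z^{1/2}$ indices $j$ (on the event that the overhang is at most $z^{1/2}$) gives $\P[\mathcal{V}_{i,k,k'}\ge z]\le C\exp(-cz^{\xi})$ for some $\xi>0$, even without the allowed factor $1+|k-k'|^{1+\delta}$.

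Independence across $i$ needs checking. The quantity $S^+_{i,K}$ is defined through a restricted geodesic between points on the two boundaries of $\Lambda_i$, so it is measurable with respect to $\omega^{\Lambda_i}$. The resampled variables use $\comega$ restricted to column $i$. Hence the collections $\{\mathcal{V}_{i,k,k'}\}_{k,k'}$ are independent for different $i$, by the construction of $\somega$.

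\textbf{Step 3: conclusion.} We intersect three events:
\begin{itemize}
\item $\{|J_i|\le n^\delta\ \forall i\}$, which fails with probability at most $e^{-z^{\theta_3}}$;
\item $\{\tau_2(J)\le (C_7+1)M+z\}$, controlled by Proposition \ref{p:tau2perc};
\item for large $z$, the event that the polymer bound holds.
\end{itemize}
If $z\lesssim M$, apply Proposition \ref{p:perc1} with $R=C_7+1$. If $z\gg M$, apply it with $R\approx z/M$; then $R^{3/4}M\le z^{3/4}M^{1/4}\le z$, which is absorbed into $z$. On these events we get
\[
\sum_i C_i\le Q_n^4\max_{\tau_2(\uk)\le RM}\sum_i\mathcal{V}_{i,k_{i-1},k_i},
\]
and this gives the required bound of $(CM+z)Q_n^4$ with stretched exponential failure probability.

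\textbf{Main obstacle.} The delicate point is Step 1. Canonical crossing points may be segments of the boundary lines, and the geodesic $\gamma$ may leave the rectangle between $u$ and $v$, so the reduction to a single column comparison needs care. The fix is to use the strongly conforming approximation of Lemma \ref{l:strongly.conforming} together with the fact that the restricted distance inside $\Lambda_i$ depends only on $\omega^{\Lambda_i}$. A second point to check is that $S^+_{i,K}$, defined at the grid point $u_{i,K+1}$ rather than at the true crossing points, still dominates the overhang. This follows from the planar ordering of geodesics, as already noted before Lemma \ref{l:overhang}.
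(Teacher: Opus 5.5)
Your proposal is correct and follows essentially the same route as the paper, which proves Lemma~\ref{l:uij3} by mirroring Lemma~\ref{l:uij1}. It dominates $C_i$ by a column-local variable summed over the overhang $[K+1,S^+_{i,K}]$, obtains that variable's stretched exponential tail from Lemma~\ref{l:overhang} and Proposition~\ref{p:paraestimateconforming} (as in Lemma~\ref{l:uij1aux}), and then applies Proposition~\ref{p:perc1} over $\tau_2(\uk)\le RM+z$ together with Proposition~\ref{p:tau2perc}; your case split between $z\lesssim M$ and $z\gg M$ is just a rephrasing of the paper's absorption $(C+C'(R+z/M)^{3/4})M+z\le C_1'M+C_2'z$.
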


The proofs of Lemma \ref{l:uij1} and Lemma \ref{l:uij3} are essentially identical, so we shall only provide a proof for the first one. For this we shall need to bound the individual summands in $A_i$. We first make the following notation. 

For $i\in [1,M]$, and $k,k'\in \Z$ let us define 
$$Z_{i,k,k'}= \sum_{j=S^-_{i,k}}^{k-1} \max_{u\in \ell_{(i-1)n,kW_{n},(k+1)W_n},\atop v\in \ell_{in,k'W_{n},(k'+1)W_n}} ((\rX^{ij}_{uv}-\rX_{uv})^+)^{4} I(-M\le j \le M)$$
if $k\le k'$ and 
$$Z_{i,k,k'}= \sum_{j=S^-_{i,k'}}^{k'-1} \max_{u\in \ell_{(i-1)n,kW_{n},(k+1)W_n},\atop v\in \ell_{in,k'W_{n},(k'+1)W_n}} ((\rX^{ij}_{uv}-\rX_{uv})^+)^{4}I(-M\le j \le M)$$
if $k>k'$. The next lemma controls the tails of $Z_{i,k,k'}$ which will be necessary to apply Proposition~\ref{p:perc1} later.

\begin{lemma}
    \label{l:uij1aux}
    There exist $c, \theta'>0$ such that for $i\in [1,M], k,k'\in \Z$ with $|k|\vee |k'|\le n^{\delta}$ and for all  $z\ge 0$ 
    $$\P(Z_{i,k,k'}\ge zQ_n^4)\le \exp(1-cz^{\theta'}).$$
\end{lemma}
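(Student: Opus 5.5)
We treat the case $k\le k'$; the case $k>k'$ is symmetric. The plan is to decompose $Z_{i,k,k'}$ as a sum over $j$ and bound each summand separately. The number of summands is $k-S^-_{i,k}$, which by Lemma \ref{l:overhang} has stretched exponential tails. For each fixed $j$, the summand
$$D_j:=\max_{u,v}((\rX^{ij}_{uv}-\rX_{uv})^+)^4$$
(with $u\in \ell_{(i-1)n,kW_n,(k+1)W_n}$ and $v\in\ell_{in,k'W_n,(k'+1)W_n}$) also has stretched exponential tails at scale $Q_n^4$, uniformly in $j$. A union bound then gives
$$\P(Z_{i,k,k'}\ge zQ_n^4)\le \P\big(k-S^-_{i,k}\ge z^{1/2}\big)+\sum_{j=k-z^{1/2}}^{k-1}\P\big(D_j\ge z^{1/2}Q_n^4\big),$$
and both terms are $\exp(1-cz^{\theta'})$. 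Here $z$ is assumed large; small $z$ is absorbed into the constant.

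The main step is the tail of $D_j$. Since $\rX^{ij}_{uv}\le \rX^{ij}_{\gamma_{uv}}$, resampling only increases the length of the old geodesic $\gamma_{uv}$, and only through its portion near $\Lambda_{ij}$. The cleanest bound is by alternative paths. For $j<k$ (the overhang region), we have $\rX^{ij}_{uv}\le \rX^{ij}_{g}$ for any conforming path $g$ from $u$ to $v$ avoiding the $1$-neighbourhood of $\Lambda_{ij}$. One choice is a path constrained to the strip $[(i-1)n,in]\times[kW_n-1,\infty)$ that goes from $u$ to $v$ and stays in a parallelogram of height $O(W_n)$ above row $k$. Such a path sees only the unresampled field, so $\rX^{ij}_g=\rX_g$. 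Therefore
$$(\rX^{ij}_{uv}-\rX_{uv})^+\le \rX_g-\rX_{uv}.$$
The term $\rX_g-|u-v|$ is controlled by a constrained passage time estimate. We apply Proposition \ref{p:constrainerX}, or rather its version for parallelograms of slope $|k'-k|$ with uniformity over endpoints on the segments, obtained by concatenating a few constrained pieces together with Proposition \ref{p:paraestimateconforming}. The term $|u-v|-\rX_{uv}$ is controlled by Proposition \ref{p:paraestimateconforming}(ii). Both give tails $\exp(1-cz^{\theta})$ at scale $Q_n$, so the fourth power has tails of the required form at scale $Q_n^4$.

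One caveat concerns the slope. When $|k'-k|$ is large, the straight-line length differs from $n$ by $\Theta(|k'-k|^2Q_n)$. The constrained path must then follow the parallelogram $\cP_{i,k,k'}$ rather than a horizontal rectangle, and the error is still $O(Q_n)$ since both passage times are compared to $|u-v|$. If uniformity in $k'-k$ turns out to be delicate, I would accept a tail that degrades polynomially in $|k-k'|$, which is compatible with the later application of Proposition \ref{p:perc1}. Even so, I expect the uniform statement to hold because the comparison is to $|u-v|$.

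I expect the main obstacle to be the uniform-over-endpoints constrained passage time estimate across a tilted $n\times W_n$ parallelogram avoiding a given strip. Proposition \ref{p:constrainerX} is only point-to-point. I would first try to route $g$ in three pieces: a short initial segment from $u$ up to a fixed point $u'$, a constrained middle piece from $u'$ to $v'$, and a short final segment from $v'$ to $v$. The endpoint pieces cover horizontal distance $\epsilon n$ and are handled by Proposition \ref{p:paraestimateconforming}(i) at scale $\epsilon n$, using $Q_{\epsilon n}\le Q_n$. The middle piece is handled by Proposition \ref{p:constrainerX} with a rotation, which is available by rotational invariance, up to the column restriction. A further issue is the boundary case $j=k-1$, where $\Lambda_{ij}$ is adjacent to the segment containing $u$. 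There the detour must start at height at least $kW_n+1$. This is handled by taking $u'$ to be the point $W_n/2$ above row $k$, at the cost of an extra $O(W_n)$ vertical segment, which has length $O(W_n)\ll Q_n$? No: $W_n\gg Q_n$, so instead the detour must rise gradually over horizontal distance $\epsilon n$, with cost $O(W_n^2/(\epsilon n))=O(Q_n/\epsilon)$. This is acceptable.
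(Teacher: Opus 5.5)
Your outer decomposition is the paper's. You show there are at most $z^{1/2}$ summands outside an event controlled by Lemma \ref{l:overhang}, then take a union bound over the per-$j$ terms $D_j$; the paper uses a general exponent $\alpha'$ where you use $1/2$.

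You diverge at the tail of $D_j$, and there you miss the observation that makes it a one-liner. The resampled field $\somega^{ij}$ has the same law as $\somega$, so $\rX^{ij}$ is just another copy of the restricted distance, and Proposition \ref{p:paraestimateconforming}(i) applies to it verbatim. That proposition is two-sided and already uniform over $u,v$ on the two segments; the hypothesis $|k|\vee|k'|\le n^{\delta}$ is what puts the segments inside its range. From
\[
(\rX^{ij}_{uv}-\rX_{uv})^+\le \big|\rX^{ij}_{uv}-|u-v|\big|+\big|\rX_{uv}-|u-v|\big|
\]
you get $\P(D_j\ge z'Q_n^4)\le 2\exp(1-c(z')^{\theta_2/4})$ uniformly in $j$, which is exactly the paper's argument. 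No comparison path avoiding $\Lambda_{ij}$ and no constrained passage-time estimate is needed; the ``main obstacle'' you identify is not there.

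As written, your substitute for this step is not yet a proof. It rests on a uniform-over-endpoints constrained estimate across a tilted parallelogram. That estimate is not in the paper and you only sketch it, and the sketch has a concrete hole.
\begin{itemize}
\item The two endpoint pieces are controlled by unconstrained bounds on $\rX_{u,u'}$ and $\rX_{v',v}$. Nothing keeps the corresponding paths out of the $1$-neighbourhood of $\Lambda_{ij}$, which is precisely what $\rX^{ij}_g=\rX_g$ requires.
\item With $\epsilon$ fixed, the chance that a geodesic over horizontal distance $\epsilon n$ wanders a distance of order $W_n$ is bounded only by a constant depending on $\epsilon$, not by $\exp(-cz^{\theta})$.
\item Repairing this needs one of: $\epsilon$ shrinking with $z$ together with a transversal-fluctuation bound; constrained bounds for the end pieces, which is circular; or bounding the end pieces in $\rX^{ij}$ itself, which already uses the equality in law above.
\item Proposition \ref{p:paraestimateconforming} is only stated for $r$ a multiple of $n$, so scale $\epsilon n$ would have to go through Lemma \ref{l:proxy} and Proposition \ref{p:para}.
\end{itemize}

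Finally, your boundary-case worry comes from an indexing slip. With $\Lambda_{ij}=[(i-1)n,in]\times[(j-1)W_n,jW_n]$, the block $\Lambda_{i,k-1}$ lies below height $(k-1)W_n$, a full $W_n$ below the segment containing $u$.
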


\begin{proof}
    Let us only consider the case $k\le k'$. The proof in the other case is identical. 
    
    Let $0<\alpha'<1$ be fixed. Observe that 
    $$\P(Z_{i,k,k'}\ge zQ_n^4)\le \P\left(\sum_{j=k-z^{\alpha'}}^{k-1} \max_{u\in \ell_{(i-1)n,kW_{n},(k+1)W_n}, \atop v\in \ell_{in,k'W_{n},(k'+1)W_n}} (\rX^{ij}_{uv}-\rX_{uv})^{4}I(-M\le j\le M) >zQ_n^{4}\right)+\P(S^-_{i,k}\le k-z^{\alpha'}).$$
    Using Lemma \ref{l:overhang}, it suffices to bound only the first term. Notice now that by Proposition \ref{p:paraestimateconforming}, for each {$j\in [k-z^{\alpha'},k-1]\cap [-M,M]$} (here we use the a priori bound on $k,k'$) we have for for some $c>0$ and for all $z'>0$
    $$\P\left(\max_{u\in \ell_{(i-1)n,kW_{n},(k+1)W_n}, \atop v\in \ell_{in,k'W_{n},(k'+1)W_n}} \Big|\rX^{ij}_{uv}-\mu|u-v|\Big| \ge z'Q_n\right)\le \exp(1-c(z')^{\theta_2});$$
    $$\P\left(\max_{u\in \ell_{(i-1)n,kW_{n},(k+1)W_n}, \atop v\in \ell_{in,k'W_{n},(k'+1)W_n}} \Big|\rX_{uv}-\mu|u-v|\Big| \ge z'Q_n\right)\le \exp(1-c(z')^{\theta_2}).$$
    It therefore follows that 
    $$\P\left(\max_{u\in \ell_{(i-1)n,kW_{n},(k+1)W_n}, \atop v\in \ell_{in,k'W_{n},(k'+1)W_n}} (\rX^{ij}_{uv}-\rX_{uv})^{4} >z'Q_n^4 \right) \le \exp(1-c(z')^{\theta_2/4})$$
    for all $z\ge 0$ and some $c>0$. 
    Finally, using the above and a union bound we get
    $$\P\left(\sum_{j=k-z^{\alpha'}}^{k-1} \max_{u\in \ell_{(i-1)n,kW_{n},(k+1)W_n}, \atop v\in \ell_{in,k'W_{n},(k'+1)W_n}} (\rX^{ij}_{uv}-\rX_{uv})^{4}I(-M\le j\le M) >zQ_n^{4} \right)\le z^{\alpha}\exp(1-c(z^{(1-\alpha)\theta_2/4})).$$
    It follows from Lemma \ref{l:overhang} that for some $\theta'$
    $$\P(Z_{i,k,k'}\ge zQ_n^4)\le z^{\alpha}\exp(1-c(z^{(1-\alpha)\theta'}))+ \exp(1-z^{\theta'}).$$
     The proof of the lemma is completed by choosing $\theta_3$ sufficiently small. 
\end{proof}

We are now ready to prove Lemma \ref{l:uij1}.

\begin{proof}[Proof of Lemma \ref{l:uij1}]
Notice first that on the event 
$\{J_{i-1}=k_{i-1}\}\cap \{J_{i}=k_{i}\}$ there must exist $u\in  \ell_{(i-1)n,k_{i-1}W_{n},(k_{i-1}+1)W_n}$ and $v\in \ell_{in,k_iW_{n},(k_i+1)W_n}$ such that $u,v\in \gamma$. It follows that 
$$ \rX^{ij}_{Mn}-\rX_{Mn} \le \rX^{ij}_{uv}-\rX_{uv}.$$
It therefore follows that for all $i,j$, on the event above
$$Y_{i,j}\le \max_{u\in \ell_{(i-1)n,k_{i-1}W_{n},(k_{i-1}+1)W_n} \atop v\in \ell_{in,k_iW_{n},(k_i+1)W_n}} ((\rX^{ij}_{uv}-\rX_{uv})^+)^{4} I(-M\le j \le M).$$ Summing now over $j$ from $j\in [S^{-}_{i,J_{i-1}\wedge J_{i}},J_{i-1}\wedge J_{i}-1]$, it follows that 
$$A_{i}\le Z_{i,J_{i-1},J_{i}}.$$
It therefore suffices to show that 
$$\P\left(\sum_{i=1}^{M} Z_{i,J_{i-1},J_{i}} \ge (CM+z)Q_n^4 \right)\le \exp(1-cz^{\theta_3}).$$

Clearly, by Proposition \ref{p:tau2perc} we can choose $R$ sufficiently large such that it suffices to show that for $\theta_3$ sufficiently small 
$$\P\left(\max_{\uk \in \mathfrak{K}_{M}, \tau_2(\uk)\le RM+z } \sum_{i} Z_{i,k_{i-1},k_i} \ge (CM+z)Q^4_{n}\right) \le \exp(1-cz^{\theta_3}).$$

Recall that we only need to prove the lemma for $z\le n^{\delta}$. Notice that this a priori upper bound on $z$ implies that (for $\delta$ sufficiently small) that for each $\uk \in \mathfrak{K}_{M}$ with $\tau_2(k)\le RM+z$ we have $\max |k_i|\le n^{\delta}$ and hence $Z_{i,k_{i-1},k_{i}}$ satisfy the conclusion of Lemma \ref{l:uij1aux} (for $n$ sufficiently large), which further implies that $Q_n^{-4}Z_{i,k,k'}$ satisfy the weaker tail estimates in the hypothesis of Proposition \ref{p:perc1}. Observe also that the family of random variables $Z_{i,k,k'}$ are independent across different $i$. Applying Proposition \ref{p:perc1} to the random variables $Q_n^{-4}Z_{i,k,k'}$ we get for some $C,C'c, \theta_3>0$

$$\P\left(\max_{\uk \in \mathfrak{K}_{M}\atop \tau_2(\uk)\le RM+z} \sum_{i} Z_{i,k_{i-1},k_i} \ge ((C+C'(R+z/M)^{3/4})M+ z)Q^4_{n}\right) \le \exp(1-cz^{\theta'}).$$
Observe now that 
$$((C+C'(R+z/M)^{3/4})M+ z)\le C'_1M+C'_2z$$ for some $C'_1,C'_2>0$ (depending on $C,C',R$ but not depending on $M$ or $z$) therefore we get that 
$$\P\left(\max_{\uk \in \mathfrak{K}_{M}\atop \tau_2(\uk)\le RM+z} \sum_{i} Z_{i,k_{i-1},k_i} \ge (CM+z)Q^4_{n}\right) \le \exp(1-c(z/C'_2)^{\theta_3}),$$
as required. This completes the proof of the lemma. 
\end{proof}

We now move towards the proof of Lemma \ref{l:uij2}. The argument is similar to the proof of Lemma \ref{l:uij1}. For $i\in [1,M]$, and $k,k'\in \Z$ let us define 
$$\widetilde{Z}_{i,k,k'}= \sum_{j=k\wedge k'}^{k\vee k'} \max_{u\in \ell_{(i-1)n,kW_{n},(k+1)W_n},\atop v\in \ell_{in,k'W_{n},(k'+1)W_n}} ((\rX^{ij}_{uv}-\rX_{uv})^+)^{4} I(-M\le j \le M).$$

Similar to Lemma \ref{l:uij1aux}, we have the following lemma to bound the tails of $\widetilde{Z}_{i,k,k'}$.

\begin{lemma}
    \label{l:uij2aux}
    Let $\epsilon>0$ be fixed but arbitrarily small. There exist $c,c', \theta_3>0$ such that for $i\in [1,M], k,k'\in \Z$ with $|k|\vee |k'|\le n^{\delta}$ and for all  $z\ge 0$ 
    $$\P(\widetilde{Z}_{i,k,k'}\ge zQ_n^4)\le (1+|k-k'|)\exp \left(1-c'(\frac{z}{1+|k-k'|})^{\theta_3}\right)\wedge 1\le \exp \left(1-c(\frac{z}{1+|k-k'|^{1+\epsilon}})^{\theta_3}\right).$$
\end{lemma}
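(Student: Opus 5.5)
The plan is to follow the proof of Lemma \ref{l:uij1aux} closely. The only difference is that the number of summands in $\widetilde{Z}_{i,k,k'}$ is now deterministic, namely at most $1+|k-k'|$, rather than governed by an overhang index. Write $m=1+|k-k'|$ and, for each $j\in[k\wedge k',k\vee k']\cap[-M,M]$, set
$$V_j:=\max_{u\in \ell_{(i-1)n,kW_{n},(k+1)W_n},\ v\in \ell_{in,k'W_{n},(k'+1)W_n}} ((\rX^{ij}_{uv}-\rX_{uv})^+)^{4}.$$
The first step is a uniform tail bound for each $V_j$. By Proposition \ref{p:paraestimateconforming} (both parts), applied to $\rX$ and to $\rX^{ij}$, we have
$$\P\Big(\max_{u,v}|\rX_{uv}-|u-v||\ge z'Q_n\Big)\le \exp(1-c(z')^{\theta_2}),$$
and the same bound holds for $\rX^{ij}$, which has the same law. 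The a priori bound $|k|\vee|k'|\le n^{\delta}$ is what guarantees that these parallelograms lie inside the window where that proposition applies. Hence $(\rX^{ij}_{uv}-\rX_{uv})^+\le 2z'Q_n$ outside an event of probability $2\exp(1-c(z')^{\theta_2})$, which gives
$$\P(V_j\ge yQ_n^4)\le \exp(1-c y^{\theta_2/4}).$$
There is one point to watch here. The centering $|u-v|$ is common to both passage times, so it cancels. This is why the slope $|k-k'|$ does not enter the tail of $V_j$, even though $|u-v|-n$ can be as large as order $(k-k')^2Q_n$.

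The second step is a union bound. If $\widetilde{Z}_{i,k,k'}\ge zQ_n^4$, then some $V_j\ge (z/m)Q_n^4$. Summing over the at most $m$ values of $j$ gives
$$\P(\widetilde{Z}_{i,k,k'}\ge zQ_n^4)\le m\exp\big(1-c'(z/m)^{\theta_3}\big)\wedge 1,$$
with $\theta_3\le\theta_2/4$. This is the first inequality in the statement.

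The third step is the cosmetic conversion to the form $\exp(1-c(z/(1+|k-k'|^{1+\epsilon}))^{\theta_3})$, which is where the real (though still mild) work lies. Set $w=z/m^{1+\epsilon}$, so that $z/m=w\,m^{\epsilon}$. We need
$$m\exp(1-c'w^{\theta_3}m^{\epsilon\theta_3})\le \exp(1-cw^{\theta_3})$$
whenever the left side is below $1$. I would split into two cases.

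\emph{Case $w\ge w_0$.} Here $c'w^{\theta_3}m^{\epsilon\theta_3}\ge \tfrac{c'}{2}w^{\theta_3}+\tfrac{c'}{2}w_0^{\theta_3}m^{\epsilon\theta_3}$, and the second term dominates $\log m$ for all $m\ge 1$ once $w_0$ is chosen large, depending on $\epsilon$ and $\theta_3$. The bound then holds with $c=c'/2$.

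\emph{Case $w<w_0$.} Here the right-hand side is at least $\exp(1-cw_0^{\theta_3})\ge 1$ once $c\le w_0^{-\theta_3}$, so the claim is trivial because of the $\wedge 1$.

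Choosing $c=\min(c'/2,\,w_0^{-\theta_3})$ handles both cases. This uses the $m^{\epsilon}$ slack to absorb the polynomial prefactor $m$, which is the only genuinely delicate point. The resulting tail has exactly the form required in the hypothesis of Proposition \ref{p:perc1}, with $\delta=\epsilon$.
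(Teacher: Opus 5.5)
Your proposal is correct and follows the paper's proof. The argument has the same three steps: a per-$j$ tail bound for $\max_{u,v}((\rX^{ij}_{uv}-\rX_{uv})^+)^4$ from Proposition \ref{p:paraestimateconforming} applied to both $\rX$ and $\rX^{ij}$ (giving $\theta_3=\theta_2/4$), a union bound over the at most $1+|k-k'|$ values of $j$ at threshold $z/(1+|k-k'|)$, and a case split for the second inequality that makes rigorous the paper's one-line remark that the middle term is below $1$ only when $z\gg 1+|k-k'|^{1+\epsilon}$. One small correction: your case analysis actually works with $(1+|k-k'|)^{1+\epsilon}$ rather than $1+|k-k'|^{1+\epsilon}$, which is harmless because $(1+a)^{1+\epsilon}\le 2^{\epsilon}(1+a^{1+\epsilon})$ by convexity, and the resulting factor is absorbed into $c$.
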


\begin{proof}
     Observe that the term $(1+|k-k'|)\exp\left(1-c(\frac{z}{1+|k-k'|^{1+\epsilon}})^{\theta_3}\right)$ is smaller than $1$ only if {$z\gg 1+|k-k'|^{1+\epsilon}$}. The second inequality in the statement follows from this. Therefore it suffices to prove only the first inequality. 

    By definition of $\widetilde{Z}_{i,k,k'}$, and arguing as in the Proof of Lemma \ref{l:uij1aux} it follows that 
    $$\P(\widetilde{Z}_{i,k,k'}\ge zQ_n^4) \le (1+|k-k'|) \max_{j\in [k\wedge k', k\vee k']} \P\left(  \max_{u\in \ell_{(i-1)n,kW_{n},(k+1)W_n},\atop v\in \ell_{in,k'W_{n},(k'+1)W_n}} ((\rX^{ij}_{uv}-\rX_{uv})^+)^{4} \ge \frac{z Q^4_n}{1+|k-k'|}\right).$$
    Therefore it suffices to prove that there exists $c',\theta_3>0$ such that for all $z'>0$, and for all $k,k'$ as in the statement of the lemma and $j\in [k\wedge k',k\vee k']$ we have
 \begin{equation}
     \label{e:uij21}
     \P\left(  \max_{u\in \ell_{(i-1)n,kW_{n},(k+1)W_n},\atop v\in \ell_{in,k'W_{n},(k'+1)W_n}} ((\rX^{ij}_{uv}-\rX_{uv})^+)^{4} \ge z'Q_n^4\right) \le \exp(1-c'(z')^{\theta_3}).
 \end{equation}
    
    As in the proof of Lemma \ref{l:uij1aux}, we have from Proposition \ref{p:paraestimateconforming} that for all $i,k,k',j$ as above we have for some $c'>0$ and $\theta_2$ as in Proposition \ref{p:paraestimateconforming}
     $$\P\left(\max_{u\in \ell_{(i-1)n,kW_{n},(k+1)W_n}, \atop v\in \ell_{in,k'W_{n},(k'+1)W_n}} |\rX^{ij}_{uv}-\mu|u-v| \ge z'Q_n\right)\le \exp(1-c(z')^{\theta_2});$$
    $$\P\left(\max_{u\in \ell_{(i-1)n,kW_{n},(k+1)W_n}, \atop v\in \ell_{in,k'W_{n},(k'+1)W_n}} |\rX_{uv}-\mu|u-v| \ge z'Q_n\right)\le \exp(1-c(z')^{\theta_2}).$$
    We therefore get the \eqref{e:uij21} with $\theta_3=\theta_2/4$. This completes the proof of the lemma. 
\end{proof}

Using the above lemma, the proof of Lemma \ref{l:uij2} is similar to the proof of Lemma \ref{l:uij1}.

\begin{proof}[Proof of Lemma \ref{l:uij2}]
Arguing as in the proof of Lemma \ref{l:uij1}, we get that for all $i,j$, on the event $\{J_{i-1}=k_i\}\cap \{J_i=k_i\}$ we have 
$$Y_{i,j}\le \max_{u\in \ell_{(i-1)n,k_{i-1}W_{n},(k_{i-1}+1)W_n} \atop v\in \ell_{in,k_iW_{n},(k_i+1)W_n}} ((\rX^{ij}_{uv}-\rX_{uv})^+)^{4} I(-M\le j \le M).$$
 Summing now over $j\in [J_{i-1}\wedge J_{i}, J_{i-1}\vee J_{i}]$, it follows that 
$$B_{i}\le \widetilde{Z}_{i,J_{i-1},J_{i}}.$$
For $R$ as in Proposition \ref{p:tau2perc} it follows therefore that it suffices to show that 

$$\P\left(\max_{\uk \in \mathfrak{K}_{M}\atop \tau_2(k)\le RM+z} \sum_{i} \widetilde{Z}_{i,k_{i-1},k_i} \ge (CM+z)Q^4_{n}\right) \le \exp(1-cz^{\theta'}).$$

Recall that it suffices to prove the lemma for $z\le n^{\delta}$. Notice that this a priori upper bound on $z$ implies that (for $\delta$ sufficiently small) that for each $\uk \in \mathfrak{K}_{M}$ with $\tau_2(k)\le RM+z$ we have $|k_{i-1}|\vee |k_i|\le n^{\delta}$ and hence $\widetilde{Z}_{i,k_{i-1},k_{i}}$ satisfy the conclusion of Lemma \ref{l:uij2aux}. Observe also that the family of random variables $\widetilde{Z}_{i,k,k'}$ are independent across different $i$. The remainder of the argument apply Proposition \ref{p:perc1} to $Q_n^{-4}\widetilde{Z}_{i,k,k'}$  identically to the proof of Lemma \ref{l:uij1} and we omit the details. 
\end{proof}

As already mentioned, the proof of Lemma \ref{l:uij3} is identical to that of Lemma \ref{l:uij1} and hence is omitted. Combining Lemmas \ref{l:uij1}, \ref{l:uij2} and \ref{l:uij3}, the proof of the second estimate in Proposition \ref{p:uij.bounds} is completed. \qed

\subsection{Proof of the third estimate} Proof of the third estimate in Proposition \ref{p:uij.bounds} is similar to the second one, in fact, slightly simpler. Recalling the definition of $S^{\pm}_{i,j}$, observe that for $i\in [1,M]$

$$\sum_{j=-M}^{M} I(\cU_{ij}) \le S^{+}_{i,J_{i-1}\vee J_{i}}-S^{-}_{i,J_{i-1}\wedge J_{i}}+1.$$ 
Since $S^{+}_{i,J_{i-1}\vee J_{i}}-S^{-}_{i,J_{i-1}}$ is a nonnegative integer it follows that 

$$\left(\sum_{j=-M}^{M} I(\cU_{ij})\right)^2\le 3(S^{+}_{i,J_{i-1}\vee J_{i}}-S^{-}_{i,J_{i-1}})^2+1.$$
Summing over $i$ and using $(x+y+z)^2\le 3(x^2+y^2+z^2)$ yields

$$\sum_i \left(\sum_{j=-M}^{M} I(\cU_{ij})\right)^2 \le M+ 9\left(\sum_i (S^{+}_{i,J_{i-1}\vee J_{i}}-J_{i-1}\vee J_{i})^2+\sum_{i} (J_{i}-J_{i-1})^2+\sum_{i} (S^{-}_{i,J_{i-1}\wedge J_{i}}-J_{i-1}\wedge J_{i})^2  \right). $$

Therefore, the third estimate in Proposition \ref{p:uij.bounds} follows from the next two lemmas together with Proposition \ref{p:tau2perc}. 

\begin{lemma}
    \label{l:uij3rd1}
     There exist constants $C,c, \theta'>0$ such that for $z\ge 0$ we have 
$$\P\left({\sum_{i} (S^{+}_{i,J_{i-1}\vee J_{i}}-J_{i-1}\vee J_{i})^2}\ge (CM+z)\right)\le \exp(1-cz^{\theta'}).$$
\end{lemma}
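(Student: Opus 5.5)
We will follow the template of the proof of Lemma \ref{l:uij1}: bound the sum along the geodesic by a maximum over admissible sequences $\uk$ of a sum of independent-across-$i$ random variables, and then apply Proposition \ref{p:perc1}. As before, by Lemma \ref{l:proxytrans:intro} we may restrict to $z\le n^{\delta}$ and work on the event $\max_i|J_i|\le n^{\delta}$; on the complement we lose only $\exp(1-cz^{\theta'})$. Indeed, the sum is deterministically at most polynomial in $M$ and $n$, so large $z$ is absorbed after shrinking $\theta'$.

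For $i\in[1,M]$ and $k,k'\in\Z$ we define
$$V_{i,k,k'}:=\big(S^{+}_{i,k\vee k'}-k\vee k'\big)^2 .$$
The variable $S^{+}_{i,k\vee k'}$ is a function only of the geodesic from $u_{i-1,k\vee k'+1}$ to $u_{i,k\vee k'+1}$. This is a conforming geodesic inside the column $\Lambda_i$, so it is measurable with respect to $\omega^{\Lambda_i}$. Hence the families $\{V_{i,k,k'}\}_{k,k'}$ are independent for different $i$, which is the independence hypothesis of Proposition \ref{p:perc1}. Identically, $(S^{+}_{i,J_{i-1}\vee J_i}-J_{i-1}\vee J_i)^2=V_{i,J_{i-1},J_i}$. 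So, using Proposition \ref{p:tau2perc} to choose $R$ with $\P(\tau_2(J)\ge RM+z)\le \exp(1-cz^{\theta_4})$, it suffices to bound
$$\P\Big(\max_{\uk\in\fK_M,\ \tau_2(\uk)\le RM+z}\ \sum_{i=1}^M V_{i,k_{i-1},k_i}\ge CM+z\Big).$$

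The needed tail bound for $V_{i,k,k'}$ comes from Lemma \ref{l:overhang}. We apply it with $z^{1/2}$ in place of $z$ and use $|k\vee k'|\le n^\delta$, which holds for every $\uk$ with $\tau_2(\uk)\le RM+z$ once $z\le n^\delta$ and $\delta$ is small. This gives $\P(V_{i,k,k'}\ge z)\le \exp(1-z^{\theta'/2})$. Such a tail is uniform in $|k-k'|$, so it is stronger than the hypothesis of Proposition \ref{p:perc1} with $\xi=\theta'/2$ and any $\delta<1/100$.

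Two small points need care. First, $S^+_{i,k}-k$ could be negative, but it is bounded below by $-1$ up to a trivial shift, since the geodesic from $u_{i-1,k+1}$ to $u_{i,k+1}$ starts in block $k+1$. So we may replace $V$ by the square of the positive part plus a constant, which is absorbed into $CM$. Second, Lemma \ref{l:overhang} holds only for $z$ large; for small $z$ we adjust constants.

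Proposition \ref{p:perc1} then gives the bound
$$\P\Big(\max \sum_i V_{i,k_{i-1},k_i}\ge (C_3+C_4(R+z/M)^{3/4})M+z\Big)\le C_5\exp(-C_6z^{\xi/4}).$$
As in Lemma \ref{l:uij1}, we have $(R+z/M)^{3/4}M\le C(M+z)$, which yields the claim with some $C$ and $\theta'$.

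The main obstacle I expect is verifying the measurability of $S^{+}_{i,k}$ with respect to the column field alone. This is needed for the independence across $i$ and relies on the definition of conforming geodesics between two points on column boundaries, including the canonical topmost choice. Everything else is a routine repetition of the proof of Lemma \ref{l:uij1}.
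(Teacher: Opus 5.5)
Your proposal is correct and follows the paper's proof: the paper sets $Z_{i,k,k'}=(S^{+}_{i,k\vee k'}-k\vee k')^2$, notes that these are independent across $i$, uses Proposition \ref{p:tau2perc} to restrict to $\tau_2(\uk)\le RM+z$, and applies Proposition \ref{p:perc1} with the tail bounds from Lemma \ref{l:overhang}. The only difference is that the paper truncates at $z=O(M^3)$ rather than $z\le n^{\delta}$, which is immaterial; your sign worry is also moot, since the geodesic defining $S^{+}_{i,k}$ starts on a block boundary strictly above row $k$, so $S^{+}_{i,k}-k$ is bounded below by a positive constant.
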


\begin{lemma}
    \label{l:uij3rd2}
     There exist constants $C,c, \theta'>0$ such that for $z\ge 0$ we have 
$$\P\left(\sum_{i} (S^{-}_{i,J_{i-1}\wedge J_{i}}-J_{i-1}\wedge J_{i})^2\ge (CM+z)\right)\le \exp(1-cz^{\theta'}).$$
\end{lemma}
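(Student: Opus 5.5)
The plan is to prove Lemma \ref{l:uij3rd2} by the same route as Lemma \ref{l:uij1}: reduce the sum along the random geodesic to a maximum over deterministic index sequences $\uk$ with bounded $\tau_2$, and then apply the polymer estimate Proposition \ref{p:perc1}. As in the second estimate, the trivial bound $|S^-_{i,k}-k| = O(n^{C})$ on the relevant event lets us assume $z\le n^{\delta}$. We may also work on the event that $|J_i|\le n^{\delta}$ for all $i$, which by Lemma \ref{l:proxytrans:intro} fails with probability at most $\exp(-z^{\theta})$.

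For $i\in[1,M]$ and $k,k'\in\Z$ with $|k|\vee|k'|\le n^{\delta}$, define
$$V_{i,k,k'} := \big(S^-_{i,k\wedge k'}-k\wedge k'\big)^2 .$$
On the event $\{J_{i-1}=k_{i-1},J_i=k_i\}$, the $i$-th summand of the lemma equals $V_{i,k_{i-1},k_i}$. Hence
$$\sum_i \big(S^-_{i,J_{i-1}\wedge J_i}-J_{i-1}\wedge J_i\big)^2\le \max_{\uk\in\fK_M,\ \tau_2(\uk)\le RM+z}\ \sum_i V_{i,k_{i-1},k_i},$$
outside an event of probability $\exp(1-cz^{\theta_4})$ given by Proposition \ref{p:tau2perc}, with $R$ chosen large. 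Since $S^-_{i,k}$ is defined through conforming geodesics between points on the lines $x=(i-1)n$ and $x=in$, it is measurable with respect to $\omega^{\Lambda_i}$ alone. So the families $\{V_{i,k,k'}\}_{k,k'}$ are independent across $i$.

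By Lemma \ref{l:overhang}, $\P(V_{i,k,k'}\ge z)\le \exp(1-z^{\theta'/2})$. This tail is uniform in $|k-k'|$, so it is stronger than the hypothesis of Proposition \ref{p:perc1}, which holds with $\xi=\theta'/2$ and any $\delta<1/100$. Lemma \ref{l:overhang} is stated only for large $z$, but small $z$ is absorbed by adjusting the constant $C_1$.

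Proposition \ref{p:perc1}, applied with $R$ replaced by $R+z/M$, then gives
$$\P\Big(\max_{\tau_2(\uk)\le RM+z}\sum_i V_{i,k_{i-1},k_i}\ge (C_3+C_4(R+z/M)^{3/4})M+z\Big)\le C_5\exp(-C_6z^{\xi/4}).$$
Since $(R+z/M)^{3/4}M\le C(M+z)$, the threshold is at most $C'M+C''z$. Rescaling $z$ and combining the three exceptional events, then shrinking $\theta'$ and adjusting constants, gives the stated bound.

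The only point needing care is the measurability and independence claim, together with the requirement that the sequences $\uk$ in the maximum stay within $|k_i|\le n^{\delta}$ so that Lemma \ref{l:overhang} applies. The latter follows from $\tau_2(\uk)\le RM+z$ with $k_0=0$, since this forces $|k_i|\le \sqrt{M(RM+z)}\le n^{\delta}$ once $n\ge n(M)$. Lemma \ref{l:uij3rd1} is handled identically with $S^+$ in place of $S^-$, and this completes the third estimate of Proposition \ref{p:uij.bounds}.
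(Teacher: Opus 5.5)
Your proposal is correct and follows the paper's argument, which proves the companion Lemma~\ref{l:uij3rd1} and declares this one identical. The steps are the same: define the per-column variable $(S^-_{i,k\wedge k'}-k\wedge k')^2$, use Proposition~\ref{p:tau2perc} to restrict to sequences with $\tau_2\le RM+z$, note the independence across columns and the stretched-exponential tails from Lemma~\ref{l:overhang}, and apply Proposition~\ref{p:perc1} with $R+z/M$ in place of $R$. The only difference is cosmetic: you truncate at $z\le n^{\delta}$ (via the deterministic polynomial bound on $|S^-_{i,k}-k|$, as in the second estimate) rather than the paper's $z=O(M^3)$, and your truncation gives the more direct justification for the quantity actually appearing in this lemma.
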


The proofs of the two lemmas above are essentially identical so we shall only focus on the proof of the first one.

\begin{proof}[Proof of Lemma \ref{l:uij3rd1}]
Notice first that
$$\sum_i \left(\sum_{j=-M}^{M} I(\cU_{ij})\right)^2$$
is deterministically $O(M^3)$ and therefore it suffices to prove it for $z=O(M^3)$.

Define $Z_{i,k,k'}=(S^{+}_{i,k\vee k'}-k\vee k')^2$. Arguing as in the proof of the second estimate we get that it suffices to upper bound

$$\P\left(\max_{\uk \in \mathfrak{K}_{M}\atop \tau_2(\uk)\le RM+z}\sum_i Z_{i,k_{i-1},k_i} \ge CM+z\right)+\P\left(\sum_{i}(J_{i}-J_{i-1})^2\ge RM+z\right).$$
Applying Proposition \ref{p:tau2perc}, we  fix $R$ (independent of $M$ and $z$) such that the second term above is upper bounded by $\exp(1-z^{\theta_3})$ and therefore it suffices to show that 
$$\P\left(\max_{\uk \in \mathfrak{K}_{M}\atop \tau_2\uk)\le RM+z}\sum_i Z_{i,k_{i-1},k_i} \ge CM+z\right)\le \exp(1-cz^{\theta_3}).$$

Observe also that  $Z_{i,k,k'}$ is independent across $i$. Further, since $z=O(M^3)$ it follows that $\max|k_i|=O(M^3)$ for all $\uk$ as above. Therefore, Lemma
\ref{l:overhang} ensures that for all $\uk$ as above, the $Z_{i,k,k'}$ satisfy the hypothesis of Proposition~\ref{p:perc1}. By Proposition \ref{p:perc1} we therefore have
  $$\P\left(\max_{\uk \in \mathfrak{K}_{M}\atop \tau_2(\uk)\le RM+z} \sum_{i} Z_{i,k_{i-1},k_i} \ge CM+z \right)\le \exp(1-cz^{\theta_3})$$
  and this completes the proof.
\end{proof}

\subsection{Proof of the first estimate}
By exchangeability of $\rX$ and $\rX^{ij}$ it follows that 
$$\E\Bigg[ \sum_{i=1}^M \sum_{j=-M}^M  I(\cU_{ij})(\rX^{ij}_{Mn} - \rX_{Mn})^2 \Bigg] = 2\E\Bigg[ \sum_{i=1}^M \sum_{j=-M}^M  I(\cU_{ij})((\rX^{ij}_{Mn} - \rX_{Mn})^{+})^2 \Bigg].$$ 
It therefore suffices to prove the following stronger result: There exist $C,c,\theta_3>0$ such that for $z\ge 0$ we have 

\begin{equation}
    \label{e:vartail}
    \P\Bigg[ \sum_{i=1}^M \sum_{j=-M}^M  I(\cU_{ij})((\rX^{ij}_{Mn} - \rX_{Mn})^{+})^2 \ge (CM+z)Q_n^2 \Bigg] \le \exp(1-cz^{\theta'}),
\end{equation}

Proof of \eqref{e:vartail} is very similar to the proof of the second estimate in Proposition \ref{p:uij.bounds} and we omit the details to avoid repetitions. \qed

\section{Multi-scale argument for Proposition \ref{p:chaos}: Definition of events}
\label{s:events}

The next few sections are devoted to the proof of Proposition \ref{p:chaos} that was sketched in Section \ref{s:prelim}. We shall fix a constant $\kappa\in (0,1)$. The reader can easily check that if the conclusion of Proposition \ref{p:chaos} holds for some value of $\kappa$, then it holds for all larger values of $\kappa$, therefore it suffices to prove the proposition under the assumption that $\kappa^{-1}$ is an integer. We shall henceforth assume so as it will be technically convenient. 

Assume that $\log_2 \log_2 M$ is a large integer (the reader can easily check that the argument goes through for general $M$ with minor modifications, but for the purpose of the proof of Theorem \ref{t:theorem} having one $M$ is enough) and let $$\Phi=2^{\ell(\log_2 \log_2 M)^5}.$$ We will define events on a series of scales 
$$r_\ell = \Phi^\ell n= 2^{(\log_2 \log_2 M)^5}n$$ 
for integer values of $\ell \in [0,\ell_{\max}]$ where $$\ell_{\max}= \frac1{100}\bigg\lfloor \frac{\log_2 M}{(\log_2 \log_2 M)^5} \bigg\rfloor.$$ 
From now on, we shall work with a fixed $\ell$ in the above range. 

We shall work with vertical columns of width $r_{\ell}$. For $\ell=0$ these will correspond to the same columns $\Lambda_i$ we had previous defined. Recall from Section \ref{s:prelim} that we set  
\[
J_i^{n,M,\ell}=\lfloor y_{i\Phi^\ell}/W_{r_\ell} \rfloor
\]
to denote the location at which the geodesic $\gamma$ (from $\bf0$ to $(Mn,0)$) exits the $i$th column at scale $r_\ell$. 
Note again that for $\ell=0$ this equals $J_i$ considered previously. 

In the following subsection we will define a collection of events, each of which is implicitly depends on $n,M,\ell$ but for notational simplicity we will suppress this and drop the corresponding subscripts and superscripts (in particular, by a slight abuse of notation $J_i$ would refer to $J_i^{n,M,\ell}$ when it is clear from the context that we are dealing with a fixed $\ell$).  As we shall only be working with a fixed value of $\ell$ (except in Section~\ref{s:chaos.estimate} where we will be considering multiple $\ell$ simultaneously and where the dependence will be made explicit) there will be no scope for confusion.

\subsection{Elementary Events}
\label{s:elementary1}
The events we will need to prove Proposition \ref{p:chaos} are rather complicated but they are made up of a number of elementary events dealing with passage times of paths across rectangles and parallelograms. We first define these events and give estimates of their probabilities. The constants involved in the probability estimates for events in this section will not depend on $M,n$ or $\ell$. We shall not mention this explicitly each time. If not specified otherwise we shall also assume that all such stated estimates at scale $r=r_{\ell}$ work for all $i\in [1,Mn/r]$ and for all $j\in [-MW_n/W_r,MW_n/W_r]$ (in many cases the estimates will hold for a larger range of $j$). The proofs of the estimates in this subsection are provided in Section \ref{s:letter}.

\noindent
{\bf Horizontal lines with well behaved passage times to the sides.}

The first estimate asks that passage times from the side of the $i$-th column to points on the line $y=jW_r$ are not too small.  We define
\begin{align*}
\cK_{i,j,z} &= \bigg\{ \inf_{\substack{x\in[(i-1)r,ir]| \\ |y|\leq {n^{\beta}W_n}}} \inf_{\substack{\gamma'(0)=((i-1)r,y)) \\ \gamma'(1)=(x,jW_r)}} \rX_{\gamma'} - (x-(i-1)r) - \frac12\Big(\frac{|y-jW_r| }{W_r} - 1\Big)^2 Q_r \geq  -z Q_r \bigg\}\\
&\cap \bigg\{ \inf_{\substack{x\in[(i-1)r,ir]| \\ |y|\leq {n^{\beta}W_n}}} \inf_{\substack{ \gamma'(0)=(x,jW_r) \\ \gamma'(1)=(ir,y)) }} \rX_{\gamma'} - (ir-x) - \frac12\Big(\frac{|y-jW_r| }{W_r} - 1\Big)^2 Q_r \geq  -z Q_r \bigg\}
\end{align*}

and let
\[
\cK^*_{i,j,z,w} = \cK_{i,j-w,z} \cap \cK_{i,j,z} \cap \cK_{i,j+w,z}.
\]
\begin{lemma}\label{l:cK.bound}
There exists $C,\theta_5>0$, not depending on $n,M,\ell$ such that for all $i$ and {$|j|\leq MW_n/W_r$} {and all $z\ge 0$}
\[
\P[\cK_{i,j,z}] \geq 1 - \exp(-C z^{\theta_5}).
\]
\end{lemma}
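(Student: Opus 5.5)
By symmetry it suffices to treat the first event in the definition of $\cK_{i,j,z}$, i.e.\ paths from the left side $\{(i-1)r\}\times[-n^\beta W_n,n^\beta W_n]$ of the column to points $(x,jW_r)$ with $x\in[(i-1)r,ir]$. The second event follows by reflection. By translation invariance of the field we may take $i=1$ and shift vertically so that $j=0$. The vertical shift is harmless because the estimates of Proposition~\ref{p:paraestimateconforming}(ii) hold for $|jW_r|\le n^\beta W_n$. The plan is to discretize both the starting point and the endpoint $x$, use lower-tail estimates across parallelograms, and take a union bound.

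\textbf{Step 1 (decomposition).} Partition the left side into segments $\ell_{0,kW_r,(k+1)W_r}$ with $|k|\lesssim n^\beta W_n/W_r$. For the endpoint, split $x$ dyadically: write $x-(i-1)r\in[2^{-m-1}r,2^{-m}r]$ for $m\ge 0$, and handle $x$ very close to the left boundary separately. For such a dyadic range use the scale $s=2^{-m}r$. A start point in segment $k$ is then at vertical distance $\approx|k|W_r$ from the target, which is $\approx |k|(W_r/W_s)$ blocks at scale $s$.

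\textbf{Step 2 (lower tail per piece).} For each piece, apply Proposition~\ref{p:paraestimateconforming}(ii) together with Lemma~\ref{l:paraexplicit} (transferred to $\rX$ via Lemma~\ref{l:proxy}) at scale $s$. This gives, with failure probability $\exp(1-Cz'^{\theta})$,
\[
\rX_{uv}\ge |u-v|-z'Q_s,
\]
where Pythagoras yields
\[
|u-v|-(x-(i-1)r)\ge c\,\frac{(kW_r)^2}{s}-W_r^2/s .
\]
Since $W_r^2/r=Q_r$ and $s\le r$, this exceeds $\tfrac12(|k|-1)^2Q_r$, with an extra factor $r/s\ge1$ that gives slack. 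When the vertical displacement is comparable to or larger than $s$, the path is essentially vertical. In that regime $\rX_{uv}\ge c|u-v|$ deterministically (because $\psi\ge d_1$) already dominates once $n$ is large.

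\textbf{Step 3 (union bound).} Choose $z'=z+|k|+m$, rescaled appropriately. The $Q_s$ versus $Q_r$ mismatch only helps, since $Q_s\le Q_r$. Summing $\exp(-C(z+|k|+m)^{\theta})$ over $k$ and $m$ gives $\exp(-Cz^{\theta_5})$ for a smaller exponent $\theta_5$. The constants in the quadratic bound are absorbed by using the $1/32$ from Lemma~\ref{l:paraexplicit} together with the $(r/s)$ slack.

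\textbf{Main obstacle.} The hardest step is the regime of small $x-(i-1)r$, meaning $s$ far below $n$, where the estimates of Proposition~\ref{p:paraestimateconforming} are unavailable. There I would use the deterministic bound $\rX\ge d_1|u-v|$ together with the fact that a geodesic to a nearby point on the line $y=jW_r$ must essentially cross vertical distance $|y-jW_r|$. When $|y-jW_r|\le 2W_r$ the quadratic term is $O(Q_r)$, and a small $x$ costs at most $O(n)\ll zQ_r$ only if we also use monotonicity: extend the path to the point $((i-1)r+n,jW_r)$ at cost at most $d_2 n$. That cost needs $n\ll Q_r$, which fails at $\ell=0$. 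So I would first try to handle $\ell=0$ with the parallelogram bound at the fixed scale $n$ by comparing with endpoints on the column line $x=(i-1)r+n$ via subadditivity. I would also track that the geodesic's backtracking is excluded by the conforming structure, so that every relevant path lies in the single column.
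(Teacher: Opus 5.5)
There is a genuine gap, and it is in Step 2. Proposition~\ref{p:paraestimateconforming} and Lemma~\ref{l:paraexplicit} control $\rX_{uv}$ only when $u$ and $v$ range over \emph{vertical} segments at a fixed horizontal separation (one scale $s$). The endpoints in $\cK_{i,j,z}$ form a horizontal continuum $\{(x,jW_r):x\in[(i-1)r,ir]\}$. Restricting $x-(i-1)r$ to a dyadic range $[s/2,s]$ still leaves a horizontal segment of endpoints of length $s/2$, and no single application of those estimates covers it.

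To get uniformity in $x$ by a union bound, you would need a grid of vertical lines with spacing $\lesssim zQ_r$, so that the deterministic Lipschitz bound interpolates with error $O(zQ_r)$. At the top scale $s\asymp r$ that is of order $r/(zQ_r)$ lines, each failing with probability only about $\exp(-Cz^{\theta})$. The resulting bound is vacuous unless $z\gtrsim(\log r)^{1/\theta}$, so you do not get $\exp(-Cz^{\theta_5})$ uniformly in $z,n,M,\ell$. Any repair needs a chaining device along the horizontal line, and your Step 3, which sums only over $k$ and $m$, has none. Your treatment of small $x-(i-1)r$ (including $\ell=0$) is also left open, as you acknowledge.

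The missing idea is the reverse triangle inequality
$\rX_{((i-1)r,y),(x,jW_r)}\ge \rX_{((i-1)r,y),(ir,jW_r)}-\rX_{(x,jW_r),(ir,jW_r)}$. It splits the first event in $\cK_{i,j,z}$ into two pieces.

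\textbf{(a)} A lower bound for crossings from the left side to the \emph{single} point $(ir,jW_r)$ at the full scale $r$. This follows from Proposition~\ref{p:paraestimateconforming}(ii) with a union bound over blocks $j'$ of height $W_r$. The inequality $\frac{(y-jW_r)^2}{2r}\ge \frac12(\frac{|y-jW_r|}{W_r}-1)^2Q_r+(|j'|-2)Q_r$ makes the error parameter $z+|j'|$, which is summable over $j'$.

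\textbf{(b)} An \emph{upper} bound $\sup_{x<x'}\rX_{(x,jW_r),(x',jW_r)}-(x'-x)\le \frac z2 Q_r$ along the horizontal line. This is proved by dyadic chaining. On scale $2^k$, control all $\lceil r/2^k\rceil$ aligned dyadic intervals with threshold $\propto z(r/2^k)^{-\alpha/2}Q_r\ge z(r/2^k)^{\alpha/2}Q_{2^k}$, so the upper tail of Theorem~\ref{t:all} beats the count. Then cover any $[x,x']$ by aligned dyadic intervals with each length used at most twice (Lemma~\ref{l:dyadic.seq}), so the thresholds sum geometrically to at most $\frac z2 Q_r$. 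Passing from $X$ to $\rX$ uses Lemma~\ref{l:proxy}, together with discretization to integer points via the bounded field.

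This route needs no lower-tail estimate below scale $r$. The small-$x$ and $\ell=0$ difficulty you raise therefore never appears.
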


\noindent
{\bf Side to side passage times for paths in a rectangle.}
To define this event it would be useful to introduce a centered version of the conforming passage times. Instead of centering by the Euclidean distance between the end point it will be more convenient (when optimizing over paths) by an approximation of the same.

For conforming paths $\gamma'$ with $\gamma'(0)=(in,y),\gamma'(1)=(i'n,y')$ with $i<i'$ integers, we define
 \[
 \hrX_{\gamma'} = \rX_{\gamma'} - (i'-i)n - \frac12 \frac{(y-y')^2}{(i'-i)n} .
 \]
 The quadratic term corresponds to the second order term from Taylor Series expansion of the Euclidean distance using Pythagoras' Theorem. For $u=\gamma'(0),v=\gamma'(1)$ we set 

 $$\hrX_{uv}=\inf_{\gamma':\gamma'(0)=u, \gamma'(1)=v} \hrX_{\gamma'}.$$

We define 

\begin{align*}
\cI^+_{i,j,j',z} &= \bigg\{\inf_{y,y'\in[jW_r,j' W_r]} \inf_{\substack{\gamma' \subset  [(i-1)r,ir]\times [jW_r,j' W_r] \\ \gamma'(0)=((i-1)r,y) \\ \gamma'(1)=(ir,y')}} \hrX_{\gamma'}  \geq  z Q_r \bigg\};\\
\cI^-_{i,j,j',z} &= \bigg\{\inf_{y,y'\in[jW_r,j' W_r]} \inf_{\substack{ \gamma' \subset  [(i-1)r,ir]\times [jW_r,j' W_r] \\ \gamma'(0)=((i-1)r,y) \\ \gamma'(1)=(ir,y')}} \hrX_{\gamma'}  \leq  z Q_r \bigg\}.
\end{align*}

For large $z$, the event $\cI^+$ says that all the paths in this rectangle have larger length than typical and hence the rectangle acts as a \emph{barrier} for the geodesic, whereas for large negative $z$ the event $\cI^-$ guarantees the existence of an unusually good path across the rectangle. Both these events occur with positive probability.

\begin{lemma}\label{l:cI.bound}
There exists $C,\theta_5>0$, not depending on $n,M,\ell$ such that for all $i$ and $|j|,|j'|\leq M\frac{W_n}{W_r}$ and $z\geq 0$
\[
\P[\cI^+_{i,j,j',-z}] \geq 1 - (1\vee|j-j'|)^2\exp(-C z^{\theta_5});
\]
\[
\P[\cI^-_{i,j,j',z}] \geq 1 -\exp(-C z^{\theta_5});
\]
For any $t$ and $z\geq 0$ there exists $\delta(t,z) >0$ such that if $j'\leq j+t$
\[
\P[\cI^+_{i,j,j',z}] \geq \delta.
\]
For any $z\geq 0$ there exists $\delta'>0$ such that for all $n$ sufficiently large and all $j'\geq j+1$,
\[
\P[\cI^-_{i,j,j',-z}] \geq \delta'.
\]

\end{lemma}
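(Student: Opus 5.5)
The plan is to prove the four estimates separately. The first two are high-probability bounds obtained from the concentration results already available at scales $n\le r\le Mn$. The last two are positive-probability statements, which I would get from FKG together with splitting the rectangle into many independent sub-columns.

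\textbf{First estimate ($\cI^+_{i,j,j',-z}$).} Cover the left side $\{(i-1)r\}\times[jW_r,j'W_r]$ and the right side by $1\vee|j-j'|$ segments of length $W_r$ each, giving at most $(1\vee|j-j'|)^2$ pairs $(k,k')$ of segments. A path constrained to the rectangle has weight at least the unconstrained restricted distance, so it suffices to show that for each pair
\[
\min_{u,v}\ \rX_{uv}-r-\tfrac{(y-y')^2}{2r}\ \ge -zQ_r
\]
with probability at least $1-\exp(-Cz^{\theta_5})$, and then take a union bound. Writing $d=|y-y'|$, Taylor's bound gives
\[
|u-v|=\sqrt{r^2+d^2}\ \ge\ r+\frac{d^2}{2r}-\frac{d^4}{8r^3}.
\]
Since $d\le 2MW_n$ and $W_r^2=rQ_r$, the error satisfies $d^4/r^3\le CM^4Q_r\cdot Q_r/r=o(Q_r)$ for $n$ large. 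The bound then follows from Proposition~\ref{p:paraestimateconforming}(ii), which controls $\rX_{uv}-|u-v|$ uniformly over all $u,v$ in the two segments.

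\textbf{Second estimate ($\cI^-_{i,j,j',z}$).} I assume $j'\ge j+1$, so the rectangle contains a strip of height $W_r$. Taking $y=y'=jW_r$ gives $\hrX_{\gamma'}=\rX_{\gamma'}-r$. The bound is then exactly Proposition~\ref{p:constrainerX}, after translating to the strip $[(i-1)r,ir]\times[jW_r,(j+1)W_r]$. The translation is legitimate because $r$ is a multiple of $n$, so the column structure is preserved.

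\textbf{Third and fourth estimates (positive probability).} The event $\cI^+_{i,j,j',z}$ is increasing in the field, since $\psi$ is monotone, and $\cI^-_{i,j,j',-z}$ is decreasing. My plan has two steps.
\begin{enumerate}
\item[(a)] Reduce to a bounded number of elementary events using FKG. For $\cI^+$, the complement is a union over at most $t^2$ segment pairs, and each event ``all crossings between segments $k,k'$ are long'' is increasing, so FKG multiplies their probabilities.
\item[(b)] Boost a small deviation into a deviation of size $zQ_r$ using independence across columns. Split $[(i-1)r,ir]$ into $K$ sub-columns of width $r/K$, where $r/K$ is a multiple of $n$. Restricted passage times in different columns are independent by the construction of $\omega_*$. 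The quadratic centering is convex: by Cauchy--Schwarz, $\sum_m d_m^2/(2r/K)\ge(\sum_m d_m)^2/(2r)$. Hence for any crossing, $\hrX_{\gamma'}$ is at least the sum of the sub-column centered weights. If each sub-column barrier has value at least $cQ_{r/K}$, the total is at least $cKQ_{r/K}\ge cK^{1/2-\delta}Q_r$ by Theorem~\ref{t:all}, which exceeds $zQ_r$ once $K$ is large. The probability is at least $\delta_1^{K}$ times FKG factors, where $\delta_1$ is the one-block probability. For $\cI^-$ the argument is dual: concatenate good constrained sub-paths ending at the same height $jW_r$, so the quadratic term vanishes and $\hrX$ is at most the sum of the sub-column deviations.
\end{enumerate}

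\textbf{Main obstacle.} The hard step is the base case: with probability at least $\delta_1>0$, uniformly in the scale $s$, the constrained minimal centered crossing of a single $s\times O(W_s)$ block is at least $+cQ_s$ (for $\cI^+$), or at most $-cQ_s$ (for $\cI^-$). Tightness alone does not give this, since the sign relative to $0$ matters. My first attempt would use that the centered variable $\rX_s-s$ has mean $A_s=\Theta(Q_s)>0$ and variance $\Theta(Q_s^2)$, with stretched-exponential tails from Theorem~\ref{t:all}. Paley--Zygmund then gives positive mass above $cQ_s$ for the point-to-point time. To pass to the minimum over segment pairs, I would control the oscillation over a segment pair using Proposition~\ref{p:paraestimateconforming}, and absorb it with the geometric factor $K^{1/2-\delta}$, applied with a deviation level that is a large multiple of the oscillation.

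For $\cI^-$, positive mean works against us. I would instead use a non-degeneracy argument across scales. Concentration together with $A_{2s}\le 2A_s-c'Q_s$ (super-additivity defects, since the two halves are not forced to meet at the optimal point) forces positive mass of $\rX_s-s-A_s$ below $-cQ_s$. Combining this with subadditivity over the $K$ sub-columns should make the sum of the expected-value terms $KA_{r/K}$ comparable to $A_r$ while the fluctuations add up. I expect making this sign-control step rigorous to be the main difficulty of the lemma.
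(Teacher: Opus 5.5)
The first two bounds are fine and follow the paper: Proposition~\ref{p:paraestimateconforming} with a union bound over the $(1\vee|j-j'|)^2$ segment pairs, and Proposition~\ref{p:constrainerX}. The gap is in the two positive-probability bounds, exactly at the base cases you flag, and the fixes you sketch do not close it.

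\textbf{The barrier bound for $\cI^+_{i,j,j',z}$.} Paley--Zygmund gives $\P(\rX_s\ge s+cQ_s)\ge\delta_1$, but only for a \emph{point-to-point} time and only for some small $c$. The oscillation of $\rX_{uv}-|u-v|$ over a pair of segments of length $W_s$, as controlled by Proposition~\ref{p:paraestimateconforming}, is itself of order $Q_s$. So the minimum over the segment pair need not be positive. Splitting into $K$ sub-columns cannot absorb this: the gain $cQ_{r/K}$ and the loss $O(Q_{r/K})$ both occur in every sub-column, so both scale with $K$. Asking instead for a point-to-point deviation $LQ_s$ with $L$ above the oscillation constant is circular, since producing such deviations is what the barrier is for.

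The missing idea is the step-back argument of Lemma~\ref{l:righttailbasicrect}. Take $u=(-\delta'r/2,0)$ and $v=((1+\delta'/2)r,0)$. With probability at least $\delta$, $X_{uv}\ge(1+\delta')r+\delta Q_r$. Now choose $\delta'$ small, and then the segment height $\delta_2W_r$ small. With probability at least $1-2\delta/3$, all passage times from $u$ to $\{0\}\times[0,\delta_2W_r]$ and from $\{r\}\times[0,\delta_2W_r]$ to $v$ are at most $\delta'r/2+\delta Q_r/3$, because fluctuations at scale $\delta'r$ are $\ll Q_r$. The triangle inequality then forces every crossing between the thin segments to exceed $r+\delta Q_r/3$. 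Only once this thin-segment barrier, and its tilted version (Lemma~\ref{l:righttailbasicpara}), is available does your FKG boosting over sub-columns and segment pairs go through, as in Lemma~\ref{l:barrier}.

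\textbf{The good-path bound for $\cI^-_{i,j,j',-z}$.} What you need is $\P(\rX_s\le s-cQ_s)\ge\delta_1>0$ uniformly in $s$ for some $c>0$; your $K$-boosting would then handle the size. This is a deviation below the linear term $s$, which is more than $A_s=\Theta(Q_s)$ below the mean. Mean, variance and the upper tail bounds of Theorem~\ref{t:all} only give mass below the mean. Your super-additivity heuristic also points the wrong way. Subadditivity gives $A_r\le KA_{r/K}$, and in fact $KA_{r/K}\ge cKQ_{r/K}\ge cK^{1/2-\delta}Q_r\gg A_r$. So concatenating typical sub-column paths at a common height gives a path worse than typical by $\gg Q_r$. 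Each sub-path must itself be atypically good, which is the same left-tail statement at a smaller scale.

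The paper does not derive this left tail; it imports it from \cite[Proposition 2.3]{BSS23}, transferred to $\rX$ by Lemma~\ref{l:proxy}. It then takes $H$ large so that the geodesics between consecutive points at spacing $r/(100H)$, at height $W_r/2$, stay in the strip, since their transversal fluctuation is $W_{r/(100H)}\ll W_r$. Finally it combines these $98H$ decreasing events with the two end events from Proposition~\ref{p:constrainerX} by FKG. Without that external left-tail input, your argument for the fourth bound does not go through.
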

\noindent 
{\bf Lower bound on passage times of paths with vertical change.} Define
\begin{align*}
\cJ_{i,j,j',z,w} &= \bigg\{\inf_{\substack{x,x' \in [(i-1)r,ir]\\ y,y'\in[jW_r,j' W_r] \\ |y-y'|\geq w W_r}} \inf_{\substack{\gamma' \subset  [(i-1)r,ir]\times [jW_r,j' W_r] \\ \gamma'(0)=(x,y) \\ \gamma'(1)=(x',y')}} \rX_{\gamma'} - |x-x'|  \geq  z Q_r \bigg\}
\end{align*}
{where the first infimum is taken over all pairs of points except the vertical boundary pairs.}

\begin{lemma}\label{l:cJ.bound}
For any $t,w > 0$ and $z\geq 0$ there exists $\delta(t,z) >0$ not depending on $n,M,\ell$ such that if $j\leq j'\leq j+t$
\[
\P[\cJ_{i,j,j',z,w}] \geq \delta.
\]
\end{lemma}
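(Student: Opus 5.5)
The plan is to exhibit $\cJ_{i,j,j',z,w}$ as containing an intersection of finitely many \emph{increasing} events, each of probability bounded below independently of $n,M,\ell$, and then to combine them using the FKG inequality. Since $\psi$ is increasing, every passage time $\rX_{\gamma'}$ is an increasing function of the field. Hence any event of the form ``all paths in some family have (centered) passage time at least something'' is increasing, and $\cJ_{i,j,j',z,w}$ is itself of this type. By translation invariance we may take $i=1$ and $j=0$, so that $j'\le t$. We fix a large integer $K$, chosen depending on $z$, $w$ and $t$ only, and split the rectangle $[0,r]\times[0,j'W_r]$ into the $K$ sub-columns cut out by the lines $x=kr/K$, $k=0,\dots,K$.

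\textbf{Step 1 (barrier events).} For every sub-column $k$ we consider the event that every path inside the $k$-th sub-column of the rectangle, going from its left side to its right side, has passage time at least $r/K+(z+1)Q_r$. Since $Q_r\le CK^{1/2+\delta}Q_{r/K}$ by Theorem~\ref{t:all}, this is an event of the type $\cI^+$ at scale $r/K$, with height at most $tW_r\le t'(K)W_{r/K}$ blocks and centering level $z'(K,z)$. By Lemma~\ref{l:cI.bound} its probability is at least some $\delta_1(K,z,t)>0$, after absorbing the quadratic centering into $z'$ using heights $\le t'W_{r/K}$. The events are increasing, so by FKG their intersection $A$ over the $K$ sub-columns has probability at least $\delta_1^K>0$.

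\textbf{Step 2 (Pythagorean events).} Consider a path $\gamma'$ in the rectangle from $(x,y)$ to $(x',y')$ with $|y-y'|\ge wW_r$. If $\gamma'$ fully crosses some sub-column, we decompose it at its first full crossing. On $A$ the crossing contributes $\ge r/K+(z+1)Q_r$ plus its own horizontal span. The remaining pieces are bounded below by their horizontal displacement minus fluctuation errors. For this we use the increasing events from Proposition~\ref{p:paraestimateconforming}(ii) and Lemma~\ref{l:cK.bound}, applied at scale $r/K$ at level $Q_r$. These fail with probability $\exp(-cK^{c'})$, which is small.

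If instead $\gamma'$ stays within two adjacent sub-columns, its horizontal extent is at most $2r/K$ while its vertical change is at least $wW_r$. Pythagoras, in the form of Lemma~\ref{l:paraexplicit} and Proposition~\ref{p:paraestimateconforming}(ii) with the quadratic term, then gives a gain of order
\[
\frac{w^2W_r^2}{4r/K}=\frac{Kw^2}{4}\,Q_r ,
\]
which exceeds $2zQ_r$ once $K$ is large. To control paths with very short horizontal extent, we run the same argument over dyadic scales $s=r2^{-m}$ below $r/K$. The relative gain $W_r^2/(sQ_s)\ge (r/s)^{1/2-\delta}$ grows, so a union bound over the $O\big((r/s)\,t\,W_r/W_s\big)$ blocks at scale $s$ has summable failure probabilities $\exp(-c(r/s)^{c''})$. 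For horizontal extents below a constant, the trivial bound $\rX\ge d_1\cdot$(Euclidean length) together with $W_r\gg Q_r$ suffices. Call $B$ the intersection of all these increasing lower-bound events; then $\P(B)\ge 1-\eta(K)$ with $\eta(K)\to0$.

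\textbf{Step 3 (combining).} On $A\cap B$ every relevant path satisfies $\rX_{\gamma'}-|x-x'|\ge zQ_r$, so $A\cap B\subseteq\cJ_{1,0,j',z,w}$. Both $A$ and $B$ are increasing, so FKG gives $\P(A\cap B)\ge \delta_1^K(1-\eta(K))=:\delta>0$. The main obstacle is the bookkeeping in Step 2. Paths may backtrack horizontally within the rectangle, and the errors from the pieces outside the barrier crossing must sum to less than $Q_r$ uniformly. I would handle this by decomposing at the sub-column lines, using that the number of pieces is at most $K$, and paying error $\le Q_r/(2K)$ per piece. This is affordable because each piece lives at scale $r/K$, where the fluctuation $Q_{r/K}\le CK^{-\alpha_*}Q_r$; if this is not quite enough, I would reduce the per-piece error requirement by working on the higher-probability side of the tail bounds at the cost of enlarging $K$.
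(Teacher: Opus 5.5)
There is a genuine gap in the error accounting for case (a). Outside the single barrier crossing, the rest of $\gamma'$ can span horizontal distance up to $r$. After cutting at the lines $x=kr/K$ you therefore face up to about $2K$ line-to-line pieces at scale $r/K$, plus two partial end pieces.

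By Theorem~\ref{t:all}, $Q_r\le K^{1/2+\delta}Q_{r/K}$. So your per-piece budget $Q_r/(2K)$ is at most $\tfrac12K^{-1/2+\delta}Q_{r/K}$, far \emph{below} the natural fluctuation of a single piece. Events of the form ``$\rX_{uv}\ge|u-v|-Q_r/(2K)$ for all $u,v$ on two consecutive lines'' are therefore not likely. Each fails with probability bounded below, by the non-degenerate left tail as in Lemma~\ref{l:cI.bound2}. Hence $\P(B)\ge 1-\eta(K)$ is false; since non-adjacent sub-columns are essentially independent, $\P(B)$ actually decays with $K$.

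The inequality $Q_{r/K}\le CK^{-\alpha_*}Q_r$ you invoke points the wrong way. You would need $KQ_{r/K}\ll Q_r$, whereas $KQ_{r/K}\ge K^{1/2-\delta}Q_r$, and enlarging $K$ only makes this worse. At level $Q_r$ per piece your failure bound $\exp(-cK^{c'})$ is correct, but then the errors add up to order $KQ_r$ and swamp the barrier margin $(z+1)Q_r$. Either way, $A\cap B\subseteq\cJ_{1,0,j',z,w}$ is not established.

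The missing idea is to control the coarse scale with barrier-type events (only positive probability, combined via FKG) rather than with likely events.

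The cheapest repair in your scheme uses the fact that Step~1 only needs a probability depending on $K$. Raise the barrier level to $(z+2K+3)Q_r$ and keep error $Q_r$ per piece. Lemma~\ref{l:barrier} at scale $r/K$ still gives some $\delta_1(K,z,t)>0$: the level is at most $(z+2K+3)K^{1/2+\delta}$ in units of $Q_{r/K}$, and the height is at most $tK$ in units of $W_{r/K}$.

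The paper does it differently. It makes \emph{every} parallelogram at every dyadic scale $k\le k_1$ a barrier ($\rX_{uv}-|u-v|\ge 10zQ_r$), so coarse pieces never contribute negatively. It imposes only the likely bounds $\rX_{uv}-|u-v|\ge-2^{-\alpha_*k/2}Q_r$ for $k_1<k\le k_*$. These sum to $O(Q_r)$ because each scale occurs at most twice in the dyadic decomposition of Lemma~\ref{l:dyadic.seq}. Here $k_1\approx\log_2(240z/w^2)$ plays the role of your $\log_2K$, and the case where all pieces are finer than $k_1$ is exactly your Pythagorean case (b).

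You also still need to handle the partial pieces from an arbitrary endpoint $(x,y)$ to the nearest grid line. Lemma~\ref{l:cK.bound} only covers endpoints lying on a fixed horizontal line. The paper decomposes on a dyadic grid of mesh $2^{-k_*}r\approx r^{1-\varepsilon_*}$ and adds the event $\cJ^*$: all pairs at distance at most $r^{1-\varepsilon_*/2}$ satisfy $\rX\ge|\cdot|-Q_r$. This event is likely by a union bound over integer points, because $Q_r/Q_{r^{1-\varepsilon_*/2}}$ is polynomially large in $r$.
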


\noindent
{\bf Typical passage times for the full column.} We set
\begin{align*}
\cA_{i,j,z}^- &= \Bigg\{\sup_{\substack{|y|,|y'| \leq MW_n \\ u=((i-1)r,y) \\ v= (ir,y')}} \hrX_{uv} - \frac{|y - jW_r| + |y' - jW_r|}{W_r}Q_r \leq z Q_r \Bigg\};\\
\cA_{i,j,z}^+ &= \Bigg\{\inf_{\substack{|y|,|y'| \leq n^\beta W_n \\ u=((i-1)rn,y) \\ v= (ir,y')}} \hrX_{uv}  + \frac{|y - jW_r| + |y' - jW_r|}{W_r}Q_r  \geq -z Q_r \Bigg\}.
\end{align*}

\begin{lemma}\label{l:cA.bound}
There exists $C,\theta_5>0$, not depending on $n,M,\ell$ such that for all $i$ and {$|j|\leq \frac{MW_n}{W_r}$}
\[
\P[\cA_{i,j,z}^- ] \geq 1 - \exp(-C z^{\theta_5}),\qquad \P[\cA_{i,j,z}^+ ] \geq 1 - \exp(-C z^{\theta_5}).
\]
\end{lemma}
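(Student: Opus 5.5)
The plan is to reduce both bounds in Lemma~\ref{l:cA.bound} to the parallelogram estimates of Proposition~\ref{p:paraestimateconforming} (at scale $r$), using a union bound over a discretization of the left and right sides of the $i$-th column into segments of height $W_r$. For $k,k'$, let $L_k=\ell_{(i-1)r,kW_r,(k+1)W_r}$ and $R_{k'}=\ell_{ir,k'W_r,(k'+1)W_r}$. For $u\in L_k$, $v\in R_{k'}$, a Taylor expansion of $|u-v|=\sqrt{r^2+(y-y')^2}$ gives $|u-v|-r-\frac{(y-y')^2}{2r}=O((y-y')^4/r^3)$. When $|y-y'|\le n^{\beta}W_n\ll r$ this is bounded by $O(|k-k'|^2)\cdot (|k-k'|W_r)^2/r^2\cdot Q_r$, which is negligible. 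This needs $n^{\beta}W_n\cdot W_r/r$ to be small relative to $W_r$, which holds since $\beta$ is small; there is a little slack to check here. So $\hrX_{uv}$ and $\rX_{uv}-|u-v|$ agree up to $o(Q_r)$ uniformly.

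\textbf{The bound for $\cA^-$.} Use part (i) of Proposition~\ref{p:paraestimateconforming}, applied after shifting vertically so that the pair of segments $(L_k,R_{k'})$ is in the parallelogram position. The proposition is stated for $j_1<j_2$, but by translation invariance and reflection symmetry the same bound holds for any $k,k'$. Also, since $|y|\le MW_n\le \frac12 n^\beta W_n$ for $n$ large, the range condition holds.

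This gives
\[
\P\Big(\max_{u\in L_k,v\in R_{k'}}\hrX_{uv}\ge xQ_r\Big)\le \exp(1-Cx^{\theta_2}).
\]
Apply it with $x=z+|k-j|+|k'-j|-2$. Summing over $k,k'$ gives a total of at most
\[
\sum_{a,b\ge0}\exp\big(1-C(z+a+b-2)^{\theta_2}\big)\le \exp(-C'z^{\theta_5})
\]
for $z$ large. Small $z$ is handled by adjusting constants, since the claim is trivial when the right-hand side exceeds $1$. On the complement, for every $u,v$ we have $\hrX_{uv}\le (z+|y-jW_r|/W_r+|y'-jW_r|/W_r)Q_r$, which is exactly $\cA^-_{i,j,z}$.

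\textbf{The bound for $\cA^+$.} The argument is symmetric, using part (ii) of Proposition~\ref{p:paraestimateconforming} for the lower tail, which is valid on the full range $|y|,|y'|\le n^\beta W_n$. The linear penalty $(|k-j|+|k'-j|)Q_r$ again makes the union bound over $k,k'$ summable.

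\textbf{The main obstacle.} The delicate point is that the number of pairs $(k,k')$ is polynomial in $n$, since $n^\beta W_n/W_r$ can be large. Because the penalty grows linearly in $|k-j|+|k'-j|$ and the tail is stretched exponential, the sum still converges uniformly in $n$, so this is harmless. One should also double-check that the Taylor error stays $o(Q_r)$ when $|k-k'|$ is as large as $n^\beta W_n/W_r$. It is enough for this error to be at most a constant times $(|k-j|+|k'-j|)Q_r$: the error is $O\big((y-y')^2/r^2\big)\cdot|k-k'|^2Q_r$, and $(y-y')^2/r^2\ll 1/|k-k'|$ as long as $n^{\beta}W_n\ll r^{1/2}\cdots$. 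If that fails, I would instead absorb the error into the quadratic slack, since $|u-v|\ge r+\frac{(y-y')^2}{2r}-\frac{(y-y')^4}{8r^3}$ has the favorable sign for $\cA^-$. For $\cA^+$, the lower bound of Proposition~\ref{p:paraestimateconforming}(ii) is stated directly against $|u-v|\ge r+\frac{(y-y')^2}{2r}(1-o(1))$, and the $o(1)$ correction is dominated by the linear penalty.
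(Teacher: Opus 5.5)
Your proposal is correct and is essentially the paper's proof. Like the paper, you discretize the two sides of the column into height-$W_r$ segments, apply Proposition~\ref{p:paraestimateconforming} to each pair with threshold $z+|k-j|+|k'-j|-2$, and take a union bound; the linear growth of the threshold makes the sum converge uniformly in $n$, and you additionally justify two points the paper passes over, namely reflection symmetry for $k'<k$ and the Taylor comparison between $\hrX_{uv}$ and $\rX_{uv}-|u-v|$. One small slip in your last paragraph: the signs are swapped there. The always-true bound $|u-v|\le r+\frac{(y-y')^2}{2r}$ is what gives $\cA^-$ for free, while the Taylor lower bound $|u-v|\ge r+\frac{(y-y')^2}{2r}-\frac{(y-y')^4}{8r^3}$ is what $\cA^+$ needs. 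This is harmless, because that error is $O(n^{4\beta-1/2})Q_r$ uniformly for small $\beta$, as your first-paragraph estimate already shows.
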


\noindent
{\bf  Comparing side to side passage times across two boxes with different heights}. We set
\begin{align*}
\cM_{i,j,j',z,w} &= \bigg\{\inf_{\substack{y,y'\in[jW_r,j' W_r]\\ \gamma' \subset  [(i-1)r,ir]\times [jW_r,j' W_r] \\ \gamma'(0)=((i-1)r,y) \\ \gamma'(1)=(ir,y')}} \hrX_{\gamma'} 
-  \inf_{\substack{y,y'\in[jW_r,j' W_r]\\ \gamma' \subset  [(i-1)r,ir]\times [(j-w)W_r,(j'+w) W_r] \\ \gamma'(0)=((i-1)r,y) \\ \gamma'(1)=(ir,y')}} \hrX_{\gamma'} 
\leq  z Q_r \bigg\}.
\end{align*}

The probability estimate we need for $\cM$ will be coupled with the estimate of another event and is stated in Lemma \ref{l:gadget} below. 

\noindent
{\bf Events in the resampled environment.}
Recall the environment $\omega_{\kappa}$ obtained by resampling the randomness in each $n\times W_n$ block $\Lambda^{+}_{ij}$ with probability $\kappa$. Recall that we had used the notation $\rX'$ to denote passage times in this environment. Analogous to above we define events $\cK', \cI^{'\pm}, \cJ', \cA^{'\pm}, \cM'$ with passage times $\rX$ replaced in their respective definitions by $\rX'$.

\begin{lemma}\label{l:gadget}
There exist $C,s_0>0$ depending on $\kappa$ but not on $n,M,\ell$ such that for any $s\geq s_0$ there exists $\delta(s)>0$ independent of $n,M,\ell$ and $\alpha=\alpha(n,M,\ell,s), h=h(n,M,\ell,s)$ such that
\[
\alpha \in  [s,2\kappa^{-1} s], \qquad h\in[1,\frac32],
\]
and
\begin{equation}
    \label{e:mboundI}
    \P[\cI^+_{i,j,h,-(\alpha-\alpha^{9/10})}, \cI^{'-}_{i,j,h,-(\alpha+\alpha^{9/10})}] \geq \delta
\end{equation}
and for all $w\leq \frac14$,
\begin{equation}
    \label{e:mboundM}
    \P[\cM^c_{i,j,h,z,w}] \leq C w/z.
\end{equation}
\end{lemma}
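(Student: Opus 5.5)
The plan is to choose $h$ first, using only a monotonicity and averaging argument, and then choose $\alpha$ by chaining resamplings and pigeonholing over levels. Throughout I read the events as referring to the rectangle $[(i-1)r,ir]\times[jW_r,(j+h)W_r]$. Write
$$Y_h=Q_r^{-1}\inf_{\gamma'}\hrX_{\gamma'},$$
the infimum being over side-to-side paths in that rectangle, and write $Y'_h$ for the same quantity computed with $\rX'$. Also write $\widetilde Y_{h,w}$ for the analogous infimum with endpoints still in $[jW_r,(j+h)W_r]$ but paths allowed in the rectangle widened by $wW_r$ on both sides. By stationarity, the laws of $Y_h$ and $\widetilde Y_{h,w}$ do not depend on $i,j$.

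\textbf{Choice of $h$ (bound \eqref{e:mboundM}).} Since $\widetilde Y_{h,w}\le Y_h$, Markov's inequality gives
$$\P[\cM^c_{i,j,h,z,w}]\le \E[Y_h-\widetilde Y_{h,w}]/z.$$
Widening by $w$ on both sides can be dominated by comparing with rectangles of heights $h$ and $h+2w$. Concretely, $\widetilde Y_{h,w}$ is bounded below by a quantity whose expectation is $g(h+2w)$ minus a controlled error, where $g(u)=\E[\inf \text{ over rectangle of height } u]/Q_r$. The endpoint restriction to the inner interval only increases $\widetilde Y$, so the comparison goes the right way.

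The function $g$ is nonincreasing. By Lemma~\ref{l:cI.bound} and Lemma~\ref{l:cA.bound}, its total drop over $u\in[1,2]$ is $O(1)$, uniformly in $n,M,\ell$. Averaging then shows that the set $H$ of $h\in[1,\tfrac32]$ with $g(h)-g(h+2w)\le Cw$ for all $w\le\frac14$ has positive measure. I would derive this from a maximal-function (Vitali/Riesz rising-sun) bound for monotone functions, rather than from an almost-everywhere derivative. Fix any $h\in H$; this gives \eqref{e:mboundM}.

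\textbf{Choice of $\alpha$ (bound \eqref{e:mboundI}).} Use the $T_{ij}$ interpolation \eqref{eq:omega.interpolation}. The environments $\omega_{m\kappa}$, $m=0,\dots,\kappa^{-1}$, form a chain in which each consecutive pair $(\omega_{(m-1)\kappa},\omega_{m\kappa})$ has the same joint law as $(\somega,\omega_\kappa)$. The endpoint $\omega_1$ is independent of $\omega_0$.

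Let $p=\P[Y_h\le -2\kappa^{-1}s]$. This is positive and bounded below uniformly in $n,M,\ell$ and in $h\in[1,\tfrac32]$, by the $\cI^-$ bound in Lemma~\ref{l:cI.bound} (heights are at least $1$). By the $\cI^+$ tail bound, $\P[Y_h>-s/2]\ge\frac12$ once $s\ge s_0$. By independence, with probability at least $p/2$ the chain value $Y_h(\omega_{m\kappa})$ starts above $-s/2$ and ends below $-2\kappa^{-1}s$.

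Now partition $[s,2\kappa^{-1}s]$ into consecutive windows $[\alpha-\alpha^{9/10},\alpha+\alpha^{9/10}]$. There are at most $N(s)\lesssim \kappa^{-1}s^{1/10}$ of them. On the event above, the chain crosses every window downward, so some step $m$ has its value above the window before the step and below it after. Necessarily the step crosses it from above $-(\alpha-\alpha^{9/10})$ to below $-(\alpha+\alpha^{9/10})$, or it crosses several windows at once, which also qualifies for each crossed window.

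Fix one window, namely the one at the top of the range. Pigeonholing over the $\kappa^{-1}$ steps, some step crosses it with probability at least $\kappa p/2$. By the identity in law of consecutive pairs, this gives \eqref{e:mboundI} with $\delta=\kappa p/2$ for that $\alpha$. Since $\alpha$ may depend on $(n,M,\ell,s)$, it is harmless if this choice is not canonical. Handling multi-window jumps only requires choosing the crossing window appropriately, and crossing a single fixed window already suffices.

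\textbf{Main obstacle.} I expect the delicate part to be the comparison in the $h$-step. One must lower-bound the widened infimum $\widetilde Y_{h,w}$, whose endpoints are restricted but whose paths may use the extra $wW_r$ on both sides, by a quantity whose expectation is controlled by the monotone one-parameter family $g$. The concern is whether shifting the rectangle down by $w$ (translation invariance) together with heights $h+2w$ gives an honest inequality, rather than one that is only true in law. If needed, I would instead define $g(a,b)$ as a function of both side extensions and run the averaging argument in two parameters.
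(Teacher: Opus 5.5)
Your choice of $h$ works, but your choice of $\alpha$ has a genuine gap.

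\textbf{The gap in the choice of $\alpha$.} On the event $\{Y_h(\omega_0)>-s/2,\ Y_h(\omega_1)\le -2\kappa^{-1}s\}$ it is \emph{not} true that every window is jumped over by a single step. The chain $Y_h(\omega_{m\kappa})$ may land \emph{inside} a window $[-(\alpha+\alpha^{9/10}),-(\alpha-\alpha^{9/10})]$ at some intermediate $m$. Then no step goes from above $-(\alpha-\alpha^{9/10})$ to below $-(\alpha+\alpha^{9/10})$. Nothing controls the probability of landing in the top window you fix, since there is no anti-concentration at scale $\alpha^{9/10}$. So the bound $\kappa p/2$ for that window is unjustified. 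The real difficulty is landings inside a window, not multi-window jumps.

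\textbf{How to repair it.} You need one extra idea; either of the following works.
\begin{enumerate}
\item \emph{Average over windows.} On the good event, the $\kappa^{-1}-1$ intermediate values occupy at most $\kappa^{-1}-1$ of the $N(s)$ windows. Every unoccupied window is jumped over by some step, and $N(s)\to\infty$ as $s\to\infty$. So once $N(s)\ge 2\kappa^{-1}$, averaging over windows and then over steps gives a deterministic window and step with probability at least $\kappa p/4$.
\item \emph{Tie the thresholds to the step index (the paper's route).} On $\{Y_h(\omega_0)\ge 0,\ Y_h(\omega_1)\le -2\kappa^{-1}s\}$, which has positive probability by independence and Lemma~\ref{l:cI.bound}, let $m$ be the first index with $Y_h(\omega_{m\kappa})\le -2ms$. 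Then $Y_h(\omega_{(m-1)\kappa})\ge -2(m-1)s$, so some deterministic $m$ carries probability at least $\kappa\,\P[Y_h\ge 0]\,\P[Y_h\le -2\kappa^{-1}s]$. Take $\alpha=(2m-1)s$. Once $s\ge(2\kappa^{-1})^9$, we have $[\alpha-\alpha^{9/10},\alpha+\alpha^{9/10}]\subset[2(m-1)s,2ms]$. The moving threshold is exactly what guarantees a one-step crossing.
\end{enumerate}

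\textbf{On the choice of $h$.} Your rising-sun/Vitali argument is valid. The paper gets the same conclusion more cheaply. Let $U_r(u)$ be the constrained infimum at height $u$, and let $C_*$ satisfy $|\E U_r(1)|,|\E U_r(2)|\le C_*Q_r$. The paper takes $h$ to minimise $\E U_r(h')+10C_*Q_rh'$ over $[1,2]$. Minimality forces $h<\frac32$ and gives $\E U_r(h)-\E U_r(h+2w)\le 20C_*wQ_r$ for every $w\le\frac14$ simultaneously.

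The obstacle you flag is not a real one. After Markov's inequality, only expectations appear. Enlarging the endpoint range to $[-wW_r,(h+w)W_r]$ only lowers the infimum. That enlarged quantity has expectation exactly $\E U_r(h+2w)$, by vertical translation invariance in law. So no pathwise coupling and no two-parameter family are needed.
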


The events in the above lemma are going to be crucial for showing separation of geodesics before and after the resampling. Observe that the event $\cI^+_{i,j,h,-(\alpha-\alpha^{9/10})} \cap \cI^{'-}_{i,j,h,-(\alpha+\alpha^{9/10})}$ guarantees that the passage time across the corresponding rectangle drops sharply after the resampling.  The $\cM$ event guarantees that shortest paths across a slightly expanded box are not too much shorter than before. These together with other events defined below will (essentially) imply that the geodesic after the resampling passes through this rectangle while the geodesic before the resampling does not come close to it.

\subsection*{Parameters}
Using these basic estimates we are now going to construct a series of events. These events will depend on a number of parameters. We summarise the interrelations between the parameters now. Recall that $\beta$ is the parameter for the construction of conforming paths and is small enough such that for $n$ sufficiently large we have that $n^{\beta}W_n \ll n$.  Meanwhile, $\kappa$ is the density of resampling chosen sufficiently small such that $\kappa^{-1}$ is an integer. The remaining parameters are chosen possibly depending on these. We shall fix $L_0$ to be some large number, and $\delta_A$ is taken to be a small number depending on $L_0$ coming from Lemma~\ref{l:gadget}. Then we choose $w\ll 1$ depending on $\delta_A$. Next $L_1\gg L_0$ is chosen depending on $L_0,\kappa$ and $\delta_A$, and we set $L_2=L_1^{100}$. There are other parameters $\delta_B$, $\delta_C$ which are chosen small depending on $L_0,L_1,L_2$. All parameters so far are chosen independent of $n,M$ and $\ell$. Fixing these we pick $M$ sufficiently large, $n$ sufficiently large depending on $M$, and then $\ell$ ranges within $\{0,\ldots,\ell_{\max}\}$. The parameters $\alpha=\alpha(n,M,\ell,L_0^{100})$ and $h=h(n,M,\ell,L_0^{100})$ are chosen from Lemma \ref{l:gadget} depending on $n,M,\ell$; however, note that by Lemma \ref{l:gadget}, $\alpha\in[L^{100},2\kappa^{-1}L_0^{100}]$ and $h\in[1,\frac32]$ remain bounded independent of $n,M,\ell$.

\begin{center}
\begin{figure}[htbp!]
\includegraphics[width=6in]{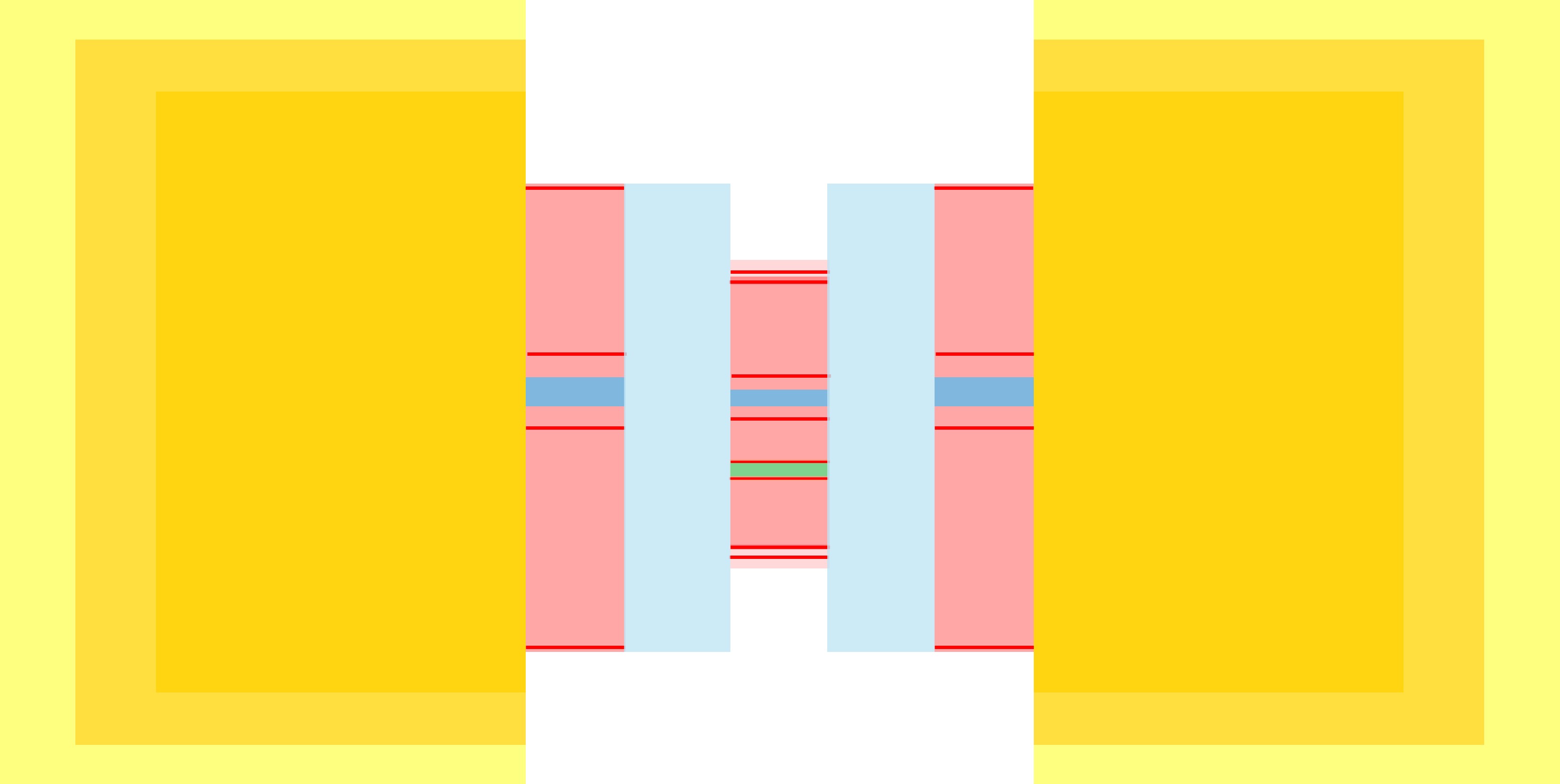}
\caption{The regions where different events are defined at coordinate $i,j$ at length scale $r$. In the vertical direction, the centres of the different columns shown in the figure is located at $jW_r$. The column at the middle (central column) corresponds to $[(i-1)r,ir]$, the columns marked in light blue are the intermediate columns ($[(i-2)r,(i-1)r]$ and $[ir,(i+1)r]$) and and the outer columns correspond to $[(i-3)r,(i-2)r]$ and $[(i+1)r, (i+2)r]$. The different colours in the central and outer columns represents regions where we ask for different type of events. The yellow columns flanking these five columns are referred to as wings on a series of different scales and we ask them to satisfy certain typical events called wing conditions. 
}
\label{f:fullevent}
\end{figure}
\end{center}

\noindent
\textbf{Columns for different events.}
We shall now construct events for the chaos estimate using the basic events defined above. These events will be indexed by $i,j$ (and several other parameters) which will indicate that they correspond to the location at height $jW_r$ at the column $[(i-1)r,ir]$. The events will be divided into four parts: $\cB$ for the central column , i.e., the column $[(i-1)r, ir]$, one for its neighbouring columns $\cC$, the intermediate columns  $[(i-2)r, (i-1)r]$ and $[ir, (i+1)r]$, the third one $\cD$ for the outer columns ($[(i-3)r,(i-2)r]$ and $[(i+1)r,(i+2)r]$) and the final one which we call the \emph{wing condition} $\cW$ for the region outside the columns $[(i-3)r,(i+2)r]$. We now move towards constructing all these events; see Figure \ref{f:fullevent} for an illustration.

\subsection{Events for the Central column}
The events for the central column covers (roughly) the region $[(i-1)r,ir]\times [(j-L_1)W_r, (j+L_1)W_r]$. These events are divided into the following seven sub-events that corresponds to different parts of the rectangle defined below. See Figure \ref{f:EventB} for an illustration of the same. 

\begin{align*}
\cB^{(1)}_{i,j} = \cI^-_{i,j-\tfrac1{40} L_0,j+\tfrac1{40} L_0,L_0}\cap \cI^{'-}_{i,j-\tfrac1{40} L_0,j+\tfrac1{40} L_0,L_0}.
\end{align*}
\begin{align*}
\cB^{(2)}_{i,j} = 
\bigg\{\inf_{y,y'\in[(j - \frac14 L_0)W_r,(j + \frac14 L_0)W_r]} \inf_{\substack{\gamma \subset  [(i-1)r,ir]\times [(j - \frac32 L_0)W_r,(j + \frac32 L_0)W_r] \\
\gamma \not\subset  [(i-1)r,ir]\times [(j - \frac12 L_0)W_r,(j + \frac12 L_0)W_r] \\ \gamma(0)=((i-1)r,y) \\ \gamma(1)=(ir,y')}} \min\{\rX_\gamma,\rX_\gamma'\}  \geq  r + \frac1{40} L_0^2 Q_r \bigg\}\\
\cap \cI^+_{i,j-\frac32 L_0,j+\frac32 L_0,-L_0}\cap \cI^{'+}_{i,j-\frac32 L_0,j+\frac32 L_0,-L_0}.
\end{align*}

\begin{align*}
\cB^{(3)}_{i,j} = \cI^+_{i,j-L_1 - 1, j-L_1 + 1,-L_0} \cap \cI^{'+}_{i,j-L_1 - 1, j-L_1 + 1,-L_0}\\
\cap \cI^+_{i,j+L_1 - 1, j+L_1 + 1,-L_0} \cap \cI^{'+}_{i,j+L_1 - 1, j+L_1 + 1,-L_0}.
\end{align*}
\begin{align*}
\cB^{(4)}_{i,j} = \cI^+_{i,j-\sqrt{\alpha},j-\sqrt{\alpha}+h,-(\alpha-\alpha^{9/10})} \cap \cI^{'-}_{i,j-\sqrt{\alpha},j-\sqrt{\alpha}+h,-(\alpha+\alpha^{9/10})}.
\end{align*}

\begin{align*}
\cB^{(5)}_{i,j} = \cM_{i,j-\sqrt{\alpha},j-\sqrt{\alpha}+h,1,w}\cap\cM'_{i,j-\sqrt{\alpha},j-\sqrt{\alpha}+h,1,w}.
\end{align*}
\begin{align*}
\cB^{(6)}_{i,j} = \bigcap_{s\in S}\cK^*_{i,j+s,{L_1},w}\cap\cK^{'*}_{i,j+s,L_1,w} \qquad \hbox{where } \\
S=\{ -L_1,-L_1+w, - \sqrt{\alpha} -\frac{2w}{3}, - \sqrt{\alpha} + h + \frac{2w}{3},-\frac32 L_0,\frac32 L_0,L_1 - w,L_1 \}.
\end{align*}

\begin{align*}
\cB^{(7)}_{i,j} = \cI^+_{i,j-L_1+\frac{2}{W_r},j-\sqrt{\alpha}-\frac{2}{W_r},L_2^2}\cap \cI^{'+}_{i,j-L_1+\frac{2}{W_r},j-\sqrt{\alpha}-\frac{2}{W_r},L_2^2}\\
\cap\cI^+_{i,j-\sqrt{\alpha}+h+\frac{2}{W_r},j-L_0-\frac{2}{W_r},L_2^2}\cap \cI^{'+}_{i,j-\sqrt{\alpha}+h+\frac{2}{W_r},j-L_0-\frac{2}{W_r},L_2^2}\\
\cap\cI^+_{i,j+L_0+\frac{2}{W_r},j+L_1-\frac{2}{W_r},L_2^2}\cap \cI^{'+}_{i,j+L_0+\frac{2}{W_r},j+L_1-\frac{2}{W_r},L_2^2}\\
\cap\cJ_{i,j-L_1+\frac{2}{W_r},j-\sqrt{\alpha}-\frac{2}{W_r},L_2^2,w/3}\cap \cJ^{}_{i,j-L_1+\frac{2}{W_r},j-\sqrt{\alpha}-\frac{2}{W_r},L_2^2,w/3}\\
\cap\cJ_{i,j-\sqrt{\alpha}+h+\frac{2}{W_r},j-L_0-\frac{2}{W_r},L_2^2,w/3}\cap \cJ^{'}_{i,j-\sqrt{\alpha}+h+\frac{2}{W_r},j-L_0-\frac{2}{W_r},L_2^2,w/3}\\
\cap\cJ_{i,j+L_0+\frac{2}{W_r},j+L_1-\frac{2}{W_r},L_2^2,w/3}\cap \cJ^{'}_{i,j+L_0+\frac{2}{W_r},j+L_1-\frac{2}{W_r},L_2^2,w/3}\\
\cB_{i,j} = \bigcap_{\ell=1}^7 \cB^{(\ell)}_{i,j}.
\end{align*}
\begin{center}
\begin{figure}[htbp!]
\includegraphics[width=6in]{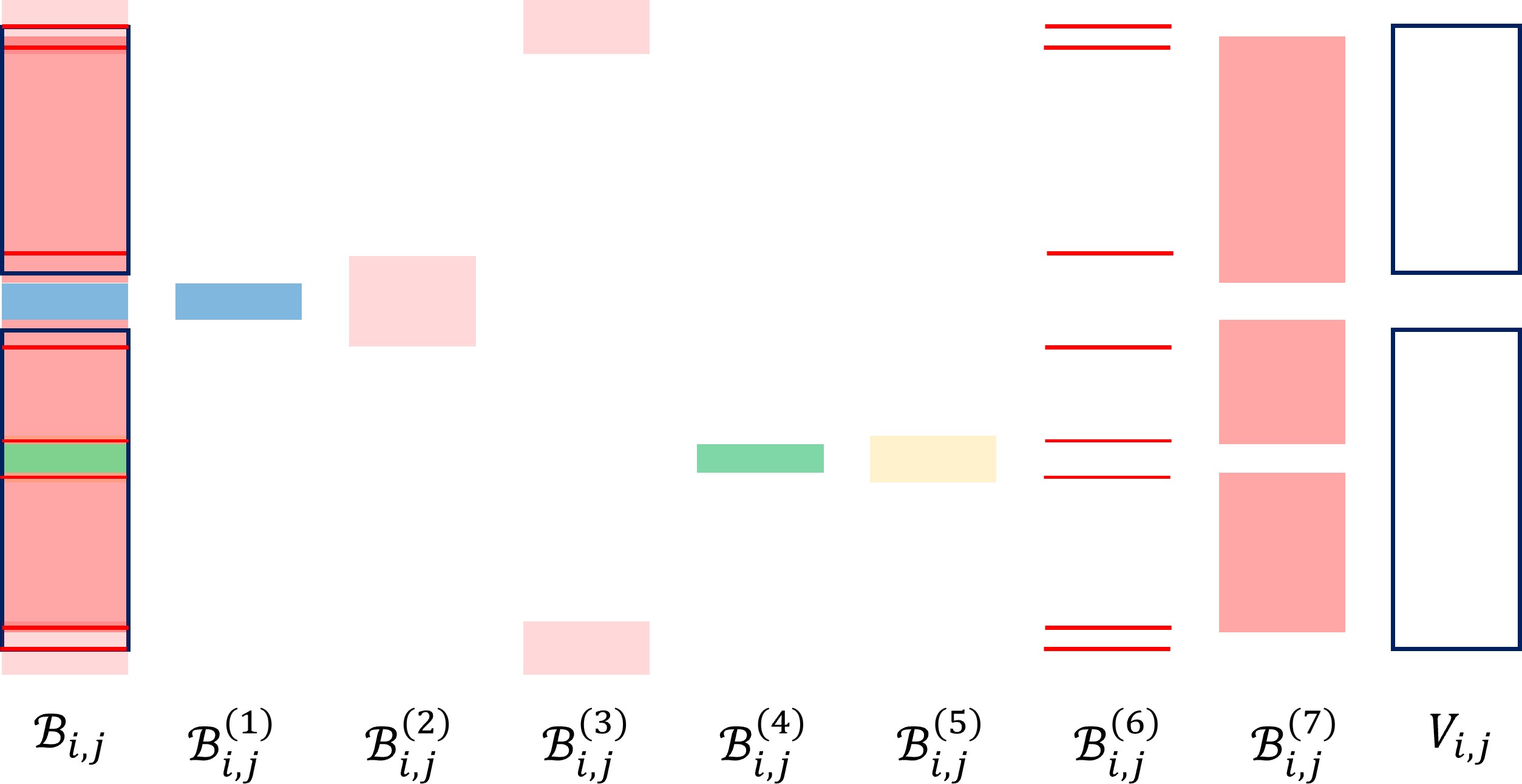}
\caption{The event $\cB_{i,j}$ for the central column $[(i-1)r,ir]$. The different panels show different subevents referring to different regions of the column and the first panel illustrates them combined. The first event $\cB_{i,j}^{(1)}$ asks that the passage time  across blue region in the middle for both $\rX$ and $\rX'$ is not too larger than typical. The second event $\cB_{i,j}^{(2)}$ asks that in the corresponding light red region, none of the paths are too short and any path across with large vertical change will have large excess length both before and after resampling. The third event $\cB_{i,j}^{(3)}$ asks that paths across the corresponding light red region are not too short both before and after the resampling. The fourth event $\cB_{i,j}^{(4)}$ asks that across the green region marked (which we shall refer to as the \emph{gadget}), none of the paths are too short before resampling, while after resampling there is a very good path. The fifth event $\cB_{i,j}^{(5)}$ asks that the $\cM$ event holds in the yellow region before and after resampling, i.e., the passage time across the yellow region and the passage time across the green region are not too different. The sixth event $\cB_{i,j}^{(6)}$ asks that the $\cK$ event holds for the horizontal lines marked in red both before and after resampling. The seventh event $\cB_{i,j}^{(7)}$ asks that the regions marked in dark red are barriers, i.e., all paths across these regions are atypically large (given by $\cI^+$ and $\cJ$ events). The regions $V_{ij}$ in the final panel will be used for conditioning and separating out the likely part of the $\cB$ events, see Section \ref{s:eventperc}. 
}
\label{f:EventB}
\end{figure}
\end{center}

We have the following estimates.

\begin{lemma}
\label{l:Bbound}
Provided $L_0$ is large enough, for all $i$ and $|j|\leq \frac{MW_n}{W_r}$, and all large enough $n$,
\[
\P[\cB_{i,j}^{(1)}],\P[\cB_{i,j}^{(2)}],\P[\cB_{i,j}^{(3)}] \geq 1-L_0^{-1}.
\]
\end{lemma}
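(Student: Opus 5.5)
We prove the three bounds separately. Each sub-event is an intersection of boundedly many events. Each of these is either an $\cI^{\pm}$ event with a favourable sign of $z$, covered by Lemma~\ref{l:cI.bound}, or a path-excess event, controlled by Proposition~\ref{p:paraestimateconforming} together with the transversal fluctuation estimates. Since $\omega_\kappa$ has the same law as $\somega$, each primed event has the same probability as its unprimed version. It therefore suffices to show that each unprimed constituent fails with probability at most $L_0^{-1}/10$ once $L_0$ is large, and then take a union bound.

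For $\cB^{(1)}_{i,j}$: the constituent $\cI^-_{i,j-L_0/40,j+L_0/40,L_0}$ has probability at least $1-\exp(-CL_0^{\theta_5})$ by Lemma~\ref{l:cI.bound}. The same holds for the primed version. So $\P[(\cB^{(1)}_{i,j})^c]\le 2\exp(-CL_0^{\theta_5})\le L_0^{-1}$ for $L_0$ large.

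For $\cB^{(3)}_{i,j}$: the four events have the form $\cI^{\pm}_{i,\cdot,\cdot,-L_0}$ with height $|j-j'|=2$. Lemma~\ref{l:cI.bound} gives failure probability at most $4\exp(-CL_0^{\theta_5})$ for each. The same argument handles the two $\cI^+_{i,j-\frac32L_0,j+\frac32L_0,-L_0}$ factors in $\cB^{(2)}_{i,j}$. There the height is $3L_0$, so the bound is $(3L_0)^2\exp(-CL_0^{\theta_5})$, which is still $\ll L_0^{-1}$.

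The genuinely new part is the first event in $\cB^{(2)}_{i,j}$. Every path from $((i-1)r,y)$ to $(ir,y')$ with $|y-jW_r|,|y'-jW_r|\le \frac14 L_0W_r$ that leaves the strip $[(j-\frac12L_0)W_r,(j+\frac12L_0)W_r]$ must have weight at least $r+\frac1{40}L_0^2Q_r$. The plan is to decompose such a path at the first point where it reaches height $\pm\frac12 L_0W_r$. That gives two pieces. The first goes from a point $u$ on the left side with $|u_2-jW_r|\le\frac14L_0W_r$ to a point $(x,(j\pm\frac12L_0)W_r)$. The second goes from that point to $v$ on the right side. Each piece has vertical displacement at least $\frac14L_0W_r$, so its weight should be at least the horizontal length plus roughly $\frac12(\frac{L_0}{4})^2Q_r$ minus fluctuations.

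This is exactly what the event $\cK_{i,j\pm\frac12L_0,z}$ gives, with $z=L_0^2/100$. Its defining inequality bounds from below the weight of any path from the left side to a point on the horizontal line $y=(j\pm\frac12L_0)W_r$, and of any path from that line to the right side. Adding the two bounds, the horizontal lengths sum to $r$, and the parabolic penalties give $2\cdot\frac12(\frac{L_0}4-1)^2Q_r-2zQ_r\ge \frac1{40}L_0^2Q_r$ for $L_0$ large. Lemma~\ref{l:cK.bound} then gives failure probability $\exp(-CL_0^{2\theta_5})$. We apply it to the two lines $j\pm\frac12 L_0$ (rounded to multiples of $W_r$ if necessary; rounding changes the constants only by $O(1)$) and to both $\rX$ and $\rX'$.

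The paths in the definition are constrained to the wider box $[(j-\frac32L_0)W_r,(j+\frac32L_0)W_r]$. This only helps, because the infimum in $\cK$ is over all conforming paths, so the same decomposition applies without change.

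The main point needing care is checking that the decomposition is compatible with the conforming-path weight. Within a single column, $\rX$ is just the $\omega^{\Lambda}$-weight, so it is additive, and splitting a path at an interior point is legitimate. The $\cK$ event concerns a column at scale $r$ made up of several columns at scale $n$; there we use that conforming weights add across the concatenation and that the crossing point lies in the interior. Collecting all constituents gives $\P[\cB^{(k)}_{i,j}]\ge 1-L_0^{-1}$ for $k=1,2,3$.
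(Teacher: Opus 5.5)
Your argument follows the paper's proof closely. The $\cI^{\pm}$ constituents of $\cB^{(1)},\cB^{(2)},\cB^{(3)}$ are handled by Lemma~\ref{l:cI.bound}. The first event of $\cB^{(2)}$ is handled by splitting a violating path where it first reaches height $(j\pm\frac12L_0)W_r$ and applying $\cK_{i,j\pm\frac12L_0,\frac1{100}L_0^2}$, for both $\rX$ and $\rX'$, together with Lemma~\ref{l:cK.bound}. Your arithmetic for the two parabolic penalties is correct.

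There is one gap. You assert, without justification, that the crossing point $u$ lies in the interior, and this can fail. A conforming path in $[(i-1)r,ir]\times\R$ may run along, or touch, the line $x=(i-1)r$ or $x=ir$ and first reach height $(j\pm\frac12L_0)W_r$ there. In that case $u$ and the corresponding endpoint form a vertical boundary pair. For such a pair $\rX$ is not defined, so the split $\rX_\gamma\ge\rX_{vu}+\rX_{uw}$ and the $\cK$ bound for that piece are not available as stated. The paper closes this by first replacing the violating path, via Lemma~\ref{l:strongly.conforming}, by a strongly conforming path with the same vertical coordinate. The new path still lies in the constraint box and still has weight below $r+\frac1{40}L_0^2Q_r$. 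Since it meets each line $x=i'n$ at most once, $u$ cannot lie on the outer boundary lines, and the split is legitimate.

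Alternatively, you could treat the boundary case directly. The piece joining two points of the same vertical line at vertical distance at least $\frac14L_0W_r$ has weight of order $L_0W_r$, which dominates $L_0^2Q_r$ because $W_r/Q_r\to\infty$. The other piece is controlled by the $\cK$ bound. With this patch your proof is complete.
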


\begin{proof}
The bounds in the above lemma for $\cB^{(1)}$ and $\cB^{(3)}$ are easy consequences of Lemma \ref{l:cI.bound} by choosing $L_0$ to be sufficiently large. The second and the third events in the definition of $\cB^{(2)}$ have probability at least $1-(3L_0)^{-1}$ by  Lemma \ref{l:cI.bound} and choosing $L_0$ sufficiently large. It therefore only remains to deal with the first event in $\cB^{(2)}$. By the exchangeability of $\rX$ and $\rX'$ it suffices to show that $\P(A)\ge 1-(6L_0)^{-1}$
where 
$$A=\bigg\{\inf_{y,y'\in[(j - \frac14 L_0)W_r,(j + \frac14 L_0)W_r]} \inf_{\substack{\gamma \subset  [(i-1)r,ir]\times [(j - \frac32 L_0)W_r,(j + \frac32 L_0)W_r] \\
\gamma \not\subset  [(i-1)r,ir]\times [(j - \frac12 L_0)W_r,(j + \frac12 L_0)W_r] \\ \gamma(0)=((i-1)r,y) \\ \gamma(1)=(ir,y')}} \rX_{\gamma}  \geq  r + \frac1{40} L_0^2 Q_r \bigg\}.$$
It suffices to show this only for the case $i=1,j=0$; it is easy to see that the same proof with minimal changes go through for general values of $i$ and $j$. Suppose that the event $A^c$ holds in which case we can find a $\gamma$ satisfying the the constraints such that $\rX_\gamma < r + \frac1{40} L_0^2 Q_r$. By Lemma~\ref{l:strongly.conforming} we can find another path $\gamma^{(a)}$, also satisfying the same set of constraints such that $\rX_{\gamma^{(a)}} < r + \frac1{40} L_0^2 Q_r$ and $\gamma^{(a)}$ has no vertical boundary pairs. By the constraints, there must be a point $u\in \gamma^{(a)}$ on the line segment $[0,r]\times\{\frac{1}{2}L_0W_r\}$ or the line segment $[0,r]\times\{-\frac{1}{2}L_0W_r\}$.  Setting $v=\gamma^{(a)}(0)=(0,y)$ and $w=\gamma^{(a)}(1)=(r,y')$ we have that  $y,y'\in [-\frac{1}{4}L_0 W_r,\frac{1}{4}L_0 W_r]$ and that neither $\{v,u\}$ or $\{u,w\}$ are a vertical boundary pair and so
\[
\rX_{vu}+\rX_{uw} \leq \rX_{\gamma^{(a)}} < r+\frac{1}{40}L_0^2Q_r.
\]
Notice, however, that on the event 
$$\cK_{1,\frac{1}{2}L_0, \frac{1}{100}L_0^2} \cap \cK_{1,-\frac{1}{2}L_0, \frac{1}{100}L_0^2}$$
we have that for $L_0$ sufficiently large and for all $u,v,w$ as above 
$$\rX_{vu}+\rX_{uw}\geq r + \frac12\Big(\frac{|y-\frac12 L_0 W_r| }{W_r} - 1\Big)^2 Q_r+\frac12\Big(\frac{|y-\frac12 L_0 W_r| }{W_r} - 1\Big)^2 Q_r> r+\frac{1}{40}L_0^2Q_r.$$
It follows from Lemma \ref{l:cK.bound} that 
$\P(A)\ge \P(\cK_{1,\frac{1}{2}L_0, \frac{1}{100}L_0^2} \cap \cK_{1,-\frac{1}{2}L_0, \frac{1}{100}L_0^2})\geq 1-(6L_0)^{-1}$ for $L_0$ sufficiently large, as required.     
\end{proof}

By Lemma~\ref{l:gadget}, there exists $\delta_A>0$ such that
\begin{equation}\label{eq:B4.bound}
\P[\cB_{i,j}^{(4)}] \geq \delta_A.
\end{equation}
With $C$ the constant in Lemma~\ref{l:gadget}, set $w=\frac{\delta_A}{200 C}\wedge \frac14$.  Then by Lemma~\ref{l:gadget},
\begin{equation}\label{eq:B5.bound}
\P[\cB_{i,j}^{(5)}] \geq 1 - \frac1{100}\delta_A.
\end{equation}
By Lemma~\ref{l:cK.bound} we can choose $L_1$ large enough so that $L_1\geq \kappa^{-2} L_0^{100}$ and
\[
\P[\cK_{i,j+s,L_1}]\geq 1 - \frac1{10000L_0}\delta_A.
\]
Then by a union bound,
\begin{equation}\label{eq:B6.bound}
\P[\cB_{i,j}^{(6)}] \geq 1 - \frac1{100L_0}\delta_A.
\end{equation}
Finally, we set $L_2 = L_1^{100}$.  The event $\cB_{i,j}^{(7)}$ is defined as the intersection of 12 events of type~$\cI^{+}$ and~$\cJ$.  By Lemmas~\ref{l:cI.bound} and~\ref{l:cJ.bound} each of the events have probability at least some $\delta>0$.  Conditional on the $T_{ij}$, which does not affect their individual probabilities, they are all increasing events and so by the FKG Inequality,
\begin{equation}\label{eq:B7.bound}
\P[\cB_{i,j}^{(7)}] \geq \delta^{12} =: \delta_B >0.
\end{equation}

\subsection{Intermediate Left and Right Columns}
We define the event
\begin{align*}
\cC_{i,j} &= \cA_{i,j,L_0}^- \cap \cA_{i,j,L_0}^+ \cap \cA_{i,j,L_0}^{'-} \cap \cA_{i,j,L_0}^{'+}.
\end{align*}
This event asks that (both before and after resampling) the passage times across the column are not too far from their expectation. 

By Lemma~\ref{l:cA.bound}
\begin{equation}\label{eq:cC.bound}
\P[\cC_{i,j}] \geq 1-L_0^{-1}.
\end{equation}
Recalling that the intermediate left and the intermediate right columns correspond to indices $(i-1)$ and $(i+1)$ respectively we shall set the event for the intermediate left and right columns to be 
$$\cC_{i-1,j} \cap \cC_{i+1,j}.$$

\subsection{Outer Left and Right Columns}
The event for the outer left and right columns roughly correspond to events in the regions $[(i-3)r,(i-2)r]\times [(j-L_2)W_r, (j+L_2)W_r]$. This event also consists of a number of sub-events corresponding to different regions of the rectangle; see Figure \ref{f:outer} for an illustration.

\begin{center}
\begin{figure}[htbp!]
\includegraphics[width=4in]{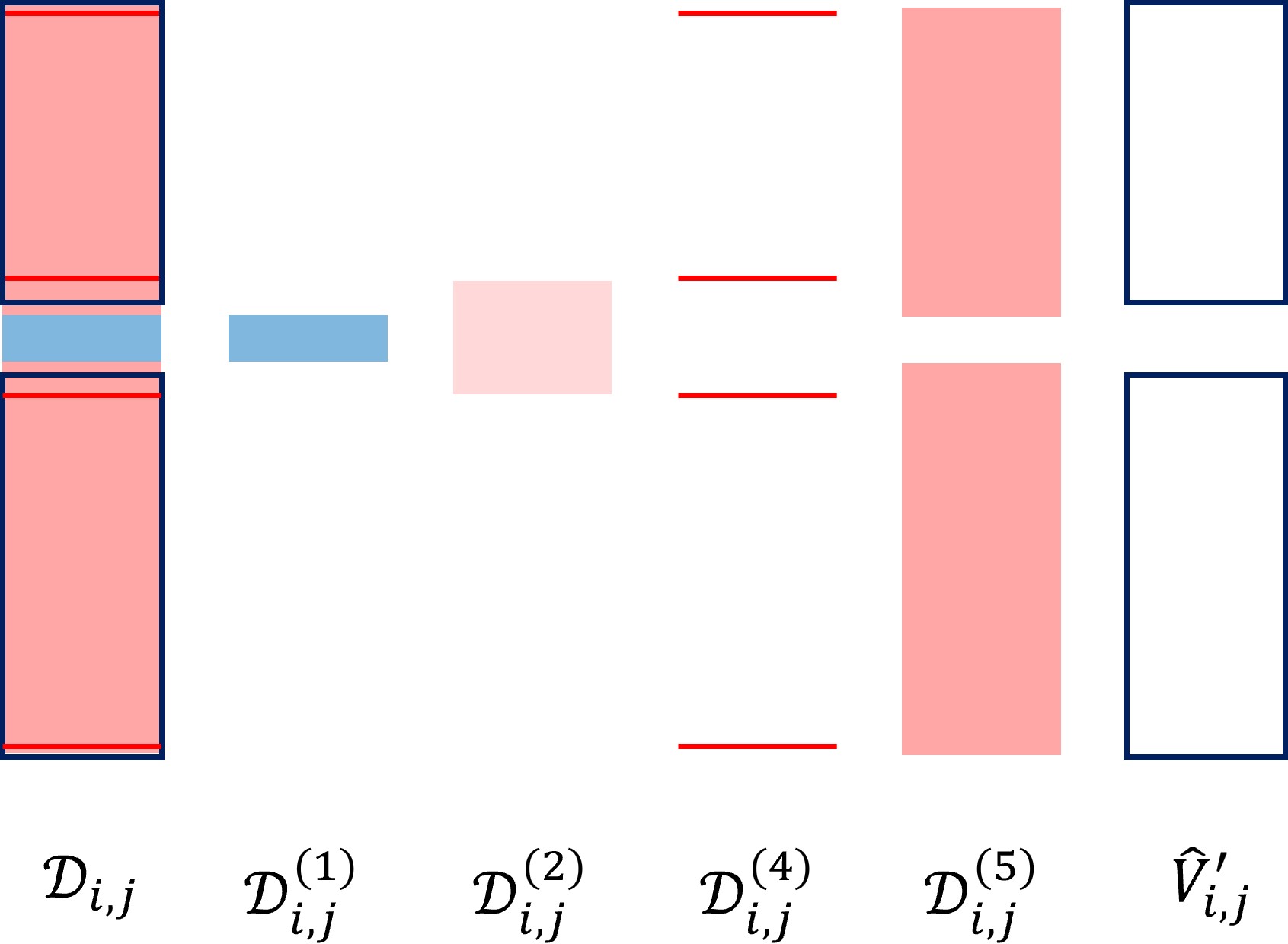}
\caption{The event $\cD_{i,j}$ for the rectangle $[(i-1)r,ir]\times [(j-L_2)W_r,(j+L_2)W_r]$. As in Figure \ref{f:EventB}, the different panels show different subevents referring to different regions of the rectangle and the first panel illustrates them combined. The first event $\cD_{i,j}^{(1)}$ asks that the passage time  across blue region in the middle for both $\rX$ and $\rX'$ is not too larger than typical. The second event $\cD_{i,j}^{(2)}$ asks that in the corresponding light red region, none of the paths are too short both before and after resampling. The third event $\cD_{i,j}^{(3)}$ (not shown) asks for not having any very good paths across the whole column (with some tolerance as we go away from the centre of the rectangle as in event $\cA$) both before and after resampling. The fourth event $\cD_{i,j}^{(4)}$ asks that the $\cK$ event holds for the horizontal lines marked in red both before and after resampling. The seventh event $\cD_{i,j}^{(3)}$ asks that the regions marked in dark red are barriers, i.e., all paths across these regions are atypically large (given by $\cI^+$ and $\cJ$ events). Finally, the regions $\widehat{V}'_{ij}$ in the final panel will be used for conditioning and separating out the likely part of the $\cD$ events, see Section \ref{s:eventperc}.}
\label{f:outer}
\end{figure}
\end{center}

We define the events
\begin{align*}
\cD^{(1)}_{i,j} = \cI^-_{i,j-\tfrac1{40} L_0,j+\tfrac1{40} L_0,{L_0}}\cap \cI^{'-}_{i,j-\tfrac1{40} L_0,j+\tfrac1{40} L_0,L_0}.
\end{align*}
\begin{align*}
\cD^{(2)}_{i,j} = \cI^+_{i,j-4L_0,j+4L_0,-L_0}\cap \cI^{'+}_{i,j-4L_0,j+4L_0,-L_0}.
\end{align*}
\begin{align*}
\cD^{(3)}_{i,j} = \cA_{i,j,L_0}^+ \cap \cA_{i,j,L_0}^{'+}.
\end{align*}
\begin{align*}
\cD^{(4)}_{i,j} = \bigcap_{s\in S} \cK_{i,j+s,L_1},\cK'_{i,j+s,L_1} \qquad \hbox{for } \\
S= \{ -L_2+w,-\frac1{20}L_0,-\frac1{20}L_0 + 1,\frac1{20}L_0-1,\frac1{20},L_2 - w \}.
\end{align*}
\begin{align*}
\cD^{(5)}_{i,j} = \cI^+_{i,j-L_2+\frac{2}{W_r},j-\tfrac1{40}L_0-\frac{2}{W_r},L_2^3}\cap \cI^{'+}_{i,j-L_2+\frac{2}{W_r},j-\tfrac1{40}L_0-\frac{2}{W_r},L_2^3}\\
\cap\cI^+_{i,j+\tfrac1{40}L_0+\frac{2}{W_r},j+L_2-\frac{2}{W_r},L_2^3}\cap \cI^{'+}_{i,j+\tfrac1{40}L_0+\frac{2}{W_r},j+L_2-\frac{2}{W_r},L_2^3}\\
\cap\cJ_{i,j-L_2+\frac{2}{W_r},j-\tfrac1{40}L_0-\frac{2}{W_r},L_2^3,w/3}\cap \cJ^{'}_{i,j-L_2+\frac{2}{W_r},j-\tfrac1{40}L_0-\frac{2}{W_r},L_2^3,w/3}\\
\cap\cJ_{i,j+\tfrac1{40}L_0+\frac{2}{W_r},j+L_2-\frac{2}{W_r},L_2^3,w/3}\cap \cJ^{'}_{i,j+\tfrac1{40}L_0+\frac{2}{W_r},j+L_2-\frac{2}{W_r},L_2^3,w/3}.\\
\end{align*}
Finally, set

\[
\cD_{i,j} = \bigcap_{\ell=1}^5 \cD^{(\ell)}_{i,j}
\]
By Lemma~\ref{l:cI.bound}
\begin{equation}\label{eq:D12.bound}
\P[\cD_{i,j}^{(1)}] \geq 1 - L_0^{-1},\qquad \P[\cD_{i,j}^{(2)}] \geq 1 - L_0^{-1}.
\end{equation}
and by Lemma~\ref{l:cA.bound},
\begin{equation}\label{eq:D3.bound}
\P[\cD_{i,j}^{(3)}] \geq 1 - L_0^{-1}.
\end{equation}
By Lemma~\ref{l:cK.bound} and a union bound
\begin{equation}\label{eq:D4.bound}
\P[\cD_{i,j}^{(4)}] \geq 1 - L_1^{-1}.
\end{equation}
and finally, similarly to equation~\eqref{eq:B7.bound} we have by the FKG inequality that for some $\delta_C>0$,
\begin{equation}\label{eq:D5.bound}
\P[\cD_{i,j}^{(5)}] \geq \delta_C.
\end{equation}

Recalling that the outer left and the outer right columns correspond to the indices $i-2$ and $i+2$ we shall define the event for the far left and far right columns as 
$$\cD_{i-2,j}\cap \cD_{i+2,j}.$$

\subsection{Wing Conditions} For passage times outside of $[(i-3)r,(i+2)r]$ we define a series of passage time estimate on both local and global scales.  The global ones will be likely enough to hold with high probability.  We set (for $\theta_2$ as in Proposition \ref{p:paraestimateconforming}) 
\begin{align*}
\cW^{*}_{i}(\cX)&= \bigcap_{i'=0}^{i-4}\bigcap_{i''=i'+1}^{i-3}\left\{\max_{\substack{|y|,|y'| \leq MW_n \\ u'=(i'r,y)\\u''=(i''r,y') }} |\hrX_{u',u''}|  \leq  \log^{\frac{100}{\theta_2}} (M) Q_{(i''-i')r}\right\}\\
&\bigcap_{i'=0}^{i-4}\bigcap_{i''=i'+1}^{i-3}\left\{\min_{\substack{|y|,|y'| \leq n^\beta W_n \\ u'=(i'r,y)\\u''=(i''r,y') }}\hrX_{u',u''}  \geq - (1\vee M^{-2}W_n^{-1}(|y|+|y'|))\log^{\frac{100}{\theta_2}} (M) Q_{(i''-i')r}\right\}\\
&\bigcap_{i'=i+2}^{Mn/r-1}\bigcap_{i''=i'+1}^{Mn/r}\left\{\max_{\substack{|y|,|y'| \leq MW_n \\ u'=(i'r,y)\\u''=(i''r,y') }} |\hrX_{u',u''}|  \leq  \log^{\frac{100}{\theta_2}} (M) Q_{(i''-i')r}\right\}\\
&\bigcap_{i'=i+2}^{Mn/r-1}\bigcap_{i''=i'+1}^{Mn/r}\left\{\min_{\substack{|y|,|y'| \leq {n^{\beta}W_n} \\ u'=(i'r,y)\\u''=(i''r,y') }}\hrX_{u',u''}  \geq - (1\vee M^{-2}W_n^{-1}(|y|+|y'|))\log^{\frac{100}{\theta_2}} (M) Q_{(i''-i')r}\right\}.
\end{align*}

Let $\cW_i^*(\cX')$ denote the event above with $\cX$ replaced by the resampled weights $\cX'$, i.e., $\hrX_{u',u''}$ in the events above are replaced by $\hrX'_{u',u''}$ and set 
$$\cW_i^*=\cW_i^*(\cX)\cap \cW_i^*(\cX').$$

Next, define 
\begin{align*}
\cZ_{i,j,k}&= \left\{\max_{\substack{|y|,|y'| \leq {MW_n} \\ u=(ir,y) \\ v= ((i+2^k L_2)r,y')}} |\hrX_{uv}| - (|\frac{y}{W_r}-j|^{\tfrac1{100}} +|\frac{y'}{W_r}-j|^{\tfrac1{100}})\frac{Q_r}{2^k L_2^2} \leq (2^k L_2)^{3/5} Q_{r}\right\}\\
&\bigcap \left\{\max_{\substack{|y|,|y'| \leq {MW_n} \\ u=(ir,y) \\ v= ((i+2^k L_2)r,y')}} |\hrX'_{uv}| - (|\frac{y}{W_r}-j|^{\tfrac1{100}} +|\frac{y'}{W_r}-j|^{\tfrac1{100}})\frac{Q_r}{2^k L_2^2} \leq (2^k L_2)^{3/5} Q_{r}\right\},\\
\cW_{i,j}^{loc}&= \bigcap_{k=1}^{(\log_2 \log_2 M)^2} \cZ_{i-2^{k+1}L_2-3,j,k}\cap \cZ_{i-2^{k}L_2-3,j,k} \cap \cZ_{i+2,j,k}\cap \cZ_{i+2^kL_2 +2 ,j,k},\\
\cW_{i,j}^{glo}&= \cW^{*}_{i} \cap \bigcap_{k=(\log_2 \log_2 M)^2}^{\lfloor\log_2 (\frac12M^{99/100}{\Phi^{-\ell}})\rfloor} \cZ_{i-2^{k+1}-3,j,k}\cap \cZ_{i-2^{k}-3,j,k} \cap \cZ_{i+2,j,k}\cap \cZ_{i+2^k +2 ,j,k}.
\end{align*}

We have the following probability bound for $\cZ_{i,j,k}$. 

\begin{lemma}
    \label{l:zbound}
    There exists $c,\theta'$ such that for $L_2$ chosen sufficiently large enough,  we have for all $i, |j|\le M$ and for all $k\le (\log_2\log_2 M)^2$
    $$ \P(\cZ_{i,j,k}) \ge 1-\exp(-c(2^{k}L_2)^{\theta'}).$$
\end{lemma}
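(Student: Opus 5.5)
We reduce to estimates across a single wide column and treat $\rX$ and $\rX'$ separately. The resampled field $\omega_\kappa$ has the same law as $\somega$, so it suffices to bound the probability that the first event in $\cZ_{i,j,k}$ fails and then use a union bound. Set $m=2^kL_2$ and $R=mr$. Since $k\le(\log_2\log_2M)^2$ and $r\le M^{1/100}n$, we have $R\le Mn$. By Theorem~\ref{t:all}, $Q_R\le C m^{1/2+\delta}Q_r$. The target allowance is $m^{3/5}Q_r$, which is therefore larger than $Q_R$ by a factor $m^{1/10-\delta}\ge m^{1/20}$. So the whole statement is a stretched exponential estimate at level $z=m^{1/20}$ in units of $Q_R$, combined with a union bound over the vertical positions of the endpoints.

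\textbf{Step 1: tiling the endpoints.} Split the left line $\{ir\}\times[-MW_n,MW_n]$ and the right line $\{(i+m)r\}\times[-MW_n,MW_n]$ into segments of length $W_R$. For a pair of segments at vertical offsets $a,b$ (in units of $W_R$) from $jW_r$, apply Proposition~\ref{p:paraestimateconforming}(i) at scale $R$ (after a shift in $j$, using rotational and translational invariance) to control $|\rX_{uv}-|u-v||$ uniformly over $u,v$ in the segment pair.

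Next compare $\hrX_{uv}$ with $\rX_{uv}-|u-v|$. Their difference is the Taylor remainder $|u-v|-R-\frac{(y-y')^2}{2R}=O((y-y')^4/R^3)$. This is negligible, about $(|a-b|W_R)^4/R^3\ll Q_R$, as long as $|y-y'|\le n^{\beta}W_n\ll R^{3/4}Q_R^{1/4}$; this range covers everything since $n^\beta W_n\ll n$.

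\textbf{Step 2: absorbing far-away endpoints.} When $|a|$ or $|b|$ is large, the allowance grows. The term $(|y/W_r-j|^{1/100}+|y'/W_r-j|^{1/100})Q_r/m^2$ is at least roughly $(|a|W_R/W_r)^{1/100}Q_r/m^2$. Since $W_R/W_r\ge m^{1/2}$, this is at least $|a|^{1/100}m^{1/200-2}Q_r$. That is polynomially small in $m$ unless $|a|$ is huge, so the extra allowance only matters for $|a|\ge m^{C'}$. For $|a|,|b|\le m^{C'}$, the union bound over $m^{2C'}$ pairs costs a factor $m^{2C'}\exp(-cm^{\theta_2/20})$, which is still $\exp(-c'm^{\theta'})$.

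For larger offsets, the total allowance is $m^{3/5}Q_r+|a|^{1/100}m^{-2+1/200}Q_r\ge c\,|a|^{1/100}m^{-3}Q_R$. The tail bound for that pair is then $\exp(-c(|a|^{1/100}m^{-3})^{\theta_2})$. Summed over $|a|\ge m^{C'}$, with $C'$ large (say $C'=1000$), this gives $\exp(-cm^{\theta'})$. There are at most polynomially many in $M$ offsets, but once $|a|^{1/100}\gg m^3$ the tail in $|a|$ beats any polynomial count. Along the way, $L_2$ must be taken large so that all the $m$-dependent constants are absorbed into $\exp(-c(2^kL_2)^{\theta'})$.

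\textbf{Main obstacle.} Proposition~\ref{p:paraestimateconforming}(i) only covers endpoints with $|jW_r|\le\frac12n^\beta W_n$, whereas $\cZ$ ranges over $|y|\le MW_n$; the range $|y|\le MW_n$ is fine since $MW_n\ll n^\beta W_n$ for $n$ large. The other restriction is the slope, $k\le R/W_R$: for $|y-y'|$ close to $MW_n$ the slope must stay subcritical. Since $MW_n/R\le MW_n/n\to0$, this holds for $n$ large depending on $M$.

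The genuinely delicate point is the two-sided control of $\hrX$ for steep pairs, where the quadratic centering must match $|u-v|$ to within $Q_R$. If the Taylor error is not uniformly negligible, I would fall back on Lemma~\ref{l:paraexplicit} for the lower tail and Proposition~\ref{p:para} for the upper tail on each parallelogram.
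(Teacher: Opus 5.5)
Your proposal is correct and is essentially the paper's argument. You split the endpoint pairs into segments; for endpoints near $jW_r$ you use the base allowance $(2^kL_2)^{3/5}Q_r\gg Q_{2^kL_2r}$ together with Proposition~\ref{p:paraestimateconforming} and a union bound over polynomially many pairs; for far-away endpoints you let the $|\cdot|^{1/100}$ terms pay for the union bound. The differences are only cosmetic: you tile by segments of length $W_{2^kL_2r}$ rather than $W_r$, you compare $Q_{2^kL_2r}$ with $Q_r$ via Theorem~\ref{t:all} rather than Proposition~\ref{p:uij.bounds}, you cut the far regime at $|a|\ge m^{C'}$ instead of using the paper's shells $\mathbf{H}_s$, and you check explicitly the Taylor remainder between $\hrX_{uv}$ and $\rX_{uv}-|u-v|$, which the paper leaves implicit.
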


The proof of Lemma \ref{l:zbound} is given in Section \ref{s:letter} and the bounds on $\cW^{loc}_{i,j}$ are provided in Section \ref{s:likely}. The next lemma which gives bound on $\cW^{glo}_{i,j}$ is proved in Section \ref{s:letter}.

\begin{lemma}\label{l:cW.global}
For all $M$ and $n$ large enough,
\[
\P[\cW^{glo}_{i,j}] \geq 1-M^{-200}.
\]

\end{lemma}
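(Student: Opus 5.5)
The bound on $\P[\cW^{glo}_{i,j}]$ will come from a union bound over the polynomially many constituent events of $\cW^{glo}_{i,j}$. Each constituent fails with probability at most $\exp(-c(\log M)^{100})$ or $\exp(-c(2^kL_2)^{\theta'})$ with $k\ge(\log_2\log_2M)^2$. Both are smaller than any fixed negative power of $M$.

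\emph{Step 1: the events in $\cW^*_i$.} Fix $i'<i''$, set $r'=(i''-i')r$, and consider the first family of events. We partition the segments $\{i'r\}\times[-MW_n,MW_n]$ and $\{i''r\}\times[-MW_n,MW_n]$ into intervals of length $W_{r'}$. There are at most $M^{O(1)}$ such pairs of intervals.

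For each pair of intervals, Proposition \ref{p:paraestimateconforming} applies at scale $r'$, which lies between $n$ and $Mn$ and is a multiple of $n$. It bounds $|\rX_{uv}-|u-v||$ by $zQ_{r'}$ outside probability $\exp(1-Cz^{\theta_2})$. The vertical offset is at most $2MW_n\ll r'$, so a Taylor expansion gives $|u-v|-r'-\frac{(y-y')^2}{2r'}=O\big((y-y')^4/r'^3\big)$. This error is negligible compared to $Q_{r'}$ because $W_n\le n^{3/4}$.

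Taking $z=\log^{100/\theta_2}(M)$, each pair fails with probability at most $\exp(1-C\log^{100}M)$. There are at most $M^{O(1)}$ choices of $(i',i'')$ and of interval pairs, so the union bound gives a total of $M^{O(1)}\exp(-C\log^{100}M)\le M^{-400}$. The same argument handles the events with $i'\ge i+2$, and it also applies to $\rX'$, since $\rX'$ has the same law as $\rX$.

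For the lower-tail events we use part (ii) of Proposition \ref{p:paraestimateconforming}, which holds for $|y|,|y'|\le n^\beta W_n$. The tolerance $(1\vee M^{-2}W_n^{-1}(|y|+|y'|))$ only enlarges the threshold. We therefore split the range of $|y|+|y'|$ dyadically and apply the same bound on each piece. For $|y|$ large, the quadratic term in $\hrX$ together with Lemma \ref{l:paraexplicit} provides additional room. The number of intervals is now $n^{O(\beta)}$ rather than $M^{O(1)}$. To absorb this, we take $z$ proportional to the enlarged tolerance on the far intervals. On near intervals the count is only $M^{O(1)}$, so the previous bound suffices there.

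\emph{Step 2: the $\cZ$ events with $(\log_2\log_2M)^2\le k\le\log_2(\frac12M^{99/100}\Phi^{-\ell})$.} Lemma \ref{l:zbound} as stated covers only $k\le(\log_2\log_2M)^2$. Its proof should nevertheless extend verbatim to all $k$ up to the point where $2^kL_2r\le Mn$. That proof combines Proposition \ref{p:paraestimateconforming} at scale $2^kL_2r$ with the growth estimate $Q_{2^kL_2r}\ge(2^kL_2)^{\alpha_*}Q_r$ from Theorem \ref{t:all}. This makes the threshold $(2^kL_2)^{3/5}Q_r$ a large multiple of $Q_{2^kL_2r}$ relative to its scale, and the $j$-dependent slack term pays for the sum over the $M^{O(1)}$ far intervals.

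The failure probability is then at most $\exp(-c(2^kL_2)^{\theta'})\le\exp(-c\,2^{\theta'(\log_2\log_2 M)^2})$. This is super-polylogarithmically small in $M$, hence at most $M^{-300}$ for $M$ large. Summing over the $O(\log M)$ values of $k$ and the four shifts per $k$ gives at most $M^{-250}$.

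\emph{Main obstacle.} The delicate point is the re-verification of Lemma \ref{l:zbound}'s tail for large $k$. We must check that the union over the $\approx M W_n/W_r$ vertical positions is absorbed. The worry is that, when $2^kL_2$ is close to $M^{99/100}\Phi^{-\ell}$, the ratio $(2^kL_2)^{3/5}Q_r/Q_{2^kL_2r}$ might fail to be large. Theorem \ref{t:all} gives only $Q_{r'}\le (r'/r)^{1/2+\delta}Q_r$, and $3/5>1/2+\delta$, so the ratio is at least $(2^kL_2)^{1/10-\delta}$, a positive power of $M$. The stretched exponential then beats the $M^{O(1)}$ union bound.

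Combining Steps 1 and 2, $\P[(\cW^{glo}_{i,j})^c]\le M^{-200}$ for $M$ and $n$ large enough.
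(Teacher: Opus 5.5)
Your proposal is correct and follows the paper's route: a union bound over the constituents of $\cW^{glo}_{i,j}$, with $\cW^*_i$ handled by Proposition \ref{p:paraestimateconforming} as in Lemma \ref{l:wstar}, and the $\cZ$ events handled by the argument of Lemma \ref{l:zbound}. You also rightly note that the paper applies Lemma \ref{l:zbound} beyond its stated range $k\le(\log_2\log_2M)^2$; your extension via the upper bound $Q_{2^kL_2r}\le C(2^kL_2)^{1/2+\delta}Q_r$ closes that gap. A few small slips should be fixed. In Step 2 the relevant growth bound is this upper bound, not $Q_{2^kL_2r}\ge(2^kL_2)^{\alpha_*}Q_r$. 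The ratio $(2^kL_2)^{1/10-\delta}$ is only super-polylogarithmic in $M$ (not a power of $M$) at the bottom of the $k$-range, though that still suffices. For the far lower-tail intervals, what does the work is Proposition \ref{p:paraestimateconforming}(ii) with $z$ scaled to the enlarged tolerance, not Lemma \ref{l:paraexplicit}.
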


Finally, we let
\[
\cW_{i,j} = \cW^{loc}_{i,j} \cap \cW^{glo}_{i,j}.
\]

\subsection{Percolation Events}
\label{s:eventperc}
Not all parts of the events $\cB,\cD$ are highly likely so we separate the likely parts.  We set
\begin{align*}
\hV_j &= \Big(((j-L_1) W_r ,(j-L_0) W_r )\cup ((j + L_0) W_r ,(j+L_1)W_r)\Big)\\
V_{i,j} &= \bigcup_{i'=(i-1)\Phi^\ell+1}^{i\Phi^\ell} \{i'\}\times [(i'-1)n-1,i'n+1]\times \hV_j \\
V_{i,j}^c &= \bigcup_{i'=(i-1)\Phi^\ell+1}^{i\Phi^\ell} \{i'\}\times [(i'-1)n-1,i'n+1]\times (\R\setminus \hV_j)\\ 
\hV_{i,j} &= [(i-1)\Phi^{\ell}-1,i\Phi^{\ell}+1]\times\hV_{j}
\\
\hV_j'&= \Big(((j-L_2) W_r +2,(j-\frac1{40}L_0) W_r -2)\cup ((j + \frac1{40}L_0) W_r + 2,(j+L_2)W_r -2)\Big)\\
V'_{i,j} &= \bigcup_{i'=(i-1)\Phi^\ell+1}^{i\Phi^\ell} \{i'\}\times [(i'-1)n-1,i'n+1]\times \hV_j'\\
V^{'c}_{i,j} &= \bigcup_{i'=(i-1)\Phi^\ell+1}^{i\Phi^\ell} \{i'\}\times [(i'-1)n-1,i'n+1]\times (\R\setminus \hV_j')\\
\hV'_{i,j} &=[(i-1)\Phi^{\ell}-1,i\Phi^{\ell}+1]\times\hV'_{j}.
\end{align*}
We define the following events.
\begin{align*}
\cP_{i,j}^- &= \cB^{(1)}_{i,j}
\cap \Big \{\P\Big[\cB^{(2)}_{i,j}, \cB^{(3)}_{i,j} \mid \bomega(V_{i,j}^c)\Big] \geq \tfrac12 \Big \}
\cap \Big \{\P\Big[\cB^{(6)}_{i,j} \mid \bomega(V_{i,j}^c)\big)\Big] \geq 1-\frac{\delta_A}{100} \Big \}\\
&\qquad\bigcap_{i'\in \{i-2,i+2\}} \Bigg(\cD^{(1)}_{i',j}
\cap \Big \{\P[\cD^{(2)}_{i',j}, \cD^{(3)}_{i',j},\cD^{(4)}_{i',j}  \mid \bomega(V_{i',j}^{'c})] \geq 1/2 \Big \}\Bigg)\\
&\qquad \bigcap \cC_{i-1,j} \bigcap \cC_{i+1,j} \bigcap \cW_{i,j}^{loc}
\end{align*}
and
\[
\cP_{i,j} = \cP_{i,j}^-\cap \cD_{i-2,j}\cap \cD_{i+2,j}\cap\cB_{i,j} \cap \cW^{glo}_{i,j}.
\]
Implicitly $\cP_{i,j}$ is a function of $n,M$ and $\ell$ so when there is ambiguity we will write $\cP_{i,J_i}^{n,M,\ell}$.

The above events are local events, which in particular do not reference the optimal path (the event $\cW^{glo}_{i,j}$ is not  but it is sufficiently likely that we will employ a simple union bound for it.  

\subsection{Conclusions about the events}
We shall need two results about the events that we have defined above. The first one states that with high probability the event $\cP_{i,j}$ occurs at a positive fraction of locations in the bulk along the geodesic; see Figure \ref{f:Ppercolation}.

\begin{center}
\begin{figure}
\includegraphics[width=6in]{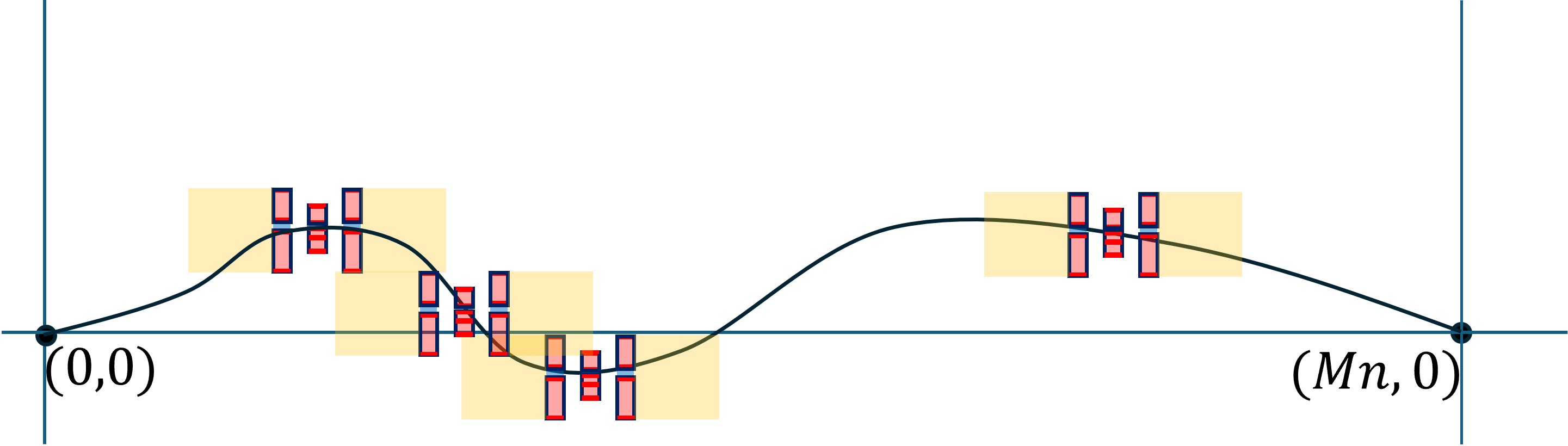}
\caption{In Theorem \ref{t:Pplus}, for each scale $r=r_{\ell}$, we check if the event $\cP_{i,j}$ occurs at location $i$ along the geodesic from $(0,0)$ to $(Mn,0)$, i.e., if $\cP_{i,J_i}$ holds. This event is local and as shown in the figure depends on the marked regions (in the figure, the red region marks the central and outer columns and the wing conditions depends on the regions marked in yellow, intermediate columns are not marked). Theorem \ref{t:Pplus} asserts that with large probability along any consecutive $\Phi$ many consecutive locations in the bulk, $\cP_{i,J_i}$ occurs at a positive (not depending on $n,M,\ell$) fraction of locations.}
\label{f:Ppercolation}
\end{figure}
\end{center}

\begin{theorem}\label{t:Pplus}
There exists $M_0$ such that for all $M\geq M_0$ and all $n$ sufficiently large and $0\leq \ell\leq \ell_{max}$ and $2M^{99/100}n\leq  ir_\ell \leq (M-2M^{99/100})n$,
\[
\P\Bigg[\sum_{i'=i}^{i+\Phi-1} I(\cP_{i',J^{n,M,\ell}_{i'}}^{n,M,\ell},|J^{n,M,\ell}_{i'}|W_r \leq M^{8/10}W_n) \leq \frac{\delta_A\delta_B\delta_C^2}{200} \Phi \Bigg] \leq M^{-90}.
\]
\end{theorem}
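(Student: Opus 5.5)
Throughout, $r=r_\ell$ and $J_{i'}=J^{n,M,\ell}_{i'}$. The plan splits $\cP_{i,j}$ into its three kinds of ingredients and treats them separately: the likely part $\cP^-_{i,j}$, the global wing event $\cW^{glo}_{i,j}$, and the unlikely local parts $\cB_{i,j}$, $\cD_{i\pm2,j}$. The first part handles the joint failure of all the non-likely constraints.

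\textbf{Step 1 (global wing events and a priori bounds).} By Lemma \ref{l:cW.global} and a union bound over the at most $\Phi\cdot O(M^{2})$ pairs $(i',j)$ with $|j|W_r\le M^{8/10}W_n$, the events $\cW^{glo}_{i',j}$ hold simultaneously except with probability $M^{-190}$. Lemma \ref{l:proxytrans:intro} with $z=M^{1/10}$ (transversal fluctuation $\ll M^{8/10}W_n$) shows that $|J_{i'}|W_r\le M^{8/10}W_n$ for all $i'$ except with probability $\exp(-M^{c})$. Proposition \ref{p:tau2perc} at scale $r$, together with Proposition \ref{p:perc1}, controls $\sum_{i'}|J_{i'}-J_{i'-1}|^2$ over the window. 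The purpose is to restrict the geodesic's $\uk$-sequence to a class of size $e^{O(\Phi)}$ after coarse-graining; without this, the union bound over candidate sequences later would be too costly.

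\textbf{Step 2 (likely part via percolation).} Each ingredient of $\cP^-_{i',j}$ has probability at least $1-O(L_0^{-1})$ by Lemma \ref{l:Bbound}, \eqref{eq:cC.bound}, \eqref{eq:D12.bound}--\eqref{eq:D4.bound}, and Lemma \ref{l:zbound} for $\cW^{loc}$. Moreover $\cP^-_{i',j}$ depends only on finitely many columns at bounded range, apart from the $\cZ$ events at dyadic ranges $2^kL_2$, whose failure probabilities decay stretched-exponentially in $2^kL_2$. I would then prove a Peierls/polymer estimate in the spirit of Proposition \ref{p:perc1}, with indicators $I((\cP^-_{i',k})^c)$ in place of the variables there. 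The goal is that, with probability at least $1-M^{-100}$, uniformly over all admissible sequences $\uk$, at most a $\frac{1}{100}\delta_A\delta_B\delta_C^2$ fraction of the $i'$ in the window have $(\cP^-_{i',k_{i'}})^c$. The long-range $\cZ$ dependence is handled by splitting indices into residue classes mod $2^{k}L_2$ and summing the tails over $k$. This is Lemma \ref{l:Pminus.perc}.

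\textbf{Step 3 (unlikely monotone and resampling parts, the main obstacle).} Given $\cP^-$, I need that a $\delta_A\delta_B\delta_C^2/100$ fraction of the locations actually along the geodesic also see $\cB_{i',J_{i'}}\cap\cD_{i'\pm2,J_{i'}}$. The difficulty is that these events are correlated with where the geodesic goes: barriers $\cI^+$ repel it, and the gadget $\cB^{(4)}$ is non-monotone.

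I would reveal the field in stages, sweeping left to right. At a location $i'$ where $\cP^-_{i',J_{i'}}$ holds, I condition on $\bomega(V^c_{i',J_{i'}})$ (respectively $V'^c$), which is allowed by the conditional-probability clauses built into $\cP^-$. The remaining region $V_{i',j}$ is then resampled.

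For the unlikely parts I would use the following bounds:
\begin{itemize}
\item the monotone barrier events $\cB^{(7)}$ and $\cD^{(5)}$ have conditional probability at least $\delta_B$, respectively $\delta_C$, by FKG;
\item the conditional-probability clauses keep $\cB^{(2)},\cB^{(3)},\cB^{(6)},\cD^{(2\text{--}4)}$ at probability at least $1/2$, respectively $1-\delta_A/100$;
\item the gadget $\cB^{(4)}\cap\cB^{(5)}$ has probability at least $\delta_A-\delta_A/100$ by \eqref{eq:B4.bound} and \eqref{eq:B5.bound}.
\end{itemize}

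The crucial deterministic input is that these resampled regions lie off the geodesic's likely trajectory. The wing conditions and $\cK$ events show the geodesic stays within the central band $[J_{i'}\pm L_0]W_r$, so modifying $V_{i',J}$ while preserving $\cP^-$ does not change $J_{i'}$. Hence, conditionally on the past, each good location succeeds with probability at least $\frac12\cdot\delta_A\delta_B\delta_C^2$ (the factor $\frac12$ accounts for the intersections). Spacing the candidate locations by $5$, or by the wing range, gives approximate conditional independence, and an Azuma/Chernoff bound then yields at least $\frac{\delta_A\delta_B\delta_C^2}{200}\Phi$ successes, except with probability $e^{-c\Phi}\ll M^{-90}$, since $\Phi=2^{(\log_2\log_2M)^5}$ grows faster than any power of $\log M$.

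The hardest point is justifying this ``resampling does not move the geodesic'' claim rigorously. I would first attempt a Glauber-type resampling comparison: by an exchangeability/coupling argument, the joint law of (environment, location $J_{i'}$) under resampling of $V_{i',J}$ is comparable to the original up to constant factors.
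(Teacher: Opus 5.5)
Your skeleton matches the paper's: the likely part by a polymer/percolation bound as in Lemma \ref{l:Pminus.perc}, the unlikely parts by a Glauber-type resampling, then Azuma--Hoeffding. However, Step~3 has a genuine gap exactly where you flag it, and the ``deterministic input'' you propose to close it is false.

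Since $\cP^-_{i',j}$ is measurable with respect to the field off $V_{i',j}$, resampling $V_{i',j}$ trivially preserves it. The geodesic is not preserved. A fresh field in $V_{i',j}$ can contain a favourable path that pulls the geodesic into $\hV_{i',j}$ and changes $J_{i'}$; indeed $\cB^{(2)}\cap\cB^{(3)}$ may fail with conditional probability up to $\frac12$ after resampling. The wing events and $\cK$ events do not prevent this. The $\cK$ events act as barriers only together with the $\cI^+,\cJ$ events of $\cB^{(7)}$, which live inside $V_{i',j}$. The confinement to Type~1 paths in Lemma \ref{c:path.sepatated} is derived from the full $\cP_{i',j}$, so invoking it at the $\cP^-$ stage is circular.

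What you need instead has two parts.
\begin{enumerate}
\item An additional likely event $\cR_{i'}$: the geodesic crosses columns $i'-2,\dots,i'+2$ within $\frac1{100}L_0W_r$ of height $J_{i'}W_r$. It is proved by its own percolation argument, using a local version $\cR_{i,j}$ and ordering of geodesics (Lemma \ref{l:pminusR}).
\item A resampling operator that redraws $\bomega(V_{i',j})$ \emph{conditionally on} $\bomega(V^c_{i',j})$ \emph{and} on $\cS_{i',j}=\cP^-_{i',j}\cap\cR_{i'}\cap\{J_{i'}=j\}$. This gives an exactly exchangeable pair, not merely a law ``comparable up to constants''. Within a fibre $\{\bomega(V^c_{i',j})\text{ fixed}\}$, the event $\cS_{i',j}$ coincides with $\cL_{i',j}$, the event that the geodesic stays at distance $>1$ from $\hV_{i',j}$. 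On $\cL_{i',j}$ the geodesic is unchanged, because the topmost optimal path is optimal in both fields.
\end{enumerate}

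The remaining issue, which you do not address, is why conditioning on $\cL$ does not destroy the probability of the unlikely events. Two different mechanisms are needed.

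For the outer barriers, $\cL$ is increasing in the field on $V'$ and the $\cD^{(k)}$ are increasing. So FKG shows the conditioning only helps, giving $\frac12\delta_C$.

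For the central column FKG is unavailable, because $\cB^{(4)}$ is not monotone. The paper instead shows deterministically, via Lemma \ref{c:path.sepatated}, that on $\cP_{i',j}$ the geodesic is of Type 1 or 6 and so avoids $\hV_{i',j}$. Hence $\cB_{i',j}\cap\{\text{fibre}\}\subseteq\cS_{i',j}$, and $\P[\cB\mid\cS,\text{fibre}]\ge\P[\cB\mid\text{fibre}]\ge\delta_A\delta_B/32$. The last bound comes from conditioning on the gadget region $\Theta_{i',j}$ and applying FKG off $\Theta_{i',j}\cup V^c_{i',j}$, where $\cB^{(5)}$ is increasing. This argument requires $\cD_{i'\pm2,j}$ and $\cW^{glo}_{i',j}$ to hold already. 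So the outer and central resamplings must be done in two successive rounds, not via one product bound.

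Finally, a left-to-right sweep ``conditionally on the past'' is not available. Whether $i'$ is a good location depends on the global geodesic, hence on unrevealed field. The paper instead composes the operators $T^{(i+t\xi)}$, spaced by $\xi=2^{(\log_2\log_2M)^3}$. This spacing must exceed the range of $\cP^-$, which includes $\cW^{loc}$ at range of order $L_22^{(\log_2\log_2M)^2}$, so spacing $5$ fails. Each operator preserves $\cS$ at the other locations. Exact equality in law of the counts before and after resampling then gives binomial domination in each residue class mod $\xi$.
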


The second result states that on the event $\cP_{i,J_{i}}$ the optimal path $\gamma$ before the resampling and the optimal path $\gamma'$ after the resampling does not share any $n\times W_n$ block in the $i$-th column (see Corollary \ref{c:path.sepatated} for a more detailed statement).

\begin{lemma}
    \label{l:separatedfirst}
    There exists $M_0$ such that for all $M\geq M_0$ and all $n$ sufficiently large and $0\leq \ell\le \ell_{\max}$, ( i.e., $n\leq r_\ell \leq M^{1/100}n$) and $2M^{99/100}n\leq  ir_\ell \leq (M-2M^{99/100})n$ we have the following: on the event $\cP^{n,M,\ell}_{i,J_i}$, for all $i'\in [(i-1)\Phi^{\ell}+1,i\Phi^{\ell}]$ and for all $j$, $I(\cU_{i'j}\cap \cU'_{i'j})=0$. 
\end{lemma}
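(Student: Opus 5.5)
The plan is a purely deterministic comparison-of-paths argument on the event $\cP_{i,j}$ with $j=J_i=J^{n,M,\ell}_i$. Throughout I write $r=r_\ell$ and measure heights in units of $W_r$. The goal is to show two things. First, inside the central column $[(i-1)r,ir]$ the original geodesic $\gamma$ stays in the band $[(j-\tfrac12L_0)W_r,(j+\tfrac12L_0)W_r]$. Second, the resampled geodesic $\gamma'$ crosses the central column inside the gadget band $[(j-\sqrt\alpha-w)W_r,(j-\sqrt\alpha+h+w)W_r]$. Since $\sqrt{\alpha}\ge L_0^{50}\gg L_0$ and $W_r\ge W_n$, these two bands are separated by far more than $W_n+2$. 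Hence no $n\times W_n$ block $\Lambda_{i'j'}$ with $i'\in[(i-1)\Phi^\ell+1,i\Phi^\ell]$ can be within distance $1$ of both paths, which is exactly $I(\cU_{i'j'}\cap\cU'_{i'j'})=0$.

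\textbf{Localizing $\gamma$.} I would argue by contradiction using surgery on $\gamma$. We know $\gamma$ exits the central column at height in $[jW_r,(j+1)W_r)$.
\begin{itemize}
\item Global control of the entry and exit heights of $\gamma$ at the boundaries of the five-column window comes from $\cW^{glo}_{i,j}$ and $\cW^{loc}_{i,j}$ (the $\cZ$ events and $\cW^*$). If $\gamma$ entered the window far from $j$, then replacing its segment near the window by a path through the outer and intermediate columns would save time. That replacement path uses $\cD^{(1)}$ (a good path in the middle of the outer columns), $\cC$ (typical crossing times of the intermediate columns) and $\cB^{(1)}$; the savings would contradict optimality. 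This is done scale by scale in $2^kL_2$, which is why the $\cZ_{\cdot,j,k}$ carry the $|y/W_r-j|^{1/100}$ penalty.
\item Once the entry and exit heights are within $L_2$ of $j$, the barriers $\cD^{(5)}$ and $\cB^{(7)}$ (the $\cI^+$ and $\cJ$ events with $L_2^2,L_2^3$) forbid $\gamma$ from travelling through the dark red regions. The $\cK$ lines in $\cB^{(6)},\cD^{(4)}$ then force any crossing of those horizontal lines to pay the quadratic penalty.
\item $\cB^{(2)}$ and $\cB^{(3)}$ show that leaving the band $\pm\tfrac12L_0$ costs at least $\tfrac1{40}L_0^2Q_r$. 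This exceeds the $O(L_0)Q_r$ slack afforded by $\cB^{(1)},\cC,\cD^{(1)}$.
\item Finally, $\gamma$ avoids the gadget. Before resampling, $\cB^{(4)}$ gives only a saving $\alpha-\alpha^{9/10}$. Reaching height $j-\sqrt\alpha$ and returning costs at least about $\alpha$ via $\cK$ at $-\sqrt\alpha-\tfrac{2w}3$ and $-\sqrt\alpha+h+\tfrac{2w}3$. The net effect is positive, of order $\alpha^{9/10}$.
\end{itemize}

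\textbf{Localizing $\gamma'$.} Every event above has a primed counterpart built into $\cP_{i,j}$, so the same argument confines $\gamma'$ to within $L_1$ of $j$ near the central column. Now $\cB^{(4)}$ gives a gadget path with $\hrX'\le-(\alpha+\alpha^{9/10})Q_r$, which beats the access cost of about $\alpha$ by about $\alpha^{9/10}$. The reference path is: go through $\cD^{(1)'}$, cross $\cC'$, descend to the gadget, and come back. This path is shorter than any path staying in the $\pm\tfrac12L_0$ band ($\cB^{(2)}$ bounds those below) and than any path through the barrier regions. $\cB^{(5)}$ (the $\cM'$ event) then shows that no path in the slightly enlarged box beats the gadget by more than $Q_r$. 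Combined with the $\cK'$ lines at $-\sqrt\alpha-\tfrac{2w}3$ and $-\sqrt\alpha+h+\tfrac{2w}3$, this pins $\gamma'$ to the $w$-neighbourhood of the gadget throughout the column.

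\textbf{Main obstacle.} The step I expect to be hardest is the bookkeeping for $\gamma'$. Its entry and exit heights are not a priori tied to $J_i$, so I need an explicit chain of inequalities showing that every competitor path is worse. These competitors include paths that leave the window, cross the barriers, or enter the gadget only partially, each handled through the strongly conforming approximation of Lemma~\ref{l:strongly.conforming}. The constant hierarchy $L_0\ll\alpha\ll L_1\ll L_2$, with $w$ small relative to $\delta_A$, is exactly what should make every error term smaller than the $\alpha^{9/10}$ and $L_0^2$ margins.
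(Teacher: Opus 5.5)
Your handling of $\gamma$ is broadly compatible with the paper. The paper, however, does not localize where $\gamma$ enters the five-column window. It classifies every path by how its central-column segment sits relative to $jW_r$ (Types 1--6) and compares it with the benchmark $\rX_{\origin,((i-1)r,jW_r)}+r+\rX_{(ir,jW_r),(Mn,0)}$. To do this it uses Lemmas~\ref{l:wing.bound}, \ref{l:barrier6} and \ref{l:intermeadiate.bound}, which control $\rX_{\origin,((i-1)r,y)}$ for \emph{all} $y$. It then uses only the fact that $\gamma$ hits $\{ir\}\times[jW_r,(j+1)W_r)$ to rule out Type 6.

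The genuine gap is your claim that ``the same argument confines $\gamma'$ to within $L_1$ of $j$ near the central column'', and more generally the goal of pinning $\gamma'$ to the gadget band. This cannot be deduced from $\cP_{i,j}$. What anchored $\gamma$ near height $j$ was $J_i=j$, and there is no analogous information about $\gamma'$. Moreover, nothing in $\cP_{i,j}$ bounds from below the $\rX'$-length of a crossing of the central column that stays outside $[(j-L_1)W_r-2,(j+L_1)W_r+2]$:
\begin{itemize}
\item there is no $\cA^{+}$-type event for the central column;
\item the $\cK'$ lines only penalize paths that actually touch them;
\item the primed version of Lemma~\ref{l:intermeadiate.bound} only gives $\rX'_{\origin,((i-1)r,y)}-\rX'_{\origin,((i-1)r,jW_r)}\ge-(8L_2+8|y/W_r-j|)Q_r$.
\end{itemize}
So on $\cP_{i,j}$ the resampled geodesic may well cross the central column far from $jW_r$ (a Type 6 path). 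Your planned chain of inequalities showing that every competitor, including paths leaving the window, is worse fails exactly there. The step you flag as the main obstacle is not merely hard but false.

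The fix is to aim lower: you only need $\gamma'$ to avoid $[(i-1)r,ir]\times[(j-L_0)W_r,(j+L_0)W_r]$. Any path whose central segment meets this band is of Type 1, 2 or 4. The primed events give every path of Type 1, 2, 4 or 5 the bound $\rX'\ge\rX'_{\origin,((i-1)r,jW_r)}+r+\rX'_{(ir,jW_r),(Mn,0)}-\frac{L_0^2}{100}Q_r$. On the other hand, $\cB^{(4)}$ combined with the primed Lemma~\ref{l:intermeadiate.bound} yields an explicit gadget path whose $\rX'$-length is at most this benchmark minus $\frac12\alpha^{9/10}Q_r$. Hence $\gamma'$ is of Type 3 or Type 6. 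Either way it is separated from the Type~1 geodesic $\gamma$ by much more than $W_n+2$ (Lemma~\ref{c:path.sepatated}).

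Three smaller points:
\begin{itemize}
\item The slack in the Type~1 comparison is of order $L_0^2$, not $L_0$, because the linear error terms carry a factor $1+\frac{L_0}{20}$. The argument therefore relies on the constants $\frac1{100}$ versus $\frac1{50}$.
\item The cost $\approx\alpha$ of visiting the gadget comes from the parabolic term $\frac12(|y-jW_r|/W_r)^2Q_r$ at both column sides. The $\cK$ lines at $-\sqrt\alpha-\frac{2w}{3}$ and $-\sqrt\alpha+h+\frac{2w}{3}$ enter only through Lemma~\ref{l:barrier.entry}, for paths escaping into the adjacent barriers.
\item $\cM$ is needed in the \emph{unprimed} environment: it extends the $\cI^+$ lower bound of $\cB^{(4)}$ from the gadget box to its $w$-enlargement, which is what makes $\gamma$ avoid the gadget. $\cM'$ is not needed for $\gamma'$.
\end{itemize}
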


\section{Chaos Estimate: Proof of Proposition \ref{p:chaos}}\label{s:chaos.estimate}

Assuming Theorem \ref{t:Pplus} and Lemma \ref{l:separatedfirst} (which will be proved over the next few sections) we prove Proposition \ref{p:chaos} in this section. Recall that $\cU_{ij}$ is the event that the conforming geodesic $\gamma$ passes within distance 1 of the block $\Lambda_{ij}$ and let $\cU_{ij}'$ is the analogous event for the updated path $\gamma'$. We need to show that the expected overlap of blocks of the original and resampled paths is $o(M)$. This is done in the following lemma.
\begin{lemma}
\label{l:chaosbasic}
There exists $M_0$ such that for $M \geq M_0$ and all large enough $n$,
\[
\sum_{i=1}^M \sum_{j=-M}^M  \P[\cU_{ij},\cU_{ij}'] \leq \exp\Big(-\frac{\log M}{(\log_2 \log_2 M)^6}\Big) M.
\]
\end{lemma}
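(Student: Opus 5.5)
We split the sum over columns $i$ into the edge columns, $i\le 2M^{99/100}$ or $i\ge M-2M^{99/100}$, and the bulk columns. For the edge columns we use the crude bound
\[
\sum_j \P[\cU_{ij},\cU'_{ij}]\le \E\Big[\sum_j I(\cU_{ij})\Big].
\]
By the third estimate of Proposition~\ref{p:uij.bounds}, applied together with Cauchy--Schwarz in $i$ over the $O(M^{99/100})$ edge columns, the expected number of blocks touched by $\gamma$ in these columns is $O(M^{99/100})$. This is far below $\exp(-\log M/(\log_2\log_2 M)^6)M$, which is what \eqref{eq:edge1}--\eqref{eq:edge2} (Lemma~\ref{l:chaosbasic2}) record.

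For the bulk columns, define for each scale $\ell\le\ell_{\max}$ the set $G_\ell$ of indices $i'$ at scale $n$ (that is, $i'\in[(i-1)\Phi^\ell+1,i\Phi^\ell]$) such that $\cP^{n,M,\ell}_{i,J_i}$ holds. By Lemma~\ref{l:separatedfirst}, for every $i'\in\bigcup_\ell G_\ell$ and every $j$ we have $I(\cU_{i'j}\cap\cU'_{i'j})=0$. It therefore suffices to show that the set $B$ of bulk columns not covered by any $G_\ell$ is small.

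We argue top-down through the scales. Let $E$ be the event that for every $\ell$ and every window of $\Phi$ consecutive scale-$\ell$ columns in the bulk, at least $\delta\Phi$ of them satisfy $\cP_{i',J_{i'}}$ with $|J_{i'}|W_r\le M^{8/10}W_n$, where $\delta=\delta_A\delta_B\delta_C^2/200$. By Theorem~\ref{t:Pplus} and a union bound over at most $M\ell_{\max}$ windows, $\P[E^c]\le M^{-88}$. On $E$ we run an induction. An uncovered scale-$(\ell+1)$ column consists of exactly $\Phi$ scale-$\ell$ columns, and these form a window, so at least a $\delta$ fraction of them are covered at scale $\ell$. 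Hence the uncovered mass shrinks by a factor $(1-\delta)$ at each level, and after all levels
\[
|B|\le (1-\delta)^{\ell_{\max}}M+O(M^{99/100}),
\]
where the error term accounts for boundary windows near $2M^{99/100}$. Since $\ell_{\max}\asymp \log_2 M/(\log_2\log_2M)^5$, we get $(1-\delta)^{\ell_{\max}}\le \exp(-2\log M/(\log_2\log_2 M)^6)$ for $M$ large.

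The main obstacle is that $\sum_j I(\cU_{ij}\cap \cU'_{ij})$ over the uncovered columns need not be bounded by $|B|$: the number of blocks touched in a single column can be large, and $B$ is random and correlated with the geodesic. We would handle this with Cauchy--Schwarz:
\[
\E\Big[\sum_{i\in B}\sum_j I(\cU_{ij})\Big]\le \E\Big[|B|\,I(E)\Big]^{1/2}\,\E\Big[\sum_i \Big(\sum_j I(\cU_{ij})\Big)^2\Big]^{1/2}+\E\Big[I(E^c)\sum_{i,j}I(\cU_{ij})\Big].
\]
By the third estimate of Proposition~\ref{p:uij.bounds}, the second factor of the first term is $O(M^{1/2})$. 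The first term is therefore at most $(\exp(-2\log M/(\log_2\log_2M)^6)M)^{1/2}\,O(M^{1/2})$, which is $\le \tfrac12\exp(-\log M/(\log_2\log_2M)^6)M$. The last term is controlled by Cauchy--Schwarz, combining $\P[E^c]\le M^{-88}$ with the same second-moment bound.

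A secondary point is the restriction $|J|\le M^{8/10}$ and the range $|j|\le M$ in the sum. Blocks with $|j|>M$ are negligible by \eqref{eq:trans.Uij}, and Theorem~\ref{t:Pplus} already includes the constraint $|J|\le M^{8/10}$. Combining the edge and bulk contributions gives the lemma.
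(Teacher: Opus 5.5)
Your bulk argument is the paper's argument. Your event $E$ is the paper's $\cT$. Your top-down shrinking of the uncovered mass by a factor $(1-\delta)$ per level is the paper's bottom-up induction $\sum I(\bigcup_{\ell'<\ell}\cQ_{\lceil i\Phi^{-\ell'}\rceil,\ell'})\ge (1-(1-\delta_0)^\ell)\Phi^\ell$ read in the other direction. The final Cauchy--Schwarz against the third estimate of Proposition~\ref{p:uij.bounds} is also the same. The only structural difference in the bulk is that you isolate the $E^c$ contribution instead of folding $M^2\P[\cT^c]$ into $\sum_i \P[\cV_i^c]$, which is immaterial.

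Where you genuinely differ is the edge columns. The paper proves Lemma~\ref{l:chaosbasic2} by a separate high-probability argument:
\begin{itemize}
\item overhangs are controlled through $S^\pm_{i,j}$ and Lemma~\ref{l:proxytrans};
\item $J_{2M^{99/100}}$ is localized by Lemma~\ref{l:localtransproxy:intro};
\item the $\tau_1$ fluctuation of geodesics to distance $2M^{99/100}n$ is bounded via Lemma~\ref{l:tau2percgen}.
\end{itemize}
You instead apply Cauchy--Schwarz over the $O(M^{99/100})$ edge columns against the same second-moment bound from Proposition~\ref{p:uij.bounds}. This is valid, since Proposition~\ref{p:uij.bounds} is proved independently in Section~\ref{s:1.2proof}, so there is no circularity. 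It is also much shorter. The paper's route gives more, namely the tail bound \eqref{e:chaosbasic4}, but this lemma only needs an expectation.

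Your arithmetic at that step is off, though. Cauchy--Schwarz gives $\sqrt{2M^{99/100}}\,(C_1M+O(1))^{1/2}=O(M^{199/200})$, not $O(M^{99/100})$. This is the same exponent as the paper's $M^{995/1000}$, and it is still $o\big(M^{1-(\log_2\log_2 M)^{-6}}\big)$, so the conclusion survives.

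There is a similar constant slip in the bulk. Write $x=\log M/(\log_2\log_2 M)^6$. With only $(1-\delta)^{\ell_{\max}}\le e^{-2x}$, your first bulk term is $\sqrt{C_1+o(1)}\,e^{-x}M$, not $\tfrac12 e^{-x}M$. The fix is to use the much stronger available bound $(1-\delta)^{\ell_{\max}}\le \exp\big(-c\log M/(\log_2\log_2 M)^5\big)$. Note also that the $O(M^{99/100})$ boundary term in $|B|$ contributes another $O(M^{199/200})$ after Cauchy--Schwarz, which is harmless for the same reason as above. Finally, your remark about blocks with $|j|>M$ is unnecessary, since the sum in the lemma is already restricted to $|j|\le M$.
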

\begin{proof}
We shall treat the cases where $i$ is close to $1$ or $M$ separately from the case when $i$ is in the bulk. 
{The proof of Lemma \ref{l:chaosbasic} will follow from the following three estimates:
\begin{equation}
    \label{e:chaosbasic1}
    \sum_{i=2M^{99/100}}^{M-2M^{99/100}} \sum_{j=-M}^M  \P[\cU_{ij},\cU_{ij}'] \leq \exp\Big(-\frac{\log M}{(\log_2 \log_2 M)^6}\Big) M;
\end{equation}
\begin{equation}
    \label{e:chaosbasic2}
    \sum_{i=1}^{2M^{99/100}} \sum_{j=-M}^M  \P[\cU_{ij}] \leq M^{995/1000};
\end{equation}
\begin{equation}
    \label{e:chaosbasic3}
    \sum_{i=M-2M^{99/100}}^{M} \sum_{j=-M}^M  \P[\cU_{ij}] \leq M^{995/1000};
\end{equation}
The proof of \eqref{e:chaosbasic1} using Theorem \ref{t:Pplus} and Lemma \ref{l:separatedfirst} is the main part of the argument and is provided below. The proof of \eqref{e:chaosbasic2} is given in Lemma \ref{l:chaosbasic2} below. The proof of \eqref{e:chaosbasic3} is identical to the proof of \eqref{e:chaosbasic2} and is omitted.}

For the proof of \eqref{e:chaosbasic1}, recall that $\ell_{\max}$ is the largest $\ell$ such that $\Phi^\ell \leq M^{1/100}$.  Let $\cT$ be the event that for all $0\leq \ell \leq \ell_{\max}$ and $2M^{99/100}n\leq  ir_\ell \leq (M-2M^{99/100})n$ that
\[
\sum_{i'=i}^{i+\Phi-1} I(\cP_{i,J_i^{n,M,\ell}}^{n,M,\ell}) \geq \delta_0 \Phi
\]
where $\delta_0=\frac{\delta_A\delta_{B}\delta^2_C}{200}$ and also for all $0\leq \ell \leq \ell_{\max}$
$$\max_{i} |J^{n,M,\ell}_{i}| W_{r\_{\ell}} \le M^{8/10}W_n.$$
By Theorem~\ref{t:Pplus}, Lemma \ref{l:proxytrans:intro} and a union bound, $\P[\cT] \geq 1-M^{-90}$.

For $2M^{99/100}\leq i \leq M-2M^{99/100}$ let $\cQ_{i,\ell}$ denote the event $\cP_{i,J_i^{n,M,\ell}}^{n,M,\ell}$ at level $i$ and let
\[
\cV_i = \bigcup_{\ell=0}^{\ell_{\max}} \cQ_{\lceil i\Phi^{-\ell} \rceil,\ell}
\]
For $1\leq i < 2M^{99/100}$ and $M-2M^{99/100}< i \leq M$ let $\cV^c_i$ be the empty event. For $2M^{99/100}\leq i \leq M-2M^{99/100}$, on the event $\cV_i$, there is some $1\leq \ell \leq \ell_{\max}$ such that $\cQ_{\lceil i\Phi^{-\ell} \rceil,\ell}$ holds.  By Lemma~\ref{l:separatedfirst}, there is no event $\cU_{i'j}\cap\cU_{i'j}'$ that holds for any $i'\in [(\lceil i \Phi^{-\ell} \rceil\Phi^{\ell}-1) + 1,\lceil i\Phi^{-\ell} \rceil\Phi^{\ell}]$ since the paths are separated.  In particular $\cU_{ij}\cap\cU_{ij}'$ does not hold.

On the event $\cT$ we will show that for every $2M^{99/100}n/r_\ell \leq k \leq (M- 2M^{99/100})n/r_\ell$ that
\begin{align*}
\sum_{i=(k-1) \Phi^{\ell} + 1}^{k\Phi^{\ell}} I(\bigcup_{\ell'=0}^{\ell-1} \cQ_{\lceil i \Phi^{-\ell'}\rceil,\ell'}) \geq (1-(1-\delta_0)^\ell) \Phi^{\ell}.
\end{align*}
Assume by induction this holds for $\ell-1$.  Then
\begin{align*}
&\sum_{i=(k-1) \Phi^{\ell} + 1}^{k\Phi^{\ell}} I(\bigcup_{\ell'=0}^{\ell-1} \cQ_{\lceil i\Phi^{-\ell'} \rceil,\ell'})\\
&\qquad= \sum_{s=1}^{\Phi} \sum_{i=((k-1)\Phi+ s - 1)\Phi^{\ell-1} + 1}^{((k-1)\Phi + s)\Phi^{\ell-1}} I(\bigcup_{\ell'=0}^{\ell-1} \cQ_{\lceil i\Phi^{-\ell'} \rceil,\ell'})\\
&\qquad= \sum_{s=1}^{\Phi} I(\cQ_{((k-1)\Phi + s),\ell-1})\Phi^{\ell-1} +  I(\cQ_{((k-1)\Phi + s),\ell-1}^c)\sum_{i=((k-1)\Phi + s - 1)\Phi^{\ell-1} + 1}^{((k-1)\Phi + s)\Phi^{\ell-1}} I(\bigcup_{\ell'=0}^{\ell-2} \cQ_{\lceil i\Phi^{-\ell'} \rceil,\ell'})\\
&\qquad\geq \sum_{s=1}^{\Phi} I(\cQ_{((k-1)\Phi + s),\ell-1})\Phi^{\ell-1} +  I(\cQ_{((k-1)\Phi + s),\ell-1}^c)(1-(1-\delta_0)^{\ell-1}) \Phi^{\ell-1}\\
&\qquad\geq (1-(1-\delta_0)^\ell) \Phi^\ell.
\end{align*}
where the second equality is by considering the blocks at level $\ell-1$, the first inequality is by the induction hypothesis and the final inequality is by $\cT$.  Hence we have that
\begin{equation}\label{eq:separated.columns}
\sum_{i=1}^M \P[\cV_i^c] \leq 4M^{99/100} +M(1-\delta_0)^{\ell_{\max}} + M^2 \P[\cT^c] \leq 2M(1-\delta_0)^{\ell_{\max}}.
\end{equation}
Then, since for $2M^{99/100}\le i \le M-2M^{99/100}$, $\cU_{ij}\cap\cU_{ij}'$ can only hold on $\cV_i^c$,

\begin{align*}
\sum_{i=2M^{99/100}}^{M-2M^{99/100}} \sum_{j=-M}^M  \P[\cU_{ij},\cU_{ij}']
&\leq \E\bigg[\sum_{i=1}^M \Big(\sum_{j=-M}^M  I(\cU_{ij})\Big)I(\cV_i^c)\bigg]\\
&\leq\Bigg(\E\bigg[\sum_{i=1}^M \Big(\sum_{j=-M}^M  I(\cU_{ij})\Big)^2\bigg]\E\bigg[\sum_{i=1}^M I(\cV_i^c)\bigg]\Bigg)^{1/2}\\
&\leq\Big(CM \cdot 2M(1-\delta_1)^{\ell_{\max}}\Big)^{1/2}\\
&\leq \exp\Big(-\frac{\log M}{(\log_2 \log_2 M)^6}\Big) M,
\end{align*}
where the second inequality is by Cauchy-Schwartz, the third is by Proposition~\ref{p:uij.bounds} and equation~\eqref{eq:separated.columns} and the final inequality is because $\ell_{\max} = \frac1{100}\lfloor \frac{\log_2 M}{(\log_2 \log_2 M)^5} \rfloor$.
This completes the proof of \eqref{e:chaosbasic1}. As explained at the start of the proof, the lemma now follows from Lemma \ref{l:chaosbasic2} below. 
\end{proof}

We first complete the proof of Proposition \ref{p:chaos} before completing the remainder of the argument in Lemma \ref{l:chaosbasic2}.

\begin{proof}[Proof of Proposition \ref{p:chaos}]
Given $\kappa, \epsilon>0$ choose $M$ sufficiently large  so that 
$$\exp\Big(-\frac{\log M}{(\log_2 \log_2 M)^6}\Big)\le \epsilon.$$
By definition, $\cU_{ij}$ and $\cU'_{ij}$ are independent given $\cF_{1-\kappa}$, and therefore 
$$\P(\cU_{ij}\cap \cU'_{ij})=\E[\P[\cU_{ij}\mid \cF_{1-\kappa}]^2].$$
Summing over $i$ from $1$ to $M$ and $j$ from $-M$ to $M$ and using Lemma \ref{l:chaosbasic} completes the proof of the proposition.
\end{proof}

\begin{lemma}
    \label{l:chaosbasic2}
    There exists $M_0$ such that for $M \geq M_0$ and all large enough $n$,
\[
    \sum_{i=1}^{2M^{99/100}} \sum_{j=-M}^M  \P[\cU_{ij}] \leq M^{995/1000}.
    \]    
\end{lemma}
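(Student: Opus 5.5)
We bound $\sum_{j}\P[\cU_{ij}]=\E\big[\sum_j I(\cU_{ij})\big]$ for each $i\le 2M^{99/100}$. Since the geodesic $\gamma$ starts at $\origin$, near the left end it is confined to a window of width of order $W_{in}$, not $W_n$. Hence the number of blocks of height $W_n$ it touches in column $i$ should be $O(W_{in}/W_n)$ on average. We then show this sum is at most $M^{995/1000}$.

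\textbf{Step 1 (bound per column via overhangs).} As in the proof of the third estimate of Proposition \ref{p:uij.bounds}, $\sum_{j} I(\cU_{ij})\le S^{+}_{i,J_{i-1}\vee J_i}-S^{-}_{i,J_{i-1}\wedge J_i}+1$. This is at most
\[
|J_i-J_{i-1}|+(S^{+}_{i,J_{i-1}\vee J_{i}}-J_{i-1}\vee J_{i})+(J_{i-1}\wedge J_{i}-S^{-}_{i,J_{i-1}\wedge J_{i}})+1 .
\]
Summing over $i\le 2M^{99/100}$, we get the crude bound
\[
\sum_{i\le 2M^{99/100}}\sum_j I(\cU_{ij})\le 2M^{99/100}+\sum_{i\le M}|J_i-J_{i-1}|+\sum_{i\le M}\big(\text{overhang terms}\big).
\]
This is too weak as stated, because the $\tau_1$ sum over all $i$ is of order $M$. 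We must therefore restrict to the first $2M^{99/100}$ columns and control them directly.

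\textbf{Step 2 (restricted sums).} Apply Cauchy--Schwarz:
\[
\sum_{i\le 2M^{99/100}}|J_i-J_{i-1}|\le \sqrt{2M^{99/100}}\Big(\sum_{i\le M}(J_i-J_{i-1})^2\Big)^{1/2}.
\]
Proposition \ref{p:tau2perc} gives $\E\sum_i (J_i-J_{i-1})^2\le CM$, so the expectation of this term is at most $C M^{99/200}M^{1/2}=CM^{199/200}$. The overhang terms are handled the same way, using Lemmas \ref{l:uij3rd1} and \ref{l:uij3rd2} for the $\ell^2$ control over all $i$, followed by Cauchy--Schwarz over the first $2M^{99/100}$ indices. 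Adding the $2M^{99/100}$ term gives
\[
\sum_{i\le 2M^{99/100}}\sum_j\P[\cU_{ij}]\le C M^{199/200}+2M^{99/100}.
\]
Since $199/200=0.995$, this has the right exponent but carries an extra constant $C$.

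\textbf{Step 3 (removing the constant).} The main obstacle is the constant $C$ in front of $M^{995/1000}$. Proposition \ref{p:tau2perc} in fact bounds $\E\sum_i (J_i-J_{i-1})^2\le C_7M+O(1)$. One fix is to run the same Cauchy--Schwarz over $i\le 2M^{99/100}$ but with the $\ell^2$ sum restricted to $i\le 2M^{99/100}$ rather than all $i\le M$, which would give a bound of order $M^{99/100}$ with room to spare. Bounding $\E\sum_{i\le K}(J_i-J_{i-1})^2\le CK$ for an initial segment needs a version of Proposition \ref{p:tau2perc} for initial segments. I would obtain it from the same polymer argument (Proposition \ref{p:perc1} with $M$ replaced by $K$), feeding in the local transversal fluctuation bound Lemma \ref{l:localtransproxy:intro} to control $|J_K|$. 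If that adaptation proves awkward, the fallback is to cite Proposition \ref{p:uij.bounds}'s third estimate: it gives $\E\sum_i(\sum_j I(\cU_{ij}))^2\le C_1M$, and Cauchy--Schwarz over $2M^{99/100}$ columns yields $\sqrt{2C_1}\,M^{199/200}$. The constant is then absorbed by making the exponent slightly larger in the split point, or by treating $i\le M^{98/100}$ trivially. Either way, taking $M$ large finishes the proof.
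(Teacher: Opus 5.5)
Your Step~2 cannot give the statement, and the fallback in Step~3 cannot either. Since $\tfrac12\cdot\tfrac{99}{100}+\tfrac12=\tfrac{995}{1000}$ exactly, Cauchy--Schwarz over the first $2M^{99/100}$ columns against any $\ell^2$ bound taken over all $M$ columns gives $\sqrt{2C}\,M^{995/1000}$. This holds whichever global bound you use: Proposition~\ref{p:tau2perc}, Lemmas~\ref{l:uij3rd1}--\ref{l:uij3rd2}, or the third estimate of Proposition~\ref{p:uij.bounds}. The constant $C$ there is uncontrolled, and there is nothing to absorb it into. The range $i\le 2M^{99/100}$ and the exponent $995/1000$ are fixed by the statement, so you cannot move the split point. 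Handling $i\le M^{98/100}$ separately still leaves $\asymp M^{99/100}$ columns, which give the same $\sqrt{2C_1}\,M^{995/1000}$. Note also that your Step~3 fix only treats the $\sum|J_i-J_{i-1}|$ term. You bounded the overhang terms in Step~2 through the global Lemmas~\ref{l:uij3rd1} and~\ref{l:uij3rd2}, so they still carry the same constant and need separate treatment.

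Your first fix is the right idea, and for the $\tau_1$ term it is exactly the paper's argument. The initial-segment estimate you want is Lemma~\ref{l:tau2percgen}, applied with $r=n$, $D=2M^{99/100}$, $s_1=0$, $s_2=j$. Combine it with a union bound over the block $j$ containing the crossing point at $x=2M^{99/100}n$. To restrict $j$, do not use Lemma~\ref{l:localtransproxy:intro}, which is stated only for $k\le M^{1/100}$. The global Lemma~\ref{l:proxytrans}, together with $W_{Mn}\le M^{7/8}W_n$, already gives $|J_{2M^{99/100}}|\le M^{7/8}(\log M)^{O(1)}$. This is below the $D^{9/10}$ required in Lemma~\ref{l:tau2percgen}. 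Cauchy--Schwarz inside the initial segment then gives $\sum_{i\le 2M^{99/100}}|J_i-J_{i-1}|=O(M^{99/100})$, except on an event of tiny probability.

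For the overhangs no $\ell^2$ control is needed. By Lemma~\ref{l:overhang} and a union bound over $i\le 2M^{99/100}$ and $|j|\le M$, every $S^+_{i,j}-j$ and $j-S^-_{i,j}$ is at most $M^{1/1000}$, off an event of probability $o(M^{-100})$. So the overhangs contribute $O(M^{991/1000})$. The paper states all of this as a tail bound. It then converts to the expectation bound using that $\sum_{i\le 2M^{99/100}}\sum_{|j|\le M}I(\cU_{ij})=O(M^2)$ deterministically.
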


\begin{proof}
Since 
$$ \sum_{i=1}^{2M^{99/100}} \sum_{j=-M}^M I(\cU_{ij}) \le M^2$$
deterministically, it suffices to prove that 
\begin{equation}
    \label{e:chaosbasic4}
    \P\left[\sum_{i=1}^{2M^{99/100}} \sum_{j=-M}^M I(\cU_{ij}) \ge M^{994/1000} \right] \le M^{-100}. 
\end{equation}
Proof of \eqref{e:chaosbasic4} is a simpler version of the proof of the third estimate in Proposition \ref{p:uij.bounds}. Recall the definitions of $S^{+}_{i,j}$ and $S^{-}_{i,j}$ from Section \ref{s:1.2proof}. Recall also the definition of $J_i$. 
As argued in Section \ref{s:1.2proof}, we have for each fixed $i$,
\[
\sum_{j=-M}^{M} I(\cU_{i,j}) \le (S^{+}_{i,J_{i-1}\vee J_i}-J_{i-1}\vee J_i) + (J_{i-1}\wedge J_{i}-S^{-}_{i,J_{i-1}\wedge J_{i}})+|J_{i}-J_{i-1}|.
\]
It follows that 
$$\sum_{i=1}^{2M^{99/100}} \sum_{j=-M}^M I(\cU_{i,j})\le A+B+C$$
where 
\begin{align}
A&= \sum_{i=1}^{2M^{99/100}} |J_i-J_{i-1}|;\\
B&=\sum_{i=1}^{2M^{99/100}} \left(\max_{j\in [-M,M]} S^{+}_{i,j}-j \right);\\
C&=\sum_{i=1}^{2M^{99/100}} \left(\max_{j\in [-M,M]} j-S^{-}_{i,j} \right).
\end{align}
It follows from Lemma \ref{l:proxytrans} that for each fixed $i$ and $j$
$$\P(S^{+}_{i,j}-j\ge M^{1/1000}) \le M^{-1000}$$
for $M$ sufficiently large. Taking a union bound over $j$ between $-M$ and $M$ it follows that 
$$\P\left(\max_{j\in [-M,M]} S^{+}_{i,j}-j \ge M^{1/1000} \right) \le M^{-998}.$$
Taking a further union bound over $i$ between $1$ and $2M^{99/100}$ we finally get 
$$\P(B\ge M^{992/100})\le  M^{-997}.$$
A similar argument shows that 
$$\P(C\ge M^{992/100})\le  M^{-997}.$$
Therefore, to prove \eqref{e:chaosbasic4} it only remains to show that 
 $$\P\left(\sum_{i=1}^{2M^{99/100}} |J_i-J_{i-1}|\ge M^{9992/100} \right)\le M^{-1000}.$$

Notice that by Lemma \ref{l:localtransproxy:intro} we know that for $M$ large 
 $$\P(|J_{2M^{99/100}}|\ge M^{3/4})\le M^{-2000}$$ and therefore it suffices to prove that for each $j\in [-M^{3/4},M^{3/4}]$ we have 

 $$\P\left(\sum_{i=1}^{2M^{99/100}} |J_i-J_{i-1}|\ge M^{9992/10000}, J_{2M^{99/100}}=j \right) \le M^{-2000}.$$
 To this end, let us fix $j$ as above, and for $v\in \ell_{2M^{99/100}n, jW_n, (j+1)W_n}$ and the conforming geodesic $\gamma_{0v}$ from $\origin$ to $v$, let us define $J^v_i=\lfloor\frac{y_i}{W_n}\rfloor$ and $y_i$ is the point where $\gamma_{0v}$ intersects the line $x=in$. Define 
 $$\tau_1(\gamma_{0v})=\sum_i |J_i^v-J_{i-1}^v|.$$
 Clearly it suffices to prove that for all $j\in [-M^{3/4}, M^{3/4}]$
 \begin{equation}
     \label{e:chaosbasic5}
     \P\left(\sup_{v\in \ell_{2M^{99/100}n, jW_n, (j+1)W_n}} \tau_1(\gamma_{0v})\ge M^{9992/10000}\right) \le M^{-2000}.
 \end{equation}
 Notice that By Cauchy-Schwarz inequality
 
$$\frac{1}{2M^{99/100}}(\tau_1(\gamma_{0v}))^2 \le \tau_2(\gamma_{0v}):=\sum_{i}(J^v_i-J^v_{i-1})^2.$$
Therefore 
$$\tau_{1}(\gamma_{0v})\ge M^{9992/10000} \Rightarrow \tau_2(\gamma_{0v})\ge M^{10084/10000}.$$
Applying Lemma \ref{l:tau2percgen} with $r=n,D=2M^{99/100},s_1=0$ and $s_2=j$, \eqref{e:chaosbasic5} follows. 

 This completes the proof of the lemma. 
\end{proof}

\section{Analysis of the events: separation of paths}
\label{s:separation}

The objective of this section is to prove Lemma \ref{l:separatedfirst}. Recall the basic set up of that lemma. For $M$ sufficiently large and $n$ sufficiently large depending on $M$ we shall work with a fixed $\ell \le \ell_{\max}$ and $r=r_{\ell}$ and an index $i$ such that $2M^{99/100}n \le ir \le (M-2M^{99/100})n$. We shall show that on the event $\cP_{i,J_i}$ the geodesics before and after the resampling do not share (comes within distance $1$ of) any $n\times W_n$ block $\Lambda_{ij}$ within the column $[(i-1)r,ir]$. To this end we start with analyzing various events constituting the event $\cP_{i,j}$ (as before we shall omit the superscripts $n,M,\ell$). 

We shall need the following geometric notation. Let us set $$H_{i,j} = H_{i,j}^{n,M,\ell}= [(i-1)r,ir]\times\{jW_r\}.$$ We start with a lemma that gives a lower bound on the passage times across a column on certain barrier events.

\begin{lemma}\label{l:barrier.entry}
For $0<w\leq 1$ and {$j,j'\in [-2MW_n / W_r, 2MW_n / W_r]$} with {$j'-j\geq w$} on the event
\[
\cI^+_{i,j-w,j'+w,z_1}\cap \cJ_{i,j-w,j'+w,z_1,w} \cap \cK^*_{i,j,z_2,w} \cap \cK^*_{i,j',z_2,w}
\]
the following holds. Let $u=((i-1)r,y), u'=(ir,y')$ and let $\gamma_{u,u'}$ be the optimal conforming path joining $u$ to $u'$. If $\gamma_{u,u'}$ intersects $[(i-1)r,ir]\times [jW_r,j' W_r]$ then
\[
\rX_{u,u'} \geq (z_1-2z_2) Q_r + r + \frac12 \Big(\Big(\frac{|y-jW_r|  }{W_r} - |j'-j| - 2 \Big)^+\Big)^2 Q_r + \frac12 \Big(\Big(\frac{|y'-j W_r| }{W_r} - |j'-j| - 2 \Big)^+\Big)^2 Q_r.
\]

\end{lemma}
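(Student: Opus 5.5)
We argue deterministically on the stated event. The plan is to cut $\gamma=\gamma_{u,u'}$ into at most three pieces: an initial piece from $u$ to a point on one of the horizontal lines $H_{i,j}$, $H_{i,j-w}$, $H_{i,j'}$, $H_{i,j'+w}$; a middle piece inside the strip $[(i-1)r,ir]\times[(j-w)W_r,(j'+w)W_r]$; and a final piece from such a line to $u'$. We then lower bound each piece using the $\cK$, $\cI^+$ and $\cJ$ events respectively.

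By Lemma~\ref{l:strongly.conforming} we may assume (up to arbitrarily small error) that $\gamma$ meets the vertical column boundaries only at its endpoints, so all pieces are conforming paths inside the column. Since $\gamma$ meets $[(i-1)r,ir]\times[jW_r,j'W_r]$, there are two cases.

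\textbf{Case (a): $\gamma$ stays in the enlarged strip $[(j-w)W_r,(j'+w)W_r]$ throughout.} Then $y,y'$ lie in the strip, so the quadratic terms in the conclusion vanish. The whole path is a side-to-side path across the rectangle, and $\cI^+_{i,j-w,j'+w,z_1}$ gives
\[
\rX_\gamma \ge r+\tfrac12\frac{(y-y')^2}{r}+z_1Q_r \ge r+z_1Q_r,
\]
which is stronger than required.

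\textbf{Case (b): $\gamma$ exits the enlarged strip.} Let $x_1=(a,b)$ be the last point before entering $[jW_r,j'W_r]$ at which $\gamma$ lies on a boundary line $H_{i,j-w}$ or $H_{i,j'+w}$, or take $x_1=u$ if no such point exists. Similarly let $x_2=(a',b')$ be the first such point after leaving. The middle portion from $x_1$ to $x_2$ lies in the enlarged strip. Two sub-cases arise.
\begin{itemize}
\item If the middle portion travels vertically at least $wW_r$, then $\cJ_{i,j-w,j'+w,z_1,w}$ gives it weight at least $|a-a'|+z_1Q_r$.
\item Otherwise it must go from an outer line to the inner rectangle and back, which again forces a vertical change of at least $w W_r$. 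Hence the $\cJ$ bound applies in all situations where both $x_1$ and $x_2$ are on lines.
\end{itemize}
The outer pieces $u\to x_1$ and $x_2\to u'$ end on one of the lines $y=(j-w)W_r$ or $y=(j'+w)W_r$. These lines are covered by $\cK^*_{i,j,z_2,w}\cup\cK^*_{i,j',z_2,w}$, so each outer piece has weight at least
\[
(a-(i-1)r)+\tfrac12\Bigl(\tfrac{|y-bW_r^{-1}W_r|}{W_r}-1\Bigr)^2Q_r-z_2Q_r .
\]
Here $|y-b|\ge |y-jW_r|-(|j'-j|+w)W_r$, which, after taking positive parts and using $w\le1$, yields the stated quadratic term with shift $|j'-j|+2$. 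Summing the three pieces telescopes the horizontal displacements to $r$, and the errors contribute $-2z_2Q_r+z_1Q_r$.

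Mixed cases are handled the same way. If $u$ is itself inside the strip, so that there is no initial outer piece, then $|y-jW_r|\le(|j'-j|+w)W_r$ and the corresponding quadratic term vanishes; the middle piece then starts at $u$ and is bounded by $\cJ$ or by $\cI^+$.

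The main obstacle is the case bookkeeping in (b). We need the middle portion always to incur the $z_1Q_r$ gain: either via $\cI^+$ when it is side-to-side, or via $\cJ$ when it has vertical change at least $w$. The delicate configuration is a middle piece that starts on a side and ends on a line. Such a piece must cross a height band of width $w$ to reach $[jW_r,j'W_r]$ and return, or else it is side-to-side. We therefore choose $x_1,x_2$ as the last and first hitting points of the outer lines so that this dichotomy is forced. Vertical boundary pairs are excluded by the strongly conforming reduction.
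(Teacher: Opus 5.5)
Your overall strategy matches the paper's. You reduce to a strongly conforming $\zeta$, control the two end pieces by $\cK$, and control the middle piece by $\cJ$, or by $\cI^+$ when the path never leaves the enlarged strip. However, your choice of cut points in case (b) leaves one configuration uncovered, and the claim you make about it is false.

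You cut only at hits of the \emph{outer} lines $H_{i,j-w},H_{i,j'+w}$. You then assert that in the mixed case the middle piece starting at $u$ ``is bounded by $\cJ$ or by $\cI^+$''. Take $y\in(j'W_r,(j'+w)W_r)$ and a path that first dips to $H_{i,j'}$ and afterwards leaves the strip through $H_{i,j'+w}$ at $x_2=(a',(j'+w)W_r)$. Then $x_1=u$, and the piece $u\to x_2$ fails both tests.
\begin{itemize}
\item It does not end on the right side of the column, so $\cI^+$ says nothing about it.
\item Its endpoints are at vertical distance $(j'+w)W_r-y<wW_r$, so $\cJ$ does not apply either. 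The event $\cJ_{i,j-w,j'+w,z_1,w}$ constrains paths whose \emph{endpoints} differ vertically by at least $wW_r$, not paths that merely sweep out a vertical range of $wW_r$.
\end{itemize}
Your last paragraph observes that such a piece contains a full crossing of a width-$w$ band. That handles the crossing itself. But nothing in your scheme controls the initial stretch from $u$ down to $H_{i,j'}$, because you only ever use $\cK$ on the outer lines.

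The missing ingredient is $\cK$ on the \emph{inner} lines. The hypothesis supplies it, since $\cK^*_{i,j',z_2,w}\subseteq\cK_{i,j',z_2}\cap\cK_{i,j'+w,z_2}$, and similarly at $j$. In the example, let $v'=(x'',j'W_r)$ be the last visit of $\zeta$ to $H_{i,j'}$ before $x_2$.
\begin{itemize}
\item The segment $v'\to x_2$ lies in $[(i-1)r,ir]\times[j'W_r,(j'+w)W_r]$ with endpoint separation exactly $wW_r$, so $\cJ$ gives it weight at least $|a'-x''|+z_1Q_r$.
\item $\cK_{i,j',z_2}$ gives $\rX_{u,v'}\ge (x''-(i-1)r)-z_2Q_r$.
\item $\cK_{i,j'+w,z_2}$ handles the final piece $x_2\to u'$.
\end{itemize}
The $y$-term of the conclusion vanishes since $u$ lies in the strip, and the total correction is still $(z_1-2z_2)Q_r$.

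This is exactly how the paper organizes the proof. Rather than cutting at outer-line hits on both sides, it always takes a consecutive pair $v,v'$: one point on an outer line and one on the adjacent inner line. The pair is chosen on the side where $u$ lies outside the strip, or where the path leaves it. The paper applies $\cJ$ once to the segment between them, and uses $\cK$ on the corresponding outer or inner line for each end piece.

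Your same-line sub-case ($x_1,x_2$ both on $H_{i,j'+w}$) needs the same repair. You must explicitly cut at a point of $H_{i,j'}$ before invoking $\cJ$. Then either apply $\cJ$ to both halves, which uses $z_1\ge 0$, or, as in the paper, apply it once and use $\cK_{i,j',z_2}$ for the rest. With these repairs your argument goes through.
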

\begin{center}
\begin{figure}
\includegraphics[width=6in]{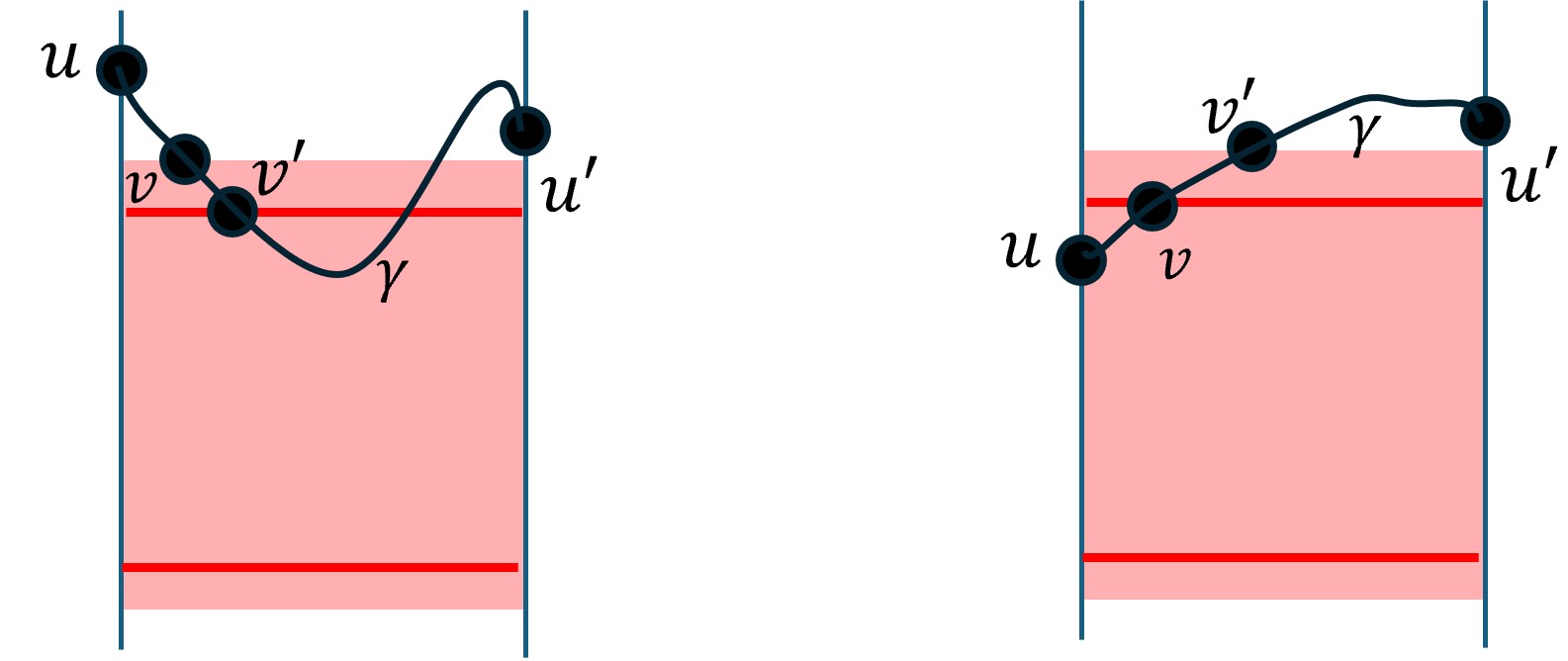}
\caption{Proof of Lemma \ref{l:barrier.entry}. This lemma gives a lower bound on passage times of paths across a column that intersects a certain rectangle (the region between the two dark red horizontal lines in the figures) provided certain barrier type events hold on a slightly larger rectangle (the region marked in light red in the figures). There are two cases: the left panel depicts the case where the starting point of the path is outside the barrier region while the right panels illustrates the case where the path starts inside the barrier region.}
\label{f:BCross}
\end{figure}
\end{center}
\begin{proof}
Suppose $\gamma_{u,u'}$ intersects $[(i-1)r,ir]\times [jW_r,j' W_r]$.  Fixing $\epsilon>0$, by Lemma~\ref{l:strongly.conforming} we can find a strongly conforming path $\zeta$ with $\zeta(0)=u$ and $\zeta(1)=u'$ that also intersects $[(i-1)r,ir]\times [jW_r,j' W_r]$ with $\rX_\zeta \leq \rX_{u,u'}+\epsilon$.  First suppose that $y\geq (j'+w)W_r$.  Then since $\zeta$ intersects $[(i-1)r,ir]\times [jW_r,j' W_r]$ we can find  $v=(x,(j'+w)W_r),v'=(x',j'W_r)$ along $\zeta$ in that order with $(i-1)r\leq x\leq x' \leq ir$ (see the left panel of Figure \ref{f:BCross}) and none of $(u,v), (v,v')$ and $(v',u')$ are vertical boundary pairs.  Then  by $\cJ_{i,j-w,j+w,z_1,w}$ and  $\cK^*_{i,j',z_2,w}$,
\begin{align*}
&\rX_{u,u'}\geq \rX_{\zeta} -\epsilon = {\rX_{u,v}} + {\rX_{v,v'}} + {\rX_{v',u'}}-\epsilon\\
&\geq  {(x-(i-1)r) + \frac12\Big(\frac{|y-(j'+w)W_r| }{W_r} - 1\Big)^2 Q_r - z_2 Q_r} +  {(x'-x) + z_1 Q_r}\\
&\qquad + {(ir-x') + \frac12\Big(\frac{|y'-j'W_r| }{W_r} - 1\Big)^2 Q_r -z_2 Q_r}-\epsilon\\
&\geq (z_1-2z_2) Q_r + r + \frac12 \Big(\Big(\frac{|y-jW_r|  }{W_r} - |j'-j| - 2 \Big)^+\Big)^2 Q_r + \frac12 \Big(\Big(\frac{|y'-j W_r| }{W_r} - |j'-j| - 2 \Big)^+\Big)^2 Q_r -\epsilon,
\end{align*}
as required, where we have also used the assumption that $0< w\le 1$. The case of $y\leq (j-w)W_r$ follows similarly.  

Suppose next that $y\in [(j-w)W_r,(j'+w)W_r]$. If $\gamma_{u,u'}$ stays within $[(i-1)r,ir]\times [(j-w)W_r,(j'+w) W_r]$ then by $\cI^+_{i,j-w,j+w,z_1}$,
\begin{align*}
&\rX_{u,u'} 
\geq z_1 Q_r + r + \frac12 \Big(\frac{|y-y'|)^+}{W_r} \Big)^2 Q_r\\
&\geq (z_1-2z_2) Q_r + r + \frac12 \Big(\Big(\frac{|y-jW_r|  }{W_r} - |j'-j| - 2 \Big)^+\Big)^2 Q_r + \frac12 \Big(\Big(\frac{|y'-j W_r| }{W_r} - |j'-j| - 2 \Big)^+\Big)^2 Q_r.
\end{align*}
The last step above follows since $z_2>0$ and from the fact that for $y\in [(j-w)W_r,(j'+w)W_r]$
$$ \frac{|y-j W_r| }{W_r} - |j'-j| - 2 \le 0.$$

If $\gamma_{u,u'}$ does not stay within $[(i-1)r,ir]\times [(j-w)W_r,(j'+w) W_r]$,  $\gamma_{u,u'}$ must intersect either $H_{i,j}$ and $H_{i,j-w}$ or $H_{i,j'}$ and $H_{i,j'+w}$ and so must $\zeta$. Suppose that it is the latter case and that $v=(x,(j'W_r),v'=(x',(j'+w)W_r)$ are the intersection points (see the right panel of Figure \ref{f:BCross}).  We will assume that $v$ is hit before $v'$ but the case of the opposite order follows similarly.  Then the same estimates as in the $y\geq (j'+w)W_r$ case hold and
\begin{align*}
\rX_{u,u'} &\geq \rX_{\zeta} -\epsilon= {\rX_{u,v}} + {\rX_{v,v'}} + {\rX_{v',u'}}-\epsilon\\
&\geq  {(x-(i-1)r) + \frac12\Big(\frac{|y-(j'+w)W_r| }{W_r} - 1\Big)^2 Q_r - z_2 Q_r} +  {(x'-x) + z_1 Q_r}\\
&\qquad + {(ir-x') + \frac12\Big(\frac{|y'-j'W_r| }{W_r} - 1\Big)^2 Q_r -z_2 Q_r}-\epsilon\\
&\geq (z_1-2z_2) Q_r + r + \frac12 \Big(\Big(\frac{|y-jW_r|  }{W_r} - |j'-j| - 2 \Big)^+\Big)^2 Q_r\\
&\qquad + \frac12 \Big(\Big(\frac{|y'-j W_r| }{W_r} - |j'-j| - 2 \Big)^+\Big)^2 Q_r -\epsilon.
\end{align*}
The case of hitting $H_{i,j}$ and $H_{i,j-w}$ follows similarly.  Taking $\epsilon\to 0$ completes the result.
\end{proof}

\subsection{Wing Passage Times}
Our next job is to analyze consequences of the wing events. We have the following lemma. 

\begin{lemma}\label{l:wing.bound}
For $i,j$ such that $M^{99/100}n\leq ir \leq (M-M^{99/100})n$ and $|jW_r|\leq M^{4/5}W_n$, on the event $\cW_{i,j}$,  
\begin{equation}\label{eq:wing.boundA}
\inf_{|y|\leq n^\beta W_n} \rX_{\origin,((i-3)r,y)} - \rX_{\origin,((i-3)r,jW_r)} +\Big(5|y/W_r - j| + 2L_2\Big)Q_r\geq 0
\end{equation}
and
\begin{equation}\label{eq:wing.boundB}
\sup_{y\in [jW_r-2L_2W_r,jW_r+2L_2 W_r]} \rX_{\origin,((i-3)r,y)} - \rX_{\origin,((i-3)r,jW_r)} - 2L_2 Q_r \leq 0.
\end{equation}
The same estimates hold for $\rX'$.
\end{lemma}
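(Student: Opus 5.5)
The plan is to compare $\rX_{\origin,((i-3)r,y)}$ with $\rX_{\origin,((i-3)r,jW_r)}$ through a common ancestor point at distance $2^kL_2 r$ (or $2^k r$) to the left, with $k$ chosen according to $|y/W_r-j|$. Write $x_0=(i-3)r$. For each scale $k$, let $x_k=(i-2^kL_2-3)r$ in the local range, and $x_k=(i-2^k-3)r$ in the global range. For either target point $v\in\{(x_0,y),(x_0,jW_r)\}$, the geodesic from $\origin$ to $v$ crosses the line $x=x_k$ at some point $(x_k,s)$. Hence
\[
\rX_{\origin,v}=\min_s\big(\rX_{\origin,(x_k,s)}+\rX_{(x_k,s),v}\big).
\]
The lower bound \eqref{eq:wing.boundA} then follows from the chain
\[
\rX_{\origin,(x_0,y)}\ge \rX_{\origin,(x_k,s)}+\rX_{(x_k,s),(x_0,y)}, \qquad \rX_{\origin,(x_0,jW_r)}\le \rX_{\origin,(x_k,s)}+\rX_{(x_k,s),(x_0,jW_r)},
\]
where $s$ is the crossing point of the geodesic to $(x_0,y)$. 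It therefore suffices to bound
\[
\rX_{(x_k,s),(x_0,jW_r)}-\rX_{(x_k,s),(x_0,y)}
\]
from above, uniformly in $s$.

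Write $d=(x_0-x_k)=2^kL_2r$ and use the centered quantity $\hrX$. The difference above equals a difference of two $\hrX$ terms plus the difference of the quadratic corrections,
\[
\frac{(s-jW_r)^2-(s-y)^2}{2d}.
\]
The event $\cZ_{i-2^kL_2-3,j,k}$ in $\cW^{loc}_{i,j}$ bounds each $|\hrX|$ by
\[
(2^kL_2)^{3/5}Q_r+\big(|s/W_r-j|^{1/100}+|y/W_r-j|^{1/100}\big)\frac{Q_r}{2^kL_2^2},
\]
with the corresponding term in $\cW^{glo}_{i,j}$ for the larger scales.

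For $y$ near $jW_r$, take $k=1$. The quadratic difference is at most
\[
|y-jW_r|\cdot\frac{|s-jW_r|+|s-y|}{2d},
\]
and $\hrX$ errors of order $L_2^{3/5}Q_r$ fit inside the $2L_2Q_r$ slack. For larger $|y/W_r-j|=t$, choose $k$ with $2^kL_2\approx t$. Then $(2^kL_2)^{3/5}Q_r\ll tQ_r$. The quadratic term is of order
\[
t\cdot\frac{W_r\,|s-jW_r|}{d}\cdot\frac{W_r}{W_r}\sim t\,\frac{|s-jW_r|}{W_{2^kL_2r}}\cdot\frac{W_r\,Q_{2^kL_2 r}}{\ldots},
\]
and here I need to control where the geodesic crosses $x=x_k$. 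The difficulty is that $s$ is not controlled a priori. The remedy is to avoid needing control of $s$. If $|s-jW_r|$ is large, the same $\cZ$ event at scale $k$ applied to $\rX_{(x_k,s),(x_0,y)}$ gives an expensive quadratic cost. I would combine this with \eqref{eq:wing.boundB}-type estimates and minimize over $s$ explicitly, using that $\frac{(s-jW_r)^2-(s-y)^2}{2d}$ is linear in $s$ while the $\hrX$ lower bound from $\cZ$ and $\cW^*$ is uniform, together with $W^2_{dr}/d\asymp Q_d$. This yields a bound linear in $|y/W_r-j|$ with constant $5$ after tuning.

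For the upper bound \eqref{eq:wing.boundB}, $|y-jW_r|\le 2L_2W_r$. I reverse the roles of the two points: take $s$ on the geodesic to $(x_0,jW_r)$ at $k=1$. The quadratic difference is then at most
\[
\frac{2L_2W_r\,(2|s-jW_r|+2L_2W_r)}{2L_2r}.
\]
This is $O(Q_r)$ times $|s-jW_r|/W_r$, so I first need $|s-jW_r|\lesssim L_2W_r$. I expect to get this by a local transversal fluctuation argument: a crossing far away costs quadratic excess by $\cZ$ and the scale-$(\log\log M)^2$ events, contradicting optimality, using also $|jW_r|\le M^{4/5}W_n$ and the $\cW^*$ bounds near the origin.

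The main obstacle is this localization of the crossing point $s$. I would obtain it by the same optimality-versus-$\cZ$ comparison across dyadic $k$, and then check that the constants $5$ and $2L_2$ absorb all errors. The statement for $\rX'$ is identical, since all events are imposed for both environments.
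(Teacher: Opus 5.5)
Your endgame is the right one. Once the crossing $s$ of $\gamma_{\origin,u}$, $u=(x_0,y)$, with the line $x=x_k$ is known to lie close to $y$, the paper finishes exactly as you propose: the inequality $\rX_{\origin,u}-\rX_{\origin,(x_0,jW_r)}\ge \rX_{(x_k,s),u}-\rX_{(x_k,s),(x_0,jW_r)}$, the $\hrX$ decomposition and the $\cZ$ bounds, taken at $2^kL_2\asymp S\vee L_2$ with $S=|y/W_r-j|+1$ (and at $k=10$ on the geodesic to $(x_0,jW_r)$ for \eqref{eq:wing.boundB}).

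The gap is the localization of $s$, which you defer as the main obstacle but which is the substance of the proof, and the remedy you sketch cannot work. The term $\frac{(y-jW_r)(2s-y-jW_r)}{2d}$ is unbounded below in $s$. Nothing on the window $[x_k,x_0]$ penalizes a distant crossing; the $\cZ$ tolerance $|s/W_r-j|^{1/100}Q_r/(2^kL_2^2)$ even grows with $|s|$. So ``minimizing over $s$'' returns $-\infty$. A penalty can only come from optimality of $\gamma_{\origin,u}$ against a competitor that bypasses $(x_k,s)$, i.e.\ from information to the left of $x_k$. The only control of passage times from $\origin$ is $\cW^*$, and its error $\log^{100/\theta_2}(M)Q_{(i-3)r}$ is useless on the scale $Q_r$ unless the deviation is enormous.

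The missing idea is a downward chaining over dyadic scales, anchored at the top by $\cW^*$. Let $v_k=(x_k,y'_k)$ be the crossings of $\gamma_{\origin,u}$ for $k_{\min}\le k\le k_{\max}$, where $2^{k_{\max}}L_2\Phi^\ell\asymp M^{99/100}$. One shows that none satisfies $|y-y'_k|\ge (2^kL_2)^{9/10}W_r$ by looking at the largest violating index $k_\star$.

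If $k_\star<k_{\max}$, then $v_{k_\star+1}$ is non-violating. Since $a^2+b^2-\frac12(a+b)^2=\frac12(a-b)^2$, the detour $v_{k_\star+1}\to v_{k_\star}\to u$ costs more than $\rX_{v_{k_\star+1},u}$ by a quadratic amount of order $(2^{k_\star}L_2)^{4/5}Q_r+|y-y'_{k_\star}|^2/(2^{k_\star}L_2r)$. This beats the $\cZ$ errors of order $(2^{k_\star}L_2)^{3/5}Q_r$, which is why the exponent $9/10$ must exceed $4/5$. It contradicts $v_{k_\star}$ lying on the geodesic.

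If $k_\star=k_{\max}$, or if $|y'_{k_\star}|>MW_n$ so that no $\cZ$ applies, one compares instead with routes from $\origin$ using $\cW^*$. This is affordable only because the quadratic cost there is at least of order $M^{7/10}Q_n$ (resp.\ $MQ_n$), which beats $\log^{100/\theta_2}(M)Q_{Mn}=O(\log^{100/\theta_2}(M)M^{1/2}Q_n)$.

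Two further pieces are absent from your plan:
\begin{itemize}
\item The infimum in \eqref{eq:wing.boundA} runs over $|y|\le n^\beta W_n$, far outside the range $|y|\le MW_n$ of every $\cZ$. So large $|y|$ (the paper takes $|y|\ge 2M^{4/5}W_n$) must be treated directly from $\cW^*$ applied to $\hrX_{\origin,\cdot}$, where $\frac{y^2-(jW_r)^2}{2(i-3)r}$ dominates the error.
\item In \eqref{eq:wing.boundB}, a localization $|s-jW_r|\le cL_2W_r$ bounds the quadratic difference only by $(1+c)L_2Q_r$. That closes below $2L_2Q_r$ only if $c<1$, whereas the chain gives the much finer $(2^kL_2)^{9/10}W_r$.
\end{itemize}
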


\begin{proof}
We shall only prove the statements for $\rX$, the proofs for $\rX'$ are identical. 

\noindent
\emph{Proof of \eqref{eq:wing.boundA}.}
Let $u=((i-3)r,y)$.  We will begin with the case that $|y|\geq 2M^{4/5}W_n$. Recall that 
$$\widehat{\rX}_{\origin,u}=\rX_{\origin,u}-\frac{y^2}{2(i-3)r}; \quad \widehat{\rX}_{\origin,((i-3)r,jW_r)}=\rX_{\origin,((i-3)r,jW_r)}-\frac{(jW_r)^2}{2(i-3)r}$$
and therefore 
$$\rX_{\origin,u} - \rX_{\origin,((i-3)r,jW_r)}=\widehat{\rX}_{\origin,u}-\widehat{\rX}_{\origin,((i-3)r,jW_r)}-\frac{y^2-(jW_r)^2}{2(i-3)r}.$$
Then by $\cW_{i,j}\subset\cW^{*}_{i}$, and using $|jW_r|\le M^{4/5}W_n$ and $(i-3)r\le Mn$ we get
\begin{align*}
\rX_{\origin,u} - \rX_{\origin,((i-3)r,jW_r)}&\geq \frac{y^2 - (M^{4/5}W_n)^2}{2(i-3)r}  - 2  (1\vee M^{-2}W_n^{-1}|y|)\log^{\frac{100}{\theta_2}} (M) Q_{Mn}\\
&\geq \frac{\frac{7}{9}M^{8/5}}{M}Q_n + \frac1{18M} (y/W_n)^2Q_n  - 2  (1\vee M^{-2}W_n^{-1}|y|)\log^{\frac{100}{\theta_2}} (M) Q_{Mn} > 0,
\end{align*}
where in the second inequality we have also used $y^2 \geq \frac19 y^2 + (\frac43M^{4/5}W_n)^2$, and the final inequality follows since $Q_{Mn}=O(M^{1/2}Q_n)$.

It therefore suffices to consider the case $|y|\leq 2M^{4/5}W_n$. Let 
\[
S=|\frac{y}{W_r}-j|+1
\]
and let $v_k=((i-3-2^k L_2)r,y_k')$ be the intersection of $\gamma_{\origin,u}$ with the line $x=(i-3-2^k L_2)r$; see Figure \ref{f:Wing}. Set
\[
k_{\min} = \lfloor\log_2 (\frac{S}{L_2}\vee 1)\rfloor,\qquad k_{\max} = \lfloor\log_2 (\frac12M^{99/100}{L_2^{-1}\Phi^{-\ell}})\rfloor.
\]
We will show that the events $A_k$ defined as
\[
A_k=\big\{|y-y_k'| \geq (2^k L_2)^{9/10} W_r\big\}
\]
do not hold for any $k_{\min}\leq k \leq k_{\max}$.  

\begin{center}
\begin{figure}[htbp!]
\includegraphics[width=6in]{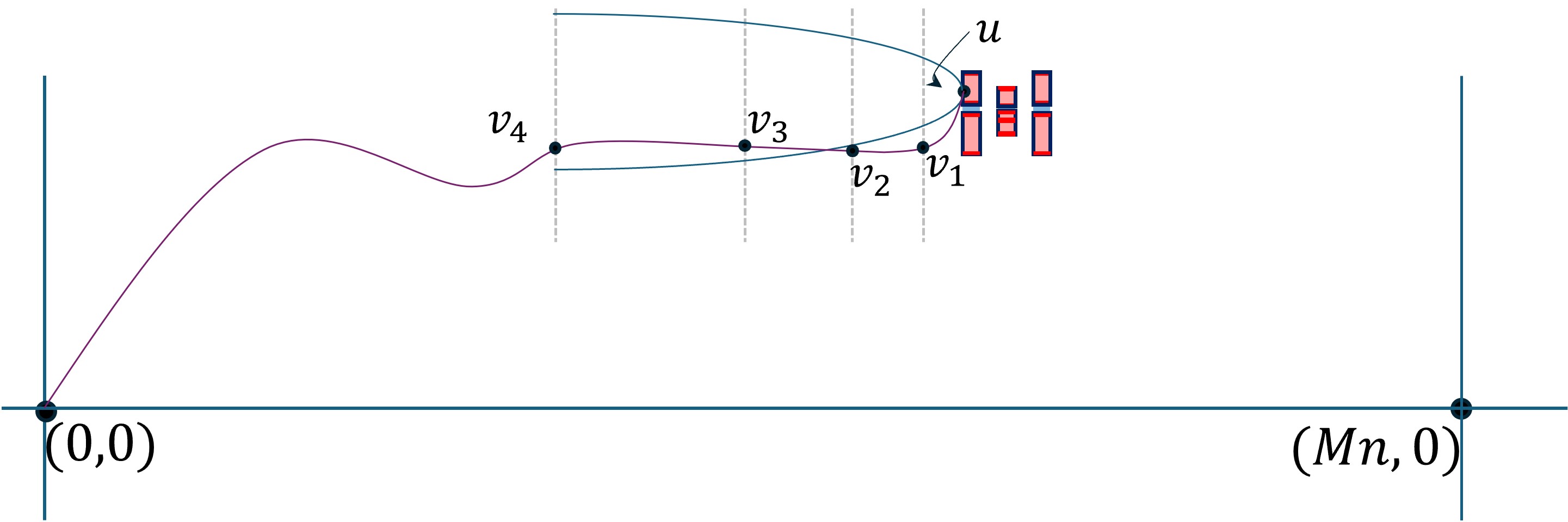}
\caption{Proof of Lemma \ref{l:wing.bound}. The path in the figure is the geodesic from the point $(0,0)$ to $u=((i-3)r,y)$ and $v_k$ denotes the intersection of this path with the vertical lines at distance $2^{k}L_2Sr$ to the left of $u$. We want to show that on the event $\cW_{i,j}$, the passage time $\cX_{\origin,u}$ cannot be too much smaller compared to the passage time  $\cX_{\origin,(i-3)r,jW_r}$. We show this by showing first that by a chaining argument on the event $\cW_{i,j}$, the vertical coordinates of the points $v_k$ are not too far from $y$, which lets us lower bound $\cX_{\origin,u}-\cX_{\origin,(i-3)r,jW_r}$ by the triangle inequality.}
\label{f:Wing}
\end{figure}
\end{center}

Suppose that such an $A_k$ does occur and let $k_\star$ be the largest such $k$ between $k_{\min}$ and $k_{\max}$. We shall treat the cases $k_*<k_{\max}$ and $k_*=k_{\max}$ separately. 

\textbf{Case 1.} If $k_\star<k_{\max}$ then
\begin{align*}
D:=\frac{(y-y_{k_\star}')^2 + (y_{k_\star}'-y_{k_\star+1}')^2 - \frac12 (y-y_{k_\star+1}')^2}{2^{k_\star+1}L_2 r} 
&= \frac{2\Big((y-y_{k_\star}') - \frac12(y-y_{k_\star+1}')\Big)^2}{2^{k_\star+1}L_2 r} \\
&\geq \frac{2\Big(|y-y_{k_\star}'| - 2^{\frac{9}{10}(k_\star+1)-1}L_2^{9/10} W_r \Big)^2}{2^{k_\star+1}L_2  r}
\end{align*}
where the inequality used that 
\[
|y-y_{k_\star}'| \geq (2^{k_\star} L_2)^{9/10}  W_r \geq \frac{21}{20} 2^{\frac{9}{10}(k_\star+1)-1}L_2^{9/10} W_r \geq \frac{21}{20}\cdot \frac12 |y-y_{k_\star+1}'|.
\]
Hence, using the above two equations, 
\begin{align}\label{eq:D.traingle.bound}
D\geq \frac{|y-y_{k_\star}'|^2}{400\cdot 2^{k_\star+1}L_2 r} + \frac{\Big(2^{\frac{9k_\star-1}{10}}L_2^{9/10} W_r \Big)^2}{400\cdot 2^{k_\star+1}L_2 r}
\end{align}
Since $A_{k_\star+1}$ does not hold,  
\begin{align*}
|y_{k_\star+1}'| &\leq |y| + |y-y_{k_\star+1}'|\leq 2 M^{\frac45}W_n + (2^{k_{\max}} L_2)^{9/10} W_r\\
&\leq 2 M^{\frac45}W_n  + (M^{\frac{99}{100}}\Phi^{-\ell})^{9/10} \Phi^{\frac34\ell}W_n \leq MW_n.
\end{align*}
If $|y_{k_\star}'|\leq MW_n$ 
\begin{align*}
&\rX_{v_{k_\star},u} + \rX_{v_{k_\star+1},v_{k_\star}}-\rX_{v_{k_\star+1},u}\\
&\qquad \geq \hrX_{v_{k_\star},u} + \hrX_{v_{k_\star+1},v_{k_\star}}-\hrX_{v_{k_\star+1},u} + \frac{|y-y_{k_\star}'|^2}{400\cdot 2^{k_\star+1}L_2 r} + \frac{\Big(2^{\frac{9k_\star-1}{10}}L_2^{9/10} W_r \Big)^2}{400\cdot 2^{k_\star+1}L_2 r}\\
&\qquad \geq -3(2^{k_\star+1} L_2)^{3/5} Q_{r} - 4\Big( S + 2^{\frac{9(k_\star+1)}{10}}\Big)^{\tfrac1{100}}\frac{Q_r}{2^{k_\star} L_2^2} 
- \Big( S + \frac{|y-y_{k_\star}'|}{W_r}\Big)^{\tfrac1{100}}\frac{Q_r}{2^{k_\star} L_2^2}\\
&\qquad \quad+ \frac{|y-y_{k_\star}'|^2}{400\cdot 2^{k_\star+1}L_2 r} + \frac{\Big(2^{\frac{9k_\star-1}{10}}L_2^{9/10} W_r \Big)^2}{400\cdot 2^{k_\star+1}L_2 r} 
\end{align*}
where the first inequality is by by equation~\eqref{eq:D.traingle.bound}, the second is by applying the event  $\cW_{i,j}^{loc}\cap \cW_{i,j}^{glo}$ to the passages times.  Now since $2^{k_\star}\geq 2^{k_{\min}}\geq S$ it follows that
\begin{align*}
\frac{\Big(2^{\frac{9k_\star-1}{10}}L_2^{9/10} W_r \Big)^2}{400\cdot 2^{k_\star+1}L_2 r} -3(2^{k_\star+1} L_2)^{3/5} Q_{r} - 4\Big( S + 2^{\frac{9(k_\star+1)}{10}}\Big)^{\tfrac1{100}}\frac{Q_r}{2^{k_\star} L_2^2} 
-  S^{\tfrac1{100}}\frac{Q_r}{2^{k_\star} L_2^2}-Q_r\\
\geq \bigg(\frac1{800}2^{\frac{4k_\star}{5}}L_2^{8/10}  - 6\cdot2^{\frac{3k_\star}{5}}L_2^{3/5} - 10\cdot2^{-99k_{\star}/100}L_2^{-2}  -1\bigg)Q_r>0
\end{align*}
and for $L_2$ large enough
\begin{align*}
\frac{|y-y_{k_\star}'|^2}{400\cdot 2^{k_\star+1}L_2 r} +Q_r
- \Big(\frac{|y-y_{k_\star}'|}{W_r}\Big)^{\tfrac1{100}}\frac{Q_r}{2^{k_\star} L_2^2}>0
\end{align*}
so adding the last two equations and using that $x^{\frac1{100}}+y^{\frac1{100}}\geq (x+y)^{\frac1{100}}$ we have that
\[
\rX_{v_{k_\star},u} + \rX_{v_{k_\star+1},v_{k_\star}}-\rX_{v_{k_\star+1},u}>0
\]
which gives a contradiction.  

If $|y_{k_\star}'|>MW_n$ then we instead apply the bounds from $\cW_i^*$ and we get
\begin{align*}
&\rX_{v_{k_\star},u} + \rX_{v_{k_\star+1},v_{k_\star}}-\rX_{v_{k_\star+1},u}\\
&\qquad \geq \hrX_{v_{k_\star},u} + \hrX_{v_{k_\star+1},v_{k_\star}}-\hrX_{v_{k_\star+1},u} + \frac{(\frac12 MW_n+\frac12|y_{k_\star}'|-2M^{4/5}W_n)^2}{2Mn}\\
&\qquad \geq \frac1{20}MQ_n + \frac1{20} \Big(\frac{|y_{k_\star}'|}{W_n}\Big)^2 Q_n -  (1+1\vee M^{-2}W_n^{-1}|y_{k_\star}|)\log^{\frac{100}{\theta_2}} (M) Q_{2^{k_\star +1}r}> 0,
\end{align*}
by using $Q_{2^{k_\star +1}r}=O(M^{1/2}Q_n)$, again giving a contradiction.

\textbf{Case 2}. Next suppose that $k_\star=k_{\max}$.  Then $2^{k_\star}L_2 \Phi^\ell \geq \frac18 M^{99/100}$ and so
\[
|y_{k_\star}-y| \geq 2^{\frac{9k_\star}{10}}W_r \geq  \Big(\frac{M^{99/100}}{8L_2 \Phi^\ell} \Big)^{\frac{9}{10}}W_r\geq M^{\frac{17}{20}}W_r.
\]
and hence
\begin{align*}
\frac{(y-y_{k_\star}')^2 }{2^{k_\star+1}L_2 r} + \frac{y_{k_\star}'^2}{2((i-3) - 2^{k_\star}L_2) r} - \frac{y^2}{2(i-3) r} &\geq \frac{M^{34/20}W_r^2+|y_{k_\star}-y|^2}{8Mn} - \frac{(2M^{4/5}W_r)^2}{\tfrac14M^{99/100}n}\\
&\geq \frac14 M^{14/20}Q_n +\frac{|y_{k_\star}-y|^2}{8Mn}.
\end{align*}
Then using the estimates from $\cW_i^*$,
\begin{align*}
\rX_{\origin,v_{k_\star}}+\rX_{v_{k_\star},u} -\rX_{\origin,u} \geq \frac14 M^{14/20}Q_n +\frac{|y_{k_\star}-y|^2}{8Mn} - 3  (1+1\vee M^{-2}W_n^{-1}|y_{k_\star}|)\log^{\frac{100}{\theta_2}} (M) Q_{2^{k_\star}r} >0
\end{align*}
and so $A_{k_{\max}}$ does not hold.  

Hence no $A_k$ holds between $k_{\min}$ and $k_{\max}$ and in particular, $A_{k_{\min}}$ does not hold so
\[
|y_{k_{\min}}' - jW_r| \leq ((2^{k_{\min}}L_2)^{9/10} +S\big) W_r \leq \Big(\Big(S\vee L_2\Big)^{9/10} +S\Big)W_r.
\]
Since the optimal path from $\origin$ to $u=((i-3)r,y)$ passes through $v_{k_{\min}}$  we have that 
\begin{align*}
&\rX_{\origin,((i-3)r,y)} - \rX_{\origin,((i-3)r,jW_r)} \\
&\geq \rX_{v_{k_{\min}},((i-3)r,y)} - \rX_{v_{k_{\min}},((i-3)r,jW_r)}\\
&\geq \hrX_{v_{k_{\min}},((i-3)r,y)} - \hrX_{v_{k_{\min}},((i-3)r,jW_r)} - \frac{\Big(((2^{k_{\min}}L_2)^{9/10} +S\big) W_r\Big)^2}{2^{k_{\min}+1}L_2 r}\\
&\geq - 4\Big((2^{k_{\min}}L_2)^{9/10} +S\Big)^{\tfrac1{100}}\frac{Q_r}{2^{k_{\min}} L_2^2} 
-(2^{k_{\min}}L_2)^{3/5} Q_{r} 
- \frac{\Big((2^{k_{\min}}L_2)^{9/10} +S\Big)^2}{2^{k_{\min}+1}L_2 }Q_r\\
&\geq - 4\Big((S\vee L_2)^{9/10} +S\Big)^{\tfrac1{100}}\frac{Q_r}{(S\vee L_2) L_2} 
-(S\vee L_2)^{3/5} Q_{r} 
- \frac{\big((S\vee L_2)^{9/10} +S\big)^2}{2(S\vee L_2) }Q_r\\
&\geq - \Big(5|y/W_r - j| + 2L_2\Big)Q_r
\end{align*}
where in the second inequality above we have used the upper bound on $|y_{k_{\min}}' - jW_r|$ and in the third inequality we have used the definition of the event $\cW_{i,j}^{loc}\cap \cW_{i,j}^{glo}$.
This establishes~\eqref{eq:wing.boundA} for $|y|\leq 2M^{4/5}W_n$ completing the proof of that equation.

\noindent
\emph{Proof of \eqref{eq:wing.boundB}.}
Choose $y=jW_r$ and set $w=v_{10}$ defined as above.  We have from the above calculations that $A_{10}$ holds and so $w=((i-3-2^{10}L_2)r,\hat{y})$ where $|\hat{y}-jW_r| \leq (2^{10}L_2)^{9/10}W_r$.
Moving onto equation~\eqref{eq:wing.boundB} for any $y\in [jW_r-2L_2W_r,jW_r+2L_2 W_r]$,
\begin{align*}
\rX_{\origin,((i-3)r,y)} - \rX_{\origin,((i-3)r,jW_r)} &\leq \rX_{w,((i-3)r,y)} - \rX_{w,((i-3)r,jW_r)}\\
&\leq \hrX_{w,((i-3)r,y)} - \hrX_{w,((i-3)r,jW_r)} + 2\frac{(2L_2W_r + (2^{10}L_2)^{9/10} W_r)^2}{2^{11}L_2 r}\\
&\leq 2(2L_2 + (2^{10}L_2)^{9/10})^{\tfrac1{100}}\frac{Q_r}{2^{10} L_2^2} + \frac{(3L_2 )^2}{2^{11}L_2}Q_r\\
&\leq 2L_2 Q_r,
\end{align*}
which completes the proof.
\end{proof}

\subsection{Outer Columns}
The next lemma is a consequence of the definition of the outer column events. 

\begin{lemma}\label{l:barrier6}
On the event $\cW_{i,j}\cap D_{i-2,j}$, for $M^{99/100}n\leq ir \leq (M-M^{99/100})n$ and $|jW_r|\leq M^{4/5}W_n$, for all $|y|\leq n^{\beta}W_n$
\begin{align}\label{eq:barrier.column}
&\rX_{\origin,((i-2)r,y)} - \inf_{y'\in [-\tfrac{L_0 W_r}{20},\tfrac{L_0 W_r}{20}]}\rX_{\origin,((i-2)r,y'+jW_r)}\nonumber\\
&\qquad\geq -\bigg( 6L_2 + \frac{7|y - jW_r|}{W_r} \bigg)Q_r + I\Big(|y-jW_r|\in (\frac1{20}L_0 W_r,(L_2 -3) W_r]\Big)\frac{L_2^3}{2} Q_r.
\end{align}
The same estimate also holds for $\cX'$. 

\end{lemma}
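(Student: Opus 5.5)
We compare $\rX_{\origin,((i-2)r,y)}$ with a reference value using the point where the geodesic crosses the line $x=(i-3)r$. Let $v=((i-3)r,y'')$ be the canonical intersection of the conforming geodesic $\gamma_{\origin,((i-2)r,y)}$ with $\{x=(i-3)r\}$. Then
\[
\rX_{\origin,((i-2)r,y)}=\rX_{\origin,v}+\rX_{v,((i-2)r,y)},
\]
where the second term is a passage time across the outer column $[(i-3)r,(i-2)r]$, i.e. column $i-2$ at scale $r$.

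\emph{Upper bound for the subtracted infimum.} Take $y'=0$, so the infimum is at most $\rX_{\origin,((i-2)r,jW_r)}$. By the triangle inequality,
\[
\rX_{\origin,((i-2)r,jW_r)} \le \rX_{\origin,((i-3)r,jW_r)}+\rX_{((i-3)r,jW_r),((i-2)r,jW_r)}.
\]
Bound the last term by $r+L_0Q_r$ using $\cD^{(1)}_{i-2,j}$: the event $\cI^-$ gives a good path across the middle band at cost at most $r+L_0Q_r$. Near the endpoints we may need short connectors, or we can instead choose $y'$ equal to the exit point of that good path. For this reason it is better to bound the infimum over $y'\in[-\frac{L_0W_r}{20},\frac{L_0W_r}{20}]$ directly than to fix $y'=0$. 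The entry point of the good path differs from $jW_r$ by at most $\frac{L_0}{40}W_r$, so \eqref{eq:wing.boundB} controls $\rX_{\origin,\cdot}$ there up to $2L_2Q_r$. This yields
\[
\inf_{y'}\rX_{\origin,((i-2)r,y'+jW_r)}\le \rX_{\origin,((i-3)r,jW_r)}+r+(L_0+2L_2)Q_r.
\]

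\emph{Lower bound for the left-hand term.} By \eqref{eq:wing.boundA},
\[
\rX_{\origin,v}\ge \rX_{\origin,((i-3)r,jW_r)}-\bigl(5|y''/W_r-j|+2L_2\bigr)Q_r.
\]
We then need $\rX_{v,((i-2)r,y)}\ge r+\bigl(\tfrac12(|y''/W_r-j|-1)^2-\text{const}\bigr)Q_r$-type bounds. Here we split into cases.

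(a) If the crossing stays roughly at height near $j$ and inside the band, use $\cD^{(2)}$ (the $\cI^+$ event with $-L_0$ on $[j-4L_0,j+4L_0]$) to get $\rX\ge r-L_0Q_r$.

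(b) Otherwise use $\cD^{(3)}=\cA^+_{i-2,j,L_0}$, which gives
\[
\hrX\ge -\Bigl(L_0+\frac{|y''-jW_r|+|y-jW_r|}{W_r}\Bigr)Q_r.
\]
Then add the quadratic term $\frac{(y-y'')^2}{2r}=\frac12\bigl(\frac{y-y''}{W_r}\bigr)^2Q_r$.

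Combining, the $5|y''/W_r-j|$ loss plus the linear $\cA^+$ loss in $|y''-jW_r|$ must be absorbed. Write $a=|y''/W_r-j|$ and $b=|y/W_r-j|$. The total loss is $6a+b$ minus $\frac12(a-b)^2$. Since $6a\le 6b+6|a-b|$ and $6|a-b|-\frac12(a-b)^2\le 18$, the total is at most $7b+18$. This gives the bound $-(6L_2+7b)Q_r$ after absorbing $L_0$ and the constants into $L_2$, using $L_2\gg L_0$.

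\emph{The barrier bonus.} Suppose $|y-jW_r|\in(\frac1{20}L_0W_r,(L_2-3)W_r]$. The geodesic from $v$ to $((i-2)r,y)$ must end inside one of the dark-red barrier regions of $\cD^{(5)}$, so it intersects the barrier rectangle $[j+\tfrac1{40}L_0,j+L_2]$ or its mirror. Apply Lemma~\ref{l:barrier.entry} with $z_1=L_2^3$ (from $\cD^{(5)}$'s $\cI^+$ and $\cJ$ events with $w/3$ margins) and $z_2=L_1$ (from the $\cK^*$ events at the heights in $\cD^{(4)}$). The lemma's $w$-margins correspond to the $\pm 2/W_r$ shifts and the $s$-values in $\cD^{(4)}$. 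This yields
\[
\rX_{v,((i-2)r,y)}\ge r+(L_2^3-2L_1)Q_r+\text{quadratic terms},
\]
and hence the extra $\frac{L_2^3}{2}Q_r$ after subtracting the $O(L_2)$ losses above.

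The main obstacle is the case bookkeeping in the barrier step. Lemma~\ref{l:barrier.entry} is stated for a rectangle $[j,j']$ with margin $w$, while $\cD^{(5)}$ and $\cD^{(4)}$ use slightly shifted heights ($\pm\frac{2}{W_r}$, $L_2-w$, $\frac1{20}L_0\pm1$). Matching these so that any endpoint in the stated range forces an intersection with the barrier, and so that the lemma's quadratic terms in $|y''-jW_r|$ still dominate the $5a$ wing loss when $v$ is far away, needs care. For $v$ far away I would fall back on $\cA^+$ as in case (b) and use $L_2^3/2\ll L_2^3-O(L_1)$.

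The same argument applies verbatim to $\rX'$, since all the events used come in primed versions.
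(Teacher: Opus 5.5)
Your proposal is correct and follows essentially the same route as the paper: an upper bound on the infimum via the $\cD^{(1)}_{i-2,j}$ good path together with \eqref{eq:wing.boundB}, and a lower bound via the crossing point on $x=(i-3)r$, \eqref{eq:wing.boundA} and $\cD^{(3)}_{i-2,j}=\cA^+$ with the same quadratic optimization (constant $18$, coefficient $7$). The extra $\frac{L_2^3}{2}$ then comes, as in the paper, from Lemma~\ref{l:barrier.entry} with $\cD^{(4)},\cD^{(5)}$. The differences are cosmetic: your case (a) via $\cD^{(2)}$ is unnecessary since the $\cA^+$ bound covers every $y$, you drop the harmless term $\frac{(q-q')^2}{2r}\le\frac{L_0^2}{800}Q_r$ in the cost of the good path, and the paper absorbs the wing loss $5|y''/W_r-j|$ directly with the quadratic terms of Lemma~\ref{l:barrier.entry} rather than using a separate far-$v$ fallback.
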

\begin{center}
\begin{figure}
\includegraphics[width=1.5in]{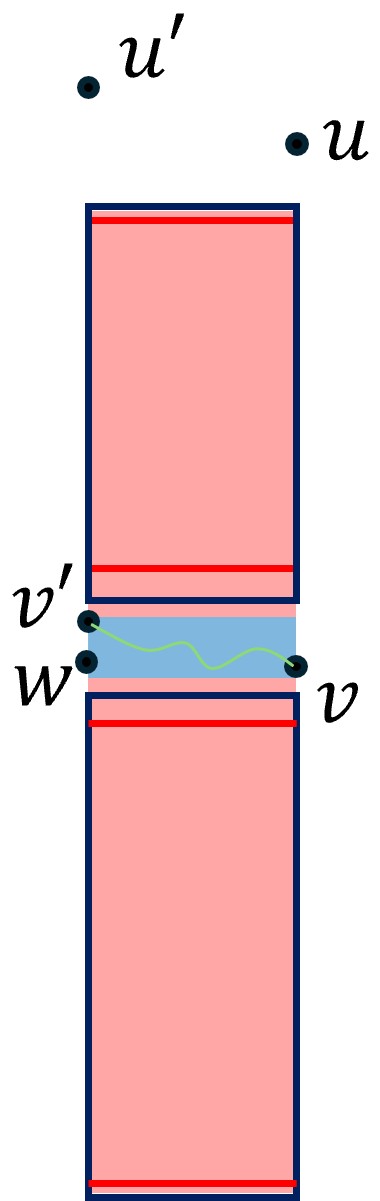}
\caption{Proof of Lemma \ref{l:barrier6}: this lemma shows that the passage time from $\bf0$ to $u=((i-3)r,y)$ in the figure cannot be too small compared to the minimum passage time from $0$ to the right side of the rectangular region marked in blue. On the outer column event $\cD_{i-2,j}$ for the column $[(i-3)r,(i-2)r]$ and the wing event implies that the minimum passage time can be compared to $\cX_{0,w}$. Next we take $u'$ to be the point where the geodesic from $\bf0$ to $u$ intersects the vertical line $x=(i-3)r$. The depicts the case when $y>(L_2-3+j)W_r$, we lower bound $\cX_{0,u}+\cX_{u,u'}-\cX_{0,w}$ by using Lemma \ref{l:wing.bound} and the event $\cD^{(3)}_{i-2,j}$. The other cases are dealt with similarly.}
\label{f:Eventouter}
\end{figure}
\end{center}
\begin{proof}

By $D_{i-2,j}$ we can find points $v'=((i-3)r,q'+jW_r),v=((i-2)r,q+jW_r)$  points such that $|q|,|q'|\leq \frac1{40}L_0 W_r$ and $\hrX_{v',v} \leq L_0 Q_r$ (we can find such points by $\cD^{(1)}_{i-2,j}$); See Figure \ref{f:Eventouter}.  Then if we set $w=((i-3)r,jW_r)$ we get
\begin{align}\label{eq:barrierA}
\inf_{y'\in [-\tfrac{L_0 W_r}{20},\tfrac{L_0 W_r}{20}]}\rX_{\origin,((i-2)r,y'+jW_r)}- \rX_{\origin,w}
&\leq\rX_{\origin,v} - \rX_{\origin,w} \nonumber\\
&\leq \rX_{\origin,v'} +\rX_{v',v} - \rX_{\origin,w} \nonumber \\
&\leq 2L_2 Q_r  + r + \frac{(q-q')^2}{2r} +L_0 Q_r \nonumber \\
&\leq 3L_2 Q_r + r.
\end{align}
where the third inequality is by Lemma~\ref{l:wing.bound}.
Equation~\eqref{eq:barrier.column} is trivially true for $y\in [jW_r-\tfrac{L_0 W_r}{20},jW_r+\tfrac{L_0 W_r}{20}]$ since the right hand side is positive. So assume $y\not \in [jW_r-\tfrac{L_0 W_r}{20},jW_r+\tfrac{L_0 W_r}{20}]$. Set $u=((i-2)r,y)$ and $u'=((i-3)r,y')$ such that  $\rX_{\origin,u} =\rX_{\origin,u'}+\rX_{u',u}$. 

For $y \geq (L_2 -3 + j) W_r$ by Lemma~\ref{l:wing.bound} and $\cD_{i-2,j}^{(3)}$ we have that
\begin{align*}
&\rX_{\origin,u} - \rX_{\origin,w}=\rX_{\origin,u'} + \rX_{u',u} - \rX_{\origin,w}\\
&\qquad \geq r + \bigg(-\frac{5|y' - jW_r|}{W_r} - 2L_2 + \frac12\Big(\frac{y-y'}{W_r}\Big)^2 - \frac{|y - jW_r| + |y' - jW_r|}{W_r} -L_0  \bigg)Q_r.
\end{align*}
Differentiating the right hand side in $y'$, it is minimized at $y'=y+6W_r$ and so
\begin{align}\label{eq:barrierB}
\rX_{\origin,u} - \rX_{\origin,w}
&\geq r + \bigg( - 2L_2 - 18 - \frac{7|y - jW_r|}{W_r} -L_0  \bigg)Q_r \nonumber\\
&\geq r +\bigg( - 3L_2 - \frac{7|y - jW_r|}{W_r} \bigg)Q_r.
\end{align}
If $y\in [(\frac1{20}L_0 + j) W_r,(L_2 -3 + j) W_r]$ then $\gamma$ must pass through $[(i-1)r,r)]\times [(\frac1{20}L_0 + j) W_r,(L_2 -3 + j) W_r]$ and so by Lemma~\ref{l:barrier.entry} and $D_{i-2,j}$,
\begin{align*}
&\rX_{\origin,u'} + \rX_{u',u} - \rX_{\origin,w}\\
&\qquad \geq r + \bigg(-\frac{5|y' - jW_r|}{W_r} - 2L_2 + (L_2^3-2L_1) + \frac12 \Big(\Big(\frac{|y'-jW_r|  }{W_r} - 2L_2 \Big)^+\Big)^2 \bigg)Q_r.
\end{align*}
Again optimizing over $y'$ we have that
\begin{align}\label{eq:barrierC}
\rX_{\origin,u} - \rX_{\origin,w}  \geq r +\frac12 L_2^3  Q_r.
\end{align}
Combining with~\eqref{eq:barrierA}, \eqref{eq:barrierB} and~\eqref{eq:barrierC}, this completes the result for $y>jW_r$ and the case $y<jW_r$ follows similarly.
\end{proof}

\subsection{Intermediate Columns}
The next lemma shows that on the intersection of the outer column, intermediate column and the wing events, passage times from $\bf0$ vary in a sufficiently regular manner along the right bounder of the left intermediate column.

\begin{lemma}\label{l:intermeadiate.bound}
On the event $\cW_{i,j}\cap \cD_{i-2,j}\cap \cC_{i-1,j}$, for $M^{99/100}n\leq ir \leq (M-M^{99/100})n$ and $|jW_r|\leq M^{4/5}W_n$ and for all $|y|\leq n^{\beta}W_n$ 
\begin{align*}
\rX_{\origin,((i-1)r,y)} - \rX_{\origin,((i-1)r,jW_r)}\geq -\bigg( 8L_2 +\frac{8|y - jW_r|}{W_r}\bigg)Q_r ,
\end{align*}
and for $y\in [(j-\frac13 L_2)W_r,(j+\frac13 L_2)W_r]$
\begin{align*}
&|\rX_{\origin,((i-1)r,y)} - \rX_{\origin,((i-1)r,jW_r)}  - \frac12\Big(\frac{|y-jW_r|}{W_r}\Big)^2 Q_r|\\
&\leq \bigg(\frac{L_0^2}{800} + \frac{21}{10}L_0  + (1+\frac{L_0}{20})\frac{|y-jW_r|}{W_r} \bigg)Q_r 
\end{align*}
The same bounds hold for $\rX'$.
\end{lemma}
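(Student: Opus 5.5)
The plan is to pass from the line $x=(i-2)r$, where Lemma~\ref{l:barrier6} controls $\rX_{\origin,\cdot}$, to the line $x=(i-1)r$ by one step across the intermediate column. The two directions are handled separately: upper bounds come from exhibiting a concrete path, lower bounds from the fact that the geodesic must cross $x=(i-2)r$. Throughout write $w_0=((i-1)r,jW_r)$.

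\textbf{Lower bound (first display).} Let $u=((i-1)r,y)$ and let $u'=((i-2)r,y')$ be the canonical point where $\gamma_{\origin,u}$ crosses $x=(i-2)r$, so that $\rX_{\origin,u}=\rX_{\origin,u'}+\rX_{u',u}$.
\begin{enumerate}
\item Bound $\rX_{\origin,u'}$ from below using Lemma~\ref{l:barrier6}. This gives the infimum over $y''\in[-\tfrac{L_0W_r}{20},\tfrac{L_0W_r}{20}]$ of $\rX_{\origin,((i-2)r,y''+jW_r)}$, minus $(6L_2+7|y'-jW_r|/W_r)Q_r$.
\item Bound $\rX_{u',u}$ from below using $\cA^+_{i-1,j,L_0}\subset\cC_{i-1,j}$. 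This gives $r+\frac{(y-y')^2}{2r}-(|y-jW_r|+|y'-jW_r|)Q_r/W_r-L_0Q_r$.
\item Bound $\rX_{\origin,w_0}$ from above. Route through the minimizing point $v=((i-2)r,y''+jW_r)$ and use $\cA^-_{i-1,j,L_0}$ for the crossing $v\to w_0$. Since $|y''|\le L_0W_r/20$, this crossing costs at most $r+(\frac{L_0^2}{800}+\frac{L_0}{20}+L_0)Q_r$.
\item Subtract and note $\frac{(y-y')^2}{2r}=\frac12((y-y')/W_r)^2Q_r$, using $W_r^2=rQ_r$. What remains is a quadratic in $t=(y'-jW_r)/W_r$ with linear terms of size $8|t|$.
\item Minimize over $t$, exactly as in the proof of Lemma~\ref{l:barrier6}. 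The quadratic absorbs the linear terms at a cost of $O(1)$ plus $8|y-jW_r|/W_r$. The constants $L_0$ and $18$ are then swallowed by the extra $2L_2$, since $L_2\gg L_0$.
\end{enumerate}

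\textbf{Two-sided bound for $|y-jW_r|\le \frac13L_2W_r$.} Set $s=|y-jW_r|/W_r$.
\begin{enumerate}
\item For the upper bound on $\rX_{\origin,u}$, route through the same near-center point $v$ as above. Use $\cA^-$ for $v\to u$, which costs $r+\frac12(s+\tfrac{L_0}{20})^2Q_r+(s+\tfrac{L_0}{20})Q_r+L_0Q_r$.
\item For the lower bound on $\rX_{\origin,w_0}$, use its geodesic crossing point $v_0$ on $x=(i-2)r$. Lemma~\ref{l:barrier6} puts $v_0$ within $\tfrac{L_0}{20}W_r$ of $jW_r$ up to a small penalty. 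The indicator term $\frac{L_2^3}{2}Q_r$ forbids crossings at distance in $(\tfrac{L_0}{20},L_2-3]$, and the quadratic from $\cA^+$ forbids larger ones. The cost of $v_0\to w_0$ is then at least $r-(\tfrac{L_0}{20}+L_0)Q_r$.
\item Combining the two and expanding $\frac12(s+\tfrac{L_0}{20})^2=\frac12s^2+\tfrac{L_0}{20}s+\tfrac{L_0^2}{800}$ gives the claimed upper bound on $\rX_{\origin,u}-\rX_{\origin,w_0}-\frac12s^2Q_r$.
\item The reverse inequality is symmetric. Bound $\rX_{\origin,w_0}$ from above via $v$, and bound $\rX_{\origin,u}$ from below via its crossing point. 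That crossing point must also lie within $\tfrac{L_0}{20}W_r$ of $jW_r$, again by the $\frac{L_2^3}{2}$ barrier term. This works because $s\le L_2/3$ makes any alternative crossing cost far less than $L_2^3$.
\end{enumerate}

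The main obstacle is the bookkeeping that yields the exact constants $\frac{L_0^2}{800}+\frac{21}{10}L_0$ and $1+\frac{L_0}{20}$. The constant $\frac{21}{10}L_0=2L_0+\frac{L_0}{10}$ should come from the two $\cA$ errors $L_0$ plus two shifts of $\tfrac{L_0}{20}$. The second obstacle is verifying, when localizing the crossing point, that excursions to distance beyond $L_2-3$ are beaten by the quadratic term of $\cA^+$ together with the $-7|t|$ bound of Lemma~\ref{l:barrier6}. The statements for $\rX'$ are identical, using the primed events.
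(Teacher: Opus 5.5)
Your proposal is correct and follows the paper's proof essentially step for step. Like you, the paper fixes $v$ as the minimizer of $\rX_{\origin,\cdot}$ over the window $|y'-jW_r|\le \frac{L_0}{20}W_r$ on $x=(i-2)r$, gets the first display from Lemma~\ref{l:barrier6} plus $\cA^+_{i-1,j,L_0}$ at the crossing point optimized over its height, gets the upper bounds by routing through $v$ with $\cA^-_{i-1,j,L_0}$, and proves the two-sided estimate by forcing the crossing point into the window via the $\frac{L_2^3}{2}$ indicator and the quadratic beyond $L_2-3$, with exactly your split $\frac{21}{10}L_0=2(L_0+\frac{L_0}{20})$.

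One bookkeeping point, which applies equally to the paper: with $\sigma=(y-jW_r)/W_r$, minimizing $\frac12(\sigma-t)^2-8|t|$ over $t$ costs $8|\sigma|+32$ (not $18$), and the $\cA^+$ error adds another $|\sigma|$. So these ingredients give coefficient $9$ rather than $8$ on $|y-jW_r|/W_r$ in the first display. This is harmless for its later uses in Lemmas~\ref{l:type4} and~\ref{l:type5}.
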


\begin{proof}
We choose $y_*\in [(j-\tfrac{L_0}{20})W_r,(j+\tfrac{L_0}{20})W_r]$ such that with $v=((i-2)r,y_*)$ we have that that
\[
\rX_{\origin,v} = \inf_{y'\in [(j-\tfrac{L_0}{20})W_r,(j+\tfrac{L_0}{20})W_r]}\rX_{\origin,((i-2)r,y')}.
\]
By Lemma~\ref{l:barrier.entry} and $\cC_{i-1,j}$,
\begin{align}\label{eq:intermeadiateA}
\rX_{\origin,((i-1)r,y)}&= \inf_{|y'|\leq n^\beta W_n}  \rX_{\origin,((i-2)r,y')} + \rX_{((i-2)r,y'),((i-1)r,y)}\nonumber\\
&\geq -\bigg( 6L_2 + \frac{7|y' - jW_r|}{W_r} \bigg)Q_r + I\Big(|y'-jW_r|\in (\frac1{20}L_0 W_r,(L_2 -3) W_r]\Big)\frac{L_2^3}{2} Q_r\nonumber\\
&\qquad +r +\frac12\Big(\frac{y-y'}{W_r}\Big)^2 Q_r-\bigg( L_0 + \frac{|y' - jW_r|}{W_r}+ \frac{|y - jW_r|}{W_r} \bigg)Q_r\nonumber\\
&\geq \rX_{\origin,v} +r  -\bigg( 7L_2 +8\frac{|y-jW_r|}{W_r}\bigg)Q_r
\end{align}
and where the last inequality follows by optimizing over $y'$.
For $y\in [(j-\frac13 L_2)W_r,(j+\frac13 L_2)W_r]$, by $\cC_{i-1,j}$ we have that
\begin{align}\label{eq:intermeadiateB}
\rX_{\origin,((i-1)r,y)} &\leq  \rX_{\origin,v} +r +\frac12\Big(\frac{y-y_*}{W_r}\Big)^2  Q_r+\bigg(L_0 +\frac{|y-jW_r|}{W_r} +\frac{|y_*-jW_r|}{W_r} \bigg)Q_r \nonumber\\
&\leq  \rX_{\origin,v} +r +\frac12\Big(\frac{|y-jW_r|}{W_r}\Big)^2Q_r  + \bigg(\frac{L_0^2}{800} + \frac{21}{20}L_0  + (1+\frac{L_0}{20})\frac{|y-jW_r|}{W_r} \bigg)Q_r \nonumber\\
&\leq  \rX_{\origin,v} +r + \frac1{17} L_2^2 Q_r.
\end{align}
When $y=jW_r$ this gives
\[
\rX_{\origin,((i-1)r,y)} \leq \rX_{\origin,v} +r + \bigg(\frac{L_0^2}{800} + \frac{21}{20}L_0  \bigg)Q_r
\]
Combining this with equation~\eqref{eq:intermeadiateA} for any $|y|\leq n^\beta W_n$,
\begin{align*}
\rX_{\origin,((i-1)r,y)} - \rX_{\origin,((i-1)r,jW_r)} &\geq  -\bigg( 7L_2 +8\frac{|y-jW_r|}{W_r}\bigg)Q_r - \bigg(\frac{L_0^2}{800} + \frac{21}{20}L_0   \bigg)Q_r \\
&\geq -\bigg( 8L_2 +8\frac{|y-jW_r|}{W_r}\bigg)Q_r 
\end{align*}
which completes the proof of the first part of the lemma.  For the remainder assume that $y\in [(j-\frac13 L_2)W_r,(j+\frac13 L_2)W_r]$.  We will show that the path from the origin to $(i-1)r,y)$ must pass though $u= ((i-2)r,y')$ with $y'\in [(j-\tfrac{L_0}{20})W_r,(j+\tfrac{L_0}{20})W_r]$.  Splitting into two cases, first by~\eqref{eq:intermeadiateA} and $\cC_{i-1,j}$,
\begin{align}\label{eq:intermeadiateC}
&\inf_{\substack{u= ((i-2)r,y') \\ |y'-jW_r|\in [\frac1{20}L_0 W_r,(L_2 -3) W_r]}} \rX_{\origin,u} 
+ \rX_{u,((i-1)r,y)} \nonumber\\
&\qquad\geq \inf_{|y'|\leq n^\beta W_n} \rX_{\origin,v} +r +\frac12\Big(\frac{y-y'}{W_r}\Big)^2 Q_r -\bigg(L_0+6L_2 +\frac{|y-jW_r|}{W_r} + 8\frac{|y'-jW_r|}{W_r} \bigg)Q_r + \frac{L_2^3}{2} Q_r\nonumber\\
&\qquad\geq  \rX_{\origin,v} +r + \frac{L_2^3}{3} Q_r 
\end{align}
and secondly
\begin{align}\label{eq:intermeadiateD}
&\inf_{\substack{u= ((i-2)r,y') \\ |y'-jW_r| \geq (L_2 -3) W_r}} \rX_{\origin,u} 
+ \rX_{u,((i-1)r,y)} \nonumber\\
&\qquad\qquad\geq \inf_{|y'|\leq n^\beta W_n} \rX_{\origin,v} +r +\frac12\Big(\frac{y-y'}{W_r}\Big)^2 Q_r -\bigg(L_0+6L_2 +\frac{|y-jW_r|}{W_r} + 8\frac{|y'-jW_r|}{W_r} \bigg)Q_r \nonumber\\
&\qquad\qquad\geq  \rX_{\origin,v} +r + \frac{L_2^2}{3} Q_r 
\end{align}
which are both greater than $\rX_{\origin,v} +r + \frac1{17} L_2^2 Q_r$.  Hence we have that
\begin{align}\label{eq:intermeadiateE}
\rX_{\origin,((i-1)r,y)}&=\inf_{\substack{u= ((i-2)r,y') \\ y'\in [(j-\tfrac{L_0}{20})W_r,(j+\tfrac{L_0}{20})W_r]}} \rX_{\origin,u} 
+ \rX_{u,((i-1)r,y)} \nonumber\\
&\geq \inf_{|y'|\leq n^\beta W_n} \rX_{\origin,v} +r +\frac12\Big(\frac{y-y'}{W_r}\Big)^2 Q_r  -\bigg(L_0 +\frac{|y-jW_r|}{W_r} +\frac{|y'-jW_r|}{W_r} \bigg)Q_r \nonumber\\
&\geq  \rX_{\origin,v} +r +\frac12\Big(\frac{|y-jW_r|}{W_r}\Big)^2 Q_r  -\bigg(\frac{21}{20}L_0  + (1+\frac{L_0}{20})\frac{|y-jW_r|}{W_r} \bigg)Q_r 
\end{align}
Combining equations~\eqref{eq:intermeadiateB} and~\eqref{eq:intermeadiateE} we have that
\begin{align*}
&|\rX_{\origin,((i-1)r,y)} - \rX_{\origin,((i-1)r,jW_r)}   - \frac12\Big(\frac{|y-jW_r|}{W_r}\Big)^2 Q_r|\\
&\leq \bigg(\frac{L_0^2}{800} + \frac{21}{10}L_0  + (1+\frac{L_0}{20})\frac{|y-jW_r|}{W_r} \bigg)Q_r 
\end{align*}
which completes the proof.
\end{proof}

Notice that Lemma \ref{l:intermeadiate.bound} provides bounds on how the passage times from $(0,0)$ to the left side of the central column (the line $x=(i-1)r$) varies as the end point is varied near height $jW_r$. By symmetry, we have the following analogous bound for passage times to the right side of the central column to $(Mn,0)$.

\begin{lemma}\label{l:intermeadiate.bound.sym}
On the event $\cW_{i,j}\cap \cD_{i+2,j}\cap \cC_{i+1,j}$, for $M^{99/100}n\leq ir \leq (M-M^{99/100})n$ and $|jW_r|\leq M^{4/5}W_n$ and for all $|y|\leq n^{\beta}W_n$ 
\begin{align*}
\rX_{(ir,y),(0,Mn)} - \rX_{(ir,jW_r),(0,Mn)}\geq -\bigg( 8L_2 +8\frac{|y-jW_r|}{W_r}\bigg)Q_r ,
\end{align*}
and for $y\in [(j-\frac13 L_2)W_r,(j+\frac13 L_2)W_r]$
\begin{align*}
&|\rX_{(0,Mn),(ir,y)} - \rX_{(0,Mn),(ir,jW_r)}   - \frac12\Big(\frac{|y-jW_r|}{W_r}\Big)^2 Q_r|\\
&\leq \bigg(\frac{L_0^2}{800} + \frac{21}{10}L_0  + (1+\frac{L_0}{20})\frac{|y-jW_r|}{W_r} \bigg)Q_r 
\end{align*}
The same bounds hold for $\rX'$.
\end{lemma}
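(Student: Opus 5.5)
The plan is to deduce Lemma~\ref{l:intermeadiate.bound.sym} from Lemma~\ref{l:intermeadiate.bound} by the reflection $x\mapsto Mn-x$, rather than redoing the chaining. Let $\sigma(x,y)=(Mn-x,y)$ and let $\tilde\omega_*$ be the field pulled back by $\sigma$. The columns $\Lambda_{i'}$ are mapped to $\Lambda_{M+1-i'}$, and the $n\times W_n$ block structure $\Lambda^+_{i'j'}$ is preserved up to reindexing. Conforming paths for $\omega_*$ are mapped bijectively to conforming paths for $\tilde\omega_*$, with weights preserved. Therefore $\rX_{(ir,y),(Mn,0)}(\omega_*)=\rX_{\origin,((M/\Phi^\ell-i)r,y)}(\tilde\omega_*)$. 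The passage time is symmetric under reversal of paths, so the order of the endpoints does not matter, which also absorbs the typo $(0,Mn)$ for $(Mn,0)$ in the statement. The same holds for $\rX'$, because the resampling variables $T_{ij}$ are reindexed in the same way.

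The key step is to check that the event $\cW_{i,j}\cap\cD_{i+2,j}\cap\cC_{i+1,j}$ for $\omega_*$ implies the event $\cW_{\tilde i,j}\cap\cD_{\tilde i-2,j}\cap\cC_{\tilde i-1,j}$ for $\tilde\omega_*$, where $\tilde i = M\Phi^{-\ell}+1-i$. This implication is what Lemma~\ref{l:intermeadiate.bound} actually uses. The events $\cC$ and $\cD$ are built from $\cI^\pm,\cJ,\cK,\cA^\pm$. Each of these is defined symmetrically between the left and right sides of a column: $\hrX$ is symmetric in its endpoints, and the $\cK$ event already contains both one-sided conditions. Hence each of them is mapped to its counterpart under $\sigma$.

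The wing event $\cW_{i,j}$ is also built to be left/right symmetric. Its $\cW^*_i$ part covers both the range $i'\le i-3$ and the range $i'\ge i+2$. Its $\cZ$ parts include both the left shifts $i-2^{k}L_2-3,\ i-2^{k+1}L_2-3$ and the right shifts $i+2,\ i+2^kL_2+2$. Under $\sigma$ the offsets $-3$ and $+2$ are exchanged, because a column index $i'$ (covering $[(i'-1)r,i'r]$) is sent to $\tilde i+1-i'$. The range condition $M^{99/100}n\le ir\le (M-M^{99/100})n$ is also invariant.

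Finally, Lemma~\ref{l:intermeadiate.bound} is deterministic on its event, so the lemma applies verbatim to $\tilde\omega_*$, and this yields both displayed bounds. The main obstacle is a bookkeeping issue rather than a conceptual one. One has to confirm that the index shifts in $\cW^{loc}$ and $\cW^{glo}$ are exactly mirror images, and that the canonical (topmost) geodesic choice plays no role. The second point holds because only passage times, not geodesics, enter the statement.

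If the wing indices turn out to be off by one under reflection, I would instead rerun the proofs of Lemmas~\ref{l:wing.bound}, \ref{l:barrier6} and \ref{l:intermeadiate.bound} with left and right interchanged. Those proofs use only the triangle inequality and the listed events, so this alternative should go through line by line.
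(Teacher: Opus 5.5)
Your reflection argument is correct and is exactly what the paper means by ``by symmetry'', which is the only proof the paper gives. You reflect $x\mapsto Mn-x$, which sends the line $x=ar$ to $x=(Mn/r-a)r$ and the central column $i$ to $\tilde i=Mn/r+1-i$; you then apply the deterministic Lemma~\ref{l:intermeadiate.bound} to the reflected configuration, and likewise to the reflected $\omega_\kappa$ for $\rX'$.

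The only slips are harmless. First, the range condition on $ir$ is preserved only up to a shift by $r$; this is immaterial, since $r\le M^{1/100}n$ and the lemma is only used for $2M^{99/100}n\le ir\le (M-2M^{99/100})n$. Second, $\cW^{glo}_{i,j}$ as typed, with offsets $2^k$ rather than $2^kL_2$, is not literally mirror-symmetric. However, the proof of Lemma~\ref{l:wing.bound} already uses the $2^kL_2$ version, which is symmetric, so your primary route goes through with the intended definition and the fallback is not needed.
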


\subsection{Central Column}

Let us consider conforming paths $\zeta$ from $\origin$ to $(0,Mn)$ that pass through $((i-1)r,y)$ and $(ir,y')$.  We will divide them into 6 different types.  Let $\zeta^*$ be the segment of $\zeta$ from $((i-1)r,y)$ to $(ir,y')$; see Figure \ref{f:pathtypes}. 
\begin{itemize}
    \item Type 1: Paths with $\zeta^*\subset [(i-1)r,ir]\times [(j-\frac12 L_0)W_r,(j+ \frac12 L_0)W_r]$.
    \item Type 2: Paths not of Type 1 with $\zeta^*\subset [(i-1)r,ir]\times [(j-\frac32 L_0)W_r,(j+ \frac32 L_0)W_r]$.
    \item Type 3: Paths with $\zeta^*\subset [(i-1)r,ir]\times [(j-\alpha - w)W_r,(j-\alpha + h + w)W_r]$.
    \item Type 4: Paths  where $\zeta^*$ intersects $[(i-1)r,ir]\times [(j- L_1+w)W_r,(j+  L_1-w)W_r]$ and not of type 1-3.
    \item Type 5: Paths not of type 1-4 where $\zeta^*$ intersects $[(i-1)r,ir]\times [(j- L_1)W_r-2,(j+  L_1)W_r+2]$.
    \item Type 6: Paths not of type 1-5.
\end{itemize}
\begin{center}
\begin{figure}[htbp!]
\includegraphics[width=1.5in]{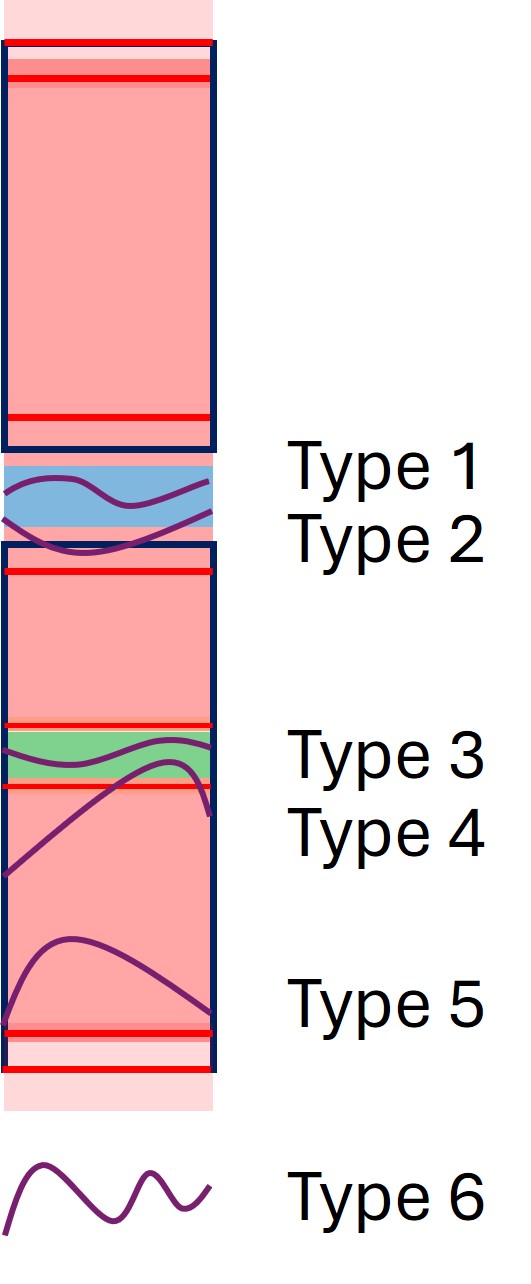}
\caption{Different types of paths through the central column. We classify the paths passing through the central column $[(i-1)r,ir]\times\R$ depending on which region of the central column it passes through. Eventually our goal is to show that on the event $\cP_{i,j}$ the geodesic $\gamma$ from $(0,0)$ to $(Mn,0)$ is either of type $1$ or type $6$ and the geodesic after the resampling is either of type $3$ or of type $6$. This will show that provided that the geodesic before the resampling is not of type $6$, its restriction to the columns $[(i-1)r,ir]$ is separated from the restriction of the resampled geodesic on $\cP_{i,j}$, which, in turn, leads to our desired chaos estimate.}
\label{f:pathtypes}
\end{figure}
\end{center}

Our objective is to show that on $\cP_{i,j}$, if the geodesic $\gamma$ is not of type $6$, (i.e., it passes the column $i$ near height $jW_r$) then it is of type $1$, and further the geodesics after a $\kappa$ fraction of the noise is resampled is either of type $3$ or type $6$. This will ensure that on the event $P_{i,J_i}$, the geodesics before and after resampling are separated in column $i$. We start with the existence of a good (but not too good) path of type $1$ on the event $\cP_{i,j}$.

\begin{lemma}
\label{l:type1}
On the event $\cP_{i,j}$ there exists a Type 1 path $\zeta$ such that
\[
\rX_\zeta \leq \rX_{\origin,((i-1)r,jW_r)} + r + \rX_{(ir,jW_r),(0,Mn)} + \frac{L_0^2}{100} Q_r.
\]
For every
Type 1 path $\zeta$,
\[
\rX_\zeta \geq \rX_{\origin,((i-1)r,jW_r)} + r + \rX_{(ir,jW_r),(0,Mn)} - \frac{L_0^2}{100} Q_r.
\]
The same bounds holds for $\rX'$.

\end{lemma}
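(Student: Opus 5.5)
The plan is to work with the decomposition of a Type 1 path into three pieces: a segment from $\origin$ to $((i-1)r,y)$, a crossing $\zeta^*$ of the central column inside $[(i-1)r,ir]\times[(j-\frac12 L_0)W_r,(j+\frac12 L_0)W_r]$, and a segment from $(ir,y')$ to $(Mn,0)$. Both bounds will then come from combining Lemma \ref{l:intermeadiate.bound} and Lemma \ref{l:intermeadiate.bound.sym}, which control the two outer pieces, with the central events $\cB^{(1)}_{i,j}$ and $\cB^{(2)}_{i,j}$, which control the crossing.

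For the upper bound I will use $\cB^{(1)}_{i,j}$, i.e. the event $\cI^-_{i,j-\frac1{40}L_0,j+\frac1{40}L_0,L_0}$. It gives $y,y'\in[(j-\frac1{40}L_0)W_r,(j+\frac1{40}L_0)W_r]$ and a path $\zeta^*$ inside the thin rectangle, hence of Type 1, with
\[
\rX_{\zeta^*}\le r+\frac{(y-y')^2}{2r}+L_0Q_r\le r+\Big(\frac{L_0^2}{3200}+L_0\Big)Q_r .
\]
I will concatenate $\zeta^*$ with geodesics from $\origin$ to $((i-1)r,y)$ and from $(ir,y')$ to $(Mn,0)$. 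Since $|y-jW_r|\le \frac{L_0}{40}W_r\le\frac13L_2W_r$, the second estimate of Lemma \ref{l:intermeadiate.bound} applies and gives
\[
\rX_{\origin,((i-1)r,y)}\le \rX_{\origin,((i-1)r,jW_r)}+\Big(\tfrac12(\tfrac{L_0}{40})^2+\tfrac{L_0^2}{800}+\tfrac{21}{10}L_0+(1+\tfrac{L_0}{20})\tfrac{L_0}{40}\Big)Q_r,
\]
and Lemma \ref{l:intermeadiate.bound.sym} gives the symmetric bound on the right. The accumulated error is at most $(\frac{L_0^2}{320}+cL_0)Q_r$ for an absolute constant $c$, which is below $\frac{L_0^2}{100}Q_r$ once $L_0$ is large. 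The concatenation is conforming by construction, since its column boundary crossings are at heights $\le n^\beta W_n$.

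For the lower bound I take an arbitrary Type 1 path and let $y,y'\in[(j-\frac12L_0)W_r,(j+\frac12L_0)W_r]$ be its crossing heights. I will bound the first piece below by the quadratic term $\frac12(\frac{|y-jW_r|}{W_r})^2Q_r$ minus the error of Lemma \ref{l:intermeadiate.bound}. This is valid because $\frac12L_0\le\frac13L_2$, but the error now has size about $\frac{L_0^2}{800}+(1+\frac{L_0}{20})\frac{L_0}{2}\approx \frac{L_0^2}{40}$, which is too large. This is the main obstacle, and I plan to overcome it by exploiting the quadratic gain rather than the crude error term.

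The quadratic term $\frac12 t^2Q_r$, with $t=|y-jW_r|/W_r$, dominates the linear error $\frac{L_0}{20}tQ_r$ for $t\ge\frac{L_0}{10}$. So the combined lower bound is at least $-(\frac{L_0^2}{800}+\frac{L_0^2}{800}+O(L_0))Q_r$ after minimizing $\frac12t^2-(1+\frac{L_0}{20})t$ over $t$, whose minimum is $-\frac12(1+\frac{L_0}{20})^2\approx-\frac{L_0^2}{800}$. The same holds on the right side. For the crossing itself I will use $\cI^+_{i,j-\frac32L_0,j+\frac32L_0,-L_0}$ from $\cB^{(2)}_{i,j}$, which gives $\rX_{\zeta^*}\ge r+\frac{(y-y')^2}{2r}-L_0Q_r\ge r-L_0Q_r$. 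The total deficit is then about $(4\cdot\frac{L_0^2}{800}+O(L_0))Q_r=\frac{L_0^2}{200}Q_r+O(L_0Q_r)$, which is below $\frac{L_0^2}{100}Q_r$ for $L_0$ large. If the constants turn out tighter than this, I will also keep the $\frac{(y-y')^2}{2r}$ term. All the events used have primed counterparts contained in $\cP_{i,j}$, so the identical argument gives the bounds for $\rX'$.
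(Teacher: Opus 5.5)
Your proposal is correct and follows essentially the same route as the paper. The upper bound concatenates the $\cB^{(1)}_{i,j}$ crossing with geodesics and applies Lemmas \ref{l:intermeadiate.bound} and \ref{l:intermeadiate.bound.sym}; the lower bound combines the $\cI^+$ part of $\cB^{(2)}_{i,j}$ with the quadratic gain $\frac12 t^2$ from those lemmas, optimized over $t$. There is one small arithmetic slip in the upper bound: the $\cB^{(1)}_{i,j}$ rectangle has height $\frac{L_0}{20}W_r$, so the crossing contributes up to $\frac{L_0^2}{800}Q_r$, and the total error is about $\frac{22}{3200}L_0^2Q_r$ rather than $\frac{L_0^2}{320}Q_r$. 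This is still below $\frac{L_0^2}{100}Q_r$, so the conclusion stands.
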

\begin{center}
\begin{figure}[htbp!]
\includegraphics[width=6in]{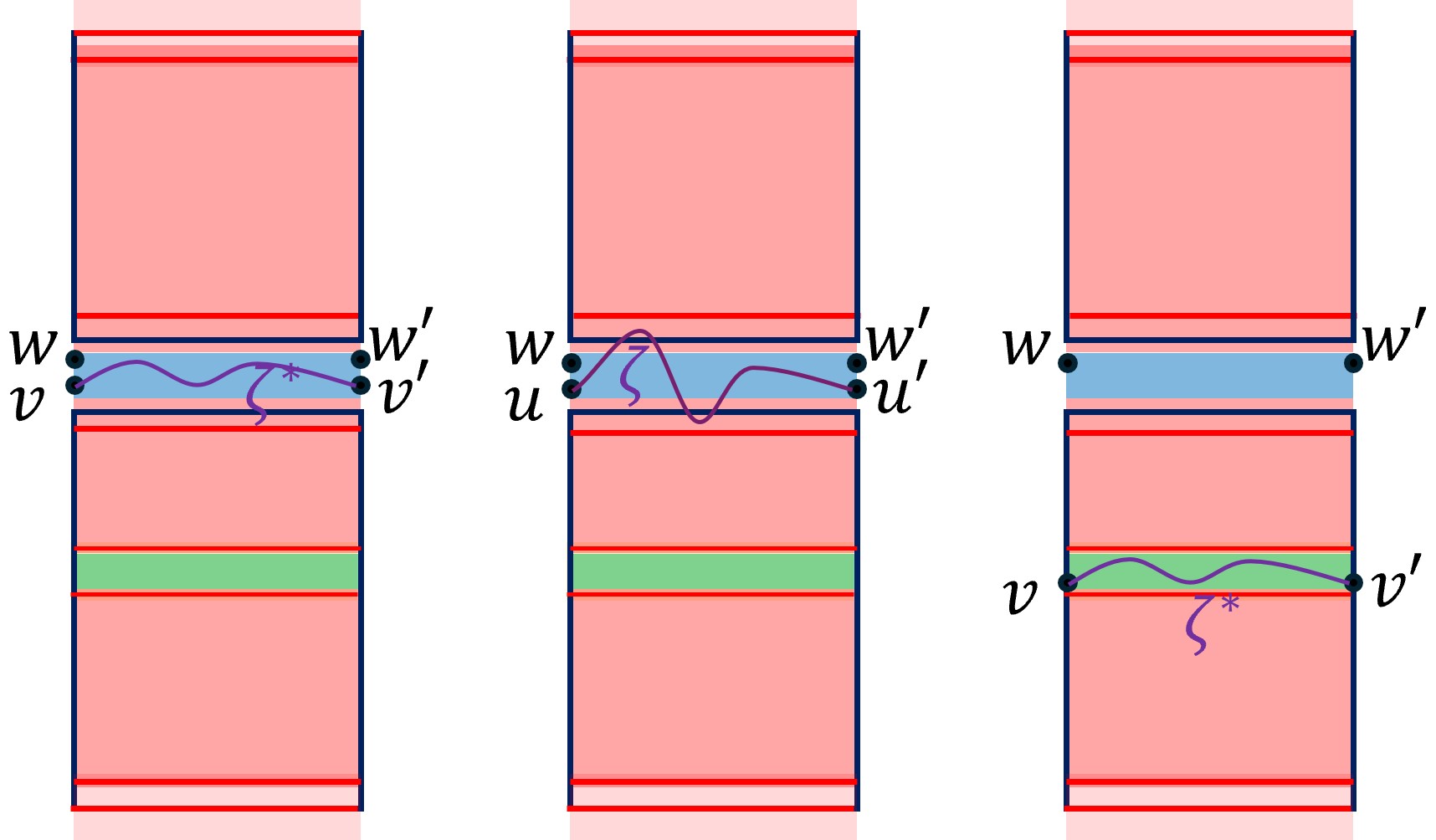}
\caption{Analysis of paths of types 1, 2 and 3. On the event $\cP_{i,j}$, there is a Type 1 path which is good but not too good. This is obtained by concatenating the path $\zeta^*$ shown in the figure with the geodesics from $0$ and $(Mn,0)$ to its endpoints respectively. This is done in Lemma \ref{l:type1} and is illustrated in the leftmost panel. On the event $\cP_{i,j}$ any type 2 path (the segment of a type 2 path $\zeta$ is shown in the middle panel above) has to be bad both before and after resampling. This is done in Lemma \ref{l:type2}. The event $\cP_{i,j}$ is designed in a way such that any type 3 path (paths passing through the region marked in green) is bad before resampling whereas there exists a type 3 path after resampling (marked in the rightmost panel of the figure figure) which is rather good. This is done in Lemma \ref{l:type3}. This, in conjunction with other consequences of the event $\cP_{i,j}$ will show that if the geodesic before the resampling passes near the point $(ir,jW_r)$ (i.e., it is not of type 6), then after resampling, the geodesic before and after resampling will be separated at location $i$ (in scale $r$).
}
\label{f:type1}
\end{figure}
\end{center}

\begin{proof}
Let $w=((i-1)r,jW_r),w'=(ir,jW_r)$.  By $\cB^{(1)}_{i,j}$ we can find $\zeta^*\subset [(i-1)r,ir]\times [(j-\frac1{40} L_0)W_r,(j+ \frac1{40} L_0)W_r]$ with $v=\zeta^*(0)=((i-1)r,y),v'=\zeta^*(1)=(ir,y')$ such that $\hrX_{\zeta^*} \leq \frac{L_0}{40}Q_r$; see the left panel of Figure \ref{f:type1}.  Let $\zeta$ be the concatenation of the optimal path from $\origin$ to $\zeta^*(0)$, then $\zeta^*$ and then the optimal path from $\zeta^*(1)$ to $(Mn,0)$.  Then by Lemmas~\ref{l:intermeadiate.bound} and~\ref{l:intermeadiate.bound.sym},
\begin{align*}
\rX_\zeta &\leq \rX_{\origin,v} + \rX_{\zeta^*} + \rX_{v',(0,Mn)}\\
&\leq \rX_{\origin,w} + r + \frac12\Big(\frac{|y-y'|}{W_r}\Big)^2 Q_r  + \rX_{w',(0,Mn)} + \frac{L_0}{40}Q_r + \frac12\Big(\frac{|y-jW_r|}{W_r}\Big)^2 Q_r + \frac12\Big(\frac{|y'-jW_r|}{W_r}\Big)^2 Q_r\\
&\qquad +\bigg(\frac{L_0^2}{800} + \frac{21}{10}L_0  + (1+\frac{L_0}{20})\frac{|y-jW_r|}{W_r} \bigg)Q_r +\bigg(\frac{L_0^2}{800} + \frac{21}{10}L_0  + (1+\frac{L_0}{20})\frac{|y'-jW_r|}{W_r} \bigg)Q_r \\
&\qquad\leq \rX_{\origin,w} + r + \rX_{w',(0,Mn)} + \frac{L_0^2}{100} Q_r
\end{align*}
Now suppose $\zeta$ is some Type 1 path {passing through} points $u=((i-1)r,y)$ and $u'=(ir,y')$.  Then by Lemmas~\ref{l:intermeadiate.bound} and~\ref{l:intermeadiate.bound.sym} and $\cB_{i,j}^{(2)}$,
\begin{align*}
\rX_\zeta &\ge \rX_{\origin,u} + X_{u,u'} + \rX_{u',(0,Mn)}\\
&\geq \rX_{\origin,w}  + \rX_{w',(0,Mn)}\\
&+r + \frac12\Big(\frac{|y-y'|}{W_r}\Big)^2 Q_r - L_0 Q_r +  \frac12\Big(\frac{|y-jW_r|}{W_r}\Big)^2 Q_r + \frac12\Big(\frac{|y'-jW_r|}{W_r}\Big)^2 Q_r\\
&-\bigg(\frac{L_0^2}{400} + \frac{21}{5}L_0  + (1+\frac{L_0}{20})(\frac{|y-jW_r|}{W_r}+\frac{|y'-jW_r|}{W_r}) \bigg)Q_r\\
&\geq \rX_{\origin,w}  + \rX_{w',(0,Mn)} + r -\frac{L_0^2}{100}Q_r
\end{align*}
where the last inequality follows we optimized over $|y-jW_r|$ and $|y'-jW_r|$ using the fact that $\inf_x \frac12x^2-\frac{L_0}{20}x = -\frac{L_0^2}{400}$.
\end{proof}

Our next lemma is about Type 2 paths. 

\begin{lemma}
\label{l:type2}
On the event $\cP_{i,j}$ for any Type 2 path $\zeta$ we have that
\[
\rX_\zeta \geq \rX_{\origin,((i-1)r,jW_r)} + r + \rX_{(ir,jW_r),(0,Mn)} + \frac{L_0^2}{50} Q_r.
\]
The same bound holds for $\rX'$.
\end{lemma}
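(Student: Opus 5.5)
The plan is to follow the proof of the second half of Lemma~\ref{l:type1}, replacing the use of $\cB^{(2)}$ as an upper-barrier-free estimate by its first component, which penalises paths that leave the inner box. Let $\zeta$ be a Type 2 path through $u=((i-1)r,y)$ and $u'=(ir,y')$, with middle segment $\zeta^*\subset[(i-1)r,ir]\times[(j-\frac32L_0)W_r,(j+\frac32L_0)W_r]$ and $\zeta^*\not\subset[(i-1)r,ir]\times[(j-\frac12L_0)W_r,(j+\frac12L_0)W_r]$. Write $w=((i-1)r,jW_r)$, $w'=(ir,jW_r)$, $a=|y-jW_r|/W_r$ and $a'=|y'-jW_r|/W_r$. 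Both endpoints lie in $[-\frac32L_0,\frac32L_0]$ relative to $j$, so $a,a'\le \frac32 L_0\le \frac13L_2$, and the two-sided estimates of Lemmas~\ref{l:intermeadiate.bound} and~\ref{l:intermeadiate.bound.sym} apply. They give
\[
\rX_\zeta\ge \rX_{\origin,w}+\rX_{w',(0,Mn)}+\rX_{\zeta^*}+\tfrac12(a^2+a'^2)Q_r-\Big(\tfrac{L_0^2}{400}+\tfrac{21}{5}L_0+(1+\tfrac{L_0}{20})(a+a')\Big)Q_r .
\]

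We split into two cases according to where the endpoints lie.

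\emph{Case A: $a\le\frac14L_0$ and $a'\le\frac14L_0$.} The first event in $\cB^{(2)}_{i,j}$ applies directly to $\zeta^*$ and gives $\rX_{\zeta^*}\ge r+\frac1{40}L_0^2Q_r$. The function $\frac12x^2-(1+\frac{L_0}{20})x$ is bounded below by $-\frac12(1+\frac{L_0}{20})^2\approx-\frac{L_0^2}{800}$. Substituting, the total lower bound is $r+\big(\frac1{40}-\frac1{400}-\frac2{800}\big)L_0^2Q_r-O(L_0)Q_r=r+\frac{1}{50}L_0^2Q_r+\big(\frac{1}{40}-\frac{1}{400}-\frac{1}{400}-\frac1{50}\big)L_0^2Q_r-O(L_0)Q_r$. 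The bracketed coefficient equals $\frac{1}{400}>0$, so for $L_0$ large it absorbs the $O(L_0)$ terms. This yields the claimed $\frac{L_0^2}{50}$.

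\emph{Case B: $a>\frac14L_0$ or $a'>\frac14L_0$.} Here we use only the barrier $\cI^+_{i,j-\frac32L_0,j+\frac32L_0,-L_0}$ from $\cB^{(2)}_{i,j}$, which gives $\rX_{\zeta^*}\ge r+\frac12(a-a')^2Q_r-L_0Q_r$ up to sign conventions on $y-y'$. The resulting lower bound is $r+Q_r\big(\frac12(a^2+a'^2)-(1+\frac{L_0}{20})(a+a')-\frac{L_0^2}{400}-O(L_0)\big)$. The dangerous term is $-\frac{L_0}{20}(a+a')$, bounded by $\frac{L_0}{20}\cdot 3L_0$. The quadratic gain is at least $\frac12(\frac14L_0)^2=\frac{L_0^2}{32}$ from the large coordinate. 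The smaller coordinate contributes at least its own minimum $-\frac{L_0^2}{800}$.

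Checking the worst case: with $a=\frac14L_0$ the large coordinate gives $\frac{L_0^2}{32}-\frac{L_0^2}{80}\approx0.0188\,L_0^2$. Subtracting $\frac{L_0^2}{800}$ and $\frac{L_0^2}{400}$ leaves about $0.0150\,L_0^2$, which is below the required $0.02\,L_0^2$. So the crude bound fails, and Case B is the main obstacle.

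The plan for Case B is to sharpen the input from Lemma~\ref{l:intermeadiate.bound} rather than use its crude constants. The $\frac{L_0^2}{400}$ and the $\frac{L_0}{20}a$ penalty there come from the worst-case location of $y_*$ within $\pm\frac{L_0}{20}$. Combining this slack with the exact quadratic $\frac12(y-y_*)^2$ in the proof of Lemma~\ref{l:intermeadiate.bound}, instead of bounding linearly, recovers the gain of order $\frac12(a-\frac{L_0}{20})^2\ge\frac12(\frac{L_0}{5})^2$.

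If that is still insufficient, the fallback is to use $\cB^{(6)}_{i,j}$, namely the $\cK$ events on the lines $j\pm\frac32L_0$ and nearby. Applied as in the proof of Lemma~\ref{l:Bbound}, they show that endpoints with $a>\frac14L_0$ whose path crosses near $\pm\frac12L_0$ still pay $\frac1{40}L_0^2$. The tight constant bookkeeping in Case B is the step I expect to require the most care.

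The identical argument with the primed events in $\cB^{(2)}_{i,j}$ and the $\rX'$ versions of Lemmas~\ref{l:intermeadiate.bound} and~\ref{l:intermeadiate.bound.sym} gives the same bound for $\rX'$.
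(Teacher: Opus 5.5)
\textbf{The gap is in Case B, which you leave unresolved.} You correctly note that $\cI^+_{i,j-\frac32L_0,j+\frac32L_0,-L_0}$ gives $\rX_{\zeta^*}\ge r+\frac12(a-a')^2Q_r-L_0Q_r$, which is valid since $|y-y'|\ge|a-a'|W_r$. But you then drop the cross term $\frac12(a-a')^2$ when writing ``the resulting lower bound''. That term is exactly what closes the case: at your worst case $a=\frac14L_0$, $a'=\frac{L_0}{20}$ it alone contributes $\frac12(\frac{L_0}{5})^2=\frac{L_0^2}{50}$.

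Keep it, and put the $1\cdot(a+a')$, $-L_0$ and $-\frac{21}{5}L_0$ terms into an $O(L_0)$ error; this is legitimate since $a,a'\le\frac32L_0$. By symmetry you may take $a'\ge\frac14L_0$, and you then need
\[
\inf_{x\ge 0,\;x'\ge \frac14 L_0}\Big(\tfrac12(x-x')^2+\tfrac12x^2+\tfrac12x'^2-\tfrac{L_0}{20}(x+x')\Big)-\tfrac{L_0^2}{400}=\tfrac{L_0^2}{40}.
\]
Indeed, minimizing in $x$ gives $x=\frac12(x'+\frac{L_0}{20})$. The expression becomes $\frac34x'^2-\frac{3L_0}{40}x'-\frac{L_0^2}{320}$, which is increasing for $x'\ge\frac{L_0}{20}$ and equals $\frac{L_0^2}{40}$ at $x'=\frac14L_0$. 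Hence $\rX_\zeta\ge\rX_{\origin,w}+r+\rX_{w',(0,Mn)}+(\frac{L_0^2}{40}-O(L_0))Q_r$, which beats $\frac{L_0^2}{50}Q_r$ for large $L_0$. This is the paper's argument, and it needs only the crude form of Lemma~\ref{l:intermeadiate.bound}.

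Neither of your proposed repairs can replace it:
\begin{itemize}
\item Sharpening Lemma~\ref{l:intermeadiate.bound} by keeping $\frac12\big((a-\frac{L_0}{20})^+\big)^2$ recovers only $\frac{L_0^2}{800}$ per side. The upper bound on $\rX_{\origin,w}$ still costs $\frac{L_0^2}{800}$ per side, so your $0.015L_0^2$ rises only to $0.0175L_0^2<\frac{L_0^2}{50}$.
\item The $\cK$ events in $\cB^{(6)}_{i,j}$ sit at heights such as $j\pm\frac32L_0$, with tolerance $L_1\ge\kappa^{-2}L_0^{100}$. They therefore cannot yield any gain of order $L_0^2$. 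The events $\cK_{i,j\pm\frac12L_0,\frac1{100}L_0^2}$ used in the proof of Lemma~\ref{l:Bbound} are not part of $\cP_{i,j}$.
\end{itemize}

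\textbf{A smaller slip in Case A.} The bracketed coefficient is $\frac1{40}-\frac1{400}-\frac1{400}-\frac1{50}=0$, not $\frac1{400}$. So your computation yields only $\frac{L_0^2}{50}Q_r-O(L_0)Q_r$, with no slack to absorb the linear terms. The paper's Case A is equally tight, so this is a harmless constant issue. Lemma~\ref{c:path.sepatated} only uses that Type 2 paths exceed the Type 1 bound $\frac{L_0^2}{100}Q_r$, so any constant above $\frac1{100}$ suffices. Alternatively, the sharpened form of Lemma~\ref{l:intermeadiate.bound} restores exactly $\frac{L_0^2}{400}$ of slack in this case.
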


\begin{proof}
Let $w=((i-1)r,jW_r),w'=(ir,jW_r)$. Denote $u=((i-1)r,y)$ and $u'=(ir,y')$ as points on $\zeta$; see the middle panel of Figure \ref{f:type1}. By Lemmas~\ref{l:intermeadiate.bound} and~\ref{l:intermeadiate.bound.sym} and the second part of $\cB^{(2)}_{ij}$,
\begin{align*}
{\rX_\zeta} &\ge \rX_{\origin,u} + X_{u,u'} + \rX_{u',(0,Mn)}\\
&\geq \rX_{\origin,w}  + \rX_{w',(0,Mn)}\\
&+r + \frac12\Big(\frac{|y-y'|}{W_r}\Big)^2 Q_r - L_0 Q_r +  \frac12\Big(\frac{|y-jW_r|}{W_r}\Big)^2 Q_r + \frac12\Big(\frac{|y'-jW_r|}{W_r}\Big)^2 Q_r\\
&-\bigg(\frac{L_0^2}{400} + \frac{21}{5}L_0  + (1+\frac{L_0}{20})(|\frac{y}{W_r}-j|+|\frac{y}{W_r}-j|) \bigg)Q_r
\end{align*}
Since an optimization of quadratic functions gives
\[
\inf_{x\in\mathbb{R},x'\geq L_0/4} \frac12(x-x')^2+\frac12 x^2 + \frac12 x'^2 - \frac{L_0}{20}(x+x') - \frac{L_0^2}{400}= \frac{L_0^2}{40},
\]
if $|y-jW_r|\geq \frac14 L_0 W_r$ or $|y'-jW_r|\geq \frac14 L_0 W_r$ then for large enough $L_0$,
\begin{align*}
\rX_\zeta &\geq \rX_{\origin,w}  + \rX_{w',(0,Mn)} + r + \frac{L_0^2}{50}Q_r.
\end{align*}
Otherwise if both $|y-jW_r|,|y'-jW_r|\leq \frac14 L_0 W_r$ then if the path is not Type 1 it must exit $[(i-1)r,ir]\times [(j-\frac12 L_0)W_r,(j+ \frac12 L_0)W_r]$ and so we can apply the first part of $\cB^{(2)}_{ij}$ and hence
\begin{align*}
{\rX_\zeta} &\ge \rX_{\origin,u} + X_{u,u'} + \rX_{u',(0,Mn)}\\
&\geq \rX_{\origin,w}  + \rX_{w',(0,Mn)}\\
&+ r + \frac1{40} L_0^2 Q_r +  \frac12\Big(\frac{|y-jW_r|}{W_r}\Big)^2 Q_r + \frac12\Big(\frac{|y'-jW_r|}{W_r}\Big)^2 Q_r\\
&-\bigg(\frac{L_0^2}{400} + \frac{21}{5}L_0  + (1+\frac{L_0}{20})(|\frac{y}{W_r}-j|+|\frac{y}{W_r}-j|) \bigg)Q_r\\
&\geq \rX_{\origin,w}  + \rX_{w',(0,Mn)} + r + \frac{L_0^2}{50}Q_r
\end{align*}
using the fact that $\inf_x \frac12x^2-\frac{L_0}{20}x = -\frac{L_0^2}{400}$ applied to $|\frac{y}{W_r}-j|$ and $|\frac{y'}{W_r}-j|$.
\end{proof}

\begin{lemma}
\label{l:type3}
On the event $\cP_{i,j}$ for any Type 3 path $\zeta$ we have that
\begin{align*}
\rX_\zeta &\geq \rX_{\origin,((i-1)r,jW_r)} + r + \rX_{(ir,jW_r),(0,Mn)} + \frac12 \alpha^{9/10} Q_r,
\end{align*}
while there exists a Type 3 path $\gamma$ such that
\begin{align*}
\rX_\zeta' &\leq \rX_{\origin,((i-1)r,jW_r)}' + r + \rX_{(ir,jW_r),(0,Mn)}' - \frac12 \alpha^{9/10} Q_r.
\end{align*}
\end{lemma}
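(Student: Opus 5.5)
The plan is to follow the template of Lemma~\ref{l:type2}: combine the endpoint regularity bounds of Lemmas~\ref{l:intermeadiate.bound} and~\ref{l:intermeadiate.bound.sym} with the gadget events $\cB^{(4)}_{i,j}$ and $\cB^{(5)}_{i,j}$. The height offset of the gadget is $\sqrt{\alpha}$, as in the definition of $\cB^{(4)}$; I read the ``$\alpha$'' in the definition of Type~3 as $\sqrt{\alpha}$. The key numerology is as follows. Since $\alpha\ge L_0^{100}$ and $\sqrt{\alpha}\le \sqrt{2\kappa^{-1}}L_0^{50}\ll L_1\ll L_2$, every endpoint height $y$ of a Type~3 path satisfies $|y-jW_r|\le \frac13 L_2 W_r$. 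Hence the second (two-sided) estimates of Lemmas~\ref{l:intermeadiate.bound} and~\ref{l:intermeadiate.bound.sym} apply to both endpoints. Moreover, all error terms of the form $L_0^2$, $L_0\sqrt{\alpha}$ and $(1+\frac{L_0}{20})\sqrt{\alpha}$ are $o(\alpha^{9/10})$ once $L_0$ is large.

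\emph{Lower bound for $\rX$.} Let $\zeta$ be Type~3, with $\zeta^*$ running from $u=((i-1)r,y)$ to $u'=(ir,y')$. Write $w=((i-1)r,jW_r)$ and $w'=(ir,jW_r)$. Then
\[
\rX_\zeta\ge \rX_{\origin,u}+\rX_{\zeta^*}+\rX_{u',(Mn,0)}.
\]
The two outer terms are at least $\rX_{\origin,w}+\frac12(\frac{|y-jW_r|}{W_r})^2Q_r$ and $\rX_{w',(Mn,0)}+\frac12(\frac{|y'-jW_r|}{W_r})^2Q_r$ respectively, each up to an error $O(L_0^2+L_0\sqrt{\alpha})Q_r$. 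Since $|y-jW_r|/W_r\ge \sqrt{\alpha}-h-w$, each quadratic term is at least $\frac{\alpha}{2}-O(\sqrt{\alpha})$, so together they contribute $\alpha-O(\sqrt{\alpha})$. For the middle term, $\cB^{(4)}$ gives $\hrX\ge -(\alpha-\alpha^{9/10})Q_r$ for paths in the gadget box $[(i-1)r,ir]\times[(j-\sqrt{\alpha})W_r,(j-\sqrt{\alpha}+h)W_r]$. Then $\cB^{(5)}$ (the $\cM$ event with $z=1$ and width $w$) transfers this to paths in the $w$-enlarged box at a cost of $Q_r$. So
\[
\rX_{\zeta^*}\ge r+\frac{(y-y')^2}{2r}-(\alpha-\alpha^{9/10}+1)Q_r .
\]
Summing, the $\alpha$ terms cancel and we are left with $+\alpha^{9/10}-O(L_0^2+L_0\sqrt{\alpha})\ge \frac12\alpha^{9/10}$. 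The nonnegative term $\frac{(y-y')^2}{2r}$ is simply dropped.

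\emph{Upper bound for $\rX'$.} The event $\cI^{'-}$ in $\cB^{(4)}$ provides a path $\zeta^*$ inside the gadget box with $\hrX'_{\zeta^*}\le -(\alpha+\alpha^{9/10})Q_r$. I concatenate it with the $\rX'$-geodesics from $\origin$ to $\zeta^*(0)$ and from $\zeta^*(1)$ to $(Mn,0)$; this gives a Type~3 path. Applying the $\rX'$ versions of Lemmas~\ref{l:intermeadiate.bound} and~\ref{l:intermeadiate.bound.sym} in the upper direction, exactly as in the first half of Lemma~\ref{l:type1}, the endpoint quadratic terms are at most $\frac{\alpha}{2}+O(\sqrt{\alpha})$ each. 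The centering term $\frac{(y-y')^2}{2r}\le \frac{h^2}{2}Q_r$ is $O(1)$. The total is therefore $\rX'_{\origin,w}+r+\rX'_{w',(Mn,0)}-\alpha^{9/10}Q_r+O(L_0^2+L_0\sqrt{\alpha})Q_r$, which is at most the claimed bound with $-\frac12\alpha^{9/10}Q_r$.

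\emph{Main obstacle.} The event $\cM$ only compares infima over paths whose endpoints lie in the narrow band $[(j-\sqrt{\alpha})W_r,(j-\sqrt{\alpha}+h)W_r]$. A Type~3 path, however, may start or end in the $w$-margins. The step above that applies $\cB^{(5)}$ to $\rX_{\zeta^*}$ is therefore only justified when both endpoints lie in the narrow band; this case is what I expect to be delicate. My first approach is as follows. Approximate $\zeta^*$ by a strongly conforming path using Lemma~\ref{l:strongly.conforming}. If an endpoint lies in a margin, let $v$ be the first or last crossing point with the horizontal line at $j-\sqrt{\alpha}-\frac{2w}{3}$ or $j-\sqrt{\alpha}+h+\frac{2w}{3}$; such a crossing exists, and the $\cK^*$ events in $\cB^{(6)}$ are placed exactly on these lines. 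Then split $\zeta^*$ at $v$. The part between the boundary and $v$ is bounded below via $\cK^*$ with losses $O(L_1)Q_r$, exactly as in the proof of Lemma~\ref{l:barrier.entry}. The remainder is compared to a path with endpoints re-anchored in the narrow band, to which $\cM$ applies. Since the losses of order $L_1$ are not $o(\alpha^{9/10})$ in general, this may require redistributing the quadratic endpoint terms more carefully. Failing that, I would exploit the margin's small width $w$ together with the quadratic endpoint penalty to absorb these losses.
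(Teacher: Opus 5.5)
Both halves of your argument are the paper's proof. For the upper bound, you concatenate the $\cI^{'-}$ path from $\cB^{(4)}_{i,j}$ with the two $\rX'$-geodesics and apply the upper estimates of Lemmas~\ref{l:intermeadiate.bound} and~\ref{l:intermeadiate.bound.sym}. For the lower bound, you get $\rX_{u,u'}\ge r-(\alpha-\alpha^{9/10}+1)Q_r$ from $\cB^{(4)}_{i,j}\cap\cB^{(5)}_{i,j}$ and let the two endpoint quadratics, each roughly $\alpha/2$, cancel the $-\alpha$. The one thing to change is how you handle Type~3 paths with endpoints in the $w$-margins.

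The paper does not treat margin endpoints separately, and you need not either. It applies $\cB^{(5)}_{i,j}$ to every Type~3 path. This amounts to reading the second infimum in $\cM$ as ranging over all endpoints on the sides of the $w$-enlarged box, not only those in the narrow band. That stronger event is exactly what the proof of Lemma~\ref{l:gadget} controls. The Markov bound there is $(\E U_r(h)-\E U_r(h+2w))/(zQ_r)$, and $U_r(h+2w)$ lets the endpoints range over the whole enlarged sides. So \eqref{e:mboundM} holds for the stronger event at no extra cost. It is also still increasing in the field outside $\Theta_{i,j}$, which is the property used in Section~\ref{s:glauber}. With this reading, your lower bound covers every Type~3 path verbatim.

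Your proposed fallback would not work.
\begin{itemize}
\item The $\cK^*$ events in $\cB^{(6)}_{i,j}$ cost $L_1Q_r$ per use. Since $L_1\ge\kappa^{-2}L_0^{100}$ while $\alpha\le 2\kappa^{-1}L_0^{100}$, this loss is at least of order $\alpha$, far above $\alpha^{9/10}$.
\item The quadratic endpoint penalty gives no slack. A margin endpoint sits at essentially the same depth $\approx\sqrt{\alpha}\,W_r$ below $jW_r$ as a band endpoint. Its penalty is already spent cancelling the $-\alpha$ coming from the gadget.
\item An endpoint in $[(j-\sqrt{\alpha}-\frac{2w}{3})W_r,(j-\sqrt{\alpha})W_r)$ need not force a crossing of the line at height $(j-\sqrt{\alpha}-\frac{2w}{3})W_r$, so the splitting point you rely on may not exist.
\end{itemize}
Those $\cK^*$ lines are there for Type~4 paths. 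In that case the barrier gain from $\cB^{(7)}_{i,j}$, a power of $L_2=L_1^{100}$, absorbs the $L_1$ losses.
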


\begin{proof}
Let $w=((i-1)r,jW_r),w'=(ir,jW_r)$.  By $\cB^{(4)}_{i,j}$ we can find $\zeta^*\subset [(i-1)r,ir]\times [(j-\sqrt{\alpha})W_r,(j-\sqrt{\alpha} + h)W_r]$ with $v=\zeta^*(0)=((i-1)r,y),v'=\zeta^*(1)=(ir,y')$ such that $\hrX_{\zeta^*}' \leq -(\alpha+\alpha^{9/10})Q_r$; see the right panel of Figure \ref{f:type1}.  Let $\zeta$ be the concatenation of the optimal path from $\origin$ to $v$, then $\zeta^*$ and then the optimal path from $v'$ to $(0,Mn)$.  We have that
\[
\sqrt{\alpha}-2\leq |y-jW_r|,|y'-jW_r|\leq \sqrt{\alpha}+1
\]
Then by Lemmas~\ref{l:intermeadiate.bound} and~\ref{l:intermeadiate.bound.sym},
\begin{align*}
\rX_\zeta' &\leq \rX_{\origin,v}' + \rX_{\zeta^*}' + \rX_{v',(0,Mn)}'\\
&\leq \rX_{\origin,w} + \rX_{w',(0,Mn)} +r + \frac12\Big(\frac{|y-y'|}{W_r}\Big)^2 Q_r -(\alpha+\alpha^{9/10})Q_r\\
&\qquad+ \frac12\Big(\frac{|y-jW_r|}{W_r}\Big)^2 Q_r + \frac12\Big(\frac{|y'-jW_r|}{W_r}\Big)^2 Q_r\\
&\qquad +\bigg(\frac{L_0^2}{800} + \frac{21}{10}L_0  + (1+\frac{L_0}{20})|\frac{y}{W_r}-j| \bigg)Q_r +\bigg(\frac{L_0^2}{800} + \frac{21}{10}L_0  + (1+\frac{L_0}{20})|\frac{y'}{W_r}-j| \bigg)Q_r \\
&\qquad\leq \rX_{\origin,w}' + \rX_{w',(0,Mn)}' + r +\frac92Q_r- (\alpha+\alpha^{9/10} Q_r) + (\sqrt{\alpha}+2)^2 Q_r\\
&\qquad +2\bigg(\frac{L_0^2}{800} + \frac{21}{10}L_0  + (\sqrt{\alpha}+2)(1+\frac{L_0}{20}) \bigg)Q_r\\
&\qquad\leq \rX_{\origin,w}' + \rX_{w',(0,Mn)}'   + r - \frac12 \alpha^{9/10} Q_r
\end{align*}
where we used that $\alpha \geq L_0^3$ which establishes the second part of the lemma.  Now suppose $\zeta$ is some Type 3 path passing through points $u=((i-1)r,y)$ and $u'=(ir,y')$.  By $\cB^{(4)}_{i,j}$ and  $\cB^{(5)}_{i,j}$ we have that
\[
\rX_{u,u'} \geq r -  (\alpha-\alpha^{9/10} + 1)Q_r
\]
and hence
\begin{align*}
{\rX_\zeta} &\ge \rX_{\origin,u} + \rX_{u,u'} + \rX_{u',(0,Mn)}\\
&\geq \rX_{\origin,w}  + \rX_{w',(0,Mn)}\\
& +r -  (\alpha-\alpha^{9/10} + 1)Q_r + \frac12\Big(\frac{|y-jW_r|}{W_r}\Big)^2 Q_r + \frac12\Big(\frac{|y'-jW_r|}{W_r}\Big)^2 Q_r\\
&-\bigg(\frac{L_0^2}{400} + \frac{21}{5}L_0  + (1+\frac{L_0}{20})(|\frac{y}{W_r}-j|+|\frac{y}{W_r}-j|) \bigg)Q_r\\
&\geq \rX_{\origin,w}  + \rX_{w',(0,Mn)}\\
& +r -  (\alpha-\alpha^{9/10} + 1)Q_r + \Big(\sqrt{\alpha}-2\Big)^2 Q_r \\
&-\bigg(\frac{L_0^2}{400} + \frac{21}{5}L_0  + 2(1+\frac{L_0}{20})(\sqrt{\alpha}+1) \bigg)Q_r\\
&\geq \rX_{\origin,w} + \rX_{w',(0,Mn)}   + r + \frac12 \alpha^{9/10} Q_r.
\end{align*}
\end{proof}

\begin{lemma}
\label{l:type4}
On the event $\cP_{i,j}$ for any Type 4 path $\zeta$ we have that
\begin{align*}
\rX_\zeta &\geq \rX_{\origin,((i-1)r,jW_r)} + r + \rX_{(ir,jW_r),(0,Mn)} + \frac12 L_2^3 Q_r.
\end{align*}
The same bound holds for $\rX'$.
\end{lemma}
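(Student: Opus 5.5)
The plan is to compare any Type 4 path $\zeta$ with the baseline $\rX_{\origin,w}+r+\rX_{w',(0,Mn)}$, where $w=((i-1)r,jW_r)$ and $w'=(ir,jW_r)$. As in the proofs of Lemmas \ref{l:type1}--\ref{l:type3}, I write
\[
\rX_\zeta\ \ge\ \rX_{\origin,u}+\rX_{u,u'}+\rX_{u',(0,Mn)},\qquad u=((i-1)r,y),\ u'=(ir,y'),
\]
and bound each of the three pieces on $\cP_{i,j}$. I am not confident the bookkeeping below actually reaches the stated constant $\frac12 L_2^3$: the only $L_2^3$ penalty built into $\cP_{i,j}$ is in the outer-column events, so case (b) below is a genuine gap that I only propose a route for. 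The argument for $\rX'$ is identical, since every event used has a primed counterpart.

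I would split according to where $\zeta$ crosses the lines $x=(i-2)r$ and $x=(i+1)r$. Let $u''=((i-2)r,y'')$ be the crossing point on the left.

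\textbf{Case (a).} Suppose $|y''-jW_r|\in(\frac1{20}L_0W_r,(L_2-3)W_r]$, or the symmetric condition holds at $x=(i+1)r$. Then Lemma \ref{l:barrier6} already gives the additive term $\frac12L_2^3Q_r$ in $\rX_{\origin,u''}$. I then redo the computation of Lemma \ref{l:intermeadiate.bound} along the route $\origin\to u''\to u$ using $\cC_{i-1,j}$, which keeps this indicator term instead of discarding it. For the central piece I use the crude bound $\rX_{u,u'}\ge r+\frac12(\frac{y-y'}{W_r})^2Q_r-(L_0+\ldots)Q_r$ coming from $\cC$/$\cA^+$-type controls. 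The losses are then linear in $|y-jW_r|/W_r$ and of order $L_2$. They are absorbed by the quadratic terms and by the slack in the barrier constant: Lemma \ref{l:barrier.entry} applied to $\cD^{(5)}_{i-2,j}$ actually yields $L_2^3-2L_1$ before the halving, so there is room to spare. The right-hand side is handled symmetrically using Lemma \ref{l:intermeadiate.bound.sym}.

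\textbf{Case (c).} Suppose instead $|y''-jW_r|>(L_2-3)W_r$ (or the analogue on the right). The quadratic penalty from $\cC_{i-1,j}$ and $\cD^{(3)}$, which is roughly $\frac12 L_2^2Q_r$ per column, is not enough on its own. Here I would try to use the $\cK$ events on the lines $j\pm(L_2-w)$ in $\cD^{(4)}$ together with the barriers $\cD^{(5)}$: the path must cross the barrier region $[(i-3)r,(i-2)r]\times[(j+\frac1{40}L_0)W_r,(j+L_2)W_r]$ while moving back toward height $j$, and crossing that region costs $L_2^3$ through $\cI^+$ and $\cJ$.

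\textbf{Case (b).} This is the remaining case: both crossings satisfy $|y''-jW_r|\le\frac1{20}L_0W_r$. Since $\zeta$ is not of Type 1--3 but meets the band $|y/W_r-j|\le L_1-w$, its central segment $\zeta^*$ must enter one of the three barrier rectangles of $\cB^{(7)}$, or cross one of the lines in $\cB^{(6)}$. Lemma \ref{l:barrier.entry}, applied with $\cI^+$ and $\cJ$ at level $L_2^2$ and with $\cK^*$ at level $L_1$, gives an excess of $(L_2^2-2L_1)Q_r$. I flag this as the main obstacle: as written, $\cB^{(7)}$ provides only $L_2^2$ and not $L_2^3$. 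My first attempt to close the gap is to show that case (b) paths entering the $\cB^{(7)}$ barriers are forced, by planarity and the $\cK$ lines at $j\pm L_1$, either to exit the central column at height far from $j$, which reduces to case (a) on the right side, or to pay the barrier cost twice. Failing that, I would check whether the intended gain can only be achieved by a $\cB^{(7)}$ threshold of $L_2^3$, and adapt the argument to whatever threshold makes the stated bound hold.

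Once each case gives excess at least $\frac12L_2^3Q_r$, taking the minimum over cases gives the lemma.
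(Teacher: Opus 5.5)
The gap you flag in case (b) is genuine, and neither repair you propose can close it. Paying the $\cB^{(7)}_{i,j}$ barrier twice gives only $2L_2^2Q_r$. Nothing in $\cP_{i,j}$ forces a path whose crossings of $x=(i-2)r$ and $x=(i+1)r$ lie near $jW_r$ to leave the central column far from $jW_r$. Its central segment can leave the band around $jW_r$, enter one of the barrier rectangles of $\cB^{(7)}_{i,j}$ and come back. The only cost $\cP_{i,j}$ guarantees for such an excursion is the barrier level of $\cB^{(7)}_{i,j}$, which is $L_2^2$ as written.

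Case (c) has the same defect. A path crossing $x=(i-2)r$ above height $(j+L_2)W_r$ need not meet the $\cD^{(5)}_{i-2,j}$ barrier at all. It can descend to height $jW_r$ inside the intermediate column, where $\cC_{i-1,j}$ charges only about $\frac12L_2^2Q_r$. The ``crude'' central bound you use in case (a) is also not available: $\cP_{i,j}$ contains $\cC_{i-1,j}$ and $\cC_{i+1,j}$ but not $\cC_{i,j}$. So the central segment of a Type~4 path can only be controlled through $\cB^{(6)}_{i,j}\cap\cB^{(7)}_{i,j}$. In short, the outer columns cannot supply an $L_2^3$ gain uniformly over Type~4 paths.

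The paper's proof is the uniform version of your last fallback, with no case split. Every Type~4 central segment enters one of the three barrier rectangles of $\cB^{(7)}_{i,j}$, which are bounded by $\cK^*$ lines of $\cB^{(6)}_{i,j}$. Lemma~\ref{l:barrier.entry} then gives
\[
\rX_{u,u'}\ge r+(L_2^3-2L_1)Q_r+\tfrac12\big((|y/W_r-j|-3L_1)^+\big)^2Q_r+\tfrac12\big((|y'/W_r-j|-3L_1)^+\big)^2Q_r.
\]
The first (linear) parts of Lemmas~\ref{l:intermeadiate.bound} and~\ref{l:intermeadiate.bound.sym} bound the two side pieces below by their baseline values minus $(8L_2+8|y/W_r-j|)Q_r$ and $(8L_2+8|y'/W_r-j|)Q_r$. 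Minimizing over $|y/W_r-j|$ and $|y'/W_r-j|$ loses only $O(L_2+L_1)Q_r$, which $L_2^3$ absorbs.

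As you suspected, this uses barrier level $L_2^3$, whereas $\cB^{(7)}_{i,j}$ is defined with level $L_2^2$; the mismatch is in the paper itself. There are two fixes:
\begin{itemize}
\item raise the $\cI^+$ and $\cJ$ levels in $\cB^{(7)}_{i,j}$ to $L_2^3$, which is harmless since Lemmas~\ref{l:cI.bound}, \ref{l:cJ.bound} and the FKG inequality still give some $\delta_B>0$;
\item keep $L_2^2$ and conclude only $\frac12L_2^2Q_r$, which is all Lemma~\ref{c:path.sepatated} uses, since it needs Type~4 paths to exceed the baseline by only $\frac{L_0^2}{50}Q_r$.
\end{itemize}
With either fix, your cases (a) and (c) become unnecessary.
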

\begin{center}
\begin{figure}[htbp!]
\includegraphics[width=3.5in]{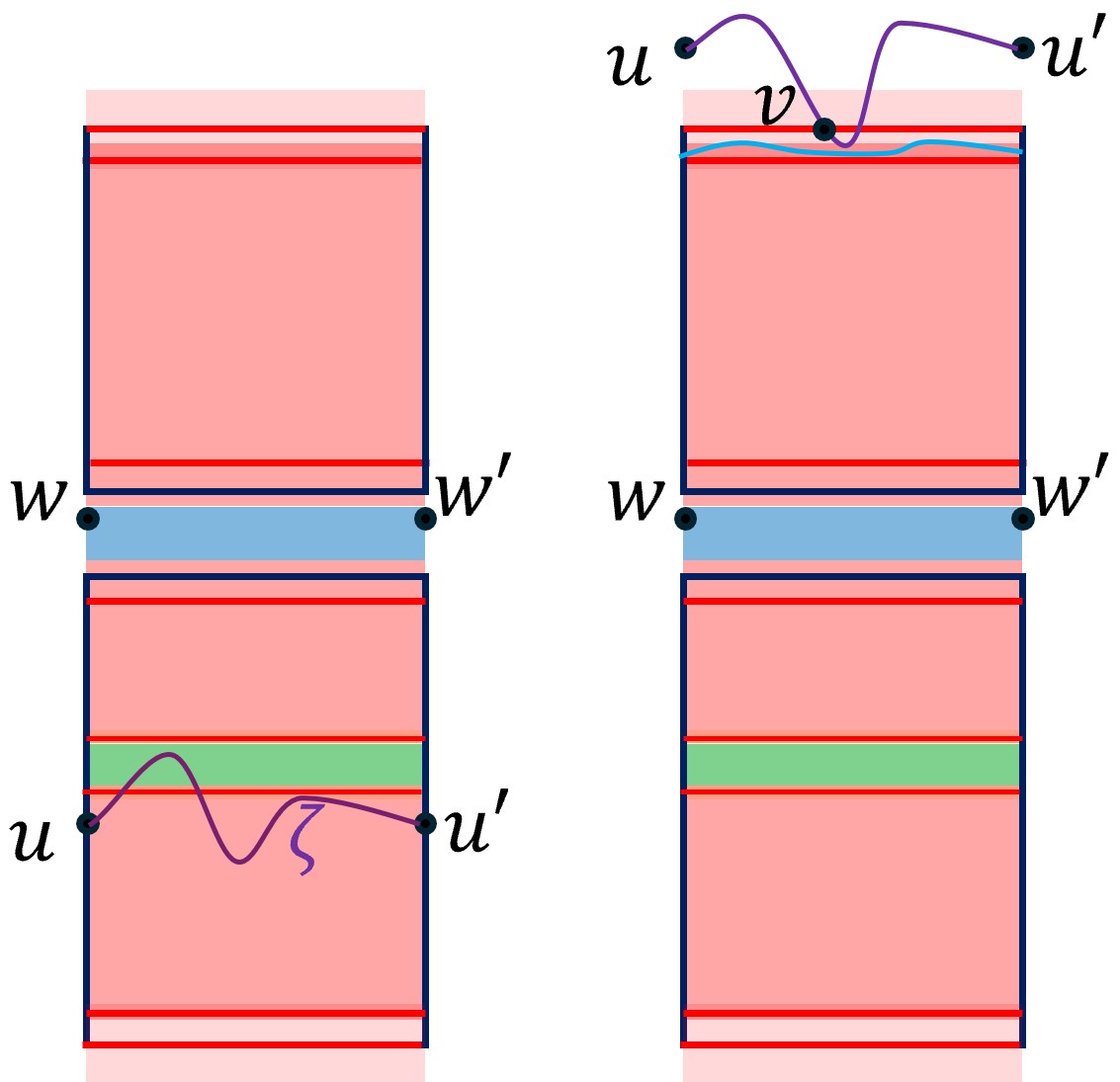}
\caption{Analysis of type 4 and type 5 paths. On the event $\cP_{i,j}$, any type 4 or type 5 path is bad both before and after resampling. A sample path of type 4 is shown in the left panel, and this case is dealt with in Lemma \ref{l:type4}. A sample path of type 5 is shown in the right panel, and this case is dealt with in Lemma \ref{l:type5}.}
\label{f:type4}
\end{figure}
\end{center}

\begin{proof}
Let $w=((i-1)r,jW_r),w'=(ir,jW_r)$. Let $u=((i-1)r,y)$ and $u'=(ir,y')$ be points along $\zeta$; see the left panel of Figure \ref{f:type4}.
Since $\zeta$ is Type 4, it must enter $[(i-1)r,ir]\times [(j- L_1+w)W_r,(j+  L_1-w)W_r]$ but not be contained in either $[(i-1)r,ir]\times [(j-\frac32 L_0)W_r,(j+ \frac32 L_0)W_r]$ or $[(i-1)r,ir]\times [(j-\alpha - w)W_r,(j-\alpha + h + w)W_r]$.  As such, $\zeta$ must enter either $[(i-1)r,ir]\times [(j+\frac32 L_0)W_r,(j+  L_1-w)W_r]$ or $[(i-1)r,ir]\times [(j-\alpha + h + w)W_r,(j-\frac32 L_0)W_r]$ or $[(i-1)r,ir]\times [(j- L_1+w)W_r, (j-\alpha - w)W_r]$.  By $\cB_{i,j}^{(6)}\cap\cB_{i,j}^{(7)}$ all three of these rectangles are barriers in the sense of Lemma~\ref{l:barrier.entry} and hence
\[
\rX_{u,u'} \geq r + (L_2^3-2 L_1)Q_r + \frac12 \Big(\Big(\frac{|y-jW_r|  }{W_r} - 3L_1 \Big)^+\Big)^2 Q_r + \frac12 \Big(\Big(\frac{|y'-j W_r| }{W_r} - 2L_1 \Big)^+\Big)^2 Q_r.
\]
By Lemmas~\ref{l:intermeadiate.bound} and~\ref{l:intermeadiate.bound.sym}
\begin{align*}
\rX_\zeta 
&\geq \rX_{\origin,w}  + \rX_{w',(0,Mn)} + r + (L_2^3-2 L_1)Q_r \\
&\qquad + \frac12 \Big(\Big(\frac{|y-jW_r|  }{W_r} - 3L_1 \Big)^+\Big)^2 Q_r + \frac12 \Big(\Big(\frac{|y'-j W_r| }{W_r} - 2L_1 \Big)^+\Big)^2 Q_r\\
 &\qquad-\bigg( 8L_2 +8|\frac{y}{W_r}-j|\bigg)Q_r-\bigg( 8L_2 +8|\frac{y'}{W_r}-j|\bigg)Q_r \\
&\geq \rX_{\origin,w} + r + \rX_{w',(0,Mn)} + \frac12 L_2^3 Q_r
\end{align*}
where the final inequality is from optimizing over the values of $|\frac{y}{W_r}-j|$ and $|\frac{y'}{W_r}-j|$.
\end{proof}

\begin{lemma}
\label{l:type5}
On the event $\cP_{i,j}$ for any Type 5 path $\zeta$ we have that
\begin{align*}
\rX_\zeta &\geq \rX_{\origin,((i-1)r,jW_r)} + r + \rX_{(ir,jW_r),(0,Mn)} + \frac1{6} L_1^2 Q_r.
\end{align*}
The same bound holds for $\rX'$.
\end{lemma}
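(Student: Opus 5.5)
We follow the template of Lemma \ref{l:type4}, with $w=((i-1)r,jW_r)$, $w'=(ir,jW_r)$, and $u=((i-1)r,y)$, $u'=(ir,y')$ the points of $\zeta$ on the two sides of the central column.

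A Type 5 path is not of Types 1--4. Hence $\zeta^*$ avoids $[(i-1)r,ir]\times[(j-L_1+w)W_r,(j+L_1-w)W_r]$, but it does enter $[(i-1)r,ir]\times[(j-L_1)W_r-2,(j+L_1)W_r+2]$. So $\zeta^*$ enters one of the two thin strips near heights $(j\pm L_1)W_r$, say the upper one $[(i-1)r,ir]\times[(j+L_1-w)W_r,(j+L_1)W_r+2]$; the lower strip is symmetric. Since $\zeta^*$ does not reach height $(j+L_1-w)W_r$, one of two things happens. Either both endpoints satisfy $|y-jW_r|,|y'-jW_r|\ge (L_1-w)W_r$ with $\zeta^*$ entirely above the line $H_{i,j+L_1-w}$, or $\zeta^*$ crosses $H_{i,j+L_1-w}$. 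The second case is impossible, since crossing would enter the excluded region.

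The key bound is the cost of $\zeta^*$ itself. We use $\cB^{(3)}_{i,j}$: the rectangle $[(i-1)r,ir]\times[(j+L_1-1)W_r,(j+L_1+1)W_r]$ satisfies $\cI^+$ with level $-L_0$. We also use the $\cK$ events in $\cB^{(6)}_{i,j}$ on the lines at heights $j+L_1-w$ and $j+L_1$, each with $z=L_1$.

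\textbf{Sub-case (a): $\zeta^*$ stays in the strip $[(j+L_1-w)W_r,(j+L_1+1)W_r]$.} Then $\cI^+$ gives
\[
\rX_{u,u'}\ge r+\tfrac12\big(\tfrac{y-y'}{W_r}\big)^2Q_r-L_0Q_r .
\]

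\textbf{Sub-case (b): $\zeta^*$ leaves through height $(j+L_1+1)W_r$.} Then it must hit $H_{i,j+L_1}$ after starting at height at least $j+L_1-w$. We split $\zeta^*$ at a hitting point $v$ of $H_{i,j+L_1}$, using Lemma~\ref{l:strongly.conforming} to avoid vertical boundary pairs, as in Lemma~\ref{l:barrier.entry}. The events $\cK_{i,j+L_1,L_1}$ then give
\[
\rX_{u,u'}\ge r+\tfrac12\big((\tfrac{|y-(j+L_1)W_r|}{W_r}-1)^2+(\tfrac{|y'-(j+L_1)W_r|}{W_r}-1)^2\big)Q_r-2L_1Q_r .
\]

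In both sub-cases, $\rX_{u,u'}\ge r-2L_1Q_r$ plus nonnegative quadratic terms measured from the line at height $j+L_1$.

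Next we combine this with the side estimates from Lemmas~\ref{l:intermeadiate.bound} and~\ref{l:intermeadiate.bound.sym}. Write $a=|y/W_r-j|$ and $a'=|y'/W_r-j|$, both at least $L_1-w$. When $a,a'\le\frac13L_2$, the second parts of those lemmas give
\[
\rX_{\origin,u}-\rX_{\origin,w}\ge \tfrac12a^2Q_r-\big(\tfrac{L_0^2}{800}+\tfrac{21}{10}L_0+(1+\tfrac{L_0}{20})a\big)Q_r ,
\]
and similarly for $u'$. Summing,
\[
\rX_\zeta-\rX_{\origin,w}-r-\rX_{w',(0,Mn)}\ge \big(\tfrac12a^2+\tfrac12a'^2-2L_1-O(L_0^2)-(1+\tfrac{L_0}{20})(a+a')\big)Q_r .
\]
Since $a,a'\ge L_1-1$ and $L_1\ge\kappa^{-2}L_0^{100}\gg L_0$, the right side is at least $(L_1^2-O(L_1L_0))Q_r\ge\frac16L_1^2Q_r$.

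If instead $a$ or $a'$ exceeds $\frac13L_2$, we use only the crude first parts of the two lemmas, which give a loss of $-(8L_2+8a)Q_r$. Then $\zeta^*$ does not stay in the thin strip, so we are in sub-case (b). The quadratic $\cK$ terms, of order $\frac12(a-L_1-1)^2\gtrsim L_2^2/20$, then dominate $16L_2+8(a+a')$ because $L_2=L_1^{100}$. The same argument works verbatim for $\rX'$ using the primed events, since all the events used appear in both versions in $\cP_{i,j}$.

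The main obstacle is the case bookkeeping for $\zeta^*$ in the thin strip. One must check that $\zeta^*$ really lies above $H_{i,j+L_1-w}$, and one must handle endpoints exactly at the boundary heights, that is, in the extra $+2$ margin beyond $(j+L_1)W_r$. I would handle this by treating the $\pm2$ margin as lying within the $\cI^+$ rectangle of width $1$ in $W_r$-units, which holds for $n$ large.
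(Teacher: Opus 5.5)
Your proof is essentially the paper's. It uses the same three ingredients: $\cB^{(3)}_{i,j}$ for a path trapped in the thin strip, the $\cK$ lines of $\cB^{(6)}_{i,j}$ for a path that crosses it, and Lemmas~\ref{l:intermeadiate.bound} and~\ref{l:intermeadiate.bound.sym} for the side costs. Your observation that every Type 5 path automatically has $a,a'\ge L_1-w$ only lets you bypass the paper's all-$y$ bound \eqref{eq:side.quadratic} except for far endpoints.

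One step is false as stated. In sub-case (b) you claim that $\zeta^*$ must hit $H_{i,j+L_1}$. This fails when $\zeta^*$ stays strictly above $H_{i,j+L_1}$ and only dips into the $+2$ margin. For example, it might start at height $(j+L_1+\tfrac12)W_r$, or above $(j+\tfrac13L_2)W_r$, which is exactly your far-endpoint case. It then descends to $(j+L_1)W_r+1$ and climbs back above $(j+L_1+1)W_r$. Such a path is Type 5 and is not contained in the $\cB^{(3)}$ rectangle. It also never meets $H_{i,j+L_1}$, so neither of your bounds on $\rX_{\zeta^*}$ applies to it. (The paper's write-up has the same blind spot: it asserts that a Type 5 path missing $H_{i,j\pm L_1}$ is confined to $((j-L_1)W_r,(j+L_1)W_r)$.)

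The fix uses an event you already have. In sub-case (b), $\zeta^*$ has a point above $(j+L_1+1)W_r$. Being Type 5, it also has a point at height at most $(j+L_1)W_r+2<(j+L_1+w)W_r$ for $n$ large. So it crosses $H_{i,j+L_1+w}$, and you should split there. The event $\cK_{i,j+L_1+w,L_1}$ and its primed version lie in $\cK^{*}_{i,j+L_1,L_1,w}\subseteq\cB^{(6)}_{i,j}$, because $L_1\in S$.

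This split gives $\rX_{\zeta^*}\ge r-2L_1Q_r$ plus quadratic terms centred at height $j+L_1+w$. For $a>\frac13L_2$, the term $\frac12(a-L_1-w-1)^2Q_r$ still dominates the linear losses $(16L_2+8(a+a')+2L_1)Q_r$. The rest of your computation is unchanged; use $H_{i,j-L_1-w}$ for the lower strip.

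Also, in both sub-cases the barrier and $\cK$ estimates bound the particular path, so you should write $\rX_{\zeta^*}$ rather than the unconstrained distance $\rX_{u,u'}$.
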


\begin{proof}
Suppose first that $\zeta$ is strongly conforming.  Let $w=((i-1)r,jW_r),w'=(ir,jW_r)$.  We begin be showing that  for all $|y|\leq n^\beta W_n$,
\begin{equation}\label{eq:side.quadratic}
\rX_{\origin,((i-1)r,y)} - \rX_{\origin,w} + \frac12\Big(\frac{|y-(j+L_1)W_r| }{W_r} - 1\Big)^2 Q_r \geq \frac1{10} L_1^2 Q_r.
\end{equation}
If {$|y-jW_r|\leq \frac13 L_2W_r$} then by Lemma~\ref{l:intermeadiate.bound},
\begin{align*}
&\rX_{\origin,((i-1)r,y)} - \rX_{\origin,w} + \frac12\Big(\frac{|y-(j+L_1)W_r| }{W_r} - 1\Big)^2 Q_r\\
&\quad \geq  \frac12\Big(\frac{|y-(j+L_1)W_r| }{W_r} - 1\Big)^2 Q_r + \frac12\Big(\frac{|y-jW_r|}{W_r}\Big)^2 Q_r - \bigg(\frac{L_0^2}{800} + \frac{21}{10}L_0  + (1+\frac{L_0}{20})\frac{|y-jW_r|}{W_r} \bigg)Q_r \\
&\quad\geq \frac1{10} L_1^2 Q_r.
\end{align*}
where the last inequality follows by optimizing the quadratic over $y$ and that $L_1 \gg L_0^2$.  Otherwise if {$|y-jW_r| > \frac13 L_2W_r$} then by Lemma~\ref{l:intermeadiate.bound},
\begin{align*}
&\rX_{\origin,((i-1)r,y)} - \rX_{\origin,w} + \frac12\Big(\frac{|y-(j+L_1)W_r| }{W_r} - 1\Big)^2 Q_r\\
&\quad \geq  \frac12\Big(\frac{|y-(j+L_1)W_r| }{W_r} - 1\Big)^2 Q_r -\bigg( 8L_2 +8\frac{|y-jW_r|}{W_r}\bigg)Q_r \\
&\quad\geq \frac1{20} L_2^2 Q_r \geq \frac1{10} L_1^2 Q_r.
\end{align*}
which establishes~\eqref{eq:side.quadratic}.  Similarly
\begin{equation}\label{eq:side.quadratic.sym}
\rX_{(ir,y'),(Mn,0)} - \rX_{w',(Mn,0)} + \frac12\Big(\frac{|y'-(j+L_1)W_r| }{W_r} - 1\Big)^2 Q_r \geq \frac1{10} L_1^2 Q_r.
\end{equation}
Let $u=((i-1)r,y)$ and $u'=(ir,y')$ be points along $\zeta$.  We will show that 
\begin{equation}\label{eq:type.6.bound}
\rX_\zeta > \rX_{\origin,w}  + \rX_{w',(0,Mn)} + r + \frac1{6} L_1^2 Q_r
\end{equation}
First, suppose that $\zeta$ hits $H_{i,j+L_1}$ at a point $v=(x,(j+L_1)W_r$; see the right panel of Figure \ref{f:type4}.  Then by $\cB^{(6)}$,
\begin{align*}
\rX_\zeta 
&= \rX_{\origin,u} + \rX_{u,v} + \rX_{v,u'} + \rX_{u',(0,Mn)}\\
&\geq  \rX_{\origin,w}  + \rX_{w',(0,Mn)} \\
&\qquad + \rX_{\origin,u} - \rX_{\origin,w} + \frac12\Big(\frac{|y-(j+L_1)W_r| }{W_r} - 1\Big)^2 Q_r + (x - (i-1)r) - L_1 Q_r\\
&\qquad + \rX_{u',(Mn,0)} - \rX_{w',(Mn,0)} + \frac12\Big(\frac{|y'-(j+L_1)W_r| }{W_r} - 1\Big)^2 Q_r + (ir - x) - L_1 Q_r\\
&> \rX_{\origin,w}  + \rX_{w',(0,Mn)} + r + \frac1{6} L_1^2 Q_r
\end{align*}
and so~\eqref{eq:type.6.bound} holds.  If $\zeta$ hits $H_{i,j-L_1}$ then similarly~\eqref{eq:type.6.bound} holds.  

Therefore to establish~\eqref{eq:type.6.bound} we only need to consider Type 5 paths that hit neither $H_{i,j+L_1}$ or $H_{i,j-L_1}$.  Such a path must be confined in $[(i-1)r,ir]\times ((j- L_1)W_r,(j+  L_1)W_r)$ within column $i$.  But since it is not Type 4 it must avoid $[(i-1)r,ir]\times [(j- L_1+w)W_r,(j+  L_1-w)W_r]$. So between $u$ and $u'$ it is confined within $[(i-1)r,ir]\times [(j+  L_1-w)W_r,(j+  L_1)W_r]$ or $[(i-1)r,ir]\times [(j-  L_1)W_r,(j-  L_1+w)W_r]$.  In the former case, by $\cB^{(3)}_{i,j}$ and Lemmas~\ref{l:intermeadiate.bound} and~\ref{l:intermeadiate.bound.sym}
\begin{align*}
\rX_\zeta 
&\geq \rX_{\origin,((i-1)r,jW_r)}  + \rX_{(ir,jW_r),(0,Mn)} + r -L_0 Q_r \\
&+\frac12\Big(\frac{|y-jW_r|}{W_r}\Big)^2 Q_r - \bigg(\frac{L_0^2}{800} + \frac{21}{10}L_0  + (1+\frac{L_0}{20})\frac{|y-jW_r|}{W_r} \bigg)Q_r \\
&+\frac12\Big(\frac{|y'-jW_r|}{W_r}\Big)^2 Q_r - \bigg(\frac{L_0^2}{800} + \frac{21}{10}L_0  + (1+\frac{L_0}{20})\frac{|y'-jW_r|}{W_r} \bigg)Q_r \\
&\geq \rX_{\origin,((i-1)r,jW_r)}  + \rX_{(ir,jW_r),(0,Mn)} + r + \bigg(-L_0 + (L_1-w)^2+ \frac{L_0^2}{400} + \frac{21}{5}L_0  + 2(1+\frac{L_0}{20})L_1 \bigg)Q_r\\
&> \rX_{\origin,((i-1)r,jW_r)}  + \rX_{(ir,jW_r),(0,Mn)} + r + \frac1{6} L_1^2 Q_r,
\end{align*}
which completes the proof of~\eqref{eq:type.6.bound}.  If $\zeta$ is not strongly conforming then by Lemma~\ref{l:strongly.conforming} we can find an approximating path $\hat{\zeta}$ which is also Type 5 such that $\rX_\zeta \geq \rX_{\hat{\zeta}} -\epsilon$ and hence the lemma holds for all $\zeta$.
\end{proof}

\begin{lemma}\label{c:path.sepatated}
On the event $\cP_{i,j}$ the optimal path $\gamma$ is Type 1 or 6, while the optimal resampled path $\gamma'$ is Type 3 or Type 6. On the event $\{\cP_{i,J_i}\}\cap\{|J_iW_r|\leq M^{4/5}W_n\}$, the optimal path is of Type 1 and for all $i'\in [(i-1)\Phi^{\ell}+1,i\Phi^{\ell}]$ and for all $j$, $I(\cU_{i'j}\cap \cU'_{i'j})=0$. 
\end{lemma}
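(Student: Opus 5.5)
The plan is to compare every path type against the benchmark
\[
B:=\rX_{\origin,((i-1)r,jW_r)} + r + \rX_{(ir,jW_r),(0,Mn)},
\]
and its primed analogue $B'$, using Lemmas \ref{l:type1}--\ref{l:type5}.

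\textbf{Original geodesic $\gamma$.} By Lemma \ref{l:type1} there is a Type 1 path of weight at most $B+\frac{L_0^2}{100}Q_r$. Lemmas \ref{l:type2}, \ref{l:type3}, \ref{l:type4} and \ref{l:type5} say that every Type 2, 3, 4 or 5 path has weight at least $B+\frac{L_0^2}{50}Q_r$, $B+\frac12\alpha^{9/10}Q_r$, $B+\frac12L_2^3Q_r$ and $B+\frac16L_1^2Q_r$ respectively. Each of these strictly exceeds the Type 1 bound, because $\alpha\ge L_0^{100}$ and $L_2\gg L_1\gg L_0$. Hence $\gamma$, being optimal, cannot be of Types 2--5, so it is of Type 1 or 6. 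The types are defined through $\zeta^*$, the segment between the canonical crossings of $x=(i-1)r$ and $x=ir$. Since the lemmas hold for all conforming paths through such crossings, this classification is exhaustive.

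\textbf{Resampled geodesic $\gamma'$.} The second part of Lemma \ref{l:type3} gives a Type 3 path with $\rX'$-weight at most $B'-\frac12\alpha^{9/10}Q_r$. The primed versions of Lemmas \ref{l:type1}, \ref{l:type2}, \ref{l:type4} and \ref{l:type5} bound every path of Types 1, 2, 4, 5 from below by $B'-\frac{L_0^2}{100}Q_r$ or more. So $\gamma'$ is of Type 3 or 6.

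\textbf{The event $\cP_{i,J_i}$.} Here $\gamma$ crosses $x=ir$ at height in $[J_iW_r,(J_i+1)W_r)$. Type 6 paths avoid $[(i-1)r,ir]\times[(J_i-L_1)W_r-2,(J_i+L_1)W_r+2]$ entirely, so they cannot have that endpoint. Thus $\gamma$ is of Type 1, and $\gamma\cap\Lambda_{i'}$ lies in the strip $|y-J_iW_r|\le \frac12L_0W_r$ for every $i'\in[(i-1)\Phi^\ell+1,i\Phi^\ell]$.

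\textbf{Separation.} If $\gamma'$ is of Type 3, it lies in $|y-(J_i-\sqrt\alpha)W_r|\le (h+w)W_r$. Care is needed here: the type definition reads $\alpha$ where the gadget sits at $\sqrt\alpha$, and I will interpret it as $\sqrt\alpha$, consistent with $\cB^{(4)}$. The vertical gap between the two strips is then at least $(\sqrt\alpha-\frac12L_0-2)W_r\gg 2W_n+2$, since $W_r\ge W_n$ and $\sqrt\alpha\ge L_0^{50}$. Therefore no $n\times W_n$ block $\Lambda_{i'j}$ lies within distance 1 of both paths.

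If $\gamma'$ is of Type 6, it stays outside the band $|y-J_iW_r|\le L_1W_r+2$ in these columns. Because $L_1W_r\ge L_0W_r$, the gap to $\gamma$ again exceeds $W_n+2$, so $\cU_{i'j}\cap\cU'_{i'j}$ fails for all $j$.

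\textbf{Main obstacle.} The delicate step is verifying that the type classification is exhaustive and correctly uses canonical crossings when paths touch column boundaries along segments. I would handle this via Lemma \ref{l:strongly.conforming}, approximating by strongly conforming paths, just as in the proof of Lemma \ref{l:type5}.
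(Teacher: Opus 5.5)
Your proposal is correct and follows the paper's proof essentially step for step. You compare every type against $\rX_{\origin,((i-1)r,jW_r)}+r+\rX_{(ir,jW_r),(0,Mn)}$ and its primed analogue via Lemmas \ref{l:type1}--\ref{l:type5}, exclude Type 6 on $\cP_{i,J_i}$ because $\gamma$ crosses $x=ir$ at height in $[J_iW_r,(J_i+1)W_r)$, and conclude from the vertical gap between the Type 1 band and the Type 3/Type 6 regions, which far exceeds $W_n+2$. Your reading of the Type 3 band as centred at $j-\sqrt{\alpha}$ is the intended one; it matches $\cB^{(4)}_{i,j}$ and the proof of Lemma \ref{l:type3}, and the $\alpha$ in the type list is a typo.
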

\begin{proof}
On the event $\cP_{i,j}$, there is a Type 1 path with passage time at most $\rX_{\origin,((i-1)r,jW_r)} + r + \rX_{(ir,jW_r),(0,Mn)} + \frac{L_0^2}{100} Q_r$ while any Type 2, 3, 4 or 5 paths have passage time at least $\rX_{\origin,((i-1)r,jW_r)} + r + \rX_{(ir,jW_r),(0,Mn)} + \frac{L_0^2}{50} Q_r$ so $\gamma$ must be Type 1 or 6.  Similarly, in the resampled environment, there is a Type 3 path with passage time at most
\[
 \rX_{\origin,((i-1)r,jW_r)}' + r + \rX_{(ir,jW_r),(0,Mn)}' - \frac12 \alpha^{9/10} Q_r \leq  \rX_{\origin,((i-1)r,jW_r)}' + r + \rX_{(ir,jW_r),(0,Mn)}' - L_0 Q_r
\]
while Type 1,2,4 or 5 paths have passage times at least $\rX_{\origin,((i-1)r,jW_r)}' + r + \rX_{(ir,jW_r),(0,Mn)}' - \frac1{100}L_0 Q_r$.

On the event $\cP_{i,J_i}$, the optimal path intersects $\{ir\}\times [(J_i-1)W_r,J_iW_r]$ and so cannot be Type 6 and hence must be Type 1.  So in the $i$th column it must stay within $[(i-1)r,ir]\times [(J_i-\frac12 L_0)W_r,(J_i+ \frac12 L_0)W_r]$.  Since the optimal resampled path $\gamma'$ is either Type 3 or 6, it does not intersect $[(i-1)r,ir]\times [(J_i- L_0)W_r,(J_i+  L_0)W_r]$.  Hence the horizontal separation between the paths is at least $\frac12 L_0 W_r$ and so for all $i'\in [(i-1)\Phi^{\ell}+1,i\Phi^{\ell}]$ and for all $j$, $I(\cU_{i'j}\cap \cU'_{i'j})=0$. 
\end{proof}

\section{Likely events occur typically along the geodesic}
\label{s:likely}
Over the next two sections we shall prove Theorem \ref{t:Pplus}. As explained in Section \ref{s:prelim}, we divide this into different parts depending on the events concerned. In this section we shall deal with the likely events. We will add one more event that requires the path to have small transversal fluctuations in the columns $\{i-2,\ldots,i+2\}$  and so define
\[
\cR_i:=\cR_i^{n,M,\ell} = \bigg\{\gamma \cap \Big(\bigcup_{i'=i-2}^{i+2} H^{n,M,\ell}_{i,J_i-\frac1{100} L_0} \cup H^{n,M,\ell}_{i,J_i+\frac1{100} L_0} \Big) =\emptyset \bigg\}.
\]
The event corresponds to $\gamma$ making a left to right crossing of the rectangle $[(i-3)r,(i+2)r]\times[(J_i-\frac1{100} L_0)W_n,(J_i+\frac1{100} L_0)]$. We shall prove the following estimate. 

\begin{lemma}\label{l:Pminus.perc}
There exists $M_0$ such that for all $M\geq M_0$ and all $n$ sufficiently large and $n\leq r_\ell=\Phi^{\ell}n \leq M^{1/100}n$ and $2M^{99/100}\leq  i\Phi^\ell \leq (M-2M^{99/100})$,
\[
\P\Bigg[\sum_{i'=i}^{i+\Phi-1} I(\cP_{i',J^{n,M,\ell}_{i'}}^{-,n,M,\ell}, \cR^{n,M,\ell}_{i'},|J^{n,M,\ell}_{i'}|W_r\leq M^{8/10}W_n) \geq \frac9{10} \Phi \Bigg] \geq 1 - M^{-100}.
\]
\end{lemma}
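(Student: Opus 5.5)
The plan is to run the polymer (percolation) argument of Section \ref{s:1.2proof} at scale $r=r_\ell$. The key point is that each constituent of $\cP^-_{i,j}\cap\cR_i$ is a \emph{likely} local event. Indeed, $\cB^{(1)}_{i,j}$, $\cD^{(1)}_{i\pm2,j}$ and $\cC_{i\pm1,j}$ each fail with probability at most $O(L_0^{-1})$ by Lemma \ref{l:Bbound}, \eqref{eq:D12.bound} and \eqref{eq:cC.bound}. The conditional events also fail rarely. For example, $\{\P[\cB^{(2)}_{i,j},\cB^{(3)}_{i,j}\mid\bomega(V^c_{i,j})]\ge 1/2\}$ fails with probability at most $2\P[(\cB^{(2)}\cap\cB^{(3)})^c]\le 4L_0^{-1}$ by Markov's inequality. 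Likewise the $\cB^{(6)}$ term fails with probability at most $\frac{100}{\delta_A}\cdot\frac{\delta_A}{100L_0}=L_0^{-1}$ by \eqref{eq:B6.bound}, and the $\cD^{(2,3,4)}$ term is similar using \eqref{eq:D12.bound}--\eqref{eq:D4.bound}. Finally, $\cW^{loc}_{i,j}$ fails with probability at most $\sum_k e^{-c(2^kL_2)^{\theta'}}$, which is small by Lemma \ref{l:zbound}. Hence for each fixed $(i,j)$ the bad event $E_{i,j}:=(\cP^-_{i,j})^c$ has probability at most $\eta(L_0)\to0$.

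The event $\cR_i$ is handled separately: by the local transversal fluctuation results (Lemma \ref{l:proxytrans:intro}, Lemma \ref{l:localtransproxy:intro} and Lemma \ref{c:localproxytrans}, rescaled to scale $r$), together with the $\tau_2$ bound of Proposition \ref{p:tau2perc} applied at scale $r$ (its general form, Lemma \ref{l:tau2percgen}), the geodesic's excursion over five consecutive columns stays within $\frac{1}{100}L_0W_r$ of $J_{i'}W_r$ except at a small fraction of $i'$. The constraint $|J_{i'}|W_r\le M^{8/10}W_n$ holds for all $i'$ with probability at least $1-M^{-200}$ by Lemma \ref{l:proxytrans:intro}.

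The main obstacle is that $J_{i'}$ is chosen by the geodesic, so the bad events cannot be treated as independent i.i.d. trials. I would handle this as in the proof of Lemma \ref{l:uij1}. The events $E_{i',j}$ depend on $\bomega$ only in columns $i'-3,\dots,i'+2$ (for $\cW^{loc}$, in a window of width $O(2^{(\log_2\log_2 M)^2}L_2)=M^{o(1)}$ columns, all within the range of $i'$ thanks to the bulk assumption). So split $\{i,\dots,i+\Phi-1\}$ into $K=K(M)=M^{o(1)}$ residue classes mod $K$. Within a class, the variables $I(E_{i',k})$ are independent across $i'$ for all $k$. Now define $\mathcal V_{i',k,k'}=I(\bigcup_{|k''-k|\le 1}E_{i',k''})$, or the analogous weighted indicator, and apply the following union/polymer bound. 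The path $(J_{i'})$ lies with high probability in the set of sequences with $\tau_2\le R\Phi$ and $|k|\le M^{8/10}W_n/W_r$, and the number of such sequences is at most $\exp(C\Phi\log R)$. Meanwhile, for a fixed sequence, the count of bad indices in a class is Binomial-dominated with mean $\eta\Phi/K$. A Chernoff bound gives probability at most $\exp(-c\Phi\log(1/\eta)/K)$ that this count exceeds $\frac{1}{30}\Phi/K$.

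This is the step needing care. The entropy $C\Phi\log R$ must be beaten by $\Phi\log(1/\eta)/K$, but $K$ grows with $M$. To fix this, I would instead use Proposition \ref{p:perc1} directly on each class: take $\xi$ arbitrary, note that indicators trivially have stretched-exponential tails with mean $\le\eta$, and use the refined entropy count in which the constant $C_3$ scales with $\eta$. Alternatively, treat the far-wing part of $\cW^{loc}$ (levels $k\ge k_0$) via a union bound, since it fails with probability $e^{-c(2^{k_0}L_2)^{\theta'}}$, and only percolate over the genuinely local part, which has bounded range ($K=O(2^{k_0}L_2)$ independent of $M$). With $L_0$ and hence $\eta^{-1}$ large relative to $R$ and $K$, this yields failure probability $e^{-c\Phi}$ for fewer than $\frac{9}{10}\Phi$ good indices. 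Since $\Phi=2^{(\log_2\log_2M)^5}\gg\log M$, we get $e^{-c\Phi}\le M^{-200}$. Adding the $\cR$ and $|J|$ contributions then gives the claimed $1-M^{-100}$.
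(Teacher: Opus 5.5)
Your failure-probability bounds for the individual constituents of $\cP^-_{i,j}$ are fine. The gap is in how you handle $\cW^{loc}$, whose range of dependence $\asymp 2^{(\log_2\log_2 M)^2}L_2$ columns grows with $M$. You diagnose this correctly, but neither remedy closes it.

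\textbf{Remedy (b) fails.} A union bound over the wing levels $k\ge k_0$ must cover all $\Phi$ locations of the window, at the random heights $J_{i'}$, with total failure probability at most $M^{-100}$. The per-event failure probability at level $k$ is $\exp(-c(2^kL_2)^{\theta'})$. For fixed $k_0$ this is a constant, so the union bound is unaffordable. It drops below $M^{-C}$ only once $2^k\gtrsim(\log M)^{1/\theta'}$. But if you put $k_0$ there, the residual ``local'' part has range $2^{k_0}L_2\approx(\log M)^{1/\theta'}L_2$, again unbounded in $M$. Your own computation then shows that the Chernoff gain $\log(1/\eta)$, with $\eta$ fixed by $L_0,L_2$, loses to the $\log K$ entropy cost.

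\textbf{Remedy (a) fails.} It needs a version of Proposition \ref{p:perc1} that is not available. That proposition requires independence across $i$, which fails for $\cW^{loc}$ inside a residue class narrower than its range. Its $C_3M$ term also does not shrink with $\eta$.

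\textbf{The missing idea: percolate level by level (Lemma \ref{l:pminuslock}).}
\begin{itemize}
\item Write $\cW^{loc}_{i,j}=\bigcap_m\mathfrak{Z}_{i,j,m}$. The level-$m$ event has range $L^*=2^{m+3}L_2$ and, by Lemma \ref{l:zbound}, failure probability at most $e^{-c'(L^*)^{\theta'}}$.
\item Give level $m$ its own tolerance $\frac{m^{-100}\Phi}{1000}$; these sum to less than $\Phi/40$.
\item Split the indices mod $L^*$. Per residue class the entropy is $\exp(c\Phi\log(L^*H_1)/L^*)$, while Chernoff gives $\exp\bigl(-\frac{\Phi}{1000L^*m^{100}}\log\frac{m^{-100}e^{c'(L^*)^{\theta'}}}{1000}\bigr)$.
\item Since $(L^*)^{\theta'}$ beats $m^{100}\log(L^*H_1)$ uniformly in $m$ once $L_2$ is large, each level fails with probability at most $e^{-c\Phi/L^*}$.
\item Finally, $\Phi/L^*\ge L_2^{-1}2^{(\log_2\log_2M)^5-(\log_2\log_2M)^2-3}\gg\log M$.
\end{itemize}
The growth of the dependence range with the level is paid for by the stretched-exponential decay of the failure probability at that same level. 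Your proposal does not have this. The genuinely finite-range part $\cP^*$ (range $7$) is then handled much as you describe, via the $\tau_1\le H_1\Phi$ entropy count (Lemma \ref{l:abstractperc}).

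\textbf{Secondary gap: the $\cR_i$ part is under-argued.}
\begin{itemize}
\item The $\tau_2$ bound controls only the canonical crossing heights, whereas $\cR_i$ constrains all of $\gamma$ inside columns $i-2,\dots,i+2$.
\item The local transversal lemmas you cite concern fixed endpoints near the ends of the full geodesic, not bulk locations of a random one.
\item The paper instead uses local events $\cR_{i,j}$: geodesics from $(ir,jW_r)$ and $(ir,(j+1)W_r)$ to fixed points two or three columns away, at vertical offset $\mp10^4H_1W_r$, stay within $\frac{1}{1000}L_0W_r$ of height $jW_r$. These have range $7$ and probability close to $1$.
\item It then deduces $\cR_i$ on $\{J_i=j\}\cap\cR_{i,j}\cap\{\sum_{i'=i-2}^{i+2}|J_{i'+1}-J_{i'}|\le 10^4H_1\}$ by ordering of geodesics.
\item Applying Lemma \ref{l:tau2percgen} on the window also requires $|J_{i+\Phi}-J_i|\le\Phi^{9/10}$, which the paper obtains from Lemma \ref{l:2.8}.
\end{itemize}
These points are repairable within your framework; the $\cW^{loc}$ issue is the real obstruction.
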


Let us fix $\ell$ (hence $r=r_{\ell}$) and $i$ as in the statement of the lemma. {To avoid notational overhead, for the rest of this section we shall drop the superscripts $n,M,\ell$}. 
We shall handle the three events in the statement of the lemma separately. Lemma \ref{l:Pminus.perc} will follow from the next three lemmas together with a union bound.

\begin{lemma}
    \label{l:pminusJ}
    For $M$ sufficiently large, 
    $$\P\left(\max_{1\leq i' \leq Mn/r}|J_{i'}|W_r\ge M^{8/10}W_n\right)\le M^{-200}.$$
\end{lemma}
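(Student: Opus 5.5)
The event in question is that the conforming geodesic $\gamma$ from $\origin$ to $(Mn,0)$ has, at some column boundary $x=i'r$, a crossing height $|y_{i'\Phi^\ell}|$ exceeding roughly $M^{8/10}W_n$. Since $J_{i'}W_r\le y_{i'\Phi^\ell}<(J_{i'}+1)W_r$ and $W_r\le M^{1/100}W_n\ll M^{8/10}W_n$ (using $r\le M^{1/100}n$ and \eqref{eq:basic.W.bounds}), it suffices to show
$$\P\Big(\sup_t |\gamma_2(t)| \ge \tfrac12 M^{8/10}W_n\Big)\le M^{-200},$$
uniformly in $\ell$. The event no longer depends on $\ell$, so no union bound over scales is needed; only a bound on the global transversal fluctuation of $\gamma$ is required.

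I will apply Lemma \ref{l:proxytrans:intro} with $w=0$, $v_1=\origin$, $v_2=(Mn,0)$. It gives, for $z\in[z_0,n^{\epsilon}]$,
$$\P\big(\gamma\in\Upsilon^{0}_{Mn,\origin,(Mn,0),z}\big)\le \exp(1-Dz^{\theta_2}).$$
If $\gamma\notin\Upsilon^0_{Mn,\origin,(Mn,0),z}$, then every point of $\gamma$ lies within distance $zW_{Mn}$ of the segment $[0,Mn]\times\{0\}$, so $|\gamma_2(t)|\le zW_{Mn}$. By \eqref{eq:basic.W.bounds}, $W_{Mn}\le M^{7/8}W_n$ for large $M,n$. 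This exponent is too weak: $7/8>8/10$, so the choice $z=M^{8/10}W_n/(2W_{Mn})$ could be less than $1$.

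The main point is therefore to use the sharper growth of $W$. The paper notes, via Proposition \ref{p:uij.bounds} and Corollary \ref{c:kesten}, that $Q_{Mn}\le C\sqrt{M}Q_n$. This gives
$$W_{Mn}=\sqrt{MnQ_{Mn}}\le C^{1/2}M^{3/4}W_n.$$
(Corollary \ref{c:kesten} bounds $\Var(\rX_{Mn})$, and Theorem \ref{t:all} together with the proof of Theorem \ref{t:varianceM.growthoriginal} transfers this to $Q_{Mn}$ up to constants. Corollary \ref{c:kesten} does not depend on the present section, so there is no circularity.) Now take
$$z=\frac{M^{8/10}}{2C^{1/2}M^{3/4}}=c\,M^{1/20}.$$
For $n\ge n(M)$ this lies in $[z_0,n^\epsilon]$, and we get
$$\P\Big(\sup_t|\gamma_2(t)|\ge \tfrac12M^{8/10}W_n\Big)\le \exp\big(1-D c^{\theta_2}M^{\theta_2/20}\big)\le M^{-200}$$
for $M$ large, since stretched exponential decay beats any polynomial.

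The only delicate point is this improvement of the exponent from $7/8$ to $3/4$, which relies on the block Kesten bound. If I wanted to avoid it, the fallback would be to use the second bullet of Theorem \ref{t:all} with $\delta$ small. That gives $Q_{Mn}\le M^{1/2+\delta}Q_n$, hence $W_{Mn}\le M^{3/4+\delta/2}W_n$, which also leaves a positive power $M^{1/20-\delta/2}$ for $z$, and the same conclusion follows.
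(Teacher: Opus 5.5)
Your proposal is correct and follows the paper's route: the paper also deduces the lemma directly from the global transversal fluctuation bound for conforming geodesics (Lemma \ref{l:proxytrans}), combined with a growth bound on $W_{Mn}/W_n$. You are also right that the exponent $7/8$ in \eqref{eq:basic.W.bounds} is too weak on its own, so one needs $W_{Mn}=O(M^{3/4}W_n)$ (which the paper itself uses in the proof of Lemma \ref{l:2.8}) or the $1/2+\delta$ growth bound; note that this bound is in the third bullet of Theorem \ref{t:all}, not the second.
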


\begin{lemma}
    \label{l:pminusR}
   In the set-up of Lemma \ref{l:Pminus.perc}, and all $2M^{99/100}\leq  i\Phi^\ell \leq (M-2M^{99/100})$,  
    $$\P\left( \sum_{i'=i}^{i+\Phi-1} I(\cR_{i'}^c, |J^{n,M,\ell}_{i'}|W_r\le M^{4/5}W_n) \ge \frac{\Phi}{20} \right)\le M^{-200}.$$
\end{lemma}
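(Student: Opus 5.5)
The approach is to show that $\cR_{i'}^c$ forces a large local transversal fluctuation of $\gamma$ at scale $r$ near column $i'$. Such fluctuations are stretched-exponentially unlikely in the width $L_0$. The column events at far-apart indices are then nearly independent, so a binomial-type concentration over the $\Phi$ columns finishes the argument.

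Concretely, $\cR_{i'}^c$ says that $\gamma$ touches one of the horizontal lines at height $(J_{i'}\pm\frac1{100}L_0)W_r$ somewhere in the five columns $[(i'-3)r,(i'+2)r]$. Since $\gamma$ exits column $i'$ at height $\approx J_{i'}W_r$, this means that within horizontal distance $3r$ of the point $(i'r,y_{i'\Phi^\ell})$ the geodesic deviates vertically by at least $\frac1{100}L_0W_r - W_r$. The first step is to bound the probability of this localized deviation. I would restrict to a good event on which $|J_{i'}|W_r\le M^{4/5}W_n$; on it, the exit point lies in one of at most $M^{2}$ windows $\ell_{i'r,kW_r,(k+1)W_r}$. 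Conditioned on passing through such a window, the pieces of $\gamma$ to the left and right are conforming geodesics between a point at distance $\gtrsim M^{99/100}n \gg r$ and a point in the window. I would then apply the local transversal fluctuation estimate for conforming geodesics near an endpoint, i.e.\ the version of Lemma \ref{l:localtransproxy:intro} announced as Lemma \ref{c:localproxytrans}, together with the reverse-direction analogue, at distances $r,2r,3r$ from that window endpoint. Alternatively, I would use a direct barrier argument via Proposition \ref{p:paraestimateconforming} and Lemma \ref{l:paraexplicit}: a path rising by $\frac1{100}L_0W_r$ within $3r$ pays $\gtrsim L_0^2Q_r$, which the fluctuations of width-$r$ passage times cannot compensate. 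Either route gives
$$p:=\P\big(\cR_{i'}^c,\ |J_{i'}|W_r\le M^{4/5}W_n\big)\le \exp(-cL_0^{\theta}),$$
uniformly in $i'$, and for $L_0$ large this is at most $1/100$.

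The main obstacle is that the $\cR_{i'}$ are not independent, since they are all defined through the single global geodesic $\gamma$, so summing expectations only gives a Markov bound of order $1/5$, not $M^{-200}$. To get concentration, I would replace $\cR_{i'}$ by local proxy events $\widetilde\cR_{i',k}$ that depend only on the field in a bounded window of width $O(\Phi^{1/2})$ columns around column $i'$ at height $kW_r$, for example through the local geodesics joining the lines at distance $\Phi^{1/2}r$ on either side. I would show that $\cR_{i'}^c\cap\{J_{i'}=k\}\subset\widetilde\cR^c_{i',k}$ except on an event of probability $M^{-300}$, using the fluctuation bounds at scale $\Phi^{1/2}r$ from Lemma \ref{l:proxytrans:intro}. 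The indicator sum over $i'$ is then dominated by $\max$ over sequences $\uk$ of $\sum_{i'}I(\widetilde\cR^c_{i',k_{i'}})$. I would split the indices into residue classes mod $\Phi^{1/2}$, within which the local events are independent. Proposition \ref{p:perc1} then applies to the variables $\cV_{i',k,k'}=I(\widetilde\cR^c_{i',k})$, restricted to $\uk$ with $\tau_2$ controlled by Proposition \ref{p:tau2perc}, or, at scale $r$, by Lemma \ref{l:tau2percgen}. Because each variable has mean at most $p$ and the number of terms is $\Phi\ge 2^{(\log_2\log_2 M)^5}$, the entropy cost of the path $\uk$ is absorbed, and the stretched-exponential tail in $\Phi$ yields a bound $\exp(-\Phi^{c})\le M^{-200}$ on the event that the sum exceeds $\Phi/20$. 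Finally I would add $M^{-300}$ for the proxy approximation failures.
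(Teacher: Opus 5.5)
There is a genuine gap, in the concentration step. Your marginal bound and the sandwiching of $\gamma$ by local geodesics over a window of width $\Phi^{1/2}$ are plausible, but the counting argument does not work.

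\textbf{Why the union bound fails.} Your proxy events $\widetilde\cR_{i',k}$ have range of dependence $\Phi^{1/2}$. Some growth with $M$ is unavoidable if the sandwich is to fail only on an event of probability $M^{-300}$ in every column. Their failure probability, however, is $p\le \exp(-cL_0^{\theta})$, a fixed constant, because $L_0$ is chosen before $M$. In one residue class you have $N=\Phi^{1/2}$ independent trials, but consecutive trials are $\Phi^{1/2}$ columns apart. The admissible sequences you propose, inherited from $\tau_1(\uk)\le H_1\Phi$ or from $\tau_2(\uk)\le C_7\Phi$, allow increments of order $\Phi^{1/2}$ between consecutive trials. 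No admissible class can do much better, since the geodesic's displacement over $\Phi^{1/2}$ columns lives on the scale $W_{\Phi^{1/2}r}/W_r\ge \Phi^{1/4}$. So there are $\exp(cN\log \Phi)$ subsampled sequences, whereas $\P(\mathrm{Bin}(N,p)\ge N/20)\le \exp\bigl(-N(\tfrac1{20}\log\tfrac1p-\log 2)\bigr)$. Since $\log\Phi=(\log_2\log_2 M)^5\log 2\to\infty$ while $\log(1/p)$ stays bounded, the union bound over paths diverges. Your claim that ``the entropy cost of the path $\uk$ is absorbed'' is therefore false in this setup.

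\textbf{Why Proposition \ref{p:perc1} does not rescue it.}
\begin{itemize}
\item Its bound $(C_3+C_4R^{3/4})N+z$ is of the order of the number of terms, which is trivial for indicators.
\item Rescaling to $A\cdot I(\widetilde\cR^c_{i',k})$ is legitimate if $p\le C_1e^{-C_2A^{\xi}}$, but you would need $A\gg R^{3/4}$. For the subsampled sequence the per-step $\tau_2$ is $R\gtrsim \Phi^{1/2}$, so $\log(1/p)$ would again have to grow with $M$.
\item Applying it to the unsubsampled sequence instead violates its hypothesis of independence across $i$.
\end{itemize}

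\textbf{The missing idea} is to keep the range of dependence bounded and to give up on sandwiching in every column. The paper's local event $\cR_{i',j}$ uses only the four geodesics from $(i'r,jW_r)$ and $(i'r,(j+1)W_r)$ to $((i'+2)r,(j\pm 10^4H_1)W_r)$ and $((i'-3)r,(j\pm 10^4H_1)W_r)$. These events are independent once $|i'-i''|\ge 7$, and have probability at least $1-\delta$ once $L_0$ is large depending on $H_1$ (via Lemma \ref{l:proxytrans}). Lemma \ref{l:tau14.1} gives $\sum_{i'}|J_{i'+1}-J_{i'}|\le H_1\Phi$ with probability $1-M^{-1000}$. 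By Markov's inequality, all but $\Phi/40$ columns then satisfy $\sum_{i''=i'-2}^{i'+2}|J_{i''+1}-J_{i''}|\le 10^4H_1$. In those columns, ordering of geodesics turns $\{J_{i'}=j\}\cap\cR_{i',j}$ into $\cR_{i'}$. The remaining count is Lemma \ref{l:pminusRperc}, proved via Lemma \ref{l:abstractperc}. There the entropy is $\exp(c\Phi\log H_1)$, which involves only the fixed constant $H_1$ and is beaten by taking $\delta$ small. That is exactly the balance your long-range proxy destroys.
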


\begin{lemma}
    \label{l:pminusP}
   In the set-up of Lemma \ref{l:Pminus.perc}, and all $2M^{99/100}\leq  i\Phi^\ell \leq (M-2M^{99/100})$,  
    $$\P\left( \sum_{i'=i}^{i+\Phi-1} I((\cP_{i',J_{i'}}^{-})^c,|J_{i'}|W_r\le M^{4/5}W_n) \ge \frac{\Phi}{20} \right)\le M^{-200}.$$
\end{lemma}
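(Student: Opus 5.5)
The plan is to bound the number of bad locations along the geodesic by a union bound over its possible trajectories at scale $r$, combined with independence across well-separated columns. Since $\cP^-_{i',j}$ is an intersection of finitely many sub-events, I would split $I((\cP^-_{i',J_{i'}})^c)$ into a sum of indicators. The first group is the \emph{local} failures: $\cB^{(1)}$, the two conditional-probability events for $\cB$, $\cD^{(1)}$ at $i'\pm2$, the conditional-probability event for $\cD$, and $\cC_{i'\pm1}$. These depend only on $\bomega$ in columns $i'-2,\dots,i'+2$ at scale $r$. The second group is the \emph{wing} failures $\cZ^c$ at each level $k\le(\log_2\log_2 M)^2$ inside $\cW^{loc}_{i',j}$. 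It then suffices to show that, with probability $1-M^{-201}$, each group contributes at most $\Phi/40$ bad locations.

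\textbf{Marginal failure probabilities.} Each local event fails, for fixed $(i',j)$, with probability $O(L_0^{-1})$. For the plain events this follows from Lemmas~\ref{l:Bbound}, \ref{l:cI.bound}, \ref{l:cA.bound} and \eqref{eq:cC.bound}, \eqref{eq:D12.bound}. For the conditional events I would use Markov's inequality:
\[
\P\big[\P[\cB^{(2)}\cap\cB^{(3)}\mid\bomega(V^c_{i',j})]<\tfrac12\big]\le 2\P[(\cB^{(2)}\cap\cB^{(3)})^c]\le 4L_0^{-1},
\]
and similarly $\P[\P[\cB^{(6)}\mid\cdot]<1-\delta_A/100]\le (\delta_A/(100L_0))/(\delta_A/100)=L_0^{-1}$ by \eqref{eq:B6.bound}, and likewise for the $\cD$ conditional event via \eqref{eq:D12.bound}--\eqref{eq:D4.bound}. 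Hence the local failure indicator $V_{i',j}$ satisfies $\P[V_{i',j}=1]\le CL_0^{-1}$, and the families $\{V_{i',j}\}_j$ are independent across $i'$ in a fixed residue class mod $5$. For the wings, Lemma~\ref{l:zbound} gives failure probability $\exp(-c(2^kL_2)^{\theta'})$ at level $k$. These events have dependence range $s_k:=2^{k+2}L_2+5\ll\Phi$, so they are independent across $i'$ in a fixed residue class mod $s_k$.

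\textbf{Local failures via path counting.} For a fixed residue class $\rho$ mod $5$, I would union bound over the starting block $J_i$, of which there are at most $\mathrm{poly}(M)$ choices since $|J_i|W_r\le M^{8/10}W_n$, and over all sequences $(k_{i'})_{i'\equiv\rho}$ of length $\Phi/5$ with increments satisfying $\sum (k_{i'+5}-k_{i'})^2\le R\Phi$. The geodesic lies in this class except on an event of probability $\exp(-c\Phi^{\theta})\le M^{-300}$; this uses the scale-$r$ analogue of Proposition~\ref{p:tau2perc} (Lemma~\ref{l:tau2percgen}) and the fact that $\Phi=2^{(\log_2\log_2M)^5}$ dominates any power of $\log M$. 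The number of such sequences is $e^{C_R\Phi}$ with $C_R$ independent of $L_0$. For each fixed sequence, independence gives
\[
\P\big[\#\{\text{bad}\}\ge\Phi/200\big]\le 2^{\Phi}(CL_0^{-1})^{\Phi/200}.
\]
Taking $L_0$ large depending on $R$ beats $e^{C_R\Phi}$, leaving a bound of $e^{-\Phi}\ll M^{-300}$. This is admissible because $R$ comes from \cite{BSS23}-type estimates and does not depend on $L_0$. Summing over the five residue classes gives at most $\Phi/40$ local failures.

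\textbf{Wing failures.} I would do the same for each level $k$, now using residue classes mod $s_k$ with $\Phi/s_k$ sites each and allowing $2^{-k/2}\Phi/(100 s_k)$ bad sites per class. Subsampled paths with step $s_k$ have squared increments summing to at most $s_k R\Phi$, so the count is $\exp(C\Phi s_k^{-1}\log s_k)$. This is beaten by $\exp(-c(2^kL_2)^{\theta'}\cdot 2^{-k/2}\Phi/(100s_k))$ once $L_2$ is large, since $(2^kL_2)^{\theta'}2^{-k/2}\gg\log s_k$ fails only for small $\theta'$. If that happens, I would instead allow $2^{-\epsilon k}$ fractions with $\epsilon<\theta'$, which still sum over $k$ to at most $\Phi/40$. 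Summing over $k\le(\log_2\log_2M)^2$ and over residue classes then finishes the argument.

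\textbf{Main obstacle.} I expect the hardest step to be the wing terms. There the events at $(i',j)$ depend on $j$ through the tolerance terms $|y/W_r-j|^{1/100}$, and the dependence ranges grow with $k$. Making the path-count versus tail trade-off uniform in $k$ (and checking that the path class with increments over $s_k$ columns is controlled by the $\tau_2$ bound) is where care is needed. If that fails, my first fallback would be to bound $\cW^{loc}$ failures by a simple union bound over all $j$ with $|j|\le 2M$, which costs polynomially many terms in $M$, for the levels $k$ with $2^kL_2\ge(\log M)^{C}$.
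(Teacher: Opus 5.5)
Your proposal is correct and is essentially the paper's argument. The paper likewise restricts to a low-fluctuation path class over the window: it uses $\tau_1\le H_1\Phi$ from Lemma~\ref{l:tau14.1}, derived from Lemma~\ref{l:tau2percgen} by Cauchy--Schwarz, where you use a $\tau_2$ class. It handles the finite-range part of $\cP^-$ by residue classes and Lemma~\ref{l:abstractperc} with $L_0$ large. It handles $\cW^{loc}$ level by level with residue classes mod $2^{m+3}L_2$, an entropy count and a Chernoff bound, allowing $m^{-100}\Phi/1000$ failures at level $m$; this avoids the small-$\theta'$ issue that you resolve with $2^{-\epsilon k}$ fractions.

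The one detail to add is that Lemma~\ref{l:tau2percgen} requires the endpoint separation $|J_{i+\Phi}-J_i|\le \Phi^{9/10}$. The paper obtains this from Lemma~\ref{l:2.8}, so the exceptional event has probability polynomially small in $M$ rather than $\exp(-c\Phi^{\theta})$, which is still sufficient for the $M^{-200}$ target.
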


Observe that Lemma \ref{l:pminusJ} follows immediately from Lemma \ref{l:proxytrans}, equation~\eqref{eq:basic.W.bounds} and a union bound. It remains to prove Lemmas \ref{l:pminusR} and \ref{l:pminusP}. For both of those proofs, we first need the following result to control the $\tau_1$ fluctuation of $\gamma$ between the lines $x=ir_{\ell}$ and $x=(i+\Phi)r_{\ell}$.

\begin{lemma}
    \label{l:tau14.1}
    There exists $H_1$ sufficiently large (not depending on $n,M,\ell$) such that
    $$\P\left( \sum_{i'=i}^{i+\Phi-1} |J_{i'+1}-J_{i'}| \le H_1\Phi, |J_{i+\Phi}-J_{i}|\le \Phi^{9/10} \right)\ge 1-M^{-1000}.$$
\end{lemma}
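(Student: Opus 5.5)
We treat the two conditions in the event separately and take a union bound. Throughout, $r=r_\ell$, $J_{i'}=J^{n,M,\ell}_{i'}$, and $\gamma$ is the conforming geodesic from $\origin$ to $(Mn,0)$. Since $\Phi^{\ell+1}\le M^{1/100}\Phi$ and $\Phi=2^{(\log_2\log_2 M)^5}=M^{o(1)}$, the window $[ir,(i+\Phi)r]$ has length $\Phi r\le M^{1/100+o(1)}n$. It also lies at distance at least $2M^{99/100}n$ from both endpoints.

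\textbf{The endpoint displacement.} For the bound $|J_{i+\Phi}-J_i|\le\Phi^{9/10}$, we restrict to the event of Lemma \ref{l:pminusJ} that $\max_{i'}|J_{i'}|W_r\le M^{8/10}W_n$. On this event we can condition on the entry point $v=(ir,y)$ of $\gamma$ into the window. We then apply the local transversal fluctuation estimate to the segment of $\gamma$ started at $v$, in the strong form mentioned after Lemma \ref{l:localtransproxy:intro} (Lemma \ref{c:localproxytrans}, which handles initial and terminal segments), at distance $\Phi r$ from $v$. This gives displacement at most $z W_{\Phi r}$ with probability $1-\exp(-z^{\theta_2})$.

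The wrinkle is that $v$ is random and not a fixed starting point. We handle this by ordering of geodesics: $\gamma$ is sandwiched between the geodesics from the two corner points of the $W_r$-interval containing $v$ to the line $x=(i+\Phi)r$ (or to $(Mn,0)$). We then take a union bound over the at most $M^2$ possible intervals. By \eqref{eq:basic.W.bounds}, $W_{\Phi r}\le \Phi^{7/8}W_r$. Choosing $z=\Phi^{1/40}$ gives the displacement bound $\Phi^{9/10}W_r$, and the failure probability $M^{2}\exp(-\Phi^{\theta_2/40})$ is far below $M^{-1000}$, because $\Phi^{c}=2^{c(\log_2\log_2 M)^5}\gg (\log M)^{2}$.

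\textbf{The $\tau_1$ bound.} By Cauchy--Schwarz it suffices to show $\sum_{i'=i}^{i+\Phi-1}(J_{i'+1}-J_{i'})^2\le H_1^2\Phi$ with probability $1-M^{-1000}$. This is the scale-$r$, window-localized analogue of Proposition \ref{p:tau2perc}. We would invoke the general version Lemma \ref{l:tau2percgen} (used in the proof of Lemma \ref{l:chaosbasic2}) with $r=r_\ell$, $D=\Phi$, and starting and ending heights $s_1,s_2$ given by the intervals of $J_i$ and $J_{i+\Phi}$.

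That lemma controls $\tau_2$ for all geodesics between the two $W_r$-intervals with stretched exponential tail $\exp(-c z^{\theta_4})$ beyond $C_7\Phi+z$. To pass from the geodesic between two intervals to the segment of $\gamma$, we again use ordering/sandwiching of geodesics, or equivalently the fact that a segment of a geodesic is a geodesic between its endpoints. We union bound over the at most $M^{4}$ choices of $(J_i,J_{i+\Phi})$, which is allowed by the first step and Lemma \ref{l:pminusJ}. Taking $z=\Phi$ makes the tail $M^4\exp(-c\Phi^{\theta_4})\le M^{-1001}$, since $\Phi^{\theta_4}\gg\log M$. Then $H_1=\sqrt{C_7+1}$ works. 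If Lemma \ref{l:tau2percgen} requires $|s_1-s_2|\lesssim D^{9/10}$ (the slope condition, as in Proposition \ref{p:paraestimateconforming}), this is exactly why we establish the displacement bound first and intersect with it.

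\textbf{Main obstacle.} The hard part is the randomness of the endpoints of the window: $\gamma$ restricted to the window is a geodesic between random points. The whole argument therefore relies on stating the $\tau_2$ and local transversal estimates uniformly over all start and end points in $W_r$-intervals. It also relies on the stretched exponential tails at scale $\Phi$ beating the polynomial-in-$M$ union bounds. This is the reason $\Phi$ was chosen superpolylogarithmic, $\Phi=2^{(\log_2\log_2 M)^5}$.
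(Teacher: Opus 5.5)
Your argument is essentially the paper's proof. The paper first restricts to $|J_i|W_r\le M^{4/5}W_n$. It then gets $|J_{i+\Phi}-J_i|\le\Phi^{9/10}$ from Lemma \ref{l:2.8}, which is your sandwiching argument with Lemma \ref{c:localproxytrans}, run with $z=(\log M)^H\le\Phi^{1/10}$ and $W_{\Phi r}\le\Phi^{4/5}W_r$. Finally it union bounds over the $O(M^2)$ pairs $(J_i,J_{i+\Phi})$ and applies Cauchy--Schwarz plus Lemma \ref{l:tau2percgen} with $D=\Phi$; this last step is Lemma \ref{l:tau14.1gen}.

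Two small repairs are needed.

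First, Lemma \ref{l:pminusJ} as stated only bounds the failure probability by $M^{-200}$, so restricting to its event cannot give $1-M^{-1000}$. Invoke Lemma \ref{l:proxytrans} directly with $z$ a small positive power of $M$; this is legitimate since $M^{4/5}W_n/W_{Mn}\ge cM^{1/20}$. It gives a failure probability $\exp(-cM^{\theta})$, as in the paper.

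Second, $z=\Phi^{1/40}$ only yields $|J_{i+\Phi}-J_i|\le\Phi^{9/10}+O(1)$, because of the interval width and the floors. Both the statement and the hypothesis $|s_1-s_2|\le D^{9/10}$ of Lemma \ref{l:tau2percgen} need the exact bound, so take a slightly smaller exponent, e.g.\ $z=\Phi^{1/50}$.
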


Proof of this lemma is given at the end of this section. We shall now assume Lemma \ref{l:tau14.1} and prove Lemmas \ref{l:pminusR} and \ref{l:pminusP}.

We shall need a {stretched exponential polymer result}. We set-up some notation first. For $k_{s},k_{e}\in \Z$ let $\fK_{i,\Phi,k_s,k_e}$ denote the set of all tuples $(k_{i},k_{i+1},\ldots, k_{i+\Phi})\in \Z^{\Phi+1}$ with $k_i=k_s$ and $k_{i+\Phi}=k_e$. As before, we shall define, for $\uk\in \fK_{i,\Phi,k_s,k_e}$, $\tau_1(\uk)=\sum_{i'} |k_{i'+1}-k_{i'}|$. 

The number of ways to sum up $A$ non-negative integers to have sum equal to at most $B$ is $\binom{B+1}{A}$.  It follows that the number of tuples in $\fK_{i,\Phi,k_s,k_e}$ with $\tau_1(\uk)\le H\Phi$ is at most $2^\Phi{H\Phi+1\choose\Phi}$ where the $2^\Phi$ comes from the choice of the signs of the {$k_{i'+1}-k_{i'}$}. There exists $c>0$, such that for all $H$ sufficiently large the number of tuples in $\fK_{i,\Phi,k_s,k_e}$ with $\tau_1(\uk)\le H\Phi$ is upper bounded by 
\begin{equation}
    \label{e:entropy}
    2^\Phi{H\Phi+1\choose\Phi}\leq\exp(c\Phi \log H). 
\end{equation}
We start with the following easy polymer lemma. 

\begin{lemma}
    \label{l:abstractperc}
    For $i'=i,i+1, \ldots, i+\Phi-1$, and $k\in \Z$, let the collection of events $B_{i',k}$ be such that the collections $\{B_{i',k}\}_{k}$ and $\{B_{i'',k}\}$ are independent if $|i'-i''|\ge K$. Given $\varepsilon>0$ small and $H>0$, there exists $\delta=\delta(\varepsilon,K,H)>0$ such that if $\P(B_{i',k})\le \delta$ for all $i',k$ then 
    $$\P\left(\max_{\uk\in \fK_{i,\Phi,k_s,k_e}: \tau_1(\uk)\le H\Phi}\sum_{i'=i}^{i+\Phi-1} I(B_{i',k_{i'}}) \ge \varepsilon \Phi\right) \le \exp(-c\Phi)$$
    for some $c>0$.
\end{lemma}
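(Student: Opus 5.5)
The plan is a union bound over paths $\uk$ combined with a Chernoff-type bound for each fixed path, with the entropy count \eqref{e:entropy} beating the per-path probability once $\delta$ is small.

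\textbf{Step 1: split into independent residue classes.} Fix $\uk\in\fK_{i,\Phi,k_s,k_e}$ with $\tau_1(\uk)\le H\Phi$. Split the indices $i'\in\{i,\ldots,i+\Phi-1\}$ into the $K$ residue classes modulo $K$. Within one class, any two indices differ by at least $K$. So by hypothesis the events $B_{i',k_{i'}}$ in a class are mutually independent, each with probability at most $\delta$. Strictly, the hypothesis only gives pairwise independence of the families; I will read it, as intended, as independence of collections at mutual distance at least $K$. If $\sum_{i'} I(B_{i',k_{i'}})\ge\varepsilon\Phi$, then some class $c$ (of size at most $\lceil\Phi/K\rceil$) has at least $\varepsilon\Phi/K$ occurrences.

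\textbf{Step 2: bound each class.} For a fixed class of size $m\le\Phi/K+1$, the number of occurrences is stochastically dominated by $\mathrm{Bin}(m,\delta)$. Choosing the at least $\varepsilon\Phi/K$ positions gives
$$\P\big(\mathrm{Bin}(m,\delta)\ge \varepsilon\Phi/K\big)\le \binom{m}{\varepsilon\Phi/K}\delta^{\varepsilon\Phi/K}\le \Big(\frac{2e\delta}{\varepsilon}\Big)^{\varepsilon\Phi/K}.$$
A union bound over the $K$ classes then bounds the per-path probability by $K(2e\delta/\varepsilon)^{\varepsilon\Phi/K}=\exp\big(-\tfrac{\varepsilon\Phi}{K}\log\tfrac{\varepsilon}{2e\delta}+\log K\big)$.

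\textbf{Step 3: union bound over paths.} By \eqref{e:entropy}, with $H$ taken large (increasing $H$ only weakens the claim), there are at most $\exp(c_0\Phi\log H)$ admissible $\uk$. The total probability is therefore at most
$$K\exp\Big(\Phi\Big(c_0\log H-\frac{\varepsilon}{K}\log\frac{\varepsilon}{2e\delta}\Big)\Big).$$
Choose $\delta=\delta(\varepsilon,K,H)$ small enough that $\frac{\varepsilon}{K}\log\frac{\varepsilon}{2e\delta}\ge c_0\log H+2$. This gives the bound $K e^{-2\Phi}\le e^{-\Phi}$ for $\Phi$ large, and small $\Phi$ is absorbed into the constant $c$.

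\textbf{Main obstacle.} The only real point is the bookkeeping in Step 2. Maximising over $\uk$ means the events depend on the path, so the independence must be used only for a fixed $\uk$; the union bound is taken afterwards. Nothing else is delicate.
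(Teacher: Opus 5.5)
Your argument is correct and is essentially the paper's proof. The paper splits the sum according to $i'$ mod $K$ and then invokes \cite[Lemma 12.7]{BSS23}, which is the per-path binomial tail combined with the entropy count \eqref{e:entropy} that you carry out by hand (and reading the hypothesis as mutual independence of collections at distance $\ge K$, as you do, is what is needed). The one loose point is your remark that small $\Phi$ is absorbed into $c$: for $\Phi<\tfrac12\log K$ the bound $Ke^{-2\Phi}$ exceeds $e^{-c\Phi}$ whatever $c$ is, so you should instead shrink $\delta$ further, bounding the probability by $\Phi\delta e^{c_0\Phi\log H}$ for $\Phi$ below a threshold depending on $K$. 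This is immaterial here, since $\Phi=2^{(\log_2\log_2 M)^5}$ is huge.
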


After splitting the sum into $i'$ mod $K$, the lemma follows from \cite[Lemma 12.7]{BSS23}, so we omit the proof.

\begin{center}
\begin{figure}
\includegraphics[width=5in]{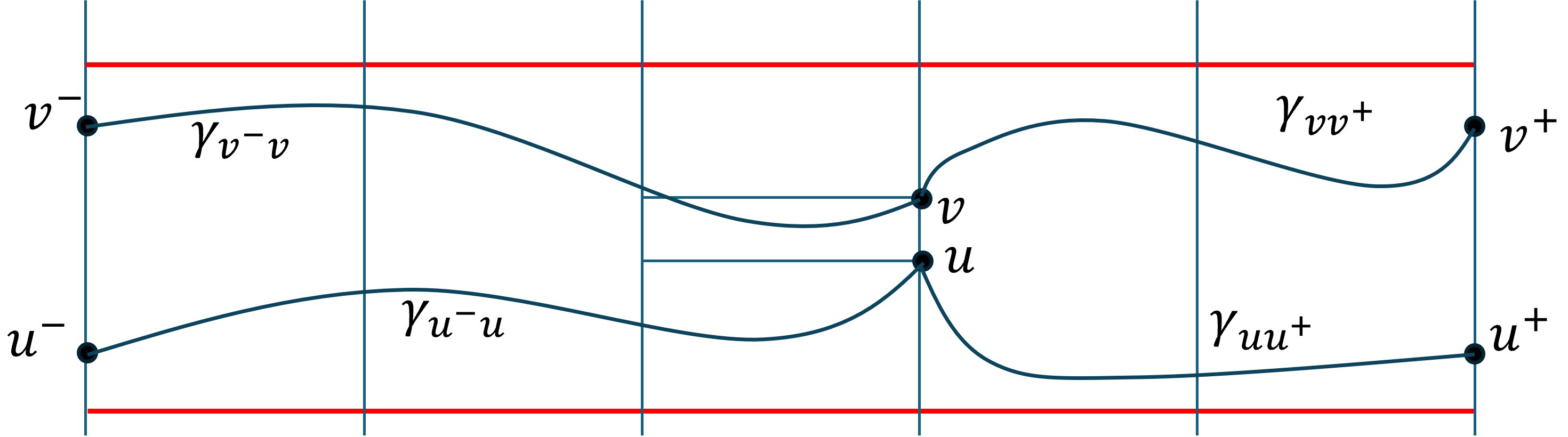}
\caption{Local event $\cR_{i,j}$ used in Lemma \ref{l:pminusRperc} to control the fluctuation of the geodesic between columns $i-2, \ldots, i+2$. The event $\cR_{i,j}$ asks that the geodesics from $v=(ir,(j+1)W_r)$ to $v^-$ and $v^+$ and the geodesics from $u=(ir,jW_r)$ to $u^-$ and $u^+$ stay within the two red lines are $\cup_{i'=i-2}^{i+2}H_{i,j\pm\frac{1}{1000}L_0}$ which are a large constant vertical distance away from the points (at scale $W_r$). Note the $\cR_{i,j}$ is a local event whose probability can be made close to $1$ by choosing $L_0$ sufficiently large and hence by a percolation argument $\cR_{i,J_i}$ holds holds for most $i$. Also, by ordering of the geodesics, if the geodesic $\gamma$ does not have large fluctuations around location $i$ $\gamma$ will also remain between the two red lines while passing though these columns, thereby showing that the non-local event $\cR_i$ also holds at most locations with good probability.}
\label{f:rij}
\end{figure}
\end{center}
Let us now move towards the proof of Lemma \ref{l:pminusR}. We shall  define a local version of the event $\cR_i$. Let $H_1$ be as in Lemma \ref{l:tau14.1}. Consider the points $u=(ir, jW_{r})$ and $v=(ir, (j+1)W_{r})$. Let us also consider points $u^{+}=((i+2)r, (j-10^4H_1)W_{r})$, $v^{+}=((i+2)r, (j
+10^4H_1)W_{r})$, $u^{-}=((i-3)r, (j-10^4H_1)W_{r})$, $v^{-}=((i-3)r, (j+10^4H_1)W_{r})$; see Figure \ref{f:rij}. Let $\cR_{i,j}$ denote the event that the geodesics $\gamma_{vv^{+}}$ and $\gamma_{vv^{-}}$ are below $\cup_{i'=i-2}^{i+2}H_{i,j+\frac{1}{1000}L_0}$ and the geodesics $\gamma_{uu^{+}}$ and $\gamma_{uu^{-}}$ are above $\cup_{i'=i-2}^{i+2}H_{i,j-\frac{1}{1000}L_0}$. Let now $A_i$ denote the event that 
\[
\sum_{i'=i-2}^{i+2} |J_{i'+1}-J_{i'}|\le 10^{4}H_1.
\]
Observe that on the event $A_{i}\cap \{J_{i}=j\}\cap \cR_{i,j}$, the geodesic $\gamma$ must pass below $\gamma_{vv^{+}}$ and $\gamma_{vv^{-}}$ and above $\gamma_{uu^{+}}$ and $\gamma_{uu^{-}}$ and so $\cR_i$ holds. Notice also that on the event $\{\sum_{i'=i}^{i+\Phi-1}|J_{i'+1}-J_{i'}|\le H_1\Phi\}$, one has (by Markov inequality) that 
$$\sum I(A_i^{c}) \le \frac{\Phi}{40}.$$ Therefore, together with Lemma \ref{l:tau14.1} the following lemma completes the proof of Lemma \ref{l:pminusR}.

\begin{lemma}
    \label{l:pminusRperc}
    In the above set-up, there exists $L_0$ sufficiently large (depending only on $H_1$) such that
    $$\P\left(\max_{{|k_{s}|\le M^{4/5}\frac{W_n}{W_r}}, |k_{e}-k_s|\le \Phi^{9/10}}\max_{\uk\in \fK_{i,\Phi,k_s,k_e}:\tau_1(\uk)\le H_1\Phi} \sum_{i'=i}^{i+\Phi-1} I(\cR^c_{i',k_{i'}}) \ge \frac{\Phi}{40}\right)\le M^{-1000}$$
\end{lemma}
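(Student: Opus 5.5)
The plan is to combine a uniform lower bound on $\P(\cR_{i',k})$ with the abstract polymer estimate, Lemma \ref{l:abstractperc}. Set $B_{i',k}=\cR_{i',k}^c$ and take $\varepsilon=1/40$, $H=H_1$. The events $\cR_{i',k}$ are determined by geodesics between the lines $x=(i'-3)r$ and $x=(i'+2)r$. For conforming paths the weights in different columns of width $n$ use independent fields ($\omega^{\Lambda_i}$). Hence the collections $\{\cR_{i',k}\}_k$ and $\{\cR_{i'',k}\}_k$ are independent whenever $|i'-i''|\ge K:=6$. Lemma \ref{l:abstractperc} then gives a $\delta=\delta(1/40,6,H_1)>0$. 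Once we show $\P(\cR_{i',k}^c)\le\delta$ uniformly in $i'$ and in $|k|\le M^{4/5}W_n/W_r+\Phi^{9/10}+H_1\Phi$, we get, for each fixed pair $(k_s,k_e)$,
$$\P\Big(\max_{\uk\in\fK_{i,\Phi,k_s,k_e}:\tau_1(\uk)\le H_1\Phi}\sum_{i'}I(\cR^c_{i',k_{i'}})\ge \Phi/40\Big)\le e^{-c\Phi}.$$
We then take a union bound over the at most $O(M^{2})$ admissible pairs $(k_s,k_e)$. Since $\Phi=2^{(\log_2\log_2 M)^5}$ grows faster than any power of $\log M$, we have $e^{-c\Phi}M^2\le M^{-1000}$ for $M$ large, which is the claim.

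The main step is to show that $\P(\cR_{i',k}^c)\le\delta$ once $L_0$ is large depending on $H_1$ (through $\delta$). By symmetry and a union bound it suffices to treat one of the four geodesics, say $\gamma_{vv^+}$ from $v=(i'r,(k+1)W_r)$ to $v^+=((i'+2)r,(k+10^4H_1)W_r)$. We must show this geodesic stays below height $(k+\frac{1}{1000}L_0)W_r$ in the relevant columns with probability at least $1-\delta/4$. The horizontal span is $2r$ and the vertical displacement is $O(H_1)W_r$, which is bounded on scale $W_{2r}\asymp W_r$. So this is a transversal fluctuation estimate for a geodesic of length $2r$ with bounded slope. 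I would obtain it from Lemma \ref{l:proxytrans:intro}, applied at scale $2r$ after a shear or rotation; alternatively, I would argue directly, as in the proof of Lemma \ref{l:Bbound}, via the events $\cK$ on the horizontal line at height $(k+\frac{1}{1000}L_0)W_r$. Any path from $v$ to $v^+$ that touches this line pays an excess of order $(\frac{L_0}{1000}-10^4H_1)^2Q_r$ over $|v-v^+|$, by the quadratic term in $\cK$ (Lemma \ref{l:cK.bound}, applied in each of the two columns). Meanwhile Proposition \ref{p:paraestimateconforming} bounds $\rX_{vv^+}-|v-v^+|$ by $O(H_1)Q_r$ with high probability. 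Choosing $L_0\gg H_1$ therefore makes the failure probability below $\delta/4$.

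The geodesics towards $v^-$, which span three columns, are handled the same way. The main obstacle is bookkeeping rather than substance. The quadratic penalty in $\cK$ is written in terms of the column endpoints, so the two-column path has to be split at the intermediate vertical line, and one must check that the ranges of $j$ stay within those allowed in Lemma \ref{l:cK.bound} and Proposition \ref{p:paraestimateconforming}. For $|k|\le M^{4/5}W_n/W_r+o(M^{4/5}W_n/W_r)$ these ranges are amply satisfied.
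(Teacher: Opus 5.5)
Your proposal is correct and follows the paper's proof. The paper applies Lemma \ref{l:abstractperc} with a finite dependence range: it uses $K=7$, and your $K=6$ is equally valid because $\cR_{i',k}$ depends only on the five scale-$r$ columns in $[(i'-3)r,(i'+2)r]$. It then uses a uniform bound $\P(\cR^c_{i',k})\le\delta$ for $L_0$ large given $H_1$, and finishes with a union bound over the at most $M$ pairs $(k_s,k_e)$ using $e^{-c\Phi}\le M^{-1001}$.

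The paper justifies the single-site bound simply by citing the transversal fluctuation estimate Lemma \ref{l:proxytrans}. Note that a shear or rotation is not available for conforming distances, because of the vertical column structure. Your $\cK$-based argument, modelled on Lemma \ref{l:Bbound}, does work. An alternative is to use geodesic ordering to bound the sloped geodesic by a flat one and then apply Corollary \ref{c:proxytrans}.
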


\begin{proof}
    Notice that, by definition of $\Phi$, the number of choices $(k_s,k_e)$ satisfying $|k_s|\le M^{4/5}\frac{W_n}{W_r}$ and $|k_s-k_e|\le \Phi^{9/10}$
 is at most $M$. Therefore, it suffices to prove that for each such $(k_s,k_e)$ we have 
$$\P\left(\max_{\uk\in \fK_{i,\Phi,k_s,k_e}:\tau_1(\uk)\le H_1\Phi} \sum_{i'=i}^{i+\Phi-1} I(\cR^c_{i',k_{i'}}) \ge \frac{\Phi}{40}\right)\le M^{-1001}.$$
Notice that by Definition, the events $\cR_{i',k'}$ and $\cR_{i'',k''}$ are independent if $|i-i'|\ge 7$. Therefore, we can apply Lemma \ref{l:abstractperc} with $K=7$, $H=H_1$ and $\varepsilon=1/40$. Observing that by Lemma \ref{l:proxytrans} one gets $\P(\cR^c_{i',k'})\le \delta$ where $\delta$ can be made arbitrarily small by choosing $L_0$ sufficiently large depending on $H_1$. The desired result now follows from Lemma \ref{l:abstractperc}, taking a union bound over $k_s,k_e$ and observing that $\exp(-c\Phi)\le M^{-1001}$ for all  $M$ sufficiently large. 
 \end{proof}

 Next, we shall prove Lemma \ref{l:pminusP}. Arguing as in the proof of Lemma \ref{l:pminusR} and using Lemma \ref{l:tau14.1}. It suffices to prove the following lemma. 

\begin{lemma}
    \label{l:pminusPred}
    Let $k_s,k_e$ be fixed such that {$|k_s|\le M^{4/5}\frac{W_n}{W_r}$} and $|k_s-k_e|\le \Phi^{9/10}$. Then 
    $$\P\left(\max_{\uk\in \fK_{i,\Phi,k_s,k_e}:\tau_1(\uk)\le H_1\Phi}\sum_{i'=i}^{i+\Phi-1} I((\cP^{-}_{i',k_{i'}})^c)\ge \frac{\Phi}{20}\right)\le M^{-1000}$$
    for all $M$ sufficiently large. 
\end{lemma}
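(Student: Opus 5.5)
The plan is to mirror the proof of Lemma \ref{l:pminusRperc}: reduce to the abstract polymer estimate Lemma \ref{l:abstractperc}. Fix $k_s,k_e$. The count over paths $\uk$ with $\tau_1(\uk)\le H_1\Phi$ is already handled inside Lemma \ref{l:abstractperc} through the entropy bound \eqref{e:entropy}. So what we need is this: the events $B_{i',k}:=(\cP^-_{i',k})^c$ have probability at most a small $\delta=\delta(1/20,K,H_1)$, and they are independent across columns at distance at least some fixed $K$. Then Lemma \ref{l:abstractperc} with $\varepsilon=1/20$ gives a bound $\exp(-c\Phi)\le M^{-1000}$ for $M$ large, since $\Phi=2^{(\log_2\log_2 M)^5}$ grows faster than any power of $\log M$.

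The first step is locality and independence. Each ingredient of $\cP^-_{i',k}$ is defined through the field in columns $i'-2,\dots,i'+2$ at scale $r$. This covers $\cB^{(1)}$, the conditional probabilities given $\bomega(V^c_{i',k})$, $\cD^{(1)}$, the conditional event for $\cD^{(2)},\cD^{(3)},\cD^{(4)}$, and $\cC_{i'\pm1,k}$. The exception is $\cW^{loc}_{i',k}$, which involves the events $\cZ$ over horizontal spans up to $2^{(\log_2\log_2 M)^2+1}L_2$ columns. By the column-wise independence built into $\omega_*$ (and the independent block resampling), this span is not a fixed $K$. I would therefore split $\cP^-=\cP^{-,0}\cap\cW^{loc}$ and treat the two parts separately. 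For $\cP^{-,0}$ we can take $K=7$, and its failure probability is bounded using Lemma \ref{l:Bbound}, \eqref{eq:B6.bound}, \eqref{eq:cC.bound}, \eqref{eq:D12.bound}--\eqref{eq:D4.bound}. For the conditional events we apply Markov: $\P[\P[\cB^{(2)}\cap\cB^{(3)}\mid\cdot]<1/2]\le 2(1-\P[\cB^{(2)}\cap\cB^{(3)}])\le 4L_0^{-1}$. Likewise $\P[\P[\cB^{(6)}\mid\cdot]<1-\delta_A/100]\le 100\,\P[(\cB^{(6)})^c]/\delta_A\le L_0^{-1}$ by \eqref{eq:B6.bound}. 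Altogether $\P[(\cP^{-,0})^c]\le CL_0^{-1}$, which is below $\delta$ once $L_0$ is large depending on $H_1$, giving the bound $\Phi/40$ for this part.

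The main obstacle is $\cW^{loc}$. I would decompose it by $k$, writing $\cW^{loc}_{i',j}=\bigcap_{k\le(\log_2\log_2M)^2}\cZ^{(k)}_{i',j}$, where $\cZ^{(k)}$ uses columns within distance $K_k:=2^{k+2}L_2+5$ and, by Lemma \ref{l:zbound}, fails with probability at most $4\exp(-c(2^kL_2)^{\theta'})=:\delta_k$. The argument then proceeds per scale $k$.
\begin{itemize}
\item Split the indices $i'$ by residue modulo $K_k$. Each residue class carries independent events along the path, and the path constraint $\tau_1\le H_1\Phi$ restricted to a subsequence still allows entropy $\exp(c(\Phi/K_k)\log(H_1K_k))$.
\item Show that the number of bad columns at scale $k$ exceeds $\varepsilon_k\Phi$ only with small probability, taking $\varepsilon_k=2^{-k}/80$. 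The relevant bound is a union bound over paths times a binomial tail $\binom{\Phi/K_k}{\varepsilon_k\Phi}\delta_k^{\varepsilon_k\Phi}$.
\item The stretched exponential decay of $\delta_k$ in $2^kL_2$ beats both the entropy $\log(H_1K_k)/K_k$ and the $2^k$ loss in $\varepsilon_k$ once $L_2$ is large. This gives probability $\exp(-c'\Phi 2^{-k})$ for each $k$.
\end{itemize}
Summing over the $(\log_2\log_2M)^2$ values of $k$ is still $\le M^{-1001}$, because $\Phi2^{-k}\ge\Phi/(\log M)^{o(1)}$ is superpolylogarithmic. The total number of bad columns from $\cW^{loc}$ is then at most $\sum_k\varepsilon_k\Phi\le\Phi/40$. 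Combining this with the $\Phi/40$ bound for $\cP^{-,0}$ gives the required $\Phi/20$.
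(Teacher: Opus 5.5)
Your architecture matches the paper's. You split $\cP^-=\cP^*\cap\cW^{loc}$ and treat $\cP^*$ by Lemma \ref{l:abstractperc} with $K=7$; your Markov bounds for the conditional events are correct. You then treat $\cW^{loc}$ scale by scale, using residue classes modulo the dependence range $K_k$, Lemma \ref{l:zbound}, and an entropy-versus-Chernoff count. The $\cW^{loc}$ step, however, fails with your choice $\varepsilon_k=2^{-k}/80$, and the binomial tail you display hides this.

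A residue class modulo $K_k$ contains only $N\approx\Phi/K_k$ indices. Once $L_2\ge 20$ we have $\varepsilon_k\Phi>\Phi/K_k$, so $\binom{\Phi/K_k}{\varepsilon_k\Phi}=0$. What pigeonhole actually gives is a class with at least $\varepsilon_k\Phi/K_k=\varepsilon_k N$ bad indices, whose tail is $(e\delta_k/\varepsilon_k)^{\varepsilon_k N}$. Against the entropy $\exp(c_0N\log(H_1K_k))$ of that class, you therefore need
\[
\varepsilon_k\log(1/\delta_k)\approx \tfrac{c}{80}\,2^{-k(1-\theta')}L_2^{\theta'} \quad\text{to dominate}\quad c_0\log(H_1K_k)\ge c_0\,k\log 2 .
\]
The exponent $\theta'$ in Lemma \ref{l:zbound} is a small stretched-exponential exponent; its proof gives $\theta'\approx\theta_2/10$, and nothing gives $\theta'\ge 1$. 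For $\theta'<1$ the left side tends to $0$ while the right side grows as $k$ ranges up to $(\log_2\log_2M)^2$, and $L_2$ is fixed before $M$. So the claim that the decay of $\delta_k$ beats the $2^k$ loss in $\varepsilon_k$ is false, and the per-scale bound $\exp(-c'\Phi 2^{-k})$ does not follow.

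The repair is to let the allowed bad fraction decay more slowly than $2^{-\theta' k}$ while keeping it summable. The paper allows $m^{-100}\Phi/1000$ bad columns at scale $m$, giving a total of at most $\Phi/40$ since $\sum_m m^{-100}<25$. Then $m^{-100}(2^mL_2)^{\theta'}$ dominates $\log(L^*H_1)$ uniformly in $m$ once $L_2$ is large. Each residue class then contributes at most $\exp(-c\Phi/L^*)\le M^{-1002}/L^*$, because $L^*\le L_2 2^{(\log_2\log_2M)^2+3}$ is negligible next to $\Phi=2^{(\log_2\log_2M)^5}$.

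Relatedly, $2^{-k}$ is not $(\log M)^{-o(1)}$ at the top scale, since $2^k$ can reach $(\log_2 M)^{\log_2\log_2 M}$. Your final summation still works, but only because of the comparison with $\Phi$ just described. With $\varepsilon_k$ replaced by any summable sequence decaying polynomially in $k$, the rest of your argument goes through.
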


Recall the definition of $\cP$. We set 
\begin{align*}
\cP_{i,j}^{*} &= \cB^{(1)}_{i,j}
\cap \Big \{\P\Big[\cB^{(2)}_{i,j}, \cB^{(3)}_{i,j} \mid \bomega(V_{i,j}^c)\Big] \geq \tfrac12 \Big \}
\cap \Big \{\P\Big[\cB^{(6)}_{i,j} \mid \bomega(V_{i,j}^c)\big)\Big] \geq 1-\frac{\delta_A}{100} \Big \}\\
&\qquad\bigcap_{i'\in {i-2,i+2}} \Bigg(\cD^{(1)}_{i',j}
\cap \Big \{\P[\cD^{(2)}_{i',j}, \cD^{(3)}_{i',j},\cD^{(4)}_{i',j}  \mid \bomega(V_{i',j}^{'c})] \geq 1/2 \Big \}\Bigg)\\
&\qquad \bigcap \cC_{i-1,j} \bigcap \cC_{i+1,j}.
\end{align*}
 Recall that 
$$\cP^{-}_{i,j}=\cP_{i,j}^{*}\cap \cW_{i,j}^{loc}.$$
We shall prove Lemma \ref{l:pminusPred} by controlling the two parts separately in the following two lemmas. 

\begin{lemma}
    \label{l:pminusPred1}
    For $L_0$ sufficiently large, the following holds for all $M$ sufficiently large.  For any $k_s,k_e$ such that {$|k_s|\le M^{4/5}\frac{W_n}{W_r}$} and $|k_s-k_e|\le \Phi^{9/10}$ we have that, 
    $$\P\left(\max_{\uk\in \fK_{i,\Phi,k_s,k_e}:\tau_1(\uk)\le H_1\Phi}\sum_{i'=i}^{i+\Phi-1} I((\cP^{*}_{i',k_{i'}})^c)\ge \frac{\Phi}{40}\right)\le M^{-1001}.$$
 
\end{lemma}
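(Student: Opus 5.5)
The plan is to reduce Lemma \ref{l:pminusPred1} to the abstract polymer estimate Lemma \ref{l:abstractperc}, applied with the bad events $B_{i',k}=(\cP^{*}_{i',k})^c$, $H=H_1$, $\varepsilon=1/40$. The union bound over the at most $M$ admissible pairs $(k_s,k_e)$ is unnecessary here, since they are fixed in the statement. The bound $\exp(-c\Phi)\le M^{-1001}$ then holds because $\Phi=2^{(\log_2\log_2 M)^5}$ grows faster than any power of $\log M$.

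\textbf{Step 1: finite-range dependence.} Check that $\{\cP^{*}_{i',k}\}_{k}$ and $\{\cP^{*}_{i'',k}\}_{k}$ are independent when $|i'-i''|\ge K$ for some absolute $K$, say $K=7$. The event $\cP^*_{i',k}$ involves only:
\begin{itemize}
\item $\cB$-type events in the column $[(i'-1)r,i'r]$;
\item the conditional probabilities $\P[\,\cdot\mid\bomega(V^c_{i',k})]$, which are measurable with respect to the field restricted to the blocks $\Lambda^+_{\cdot}$ lying in that column;
\item $\cC$-events in columns $i'\pm1$;
\item $\cD$-events in columns $i'\pm2$.
\end{itemize}
Each of these is computed with restricted distances inside the scale-$r$ columns. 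Since a scale-$r$ column is a union of $\Phi^\ell$ scale-$n$ columns, the weights in different columns use the independent fields $\omega^{\Lambda_{i}}$, and the block indicators $T_{ij}$ and the field $\comega$ are also local. Hence $\cP^*_{i',k}$ is measurable with respect to randomness attached to columns $i'-2,\dots,i'+2$. The one subtlety is that the conforming paths in the $\cA$ events start and end on column boundaries. However, the weight of a path crossing one column uses only that column's field, so boundaries do not couple non-adjacent columns.

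\textbf{Step 2: uniform smallness of $\P[(\cP^*_{i',k})^c]$.} Bound this by a union over its constituents and show the total can be made at most $\delta(1/40,7,H_1)$ by taking $L_0$ large. The pieces are handled as follows.
\begin{itemize}
\item $\cB^{(1)}$ and $\cD^{(1)}$ fail with probability at most $L_0^{-1}$, by Lemma \ref{l:Bbound} and \eqref{eq:D12.bound}.
\item $\cC_{i'\pm1,k}$ fail with probability at most $L_0^{-1}$ each, by \eqref{eq:cC.bound}.
\item For the conditional-probability events, use Markov's inequality. For example, $\P\big[\P[(\cB^{(2)}\cap\cB^{(3)})^c\mid\bomega(V^c)]>\tfrac12\big]\le 2\P[(\cB^{(2)}\cap\cB^{(3)})^c]\le 4L_0^{-1}$, by Lemma \ref{l:Bbound}. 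Similarly, $\cD^{(2)},\cD^{(3)},\cD^{(4)}$ together fail with probability at most $2(2L_0^{-1}+L_1^{-1})$, by \eqref{eq:D12.bound}, \eqref{eq:D3.bound} and \eqref{eq:D4.bound}.
\end{itemize}

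The obstacle is the $\cB^{(6)}$ term. It demands $\P[(\cB^{(6)})^c\mid\bomega(V^c)]\le \delta_A/100$, whereas Markov's inequality and \eqref{eq:B6.bound} give only a bound of order $\frac{1/(100L_0)\cdot\delta_A}{\delta_A/100}=L_0^{-1}$. Since $\delta_A$ depends on $L_0$, I will use this as stated: it yields exactly $L_0^{-1}$, which is still small. The total failure probability is then $O(L_0^{-1})$, uniformly in $i',k$, $n$, $M$ and $\ell$, for $|k|\le M W_n/W_r$ (the range covered by the elementary lemmas, and enforced here by $|k_s|\le M^{4/5}W_n/W_r$ and $\tau_1(\uk)\le H_1\Phi$).

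\textbf{Step 3: conclusion.} Choose $L_0$ large, depending on $H_1$ only, so that $O(L_0^{-1})\le\delta(1/40,7,H_1)$. Lemma \ref{l:abstractperc} then gives the bound $\exp(-c\Phi)\le M^{-1001}$ for $M$ large.

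One compatibility point needs checking: the parameter hierarchy fixes $L_0$ first, and $H_1$ must not depend on $L_0$. Lemma \ref{l:tau14.1} guarantees this, since its $H_1$ is independent of all the parameters.
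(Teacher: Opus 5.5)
Your proposal is correct and follows the paper's proof essentially step for step. Both reduce the statement to Lemma~\ref{l:abstractperc} with $K=7$, $\varepsilon=1/40$, $H=H_1$, using the locality of $\cP^*_{i',k}$ and the range $|k_{i'}|\le MW_n/W_r$, and both bound each constituent of $(\cP^*_{i',k})^c$ by $O(L_0^{-1})$ via Lemma~\ref{l:Bbound}, Markov's inequality, \eqref{eq:B6.bound}, \eqref{eq:D12.bound}, \eqref{eq:D3.bound} and \eqref{eq:cC.bound}. Your version is slightly more careful than the paper's: it also accounts for $\cD^{(4)}$ via \eqref{eq:D4.bound}, and it spells out the Markov computation for the $\cB^{(6)}$ term, where $\delta_A$ cancels to give exactly $L_0^{-1}$, which the paper leaves implicit.
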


\begin{lemma}
    \label{l:pminusPred2}
    For $L_0$ sufficiently large, the following holds for all $M$ sufficiently large.  For any $k_s,k_e$ such that {$|k_s|\le M^{4/5}\frac{W_n}{W_r}$} and $|k_s-k_e|\le \Phi^{9/10}$ we have that, 
    $$\P\left(\max_{\uk\in \fK_{i,\Phi,k_s,k_e}:\tau_1(\uk)\le H_1\Phi}\sum_{i'=i}^{i+\Phi-1} I((\cW_{i',k_{i'}}^{loc})^c)\ge \frac{\Phi}{40}\right)\le M^{-1001}.$$
\end{lemma}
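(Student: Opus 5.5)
The statement to prove is Lemma~\ref{l:pminusPred2}. The plan is a direct application of Lemma~\ref{l:abstractperc}. The events $\cW^{loc}_{i',k}$ are local, but their range grows with $k$: the event $\cZ$ at index $k$ looks at columns at horizontal distance about $2^{k+1}L_2$ from $i'$. So the first step is to split $\cW^{loc}_{i',k'}$ according to the dyadic index. For $1\le k\le (\log_2\log_2 M)^2$, let $B^{(k)}_{i',j}$ be the complement of
\[
\cZ_{i'-2^{k+1}L_2-3,j,k}\cap \cZ_{i'-2^kL_2-3,j,k}\cap \cZ_{i'+2,j,k}\cap\cZ_{i'+2^kL_2+2,j,k}.
\]
Then $(\cW^{loc}_{i',j})^c=\bigcup_k B^{(k)}_{i',j}$. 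By Lemma~\ref{l:zbound} and a union bound, $\P(B^{(k)}_{i',j})\le 4\exp(-c(2^kL_2)^{\theta'})$. The event $B^{(k)}_{i',j}$ depends only on the field in columns within distance $K_k:=2^{k+2}L_2+10$ of $i'$ (recall the column-wise independence of $\omega_*$). Hence the collections $\{B^{(k)}_{i',j}\}_j$ and $\{B^{(k)}_{i'',j}\}_j$ are independent once $|i'-i''|\ge 2K_k$.

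Next I allocate the budget $\Phi/40$ across the scales $k$. Choose $\varepsilon_k := \frac{1}{80}\,2^{-k/2}$, so that $\sum_k \varepsilon_k\le 1/40$. If $\sum_{i'} I((\cW^{loc}_{i',k_{i'}})^c)\ge \Phi/40$, then some $k$ has $\sum_{i'}I(B^{(k)}_{i',k_{i'}})\ge \varepsilon_k\Phi$. It therefore suffices to bound, for each $k$,
\[
\P\Big(\max_{\uk\in\fK_{i,\Phi,k_s,k_e}:\tau_1(\uk)\le H_1\Phi}\sum_{i'=i}^{i+\Phi-1} I(B^{(k)}_{i',k_{i'}})\ge\varepsilon_k\Phi\Big)
\]
by $M^{-1002}$, and then take a union bound over the $(\log_2\log_2M)^2$ values of $k$.

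This is the main obstacle: Lemma~\ref{l:abstractperc} as stated requires $\delta$ to depend on $(\varepsilon,K,H)$, and here both $K_k$ and $\varepsilon_k$ vary with $k$. I would redo its proof quantitatively rather than cite it. Split the indices $i'$ into residue classes mod $2K_k$; within one class the events are independent. In some class the count must be at least $\varepsilon_k\Phi/(2K_k)$ out of $\Phi/(2K_k)$ indices. Fix $\uk$ and count: the number of path choices is at most $\exp(c\Phi\log H_1)$ by \eqref{e:entropy}. The number of subsets of size $m=\varepsilon_k\Phi/(2K_k)$ is at most $2^{\Phi/(2K_k)}$. Each fixed subset has probability at most $(4e^{-c(2^kL_2)^{\theta'}})^{m}$. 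The resulting bound is
\[
\exp\Big(c\Phi\log H_1+\Phi-c'\,\tfrac{2^{-k/2}}{2^{k}L_2}\,\Phi\,(2^kL_2)^{\theta'}\Big).
\]
For this to be small we need the stretched exponential gain $(2^kL_2)^{\theta'}$ to beat the loss $2^{3k/2}L_2$, which fails for large $k$ when $\theta'<1$. So the naive allocation does not work, and I would refine it. The key point is that the union over path choices only needs to see which columns are bad, and the dependence range only kills a factor $K_k$ in the count. Instead of splitting into residue classes, I would use a direct union bound over subsets $S$ of bad columns of size $\varepsilon_k\Phi$: select a $2K_k$-separated subset of $S$ of size $\ge \varepsilon_k\Phi/(2K_k)$, whose events are independent. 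This still loses the factor $K_k$.

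The alternative I would pursue is to choose $\varepsilon_k=\frac{1}{80}\,2^{-k/2}$ only for $k\le k_0$, with $k_0$ a constant chosen so that the gain dominates up to $k_0$ once $L_2$ is large. For $k>k_0$, I would bound the total count crudely. For those $k$, $\P(B^{(k)})\le\exp(-c(2^kL_2)^{\theta'})$ is tiny, and the union over $i'\in[i,i+\Phi)$ and the heights $j$ that a path with $\tau_1\le H_1\Phi$ can visit (polynomially many in $\Phi$ and $M$) already shows, for $2^k\ge (\log M)^{2/\theta'}$, that no $B^{(k)}$ occurs at all with probability $1-M^{-1002}$. For the intermediate range $k_0<k<\frac{2}{\theta'}\log_2\log M$, we have $K_k\le (\log M)^{3/\theta'}\ll\Phi^{1/2}$ since $\Phi=2^{(\log_2\log_2M)^5}$. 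Here I would use $\varepsilon_k=\frac{1}{80}k^{-2}$. The per-class Chernoff estimate then gives an exponent of order $\Phi K_k^{-1}(\varepsilon_k(2^kL_2)^{\theta'}-\log H_1)$. Two issues remain: the entropy term $\Phi\log H_1$ is not divided by $K_k$, and for $\theta'<1$ the product $K_k\log H_1$ is not obviously controlled by the gain $(2^kL_2)^{\theta'}$. I expect to handle both by coarse-graining the path: I would record $k_{i'}$ only on a sparse set of representatives at spacing $2K_k$, together with the windows in between. Reconciling this entropy against the range of dependence is where I expect the real work to lie, and if it fails I would fall back on exploiting that $\Phi\ge M^{\epsilon}$-type largeness only through $\exp(-c\Phi/K_k)\le M^{-1002}$.
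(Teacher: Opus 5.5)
\textbf{There is a genuine gap: the proposal never closes the argument, and the step you leave open is the whole difficulty.} Your setup matches the paper's. You split $(\cW^{loc}_{i',j})^c$ by dyadic scale, bound each piece with Lemma~\ref{l:zbound}, identify a dependence range $K_k\asymp 2^kL_2$, and pass to residue classes mod $K_k$. Three things go wrong from there.
\begin{itemize}
\item Your count charges the full path entropy $\exp(c\Phi\log H_1)$ against a gain of order $\frac{\varepsilon_k\Phi}{K_k}(2^kL_2)^{\theta'}$. Since $K_k\ge 8L_2$ grows linearly in $L_2$ while the gain grows only like $L_2^{\theta'}$, this fails at \emph{every} scale once $L_2$ is large, and $L_2=L_1^{100}$ is large here. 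So your regime $k\le k_0$ does not work ``once $L_2$ is large'' either; enlarging $L_2$ makes it worse.
\item The coarse-graining you sketch at the end records $k_{i'}$ on a sparse set \emph{together with the windows in between}. That keeps the full entropy, so it does not help.
\item The fallback $\Phi\ge M^{\epsilon}$ is false: $\Phi=2^{(\log_2\log_2M)^5}=M^{o(1)}$.
\end{itemize}

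\textbf{The missing idea, which is what the paper does in Lemma~\ref{l:pminuslock}, is to maximize over subsequences rather than over $\uk$.}
\begin{itemize}
\item Fix a scale $k$ and a residue class $s$ mod $K_k$. The count $\sum_h I\big(B^{(k)}_{i+hK_k+s,\,k_{i+hK_k+s}}\big)$ depends on $\uk$ only through the subsequence $(k_{i+hK_k+s})_h$.
\item This subsequence has $N\approx\Phi/K_k$ entries and total variation at most $\tau_1(\uk)\le H_1\Phi=(H_1K_k)N$. By the counting behind \eqref{e:entropy}, there are at most $\exp\big(cN\log(K_kH_1)\big)$ such subsequences. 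The entropy is therefore divided by $K_k$, exactly like the number of independent trials, at the cost of only a factor $\log(K_kH_1)$.
\item For a fixed subsequence the indicators are independent, each with probability at most $4e^{-c(2^kL_2)^{\theta'}}$. Take any polynomially decaying budget, such as the paper's $m^{-100}/1000$ or your $k^{-2}/80$. A Chernoff bound then gives the exponent $N\big(c\log(K_kH_1)-c'\varepsilon_k(2^kL_2)^{\theta'}+O(1)\big)\le -N$, uniformly in $k$ once $L_2$ is large depending on $H_1$. This holds because $(2^kL_2)^{\theta'}$ grows exponentially in $k$, while $\varepsilon_k^{-1}$ and $\log(K_kH_1)$ grow at most polynomially.
\item Since $K_k\le L_2\,2^{(\log_2\log_2M)^2+3}\ll\Phi$, you get $K_k e^{-\Phi/K_k}\le M^{-1003}$. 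Union bounds over $s$ and over the $(\log_2\log_2M)^2$ scales then finish the proof in one pass.
\end{itemize}
Your large-$k$ union bound is valid but becomes unnecessary, and the three-regime split is not needed.
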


We first prove Lemma \ref{l:pminusPred1} which again uses Lemma \ref{l:abstractperc}. 

\begin{proof}[Proof of Lemma \ref{l:pminusPred1}]
    
    {Notice that each event in the definition of $\cP^{*}_{i',k_{i'}}$ depends only on the randomness in the region $[(i-3)r,(i+3)r]\times \R$.
    Furthermore, note that by the hypothesis on $k_s,k_e$ and $\tau_1(\uk)$ it follows that all $k_{i'}$s considered in the max in the statement of the lemma satisfies $|k_{i'}|\le \frac{MW_n}{W_r}$. Therefore, by applying Lemma \ref{l:abstractperc} as in the proof of Lemma \ref{l:pminusRperc} with $K=7$, $\varepsilon=1/40$ and $H=H_1$ it suffices to prove that 
    $$\P(\cP^*_{i',j})\ge 1-\delta$$
    for all $i\le i'\le i+\Phi$ and all $j$ with $|j|\le \frac{MW_n}{W_r}$
    where $\delta$ can be made arbitrarily small by choosing $L_0$ sufficiently large.} We shall therefore, need to show that the probability of each event in the definition of $P^*_{i',j}$ can be made arbitrarily close to 1 by choosing $L_0$ sufficiently large.

   {For the event $\cB^{(1)}_{i',j}$ this follows from Lemma \ref{l:Bbound}.}
   Lemma \ref{l:Bbound} together with Markov inequality implies 
    $$\P\Big [\P\Big[\cB^{(2)}_{i',j}, \cB^{(3)}_{i',j} \mid \bomega(V_{i',j}^c)\Big] \geq \tfrac12 \Big ] \ge 1-2\P\left((\cB^{(2)}_{i',j})^c \cup (\cB^{(2)}_{i',j})^c\right)\ge 1-2L_0^{-1} $$
    and we get the desired bound. For the event
    $$\Big \{\P\Big[\cB^{(6)}_{i',j} \mid \bomega(V_{i',j}^c)\big)\Big] \geq 1-\frac{\delta_A}{100} \Big \}$$
the desired bound follows from \eqref{eq:B6.bound} together with an application of Markov's inequality as above. For the events $\cD_{i',j}$ the required bound is given in \eqref{eq:D12.bound}. For the events 
$$\Big \{\P[\cD^{(2)}_{i',j}, \cD^{(3)}_{i',j},\cD^{(4)}_{i',j}  \mid \bomega(V_{i,j}^{'c})] \geq 1/2 \Big \}$$
the desired bound follows from \eqref{eq:D12.bound} and \eqref{eq:D3.bound} together with another application of Markov's inequality. The bounds for the events $\cC_{i'-1,j}$ and $\cC_{i'+1,j}$ are given in \eqref{eq:cC.bound}. Combining all these we get 
$$\P(\cP^*_{i',j})\ge 1-\delta$$
    where $\delta$ can be made arbitrarily small by choosing $L_0$ sufficiently large, and the proof of the lemma is completed by invoking Lemma \ref{l:abstractperc} as explained above.    
\end{proof}

It remains now to prove Lemma \ref{l:pminusPred2}. Since the events $\cW_{i',k_{i'}}^{loc}$ do not have finite range of dependence in $i'$, this cannot be done by using Lemma \ref{l:abstractperc} directly. Instead we divide the events into different parts corresponding to the different scale $k$ in its definition. For convenience of notation, let us set for $1\le m \le (\log_2\log_2 M)^2$ 

$$\mathfrak{Z}_{i,j,m}=\cZ_{i-2^{m+1}L_2-3,j,m}\cap \cZ_{i-2^{m}L_2-3,j,m} \cap \cZ_{i+2,j,m}\cap \cZ_{i+2^{m}L_2 +2 ,j,m}.$$
Recall that 

$$\cW_{i,j}^{loc}=\bigcap_{m=1}^{(\log_2\log_2 M)^2} \mathfrak{Z}_{i,j,m}.$$

We shall prove the following lemma.

\begin{lemma}
    \label{l:pminuslock}
    Let $k_s,k_e$ be fixed such that {$|k_s|\le M^{4/5}\frac{W_n}{W_r}$} and $|k_s-k_e|\le \Phi^{9/10}$. For each $1\le m \le (\log_2\log_2 M)^2$ we have 
    $$\P\left(\max_{\uk\in \fK_{i,\Phi,k_s,k_e}:\tau_1(\uk)\le H_1\Phi}\sum_{i'=i}^{i+\Phi-1} I(\mathfrak{Z}_{i',k_{i'},m}^{c}) \ge \frac{m^{-100}\Phi}{1000}\right)\le M^{-1002}. $$
\end{lemma}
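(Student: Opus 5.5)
Fix $m$ and set $K_m:=2^{m+1}L_2+10$. The event $\mathfrak{Z}_{i',j,m}$ is an intersection of four events $\cZ$, and each one depends only on the field in columns lying within horizontal distance $2^{m+1}L_2+4$ of $i'$. So for $|i'-i''|\ge K_m$ the collections $\{\mathfrak{Z}_{i',k}\}_k$ and $\{\mathfrak{Z}_{i'',k}\}_k$ are independent, since the restricted distances use independent randomness in distinct columns. The plan is to run the argument of Lemma \ref{l:abstractperc}, but with the dependence range $K_m$ and the threshold $\varepsilon_m=m^{-100}/1000$ both tracked explicitly in $m$. The point is that the bad probability is far smaller than either of these.

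\textbf{Step 1 (single-site bound).} By Lemma \ref{l:zbound} and a union bound over the four constituent events,
\[
\P(\mathfrak{Z}^c_{i',j,m})\le 4\exp\bigl(-c(2^mL_2)^{\theta'}\bigr)=:p_m
\]
for all $|j|\le MW_n/W_r$. Every $k_{i'}$ in the admissible class satisfies this bound on $|j|$, because $|k_s|\le M^{4/5}W_n/W_r$ and $\tau_1(\uk)\le H_1\Phi$.

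\textbf{Step 2 (decoupling).} Split the indices $i'\in[i,i+\Phi-1]$ into the $K_m$ residue classes modulo $K_m$. If the total sum is at least $\varepsilon_m\Phi$, then some residue class $\rho$ has
\[
\sum_{i'\equiv\rho}I(\mathfrak{Z}^c_{i',k_{i'},m})\ge \varepsilon_m\Phi/K_m.
\]
Within a class the events are independent across distinct $i'$. Fix a class, and take the union bound over $\uk$ together with the choice of a set $S$ of $\lceil\varepsilon_m\Phi/K_m\rceil$ indices in that class where the bad event occurs. The number of $\uk$ is at most $\exp(c\Phi\log H_1)$ by \eqref{e:entropy}, the number of sets $S$ is at most $2^{\Phi}$, and each fixed configuration has probability at most $p_m^{|S|}$. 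The resulting bound is
\[
K_m\exp\bigl(c\Phi\log H_1+\Phi\log 2-\tfrac{\varepsilon_m\Phi}{K_m}\,c(2^mL_2)^{\theta'}\bigr).
\]

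\textbf{Step 3 (the main obstacle: balancing entropy against rarity).} This is where I expect the difficulty to lie. The rarity exponent per unit of $\Phi$ is $\varepsilon_m K_m^{-1}(2^mL_2)^{\theta'}\asymp m^{-100}2^{-m}L_2^{-1}(2^mL_2)^{\theta'}$. Since $\theta'<1$, this decays in $m$, so the crude bound fails for large $m$.

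The fix I would try first is to avoid paying the full path entropy $\exp(c\Phi\log H_1)$. Within a residue class only the values $k_{i'}$ at the $\Phi/K_m$ sampled indices matter. Consecutive sampled indices differ in $k$ by amounts whose sum is at most $H_1\Phi$, so the relevant entropy is
\[
\exp\bigl(c(\Phi/K_m)\log(H_1K_m)\bigr).
\]
The exponent then becomes
\[
\frac{\Phi}{K_m}\Bigl(c\log(H_1K_m)+\log 2-\varepsilon_m c(2^mL_2)^{\theta'}\Bigr),
\]
and in the subset count $2^\Phi$ one should likewise count subsets of the class only, giving $2^{\Phi/K_m}$. Now $\varepsilon_m(2^mL_2)^{\theta'}=10^{-3}m^{-100}(2^mL_2)^{\theta'}$ dominates $\log(H_1 2^{m}L_2)$ uniformly in $m\ge1$, once $L_2$ is large, since $2^{m\theta'}$ beats $m^{101}$ after fixing $L_2$ large. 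This gives the bound $\exp(-\Phi/K_m)$.

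\textbf{Step 4 (conclusion).} For $m\le(\log_2\log_2M)^2$ we have
\[
K_m\le 2^{(\log_2\log_2M)^2+1}L_2\ll\Phi/\log M,
\]
because $\Phi=2^{(\log_2\log_2M)^5}$. Hence $K_m\exp(-\Phi/K_m)\le M^{-1002}$ for $M$ large, which is the claimed bound.
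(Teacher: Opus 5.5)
Your argument is essentially the paper's proof: residue classes modulo a spacing of order $2^mL_2$ (the paper uses $L^*=2^{m+3}L_2$), entropy counted only over the $k$-values at the sampled indices, giving $\exp(c\Phi\log(H_1L^*)/L^*)$, and Lemma \ref{l:zbound} together with a Chernoff/union bound to beat that entropy uniformly in $m$ once $L_2$ is large, which yields $\exp(-c\Phi/L^*)\le M^{-1002}$. One slip needs fixing: $\mathfrak{Z}_{i',\cdot,m}$ depends on the columns from $i'-2^{m+1}L_2-2$ to $i'+2^{m+1}L_2+2$, so the windows of two such collections overlap unless $|i'-i''|\ge 2^{m+2}L_2+5$; your spacing $K_m=2^{m+1}L_2+10$ therefore does not give independence, but taking $K_m=2^{m+3}L_2$ as the paper does changes nothing else, since only $K_m\asymp 2^mL_2$ enters.
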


First we complete the proof of Lemma \ref{l:pminusPred2} using Lemma \ref{l:pminuslock}. 

\begin{proof}[Proof of Lemma \ref{l:pminusPred2}]
    Let $A_{m}$ (locally) denote the event that 
    $$\sum_{i'=i}^{i+\Phi-1} I(\mathfrak{Z}_{i',k_{i'},m}^{c}) \ge \frac{m^{-100}\Phi}{1000}.$$
    Since $\sum_{\ell=1}^{\infty} \ell^{-100}<25$ it follows that on the event $\cap_{m} A^{c}_{m}$ we have 
    $$\sum_{i'=i}^{i+\Phi-1} I((\cW_{i',k_{i'}}^{loc})^c) \le \frac{\Phi}{40}.$$
    The desired result now follows from Lemma \ref{l:pminuslock} and a union bound over $m$.
\end{proof}

Finally, we provide the proof for Lemma \ref{l:pminuslock}. This will use {Lemma \ref{l:zbound}} and will be similar to the proof Lemma \ref{l:pminusPred1} except that instead of using an abstract result like Lemma \ref{l:abstractperc}, we shall need to keep track of the range of dependence more carefully. 

\begin{proof}[Proof of Lemma \ref{l:pminuslock}]
    Let us first fix $1\le m \le (\log_2 \log_2 M)^2$, and also $\uk\in \fK_{i,\Phi,k_s,k_e}$. For convenience of notation, let us set $L^*=2^{m+3}L_2$. Observe now that by definition of $\mathfrak{Z}_{i',j,m}$ it follows that for each $s\in \{0,1,\ldots, L^*-1\}$, the events 
    $\{\mathfrak{Z}_{i+hL^*+s,k_{i+hL^*+s},m}\}_{h}$ are independent where $h$ varies over all nonnegative integers such that $i+hL^*+s\in [i,i+\Phi-1]\cap \Z$. Let $A_{s,m}$ denote (again, locally) the events that for a fixed $s$ as above we have 
    $$\max_{\uk\in \fK_{i,\Phi,k_s,k_e}:\tau_1(\uk)\le H_1\Phi}\sum_{h}  I(\mathfrak{Z}_{i+hL^*+s,k_{i+hL^*+s},m}^{c}) \ge \frac{L_*^{-1}m^{-100}\Phi}{1000}.$$
    To prove the lemma, it suffices (by a simple union bound over $s$) to prove that for each $s$, we have 
    $$\P(A_{s,m})\le L_*^{-1}M^{-1002}.$$
    From now on fix $s\in \{0,1,2,\ldots, L^*-1\}$. Notice that, in the maximum over $\uk$, we only need maximize over all possible choices of $k_{i+hL_*+s}$ as $h$ varies. It is not hard to see by using the same argument as in \eqref{e:entropy} that the number of distinct tuples of such $k_{i+hL_*+s}$'s corresponding to some $\uk\in \fK_{i,\Phi,k_s,k_e}$ with $\tau_1(\uk)\le H_1\Phi$ is upper bounded by 
    $$\exp(c\Phi \log (L_*H_1)/L_*).$$
    We now upper bound $\P(A_{s,m})$ by first fixing a choice of  $k_{i+hL_*+s}$'s as above, then bounding 
      $$\P\left(\sum_{h}  I(\mathfrak{Z}_{i+hL^*+s,k_{i+hL^*+s},\ell}^{c}) \ge \frac{L_*^{-1}m^{-100}\Phi}{1000}\right)$$ by
    using the independence of the indicators,  Lemma \ref{l:zbound} (which states that the probability of each of the indicator in the above sum is upper bounded by $\exp(c'(L_*)^{\theta'})$ for some $c',\theta'>0$, {notice that Lemma \ref{l:zbound} is applicable since all possible choices of $k_{i+hL_*+s}$ satisfies, by the hypothesis on $k_s,k_e$ and $\tau_1(\uk)$, $|k_{i+hL_*+s}|\le M$}) and a Chernoff bound, and finally taking a union bound over all choices of  $k_{i+hL_*+s}$'s. 
    This leads to
    $$\P(A_{s,m})\le \exp(c\Phi \log (L_*H_1)/L_*)\exp\left(-\frac{L_*^{-1}m^{-100}\Phi}{1000}\log \bigl(\frac{m^{-100} \exp(c'(L^*)^{\theta'})}{1000}\bigr)\right).$$
  Clearly, if $L_0$ (and hence $L_2$) is chosen sufficiently large (depending on $c',\theta'$ and {$H_1$}, but not on $m$) we get for all $m$
    $$\log \bigl(\frac{m^{-100} \exp(c'(L^*)^{\theta'})}{1000}\bigr)\ge 
     1000(L_*)^{\theta'/2}m^{1000}$$ which implies that (again using $L_2$ sufficiently large) for some $c>0$
     $$\P(A_{s,m}) \le \exp(-c\Phi/L_*) \le L_*^{-1}M^{-1002}$$
     where the final inequality follows from noticing $L^*\le L_22^{(\log_2\log_2 M)^2+3}$ and $\Phi=2^{(\log_2\log_2 M)^5}$
     and choosing $M$ sufficiently large (depending on $L_2$). This completes the proof of the lemma. 
\end{proof}

\subsection{Proof of Lemma \ref{l:tau14.1}} 

Notice first that by $Q_{\Phi r}=O(\Phi^{0.51}Q_r)$ (this follows from \cite{BSS23}; see in particular Lemma 7.3 there and the comment following it) and using the definition of $W_{\cdot}$ we have 
$$W_{\Phi r}/W_{r}\le \Phi^{4/5}.$$

We next want want to show that it suffices to further restrict the event in the lemma by asking that {$|J_{i}|\le M^{4/5}\frac{W_n}{W_r}$} and $$|J_{i+\Phi}-J_{i}|\ge \Phi^{9/10}.$$
To do this we make use of the transversal fluctuation estimate from Lemma \ref{l:proxytrans}, and some associated estimates that will be proved in Appendix \ref{s:proxytrans}. By Lemma \ref{l:proxytrans} we have that $\P(|J_{i}|\ge M^{4/5}\frac{W_n}{W_r})\le M^{-2002}$.  Also, for $H$ as in Lemma \ref{l:2.8}, let $M$ be sufficiently large so that $(\log M)^{H}\le \Phi^{1/10}$, applying this we have 
$$\P(|J_{i}|\le M^{4/5},|J_{i+\Phi}-J_{i}|\ge \Phi^{9/10}) \le M^{-2001}. $$

It therefore suffices to show that for $H_1$ sufficiently large
\begin{equation}
    \label{eq:tau1phi}
\P\left(\sum_{i'=i}^{i+\Phi-1} |J_{i'+1}-J_{i'}|\ge H_1\Phi, |J_{i}|\le M^{4/5}\frac{W_n}{W_r}, |J_{i+\Phi}-J_{i}|\le \Phi^{9/10} \right) \le M^{-2000}.
\end{equation}

Since the number of pairs $(J_{i+\Phi},J_{i})$ satisfying the constraints above is bounded by $M^2$ it suffices to prove the following general fact. As before, fix $k_s=k_{i}$ and $k_e=k_{i+\Phi}$ with {$|k_s|\le M^{4/5}\frac{W_n}{W_r}$} and $|k_s-k_e|\le \Phi^{9/10}$. For $u\in \ell_{ir,kr,(k+1)W_{r}}$ and $v\in \ell_{(i+\Phi)r, k'W_{r},(k'+1)W_r}$ and for $i\le i' \le i+\Phi$ set $J^{uv}_{i'}=\lfloor \frac{y_{i'}}{W_r} \rfloor$ where $(i'r,y_{i'})$ is the point where $\gamma_{uv}$ intersects 
the line $x=i'r$. 
 
Define
$$\tau_1(\gamma_{uv})=\sum_{i'} |J^{uv}_{i'}-J^{uv}_{i'-1}|,$$
we have the following lemma which is analogous to Proposition \ref{p:tau2perc}. 
\begin{lemma}
    \label{l:tau14.1gen}
    There exists $H_1>0$, $c,\theta'>0$ such that for all $k_s,k_e$ with $|k_s|\le M^{4/5}\frac{W_n}{W_r}, |k_s-k_e|\le  \Phi^{9/10}$ we have 
    $$\P\left(\max_{u\in \ell_{ir,k_sW_r,(k_s+1)W_r}, v\in \ell_{(i+\Phi)r, k_eW_r, (k_e+1)W_r}} \tau_1(\gamma_{uv}) \ge H_1\Phi\right)\le \exp(-c\Phi^{\theta'}).$$    
\end{lemma}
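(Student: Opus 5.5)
We will follow the proof of Proposition \ref{p:tau2perc}, now at scale $r$ with $\Phi$ columns. Abbreviate $\tau_1(\uk)=\sum_{i'}|k_{i'}-k_{i'-1}|$ and write $\Delta=k_{i'}-k_{i'-1}$ for a single increment.

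The idea is a comparison between the geodesic's passage time and the passage time of a competing path. Let $\uk\in\fK_{i,\Phi,k_s,k_e}$ be a sequence of column crossings visited by $\gamma_{uv}$.

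\textbf{Lower bound along the geodesic.} By Proposition \ref{p:paraestimateconforming}(ii) and the Pythagorean expansion, the passage time of any conforming path visiting the boxes indexed by $\uk$ satisfies
\[
\rX \ge \Phi r+\sum_{i'}\Big(\tfrac{1}{2}(|\Delta|-1)_+^2-\mathcal{V}_{i',k_{i'-1},k_{i'}}\Big)Q_r .
\]
Here $\mathcal{V}_{i',k,k'}$ is the negative part of the centered minimal passage time across the parallelogram from $\ell_{(i'-1)r,kW_r,(k+1)W_r}$ to $\ell_{i'r,k'W_r,(k'+1)W_r}$, normalized by $Q_r$. These variables have stretched exponential tails uniformly in $k,k'$ (Proposition \ref{p:paraestimateconforming}), so they satisfy the hypothesis of Proposition \ref{p:perc1}. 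They are independent across $i'$ because we work with conforming distances.

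\textbf{Upper bound for a competitor.} By Proposition \ref{p:constrainerX}, together with stretched exponential tails for crossings of the boxes along the straight line from $u$ to $v$, we get
\[
\rX_{uv}\le \Phi r+\frac{(k_e-k_s)^2W_r^2}{2\Phi r}+C\Phi Q_r
\]
with probability $1-\exp(-c\Phi^{\theta'})$. This step sums independent stretched exponential variables, which is a standard concentration bound. Since $|k_e-k_s|\le\Phi^{9/10}$, the quadratic term is at most $\Phi^{4/5}Q_r\ll\Phi Q_r$.

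\textbf{Comparison.} Since $\gamma_{uv}$ is optimal, it follows that
\[
\sum_{i'}\tfrac{1}{2}(|\Delta|-1)_+^2\le C\Phi+\sum_{i'}\mathcal{V}_{i',k_{i'-1},k_{i'}} .
\]
We apply Proposition \ref{p:perc1}, with $M$ replaced by $\Phi$, separately on each dyadic range $\tau_2(\uk)\in[2^m R\Phi,2^{m+1}R\Phi]$. On this range the maximal $\mathcal{V}$-sum is at most $(C_3+C_4(2^{m+1}R)^{3/4})\Phi+z$, except with probability $C_5\exp(-C_6z^{\xi/4})$. The left-hand side is at least roughly $\frac14\tau_2-\Phi$. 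Because $\tau_2$ grows linearly while the $\mathcal{V}$-sum bound grows only like $\tau_2^{3/4}$, a large $\tau_2$ is impossible once $R$ is large. Take $z=\Phi$ times a small constant times $2^{m}$, and sum over $m$ up to the trivial a priori bound, which is polynomial in $n$.

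\textbf{Conclusion.} This gives $\tau_2(\gamma_{uv})\le R\Phi$ with probability $1-\exp(-c\Phi^{\theta'})$, simultaneously for all $u,v$, because the maximum over $\uk$ inside Proposition \ref{p:perc1} is already uniform over endpoints in the given segments. Cauchy--Schwarz then gives $\tau_1\le\sqrt{\Phi\,\tau_2}\le\sqrt{R}\,\Phi$, so we can take $H_1=\sqrt R$.

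\textbf{Main obstacle.} The key difficulty is the lower bound when a crossing of the geodesic is not a clean parallelogram crossing. The canonical intersection points may not be adjacent column crossings with bounded overhang, so a geodesic could reach box $k_{i'}$ through a long excursion. We handle this using the definition of conforming paths: a single column segment connects exactly the two canonical boundary points, and its passage time is at least $\rX$ between them. That quantity is bounded below via Proposition \ref{p:paraestimateconforming}(ii), which is uniform in the slope $|k'-k|$. We need uniform tails for $|j_1W_r|,|j_2W_r|\le n^\beta W_n$. Boxes beyond this range are excluded with overwhelming probability using Lemma \ref{l:proxytrans:intro}, since $|k_s|\le M^{4/5}W_n/W_r$.
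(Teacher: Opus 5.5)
Your argument is correct and is essentially the paper's. The paper reduces via Cauchy--Schwarz ($\tau_1\ge H_1\Phi\Rightarrow\tau_2\ge H_1^2\Phi$) to Lemma~\ref{l:tau2percgen} with $D=\Phi$, $s_1=k_s$, $s_2=k_e$. That lemma is proved by exactly the comparison you re-derive inline: minimal parallelogram crossing times with a Pythagorean quadratic term, set against an upper bound on $\max_{u,v}\rX_{uv}$, and fed into Proposition~\ref{p:perc1} through the dyadic-in-$\tau_2$ Corollary~\ref{c:tau2perc}.

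Two minor points. First, the constraint $|y|\le n^\beta W_n$ at column boundaries holds deterministically for conforming paths, so Lemma~\ref{l:proxytrans:intro} is not needed there. Second, your competitor bound on $\rX_{uv}$ must hold simultaneously for all $u,v$ in the two segments. You can get this by fixing interior points on the straight line and using Proposition~\ref{p:paraestimateconforming}(i) in the first and last columns.
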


Using Lemma \ref{l:tau14.1gen} together a union bound over all possible  $(k,k')$, \eqref{eq:tau1phi} follows. This completes the proof of Lemma \ref{l:tau14.1} modulo Lemma \ref{l:tau14.1gen}, whose proof is given below. 

\begin{proof}[Proof of Lemma \ref{l:tau14.1gen}]
Fix $k_s,k_e$ as in the statement of the lemma. Observe that by Cauchy-Schwarz inequality 
$$\tau_1(\gamma_{uv})\ge H_1\Phi \Rightarrow \sum_{i'} (J^{uv}_{i'}-J^{uv}_{i'-1})^2\ge H_1^2\Phi.$$
The result now follows by applying Lemma \ref{l:tau2percgen} with $s_1=k_s,s_2=k_e$ and $D=\Phi$. 
\end{proof}

\section{Glauber resampling analysis}
\label{s:glauber}

The main result of this section is to prove Theorem~\ref{t:Pplus} which holds that (suppressing the dependence on $n,m,\ell$) a constant fraction of the events $\cP_{i',J_{i'}}$ hold for any $\Phi$ many consecutive $i'$ with large probability.  Lemma~\ref{l:Pminus.perc} already established this for a large constant fraction of the $\cP^-_{i',J_{i'}}$ which constitutes the ``likely'' part of $\cP_{i',J_{i'}}$ and omits the barrier events from the outer columns as well as events in the central column, most importantly $\cB_{i,j}^{(4)}$ that creates a channel for an alternative good path after resampling.

To prove Theorem~\ref{t:Pplus}, we will show that the for the $i'$ for which $\cP^-_{i',J_{i'}}$ hold, the events $\cP_{i',J_{i'}}$ stochastically dominate independent Bernoulli random variables with success probability bounded below. Our approach will, for each $i'$, resample the relevant regions of the field and then check if the event  $\cP_{i',J_{i'}}$ holds; see Figure \ref{f:resamplingG}. Then, since resampling does not change the distribution, we can use this to give probabilistic bounds on the number of $i'$ for which $\cP_{i',J_{i'}}$ holds. It is important to note that the resampling here is not the same resampling as when we resample a $\kappa$ fraction of blocks. It is a separate argument where we prove some properties of a measure (here the pair of fields before and after updating the $\kappa$ fraction of blocks) by doing a Glauber dynamics style resampling (with respect to which the measure is stationary) of different regions of the field. To highlight the difference we refer to this as Glauber resampling.

A key challenge here is that we resample, conditional on the geodesic remaining fixed.  We will also do the resampling separately for the outer and central columns.  For the outer columns, since conditioning on the geodesic is an increasing event away from the geodesic and the barrier events are increasing, we may make use of the FKG inequality.  The events in the central column are not exclusively increasing and so the will be more delicate making use of Lemma~\ref{c:path.sepatated} and its characterization of which paths can be optimal under $\cP$.

We will begin with a simple lemma showing that a general resampling scheme preserves the distribution.  Let $\varphi$ be a spatially independent random field on some space $U$.  Let $\cS_1,\ldots,\cS_k$ be disjoint events on $\varphi$ and let $U_j$ be subsets of $U$.  We define a {\bf resampling operator} $T=T_{\{\cS_j\},\{U_j\}}$ as as follows.  If $\varphi\in\cS_j$ then set $T\varphi(U_j)$ according to the law $\P[\varphi(U_j) \in \cdot \mid \varphi(U_j^c),\cS_j]$ and set $T\varphi(U_j^c)=\varphi(U_j^c)$.  For $\varphi\in (\bigcup_{j=1}^k \cS_j)^c$ then set $T\varphi=\varphi$.
\begin{lemma}
The pair $(\varphi,T\varphi)$ is exchangeable.
\end{lemma}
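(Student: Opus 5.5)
I will prove exchangeability by showing that for bounded measurable $F,G$ we have $\E[F(\varphi)G(T\varphi)]=\E[G(\varphi)F(T\varphi)]$. The plan is to decompose according to which event $\varphi$ lies in.

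First, the events must have the right structure. I will assume, as the construction intends, that membership in $\cS_j$ is determined consistently under the resampling. Concretely, $T\varphi\in\cS_j$ whenever $\varphi\in\cS_j$, since the new configuration is drawn from the law conditioned on $\cS_j$. The events $\cS_1,\dots,\cS_k$ are disjoint, so the event that $\varphi\in\cS_j$ is the same as the event that $T\varphi\in\cS_j$. Likewise $\varphi\notin\bigcup_j\cS_j$ exactly when $T\varphi=\varphi$ lies outside the union. Indeed, if $T\varphi$ were in some $\cS_{j'}$ with $\varphi\notin\bigcup_j\cS_j$, then $T\varphi=\varphi$ would contradict this.

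Next, split the expectation as
\[
\E[F(\varphi)G(T\varphi)]=\sum_{j=1}^k\E[F(\varphi)G(T\varphi)I(\cS_j)]+\E[F(\varphi)G(\varphi)I((\textstyle\bigcup_j\cS_j)^c)].
\]
The last term is symmetric in $F,G$.

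For the $j$-th term, I will condition on $\varphi(U_j^c)$ and on the event $\cS_j$, using spatial independence to write $\varphi=(\varphi(U_j^c),\varphi(U_j))$. Under $\P[\,\cdot\mid\cS_j]$, the conditional law of $\varphi(U_j)$ given $\varphi(U_j^c)$ is the kernel $\mu_{\varphi(U_j^c)}=\P[\varphi(U_j)\in\cdot\mid\varphi(U_j^c),\cS_j]$. By construction $T\varphi(U_j)$ is an independent draw from this same kernel, and $T\varphi(U_j^c)=\varphi(U_j^c)$. Hence, given $\varphi(U_j^c)$ and $\cS_j$, the pair $(\varphi(U_j),T\varphi(U_j))$ is i.i.d.\ with law $\mu_{\varphi(U_j^c)}$, and this pair is exchangeable. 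Integrating over the law of $\varphi(U_j^c)$ restricted to $\cS_j$ gives
\[
\E[F(\varphi)G(T\varphi)I(\cS_j)]=\E[G(\varphi)F(T\varphi)I(\cS_j)].
\]
Summing over $j$ and adding the symmetric last term yields the claim.

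The only delicate point is measure-theoretic: the existence of regular conditional laws, and the fact that $\cS_j$ is an event whose conditional probability given $\varphi(U_j^c)$ is positive wherever we condition. On the null set where that conditional probability vanishes, $\varphi\in\cS_j$ occurs with probability zero, so this set does not matter. Regular conditional distributions exist because the fields (Poisson or white noise on bounded regions) live on standard Borel spaces.
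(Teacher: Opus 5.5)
Your proof is correct and takes essentially the same route as the paper. Both arguments decompose according to which $\cS_j$ (or the complement $\cS_0$) the configuration lies in, use that $T$ never moves between these events and is the identity on $\cS_0$, and on $\cS_j$ use that, conditionally on $\varphi(U_j^c)$ and $\cS_j$, the restrictions $\varphi(U_j)$ and $T\varphi(U_j)$ are i.i.d.; working with test functions $F,G$ instead of events $A,A'$ is only cosmetic, and your explicit conditioning on $\cS_j$ is slightly more careful than the paper's final display.
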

\begin{proof}
Set $\varphi' = T \varphi$ and write $\cS_0= (\bigcup_{j=1}^k \cS_j)^c$.  To show exchangeability we must show that for all events $(A,A')$ that
\begin{equation}\label{eq:exchange.defn}
\P[\varphi\in A,\varphi'\in A'] = \P[\varphi'\in A,\varphi\in A'].
\end{equation}
Note that by construction of $T$, the resampling never moves between $\cS_j$ so for $j\neq j'$,
\[
\P[\varphi\in \cS_j,\varphi'\in \cS_{j'}] = 0,
\]
and so
\[
\P[\varphi\in A,\varphi'\in A'] = \sum_{j=0}^k \P[\varphi\in A \cap \cS_j,\varphi'\in A' \cap \cS_j].
\]
Hence it is enough to prove~\eqref{eq:exchange.defn} for all $j\in\{0,\ldots,k\}$ and $A,A'\subset \cS_j$.  When $j=0$ this is trivially true because $\varphi=\varphi'$ on this event.  For $j\geq 1$,
\begin{align*}
\P[\varphi\in A,\varphi'\in A'] &= \E\Big[\P[\varphi\in A \mid \varphi(U_j^c)] \P[\varphi\in A' \mid \varphi(U_j^c)]\Big]\\
&= \P[\varphi\in A',\varphi'\in A],
\end{align*}
which completes the proof.
\end{proof}

\subsection{Outer Barriers}
We show that along $\gamma$ a constant fraction of site have $\cP_{i,j}^-\cap \cD_{i-2,j}\cap \cD_{i+2,j}$.  Define the events 
\[
\cS^{n,M,\ell}_{i,j} =\cP_{i,j}^{-,n,M,\ell} \cap \cR^{n,M,\ell}_{i} \cap \{J^{n,M,\ell}_{i} = j\}
\]
for {$-M^{8/10}W_n\leq jW_r \leq M^{8/10}W_n$} and set {$\cS^{n,M,\ell}_{i,j}$} to be the empty set when $|jW_r| > M^{8/10}W_n$.  Let  $\xi=2^{(\log_2 \log_2 M)^3}$ and $\Delta=\Phi\xi^{-1}$.

{Since in this section we are proving Theorem \ref{t:Pplus} which deals with only a fixed scale $r=r_{\ell}$, from now on we shall assume that $n,M,\ell$ are fixed and will drop the sub and superscripts. We have the following lemma.}

\begin{lemma}\label{l:Pminus.D.perc}
There exists $M_0$ such that for all $M\geq M_0$ and all $n$ sufficiently large and $0\leq \ell\leq \ell_{\max}$ and $2M^{99/100}\leq  i\Phi^\ell \leq (M-2M^{99/100})$,
\begin{align*}
\P\Bigg[&\sum_{t=1}^{\Delta} I(\cD_{i+t\xi-2,J_{i+t\xi}},\cS_{i+t\xi,J_{i+t\xi}}) + \Delta^{2/3} \geq \frac{\delta_C}{2} \sum_{t=1}^{\Delta} I(\cS_{i+t\xi,J_{i+t\xi}}) \Bigg] \geq 1 - M^{-200}.
\end{align*}
\end{lemma}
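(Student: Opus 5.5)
The plan is to use the Glauber resampling scheme just introduced, applied sequentially to the outer-left columns $[(i+t\xi-3)r,(i+t\xi-2)r]$ for $t=1,\dots,\Delta$. These columns are separated by $\xi r$, which is much larger than the range of dependence of $\cD$. For each $t$, let $U_t$ be the region of $\bomega$ on which $\cD^{(5)}_{i+t\xi-2,j}$ depends; roughly, this is the part of the outer-left column lying in $\hV'_{j}$. We also include the part of $V'_{i+t\xi-2,j}$ that carries $\cD^{(2)},\cD^{(3)},\cD^{(4)}$.

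On the event $\cS_{i+t\xi,j}$ we resample $\bomega(U_t)$ from its conditional law given $\bomega(U_t^c)$ and $\cS_{i+t\xi,j}$. For distinct $j$ the events $\cS_{i+t\xi,j}$ are disjoint, so the lemma on resampling operators applies. The resulting field $T_t\bomega$ is exchangeable with $\bomega$, and in particular has the same law. Composing $T_1,\dots,T_\Delta$ one after another, each step preserves the law.

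The key single-step estimate I would establish is
\[
\P\big[\cD_{i+t\xi-2,j}\mid \bomega(U_t^c),\cS_{i+t\xi,j}\big]\ge \tfrac{\delta_C}{2}\cdot\tfrac12 \quad\text{on } \cS_{i+t\xi,j}.
\]
The constants should be adjusted so that $\delta_C/2$ comes out, for example by choosing $U_t$ so that $\cD^{(1)}$ is already measurable outside $U_t$ and is part of $\cP^-$.

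The proof of this estimate has three ingredients. First, given $\bomega(U_t^c)$, the event $\cP^{-}_{i+t\xi,j}$ already gives conditional probability at least $1/2$ for $\cD^{(2)}\cap\cD^{(3)}\cap\cD^{(4)}$. Second, $\cD^{(5)}$ consists of $\cI^+$ and $\cJ$ events, which are increasing, and has probability at least $\delta_C$ by \eqref{eq:D5.bound}. Third, the conditioning on $\{J_{i+t\xi}=j\}\cap\cR_{i+t\xi}\cap\cP^-$ must be handled. Given $\cR$, the geodesic avoids $\hV'_j$ in the outer column, so, viewed as an event on $\bomega(U_t)$, the conditioning says that all paths through $U_t$ are worse than the geodesic. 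This is an increasing event in the field on $U_t$.

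FKG (Harris) for the product/Gaussian field on $U_t$ then gives $\P[\cD^{(5)}\mid\cdot]\ge\delta_C$. Combined with the conditional bound for the other parts, this yields the displayed estimate. I expect this step to be the main obstacle: one has to check carefully that, with the other regions fixed, $\{J_{i+t\xi}=j\}\cap\cR_{i+t\xi}$ and the non-$U_t$ parts of $\cP^-$ are genuinely increasing, or at least unaffected, in $\bomega(U_t)$, including for the resampled field $\rX'$. The natural argument is that increasing the field on a region the geodesic avoids cannot change the geodesic. The $\cD^{(2)},\cD^{(3)},\cD^{(4)}$ parts are not monotone, so these must be kept outside the FKG step by intersecting only with the conditional-probability bound.

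Given the single-step bound, the resampling $T_t$ only touches $U_t$. Since $\xi$ is large, this does not affect $\cS_{i+t'\xi,\cdot}$ for $t'\ne t$. I expect this because $\cW^{loc}$ reaches only $2^{(\log_2\log_2M)^2}L_2\ll\xi$ columns and $\cS$ is otherwise local. $\cW^{glo}$ is deliberately not included in $\cS$, which helps here.

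After all $\Delta$ resamplings, conditional on the indicators $I(\cS_{i+t\xi,J_{i+t\xi}})$, the indicators $I(\cD_{i+t\xi-2,J},\cS)$ in the final field stochastically dominate independent Bernoulli$(\delta_C/2)$ variables on the set of $t$ where $\cS$ holds. An Azuma or Chernoff bound then gives deviations of size $\Delta^{2/3}$ with failure probability $\exp(-c\Delta^{1/3})\le M^{-200}$, since $\Delta=2^{(\log_2\log_2M)^5-(\log_2\log_2M)^3}$. Equality in law of the final field with $\bomega$ transfers the bound back to the original field and gives the lemma.
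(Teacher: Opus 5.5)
Your scheme matches the paper's: a resampling operator indexed by the disjoint events $\cS_{i',j}$ and acting on $V'_{i'-2,j}$, applied sequentially at spacing $\xi$, followed by exchangeability, binomial domination and Azuma--Hoeffding. The single-step estimate, however, does not go through as you argue it.

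You assert that $\cD^{(2)},\cD^{(3)},\cD^{(4)}$ are not monotone and keep them outside the FKG step. You then combine two bounds: $\P[\cD^{(2)}\cap\cD^{(3)}\cap\cD^{(4)}\mid\bomega(U_t^c)]\ge\frac12$ from $\cP^-$, and $\P[\cD^{(5)}\mid\bomega(U_t^c),\cL]\ge\delta_C$. Here $\cL$ is the increasing event that $\gamma$ stays at distance more than $1$ from $\hV'_{j}$ in the outer column. These two bounds do not combine.
\begin{itemize}
\item The first bound is not conditioned on $\cL$. Since $\P[\cL\mid\bomega(U_t^c)]$ is not bounded below, conditioning on $\cL$ could destroy that bound unless you know how the event correlates with $\cL$.
\item Even ignoring the conditioning, a union bound only gives $\delta_C-\frac12<0$ for the intersection.
\end{itemize}
The missing observation is that $\cD^{(2)},\cD^{(3)},\cD^{(4)}$ are in fact increasing. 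They are $\cI^{+}$, $\cA^{+}$ and $\cK$ events, i.e.\ lower bounds on $\rX$- and $\rX'$-passage times. With this, FKG on the fibre $\{\bomega((V'_{i'-2,j})^c)=\bomega_A((V'_{i'-2,j})^c)\}$ removes the conditioning on $\cL$ for the whole intersection $\cD^{(2)}\cap\cdots\cap\cD^{(5)}$. A second application of FKG factors off $\cD^{(5)}$, which depends only on the field in $V'_{i'-2,j}$ and so has probability at least $\delta_C$. The result is exactly $\frac12\delta_C$. Your displayed bound $\frac{\delta_C}{2}\cdot\frac12$ would in any case lose a factor of $2$ relative to the statement.

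A smaller point concerns non-interference between different $t$. This does not follow from locality, since $\{J_{i'}=j\}\cap\cR_{i'}$ depends on the entire geodesic. What is needed is the following fact. On the fibre of a configuration in $\cS_{i',j}$, the event $\cS_{i',j}$ coincides with $\cL$, and on it the canonical (topmost) geodesic is unchanged, because both geodesics have the same passage time in both fields. This implies that the resampling preserves every $J_{i''}$ and $\cR_{i''}$. It is also what justifies replacing the conditioning on $\cS$ by conditioning on the increasing event $\cL$. Your phrase ``all paths through $U_t$ are worse than the geodesic'' points at this, but it should be proved and used explicitly in both places.
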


\begin{proof}
We define the resampling operator $T^{(i')}=T_{\{\cS_{i',j}\},\{U_{i',j}\}}$ on the field $\bomega$ on $\Z\times \R^2$ where $U_{i',j} = V_{i'-2,j}'$.  Define the event
\[
\cL_{i',j} = \bigg\{d(\gamma \cap [(i'-3)r,(i'-2)r] \times \R, \hV_{i'-2,j}') >1 \bigg\}.
\]

Suppose that $\bomega_A$ and $\bomega_B$ are two configurations and let $\gamma_A,\gamma_B$ be their optimal paths.

\noindent {\bf Claim:} If $\bomega_A \in \cS_{i',j}$ then
\[
\bigg\{\bomega_B:\bomega_B(V_{i'-2,j}^{'c}) = \bomega_A(V_{i'-2,j}^{'c}),\bomega_B \in \cS_{i',j} \bigg\}
=\bigg\{\bomega_B:\bomega_B(V_{i'-2,j}^{'c}) = \bomega_A(V_{i'-2,j}^{'c}),\bomega_B \in \cL_{i',j} \bigg\}.
\]
Furthermore, on this event $\gamma_A=\gamma_B$.

\noindent {\it Proof of Claim}: If $\bomega_B(V_{i'-2,j}^{'c}) = \bomega_A(V_{i'-2,j}^{'c})$ but $\bomega_B \not\in \cL_{i',j}$ then $\gamma_B$ does not pass between $H_{i'-2,j-\frac1{100}L_0}$ and $H_{i'-2,j+\frac1{100}L_0}$ which geometrically implies that $\bomega_B \not\in\cR_{i'} \cap \{J_{i'} = j\}$ and hence $\bomega_B \not\in \cS_{i',j}$.  

If $\bomega_B(V_{i'-2,j}^{'c}) = \bomega_A(V_{i'-2,j}^{'c})$ and $\bomega_B \in \cL_{i',j}$ then the path in $[(i'-3)r,(i'-2)r] \times \R$ is always distance more than 1 from $\hV_{i'-2,j}$, the region of the field that is different and so the passage time of $\gamma_B$ is the same under both fields, that is $\rX_{\gamma_B}^{\bomega_B} = \rX_{\gamma_B}^{\bomega_A}$.  By $\bomega_A \in \cS_{i',j}$ we similarly have that $\rX_{\gamma_A}^{\bomega_A} = \rX_{\gamma_A}^{\bomega_B}$.  It follows that $\gamma_A,\gamma_B$ are optimal optimal paths for both $\bomega_A$ and $\bomega_B$ and hence must be equal. {Indeed, recall that the conforming geodesics are canonically chosen to be the topmost optimal paths. Since $\gamma_A$ and $\gamma_B$ are both optimal paths in $\bomega_A$, this implies $\gamma_A$ lies above $\gamma_B$, and arguing similarly considering $\bomega_B$, it also lies below $\gamma_B$ and hence $\gamma_A=\gamma_B$}. Since the optimal path remains the same, $\bomega_B \in \cS_{i',j}$.\qed

\noindent {\bf Claim:} If $i'=i''$ or $|i'' - i'|\geq \xi$ then $T^{(i'')} \bomega_A \in \cS_{i',j}$ if and only if $\bomega_A \in \cS_{i',j}$.

\begin{center}
\begin{figure}
\includegraphics[width=4in]{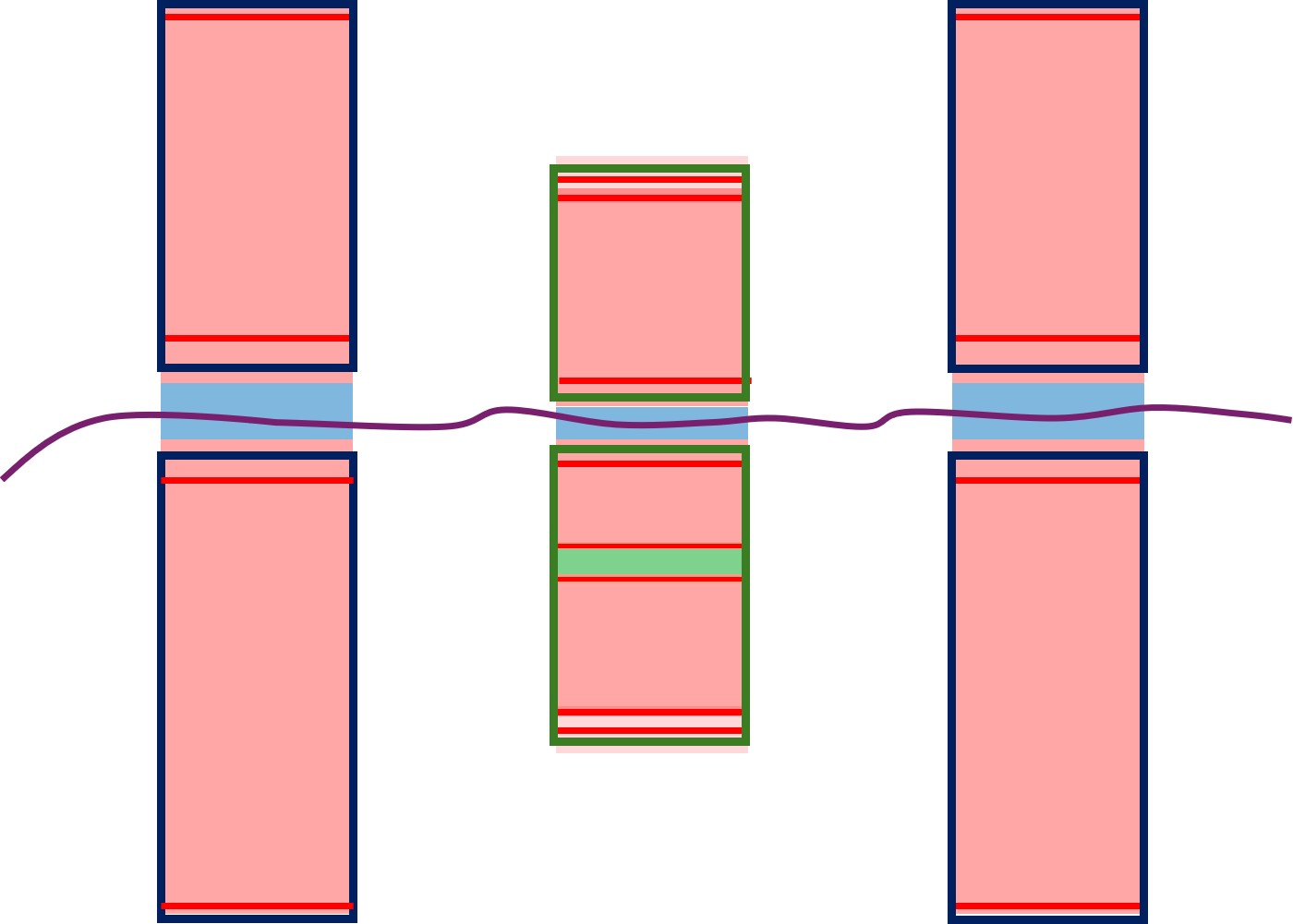}
\caption{A schematic of the resampling argument in Section \ref{s:glauber}. First we resample the outer barriers (boxes with boundaries marked in black) to show that the number of locations where both the typical and outer column barrier events occur is a constant fraction of the locations where the typical events occur. Next by resampling various parts of the central columns we show in Lemma \ref{l:Pminus.B.perc} that the number of locations where both the typical and outer column barrier event as well as the central column events occur is another constant fraction of the locations where the first two types of events occur. This argument is more delicate because it involves events that are neither monotone nor likely, namely the event $\cB^{(4)}_{i,j}$ which involves resampling the green region of the central column.}
\label{f:resamplingG}
\end{figure}
\end{center}

\noindent {\it Proof of Claim}: Suppose that $\bomega_A \in \cS_{i',j}$.  By the first claim, the optimal path is the same for $\bomega_A$ and $T^{(i'')} \bomega_A$ so each $J_{k}$ are the same as well and $T^{(i'')} \bomega_A \in \cR_{i'} \cap \{J_{i'} = j\}$. Since $\bomega_A \in\cP_{i',j}^{-}$ and $\cP_{i',j}^{-}$ only depends on the field in {$\{(i'-3-L_2 2^{(\log_2\log_2 M)^2+1})\Phi^{\ell}+1,\ldots i'+3+L_2 2^{(\log_2\log_2 M)^2+1}\Phi^{\ell}\} \times (\R^2 \setminus  V_{i'-2,j}')$} which is unaffected by  resampling  $V_{i''-2,j}'$ if $|i''-i'| \geq 2^{(\log_2\log_2 M)^3}=\xi$ or $i'=i''$ for $M$ sufficiently large. Hence $T^{(i'')} \bomega_A \in\cP_{i',j}^{-,n,M,\ell}$ and so $T^{(i'')} \bomega_A \in \cS_{i',j}$.  The other direction follows similarly. \qed

\noindent {\bf Claim:} If $\bomega_A \in \cS_{i',j}$ then 
\[
\P[T^{(i')} \bomega_A \in \cD_{i'-2,j}]\geq \frac12\delta_C
\]
where $\delta_C$ is as in \eqref{eq:D5.bound}. 

\noindent {\it Proof of Claim}:  We have that
\begin{align*}
&\P[T^{(i')} \bomega_A \in \cD_{i'-2,j}]\\
& \ = \P\bigg[  \bomega \in \cD_{i'-2,j} \mid \bomega(V_{i'-2,j}^{'c}) =  \bomega_A(V_{i'-2,j}^{'c}), \bomega \in \cS_{i',j} \bigg]\\
& \ = \P\bigg[  \bomega \in \cD^{(2)}_{i'-2,j}\cap \cD^{(3)}_{i'-2,j}\cap \cD^{(4)}_{i'-2,j}\cap \cD^{(5)}_{i'-2,j} \mid \bomega(V_{i'-2,j}^{'c}) =  \bomega_A(V_{i'-2,j}^{'c}), \bomega \in \cL_{i',j} \bigg]
\end{align*}
where the first equality follows by the definition of the resampling operator, the second is by the first claim and the fact that $\cP_{i',j} \subseteq \cD^{(1)}_{i'-2,j}$.

Now note that for $k\in\{2,3,4,5\}$ the events $\cD^{(k)}_{i'-2,j}$ are increasing events in the field.  The event $\cL_{i',j}$ is increasing in the field on $V_{i'-2,j}$ since if the optimal path is distance more than 1 from $\hV_{i'-2,j}'$ the optimal path will remain unchanged if the field in  $V_{i'-2,j}$ is increased and $\cL_{i',j}$ will still hold. So by the equation above and the FKG inequality  
\begin{align*}
&\P[T^{(i')} \bomega_A \in \cD_{i'-2,j}]\\
& \ \geq \P\bigg[  \bomega \in \cD^{(2)}_{i'-2,j}\cap \cD^{(3)}_{i'-2,j}\cap \cD^{(4)}_{i'-2,j}\cap \cD^{(5)}_{i'-2,j} \mid \bomega(V_{i'-2,j}^{'c}) =  \bomega_A(V_{i'-2,j}^{'c})\bigg]\\
& \ \geq \P\bigg[  \bomega' \in \cD^{(2)}_{i'-2,j}\cap \cD^{(3)}_{i'-2,j}\cap \cD^{(4)}_{i'-2,j}\mid \bomega(V_{i'-2,j}^{'c}) =  \bomega_A(V_{i'-2,j}^{'c}) \bigg]
\P\bigg[  \bomega' \in \cD^{(5)}_{i'-2,j}\bigg]\\
& \ \geq \frac12 \delta_C
\end{align*}
where the second inequality is another application of the FKG inequality and the fact that $\cD^{(5)}_{i'-2,j}$ only depends on the field in $V_{i'-2,j}$.  The final inequality follows by the definition of $\cP^-_{i',j}$ and equation~\eqref{eq:D5.bound}. \qed

To complete the proof we will set $\bomega^{(0)} = \bomega$ and for integers $1\leq t \leq \Delta$ define
\[
\bomega^{(t)} = T^{(i+t\xi)} \bomega^{(t-1)}.
\]
and $\bomega^\dagger = \bomega^{(\Delta)}$.  By the second claim we have that for any integers $1\leq t,t' \leq (\log_2 \log_2 M)^2$ $\bomega^{(t')} \in \cS_{i+t\xi,J_{i+t\xi}}$ if and only if $\bomega \in \cS_{i+t\xi,J_{i+t\xi}}$.  Hence we have that
\begin{align*}
&\Bigg(\sum_{t=1}^{\Delta} I(\bomega \in \cD_{i+t\xi-2,J_{i+t\xi}}\cap \cS_{i+t\xi,J_{i+t\xi}}), \sum_{t=1}^{\Delta} I(\bomega \in \cS_{i+t\xi,J_{i+t\xi}})  \Bigg)\\
& \quad \stackrel{d}{=} \Bigg(\sum_{t=1}^{\Delta} I(\bomega^\dagger \in \cD_{i+t\xi-2,J_{i+t\xi}}\cap \cS_{i+t\xi,J_{i+t\xi}}), \sum_{t=1}^{\Delta} I(\bomega^\dagger \in \cS_{i+t\xi,J_{i+t\xi}})  \Bigg)\\
& \quad = \Bigg(\sum_{t=1}^{\Delta} I(\bomega^{(t)} \in \cD_{i+t\xi-2,J_{i+t\xi}}\cap \cS_{i+t\xi,J_{i+t\xi}}), \sum_{t=1}^{\Delta} I(\bomega^{(t-1)} \in \cS_{i+t\xi,J_{i+t\xi}})  \Bigg)\\
\end{align*}
where the equality in distribution is by the exchangeability of the resampling operator and the equality is by the fact that resampling preserves the $\cS_{i+t\xi,J_{i+t\xi}}$ and only $T^{(i+t\xi)}$ changes the event $\cD_{i+t\xi-2,J_{i+t\xi}}$.  By the final claim we have the stochastic domination
\[
\sum_{t=1}^{\Delta} I(\bomega^{(t)} \in \cD_{i+t\xi-2,J_{i+t\xi}}\cap \cS_{i+t\xi,J_{i+t\xi}}) \succeq \hbox{Bin}\Bigg(\sum_{t=1}^{\Delta} I(\bomega^{(t-1)} \in \cS_{i+t\xi,J_{i+t\xi}}) ,\frac12 \delta_C\Bigg)
\]

and so by our equality in distribution,
\begin{align*}
&\P\Bigg[\sum_{t=1}^{\Delta} I(\bomega \in \cD_{i+t\xi-2,J_{i+t\xi}}\cap \cS_{i+t\xi,J_{i+t\xi}})\geq \frac12 \delta_C \sum_{t=1}^{\Delta} I(\bomega \in \cS_{i+t\xi,J_{i+t\xi}}) - \Delta^{2/3} \Bigg]\\
& \quad \geq \max_{Q\leq \Delta} \P\Bigg[\hbox{Bin}\Bigg(Q,\frac12 \delta_C\Bigg)\geq \frac12 \delta_C Q - \Delta^{2/3} \Bigg]\\
&\quad \geq 1 - M^{-200}
\end{align*}

where the last inequality follows by Azuma-Hoeffding Inequality.  This completes the proof.
\end{proof}
We have the following corollary.
\begin{corollary}\label{c:Pminus.D.perc}
There exists $M_0$ such that for all $M\geq M_0$ and all $n$ sufficiently large and $n\leq r=r_\ell \leq M^{1/100}n$ and $2M^{99/100}n\leq  ir \leq (M-2M^{99/100})n$,
\[
\P\Bigg[\sum_{i'=i}^{i+\Phi-1} I(\cD_{i'-2,J_{i'}}\cap \cP_{i',J_{i'}}^{-}, \cR_{i'},|J_{i'}|W_r\leq M^{8/10}W_n) \geq \frac{9\delta_C}{20} \Phi - \Delta^{2/3}\xi \Bigg] \geq 1 - 2M^{-100}.
\]
\end{corollary}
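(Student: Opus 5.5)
The plan is to combine Lemma~\ref{l:Pminus.perc} with Lemma~\ref{l:Pminus.D.perc}. The latter only sees the arithmetic progression $\{i+t\xi\}_{t=1}^{\Delta}$, so we sum it over the $\xi$ residue classes. For each offset $s\in\{0,1,\ldots,\xi-1\}$, apply Lemma~\ref{l:Pminus.D.perc} with starting index $i+s$ in place of $i$; the hypotheses on the starting index are preserved up to an additive $\Phi$, which is negligible compared to $M^{99/100}$. This gives, with probability at least $1-M^{-200}$,
\[
\sum_{t=1}^{\Delta} I(\cD_{i+s+t\xi-2,J_{i+s+t\xi}},\cS_{i+s+t\xi,J_{i+s+t\xi}}) \geq \frac{\delta_C}{2}\sum_{t=1}^{\Delta} I(\cS_{i+s+t\xi,J_{i+s+t\xi}}) - \Delta^{2/3}.
\]
A union bound over the $\xi\le M$ offsets makes all of these hold simultaneously with probability at least $1-M^{-199}$.

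Summing over $s$, the progressions $\{i+s+t\xi: 1\le t\le\Delta\}$ for $0\le s<\xi$ together cover the window $\{i+\xi,\ldots,i+\xi+\Phi-1\}$. This is a shift of $[i,i+\Phi-1]$ by $\xi$, since $\Delta\xi=\Phi$. At most $\xi$ indices are gained or lost at each end of the window, which costs an additive $O(\xi)$. That is absorbed into the $\Delta^{2/3}\xi$ slack provided the statement is read with a slightly larger constant, or by choosing the offsets as $s-\xi$ so the union is exactly $[i,i+\Phi-1]$. Recall that $\cS_{i',j}=\cP^-_{i',j}\cap\cR_{i'}\cap\{J_{i'}=j\}$, and that $\cS_{i',j}$ is empty when $|jW_r|>M^{8/10}W_n$. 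Hence
\[
\sum_{i'}I(\cS_{i',J_{i'}})=\sum_{i'}I(\cP^-_{i',J_{i'}},\cR_{i'},|J_{i'}|W_r\le M^{8/10}W_n).
\]
The summed inequality therefore reads
\[
\sum_{i'=i}^{i+\Phi-1} I(\cD_{i'-2,J_{i'}}\cap\cP^-_{i',J_{i'}},\cR_{i'},|J_{i'}|W_r\le M^{8/10}W_n)\ \ge\ \frac{\delta_C}{2}\sum_{i'=i}^{i+\Phi-1} I(\cS_{i',J_{i'}}) - \xi\Delta^{2/3}.
\]

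Finally, Lemma~\ref{l:Pminus.perc} gives, with probability at least $1-M^{-100}$, that $\sum_{i'} I(\cS_{i',J_{i'}})\ge \frac{9}{10}\Phi$. Substituting yields the lower bound $\frac{9\delta_C}{20}\Phi-\Delta^{2/3}\xi$. The total failure probability is at most $M^{-100}+M^{-199}\le 2M^{-100}$.

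The only delicate point is the boundary bookkeeping in the second paragraph: the window covered by the shifted progressions must match $[i,i+\Phi-1]$, with the same $J_{i'}$ used throughout. Everything else is a direct union bound followed by substitution.
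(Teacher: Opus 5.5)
Your proposal is correct and is essentially the paper's proof: a union bound combining Lemma~\ref{l:Pminus.perc} (failure probability $M^{-100}$) with Lemma~\ref{l:Pminus.D.perc} applied to each of the $\xi$ residue classes modulo $\xi$ (total failure $\xi M^{-200}$), then summing the per-class inequalities and substituting the $\frac{9}{10}\Phi$ lower bound. Your explicit window bookkeeping is more careful than the paper's display, which indexes the offsets loosely and writes $\frac{\delta_C}{10}\Phi$ where $\frac{9}{10}\Phi$ is meant. You shift the offsets so the progressions tile exactly $[i,i+\Phi-1]$, at the cost of applying Lemma~\ref{l:Pminus.D.perc} with starting indices moved by fewer than $\xi$ columns. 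That shift is harmless because its proof does not use the precise range.
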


\begin{proof}
By Lemma~\ref{l:Pminus.perc} and Lemma~\ref{l:Pminus.D.perc},
\begin{align*}
&\P\Bigg[\sum_{i'=i}^{i+\Phi-1} I(\cD_{i'-2,J_{i'}}\cap \cP_{i',J_{i'}}^{-}, \cR_{i'},|J_{i'}|W_r\leq M^{8/10}W_n) < \frac{9\delta_C}{20} \Phi - \Delta^{2/3}\xi\Bigg]\\
&\ \leq \P\Bigg[\sum_{i'=i}^{i+\Phi-1} I(\cS_{i',j}) < \frac{\delta_C}{10} \Phi \Bigg]\\
& \ +\sum_{i'=1}^{\xi} \P\Bigg[\sum_{t=1}^{\Delta} I(\cS_{i'+t\xi,J_{i'+t\xi}}, \cD_{i'+t\xi-2,J_{i'+t\xi}}) + \Delta^{2/3} < \frac{\delta_C}{2} \sum_{t=1}^{\Delta} I(\cS_{i'+t\xi,J_{i'+t\xi}}) \Bigg]\\
& \ \leq M^{-100} + \xi M^{-200} \leq 2M^{-100}.
\end{align*}
\end{proof}
By essentially the same proof of Lemma~\ref{l:Pminus.D.perc} and Corollary~\ref{c:Pminus.D.perc} we have the following lemma.
\begin{lemma}\label{l:Pminus.D2.perc}
There exists $M_0$ such that for all $M\geq M_0$ and all $n$ sufficiently large and $0\leq \ell \leq \ell_{\max}$ and $2M^{99/100}n\leq  ir=ir_\ell \leq (M-2M^{99/100})n$,
\[
\P\Bigg[\sum_{i'=i}^{i+\Phi-1} I(\cD_{i'-2,J_{i'}}\cap\cD_{i'+2,J_{i'}}\cap \cP_{i',J_{i'}}^{-}, \cR_{i'},|J_{i'}|W_r\leq M^{8/10}W_n) \geq \frac{9\delta_C^2}{40} \Phi - 2\Delta^{2/3}\xi \Bigg] \geq 1 - 3M^{-100}.
\]
\end{lemma}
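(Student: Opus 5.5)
The plan is to rerun the proof of Lemma~\ref{l:Pminus.D.perc} once more. This time the base events are
\[
\widetilde\cS_{i',j}=\cS_{i',j}\cap\cD_{i'-2,j},
\]
and the resampling is done in the right outer column. Take the operator $T^{(i')}=T_{\{\widetilde\cS_{i',j}\},\{U_{i',j}\}}$ with $U_{i',j}=V'_{i'+2,j}$. Define $\widetilde\cL_{i',j}$ as the event that $\gamma$ stays at distance more than $1$ from $\hV'_{i'+2,j}$ inside the column $[(i'+1)r,(i'+2)r]\times\R$. The three claims then need to be checked in turn.

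\emph{First claim.} Suppose $\bomega_A\in\widetilde\cS_{i',j}$ and $\bomega_B$ agrees with $\bomega_A$ off $V'_{i'+2,j}$. Then $\bomega_B\in\widetilde\cS_{i',j}$ if and only if $\bomega_B\in\widetilde\cL_{i',j}$, and in that case the two geodesics coincide. The proof is the same as before, using $\cR_{i'}$, the topmost-path convention, and the fact that the field on $V'_{i'+2,j}$ does not affect $\cD_{i'-2,j}$. The last point holds because $\cD_{i'-2,j}$ lives in column $i'-2$, which is disjoint from column $i'+2$.

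\emph{Second claim.} If $i''=i'$ or $|i''-i'|\ge\xi$, then $T^{(i'')}$ preserves $\widetilde\cS_{i',j}$. This follows from the locality of $\cP^-_{i',j}$ and $\cD_{i'-2,j}$ together with the first claim. A resampling at a far index $i''$ changes neither the geodesic nor the field in the windows of $i'$.

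\emph{Third claim.} For $\bomega_A\in\widetilde\cS_{i',j}$ we want
\[
\P\bigl[T^{(i')}\bomega_A\in\cD_{i'+2,j}\bigr]\ge\tfrac12\delta_C .
\]
By the first claim the conditioning reduces to $\widetilde\cL_{i',j}$ on $V'_{i'+2,j}$, with the field outside held fixed. Now $\widetilde\cL_{i',j}$ is increasing in the field on $V'_{i'+2,j}$, and $\cD^{(2)},\dots,\cD^{(5)}$ at column $i'+2$ are increasing. So FKG applies exactly as before. It gives the bound $\P[\cD^{(2)},\cD^{(3)},\cD^{(4)}\mid\cdot]\cdot\P[\cD^{(5)}]\ge\frac12\delta_C$, where the conditional factor is controlled by the conditional-probability clause for $i+2$ built into $\cP^-_{i',j}$, and $\cD^{(1)}_{i'+2,j}$ holds because it is part of $\cP^-$. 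I expect this step to be the main obstacle, because monotonicity of $\widetilde\cL$ must hold jointly in $\somega$ and $\comega$ on $V'_{i'+2,j}$: resampling affects $\cX$ and $\cX'$ together. Still, only $\gamma$ (not $\gamma'$) enters $\widetilde\cS$, so raising the field there cannot attract $\gamma$. This is the same argument as for the left column.

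Iterating $T^{(i+t\xi)}$ over $t=1,\dots,\Delta$ and using exchangeability gives a binomial domination. Azuma--Hoeffding then yields, with probability at least $1-M^{-200}$,
\[
\sum_t I\bigl(\widetilde\cS_{i+t\xi},\cD_{i+t\xi+2}\bigr)+\Delta^{2/3}\ge\tfrac{\delta_C}{2}\sum_t I\bigl(\widetilde\cS_{i+t\xi}\bigr).
\]
Finally I would combine this with Corollary~\ref{c:Pminus.D.perc}, taking a union bound over the $\xi$ residue classes. The count of $\widetilde\cS$ is at least $\frac{9\delta_C}{20}\Phi-\Delta^{2/3}\xi$. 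Multiplying by $\delta_C/2$ gives $\frac{9\delta_C^2}{40}\Phi$, with the loss absorbed into $2\Delta^{2/3}\xi$ since $\delta_C<1$. The failure probability is at most $2M^{-100}+\xi M^{-200}\le3M^{-100}$.
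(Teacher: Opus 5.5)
Your proposal is correct and is essentially the argument the paper intends. The paper's proof is only the line "by essentially the same proof of Lemma~\ref{l:Pminus.D.perc} and Corollary~\ref{c:Pminus.D.perc}". You make that explicit in the same way: rerun the resampling argument with base events $\cS_{i',j}\cap\cD_{i'-2,j}$, resample $V'_{i'+2,j}$, and use FKG, which applies because $\gamma$ depends only on $\somega$. Then a union bound over residue classes combined with Corollary~\ref{c:Pminus.D.perc} gives the stated constants $\frac{9\delta_C^2}{40}\Phi-2\Delta^{2/3}\xi$ and $3M^{-100}$.
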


\subsection{Central Column}
The final piece to prove Theorem~\ref{t:Pplus} is to show that a constant fraction of the $i$ which have  $\cD_{i-2,J_i}\cap\cD_{i+2,J_i}\cap \cP_{i,J_i}^{-}\cap \cR_i\cap\{|J_i|W_r\leq M^{8/10}W_n\}$ also have the central column event $\cB_{i,J_i}$.  The proof is similar to the outer columns resampling scheme but differs because $\cB_{i,j}$ is not an increasing event in the field in $V_{i,j}$ so we cannot apply the FKG inequality in the same way.  In this subsection we will define the events 
\[
\cS_{i,j} = \cD_{i-2,j}\cap\cD_{i+2,j}\cap\cP_{i,j}^{-} \cap \cR_{i} \cap \{J_{i} = j\}
\]
for {$-M^{8/10}W_n\leq jW_r \leq M^{8/10}W_n$} and set {$\cS_{i',j}$} to be the empty set when {$|j|W_r > M^{8/10}W_n$}.
\begin{lemma}\label{l:Pminus.B.perc}
There exists $M_0$ such that for all $M\geq M_0$ and all $n$ sufficiently large and $0\leq \ell \leq \ell_{\max}$ and $2M^{99/100}n\leq  ir=ir_\ell \leq (M-2M^{99/100})n$,
\begin{align*}
\P\Bigg[&\sum_{t=1}^{\Delta} I(\cB_{i+t\xi,J_{i+t\xi}},\cS_{i+t\xi,J_{i+t\xi}}) + \Delta^{2/3} \geq \frac{\delta_A\delta_B}{32} \sum_{t=1}^{\Delta} I(\cS_{i+t\xi,J_{i+t\xi}}) \Bigg] \geq 1 - M^{-150},
\end{align*}
where $\delta_A$ and $\delta_B$ are as in \eqref{eq:B4.bound} and \eqref{eq:B7.bound} respectively.  
\end{lemma}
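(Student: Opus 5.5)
I will follow the template of Lemma~\ref{l:Pminus.D.perc}: define for each $i'$ a resampling operator $T^{(i')}=T_{\{\cS_{i',j}\},\{U_{i',j}\}}$, now with $U_{i',j}=V_{i',j}$, the part of the field in the central column lying in $\hV_j$ (the gadget region, the barrier regions of $\cB^{(7)}$ and the $\cK$ lines of $\cB^{(6)}$). I compose $T^{(i+t\xi)}$ for $t=1,\dots,\Delta$ and use exchangeability of $(\bomega,T\bomega)$. The first two claims carry over as before. The first is that on $\{\bomega_B(V_{i',j}^c)=\bomega_A(V_{i',j}^c)\}$, membership in $\cS_{i',j}$ is equivalent to the geodesic staying at distance more than $1$ from $\hV_{i',j}$ in column $i'$, and then the geodesic is unchanged. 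This uses $\cR_{i'}$ and $J_{i'}=j$: the path stays in the strip of height $\frac1{100}L_0W_r$, while $\hV_j$ begins at distance $L_0W_r$. The second is that resampling at $i''$ with $|i''-i'|\ge\xi$ does not affect $\cS_{i',j}$. Here one must check that the events entering $\cS_{i',j}$ ($\cP^-$, the $\cD$'s, $\cR$) do not read the field on $V_{i',j}$ itself. This holds because $\cB^{(1)}$ and the $\cC,\cD,\cW^{loc}$ events live in $V_{i',j}^c$ or in other columns, and the conditional-probability events in $\cP^-$ are $\bomega(V^c)$-measurable by construction. The Binomial and Azuma step then gives the statement, provided I prove the key claim: for $\bomega_A\in\cS_{i',j}$,
\[
\P[T^{(i')}\bomega_A\in\cB_{i',j}]\ge \tfrac{\delta_A\delta_B}{16}.
\]

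To prove the key claim I write $\cL_{i',j}$ for the event that the geodesic avoids $\hV_{i',j}$ within column $i'$. Then $\P[T\bomega_A\in\cB]=\P[\cB\cap\cL\mid\bomega(V^c)]/\P[\cL\mid\bomega(V^c)]\ge\P[\cB\cap\cL\mid\bomega(V^c)]$, so it suffices to lower bound the numerator. Lemma~\ref{c:path.sepatated} is the tool here. On $\cP_{i',j}$ the geodesic is of Type $1$ or Type $6$. Moreover, $\cS_{i',j}$ holds for the unresampled field, so $J_{i'}=j$ and $\cR_{i'}$ exclude Type $6$ as long as the geodesic does not change. I will argue directly that on $\cB_{i',j}\cap\{\text{rest of }\cS\}$ the optimal path for the resampled configuration is still of Type $1$. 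Type $1$ paths do not touch $V_{i',j}$, so their weights are unchanged. Paths that enter $\hV_j$ are of Type $3$, $4$ or $5$, and by Lemmas~\ref{l:type3}--\ref{l:type5} they are worse by at least $\frac12\alpha^{9/10}Q_r$ before the $\kappa$-resampling. The remaining candidates, of Types $2$ and $6$, are unaffected. Hence $\cB\cap(\text{fixed outside data})\subseteq\cL$, and the numerator is at least $\P[\cB_{i',j}\mid\bomega(V^c_{i',j})]$. The hypotheses of Lemmas~\ref{l:intermeadiate.bound} and \ref{l:intermeadiate.bound.sym} that feed into these type lemmas are outside-$V$ events, already guaranteed by $\cS$.

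It remains to bound $\P[\cB_{i',j}\mid\bomega(V^c)]$ from below. The conditions in $\cP^-$ give $\P[\cB^{(2)},\cB^{(3)}\mid\cdot]\ge\frac12$ and $\P[\cB^{(6)}\mid\cdot]\ge1-\delta_A/100$, and $\cB^{(1)}$ already holds. The events $\cB^{(4)}$, $\cB^{(5)}$, $\cB^{(7)}$ depend only on the field in $V$ (up to the width-$2/W_r$ margins built into their definitions), hence are independent of $\bomega(V^c)$. I use FKG for $\cB^{(7)}$, which is increasing, together with \eqref{eq:B4.bound} and \eqref{eq:B5.bound}, to get $\P[\cB^{(4)}\cap\cB^{(5)}\cap\cB^{(7)}]\ge(\delta_A-\delta_A/100)\delta_B\cdot c$. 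The \textbf{main obstacle} is combining these without positive correlation. $\cB^{(4)}$ is not monotone: it is increasing in $\somega$ and decreasing in the resampled field on $\comega$-blocks. If I condition on $T_{ij}$ and on the $\comega$ part, it becomes increasing, as are $\cB^{(7)}$, $\cB^{(2)}$, $\cB^{(3)}$ and $\cB^{(6)}$ restricted to $\somega$. The gadget region and the barrier regions are disjoint vertical slabs, separated by the $\pm 2/W_r$ margins. I therefore plan to use independence across disjoint regions rather than FKG wherever monotonicity fails. The likely events $\cB^{(2)},\cB^{(3)},\cB^{(6)}$ can be handled by a union bound against the product $\frac{\delta_A}{2}\delta_B$, giving at least $\frac{\delta_A\delta_B}{2}-\frac12-\frac{\delta_A}{100}$. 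This is too weak, so I would instead intersect via FKG in $\somega$ after conditioning on the $\comega$ data. I expect this conditioning bookkeeping to be the delicate part, and the factor $\frac1{16}$ leaves room for losses.
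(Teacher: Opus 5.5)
Your skeleton matches the paper's: the resampling operator on $V_{i',j}$, the two structural claims, Lemma~\ref{c:path.sepatated} to keep the geodesic unchanged, and Binomial domination. The step that carries the lemma, however, is left open: a lower bound of order $\delta_A\delta_B$ on $\P[\cB_{i',j}\mid\bomega(V^c_{i',j})]$. The remedy you sketch for it does not work.

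\textbf{Why your conditioning fails.} Conditioning on the $T_{ij}$ and on $\comega$ does not make $\cB^{(4)}_{i',j}$ increasing. The event $\cI^{\prime-}$ is evaluated in $\omega_\kappa$, which equals $\somega$ on every block that is not resampled (a $1-\kappa$ fraction). It is \emph{decreasing} in $\somega$ there, while $\cI^{+}$ is increasing in $\somega$. So $\cB^{(4)}$ stays non-monotone. Consequently neither your FKG bound for $\cB^{(4)}\cap\cB^{(5)}\cap\cB^{(7)}$ nor an FKG intersection with $\cB^{(2)},\cB^{(3)},\cB^{(6)}$ is justified. Plain independence does not rescue it either, since $\cB^{(5)}$ and $\cB^{(6)}$ read the field in the gadget strip.

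\textbf{The missing idea.} Condition on the \emph{entire} field $\bomega$ in the gadget strip $\Theta_{i',j}$ (column $i'$, heights $[(j-\sqrt{\alpha})W_r-1,(j-\sqrt{\alpha}+h)W_r+1]$), together with $\bomega(V^c_{i',j})$.
\begin{itemize}
\item This determines $\cB^{(4)}$.
\item The remaining events $\cB^{(2)},\cB^{(3)},\cB^{(5)},\cB^{(6)},\cB^{(7)}$ are then all increasing in the field on $(\Theta_{i',j}\cup V^c_{i',j})^c$. For $\cB^{(5)}$ this holds because the infimum over the small box in $\cM$ is $\Theta_{i',j}$-measurable. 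Only the infimum over the enlarged box sees the outside field, and it enters with a minus sign.
\item Let $\cB^{(*)}$ be the event that $\cB^{(4)}$ holds and $\P[\cB^{(k)}\mid\bomega(\Theta_{i',j}\cup V^c_{i',j})]\ge\frac12$ for $k=5,6$. Markov's inequality applied to \eqref{eq:B5.bound}, and to the $\cB^{(6)}$ condition built into $\cP^-_{i',j}$, gives $\P[\cB^{(*)}\mid\bomega(V^c_{i',j})]\ge\delta_A-\frac{2\delta_A}{50}\ge\frac{\delta_A}{2}$.
\item The events $\cB^{(2)},\cB^{(3)},\cB^{(7)}$ do not read $\Theta_{i',j}$, so their conditional probabilities are still at least $\frac12,\frac12,\delta_B$.
\item FKG conditional on $\bomega(\Theta_{i',j}\cup V^c_{i',j})$ then gives at least $\frac{\delta_B}{16}$ on $\cB^{(*)}$, hence $\frac{\delta_A\delta_B}{32}$ in total.
\end{itemize}
This is exactly the constant in the statement. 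Your target $\frac{\delta_A\delta_B}{16}$ is neither what the argument yields nor needed.

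\textbf{A dropped hypothesis.} $\cS_{i',j}$ contains only $\cW^{loc}_{i',j}$ (through $\cP^-_{i',j}$). Lemma~\ref{c:path.sepatated} needs all of $\cP_{i',j}$, and Lemmas~\ref{l:wing.bound}--\ref{l:intermeadiate.bound.sym} behind the type lemmas need $\cW^{glo}_{i',j}$. These are not ``already guaranteed by $\cS$''. The fix is as follows.
\begin{itemize}
\item State the key claim for $\bomega_A\in\cS_{i',j}\cap\cW^{glo}_{i',j}$, which is $\bomega(V^c_{i',j})$-measurable.
\item Dominate by $\mathrm{Bin}\big(\sum_t I(\cS\cap\cW^{glo}),\frac{\delta_A\delta_B}{32}\big)$.
\item Remove $\cW^{glo}$ at the end using Lemma~\ref{l:cW.global} and a union bound over $t\le\Delta$.
\end{itemize}
This costs $M^{-199}$, which is why the statement only claims $1-M^{-150}$.

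\textbf{A smaller error.} Type~2 paths are not unaffected by the resampling. They can enter $\hV_j$ at vertical distance between $L_0W_r$ and $\frac32L_0W_r$ from $jW_r$. You do not need this claim, though. On $\cP_{i',j}$ the new geodesic is of Type~1 or~6, so it avoids $\hV_{i',j}$, and $\gamma_A$ avoids $\hV_{i',j}$ by $\cR_{i'}$. Both are therefore optimal in both fields, and the topmost convention forces them to coincide.
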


\begin{proof}
The beginning of the proof is very similar to Lemma~\ref{l:Pminus.D.perc}.
We define the resampling operator $T^{(i')}=T_{\{\cS_{i',j}\},\{U_{i',j}\}}$ on the field $\bomega$ on $\Z\times \R^2$ where $U_{i',j} = V_{i',j}$.  
Suppose that $\bomega_A$ and $\bomega_B$ are two configurations and let $\gamma_A,\gamma_B$ be their optimal paths.  The following two claims have essentially identically proofs to the corresponding claims in Lemma~\ref{l:Pminus.D.perc}.

\noindent {\bf Claim:} If $\bomega_A \in \cS_{i',j}$ then
\[
\bigg\{\bomega_B:\bomega_B(V_{i',j}^{c}) = \bomega_A(V_{i',j}^{c}),\bomega_B \in \cS_{i',j} \bigg\}
=\bigg\{\bomega_B:\bomega_B(V_{i',j}^{c}) = \bomega_A(V_{i',j}^{c}),\bomega_B \in \cL_{i',j} \bigg\}.
\]
On this event $\gamma_A=\gamma_B$.

\noindent {\bf Claim:} If $i'=i''$ or $|i'' - i'|\geq \xi$ then $T^{(i'')} \bomega_A \in \cS_{i',j}$ if and only if $\bomega_A \in \cS_{i',j}$.

The next claim is more complicated than in Lemma~\ref{l:Pminus.D.perc} because $\cB_{i',j}$ is not monotone in $\bomega(V_{i',j})$.

\noindent {\bf Claim:} If $\bomega_A \in \cS_{i',j}\cap \cW_{i',j}^{glo}$ then 
\[
\P[T^{(i')} \bomega_A \in \cB_{i',j}]\geq \frac1{32}\delta_A \delta_B.
\]
\noindent {\it Proof of Claim}:  
Suppose that $\bomega$ satisfies both $\bomega \in \cB_{i',j}$ and $\bomega(V_{i',j}^{c}) =  \bomega_A(V_{i',j}^{c})$ .
Note that the event $\cD_{i'-2,j}\cap\cD_{i'+2,j}\cap\cP_{i',j}^{-}\cap \cW_{i',j}^{glo}$ does not depend on the field in $V_{i',j}$ so
\[
\bomega \in \cB_{i',j}\cap \cD_{i'-2,j}\cap\cD_{i'+2,j}\cap\cP_{i',j}^{-}\cap \cW_{i',j}^{glo} = \cP_{i',j}.
\]
By Lemma~\ref{c:path.sepatated} the optimal path $\gamma$ must be either Type~1 or Type~6.  Both Type~1 and Type~6 paths have distance greater than 1 from $\hV_{i',j}$ and so  $\rX_\gamma^{\bomega} = \rX_\gamma^{\bomega_A}$.  By $\cR_{i'}$ the geodesic $\gamma_A$ also has distance greater than 1 from $\hV_{i',j}$ so $\rX_{\gamma_A}^{\bomega} = \rX_{\gamma_A}^{\bomega_A}$.  Since $\gamma,\gamma_A$ are both optimal paths we must have, as before, $\gamma=\gamma_A$.  Since $\gamma_A$ satisfies $\cR_{i'}\cap \{J_{i'} = j\}$, so does $\gamma$.  Hence
\begin{equation}\label{eq:bomega.in.cS}
\bomega \in \cD_{i'-2,j}\cap\cD_{i'+2,j}\cap\cP_{i',j}^{-} \cap \cR_{i'} \cap \{J_{i'} = j\} =\cS_{i',j}.
\end{equation}
By the definition of the resampling operator,
\begin{align}\label{eq:cB.resampling.equality}
\P[T^{(i')} \bomega_A \in \cB_{i',j}] & = \P\bigg[  \bomega \in \cB_{i',j} \mid \bomega(V_{i',j}^{c}) =  \bomega_A(V_{i',j}^{c}), \bomega \in \cS_{i',j} \bigg]\nonumber\\
& \geq \P\bigg[  \bomega \in \cB_{i',j} \mid \bomega(V_{i',j}^{c}) =  \bomega_A(V_{i',j}^{c}) \bigg]
\end{align}
where the inequality follows from equation~\eqref{eq:bomega.in.cS}.  

Now suppose that $\bomega$ satisfies just $\bomega(V_{i',j}^{c}) =  \bomega_A(V_{i',j}^{c})$.  Since this implies $\bomega\in \cP_{i',j}^{-,n,M,\ell}$ we automatically have $\bomega\in \cB^{(1)}_{i',j}$.  Both $\cB^{(4)}_{i',j}$ and $\cB^{(5)}_{i',j}$ are independent of $\bomega(V_{i',j}^{c})$.  Defining 
\[
\Theta_{i,j} = \Big\{\frac{(i-1)r}{n},\ldots,\frac{ir}{n}\Big\} \times \R \times \big[(j -\sqrt{\alpha}) W_r -1, (j-\sqrt{\alpha} + h) W_r + 1\big]
\]

we have that $\cB^{(4)}_{i',j}$ is $\bomega(\Theta_{i',j})$ measurable and independent of $\bomega(V_{i',j}^{c}$).  By~\eqref{eq:B5.bound} and Markov's Inequality,

\begin{align*}
\P\bigg[\P[\cB^{(5)}_{i',j}\mid \bomega(\Theta_{i',j}\cup V_{i',j}^{c})]\geq \frac12 \mid \bomega(V_{i',j}^{c}) =  \bomega_A(V_{i',j}^{c})\bigg] 
&= 1-\P\big[\P[(\cB^{(5)}_{i',j})^c\mid \bomega(\Theta_{i',j})]> \frac12\big] \\
&\geq 1 - \frac{\E\big[ \P[(\cB^{(5)}_{i',j})^c\mid \bomega(\Theta_{i',j})] \big]}{1/2}\\
&\geq 1-\frac{\delta_A}{50}.
\end{align*}
Similarly by~$\cP_{i',j}^{-}$,
\begin{align*}
&\P\bigg[\P[\cB^{(6)}_{i',j}\mid \bomega(\Theta_{i',j}\cup V_{i',j}^{c})]\geq \frac12 \mid \bomega(V_{i',j}^{c}) =  \bomega_A(V_{i',j}^{c})\bigg] \\
&\qquad= 1-\P\bigg[\P[(\cB^{(6)}_{i',j})^c\mid \bomega(\Theta_{i',j}\cup V_{i',j}^{c})]> \frac12 \mid \bomega(V_{i',j}^{c}) =  \bomega_A(V_{i',j}^{c})\bigg]  \\
&\qquad\qquad\geq 1 - \frac{\E\bigg[ I\big((\cB^{(6)}_{i',j})^c\big)\mid \bomega(V_{i',j}^{c}) =  \bomega_A(V_{i',j}^{c})\bigg] }{1/2}\\
&\qquad\geq 1-\frac{\delta_A}{50}.
\end{align*}

Altogether we have that if
\[
\cB^{(*)}_{i',j}= \bigg\{ \P[\cB^{(4)}_{i',j}\mid \bomega(\Theta_{i',j}\cup V_{i',j}^{c})] = 1, \P[\cB^{(5)}_{i',j}\mid \bomega(\Theta_{i',j}\cup V_{i',j}^{c})] \geq \frac12,\P[\cB^{(6)}_{i',j}\mid \bomega(\Theta_{i',j}\cup V_{i',j}^{c})]\geq \frac12 \bigg\}
\]

then by the above estimates and \eqref{eq:B4.bound}
\begin{align}\label{eq:bstar.lbound}
&\P\bigg[\cB^{(*)}_{i',j} \mid \bomega(V_{i',j}^{c}) =  \bomega_A(V_{i',j}^{c})\bigg] \geq \delta_A-\frac{2\delta_A}{50} \geq \frac{\delta_A}{2}.
\end{align}
Note that the event $\cB^{(*)}_{i',j}$ only depends on the configuration in $\Theta_{i',j}\cup V_{i',j}^{c}$ so we will abuse notation and view it as a configuration on just this set.  The events $\cB^{(k)}_{i',j}$ for $k\in\{1,2,3,7\}$ do not depend on the field in $\Theta_{i',j}$ and so
\begin{align}\label{eq:cond.cBk}
\P[\cB^{(k)}_{i',j}\mid \bomega(\Theta_{i',j}),\bomega(V_{i',j}^{c}) =  \bomega_A(V_{i',j}^{c})] = \P[\cB^{(k)}_{i',j}\mid\bomega(V_{i',j}^{c}) =  \bomega_A(V_{i',j}^{c})] \geq \begin{cases}
1  &\hbox{if } k=1,\\
\frac12  &\hbox{if } k=2,\\
\frac12  &\hbox{if } k=3,\\
\delta_B  &\hbox{if } k=7.
\end{cases}
\end{align}
where the bounds for $k\in\{1,2,3\}$ follows from the definition of $\cP_{i',j}^-$ while the bound in the case of $k=7$ follows from equation~\eqref{eq:B7.bound} and the fact that $\cB^{(7)}_{i',j}$ does not depend on the field in $V_{i',j}^{c}$.

Clearly from their definitions the events $\cB^{(k)}_{i',j}$ for $k\in\{2,3,6,7\}$  are increasing events in the field and in particular the field in $(\Theta_{i',j}\cup V_{i',j}^{c})^c$.  The event $\cB^{(5)}_{i',j}$ is not an increasing event as a function of the field overall but it is increasing as a function of  the field in the field in $(\Theta_{i',j}\cup V_{i',j}^{c})^c$ because the {event $\cM$ involves a difference of two infimums of passage times}, the latter of which is measurable with respect to the field in $\Theta_{i',j}$.  Hence we have that
\begin{align*}
&\P\bigg[  \bomega \in \cB_{i',j} \mid \bomega(V_{i',j}^{c}) =  \bomega_A(V_{i',j}^{c}) \bigg]\\
&=\E\bigg[  \P\Big[\bomega \in \bigcap_{k=1}^7 \cB_{i',j}^{(k)}\mid \bomega(\Theta_{i',j}\cup V_{i',j}^{c})\Big] \mid \bomega(V_{i',j}^{c}) =  \bomega_A(V_{i',j}^{c}) \bigg]\\
&\geq\E\bigg[  \P\Big[\bomega \in \bigcap_{k=1}^7 \cB_{i',j}^{(k)}\mid \bomega(\Theta_{i',j}\cup V_{i',j}^{c})\Big] I(\bomega(\Theta_{i',j}\cup V_{i',j}^{c})\in \cB^{(*)}_{i',j})\mid \bomega(V_{i',j}^{c}) =  \bomega_A(V_{i',j}^{c}) \bigg]\\
&=\E\bigg[  \P\Big[\bomega \in \bigcap_{k\in\{2,3,5,6,7\}} \cB_{i',j}^{(k)}\mid \bomega(\Theta_{i',j}\cup V_{i',j}^{c})\Big] I(\bomega(\Theta_{i',j}\cup V_{i',j}^{c})\in \cB^{(*)}_{i',j})\mid \bomega(V_{i',j}^{c}) =  \bomega_A(V_{i',j}^{c}) \bigg]\\
&\geq\E\bigg[  \prod_{k\in\{2,3,5,6,7\}} \P\Big[\bomega \in \cB_{i',j}^{(k)}\mid \bomega(\Theta_{i',j}\cup V_{i',j}^{c})\Big] I(\bomega(\Theta_{i',j}\cup V_{i',j}^{c})\in \cB^{(*)}_{i',j})\mid \bomega(V_{i',j}^{c}) =  \bomega_A(V_{i',j}^{c}) \bigg]\\
&\geq\E\bigg[  \frac12 \cdot\frac12 \cdot \frac12\cdot\frac12 \cdot \delta_B  I(\bomega(\Theta_{i',j}\cup V_{i',j}^{c})\in \cB^{(*)}_{i',j})\mid \bomega(V_{i',j}^{c}) =  \bomega_A(V_{i',j}^{c}) \bigg]\\
&=\frac{\delta_B}{16}\P\bigg[ \cB^{(*)}_{i',j}\mid \bomega(V_{i',j}^{c}) =  \bomega_A(V_{i',j}^{c}) \bigg]\\
&\geq \frac{\delta_A\delta_B}{32},
\end{align*}
where the first inequality is because we simply added an indicator, the next equality is because $\cB_{i',j}^{(1)}$ and $\cB_{i',j}^{(4)}$ hold with probability 1 on the events $\bomega(V_{i',j}^{c}) =  \bomega_A(V_{i',j}^{c})$ and $\cB^{(*)}_{i',j}$ respectively, the second inequality is by the FKG inequality noting that each of these events are increasing in the field on $(\Theta_{i',j}\cup V_{i',j}^{c})^c$, the third inequality is by equation~\eqref{eq:cond.cBk} and the definition of $\cB^{(*)}_{i',j}$ and the final inequality is by equation~\eqref{eq:bstar.lbound}. Together with equation~\eqref{eq:cB.resampling.equality} this completes the proof of the claim. \qed

To complete the proof we will set $\bomega^{(0)} = \bomega$ and for integers $1\leq t \leq \Delta$ define
\[
\bomega^{(t)} = T^{(i+t\xi)} \bomega^{(t-1)}.
\]
and $\bomega^\dagger = \bomega^{(\Delta)}$.  By the second claim we have that for any integers $1\leq t,t' \leq \Delta$ that $\bomega^{(t')} \in \cS_{i+t\xi,J_{i+t\xi}}$ if and only if $\bomega \in \cS_{i+t\xi,J_{i+t\xi}}$.  Hence we have that
\begin{align}\label{eq:glauber.resampl.equality.dist}
&\Bigg(\sum_{t=1}^{\Delta} I(\bomega \in \cB_{i+t\xi,J_{i+t\xi}}\cap \cS_{i+t\xi,J_{i+t\xi}}), \sum_{t=1}^{\Delta} I(\bomega \in \cS_{i+t\xi,J_{i+t\xi}})  \Bigg)\nonumber\\
& \quad \stackrel{d}{=} \Bigg(\sum_{t=1}^{\Delta} I({\bomega^{\dagger}} \in \cB_{i+t\xi,J_{i+t\xi}}\cap \cS_{i+t\xi,J_{i+t\xi}}), \sum_{t=1}^{\Delta} I(\bomega^{\dagger} \in \cS_{i+t\xi,J_{i+t\xi}})  \Bigg)\nonumber\\
& \quad = \Bigg(\sum_{t=1}^{\Delta} I(\bomega^{(t)} \in \cB_{i+t\xi,J_{i+t\xi}}\cap \cS_{i+t\xi,J_{i+t\xi}}), \sum_{t=1}^{\Delta} I(\bomega^{(t-1)} \in \cS_{i+t\xi,J_{i+t\xi}})  \Bigg)
\end{align}
where the equality in distribution is by the exchangeability of the resampling operator and the equality is by the fact that resampling preserves the $\cS_{i+t\xi,J_{i+t\xi}}$ and only $T^{(i+t\xi)}$ changes the event $\cB_{i+t\xi,J_{i+t\xi}}$.  By the final claim we have the stochastic domination
\[
\sum_{t=1}^{\Delta} I(\bomega^{(t)} \in \cB_{i+t\xi,J_{i+t\xi}}\cap \cS_{i+t\xi,J_{i+t\xi}}) \succeq \hbox{Bin}\Bigg(\sum_{t=1}^{\Delta} I(\bomega^{(t-1)} \in \cS_{i+t\xi,J_{i+t\xi}} \cap \cW_{i+t\xi,J_{i+t\xi}}^{glo}), \frac{\delta_A\delta_B}{32}\Bigg)
\]

and so by the Azuma-Hoeffding Inequality
\begin{align}\label{eq:binomial.domination.resamp}
&\P\Bigg[\sum_{t=1}^{\Delta} I(\bomega^{(t)} \in \cB_{i+t\xi,J_{i+t\xi}}\cap \cS_{i+t\xi,J_{i+t\xi}}) \nonumber\\
&\qquad\qquad\qquad \geq \frac{\delta_A\delta_B}{32} \sum_{t=1}^{\Delta} I(\bomega^{(t-1)} \in \cS_{i+t\xi,J_{i+t\xi}}\cap \cW_{i+t\xi,J_{i+t\xi}}^{glo}) - \Delta^{2/3} \Bigg]\\
& \quad \geq \max_{Q\leq \Delta} \P\Bigg[\hbox{Bin}\Bigg(Q,\frac{\delta_A\delta_B}{32} \Bigg)\geq {\frac{\delta_A \delta_B}{32} Q} - \Delta^{2/3} \Bigg]\nonumber\\
&\quad \geq 1 - \frac12 M^{-150}.
\end{align}
By Lemma~\ref{l:cW.global}
\[
\P[\bomega^{(t-1)} \in \cW_{i+t\xi,J_{i+t\xi}}^{glo}] = \P[\bomega \in \cW_{i+t\xi,J_{i+t\xi}}^{glo}] \geq 1-M^{-200}
\]
and so 
\[
\P\Bigg[\sum_{t=1}^{\Delta} I(\bomega^{(t-1)} \in \cS_{i+t\xi,J_{i+t\xi}}\cap \cW_{i+t\xi,J_{i+t\xi}}^{glo})=\sum_{t=1}^{\Delta} I(\bomega^{(t-1)} \in \cS_{i+t\xi,J_{i+t\xi}})\Bigg] \geq 1-M^{-199}
\]
and by combining with~\eqref{eq:binomial.domination.resamp} we have that
\begin{align*}
&\P\Bigg[\sum_{t=1}^{\Delta} I(\bomega^{(t)} \in \cB_{i+t\xi,J_{i+t\xi}}\cap \cS_{i+t\xi,J_{i+t\xi}})\\
&\qquad\qquad\qquad \geq \frac{\delta_A\delta_B}{32}  \sum_{t=1}^{\Delta} I(\bomega^{(t-1)} \in \cS_{i+t\xi,J_{i+t\xi}}) - \Delta^{2/3} \Bigg]\\
&\quad \geq 1 - \frac12 M^{-150}.
\end{align*}
Using the equality in distribution from equation~\eqref{eq:glauber.resampl.equality.dist},
\begin{align*}
&\P\Bigg[\sum_{t=1}^{\Delta} I(\bomega \in \cB_{i+t\xi,J_{i+t\xi}}\cap \cS_{i+t\xi,J_{i+t\xi}})\\
&\qquad\qquad\qquad \geq \frac{\delta_A\delta_B}{32} \sum_{t=1}^{\Delta} I(\bomega \in \cS_{i+t\xi,J_{i+t\xi}}) - \Delta^{2/3} \Bigg]\\
&\quad \geq 1 -  M^{-150}.
\end{align*}
This completes the proof.
\end{proof}

\begin{proof}[Proof of Theorem~\ref{t:Pplus}]
Since 
\[
\cP_{i,J_i} \cap \{|J_i|\leq M^{8/10}\}\cap \cR_i = \cS_{i,J_i}\cap \cB_{i,J_i}\cap \cW^{glo}_{i,J_i},
\]
by Lemma~\ref{l:Pminus.B.perc}, Lemma~\ref{l:Pminus.D2.perc} and Lemma~\ref{l:cW.global},
\begin{align*}
&\P\Bigg[\sum_{i'=i}^{i+\Phi-1} I(\cP_{i',J_{i'}},|J_{i'}|W_r\leq M^{8/10}W_n) < \frac{9\delta_A\delta_B\delta_C^2}{1280}\Phi - \Delta^{2/3}\xi \Bigg]\\
&\ \leq \P\Bigg[\sum_{i'=i}^{i+\Phi-1} I(\cS_{i',j}) < \frac{9\delta_C^2}{40} \Phi - 2\Delta^{2/3}\xi\Bigg]\\
& \ +{\sum_{i'=i}^{i+\xi-1}} \P\Bigg[\sum_{t=1}^{\Delta} I(\cS_{i'+t\xi,J_{i'+t\xi}}, \cB_{i'+t\xi,J_{i'+t\xi}}) + \Delta^{2/3} <  \frac{\delta_A\delta_B}{32} \sum_{t=1}^{\Delta} I(\cS_{i'+t\xi,J_{i'+t\xi}}) \Bigg]\\
& \ + \sum_{i'=i}^{i+\Phi-1}\P\bigg[ I\big((\cW^{glo}_{i,J_i})^c\big)\bigg]\\
& \ \leq 3M^{-100} + \xi M^{-200} +M^{-199} \leq 4 M^{-100},
\end{align*}
which completes the proof for large enough $M_0$.
\end{proof}

\section{Estimates for elementary events}
\label{s:letter}
In this section we prove the estimates for the elementary events $\cA, \cI, \cK, \cJ, \cM, \cZ$ that were stated in Section \ref{s:events}. These estimates, while technical, mostly follow from Proposition \ref{p:paraestimateconforming} and from the FKG inequality in some cases. 

Recall that these elementary events were all defined at scales $r=r_{\ell}(n,M)$ for $1\le l \le \ell_{\max}=\ell_{\max}(n,M)$. As mentioned at the beginning of Section \ref{s:elementary1} all the constants involved in probability estimates are independent of $n,M,\ell$ and the bounds work for $M\ge M_0$, $n\ge n_0(M)$ and all $\ell$. Unless otherwise specified,
all the lemmas in this section shall also has constants independent of $n,M,\ell$ and hold for the choice of $n,M,\ell$ as above, even if it might not be explicitly stated each time. The estimates will hold for an appropriate range of horizontal locations $i$  ($1\le i \le \frac{Mn}{r}$, unless otherwise specified) and an appropriate range of vertical locations $j$.

\subsection{Event $\cA$: Proof of Lemma \ref{l:cA.bound}}
Recall the definition of the events $\cA^{\pm}$. 
\begin{align*}
\cA_{i,j,z}^- &= \Bigg\{\sup_{\substack{|y|,|y'| \leq M W_n \\ u=((i-1)r,y) \\ v= (ir,y')}} \hrX_{uv} - \frac{|y - jW_r| + |y' - jW_r|}{W_r}Q_r \leq z Q_r \Bigg\};\\
\cA_{i,j,z}^+ &= \Bigg\{\inf_{\substack{|y|,|y'| \leq n^{\beta}W_n \\ u=((i-1)rn,y) \\ v= (ir,y')}} \hrX_{uv}  + \frac{|y - jW_r| + |y' - jW_r|}{W_r}Q_r  \geq -z Q_r \Bigg\}.
\end{align*}
These say that the passage times across a column near any given vertical location are typical with large probability where the tolerance for being typical increases slightly as the vertical distance of the end points from the specified location increases. We now prove Lemma \ref{l:cA.bound}. 

\begin{proof}[Proof of Lemma \ref{l:cA.bound}]
For notational brevity we shall assume without loss of generality that $i=1$. The  same proof goes through for general $i$. Fix $j$ with $|j|\le \frac{MW_n}{W_r}$. Clearly it suffices to prove the lemma for $z$ sufficiently large. 

For $j_1,j_2$  with $|j_1|, |j_2|\le \frac{MW_n}{W_r}$, let $A_{j_1,j_2}$ denote the event that for all $(0,y)\in \ell_{0,j_1W_r, (j_1+1)W_r}$ and for all $(r,y')\in \ell_{r,j_2W_r, (j_2+1)W_r}$ we have 
$$|\hrX_{(0,y),(r,y')}|\le (|j_1-j|+|j_2-j|-2+z)Q_r.$$
Clearly from the definition 
$$ \cA_{1,j,z}^- \supseteq \bigcap_{j_1,j_2} A_{j_1,j_2}.$$
It follows from Proposition \ref{p:paraestimateconforming} that for each pair $j_1,j_2$ as above
$$\P(A_{j_1,j_2}) \ge 1-\exp\left(-C (z+|j_1-j|+|j_2-j|)^{\theta_2}\right).$$
The probability bound on  $\cA_{1,j,z}^-$ follows by taking a union bound over all $j_1,j_2$. 

For the other bound we shall assume without loss of generality that $n^{\beta}\frac{W_n}{W_r}$ is an integer. For integers $-n^{\beta}\frac{W_n}{W_r}\le j_1,j_2\le n^{\beta}\frac{W_n}{W_r}-1$, let $\widetilde{A}_{j_1,j_2}$ denote the event that for all $(0,y)\in \ell_{0,j_1W_r, (j_1+1)W_r}$ and for all $(r,y')\in \ell_{r,j_2W_r, (j_2+1)W_r}$
$$\hrX_{(0,y),(r,y')} \ge -(z+|j_1-j|+|j_2-j|-2)Q_r.$$
Clearly from the definition 
$$ \cA_{1,j,z}^+ \supseteq \bigcap_{j_1,j_2} \widetilde{A}_{j_1,j_2}.$$
It follows from Proposition \ref{p:paraestimateconforming} that 
$$\P(\widetilde{A}_{j_1,j_2}) \ge 1-\exp\left(-C (z+|j_1-j|+|j_2-j|-2)^{\theta_2}\right).$$
The probability bound on  $\cA_{1,j,z}^+$ follows by taking a union bound over all $j_1,j_2$.
\end{proof}

\subsection{The event $\cI$: Proof of Lemma \ref{l:cI.bound}}
Recall the definition of the events $\cI^{\pm}$. 
\begin{align*}
\cI^+_{i,j,j',z} &= \bigg\{\inf_{y,y'\in[jW_r,j' W_r]} \inf_{\substack{\zeta' \subset  [(i-1)r,ir]\times [jW_r,j' W_r] \\ \zeta'(0)=((i-1)r,y) \\ \zeta'(1)=(ir,y')}} \hrX_{\zeta'}  \geq  z Q_r \bigg\};\\
\cI^-_{i,j,j',z} &= \bigg\{\inf_{y,y'\in[jW_r,j' W_r]} \inf_{\substack{ \zeta' \subset  [(i-1)r,ir]\times [jW_r,j' W_r] \\ \zeta'(0)=((i-1)r,y) \\ \zeta'(1)=(ir,y')}} \hrX_{\zeta'}  \leq  z Q_r \bigg\}.
\end{align*}
We shall the prove different parts of Lemma \ref{l:cI.bound} separately below.

\begin{lemma}\label{l:cI.bound1}
There exists $C,\theta_5>0$, not depending on $n,M,\ell$ such that for all $i$ and {$|j|,|j'|\leq M$} and $z\geq 0$
\[
\P[\cI^+_{i,j,j',-z}] \geq 1 - (1\vee|j-j'|)^2\exp(-C z^{\theta_5}).
\]
\end{lemma}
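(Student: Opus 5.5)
The plan is to reduce the event to the unconstrained lower-tail bound in Proposition \ref{p:paraestimateconforming}(ii). Restricting paths to the rectangle $[(i-1)r,ir]\times[jW_r,j'W_r]$ can only increase passage times. So $(\cI^+_{i,j,j',-z})^c$ forces the existence of points $u=((i-1)r,y)$, $v=(ir,y')$ with $y,y'\in[jW_r,j'W_r]$ and
\[
\rX_{uv}-r-\frac{(y-y')^2}{2r}<-zQ_r .
\]
Here $\rX_{uv}$ is the unconstrained conforming distance.

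First, I cover the interval $[jW_r,j'W_r]$ by at most $1\vee|j-j'|$ unit segments of length $W_r$ on each side. For each pair of segments with indices $j_1,j_2$, I need to bound the probability that some $u$ in the first and some $v$ in the second satisfy the inequality above.

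Next, I compare the centering $r+\frac{(y-y')^2}{2r}$ with the Euclidean distance $|u-v|=\sqrt{r^2+(y-y')^2}\le r+\frac{(y-y')^2}{2r}$. This comparison goes the favourable way: $\rX_{uv}-r-\frac{(y-y')^2}{2r}\le \rX_{uv}-|u-v|$. Hence the bad event is contained in
\[
\Big\{\min_{u,v}\big(\rX_{uv}-|u-v|\big)\le -zQ_r\Big\}.
\]
Proposition \ref{p:paraestimateconforming}(ii) bounds this event by $\exp(1-Cz^{\theta_2})$, uniformly in $j_1,j_2$ as long as the heights stay within $n^\beta W_n$. This holds because $|j|,|j'|\le MW_n/W_r$, so the heights are at most $MW_n\ll n^\beta W_n$ for $n$ large. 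The proposition is stated for ordered segment indices, $j_1<j_2$ in the notation $\ell_{ir,j_1W_r,(j_1+1)W_r}$ and $\ell_{(i+1)r,(j_2-1)W_r,j_2W_r}$. I expect the other orderings to follow by reflection symmetry of the model, or by the same proposition applied with relabelled indices.

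Finally, a union bound over the at most $(1\vee|j-j'|)^2$ pairs gives
\[
\P[(\cI^+_{i,j,j',-z})^c]\le (1\vee|j-j'|)^2 e\,\exp(-Cz^{\theta_2}).
\]
The prefactor $e$ is absorbed by shrinking $C$ and $\theta_5$ for $z$ large; for small $z$ the bound is trivial after adjusting constants.

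The only mildly delicate point is the unit-shift bookkeeping in the indices of Proposition \ref{p:paraestimateconforming}, including the degenerate case $|j-j'|<1$. It also has to be checked that the horizontal offset of column $i$ is covered by the proposition, which is stated for scale-$r$ columns starting at multiples of $r$. Neither point should be a real obstacle.
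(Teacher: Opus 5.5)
Your route is the paper's: drop the rectangle constraint (which only lowers passage times), cover $[jW_r,j'W_r]$ by $W_r$-segments, apply Proposition~\ref{p:paraestimateconforming}(ii) to each pair of segments, and take a union bound over the $(1\vee|j-j'|)^2$ pairs. However, the centering step is argued in the wrong direction. From $|u-v|=\sqrt{r^2+a^2}\le r+\frac{a^2}{2r}$ (with $a=y-y'$) you correctly get $\hrX_{uv}\le \rX_{uv}-|u-v|$. But this is exactly the inequality that does \emph{not} give the containment you claim. The bound $\hrX_{uv}<-zQ_r$ only says the smaller of the two quantities is very negative, and implies nothing about $\rX_{uv}-|u-v|$. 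The quadratic centering overshoots the Euclidean distance. So the lower-tail event for $\hrX$ is larger than the one Proposition~\ref{p:paraestimateconforming}(ii) controls, not smaller.

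What is needed is a quantitative bound on the overshoot. Since $\sqrt{1+x}\ge 1+\frac x2-\frac{x^2}{8}$ for $x\ge 0$,
\[
0\le r+\frac{a^2}{2r}-\sqrt{r^2+a^2}\le \frac{a^4}{8r^3}.
\]
Here $|a|\le |j-j'|W_r\le 2MW_r$ and $W_r^2=rQ_r$, so this is at most $2M^4Q_r^2/r$. That is $\le Q_r$ once $n\ge n_0(M)$, because $Q_r=O(\sqrt r)$ and $r\ge n$. Hence the bad event is contained in $\{\min_{u,v}(\rX_{uv}-|u-v|)\le-(z-1)Q_r\}$. The proposition then applies with $z-1$ in place of $z$ (for $z\ge 2$, say), which only changes constants. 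This step genuinely uses the a priori bound on $|j-j'|$ and $n$ large depending on $M$. The paper tacitly relies on the same Taylor expansion when it quotes the proposition directly for $\hrX$.

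Separately, your remark that small $z$ is ``trivial after adjusting constants'' is not quite right when $|j-j'|$ is of order one. In that regime the right-hand side is strictly positive for small $z>0$, so the inequality needs a uniform positive lower bound on $\P[\cI^+_{i,j,j',0}]\le\P[\cI^+_{i,j,j',-z}]$, for example from Lemma~\ref{l:cI.bound3}. The paper's proof skips this point too, and only large $z$ is used later.
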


\begin{proof}
    Observe that for $|j|,|j'|\le 2M$ since the event only considers paths contained in $[(i-1)r,ir]\times [jW_r,j'W_r]$ the event is translation invariant in $i$ and $j$ and hence it suffices to prove the result for $i=1,j=0$ and each fixed $|j'|\le 2M$. Fix such a $j'$ and observe that since we are trying to show that passage times cannot be too small, we can ignore the condition $\zeta' \subset  [0,r]\times [0,j' W_r]$ for the proof of the lower bound of $\P(\cI^+_{1,0,j',-z})$. Without loss of generality assume $j'\ge 0$ and notice that for all {$k,k'\in [0,j'-1]\cap \Z$} we have by Proposition \ref{p:paraestimateconforming} that
    $$\P\left(\inf_{\substack{y\in [kW_r, (k+1)W_r],\\y'\in[k'W_r,(k'+1)W_r]}} \inf_{\substack{\zeta'(0)=((i-1)r,y) \\ \zeta'(1)=(ir,y')}} \hrX_{\zeta'}  \ge -zQ_r \right)\ge 1-\exp(-cz^{\theta_5}).$$
    The lemma follows from a union bound over $(1\vee|j-j'|)^2$ many possible pairs $(k,k')$. 
\end{proof}

\begin{lemma}
    \label{l:cI.bound2}
    For any $z\geq 0$ there exists $\delta'>0$ such that for all $i$ and all $j'\geq j+1$,
\[
\P[\cI^-_{i,j,j',-z}] \geq \delta'.
\]
\end{lemma}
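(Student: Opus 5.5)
The plan is to reduce to the case $j'=j+1$, then to a single scale-$r/K$ building block, and to glue $K$ blocks with FKG. For the first reduction, enlarging the rectangle only lowers the infimum, so $\cI^-_{i,j,j+1,-z}\subseteq \cI^-_{i,j,j',-z}$ whenever $j'\ge j+1$. By translation invariance it then suffices to take $i=1$, $j=0$ and bound $\P[\cI^-_{1,0,1,-z}]$ from below by a constant $\delta'>0$ depending only on $z$. The event says that some path inside the $r\times W_r$ rectangle $[0,r]\times[0,W_r]$ has $\hrX\le -zQ_r$. This is a decreasing event in the field, since lowering $\Psi$ lowers passage times.

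The key steps are as follows.

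\textbf{Step 1: base case.} By Proposition \ref{p:constrainerX} applied at scale $s$ with $s$ comparable to $r$ (and translated), there is a constrained path from $(0,y)$ to $(s,y)$ inside a strip of height $W_s$ with $\rX\le s+C_0Q_s$ with probability at least $1/2$, for some constant $C_0$.

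\textbf{Step 2: a single good block.} I also need one block where $\hrX$ is very negative with positive probability. For this I use the fact that $A_n=\E X_n-n=\Theta(Q_n)$ together with the fluctuations being of order exactly $Q_n$ (Theorem \ref{t:all}, $Q_n=\Theta(\mathrm{SD})$). A random variable with mean $\ge cQ$ above $n$, variance $\Theta(Q^2)$ and stretched-exponential tails cannot be a.s.\ $\ge n-xQ$ for every $x$ without contradiction. What is really needed, though, is a lower deviation of arbitrary size $x$ with probability $\delta(x)>0$.

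\textbf{Step 3: splitting and gluing.} To obtain lower deviations of arbitrary size, I split $[0,r]$ into $K$ columns of width $r/K$, each with height $W_{r/K}\ll W_r$. In each column I ask for a constrained path whose excess over its length is at most $-\epsilon Q_{r/K}$. This happens with probability at least some $p>0$, because by Theorem \ref{t:all} and Proposition \ref{p:constrainerX} the constrained passage time has mean within $O(Q)$ of $r/K$ and nondegenerate fluctuations of order $Q$. The columns are almost independent (modulo the 2-dependence, handled by the column structure of conforming paths), and in any case all these events are decreasing, so by FKG the probability that all $K$ occur is at least $p^K$. Gluing the $K$ paths at horizontally aligned points costs at most $O(W_{r/K})$ vertical joins of length $O(W_{r/K})$, which is negligible. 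The total excess is then at most $-K\epsilon Q_{r/K}+o(Q_r)$.

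\textbf{Step 4: choosing $K$.} Since $Q_{r}\le C K^{1/2+\delta}Q_{r/K}$, we get $KQ_{r/K}\ge K^{1/2-\delta}Q_r/C$, which exceeds $zQ_r/\epsilon$ for $K$ large depending on $z$. The quadratic correction in $\hrX$ is zero for a horizontal path, so the event holds with probability at least $\delta'=p^{K}$.

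The main obstacle is Step 2: producing a single block with $\hrX\le-\epsilon Q$ with probability bounded below uniformly. My first attempt is to use $\Var(\rX_s)\ge cQ_s^2$ together with the upper tail bound $\E[(\rX_s-s)^+]$ controlled, to force $\P(\rX_s-s\le \E\rX_s - s - cQ_s)\ge p$. This does not directly give a negative excess, because $\E\rX_s-s\asymp Q_s>0$. So instead I would apply the gluing argument with the constraint removed: for the unconstrained $\hrX$ over $K$ columns of width $r/K$, concatenation gives $\sum (\text{excess}) $. Taking $K$ small columns versus one large column, superadditivity of the mean, $A_{r}\le KA_{r/K}$ fails in the wrong direction. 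This sign issue is exactly where I expect the difficulty, and I would resolve it by comparing $\E$ of the constrained time at scale $r$ with $K$ copies at scale $r/K$, using $A_r=\Theta(Q_r)\ll KQ_{r/K}$, so that block sums fluctuate downward relative to the mean by order $\sqrt K Q_{r/K}\gg Q_r$.
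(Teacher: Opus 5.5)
Your reduction to $i=1$, $j=0$, $j'=1$ is what the paper does, and so is the observation that $\cI^-$ is decreasing in the field. Your Step 3--4 architecture also matches the paper: split into short pieces, make each piece atypically fast with probability bounded below, glue with FKG, and use sublinearity of $Q$ (via $KQ_{r/K}\ge cK^{1/2-\delta}Q_r$) to amplify.

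The gap is the one you flag in Step 2, and it is not closed. Nothing in Theorem~\ref{t:all} or Proposition~\ref{p:constrainerX} gives $\P(\rX_{uv}\le |u-v|-\epsilon Q_{|u-v|})\ge p$ uniformly in the scale. Since $\E X_s-s=A_s\ge cQ_s$, this is a deviation of more than $c$ standard deviations below the mean. A law supported in $[s,s+CQ_s]$ (e.g.\ uniform) has mean excess $\Theta(Q_s)$, variance $\Theta(Q_s^2)$ and stretched-exponential tails. So the contradiction you claim in Step 2 does not exist, and ``nondegenerate fluctuations of order $Q$'' gives nothing below $s$.

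Your proposed fix goes the wrong way. Along a fixed route, the sum of $K$ block times has mean excess $KA_{r/K}\asymp KQ_{r/K}$ but fluctuations only of order $\sqrt K Q_{r/K}$, so it is typically far \emph{above} $r$. The inequality $A_r\ll KA_{r/K}$ only says the scale-$r$ geodesic beats fixed-route concatenations. It does not say that any individual block crossing is shorter than its Euclidean length.

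The missing ingredient is a genuine left-tail lower bound, which the paper imports rather than derives. By \cite[Proposition 2.3]{BSS23} together with Lemma~\ref{l:proxy}, for every $L$ there is $\delta(L)>0$ with $\P(\rX_{u,v}\le |u-v|-LQ_{|u-v|})\ge\delta$ for horizontally aligned $u,v$ at any large distance. With this input, the paper proceeds as follows:
\begin{itemize}
\item It places points $u_i$ on the line $y=\frac12 W_r$, spaced $r/(100H)$ apart.
\item It fixes $L$, depending only on $z$, so that $98HLQ_{r/(100H)}\ge 3zQ_r$ for all $H$.
\item It takes $H$ large depending on $L$, so that by Lemma~\ref{l:proxytrans} each sub-geodesic leaves the $r\times W_r$ rectangle with probability at most $\delta/2$.
\item FKG then gives probability at least $(\delta/2)^{98H}$ for the middle portion.
\item The two end pieces of length $r/100$ are handled by Proposition~\ref{p:constrainerX}.
\end{itemize}
Your remark that a fixed $\epsilon$ would suffice after amplification is correct, but even that fixed-$\epsilon$ bound needs this external input.

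Two smaller points. First, glue at common endpoints using point-to-point events. Vertical connectors of length $W_{r/K}$ cost order $W_{r/K}$, and $W_{r/K}/Q_r\to\infty$ for fixed $K$, so they are not negligible. Second, constrain the pieces to the full height-$W_r$ rectangle rather than to strips of height $W_{r/K}$. In the narrow strips the unconstrained geodesic exits with non-negligible probability, and no constrained lower-deviation estimate is available.
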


\begin{proof}
Without loss of generality, we shall prove this result for $i=0$, and $j'=j+1$. It suffices to show that there exists $\delta'>0$ such that there with probability at least $\delta'>0$ there exists a conforming path $\zeta$ from $(0,(j+\frac{1}{2})W_r)$ to $(r,(j+\frac{1}{2})W_r)$ such that $\zeta \subset [0,r]\times [jW_r, (j+1)W_r]$ and $\rX_{\zeta}\le r-zQ_r$. Clearly, this event is independent of $j$, and without loss of generality from now on we shall work with $j=0$. 

Let us define the following three events (see Figure \ref{f:constrained}).

$$A_1=\left\{\exists \zeta\subset [0,r]\times [0,W_r], \zeta(0)=(0,\frac{1}{2}W_r), \zeta(1)=(\frac{r}{100},\frac{1}{2}W_r), \rX_{\zeta}\le \frac{r}{100}+zQ_r\right\};$$

$$A_2=\left\{\exists \zeta\subset [0,r]\times [0,W_r], \zeta(0)=(\frac{99}{100}r,\frac{1}{2}W_r), \zeta(1)=(r,\frac{1}{2}W_r), \rX_{\zeta}\le \frac{r}{100}+zQ_r\right\};$$

$$A_3=\left\{\exists \zeta\subset [0,r]\times [0,W_r], \zeta(0)=(\frac{1}{100}r,\frac{1}{2}W_r), \zeta(1)=(\frac{99}{100}r,\frac{1}{2}W_r), \rX_{\zeta}\le \frac{r}{100}-3zQ_r\right\}.$$

Clearly it suffices to show that 

\begin{equation}
    \label{e:goodsuff}
    \P(A_1\cap A_2 \cap A_3)\ge \delta'
\end{equation}
 as one can simply consider the concatenation of the paths given by these three events. 

For $z$ sufficiently large (we only need to deal with this case) it follows from Proposition \ref{p:constrainerX} that $\P(A_1), \P(A_2)\ge \frac{1}{2}$, and by the FKG inequality it suffices to show that $\P(A_3)\ge 4\delta'$. For some integers $H,L$ to be chosen sufficiently large later. Let $u_i=(\frac{1}{100}r+\frac{ir}{100H}, \frac{1}{2}W_r)$. Let $B_i$ denote the event
\begin{center}
\begin{figure}
\includegraphics[width=5in]{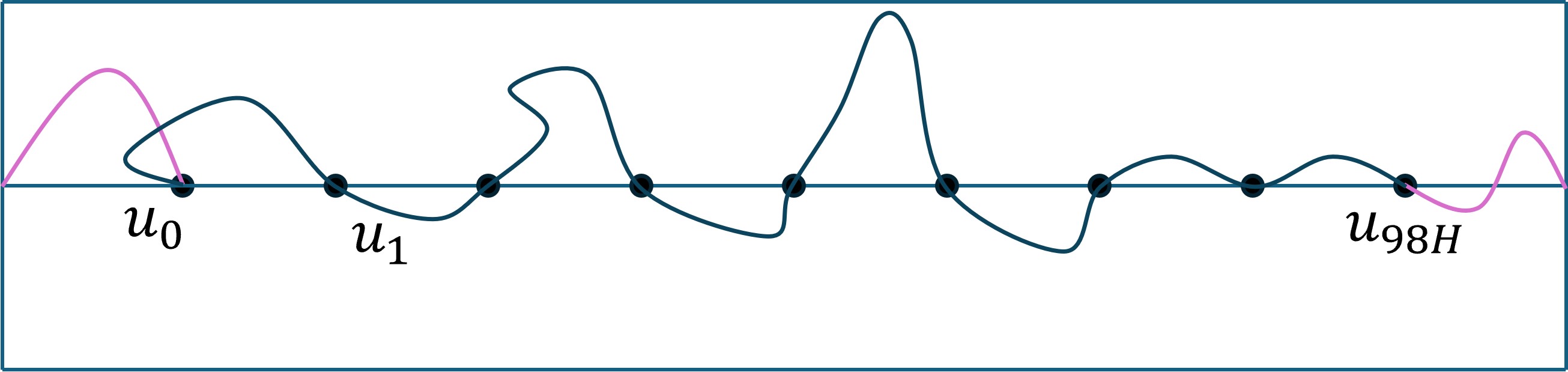}
\caption{ Proof of Lemma \ref{l:cI.bound2} where we show that with positive probability there exists a very good path (on-scale) across an on-scale rectangle contained in the rectangle. We ask that the purple paths are not too bad (events $A_1, A_2$) whereas the events $B_i$ that the paths in blue are all very good. The concatenated path will then be a very good path constrained in the rectangle.}
\label{f:constrained}
\end{figure}
\end{center}
$$B_i:=\left\{\exists \zeta\subset [0,r]\times [0,W_r], \zeta(0)=u_{i-1}, \zeta(1)=u_i, \rX_{\zeta}\le \frac{r}{100H}-LQ_{\frac{r}{100H}}\right\}$$
for $i=1,2,\ldots, 98H$. First choose $L$ sufficiently large such that $98LHQ_{\frac{r}{100H}}\ge 3zQ_r$ (note that this is possible independent of $H$ since $Q$ grows sublinearly). By \cite[Proposition 2.3]{BSS23} and Lemma \ref{l:proxy} it follows that there exists $\delta>0$ such that for each $i'$, 

$$\P\left(\rX_{(\frac{1}{100}r+(i'-1)\frac{r}{100H},\frac{1}{2}W_r),(\frac{1}{100}r+ i'\frac{r}{100H},\frac{1}{2}W_r)}\le \frac{r}{100H}-LQ_{\frac{r}{100H}}\right)\ge \delta.$$
Now choose $H$ sufficiently large depending on $L$ such that the probability that the conforming geodesic between $(\frac{1}{100}r+(i'-1)\frac{r}{100H},\frac{1}{2}W_r)$ and $(\frac{1}{100}r+ i'\frac{r}{100H},\frac{1}{2}W_r)$ exits $[0,r]\times [0,W_r]$ is at most $\delta/2$ (possible by Lemma \ref{l:proxytrans}). It therefore follows that for each $i'$, $\P(B_{i'})\ge \frac{\delta}{2}$. Since $A_3 \supset \cap_{i'} B_{i'}$, it follows by the FKG inequality that 
$$\P(A_3)\ge (\frac{\delta}{2})^{98H}.$$
By choosing $\delta'$ sufficiently small, \eqref{e:goodsuff} and hence the lemma follows. 
\end{proof}

\begin{lemma}
\label{l:cI.bound3}
For any $t$ and $z\geq 0$ there exists $\delta(t,z) >0$ such that if $j'\leq j+t$
\[
\P[\cI^+_{i,j,j',z}] \geq \delta.
\]
\end{lemma}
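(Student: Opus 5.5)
We need to show that, with probability bounded below, every path across the rectangle $[(i-1)r,ir]\times[jW_r,j'W_r]$ (height at most $t$ blocks) has centered passage time $\hrX\ge zQ_r$. The event $\cI^+_{i,j,j',z}$ is increasing in the field, so the plan is to build it from many small increasing events, each of positive probability, and combine them by the FKG inequality. By translation invariance it suffices to take $i=1$, $j=0$, and since shrinking the rectangle only makes $\cI^+$ easier, we may assume $j'=t$ with $t$ an integer.

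\textbf{Step 1: decomposition.} Fix a large integer $H$ (depending on $t,z$). Cut $[0,r]$ into $H$ sub-columns of width $r/H$. Any path crossing the rectangle from left to right crosses each sub-column. Its passage time is therefore at least the sum, over the sub-columns, of the minimal passage time of a crossing of the sub-column rectangle $[(k-1)r/H,kr/H]\times[0,tW_r]$.

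Each such sub-rectangle has height $tW_r$. Since $W_{r/H}\ge H^{-7/8}W_r$ by \eqref{eq:basic.W.bounds}, this height is about $tH^{7/8}$ blocks of size $W_{r/H}$.

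\textbf{Step 2: the per-sub-column events.} Let $E_k$ be the event that every crossing of the $k$-th sub-rectangle has passage time at least $r/H + Q_{r/H}$. This is an increasing event.

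We bound $\P(E_k)$ from below in two stages.
\begin{itemize}
\item \emph{Typical lower bound.} Apply Proposition \ref{p:paraestimateconforming}(ii) with a union bound over the $O((tH^{7/8})^2)$ pairs of entry and exit blocks, as in Lemma \ref{l:cI.bound1}. With probability at least $1/2$, every crossing has time at least $r/H - C_t(\log H)^{C}Q_{r/H}$; the vertical displacement contributes a nonnegative Pythagorean term.
\item \emph{Boosting the sub-column.} To push the minimum up by an extra amount $A=C_t(\log H)^C+1$ units of $Q_{r/H}$, note that $\psi$ is increasing and bounded by $d_2$. Increasing the field on the sub-rectangle raises $\Psi$ toward $d_2$. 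If we condition on the noise in the sub-rectangle being large (for example, the Gaussian white noise integrated over small cells exceeding a threshold, or many Poisson points), then $\Psi\ge d_1+\eta$ uniformly there, which would raise all passage times by a constant fraction of $r/H\gg Q_{r/H}$. That overshoots. Instead, the plan is to use the positive-probability lower-deviation statement for the minimum, i.e.\ the upper-tail analogue of \cite[Proposition 2.3]{BSS23}: with probability $\delta_1(H,t)>0$ the minimum exceeds $r/H+Q_{r/H}$.
\end{itemize}
Since $E_k$ is increasing, we then have $\P(E_k)\ge\delta_1>0$.

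\textbf{Step 3: FKG and conclusion.} By the FKG inequality, $\P(\bigcap_k E_k)\ge\delta_1^H$. On $\bigcap_k E_k$, every crossing $\zeta$ from $(0,y)$ to $(r,y')$ satisfies
$$\rX_\zeta\ge r+HQ_{r/H}\ge r+H^{1-\alpha'}Q_r,$$
where $Q_{r/H}\ge H^{-1/2-\delta}Q_r$ by Theorem \ref{t:all}, with $\alpha'<1$.

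The centering subtracts $\frac{(y-y')^2}{2r}\le\frac{t^2}{2}Q_r$. Choosing $H$ with $H^{1-\alpha'}\ge z+t^2$ gives $\hrX_\zeta\ge zQ_r$, which is the event $\cI^+_{1,0,t,z}$. The resulting lower bound $\delta(t,z)=\delta_1^H$ is independent of $n,M,\ell$.

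\textbf{Main obstacle.} The hardest step is showing that the \emph{minimum over all crossings} of a thin on-scale rectangle exceeds its typical value by an order-$Q$ amount with positive probability. This is an upper-tail lower bound on a constrained point-to-line-type infimum; Step 2 does not prove it, it only names an assumed "upper-tail analogue" of \cite[Proposition 2.3]{BSS23}. That auxiliary result is the one missing piece. If it is unavailable, the fallback is the same concatenation trick at yet another scale: split each sub-column into pieces and use the increasing-event FKG product together with the fact that point-to-point times exceed the mean by $Q$ with positive probability, patched via the line-to-line estimates of Proposition \ref{p:paraestimateconforming}.
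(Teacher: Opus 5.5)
\textbf{There is a genuine gap: Step 2 assumes the content of the lemma.}

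Your Steps 1 and 3 are a correct amplification. Once each sub-column is a barrier with probability bounded below, FKG together with $HQ_{r/H}\ge H^{1/2-\delta}Q_r$ raises the excess to $zQ_r$; the paper does the same thing with $\delta_1 RQ_{r/R}\ge LQ_r$ in Lemma \ref{l:barrier}.

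Step 2, however, proves nothing. Your event $E_k$ is implied by the $\cI^+$ statement at scale $r/H$ with $z=1$ and height $tW_r/W_{r/H}\le tH^{7/8}$ blocks. So you have only reduced the lemma for $(t,z)$ to the lemma for $(tH^{7/8},1)$. The whole difficulty is assumed: that an on-scale rectangle is a barrier by order $Q$ with probability bounded below \emph{uniformly in $r$}.

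The noise-boosting idea does not fail because it ``overshoots''; overshooting is harmless for an increasing event. It fails because forcing the noise to be large on a region of area of order $(r/H)\,tW_r$ has probability tending to $0$ as $r\to\infty$. Moreover, $\Psi\ge d_1+\eta$ would not beat the time constant $\mu=1$ unless $d_1+\eta>1$.

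\textbf{The missing idea: turning a point-to-point upper deviation into one uniform over all endpoints of a segment.} The paper proceeds in four steps.
\begin{enumerate}
\item Theorem \ref{t:all} gives $\P(X_r\ge r+\delta_1Q_r)\ge\delta_1$ (Lemma \ref{l:righttailbasic}). This holds because the bias $A_r=\E X_r-r$ is of order $Q_r$ and the tails are stretched exponential.
\item A step-back argument (Lemma \ref{l:righttailbasicrect}). Put $u=(-\delta'r/2,0)$ and $v=((1+\delta'/2)r,0)$. The triangle inequality gives $X_{(0,y),(r,y')}\ge X_{uv}-X_{u,(0,y)}-X_{(r,y'),v}$. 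Choose $\delta$ first, then $\delta'$ so small that $Q_{\delta' r}\ll \delta Q_r$, then $\delta_2$ small. Proposition \ref{p:para} then gives $\max_{y\in[0,\delta_2W_r]}X_{u,(0,y)}\le \delta'r/2+\delta Q_r/3$, and symmetrically at $v$, with probability at least $1-\delta/3$. These events are decreasing, so they are combined with the increasing event on $X_{uv}$ by a union bound, not FKG. The result: every crossing of a strip of height $\delta_2W_r$ exceeds $r+\delta Q_r/3$ with probability at least $\delta/3$.
\item The same bound for slanted thin parallelograms (Lemma \ref{l:righttailbasicpara}).
\item Only then your FKG amplification. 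It runs over a grid of $R$ sub-columns, each cut into a number of thin parallelograms of height $\delta_2W_{r/R}$ depending only on $t,z$. It does not run over whole sub-columns whose aspect ratio grows with $H$.
\end{enumerate}
Your fallback mentions point-to-point upper deviations, but ``patched via the line-to-line estimates'' does not supply step (2). FKG alone cannot convert a point-to-point upper deviation into a bound over all endpoints in a segment. The step-back, with parameters chosen in the order $\delta$, then $\delta'$, then $\delta_2$, is what does it.
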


\begin{lemma}
    \label{l:cI.bound4}
 There exists $C,\theta_5>0$, not depending on $n,M,\ell$ such that for all $i$ and $|j|,|j'|\leq M$ and $z\geq 0$
\[
\P[\cI^-_{i,j,j',z}] \geq 1 -\exp(-C z^{\theta_5});
\]   
\end{lemma}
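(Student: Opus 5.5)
The statement to prove is Lemma \ref{l:cI.bound4}: with probability at least $1-\exp(-Cz^{\theta_5})$ there is a path confined to the rectangle $[(i-1)r,ir]\times[jW_r,j'W_r]$, running between points on its two vertical sides, whose centred passage time $\hrX$ is at most $zQ_r$. The event only becomes easier as the rectangle grows: enlarging $[jW_r,j'W_r]$ enlarges the set of admissible paths and endpoints, so the infimum can only decrease. Hence I reduce to the smallest nondegenerate case, a height-one rectangle $[jW_r,(j+1)W_r]$ with $j'\ge j+1$. The estimate is a lower-tail, or upper-deviation, bound: we need one good path, not control over all paths. By translation invariance (paths confined to a single column, with the conforming structure invariant under shifts by multiples of $n$), I take $i=1$ and $j=0$.

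It suffices to exhibit one path. I fix the endpoints $(0,0)$ and $(r,0)$ on the bottom edge of the strip $[0,r]\times[0,W_r]$. For this pair the quadratic correction in $\hrX$ vanishes, so $\hrX_{\zeta}=\rX_\zeta-r$. Proposition \ref{p:constrainerX} then gives directly
\[
\P\Big(\inf_{\zeta\subset[0,r]\times[0,W_r],\ \zeta(0)=(0,0),\ \zeta(1)=(r,0)}\rX_\zeta\ge r+zQ_r\Big)\le \exp(1-Cz^{\theta_6}).
\]
On the complement, the minimising path is admissible for $\cI^-_{1,0,1,z}$, because $y=y'=0\in[0,W_r]$ and the path stays in the rectangle. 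Therefore $\P[\cI^-_{1,0,1,z}]\ge 1-e\exp(-Cz^{\theta_6})$. The constant $e$ is absorbed by decreasing $C$ and taking $\theta_5\le\theta_6$: for $z$ large this is immediate, and for small $z$ the bound is trivial once $C$ is small.

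The only subtle point is that Proposition \ref{p:constrainerX} is stated for a fixed scale $r$ and for the restricted distance $\rX$. Here $r=r_\ell$ is a multiple of $n$ with $n\le r\le M^{1/100}n$, and the constants must be uniform in $n,M,\ell$. I would check that the proof of Proposition \ref{p:constrainerX} in Appendix \ref{s:proxytrans} is indeed uniform over such $r$, as is asserted for the other conforming estimates in Proposition \ref{p:paraestimateconforming}. If this needs justification, I would rerun its argument at scale $r$. That argument concatenates about $r/r'$ sub-rectangle crossings at a smaller scale $r'$, using Proposition \ref{p:paraestimateconforming} for the passage times and Lemma \ref{l:proxytrans:intro} for confinement. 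I expect this uniformity check to be the only real obstacle; the remaining steps are monotonicity and translation.

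For rectangles whose height is an arbitrary real number at least $1$, the same argument applies, since the rectangle contains a height-one strip $[jW_r,(j+1)W_r]$ and monotonicity covers the rest.
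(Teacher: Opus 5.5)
Your argument is the paper's. The paper says Lemma \ref{l:cI.bound4} is an immediate consequence of Proposition \ref{p:constrainerX}, and your steps spell that out: translation invariance, monotonicity of the infimum in the height of the rectangle, and Proposition \ref{p:constrainerX} applied to the endpoints $(0,0)$ and $(r,0)$, where $\hrX_\zeta=\rX_\zeta-r$. Your implicit restriction to $j'\ge j+1$ is also appropriate: for $j'\le j$ the event is empty, and the lemma is only applied (in $\cB^{(1)}$ and $\cD^{(1)}$) to rectangles of height at least $W_r$.

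One step is wrong as justified: absorbing the factor $e$ is not trivial for small $z$.
\begin{itemize}
\item Proposition \ref{p:constrainerX} only gives $\P[\cI^-_{1,0,1,z}]\ge 1-e\exp(-Cz^{\theta_6})$, which is vacuous for $z\le C^{-1/\theta_6}$.
\item The target $1-\exp(-C'z^{\theta_5})$ is strictly positive for every $z>0$, however small you take $C'$.
\item So you need a lower bound on $\P[\cI^-_{i,j,j',z}]$ for bounded $z$, uniform in $n,M,\ell$. That positivity is a genuine input and does not follow from Proposition \ref{p:constrainerX}.
\end{itemize}

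The repair uses monotonicity in $z$ together with Lemma \ref{l:cI.bound2} at $z=0$ (whose proof does not rely on the present lemma). This gives $\P[\cI^-_{i,j,j',z}]\ge\P[\cI^-_{i,j,j',0}]\ge\delta'$ for all $z\ge 0$ and $j'\ge j+1$. Now take $\theta_5=\theta_6$, $z_1=(2/C)^{1/\theta_6}$ and $C'=\min\{C/2,\,-z_1^{-\theta_6}\log(1-\delta')\}$.
\begin{itemize}
\item For $z\ge z_1$ we have $e\exp(-Cz^{\theta_6})\le\exp(-Cz^{\theta_6}/2)\le\exp(-C'z^{\theta_5})$.
\item For $z<z_1$ we have $1-\exp(-C'z^{\theta_5})\le 1-\exp(-C'z_1^{\theta_5})\le\delta'$.
\end{itemize}
Alternatively, state the lemma only for $z$ above a fixed threshold, which is all that is used (e.g.\ $z=L_0$ in Lemma \ref{l:Bbound}). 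The paper passes over this point as well.
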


{The proof of Lemma \ref{l:cI.bound4} is an immediate consequence of Proposition \ref{p:constrainerX}. The proof of Lemma \ref{l:cI.bound3} is somewhat long and is done in several steps below in the next subsection.}

\subsubsection{Construction of the barrier event}

Showing that with positive probability all passage times across a rectangle are atypically large (often referred to as a barrier event), is of an independent interest, and has been useful in several other related models. Therefore we shall prove this lemma for the original passage times $X$. The same argument works for passage times $\rX$ and Lemma \ref{l:cI.bound3} will follow from the following lemma.

\begin{lemma}
    \label{l:barrier}
    For $L,L'>0$ fixed, there exists $\delta=\delta(L,L')>0$ such that for all $r$ sufficiently large 
    $$\P\left( \inf_{y,y'\in [0,L'W_r]}X_{(0,y),(r,y')}\ge r+LQ_r \right)\ge \delta.$$
    The same conclusion holds when $X$ is replaced by $\rX$. 
\end{lemma}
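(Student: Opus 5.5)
The barrier event is increasing in the field, so the natural route is to build it as an intersection of a bounded number of increasing events, each of probability bounded below uniformly in $r$, and then invoke the FKG inequality. The underlying mechanism is a lower-deviation bound: a single point-to-point passage time at scale $r$ exceeds $r+LQ_r$ with probability bounded away from zero. This follows from Theorem \ref{t:all}, since $Q_n=\Theta(\mathrm{SD}(X_n))$ and $A_n=\Theta(Q_n)$, together with a Paley–Zygmund type argument that uses the stretched exponential tails to rule out degenerate laws. What has to be done is to upgrade this from one pair of points to all pairs $(0,y),(r,y')$ with $y,y'\in[0,L'W_r]$, and from "$\ge r+zQ_r$ for some $z$" to "$\ge r+LQ_r$" for arbitrary fixed $L$.

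First I will treat a small scale. Fix a large integer $K$ (depending on $L,L'$) and set $s=r/K$. Cut $[0,r]$ into $K$ columns of width $s$ and tile the strip $[-K^{2}W_r,(L'+K^2)W_r]$ vertically into boxes of height $W_s$. For each column I want an increasing event $E_k$ saying that every path crossing column $k$ from its left side to its right side, with endpoints in the tiled strip, has passage time at least $s+\lambda Q_s+\frac{(\Delta y)^2}{2s}-o(Q_s)$, where $\lambda>0$ is fixed.

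To obtain $E_k$ with positive probability, I will use FKG on the increasing events $\{\min_{u\in\ell_{a},v\in\ell_b}X_{uv}-|u-v|\ge \lambda Q_s\}$ over all pairs of boxes $(a,b)$ in the column. A single such box-to-box event has probability at least some $p(\lambda)>0$. To see this, I apply Proposition \ref{p:para} to control the fluctuation of $\max/\min$ over the segment endpoints by $O(Q_s)$ with high probability, then combine this with the lower-deviation bound and a further FKG step at a yet smaller scale.

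The number of box pairs in a column is polynomial in $K$, which is bounded. Pairs with large slope $|a-b|\gg 1$ are handled differently: I do not need a positive-probability event there, only the high-probability event from Lemma \ref{l:paraexplicit}, whose Pythagorean gain $\frac{k^2}{32}Q_s$ dominates the loss. Union-bounding these high-probability failures over slopes beyond a large threshold $t_0$ leaves only $O(K^2 t_0 L')$ genuinely positive-probability events per column. FKG then gives $\P(E_k)\ge \delta_1(K,L')>0$, and FKG again across the $K$ columns gives $\P(\cap_k E_k)\ge\delta_1^K$.

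On $\cap_k E_k$, every path from $(0,y)$ to $(r,y')$ that stays within the strip is decomposed at its last crossings of the column lines. Summing the column bounds and using convexity, $\sum (\Delta y_k)^2/(2s)\ge (y-y')^2/(2r)\ge 0$, gives total passage time at least $r+K\lambda Q_s - o(Q_r)$. Since $Q_s\ge K^{-1/2-\delta}Q_r$ by Theorem \ref{t:all}, this is at least $r+\lambda K^{1/2-\delta}Q_r\ge r+LQ_r$ once $K$ is large.

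Paths that leave the strip are handled by the transversal fluctuation bound Theorem \ref{t:tfold} (or by Lemma \ref{l:paraexplicit} at scale $r$). Their cost exceeds $r+K^{3}Q_r\cdot c$ outside an event of probability $\exp(-cK^{\theta})$, which is much smaller than $\delta_1^K$ once the strip height is polynomial in $K$. Because this failure event is not increasing, I will not intersect it through FKG; I will simply subtract its probability, $\P(\cap E_k)-\exp(-cK^\theta)>0$. If this comparison is tight, I will instead widen the strip to height $K^{C}W_r$, whose number of boxes is still bounded.

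For $\rX$, the same proof goes through verbatim, using Proposition \ref{p:paraestimateconforming} and Lemma \ref{l:proxy}. The independence across columns in the conforming model even makes the column step a product bound.

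The main obstacle is the single-block step: showing that a min over all endpoints in two segments of height $W_s$ exceeds typical length by $\lambda Q_s$ with probability bounded below. Here I will try the same renormalization one level down. I write the block crossing as a concatenation of sub-crossings and use FKG on increasing events. The base case is lower deviation for the point-to-point time, obtained from the nondegeneracy of $(X_s-s)/Q_s$ (variance comparable to $Q_s^2$ with uniformly bounded higher moments). To convert this to a uniform-over-endpoints statement, I use that the oscillation over endpoints in a segment of height $W_s$ is $O(Q_s)$ with high probability, choosing $\lambda$ small relative to the lower-deviation margin.
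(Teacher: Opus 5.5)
Your overall architecture matches the paper's: $K$ columns at scale $s=r/K$, FKG over a bounded number of increasing box-pair barrier events, and summing margins $K\lambda Q_s\ge\lambda K^{1/2-\delta}Q_r$. There are, however, two genuine gaps.

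\textbf{The single-block step.} The step you flag as the main obstacle is not resolved by what you propose, and it is the heart of the lemma. For segments of height $W_s$, the variation of $X_{uv}-|u-v|$ as $u,v$ range over the two segments is of order $Q_s$, with a constant you cannot make small; this is what it means for $W_s$ to be the transversal scale. Theorem \ref{t:all}, by contrast, only gives $\P(X_s\ge s+\delta_1Q_s)\ge\delta_1$ for some uncontrolled small $\delta_1$ (Lemma \ref{l:righttailbasic}).

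\begin{itemize}
\item Taking $\lambda$ small does not help. What must be small is the oscillation relative to the margin $\delta_1Q_s$, not $\lambda$.
\item Renormalizing one level down does not help either. The sub-crossings again have endpoints ranging over segments of height $W_{s/K'}$, so the same problem recurs at every scale and there is no base case.
\item You also cannot intersect a large-margin point-to-point event with the non-monotone small-oscillation event, because you have no comparison of their probabilities.
\end{itemize}

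The missing idea is the step-back argument on \emph{thin} segments (Lemma \ref{l:righttailbasicrect}).
\begin{itemize}
\item Put $u=(-\delta's/2,0)$ and $v=((1+\delta'/2)s,0)$. Then $X_{uv}\ge(1+\delta')s+\delta Q_s$ with probability at least $\delta$.
\item Choose $\delta'$ small, then $\delta_2$ small, so that $Q_{\delta's}\le(\delta')^{\alpha_*}Q_s$ and the geometric excess $\delta_2^2Q_s/\delta'$ are both $\ll\delta Q_s$. By Proposition \ref{p:para}, with probability at least $1-\delta/3$ every $X_{u,(0,y)}$ with $y\in[0,\delta_2W_s]$ is at most $\delta's/2+\delta Q_s/3$, and symmetrically at the right end.
\item The triangle inequality then gives the segment-to-segment barrier with probability at least $\delta/3$, by a plain union bound with no FKG.
\end{itemize}
The thinness $\delta_2W_s$ is essential. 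You then tile with boxes of height $\delta_2W_s$ (still boundedly many) and treat bounded slopes as in Lemma \ref{l:righttailbasicpara}. Your use of Lemma \ref{l:paraexplicit} for large slopes is adequate for what you actually use, namely $\ge s+\lambda Q_s$ per column. Note, though, that this lemma centres at $s+\frac{k^2}{32}Q_s$, not at $|u-v|$, so it does not produce the $(\Delta y)^2/(2s)$ term you wrote.

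\textbf{The escape step.} This step is wrong as stated. The event that every path from $\{0\}\times[0,L'W_r]$ to $\{r\}\times[0,L'W_r]$ leaving the strip has passage time at least $r+LQ_r$ \emph{is} increasing in the field. It therefore enters the FKG product directly, which is what the paper does, with probability at least $9/10$. Your subtraction $\P(\cap_kE_k)-\exp(-cK^\theta)>0$ cannot be arranged:
\begin{itemize}
\item The FKG lower bound on $\P(\cap_kE_k)$ is $p^N$, where $N$ is the number of box-pair events. $N$ grows polynomially in $K$ and linearly in the strip height $H$.
\item The escape probability decays only like $\exp(-cH^{\theta'})$, with an exponent $\theta'$ you cannot assume exceeds $1$.
\end{itemize}
So widening the strip makes the comparison worse, not better.
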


Observe that in the above lemma we are centering by $r$ instead of $r+\frac{1}{2r}|y-y'|^2$, but however the lemma for the second centering follows from the first by noticing that $\frac{1}{2r}|y-y'|^2\le \frac{1}{2r}(L')^2W_r^2 = \frac12 (L')^2 Q_r$. Before providing the proof of Lemma \ref{l:barrier} we record two immediate corollaries which are useful and of independent interest. 

\begin{corollary}
    \label{l:righttaillb}
    For each $L>0$ and $r\ge r_0(L)$, there exists $\delta=\delta(L)>0$ such that 
    $$\P(X_{r}\ge r+LQ_{r}) \ge \delta.$$
\end{corollary}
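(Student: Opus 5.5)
The corollary follows from Lemma \ref{l:barrier} (which we may assume) by specializing the endpoints. Take $L'=1$ in Lemma \ref{l:barrier}. Since the statement is about the original passage time $X_r=X_{(0,0),(r,0)}$, we use the version of the lemma for $X$, not for $\rX$. The lemma then gives $\delta=\delta(L,1)>0$ and $r_0$ such that, for all $r\ge r_0$,
$$\P\Big(\inf_{y,y'\in[0,W_r]}X_{(0,y),(r,y')}\ge r+LQ_r\Big)\ge \delta.$$

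On this event we may choose $y=y'=0$, since $0\in[0,W_r]$. The infimum is at most $X_{(0,0),(r,0)}=X_r$, so the event is contained in $\{X_r\ge r+LQ_r\}$. This immediately gives $\P(X_r\ge r+LQ_r)\ge\delta(L,1)=:\delta(L)$ for all $r\ge r_0(L)$.

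There is a subtlety in matching the lemma's endpoints to the corollary's. The infimum in Lemma \ref{l:barrier} is over endpoints on the left and right sides of the rectangle $[0,r]\times[0,L'W_r]$, while the passage time $X$ is unconstrained. So $X_{(0,0),(r,0)}$ is one of the quantities being bounded below, and no constraint is lost by specializing. By translation invariance we could equally have centred the rectangle at height $0$; this is not needed.

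The corollary therefore has no genuine obstacle. All the difficulty sits in Lemma \ref{l:barrier}, which we take as given. Its proof would presumably tile the rectangle by smaller on-scale boxes, make each box have atypically large passage times with positive probability via the stretched exponential bounds of Theorem \ref{t:all} and Proposition \ref{p:para}, and combine these increasing events by FKG.
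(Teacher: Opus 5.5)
Your proposal is correct and matches the paper, which records this as an immediate corollary of Lemma \ref{l:barrier}: specialize to $y=y'=0$, so the barrier event is contained in $\{X_r\ge r+LQ_r\}$. There is also no circularity, since the proof of Lemma \ref{l:barrier} relies only on Lemmas \ref{l:righttailbasic}, \ref{l:righttailbasicrect}, \ref{l:righttailbasicpara} and FKG, not on this corollary.
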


Observe that this complements \cite[Proposition 9.1]{BSS23} which showed the same result for the left tail. That result, however, did not require the FKG inequality.

\begin{corollary}
    \label{c:parabarrier}
    For any $L,L'>0$ there exists $\delta=\delta(L,L')>0$ such that for all $r$ sufficiently large 
    $$\P\left( \inf_{y\in [0,W_r],y'\in [L'W_r, (L'+1)W_r]}X_{(0,y),(r,y')}-\E X_{(0,y),(r,y')}\ge LQ_r \right)\ge \delta.$$
\end{corollary}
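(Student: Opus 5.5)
The plan is to deduce the corollary directly from Lemma \ref{l:barrier}, applied with a larger vertical window and a larger threshold. First, I would bound the centering uniformly. For $u=(0,y)$ and $v=(r,y')$ with $y\in[0,W_r]$ and $y'\in[L'W_r,(L'+1)W_r]$, Proposition \ref{p:para} (applied to the parallelogram $\cP_{1,0,k}$ with $k\in\{L'-1,L',L'+1\}$, after a vertical translation) gives stretched exponential tails for $|X_{uv}-|u-v||/Q_r$, uniformly over $u,v$ in these segments. Integrating the tail then yields
\[
\sup_{u,v}\bigl|\E X_{uv}-|u-v|\bigr|\le C_0 Q_r
\]
for a constant $C_0$ independent of $r$ (and, since $L'$ is fixed, at worst depending on $L'$). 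Pythagoras gives $|u-v|\le r+\frac{(L'+1)^2W_r^2}{2r}=r+\frac12(L'+1)^2Q_r$, using $W_r^2=rQ_r$. Hence, for all such pairs,
\[
\E X_{uv}\le r+\Bigl(\tfrac12(L'+1)^2+C_0\Bigr)Q_r .
\]

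Second, set $L''=L+\frac12(L'+1)^2+C_0$ and apply Lemma \ref{l:barrier} with the pair $(L'',L'+1)$ in place of $(L,L')$. With probability at least $\delta(L'',L'+1)>0$ we then have $X_{(0,y),(r,y')}\ge r+L''Q_r$ for all $y,y'\in[0,(L'+1)W_r]$. This window contains both $[0,W_r]$ and $[L'W_r,(L'+1)W_r]$, so on this event, for every admissible pair,
\[
X_{(0,y),(r,y')}-\E X_{(0,y),(r,y')}\ge r+L''Q_r-r-\Bigl(\tfrac12(L'+1)^2+C_0\Bigr)Q_r=LQ_r,
\]
which is the claim with $\delta(L,L'):=\delta(L'',L'+1)$.

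All the substance sits in Lemma \ref{l:barrier}; given that lemma, the corollary is bookkeeping. The only point to check with care is the uniformity of the first step: $C_0$ must not depend on $r$, and the tilted endpoints must fall inside the range of slopes covered by Proposition \ref{p:para}. For fixed $L'$ and large $r$ the slope index is $O(1)\le r/W_r$, so Proposition \ref{p:para} applies. Should the unconstrained lower bound be needed for $\rX$ instead, the same argument goes through using Proposition \ref{p:paraestimateconforming} and the $\rX$ version of Lemma \ref{l:barrier}.
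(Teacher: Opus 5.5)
Your argument is correct and is exactly the deduction the paper intends: the paper records this as an immediate corollary of Lemma~\ref{l:barrier}, obtained by enlarging the window to $[0,(L'+1)W_r]$ and raising the threshold to absorb the $O(Q_r)$ gap between $\E X_{(0,y),(r,y')}$ and $r$. That gap comes from Pythagoras with $W_r^2=rQ_r$ plus a uniform mean bound, which you could equally get from rotational invariance and $A_n=\Theta(Q_n)$ in Theorem~\ref{t:all}. The only cosmetic fix is that, for non-integer $L'$, the right-hand segment is covered by the slopes $k\in\{\lfloor L'\rfloor,\lfloor L'\rfloor+1\}$ rather than $\{L'-1,L',L'+1\}$.
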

For the proof of Lemma \ref{l:barrier} we first record the following lemma which is an immediate consequence of Theorem \ref{t:all}. 

\begin{lemma}
    \label{l:righttailbasic} 
    There exists $\delta_1>0$ sufficiently small such that $\P(X_{r}\ge r+\delta_1 Q_{r})\ge \delta_1$ for all $r$ sufficiently large. 
\end{lemma}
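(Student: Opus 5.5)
The statement to prove is Lemma \ref{l:righttailbasic}: there is $\delta_1>0$ with $\P(X_r\ge r+\delta_1 Q_r)\ge \delta_1$ for all large $r$. The plan is to combine the two facts about $X_r$ recorded in Theorem \ref{t:all}: the mean excess $A_r=\E X_r-r$ is of order $Q_r$, and $X_r$ has stretched exponential concentration at scale $Q_r$. Together these force the upper tail of $X_r$ above $r$ to carry a definite amount of mass.

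\textbf{Step 1.} Write $Y=(X_r-r)/Q_r$. By Theorem \ref{t:all}, $A_r=\Theta(Q_r)$ with $A_r\ge 0$, so there is a constant $c_0>0$ with $\E Y\ge c_0$ for all large $r$.

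\textbf{Step 2.} Again by Theorem \ref{t:all}, $\P(|Y|\ge x)\le \exp(1-x^\theta)$. Hence $\E[Y^2]\le C_0$ for a constant $C_0$ independent of $r$.

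\textbf{Step 3.} For any $\delta_1>0$ split the expectation of the positive part:
$$\E[Y^+]\le \delta_1+\E\big[Y\,I(Y\ge \delta_1)\big]\le \delta_1+\sqrt{C_0}\,\P(Y\ge\delta_1)^{1/2},$$
using Cauchy--Schwarz in the last step. Since $\E[Y^+]\ge \E Y\ge c_0$, taking $\delta_1\le c_0/2$ gives
$$\P(Y\ge \delta_1)\ge \frac{c_0^2}{4C_0}.$$
Now set $\delta_1=\min\{c_0/2,\;c_0^2/(4C_0)\}$. Then $\P(X_r\ge r+\delta_1Q_r)\ge \delta_1$ for all $r$ large enough that Step 1 applies.

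\textbf{Main point to check.} The only delicate input is the lower bound $A_r\ge c_0Q_r$, which is exactly the statement $A_r=\Theta(Q_r)$ in Theorem \ref{t:all}. Once that is granted, the argument is elementary.
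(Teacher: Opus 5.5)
Your proof is correct and is essentially the paper's argument. The paper states this lemma as an immediate consequence of Theorem \ref{t:all} without giving details, and combining $A_r=\Theta(Q_r)$ with the uniform second-moment bound from the stretched exponential tails via Cauchy--Schwarz is exactly the natural way to fill in that step.
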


The next step is to extend this estimate to thin rectangles. 

\begin{lemma}
    \label{l:righttailbasicrect}
    There exist $\delta_1,\delta_2>0$ such that for all $r$ sufficiently large we have 
    $$ \P\left(\inf_{y,y'\in [0,\delta_2W_r]} X_{(0,y),(r,y')} \ge r+\delta_1 Q_r \right)\ge \delta_1.$$
\end{lemma}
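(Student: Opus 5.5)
The idea is to upgrade the point-to-point right tail bound of Lemma~\ref{l:righttailbasic} to a uniform statement over all endpoints in a short vertical window, using the fact that passage times between nearby points differ by little. Concretely, I will fix $\delta_1'$ from Lemma~\ref{l:righttailbasic}, so that $\P(X_r\ge r+\delta_1' Q_r)\ge \delta_1'$. The goal is to show that if $\delta_2$ is small, then with probability close to one we have
\[
\sup_{y,y'\in[0,\delta_2W_r]} |X_{(0,y),(r,y')}-X_r|\le \tfrac12\delta_1' Q_r.
\]
Intersecting these two events gives the lemma with $\delta_1=\tfrac12\delta_1'$, after shrinking if needed.

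To compare $X_{(0,y),(r,y')}$ with $X_r$, the triangle inequality alone is useless: the short segment joining $(0,0)$ to $(0,y)$ costs only $O(\delta_2 W_r)$, and $W_r\gg Q_r$, so I must not route through a vertical segment. Instead I will use geodesic comparison at intermediate distances. Fix a small $\eta>0$ and consider the points $a=(\eta r,0)$ and $b=((1-\eta)r,0)$. Then
\[
X_{(0,y),(r,y')}\le X_{(0,y),a}+X_{a,b}+X_{b,(r,y')},
\qquad
X_r\ge \ldots .
\]
The lower bound for $X_r$ is the harder direction, so I plan it separately below.

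For the upper bound on $X_{(0,y),(r,y')}-X_r$, I split the geodesic $\gamma$ from $\origin$ to $(r,0)$ at its crossings $a'=(\eta r,s)$ and $b'=((1-\eta)r,s')$ of the lines $x=\eta r$ and $x=(1-\eta)r$. By Theorem~\ref{t:tfold} (or its local version Lemma~\ref{c:local.trans}), with high probability $|s|,|s'|\le K W_{\eta r}$. Then
\[
X_{(0,y),(r,y')}-X_r\le (X_{(0,y),a'}-X_{\origin,a'})+(X_{b',(r,y')}-X_{b',(r,0)}).
\]
Each bracket compares passage times from two starting points at distance at most $\delta_2W_r$ to a common point at distance $\eta r$. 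By Proposition~\ref{p:para} applied at scale $\eta r$, with parallelograms of vertical extent $\delta_2W_r\le W_{\eta r}$, each term is at most $|$Euclidean difference$|+zQ_{\eta r}$, and the Euclidean difference is $O(\delta_2 W_r\cdot KW_{\eta r}/(\eta r))+O((\delta_2W_r)^2/(\eta r))$.

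Everything must then be expressed in units of $Q_r$. Since $W_r^2=rQ_r$, we get $(\delta_2W_r)^2/(\eta r)=\delta_2^2Q_r/\eta$. We also have $Q_{\eta r}\le C\eta^{\alpha_*}Q_r$, and the cross term is of order $\delta_2 K\eta^{-1}\cdot\eta^{(1+\alpha_*)/2}Q_r$, which is small. The recipe is therefore to choose $\eta$ first, so that $Q_{\eta r}$ is at most $\delta_1'Q_r/100$, and then to choose $\delta_2\ll\eta$.

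The reverse inequality $X_r\le X_{(0,y),(r,y')}+\tfrac12\delta_1'Q_r$ is not needed. We only need $X_{(0,y),(r,y')}\ge r+\delta_1Q_r$, which is the symmetric statement with the roles swapped: split the geodesic $\gamma_{(0,y),(r,y')}$ instead. Since that geodesic depends on $y,y'$, I will use Theorem~\ref{t:tfold} uniformly over $v_1,v_2$ in the window, which it provides. This gives $X_r\le X_{(0,y),(r,y')}+\tfrac12\delta_1'Q_r$ simultaneously for all $y,y'$.

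The main obstacle is this uniformity together with the bound on the Euclidean cross term: the crossing heights $s,s'$ are random, and one needs Proposition~\ref{p:para} to hold over all points of the relevant segments at once. Proposition~\ref{p:para} does give a maximum over segments, so I would apply it over the $O(K)$ pairs of segments of height $W_{\eta r}$ and take a union bound. The whole argument transfers verbatim to $\rX$ using Proposition~\ref{p:paraestimateconforming} and Lemma~\ref{l:proxytrans:intro}.
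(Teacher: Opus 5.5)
Your argument is correct, but it takes a different route from the paper's.

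\textbf{The paper's route.} The paper uses a step-back argument. It sets $u=(-\delta' r/2,0)$ and $v=((1+\delta'/2)r,0)$. It then intersects the right-tail event $\{X_{u,v}\ge (1+\delta')r+\delta Q_r\}$ from Lemma~\ref{l:righttailbasic} with two likely events: that $\max_{y\in[0,\delta_2W_r]}X_{u,(0,y)}$ and $\max_{y'\in[0,\delta_2W_r]}X_{(r,y'),v}$ are each at most $\delta' r/2+\delta Q_r/3$. These hold with high probability by Proposition~\ref{p:para}, choosing $\delta'$ small and then $\delta_2$ small. The conclusion follows from the single triangle inequality $X_{u,v}\le X_{u,(0,y)}+X_{(0,y),(r,y')}+X_{(r,y'),v}$. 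This uses only concentration and subadditivity: no geodesics and no transversal fluctuation input.

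\textbf{Your route.} You compare with $X_r$ itself by cutting $\gamma_{(0,y),(r,y')}$ at $x=\eta r$ and $x=(1-\eta)r$. This costs you Theorem~\ref{t:tfold} (uniformly over endpoints) and the existence of geodesics. In exchange you get a stronger, reusable statement: with high probability $\inf_{y,y'\in[0,\delta_2W_r]}X_{(0,y),(r,y')}\ge X_r-o(1)Q_r$, i.e.\ a modulus of continuity in the endpoints at scale $\delta_2W_r$. In both proofs the same curvature term, $\delta_2^2Q_r/\eta$ for you and $\delta_2^2Q_r/\delta'$ for the paper, forces $\delta_2$ to be chosen after the horizontal cut-off.

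\textbf{Points to tidy.}
\begin{itemize}
\item Theorem~\ref{t:tfold} only gives $|s|,|s'|\le KW_r$, not $KW_{\eta r}$; the latter would need Lemma~\ref{c:local.trans}. The global bound is enough. The cross term is then at most $K\delta_2Q_r/\eta$, and the number of parallelograms at scale $\eta r$ is $O(K\eta^{-7/8})$.
\item The tolerance from Proposition~\ref{p:para} is $2zQ_{\eta r}\le 2z\eta^{\alpha_*}Q_r$, not just $Q_{\eta r}$. So the order of choices is: fix $K$; take $z\asymp\delta_1'\eta^{-\alpha_*}$, with $\eta$ small enough that $O(K\eta^{-7/8})e^{-Cz^{\theta}}$ is small (the stretched-exponential decay beats the polynomial count); finally take $\delta_2\ll\eta/K$.
\item Your first computation, cutting $\gamma_{\origin,(r,0)}$, bounds the direction you do not need.
\item The sentence saying the reverse inequality ``is not needed'' is garbled. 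The inequality $X_r\le X_{(0,y),(r,y')}+\tfrac12\delta_1'Q_r$, which you derive at the end, is exactly what is required.
\end{itemize}
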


\begin{center}
\begin{figure}
\includegraphics[width=5in]{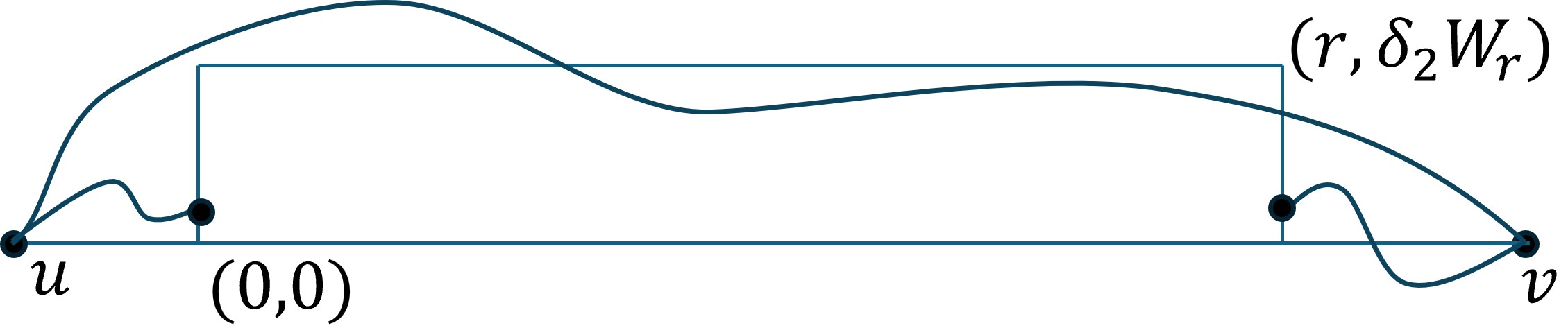}
\caption{Proof of Lemma \ref{l:righttailbasicrect} which shows that with a positive probability a thin on-scale rectangle is a barrier, that is no path across the rectangle is good. This is done by what is commonly referred to as a \emph{step back argument}. We combine the event that the point-to-point passage time from $u$ to $v$ the points obtained by stepping back a bit from the shorter boundaries of the rectangle  is large together with the event that the passage times from $u$ (resp.\ $v$) to the left (resp.\ right) side of the rectangle are not very small.}
\label{f:thin}
\end{figure}
\end{center}
\begin{proof}
    Let us define the points $u=(-\delta' r/2,0)$ and $v=((1+\delta'/2)r,0)$; see Figure \ref{f:thin}. Next, consider the following events
    $$A= \{X_{u,v}\ge (1+\delta')r+ \delta Q_{r}\}$$
    where $\delta$ is as in Lemma \ref{l:righttailbasic}. 
    $$B_1=\{\max_{y\in[0,\delta_2W_r]} X_{u,(0,y)}\le \delta' r/2+ \delta Q_{r}/3\};$$
    $$B_2=\{\max_{y\in[0,\delta_2W_r]} X_{(r,y),v}\le \delta'r/2+ \delta Q_{r}/3\}.$$
    It is clear that on $A \cap B_1 \cap B_2$ we have 
    $$\inf_{y,y'\in [0,\delta_2W_r]} X_{(0,y),(r,y')} \ge r+\delta Q_r/3.$$
    Now, by Lemma \ref{l:righttailbasic} it follows that for $\delta$ sufficiently small $\P(A)\ge \delta$. It also follows from Proposition~\ref{p:para} that first choosing $\delta'$ sufficiently small and then choosing  $\delta_2$ sufficiently small we get that $\P(B_1), \P(B_2)\ge 1-\delta/3$. This completes the proof by setting $\delta_1=\delta/3$. 
\end{proof}

\begin{lemma}
    \label{l:righttailbasicpara}
    There exist $\delta_1,\delta_2>0$ such that for all $r$ sufficiently large and all $k\le r/W_r$ we have 
    $$\P\left(\inf_{\substack{y\in [0,\delta_2W_r]\\ y'\in [k\delta_2 W_r, (k+1)\delta_2 W_r] }} X_{(0,y),(r,y')} \ge r +\delta_1 Q_{r}\right)\ge \delta_1.$$
\end{lemma}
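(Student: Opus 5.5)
We will run the step-back argument of Lemma \ref{l:righttailbasicrect} again, this time for a parallelogram, with a tilted point-to-point lower-tail event at its core. Fix $k\le r/W_r$. The parallelogram has its left side $\{0\}\times[0,\delta_2W_r]$ and its right side $\{r\}\times[k\delta_2W_r,(k+1)\delta_2W_r]$; these two sides lie on a line of slope $s=k\delta_2W_r/r$. Step back along this line and set
$$u=\big(-\tfrac{\delta'}{2}r,\,-\tfrac{\delta'}{2}k\delta_2W_r\big),\qquad v=\big((1+\tfrac{\delta'}{2})r,\,(1+\tfrac{\delta'}{2})k\delta_2W_r\big).$$
Since $|k\delta_2 W_r|\le \delta_2 r$, the segment from $u$ to $v$ has slope bounded by a small constant. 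We use three events. Let $A=\{X_{uv}\ge |u-v|+\delta Q_{|u-v|}\}$. Let $B_1=\{\max_{y}X_{u,(0,y)}\le |u-(0,y)|+\tfrac{\delta}{3}Q_r\}$, with $y$ ranging over the left side, and define $B_2$ symmetrically for $v$ and the right side. By rotational invariance of the field, $X_{uv}$ has the law of $X_{|u-v|}$. Lemma \ref{l:righttailbasic} therefore gives $\P(A)\ge\delta$, and the regularity of $Q$ from Theorem \ref{t:all} converts $Q_{|u-v|}$ into a constant multiple of $Q_r$.

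The bounds on $B_1$ and $B_2$ also use rotation. Rotate so that the line through $u$ and $(0,\delta_2W_r/2)$ becomes horizontal. The left side of the parallelogram then becomes a nearly vertical segment of length about $\delta_2W_r$ at distance about $\delta' r/2$ from $u$. Choose $\delta'$ small and then $\delta_2$ small depending on $\delta'$. Since $W_{\delta' r}\ge c(\delta')W_r$, this segment lies within a window of width $O(\delta_2/c(\delta'))W_{\delta'r}$. Proposition \ref{p:para}, applied at scale $\delta' r$ together with the Pythagoras correction, then gives fluctuations of size $O(Q_{\delta' r})\le \epsilon Q_r$ with probability at least $1-\delta/3$, because $Q_{\delta' r}\le (\delta')^{\alpha_*}Q_r$. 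A point-to-segment version is needed here; we take it from the maximum over the segment, which Proposition \ref{p:para} already allows when $u$ is viewed as a degenerate left side. Combining $A$, $B_1$ and $B_2$ via a union bound, or via FKG since all three events are monotone, would give probability at least $\delta/3$.

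The main obstacle is the deterministic step on $A\cap B_1\cap B_2$: we must bound $X_{(0,y),(r,y')}$ below by $|(0,y)-(r,y')|+\tfrac{\delta}{3}Q_r$, and then below by $r+\delta_1Q_r$. The triangle inequality gives
$$X_{(0,y),(r,y')}\ge X_{uv}-X_{u,(0,y)}-X_{(r,y'),v},$$
and we need the Euclidean defect $|u-(0,y)|+|(0,y)-(r,y')|+|(r,y')-v|-|u-v|$ to be $o(Q_r)$. The three points are nearly collinear, with transverse offsets $O(\delta_2W_r)$ over lengths of order $\delta' r$, so this defect is $O(\delta_2^2W_r^2/(\delta' r))=O(\delta_2^2/\delta')Q_r$. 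That is small once $\delta_2\ll\sqrt{\delta'}$. The lower bound $|(0,y)-(r,y')|\ge r$ then finishes the argument. The one remaining issue is uniformity in $k$ up to $r/W_r$. Large slopes are harmless because of rotational invariance; only $|u-v|\asymp r$ matters, and Theorem \ref{t:all} gives $Q_{|u-v|}\asymp Q_r$ uniformly.
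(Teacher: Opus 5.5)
Your argument is correct, but it is not the paper's route. The paper proves this lemma in one line. It uses \cite[Corollary 3.2]{BSS23} to reduce the tilted parallelogram to the untilted configuration, applies Lemma \ref{l:righttailbasicrect} (the step-back argument for a thin rectangle), and shrinks $\delta_2$ to absorb the error. You instead run the step-back argument directly along the tilted axis, using three ingredients:
\begin{itemize}
\item rotational invariance, only to identify the law of $X_{uv}$ with that of $X_{|u-v|}$ so that Lemma \ref{l:righttailbasic} applies;
\item Proposition \ref{p:para} at scale $\delta' r/2$, to control the two short legs;
\item a Euclidean defect estimate, which you correctly bound by $O(\delta_2^2/\delta')Q_r$; the middle leg contributes only $O(\delta_2^2)Q_r$.
\end{itemize}
The paper's version is shorter and reuses the rectangle lemma, but it depends on an external comparison result. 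Yours is self-contained. It also makes the uniformity over $k\le r/W_r$ transparent, since every constant comes from Lemma \ref{l:righttailbasic}, Proposition \ref{p:para} and Theorem \ref{t:all}. The parameter order is $\delta$, then $z$, then $\delta'$, then $\delta_2$, with $\delta_1=\delta/6$.

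Two small corrections.
\begin{itemize}
\item Drop the FKG alternative. $A$ is increasing in the field while $B_1$ and $B_2$ are decreasing, so Harris' inequality gives $\P(A\cap B_1\cap B_2)\le \P(A)\,\P(B_1\cap B_2)$, which is the wrong direction. The union bound $\P(A\cap B_1\cap B_2)\ge \delta-2\delta/3$ is what you need, and it is also what the paper uses in Lemma \ref{l:righttailbasicrect}.
\item Do not rotate before applying Proposition \ref{p:para}. Rotation makes the vertical side slightly tilted, and that proposition only covers vertical segments at a fixed horizontal distance. Work in the original frame with $m=\delta' r/2$ instead. The side $\{0\}\times[0,\delta_2 W_r]$ lies at horizontal distance $m$ from $u$. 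Its vertical offset from $u$ lies in $[\tfrac{\delta'}{2}k\delta_2 W_r,\ \tfrac{\delta'}{2}k\delta_2 W_r+\delta_2 W_r]$, and $\tfrac{\delta'}{2}k\delta_2W_r\le \delta_2 m$. Translate $u$ to the origin and take $\delta_2$ small given $\delta'$, using $W_r\le C(\delta')W_m$. The side is then covered by at most two windows $\ell_{m,k'W_m,(k'+1)W_m}$ with $0\le k'\le m/W_m$, so Proposition \ref{p:para} applies directly. Use reflection symmetry for negative $k$.
\end{itemize}
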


\begin{proof}
    This follows from \cite[Corollary 3.2]{BSS23} and Lemma \ref{l:righttailbasicrect} by choosing $\delta_2$ sufficiently small.   
\end{proof}

We now prove Lemma \ref{l:barrier}. 

\begin{proof}[Proof of Lemma \ref{l:barrier}]
    Let $\delta_1, \delta_2>0$ be such that the conclusions of Lemmas \ref{l:righttailbasicrect} and \ref{l:righttailbasicpara} hold. Let $R\in\Z$ be such that $\delta_1R Q_{r/R}\ge LQ_{r}$. Such an $R$ exists independent of $r$ by Theorem \ref{t:all} (since $Q$ grows locally sublinearly). Let $L_*$ be a large fixed number depending on $L,L'$. We shall treat two cases separately: one for paths with transversal fluctuation more than $L_*W_r$ and the other for paths with smaller transversal fluctuations. 

    Let $A$ denote the event
    $$A=\left\{\inf_{y,y'\in [0,L'W_r]}\inf_{\substack{\zeta(0)=(0,y)\\ \zeta(1)=(r,y')\\ \zeta \nsubseteq \R\times [-L_*W_r,L_*W_r]}} X_{\zeta} \ge r+LQ_r\right\}. $$
    It follows from Proposition \ref{p:para} and Theorem \ref{t:tfold} that for $L_*$ sufficiently large depending on $L'$ we have 
    $\P(A)\ge 9/10$. For $i=1,2,\ldots, R$ and $j,j'\in [-\frac{L_*W_{r}}{\delta_2 W_{r/R}},\frac{L_*W_{r}}{\delta_2W_{r/R}}]$ let 
    $B_{i,j,j'}$ denote the event
    $$B_{i,j,j'}=\left\{ \inf_{\substack{y\in [jW_{r/R}, (j+\delta_2)W_{r/R}]\\ y'\in [j'W_{r/R}, (j'+\delta_2)W_{r/R}]}} X_{((i-1)r/R,y),(ir/R,y')} \ge \frac{r}{R}+\delta_1 Q_{r/R}\right\}.$$
    It follows from Lemma \ref{l:righttailbasicpara} that $\P(B_{i,j,j'})\ge \delta_1$. 
    
    Suppose that for some path $\zeta$ with $\zeta(0)=(0,y), \zeta(1)=(r,y')$ and $\zeta \nsubseteq \R\times [-L_*W_r,L_*W_r]$ we let $v_i=(ir/R,y_i)$ be its first intersection with the line $x=ir/R$.  Then on the event $\bigcap_{i,j,j'} B_{i,j,j'}$ we have that any 
    \[
    X_\zeta \geq \sum_{i=1}^R X_{v_{i-1},v_i} \geq r+R\delta_1 Q_{r/R}\geq r+L Q_r.
    \]
    Then on the event
    $$D=A\cap \bigcap_{i,j,j'} B_{i,j,j'}$$ we have that 
    $$\inf_{y,y'\in [0,L' W_r]} X_{(0,y),(r,y')} \ge r+L Q_r.$$
    Observe also that the number of triples $(i,j,j')$ as above is at most $R^3L_*^2/\delta_2^2$ (here we have again used that fact that $W$ grows locally sublinearly). Finally, observing that $A$ and $B_{i,j,j'}$ are all increasing events by the FKG inequality we get 
    $$\P(D)\ge \frac{9}{10}(\delta_1)^{R^3L_*^2/\delta_2^2}.$$
    Since $R$ and $L_*$ depend only on $L,L'$ this completes the proof of the lemma. 
\end{proof}

\subsection{Event $\cK$: Proof of Lemma \ref{l:cK.bound}}
Recall the definition of the event $\cK_{i,j,z}$. 
\begin{align*}
\cK_{i,j,z} &= \bigg\{ \inf_{\substack{x\in[(i-1)r,ir]| \\ |y|\leq n^{\beta}W_n}} \inf_{\substack{ \gamma(0)=((i-1)r,y)) \\ \gamma(1)=(x,jW_r)}} \rX_{\gamma} - (x-(i-1)r) - \frac12\Big(\frac{|y-jW_r| }{W_r} - 1\Big)^2 Q_r \geq  -z Q_r \bigg\}\\
&\cap \bigg\{ \inf_{\substack{x\in[(i-1)r,ir]| \\ |y|\leq n^{\beta}W_n}} \inf_{\substack{\gamma(0)=(x,jW_r) \\ \gamma(1)=(ir,y)) }} \rX_{\gamma} - (ir-x) - \frac12\Big(\frac{|y-jW_r| }{W_r} - 1\Big)^2 Q_r \geq  -z Q_r \bigg\}.
\end{align*}
Let us denote the first event above by $\cK^{(1)}_{i,j,z}$ and the second event by $\cK^{(2)}_{i,j,z}$. To prove Lemma \ref{l:cK.bound}, by reflection symmetry, it suffices to show that for all $i$ and $|j|\le M$ and for all $z>0$ we have
\begin{equation}
    \label{e:Kred}
    \P(\cK^{(1)}_{i,j,z})\ge 1-\exp(-Cz^{\theta_5})
\end{equation}

To reduce notational overhead we shall prove \eqref{e:Kred} for {$i=1$ and $j=0$}. It will be clear from the proof that the same argument can, with minimal changes, be used to prove the result for all required values of $i$ and $j$.

Let $\widetilde{\cK}_{z}$ denote the event 
\begin{align*}
\widetilde{\cK}_{z} = \bigg\{ \inf_{|y|\leq n^{\beta}W_n} \inf_{\substack{ \gamma(0)=(0,y)) \\ \gamma(1)=(r,0)}} \rX_{\gamma} - r - \frac12\Big(\frac{|y| }{W_r} - 1\Big)^2 Q_r \geq  - z Q_r \bigg\}.
\end{align*}

Further, let us set
$$\widehat{\cK}_z=\bigg\{\sup_{\substack{x,x'\in [0,r]}} \rX_{(x,0),(x',0)}-|x-x'|\le zQ_r \bigg\}.$$

We next claim that  
$$\widetilde{\cK}_{z/2}\cap \widehat{\cK}_{z/2} \subseteq \cK^{(1)}_{1,0,z}.$$
Indeed by triangle inequality we have for all $x\in [0,r]$ and for all $y$ with $|y|\le n^{\beta}W_n$
$$\rX_{(0,y), (x,0)}\ge \rX_{(0,y),(r,0)}-\rX_{(x,0),(r,0)}.$$
Since on the event $\widetilde{\cK}_{z/2}\cap \widehat{\cK}_{z/2}$ we have 
$$\rX_{(0,y),(r,0)}\ge r+  \frac12\Big(\frac{|y| }{W_r} - 1\Big)^2 Q_r -zQ_r/2$$
and 
$$\rX_{(x,0),(r,0)}\le (r-x)+zQ_r/2,$$ this implies
$$\rX_{(0,y), (x,0)}\ge x+ \frac12\Big(\frac{|y| }{W_r} - 1\Big)^2 Q_r-zQ_r$$
as required. 

Using the lower bounds on $\P(\widetilde{\cK}_{z/2})$ and $\P(\widehat{\cK}_{z/2})$ established in Lemma \ref{l:ktilde} and Lemma \ref{l:khat} below together with a union bound, the proof of 
\eqref{e:Kred} is complete. As explained above, Lemma \ref{l:cK.bound} follows by reflection symmetry and a further union bound. 
\qed 

\begin{lemma}
    \label{l:ktilde}
    There exist $C,\theta_5>0$ such that for all $z>0$ and for all $r$ large we have 
    we have
    $$\P(\widetilde{\cK}_{z})\ge 1-\exp(-Cz^{\theta_5}).$$
\end{lemma}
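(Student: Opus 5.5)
We prove Lemma \ref{l:ktilde} by discretizing the starting point $y$ on the left side of the column into intervals of length $W_r$, applying the conforming parallelogram estimate on each interval, and summing with a union bound. The quadratic term in the definition of $\widetilde{\cK}_z$ is what makes the sum converge.

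\textbf{Discretization.} For $y\in[kW_r,(k+1)W_r]$ with $k\ge 0$, we have $|y|/W_r-1\le k$. It therefore suffices to show
\[
\P\Big(\inf_{y\in \ell_{0,kW_r,(k+1)W_r}} \rX_{(0,y),(r,0)} - r-\tfrac12 k^2 Q_r< -zQ_r\Big)\le \exp(-C(z+k^2)^{\theta'}).
\]
Negative $k$ are handled symmetrically. The range is $|k|\le n^\beta W_n/W_r$, which is polynomially bounded in $n$.

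\textbf{Bound on each interval.} Euclidean geometry gives
\[
|u-v|=\sqrt{r^2+y^2}\ge r+\frac{y^2}{2r}-\frac{y^4}{8r^3}.
\]
Since $|y|\le n^\beta W_n\ll r$ when $\beta$ is small, this yields $|u-v|\ge r+(\tfrac12-o(1))k^2Q_r$. Here we use $W_r^2/r=Q_r$, together with the fact that for $k\ge 1$ the factor $(k+1)^2$ versus $k^2$ costs only constants once we shift by $1$ as in the definition.

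Proposition \ref{p:paraestimateconforming}(ii) bounds the lower deviation of $\rX_{uv}-|u-v|$ uniformly over the two segments, with tail $\exp(1-Cz^{\theta_2})$. Applying it to the pair of segments at heights $k$ and $0$ gives
\[
\rX_{(0,y),(r,0)}\ge r+\tfrac12 k^2Q_r - \epsilon k^2 Q_r - z'Q_r.
\]
The slack $\epsilon k^2Q_r$ is absorbed by slightly weakening the constant. The exact form of the quadratic term is chosen so that the $-1$ inside it provides exactly this room.

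For $k$ of order $r/W_r$, where the slope is not small, we instead use the trivial bound $\rX_\gamma\ge c|u-v|$. The constraint $|y|\le n^\beta W_n\ll r$ ensures we never need this regime.

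\textbf{Union bound.} Taking $z'=z+\epsilon k^2$ in the estimate above and summing over $k$ gives
\[
\sum_k \exp\big(1-C(z+\epsilon k^2)^{\theta_2}\big)\le \exp(-C'z^{\theta_5}).
\]

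\textbf{Main obstacle.} The delicate step is applying Proposition \ref{p:paraestimateconforming} to parallelograms of large slope at scale $r$. That proposition requires $|j_1W_r|,|j_2W_r|\le n^\beta W_n$, and the larger-slope case $k\lesssim r/W_r$ relies on the slope condition $0\le k\le n/W_n$ from Proposition \ref{p:para}, carried over through Lemma \ref{l:proxy}. We would check that all $k$ in our range satisfy these hypotheses. If they do not for some $k$, we would fall back on subdividing the path at an intermediate vertical line, using the triangle inequality, and applying Lemma \ref{l:paraexplicit} on each piece.
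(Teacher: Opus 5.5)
There is a genuine gap in where the decay in $k$ comes from. In your reduction you replace $\tfrac12(|y|/W_r-1)^2$ by $\tfrac12k^2$ uniformly on $y\in[kW_r,(k+1)W_r]$, while the Euclidean term is also only bounded below by $\tfrac12k^2Q_r$. This discards exactly the room that the $-1$ creates, and the reduced estimate you then aim for is false. At $y=kW_r$ one has $|u-v|\le r+\tfrac12k^2Q_r$, so your bad event contains $\{\rX_{uv}<|u-v|-zQ_r\}$. That is essentially the lower tail of a single passage time at level $z$, and its probability does not decay in $k$. So there is no $\exp(-C(z+k^2)^{\theta'})$ bound. With only an $\exp(1-Cz^{\theta_2})$ bound per interval, the union bound over the $\asymp n^{\beta}W_n/W_r$ intervals is not uniform in $n$.

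The step $z'=z+\epsilon k^2$ does not repair this. Raising $z'$ lowers your bound on $\rX$ by a further $\epsilon k^2Q_r$, which must be covered by some room. The constant $\tfrac12$ in $\widetilde{\cK}_z$ is fixed and cannot be weakened. The room produced by the $-1$ is only linear in $|k|$, so it cannot absorb $\epsilon k^2$ for fixed $\epsilon>0$. If instead $\epsilon\to0$ with $n$, the sum $\sum_k\exp(1-C(z+\epsilon k^2)^{\theta_2})$ is no longer bounded independently of $n$.

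What is needed, and what the paper does, is a pointwise comparison in $y$. For $y$ in the $k$-th interval, $|y|/W_r\ge|k|-1$, so
\[
\frac{y^2}{2r}=\frac12\Big(\frac{|y|}{W_r}-1\Big)^2Q_r+\Big(\frac{|y|}{W_r}-\frac12\Big)Q_r\ge\frac12\Big(\frac{|y|}{W_r}-1\Big)^2Q_r+(|k|-2)Q_r .
\]
The Taylor error $y^4/(8r^3)$ is $o(Q_r)$ uniformly in the range, since $y^2/(2r)\le\tfrac12n^{2\beta}Q_r$ and $y^2/r^2=O(n^{2\beta-1/2})$. Thus failure on the $k$-th interval forces $\rX_{uv}-|u-v|\le-(z+|k|-3)Q_r$ for some $u$ on it. By Proposition~\ref{p:paraestimateconforming}(ii) this has probability at most $\exp(1-C(z+|k|-3)^{\theta_2})$. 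That bound is summable in $k$ uniformly in $n$, which gives the lemma. Linear slack in $|k|$ is all one gets here, and all one needs.

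Your concern about large slopes is unnecessary. The constraint $|y|\le n^\beta W_n$ is exactly the range allowed in part (ii), so no subdivision or trivial bound is required.
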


\begin{lemma}
    \label{l:khat}
    There exist $C,\theta_5>0$ such that for all $z>0$ and for all $r$ large we have
    $$\P(\widehat{\cK}_{z})\ge 1-\exp(-Cz^{\theta_5}).$$
\end{lemma}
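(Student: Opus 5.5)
The statement to prove is Lemma \ref{l:khat}, i.e.\ a uniform upper bound on $\rX_{(x,0),(x',0)}-|x-x'|$ over all pairs $x,x'\in[0,r]$ on the horizontal segment. The plan is a dyadic chaining argument, reducing to point-to-point upper tails from Proposition \ref{p:paraestimateconforming}(i) (with the conforming version of Theorem \ref{t:all}) at all scales $r2^{-m}$.

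\textbf{Step 1 (bounded scales).} For $|x-x'|\le 1$ there is a deterministic bound $\rX_{(x,0),(x',0)}\le d_2|x-x'|\le d_2$, since the straight segment is a conforming path. This is negligible compared with $zQ_r$ once $r$ is large.

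\textbf{Step 2 (dyadic decomposition).} For $m=0,1,\dots,m_{\max}$ with $r2^{-m_{\max}}\approx 1$, consider the dyadic points $x_{m,k}=kr2^{-m}$. Any $x<x'$ in $[0,r]$ can be connected through a chain of consecutive dyadic intervals, using at most two intervals per level, plus two terminal pieces of length at most $1$. The triangle inequality for $\rX$ holds because concatenating conforming paths gives a conforming path; vertical boundary pairs do not occur on a horizontal line. Hence
\[
\rX_{(x,0),(x',0)}-|x-x'|\le 2d_2+2\sum_{m}\max_k\big(\rX_{x_{m,k},x_{m,k+1}}-r2^{-m}\big)^+ .
\]
The key point is that the dyadic lengths along the chain sum exactly to $|x-x'|$ up to $O(1)$, so the linear terms cancel.

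\textbf{Step 3 (tail bounds per level).} By Proposition \ref{p:paraestimateconforming}(i) at scale $r_m=r2^{-m}$, applied to the $2^m$ intervals at level $m$ and extended to non-multiples of $n$ via the remark that the estimates hold at all scales between $n$ and $Mn$ (and below $n$ within a column by Theorem \ref{t:all} together with Lemma \ref{l:proxy}), we get
\[
\P\big(\rX_{x_{m,k},x_{m,k+1}}-r_m\ge z_mQ_{r_m}\big)\le e^{1-Cz_m^{\theta_2}} .
\]
Choose $z_m=z\,2^{m\epsilon'}$ with $\epsilon'$ small. Since $Q_{r_m}\le C2^{-m\alpha_*}Q_r$ by the growth bounds in Theorem \ref{t:all}, taking $\epsilon'<\alpha_*/2$ makes $\sum_m z_mQ_{r_m}\le C'zQ_r$. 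A union bound then gives a failure probability of
\[
\sum_m 2^m e^{-Cz^{\theta_2}2^{m\epsilon'\theta_2}}\le e^{-C''z^{\theta_2}}
\]
for $z$ large. Small $z$ is handled by adjusting the constant $C$.

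\textbf{Main obstacle.} I expect the hardest part to be the scales below $n$ (and non-integer multiples), where Proposition \ref{p:paraestimateconforming} is not stated directly. There, I would use the trivial bound $\rX\le d_2\,|\cdot|$ for $r_m\le n^{\beta}$ so that $\sum_{r_m\le n^\beta}(d_2-1)r_m\ll Q_r$. This fails for large $r_m$, so it must be combined with Theorem \ref{t:all} and Lemma \ref{l:proxy} for the intermediate range, using that $Q_{r_m}\ge r_m^{\alpha_*}$ keeps the union bound summable.
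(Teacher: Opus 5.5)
Your proposal is correct and is essentially the paper's proof: dyadic chaining along the segment with at most two intervals per level (the paper's Lemma~\ref{l:dyadic.seq}), tolerances at scale $r2^{-m}$ of order $z2^{m\epsilon'}Q_{r2^{-m}}$ summing geometrically to $O(zQ_r)$, and a union bound over the point-to-point upper tails. The only difference is bookkeeping: the paper invokes Lemma~\ref{l:proxy} once, as a single global event, to pass from $\rX$ to $X$ at every scale, and then chains with Theorem~\ref{t:all} throughout; the additive error $n^{-\epsilon}Q_n$ per link sums to $o(Q_r)$ over the $O(\log r)$ links, and the failure probability $e^{-n^{\theta_1}}$ is absorbed via the deterministic bound $\rX_{uv}\le C|u-v|$ for large $z$, so the sub-$n$ scales you flag as the main obstacle need no separate per-link treatment.
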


The rest of this subsection is devoted to the proof of Lemmas \ref{l:ktilde} and \ref{l:khat}.

\begin{proof}[Proof of Lemma \ref{l:ktilde}]
    Recall the definition of $\widetilde{\cK}_{z}$. For {$j'$ with $|j'|\le n^{\beta}\frac{W_n}{W_r}$}, define 

    $$\cK^{\circ}_{j',z}=\left\{\inf_{y\in [j'W_{r} ,(j'+1)W_{r}]} \rX_{(0,y), (r,0)}-r-\frac12\Big(\frac{|y| }{W_r} - 1\Big)^2 Q_r \geq-zQ_r \right\}.$$

    Notice that for $y\in [j'W_{r} ,(j'+1)W_{r}]$
    $$\frac{|y|^2}{2r}=\frac{1}{2}\frac{|y|^2}{W^2_r}Q_r\ge \frac12\Big(\frac{|y| }{W_r} - 1\Big)^2 Q_r +\frac12\Big(\frac{|y|}{W_r} - 1\Big)Q_r\ge \frac12\Big(\frac{|y|}{W_r} - 1\Big)^2 Q_r +(|j'|-2)Q_r.$$
    Therefore, by Proposition \ref{p:paraestimateconforming} (and a Taylor expansion which shows $|(0,y)-(r,0)|\approx r+\frac{|y|^2}{2r}$) it follows that 
    $$\P(K^{\circ}_{j',z})\ge 1-\exp(-C(|j'|+z)^{\theta_5})$$
    for some $C,\theta_5>0$. The lemma now follows from a union bound over all values of $j'$. 
    \end{proof}

\begin{proof}[Proof of Lemma \ref{l:khat}]
    By Lemma~\ref{l:proxy}, {it suffices to prove} the same result for $X$ in place of $\rX$ ({indeed, for $z$ smaller than some power of $n$, one can apply Lemma \ref{l:proxy} and for larger values of $z$ the result simply follows from the fact that $\rX_{uv}|u-v|^{-1}$ is deterministically bounded above}) so let 
    \[
    \widehat{\cK}^*_{z}=\bigg\{\sup_{\substack{x,x'\in [0,r]}} X_{(x,0),(x',0)}-|x-x'|\le zQ_r \bigg\}.
    \]
    Since $\frac{X_{uv}}{|u-v|}$ is bounded uniformly away from $0$ and $\infty$ and since $Q_r>r^{\alpha}$ for some $\alpha>0$, it suffices to take union bound over points with integer coordinates. That is, it suffices to show that, on an event of probability at least $1-\exp(-Cz^{\theta_5})$ we have 
    $$\widehat{\cK}^{*\Z}_{z}=\left\{ \forall x,x'\in [0,r]\cap \Z,~~~ 
    X_{(x,0),(x',0)}-|x-x'|\le \frac{zQ_r}{2}\right\}.$$
    Let $\alpha>0$ be as in Theorem~\ref{t:all} so for all $r$ and $t\ge 1$, $Q_{rt}\ge t^{\alpha}Q_r$.  For $0\le k \le \lfloor\log_2 r\rfloor$, set
    $$H_k=\left\{ \forall  x \in 2^{k}\Z \cap [0,r],   X_{(x,0), (x+2^k,0)}- 2^k \le z \frac{(r/2^{k})^{-\alpha/2}}{4(1-2^{-\alpha/2})}Q_r\right\}.$$
    For a fixed $x$ as above it follows from Theorem~\ref{t:all},
    \begin{align}
        \P\bigg(|X_{(x,0), (x+2^k,0)}-2^k| \le   z \frac{(r/2^{k})^{-\alpha/2}}{4(1-2^{-\alpha/2})}Q_r\bigg)
        &\ge 1-\exp\bigg(-C\Big(z\big(\frac{r}{2^k}\big)^{\alpha/2}\frac{Q_r}{Q_{2^k}}\Big)^{\theta}\bigg)\\
        &\geq 1 -\exp\left(-Cz^\theta(r/2^{k})^{\theta\alpha/2}\right),
    \end{align}
    and hence
    \[
    \P[H_k] \geq 1 - \lceil r/2^k\rceil\exp\left(-Cz^\theta(r/2^{k})^{\theta\alpha/2}\right)
    \geq 1 -\exp\left(-Cz'^\theta(r/2^{k})^{\theta\alpha/2}\right).
    \]
    A union bound over $k$ gives 
    $$\P\Big(\bigcap_k H_k\Big)\ge 1-\exp(-Cz^{\theta})$$
    for some $C>0$. 
    Next we prove that on the event $\cap_k H_k$ that $\widehat{\cK}^{*\Z}_{z/2}$ holds. 
    To see this, fix $i_1<i_2\in [0,r]\cap \Z$. Consider the dyadic sequence $j_h$ between $i_1$ and $i_2$ given by Lemma \ref{l:dyadic.seq}. On the event $\cap_k H_k$ we have for each $h$, 
    $$X_{(j_{h-1},0), (j_h,0)}- (j_h-j_{h-1}) \le  z \frac{(r/2^{k_h})^{-\alpha/2}}{4(1-2^{-\alpha/2})}Q_r$$
    where $k_h = \log_2(j_h-j_{h-1})$. Since Lemma \ref{l:dyadic.seq} guarantees that each value of $k_h$ occurs at most twice in the sequence $j_{h}-j_{h-1}$ it follows by the triangle inequality and the definition of $H_k$ that on $\cap_k H_k$ we have 
    $$X_{(i_1,0), (i_2,0)}- |i_1-i_2| 
    \le  2z \frac{r^{-\alpha/2}}{4(1-2^{-\alpha/2})}Q_r \sum_{k=0}^{\log_2 (i_2-i_1)}2^{\alpha/2} 
    \le \frac{zQ_r}{2} \Big(\frac{2^{\log_2 (i_2-i_1)}}{r}\Big)^{\alpha/2}\leq \frac{zQ_r}{2}.$$
    where the last inequality follows by taking $H$ small enough. Hence
    \[
        \P[\widehat{\cK}^{*\Z}_{z/2}]\ge 1-\exp(-Cz^{\theta})
    \]
    and the proof is completed by using Lemma~\ref{l:proxy} to pass from $X$ to $\rX$.
\end{proof}

\begin{center}
\begin{figure}
\includegraphics[width=6in]{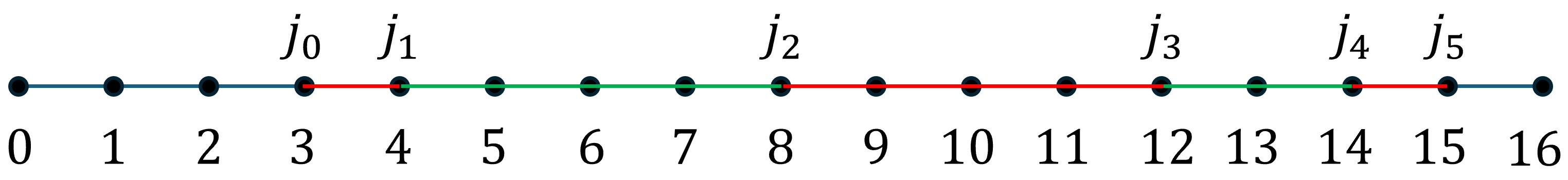}
\caption{Dyadic sequence decomposition given in Lemma \ref{l:dyadic.seq}. The decomposition is shown for the interval $[3,15]\cap \Z$ in the figure. The length of each interval is a power of 2, and both endpoint of the interval is also a multiple of the same power of $2$. There are at most two intervals of the same length.}
\label{f:dyadic}
\end{figure}
\end{center}

\begin{lemma}\label{l:dyadic.seq}
For any integers $i_1<i_2$ there exists a  sequence of integers $i_1=j_0<j_1<j_2<\cdots <j_{m}=i_2$    satisfying the following properties:
\begin{enumerate}
    \item For each $h$, $j_h-j_{h-1}=2^{k(h)}$ is a power of 2 and both $j_{h-1}$ and $j_h$ are both integer multiples of $2^{k(h)}$.
    \item For each $k$ there exists at most two different values of $h$ such that $k(h)=k$. 
    \item If $i_2-i_1\ge 2^{k+1}$ then there exists $h$ such that $j_h-j_{h-1}\ge 2^{k}$.
\end{enumerate}
\end{lemma}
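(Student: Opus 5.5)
The sequence comes from a greedy dyadic decomposition that grows the block size going up from $i_1$ and then shrinks it coming down to $i_2$. Set $K=\lfloor \log_2(i_2-i_1)\rfloor+1$ and look at the binary expansions of $i_1$ and $i_2$.

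\emph{Ascending phase.} Start at $j_0=i_1$. At each step, let $k$ be the 2-adic valuation of the current point $j$ (for $j=0$ take $k$ large). Step to $j+2^{k}$ as long as $j+2^k\le i_2$. Each such step adds the lowest set bit of $j$, so $j$ becomes divisible by $2^{k+1}$. Hence the step sizes strictly increase, and each step $[j,j+2^k]$ has both endpoints divisible by $2^k$. Stop at the first point $p$ where the next step $2^{v(p)}$ would overshoot $i_2$.

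\emph{Descending phase.} From $p$, the remaining distance is $d=i_2-p<2^{v(p)}$. Write $d$ in binary and take steps of sizes equal to its set bits, in decreasing order. Since $p$ is divisible by $2^{v(p)}>d$, each partial sum $p+\sum(\text{larger bits})$ is divisible by the current bit $2^k$. So each descending step also has both endpoints divisible by its length, and the step sizes strictly decrease.

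\emph{Property (1)} follows from the two observations above. \emph{Property (2)} holds because each power of two occurs at most once in each phase, so at most twice overall.

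\emph{Property (3).} Suppose $i_2-i_1\ge 2^{k+1}$ and every step had length $<2^k$. The ascending steps are distinct powers of two, so their sum is at most $2^k-1$; the same holds for the descending steps. The total would then be at most $2^{k+1}-2<i_2-i_1$, a contradiction. So some step has length $\ge 2^k$.

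The only point needing care is the boundary case $i_1=0$, or more generally a point with large valuation. There the first ascending step should be capped at the largest power of two not exceeding $i_2-j$; it still divides $j$. This only changes where the ascent stops, not the argument.
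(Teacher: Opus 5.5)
Your argument is correct, and the sequence it produces is exactly the paper's. The paper takes the maximal dyadic intervals $[\ell 2^k,(\ell+1)2^k)\subset[i_1,i_2)$, meaning those whose dyadic parent is not contained in $[i_1,i_2)$. Your steps are precisely these intervals:
\begin{itemize}
\item Before an ascending step from $j$, the distance $j-i_1$ is a sum of distinct powers of two below $2^{v(j)}$. So the parent $[j-2^{v(j)},j+2^{v(j)})$ sticks out on the left.
\item Each descending step starts at a multiple of twice its length, and its parent would overshoot $i_2$.
\end{itemize}

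The difference is in how the properties are verified. The paper defines the partition globally. It gets (2) from a sibling observation: among the consecutive aligned blocks of length $2^k$ inside $[i_1,i_2)$, any block other than the first or last has its sibling inside, so it is not maximal. It gets (3) by exhibiting the aligned block $[\lceil i_1 2^{-k}\rceil 2^k,(\lceil i_1 2^{-k}\rceil+1)2^k)$, which fits when $i_2-i_1\ge 2^{k+1}$ and therefore lies in some maximal interval. This is short, but it relies on the nesting of dyadic intervals to see that the maximal ones tile $[i_1,i_2)$.

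Your greedy construction makes the tiling automatic. It also exposes extra structure: the step lengths strictly increase and then strictly decrease. From this, (2) is immediate and (3) reduces to the count $2(2^k-1)<2^{k+1}$.

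Your last paragraph is not needed. With the convention $v(0)=\infty$, any point whose step $2^{v(j)}$ overshoots $i_2$ simply ends the ascent. This covers $0$ reached in the middle of the ascent when $i_1<0<i_2$. The ``capped'' step you describe is just the first descending step.
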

\begin{proof}
The construction is simply that the pairs $(j_{h-1},j_h)$ are the set of dyadic intervals contained in $[i_1,i_2]$ that are not contained in a larger dyadic interval that is a subset; see Figure \ref{f:dyadic}.  These are the intervals,
\[
A=\Big\{[\ell 2^k,(\ell+1)2^k):[\ell 2^k,(\ell+1)2^k)\subset [i_1,i_2), [\lfloor \ell/2\rfloor 2^{k+1},(\lfloor \ell/2\rfloor+1)2^k)\not\subset[i_1,i_2) \Big\}.
\]
Each $i\in[i_1,i_2)$ must be in exactly one such interval because by definition exactly one of $[\lfloor i2^{-k}\rfloor 2^k,(\lfloor i2^{-k}\rfloor+1) 2^k)$ is in $A$.  Thus by ordering the intervals in $A$ in increasing order and writing them as 
\[
A=\{[j_0,j_1),[j_1,j_2),\ldots,[j_{m-1},j_m)\}
\]
we have constructed a sequence satisfying property (1).  

To check that this satisfies property (2) we note that of all the intervals $[\ell 2^k, (\ell+1) 2^k)$ that are subsets of $[i_1,i_2)$, only the first and last can be in $A$ as any in the middle would be part of a length $2^{k+1}$ subset and thus there are at most two $h$ with $k(h)=k$.

Finally, if $i_2-i_1\ge 2^{k+1}$ the interval $[\lceil i_1 2^{-k}\rceil2^k,( \lceil i_1 2^{-k}\rceil + 1)2^k)\subset[i_1,i_2)$ and so there is at least one interval of size at least $2^k$ in $A$ which implies property (3) is satisfied.
\end{proof}

\subsection{Event $\cJ$: Proof of Lemma \ref{l:cJ.bound}}\label{s:cJ.bound}
Recall the definition of the event $\cJ$ (see Figure \ref{f:Jevent1}):
\begin{align*}
\cJ_{i,j,j',z,s} &= \bigg\{\inf_{\substack{x,x' \in [(i-1)r,ir]\\ y,y'\in[jW_r,j' W_r] \\ |y-y'|\geq s W_r}} \inf_{\substack{ \zeta \subset  [(i-1)r,ir]\times [jW_r,j' W_r] \\ \zeta(0)=(x,y) \\ \zeta(1)=(x',y')}} \rX_\zeta - |x-x'|  \geq  z Q_r \bigg\}.
\end{align*}

\begin{center}
\begin{figure}[htbp!]
\includegraphics[width=4in]{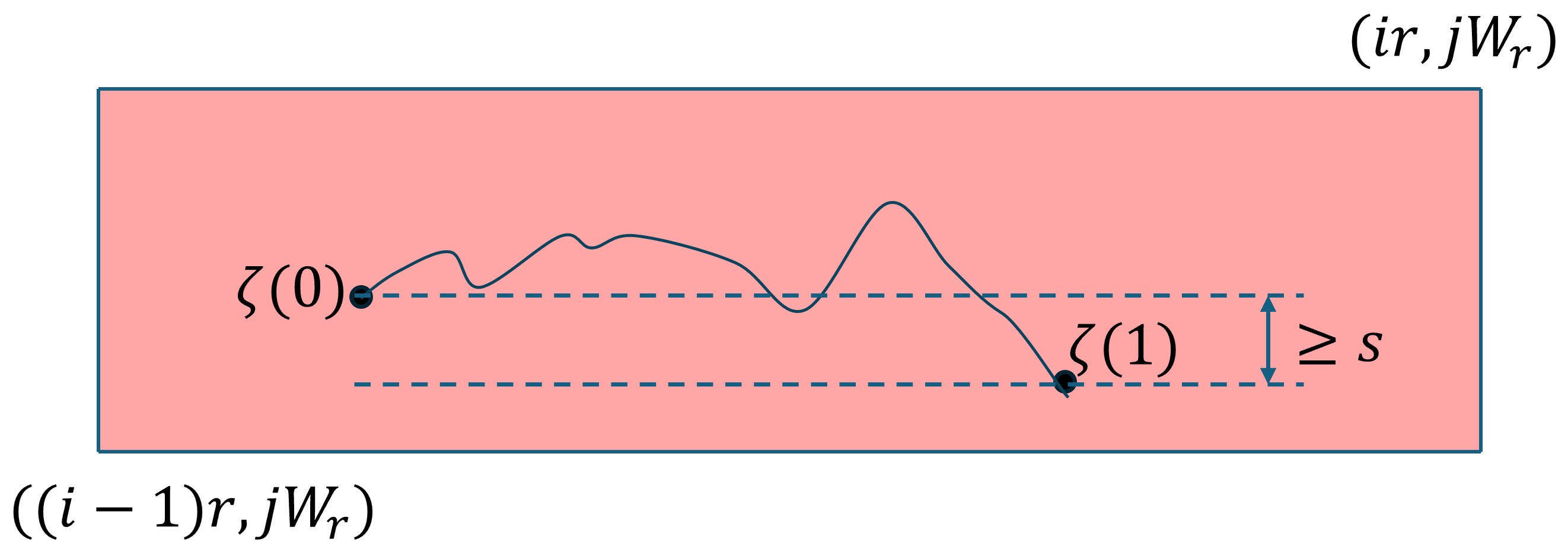}
\caption{The event $\cJ$ asks that all paths contained in an on-scale rectangle whose endpoints differ by a positive on-scale constant in the vertical co-ordinate will have large lengths. We show that this event occurs with probability bounded away from $0$.}
\label{f:Jevent1}
\end{figure}
\end{center}

It suffices to assume that $z\geq 1$. To reduce the burden of notation we shall prove this Lemma in the case $i=0$ and $j=0$. {Notice that since the paths we consider for this lemma are contained in $[(i-1)r,ir]\times [jW_r,j'W_r]$, the events are actually translation invariant in $i$ and $j$ and hence considering $i=1,j=0$ suffices. Also since the event is stronger for larger values of $j'-j$, it suffices to prove it for $j'=t$}. In particular, we shall show that there exists $\delta=\delta(s,z,t)>0$ such that 
\begin{equation}
    \label{e:Jbound1}
    \P(\cJ_{1,0,t,z,s})\ge \delta. 
\end{equation}

The proof of \eqref{e:Jbound1}, like the proof of Lemma \ref{l:cK.bound},  will subdivide passage times into segments on dyadic scales. Clearly it suffices to prove the result for $z$ sufficiently large, so this is what we shall henceforth assume. For non-negative integers $k$ and $i'\in\{0,1,2,\ldots, 2^{k}\}$, we divide the lines $x=i'2^{-k}r$ into intervals of length $W_{2^{-k}r}$ of the form $\ell_{i'2^{-k}r, hW_{2^{-k}r}, (h+1)W_{2^{-k}r}}$ for $0\leq h\le \lceil tW_r/W_{2^{-k}r}\rceil$. Now let $\cP_{k}$ denote the set of all parallelograms $P=P_{i',h,h'}$ whose left side $L_{P}$ is of the form $\ell_{(i'-1)2^{-k}r, hW_{2^{-k}r}, (h+1)W_{2^{-k}r}}$ and whose right side is of the form $\ell_{i'2^{-k}r, h'W_{2^{-k}r}, (h'+1)W_{2^{-k}r}}$ for some $h,h'$ as above and for some $i'\in \{1,2,\ldots, 2^{k}\}$. 

For the rest of Subsection~\ref{s:cJ.bound}, let $\epsilon$ take its value from Lemma~\ref{l:proxy}.
We fix some small $\varepsilon_*>0$ and set $k_*=\lceil \varepsilon_* \log_2 r \rceil$. Set
\begin{equation}\label{e:k1.defn}
    k_1=\Big\lceil\log_2\Big(\frac{240z}{s^2}\Big)\Big\rceil \vee \frac2{\alpha}\Big(1+\log_2(4(1-2^{-\alpha/2}))\Big)
\end{equation}
and for $0\leq k \leq k_1$,  define the event 
$$\widetilde{\cJ}_{k}= \bigcap_{P\in \cP_k} \bigg\{\inf_{\substack{u\in L_P\\ v\in R_P}} \rX_{uv}-|u-v| \ge 10zQ_r -n^{-\epsilon} Q_n\bigg\}.$$
For $k=k_1+1,\ldots k_*$ let us define the event 
$$\widetilde{\cJ}_{k}= \bigcap_{P\in \cP_k} \bigg\{\inf_{\substack{u\in L_P\\ v\in R_P}} \rX_{uv}-|u-v| \ge -2^{-\alpha_* k/2}Q_{r} -n^{-\epsilon} Q_n \bigg\}$$
where $\alpha_*>0$ is as in Theorem~\ref{t:all}. We then have the following lemma.

\begin{lemma}
    \label{l:jbound1}
    On the event 
    $\bigcap_{k=0}^{k_*} \widetilde{\cJ}_{k}$ we have
    \begin{equation}\label{eq:jbound1A}
        \inf_{0\le i_1<i_2\le 2^{k_*}}\inf_{\substack{y,y'\in [0,tW_r]\\ |y-y'|\ge sW_r/2}} \inf_{\substack{\zeta \subset [0,r]\times [0,tW_r]\\ \zeta(0)=(i_12^{-k_*}r,y)\\ \zeta(1)=(i_22^{-k_*}r,y')}} \rX_{\zeta}-(i_2-i_1)2^{-k_*}r \ge 5zQ_r
    \end{equation}
    and 
    \begin{equation}\label{eq:jbound1B}
    \inf_{0\le i_1<i_2\le 2^{k_*}}\inf_{y,y'\in [0,tW_r]} \inf_{\substack{\zeta \subset [0,r]\times [0,tW_r]\\ \zeta(0)=(i_12^{-k_*}r,y)\\ \zeta(1)=(i_22^{-k_*}r,y')}} \rX_{\zeta}-(i_2-i_1)2^{-k_*}r \ge -5zQ_r.
    \end{equation}
\end{lemma}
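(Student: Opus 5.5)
The plan is to decompose any path $\zeta$ from $(i_12^{-k_*}r,y)$ to $(i_22^{-k_*}r,y')$ dyadically, using Lemma \ref{l:dyadic.seq}. For each dyadic interval $[j_{h-1},j_h]$ produced by that lemma (in units of $2^{-k_*}r$), let $v_h$ be the canonical intersection of $\zeta$ with the line $x=j_h2^{-k_*}r$. Because $\zeta\subset[0,r]\times[0,tW_r]$, each pair $(v_{h-1},v_h)$ has $v_{h-1}\in L_P$ and $v_h\in R_P$ for some $P\in\cP_{k}$, where $2^{-k}r=j_h-j_{h-1}$ in real units; here we use that the endpoints of a dyadic interval are multiples of its length. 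We then bound
\[
\rX_\zeta\ge \sum_h \rX_{v_{h-1}v_h}\ge\sum_h\big(|v_{h-1}-v_h| + (\text{bound from }\widetilde{\cJ}_{k(h)})\big),
\]
and use $|v_{h-1}-v_h|\ge j_h-j_{h-1}$ (in real units) to compare with $(i_2-i_1)2^{-k_*}r$.

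For \eqref{eq:jbound1B}, the only negative contributions come from scales $k>k_1$, each at most $2^{-\alpha_*k/2}Q_r+n^{-\epsilon}Q_n$. By property (2) of Lemma \ref{l:dyadic.seq} each scale appears at most twice. The sum is therefore at most
\[
2\sum_{k>k_1}2^{-\alpha_*k/2}Q_r+2k_*n^{-\epsilon}Q_n .
\]
The first term is at most $Q_r$ by the choice of $k_1$ in \eqref{e:k1.defn}. The second term is $o(Q_r)$, because $k_*=O(\log r)$, $r\le Mn$ and $n$ is large. Scales $k\le k_1$ contribute nonnegatively, since $10zQ_r-n^{-\epsilon}Q_n>0$. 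This gives the lower bound $-5zQ_r$ with room to spare, using $z\ge1$.

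For \eqref{eq:jbound1A} we split on the size of $i_2-i_1$. In the first case $(i_2-i_1)2^{-k_*}\ge 2^{-k_1+1}$. Property (3) of Lemma \ref{l:dyadic.seq} then gives some $h$ with $k(h)\le k_1$. That segment contributes at least $10zQ_r-n^{-\epsilon}Q_n$, while all the others together contribute at least $-2Q_r$ by the previous paragraph. The net is at least $5zQ_r$ when $z$ is large.

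In the second case $i_2-i_1$ is small, so the horizontal extent $d:=(i_2-i_1)2^{-k_*}r$ is less than $2^{-k_1+1}r$. The key step is to use the vertical displacement: the path must travel vertically at least $sW_r/2$. Summing the Euclidean lengths, $\sum_h|v_{h-1}-v_h|\ge\sqrt{d^2+(sW_r/2)^2}$. Moreover $\rX_\zeta$ is at least the sum of the $\rX_{v_{h-1}v_h}$ including the bits between, and the segment Euclidean lengths already account for the vertical travel between intersection points. This gives
\[
\sqrt{d^2+s^2W_r^2/4}-d\ \ge\ \frac{s^2W_r^2}{8d}\cdot c\ \ge\ c\,\frac{s^2 2^{k_1}}{16}\,Q_r\ \ge\ 15zQ_r,
\]
by the choice $2^{k_1}\ge 240z/s^2$, using $W_r^2=rQ_r$ and $d\le 2^{1-k_1}r$. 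The constant $c$ handles the regime $sW_r\ll d$, which holds since $W_r\ll 2^{-k_1}r$ for $r$ large. Subtracting the at most $2Q_r$ total negative error yields at least $5zQ_r$.

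The main obstacle is handling vertical movement inside a single column between consecutive lines. The displacement could occur within one segment $[v_{h-1},v_h]$, but it is still captured by $|v_{h-1}-v_h|$, because the vertical positions of consecutive intersection points sum (via the triangle inequality) to at least the total displacement $|y-y'|$. The endpoints lie exactly on the grid lines $i_1,i_2$, so no boundary pieces remain. One must also check that $v_h$ are not vertical boundary pairs, which Lemma \ref{l:strongly.conforming} handles by passing to a strongly conforming approximation.
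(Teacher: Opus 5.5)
Your proposal is correct and follows essentially the same route as the paper: the dyadic decomposition from Lemma \ref{l:dyadic.seq}, the per-segment bounds from $\widetilde{\cJ}_k$ with each scale used at most twice, and a case split through property (3). In that split, either some segment has scale at most $k_1$ and contributes the $10zQ_r$ gain, or the horizontal extent is at most $2^{1-k_1}r$ and the Euclidean excess from $|y-y'|\ge sW_r/2$ together with $2^{k_1}\ge 240z/s^2$ supplies it.

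Two minor corrections. The regime $sW_r\ll d$ needed for your quadratic expansion comes from $d\ge 2^{-k_*}r\gtrsim r^{1-\varepsilon_*}\gg W_r$, not from $W_r\ll 2^{-k_1}r$, since the latter bounds the wrong side of $d$. Also, the vertical-boundary-pair worry is vacuous, because consecutive points $v_{h-1},v_h$ lie on distinct vertical lines.
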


\begin{center}
\begin{figure}
\includegraphics[width=4in]{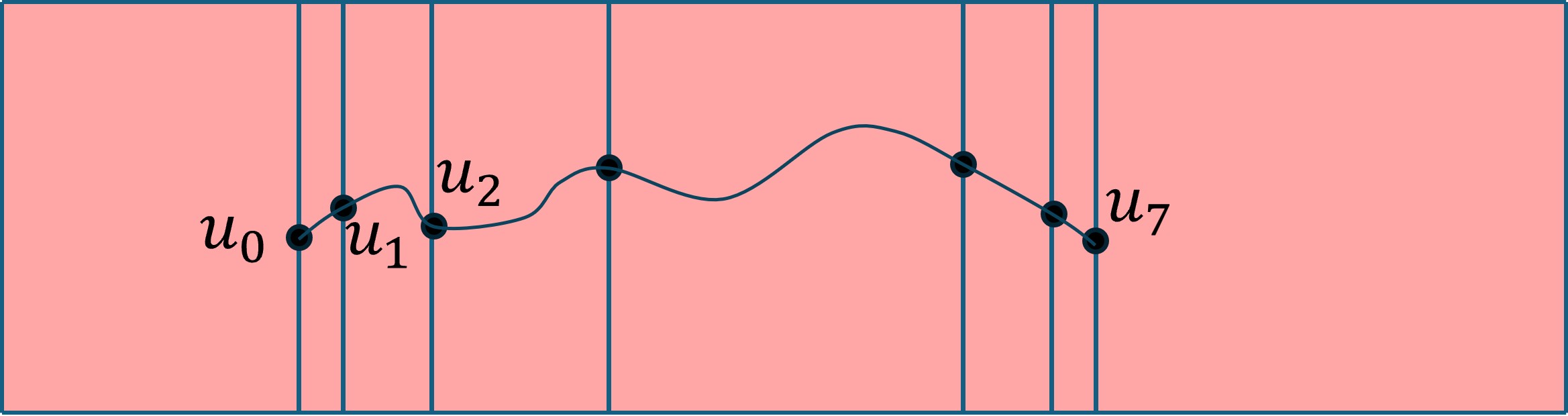}
\caption{Proof of Lemma \ref{l:jbound1}. For any fixed $i_1<i_2$, and a path $\zeta$ from $(i_12^{-k_*}r,y)$ to $(i_22^{-k_*}r,y')$ ($u_0$ and $u_7$ in the figure respectively) we decompose the path according to the dyadic scales given by Lemma \ref{l:dyadic.seq}. On the events $\cJ_k$ the segments of the path across a horizontal distance of $2^k$ are lower bounded, combining these we get the desired lower bound.}
\label{f:Jproof}
\end{figure}
\end{center}

\begin{proof} Fix $i_1,i_2$ and pick a sequence 
    \[
        i_1=j_0<j_1<j_2<\cdots <j_{m}=i_2
    \]
    satisfying the properties of Lemma~\ref{l:dyadic.seq}. For $y,y'\in [0,tW_r]$ and $\zeta$ from $(i_12^{-k_*}r,y)$ to $(i_22^{-k_*}r,y')$. For the sequence $j_h$ as above, let $u_{h}$ denote points where $\zeta$ intersects the lines $x=j_h2^{-k_*}r$; see Figure \ref{f:Jproof}. By definition of $j_h$, there exists an integer $k(h)$ with $j_h-j_{h-1}=2^{k_*-k(h)}$.  Then there exists $P\in \cP_{k(h)}$ such that $u_{h-1}\in L_P$ and $u_{h}\in R_{P}$.  Our assumption $\bigcap_{k=0}^{k_*} \widetilde{\cJ}_{k}$ implies that
    \begin{equation}\label{eq:tildeJ.implication}
    \rX_{u_{h-1}u_{h}}\ge |u_{h-1}-u_h|+10zQ_r I(k(h)\le k_1) -   2^{-\alpha_* k(h)/2}Q_rI(k(h)>k_1)-n^{-\epsilon}Q_n
    \end{equation}
    and hence
    \begin{align}\label{eq:jboundEq1}
    \rX_{\zeta}&\ge \sum_{h}\rX_{u_{h-1}u_{h}}\nonumber\\
    &\geq\sum_h \left[|u_{h-1}-u_h|+10zQ_r I(k(h)\le k_1) -   2^{-\alpha_* k(h)/2}Q_rI(k(h)> k_1)\nonumber-n^{-\epsilon} Q_n\right]\\
    &\geq |\zeta(0)-\zeta(1)| + 10zQ_rI(\min_h k(h) \leq k_1)-2n^{-\epsilon} Q_n\varepsilon_*\log_2 r -2\sum_{k> k_1} 2^{-\alpha_* k/2}Q_r\nonumber\\
    &\geq |\zeta(0)-\zeta(1)| + 10zQ_rI(\min_h k(h) \leq k_1) -2n^{-\epsilon} Q_n\varepsilon_*\log_2 r - 4z Q_r\nonumber\\
    &\geq |\zeta(0)-\zeta(1)| + 10zQ_rI(\min_h k(h) \leq k_1) - 5z Q_r,
    \end{align}
    where the second inequality is by equation~\eqref{eq:tildeJ.implication}, the third is by the triangle inequality and the fact that each $k(h)$ occurs at most twice and the third is by~\eqref{e:k1.defn}, and the final one follows by taking $n$ sufficiently large. Since $|\zeta(0)-\zeta(1)| \geq (i_2-i_1)2^{-k_*}r$ this implies~\eqref{eq:jbound1B}.  If $\min_h k(h) \leq k_1$ then we also have~\eqref{eq:jbound1A}.
    
    So to complete the lemma we need to prove~\eqref{eq:jbound1B} in the case $\min_h k(h) > k_1$ and $|y-y'|\ge sW_r/2$.  By the third property of the $j_h$ we must have that
    \begin{equation}\label{eq:k1.condition}
    (i_2-i_1)2^{-k_*}r \le 2^{1-k_1}r
    \end{equation}
    and so  
    \begin{align*}
    |\zeta(0)-\zeta(1)| &= \sqrt{((i_2-i_1)2^{-k_*}r)^2+(y-y')^2}\\
    &\ge (i_2-i_1)2^{-k_*}r+\frac{(y-y')^2}{3(i_2-i_1)2^{-k_*}r}\\
    &\ge (i_2-i_1)2^{-k_*}r+\frac{s^2}{24\cdot 2^{-k_1}}Q_r\\
    &\ge (i_2-i_1)2^{-k_*}r+10z Q_r,
    \end{align*}
    where the first inequality is by the fact that $\sqrt{x^2+y^2} \geq x + \frac{y^2}{3x}$ if $y\leq \frac12x$, the second is by equation~\eqref{eq:k1.condition} and the last is by our choice of $k_1$ which implies $\frac{s^2}{24} \geq 10z2^{-k_1}$.  Combined with equation~\eqref{eq:jboundEq1}, this competes the proof of~\eqref{eq:jbound1A}.
    \end{proof}

The next event will deal with points that are close by. Let $\cJ^{*}$ denote the event 

$${\cJ^{*}=\left\{ \inf_{\substack{x,x'\in [0,r], y,y\in [0,tW_r]\\ |(x,y)-(x',y')|\le r^{1-\varepsilon_*/2}}} \rX_{(x,y,(x',y'))}-|(x,y)-(x',y')|\ge -Q_r\right\}.}$$

\begin{lemma}
    \label{l:jbound2}
    For all $n$ sufficiently large.
    $$\cJ^{*}\cap \bigcap_{k=0}^{k_*} \widetilde{\cJ}_{k}   \subseteq \cJ_{1,0,t,z,s}.$$
       
\end{lemma}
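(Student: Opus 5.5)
Fix a path $\zeta\subset[0,r]\times[0,tW_r]$ from $(x,y)$ to $(x',y')$ with $|y-y'|\ge sW_r$, and assume $x\le x'$ after reversing $\zeta$ if needed. The goal is $\rX_\zeta-|x-x'|\ge zQ_r$. The idea is to snap the endpoints to the grid of vertical lines $x=i2^{-k_*}r$ and use Lemma~\ref{l:jbound1} on the middle portion. The two short end pieces are controlled by $\cJ^*$.

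\textbf{Case 1: $\zeta$ crosses at least one grid line.} Let $i_1=\lceil x2^{k_*}/r\rceil$ and $i_2=\lfloor x'2^{k_*}/r\rfloor$. If $i_1\le i_2$, let $u$ be the first point of $\zeta$ on the line $x=i_12^{-k_*}r$ and $v$ the last point of $\zeta$ on the line $x=i_22^{-k_*}r$. Write $\zeta=\zeta_1\zeta_2\zeta_3$ accordingly, and set $d_1=|(x,y)-u|$, $d_3=|v-(x',y')|$.

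On $\cJ^*$, $\rX_{\zeta_1}\ge d_1-Q_r$ and $\rX_{\zeta_3}\ge d_3-Q_r$, provided $d_1,d_3\le r^{1-\varepsilon_*/2}$. This size condition is the main obstacle. A path can make a long excursion before reaching the next grid line, so $d_1$ and $d_3$ need not be small. I would handle it by cutting $\zeta_1$ at its first exit from the ball of radius $r^{1-\varepsilon_*/2}/2$ around $(x,y)$. If such an exit happens, the piece up to the exit already costs at least its Euclidean length minus $Q_r$. Since that length is at least $r^{1-\varepsilon_*/2}/2$ while the horizontal gain it contributes is at most $2^{-k_*}r\le r^{1-\varepsilon_*}$, the excess is at least order $r^{1-\varepsilon_*/2}\gg zQ_r$. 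This uses $Q_r=O(\sqrt r)$, with $\varepsilon_*$ small. So either the piece is short, or it alone produces a huge excess and we are done. The same applies to $\zeta_3$.

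For the middle piece, \eqref{eq:jbound1A} or \eqref{eq:jbound1B} applied to $\zeta_2$ gives $\rX_{\zeta_2}\ge(i_2-i_1)2^{-k_*}r\pm5zQ_r$. The end pieces have horizontal extent at most $2^{-k_*}r$ each, so $|x'-x|\le(i_2-i_1)2^{-k_*}r+d_1+d_3$. The vertical displacement of $u$ and $v$ differs from that of the true endpoints by at most $d_1+d_3$. Hence if $d_1+d_3\le sW_r/2$, then $|u_y-v_y|\ge sW_r/2$ and \eqref{eq:jbound1A} applies. Summing gives
\[
\rX_\zeta\ge|x-x'|+5zQ_r-2Q_r-(\text{slack})\ge|x-x'|+zQ_r,
\]
using $z\ge1$ large. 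If instead $d_1>sW_r/4$, say, then $\zeta_1$ has vertical extent comparable to $sW_r$ over horizontal extent at most $2^{-k_*}r\le r^{1-\varepsilon_*}$. Then $d_1$ exceeds its horizontal extent by about $s^2W_r^2/r^{1-\varepsilon_*}=s^2r^{\varepsilon_*}Q_r$, which is far more than $zQ_r$. This gain combines with $\cJ^*$ on $\zeta_1$ and \eqref{eq:jbound1B} on the rest.

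\textbf{Case 2: $\zeta$ crosses no grid line.} Then $\zeta$ lies in a strip of width $2^{-k_*}r$ but has vertical displacement at least $sW_r$. The same Pythagoras estimate, split into pieces of diameter at most $r^{1-\varepsilon_*/2}$ and combined with $\cJ^*$, gives an excess of order $r^{\varepsilon_*}Q_r\gg zQ_r$ once $n$ (hence $r$) is large.

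All the $n^{-\epsilon}Q_n$ errors are already absorbed inside Lemma~\ref{l:jbound1}. Combining the cases yields the inclusion $\cJ^{*}\cap\bigcap_{k=0}^{k_*}\widetilde{\cJ}_k\subseteq\cJ_{1,0,t,z,s}$.
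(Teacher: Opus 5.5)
Your overall route is the paper's: snap to the grid $x=i2^{-k_*}r$, control the two end pieces with $\cJ^*$ and the middle with Lemma \ref{l:jbound1}, and use a Pythagoras gain when an end piece carries vertical displacement. However, the obstacle you single out does not exist. $\cJ^*$ bounds the point-to-point distance $\rX_{(x,y),u}$, and this lower-bounds $\rX_{\zeta_1}$ however much $\zeta_1$ wanders. The size condition in $\cJ^*$ concerns only the endpoint separation, and since $\zeta\subset[0,r]\times[0,tW_r]$ and $0\le u_x-x\le 2^{-k_*}r$, you always have
$$d_1\le 2^{-k_*}r+tW_r\le r^{1-\varepsilon_*}+O(r^{3/4})\le r^{1-\varepsilon_*/2}.$$
So delete the ball-exit detour. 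As written it is also incomplete: it bounds only the piece up to the exit point and says nothing about the cost of the remainder of $\zeta$, which must come back from far away.

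Two steps actually fail, though both are easy to repair.

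\emph{The subcase split.} You split on the Euclidean quantity $d_1+d_3$, but $d_1\ge u_x-x$, which is typically of order $2^{-k_*}r\approx r^{1-\varepsilon_*}\gg W_r$. So you are almost always in the branch ``$d_1>sW_r/4$''. There you assert that $\zeta_1$ has vertical extent of order $sW_r$, which is false, for instance when $u_y=y$ and $v_y=y'$. In that configuration there is no Pythagoras gain at the ends, and \eqref{eq:jbound1B} alone gives only $\rX_\zeta\ge (x'-x)-(5z+2)Q_r$. The split must be on vertical displacements, as in the paper's Cases 2a/2b:
\begin{itemize}
\item If $|u_y-y|+|v_y-y'|\le sW_r/2$, then $|u_y-v_y|\ge sW_r/2$ and \eqref{eq:jbound1A} applies.
\item Otherwise, say $|u_y-y|\ge sW_r/4$. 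That end piece has horizontal extent at most $2^{-k_*}r$, so $\sqrt{a^2+b^2}\ge a+\frac{b^2}{3a}\wedge\frac{|b|}{3}$ yields an excess of at least $\min\{s^2r^{\varepsilon_*}Q_r/48,\,sW_r/12\}\gg zQ_r$. Combine this with $\cJ^*$ on the ends and \eqref{eq:jbound1B} on the middle.
\end{itemize}

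\emph{The case $i_1=i_2$.} This is not covered. Lemma \ref{l:jbound1} needs $i_1<i_2$, and your Case 2 only treats $i_1>i_2$. The paper handles it by treating $x'-x\le 5\cdot 2^{-k_*}r$ separately. Then $|(x,y)-(x',y')|\le r^{1-\varepsilon_*/2}$, so $\cJ^*$ applies to the endpoints directly, and the same Pythagoras bound with $|y-y'|\ge sW_r$ finishes. This is your Case 2 argument, and it needs no splitting into pieces.

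With these fixes your proof coincides with the paper's.
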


\begin{proof}
Fix $0\le x\le x'\le r$ and $y,y'\in [0,tW_r]$ with $|y-y'|\ge sW_r$ and a conforming $\zeta$ from $(x,y)$ to $(x',y')$ contained in $[0,r]\times [0,tW_r]$. We need to show $\rX_{\zeta}\ge (x-x')+zQ_r$. We split the case into several.

\noindent
\textbf{Case 1.} Suppose $x'-x\le 5\cdot2^{-k_*}r$. Then by definition of $k_*$ and the fact that $tW_{r}\ll r^{1-\varepsilon/2}$  we get that 
$|(x,y)-(x',y')| \le r^{1-\varepsilon/2}$, therefore on the event $\cJ^*$ we get 
$${\rX_{\zeta}}\ge |(x,y)-(x',y')| -Q_r$$
It therefore suffices to prove that 
$$ |(x,y)-(x',y')|\ge x'-x +(z+1)Q_r$$ in this case. 
Observe now that 
$$\sqrt{a^2+b^2} \ge a+ \frac{b^2}{3a}\wedge \frac{|b|}{3}$$
therefore using $|y-y'|\ge sW_r$ it suffices to show that 
$$\frac{s^2 W_r^2}{3r^{1-\varepsilon/2}}\wedge \frac{sW_r}{3} \ge (z+1)Q_r $$
Clearly, $\frac{s^2 W_r^2}{3r^{1-\varepsilon/2}}\ge r^{\varepsilon/2}Q_r/3 \ge (z+1)Q_r$ for all $r$ sufficiently large. Also, since $r^{\alpha_*}<Q_r \le C\sqrt{r}$ ({by Theorem \ref{t:all} and Proposition \ref{p:uij.bounds}}) it follows that $\frac{sW_r}{3}\ge \frac{s\sqrt{rQ_r}}{3}\ge (z+1)Q_r$ for all $r$ sufficiently large. This concludes the proof in Case 1. 

\noindent
\textbf{Case 2.} Suppose $x'-x\ge 5\cdot 2^{-k_*}r$. Then there exists integers $i_1<i_2$ such that 
$x<i_12^{-k_*}r<i_22^{-k_*}r<x_2$ with $(i_12^{-k_*}r-x), (x'-i_22^{-k_*}r) \in [2^{-k_*}r, 2^{1-k_*}r]$. Let $u=(i_12^{-k_*}r,y_1)$ and $v=(i_22^{-k_*}r,y_2)$ be points on $\zeta$ and let $\zeta_1$ denote the restriction of $\zeta$ between these two points. Therefore we have
$$\rX_{\zeta}\ge \rX_{(x,y),u}+\rX_{\zeta_1}+\rX_{v,(x',y')}.$$
Notice also the by definition the event $\cJ^*$ covers the pairs of points $((x,y),u)$ and $(v,(x',y'))$.  
Now we need to consider two subcases. 

\noindent
\textbf{Case 2a.} $|y_1-y_2|\ge sW_r/2$. In this case, on the event $\bigcap_{k=0}^{k_*} \widetilde{\cJ}_{k}$, we have by Lemma \ref{l:jbound1} that 
$$X_{\zeta_1}\ge (i_2-i_1)2^{-k_*}r+5zQ_r.$$
Using this together with the fact that on $\cJ^*$ we have 
$$ \rX_{(x,y),u}\ge (i_12^{-k_*}r-x) -Q_r; \quad \rX_{v,(x',y')} \ge (x'-i_22^{-k_*}r)-Q_r$$
we get 
$$\rX_{\zeta}- (x'-x)+(5z-2)Q_r \ge zQ_r$$
as required. 

\noindent
\textbf{Case 2b.} $|y_1-y_2|\leq sW_r/2$. In this case, we have either 
$|y-y_1|\ge sW_r/4$ or $|y'-y_2|\ge sW_r/4$. We shall only deal with the first case, the proof for the second one is identical. Observe first that we have on $\bigcap_{k=0}^{k_*} \widetilde{\cJ}_{k}$, by Lemma \ref{l:jbound1} 
$$\rX_{\zeta_1}\ge (i_2-i_1)2^{-k_*}r-5zQ_r.$$
Therefore on the event $\bigcap_{k=0}^{k_*} \widetilde{\cJ}_{k}\cap \cJ^*$ we have 
$$\rX_{\zeta}\ge |u-(x,y)|+(x'-i_12^{-k_*}r)-(5z+2)Q_r$$
Therefore it suffices to show that 
$$|u-(x,y)|\ge (i_12^{-k_*}r)+(6z+2)Q_r.$$
This follows for all sufficiently large $r$ from the same argument as in the proof of Case 1 using $|y_1-y|\ge sW_r/4$. We omit the details. 

Combining all these cases completes the proof of the lemma. 
\end{proof}

So it remains to show that 
$$\P\left( \bigcap_{k=0}^{k_*} \widetilde{\cJ}_{k} \cap \cJ^*\right)\ge \delta(t,z,s)> 0.$$ This is achieved in the next three lemmas. 

\begin{lemma}
    \label{l:jstar}
    For any $t$ and {$\varepsilon_*>0$} fixed, for all $r$ sufficiently large we have 
    $$\P(\cJ^*)\ge \frac{1}{2}.$$
\end{lemma}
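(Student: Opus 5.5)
The plan is to prove the much stronger statement that $\cJ^*$ fails with probability tending to $0$ as $r\to\infty$, by a multiscale chaining in the spirit of the proof of Lemma \ref{l:khat}. The quantity $\rX_{uv}-|u-v|$ for $|u-v|\le r^{1-\varepsilon_*/2}$ should fluctuate on the scale $Q_{|u-v|}$. By Theorem \ref{t:all}, this is at most $r^{-(1/2+\delta)\varepsilon_*/2}\cdot$(const)$\cdot Q_r\ll Q_r$. So a lower deviation of size $Q_r$ is a stretched-exponentially rare event, and a polynomial union bound over a discretization of the $r\times tW_r$ rectangle suffices.

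First, I discretize. Since passage times are Lipschitz with constant $d_2$ in the endpoints (up to the conforming issue, handled below), it is enough to control pairs of points on the lattice $\Z^2\cap([0,r]\times[0,tW_r])$; moving each endpoint by at most $1$ changes both $\rX$ and $|u-v|$ by $O(1)\ll Q_r$. The number of such pairs is polynomial in $r$. For the conforming restriction, I use Lemma \ref{l:proxy}: on an event of probability $1-\exp(-n^{\theta_1})$, $\rX_{uv}\ge X_{uv}-n^{-\epsilon}Q_n$ for all relevant pairs, which costs at most $o(Q_r)$ since $r\ge n$. Here the pairs are those which are not vertical boundary pairs; vertical boundary pairs are excluded in the definition of $\cJ$ anyway, or are handled by a limit from nearby points. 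After this reduction it suffices to show, for the original passage time $X$ and each fixed lattice pair $u,v$ with $m:=|u-v|\le r^{1-\varepsilon_*/2}$,
$$\P\big(X_{uv}-|u-v|\le -\tfrac12 Q_r\big)\le \exp(-r^{c}).$$

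Next, I bound a single pair by scale. If $m\le r^{\alpha_*/2}$, the event is impossible, because $X_{uv}\ge d_1 m$ and $|u-v|-d_1m\le m\ll Q_r$, using $Q_r\ge r^{\alpha_*}$. Otherwise, rotational invariance together with Theorem \ref{t:all} gives tail $\exp(1-x^\theta)$ at $x=\tfrac12 Q_r/Q_m$. Since $Q_r/Q_m\ge (r/m)^{\alpha_*}\ge r^{\alpha_*\varepsilon_*/2}$, this probability is at most $\exp(1-c\,r^{\theta\alpha_*\varepsilon_*/2})$. One point needs care: the centering in Theorem \ref{t:all} is by $m$ rather than $\E X_m$. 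The theorem is stated for $|X_n-n|$ directly, so this causes no problem; in any case $A_m=\Theta(Q_m)\ll Q_r$.

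Finally, a union bound over the polynomially many lattice pairs gives $\P((\cJ^*)^c)\le r^{C}\exp(-c\,r^{c'})+\exp(-n^{\theta_1})\le \tfrac12$ for $r$ large. The main obstacle is the pair-discretization and conforming bookkeeping: pairs straddling column boundaries, and the fact that the $\rX$ versus $X$ comparison in Lemma \ref{l:proxy} is stated in terms of $n$, while $r$ is between $n$ and $M^{1/100}n$. I expect to resolve both by the one-sided inequality in the second display of Lemma \ref{l:proxy}, applied in the form $X\le\rX+n^{-\epsilon}Q_n$, which is the direction needed for a lower bound on $\rX$, together with $Q_n\le Q_r$.
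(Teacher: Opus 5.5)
Your argument is correct and is essentially the paper's proof. You discretize to the polynomially many lattice pairs with $|u-v|\le 2r^{1-\varepsilon_*/2}$, bound each pair's lower deviation of order $Q_r$ via Theorem~\ref{t:all} using $Q_r/Q_{|u-v|}\ge (r/|u-v|)^{\alpha_*}$ together with the one-sided comparison $\rX\ge X-n^{-\epsilon}Q_n$ from Lemma~\ref{l:proxy}, and take a union bound (the ``multiscale chaining'' framing in your opening is never used and is not needed).
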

\begin{proof}
    Let $S$ denote
    \[
    S=\big\{(u,v)\in (\Z^2 \cap [0,r]\times[0,tW_r])^2:|u-v|\le 2r^{1-\varepsilon_*/2}\big\}
    \]
    Since the underlying noise field is bounded clearly we have 
    $$\cJ^* \supseteq \cap_{(u,v)\in S} \{\rX_{uv} \ge |u-v|-Q_r/2 \}.$$ For all $(u,v)\in S$ we have that $Q_{r}\ge (\frac12 r^{\varepsilon_{*}/2})^{\alpha_*} Q_{|u-v|}$. Therefore, by Theorem~\ref{t:all} and Lemma~\ref{l:proxy} there exists $\delta'>0$ such that for all  $(u,v)\in S$ we have $\P(\rX_{uv}\ge |u-v|-Q_r/2)\le \exp(-r^{\delta'})$. Noting that $|S|\le t^2r^{4}$ and taking a union bound for $r$ sufficiently large completes the proof of the lemma.  
\end{proof}
 
\begin{lemma}
    \label{l:jksmall}
    For any $t,s,z>0$ fixed, and for $0\le k \le k_1$, there exists $\delta_*(k)>0$ such that we have 
    $$\P(\widetilde{\cJ}_k)\ge \delta_*(k).$$
\end{lemma}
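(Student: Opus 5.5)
The event $\widetilde{\cJ}_k$ is an intersection, over the parallelograms $P\in\cP_k$, of events asking that every passage time across $P$ is atypically large. It is therefore an increasing event in the field. The plan is to bound below the probability of each parallelogram event by a constant that depends only on $(k,t,s,z)$, and then combine them with the FKG inequality. This works because $k\le k_1$, and $k_1$ depends only on $z,s$ and $\alpha$, so $|\cP_k|$ is bounded independently of $r$.

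\textbf{Counting.} Since $W$ grows polynomially with exponent at most $7/8$, we have $W_r/W_{2^{-k}r}\le C2^{7k/8}$. Hence the number of vertical intervals per line is at most $C t2^{7k/8}$, and
$$|\cP_k|\le 2^k\big(Ct2^{7k/8}+2\big)^2=:N(k,t),$$
which does not depend on $r$.

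\textbf{Single parallelogram.} Write $r'=2^{-k}r$. Each $P\in\cP_k$ is a translate of a parallelogram whose left side is a segment of length $W_{r'}$ at $x=0$, whose right side is such a segment at $x=r'$, and whose vertical offset is $m W_{r'}$ with $|m|\le Ct2^{7k/8}$. Because $Q_r\le C2^{k/2+\delta k}Q_{r'}$ by Theorem \ref{t:all}, the required deficit satisfies
$$10zQ_r\le L\,Q_{r'},\qquad L=L(k,z):=10zC2^{(1/2+\delta)k},$$
so it is a bounded on-scale quantity. I would then apply the barrier construction (the proof of Lemma \ref{l:barrier}) at scale $r'$ to a parallelogram instead of a rectangle. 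Concretely:
\begin{itemize}
\item Replace Lemma \ref{l:righttailbasicrect} by Lemma \ref{l:righttailbasicpara}. This gives that thin sub-parallelograms of any bounded slope are barriers with probability at least $\delta_1$.
\item Subdivide into $R$ columns with $\delta_1RQ_{r'/R}\ge LQ_{r'}$, and take $R$ large.
\item Intersect with the event that paths with large transversal fluctuation (beyond $L_*W_{r'}$ around the segment joining the two sides) are costly. This holds with probability at least $9/10$ by Proposition \ref{p:para} and Theorem \ref{t:tfold}.
\end{itemize}
Since the centering is $|u-v|$ rather than the horizontal length, the slope does not hurt: the excess is measured against Euclidean distance, and Lemma \ref{l:righttailbasicpara} already centres at $r$ and uses $|u-v|-r'=O(m^2Q_{r'})$, which is bounded. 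So I would use Corollary \ref{c:parabarrier}-type centering, i.e.\ against $|u-v|$, with $L$ enlarged by $Cm^2$.

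\textbf{Passing to conforming times and combining.} By Lemma \ref{l:proxy}, $X$ and $\rX$ differ uniformly by at most $n^{-\epsilon}Q_n$ except on an event of probability $\exp(-n^{\theta_1})$. This is exactly why the $-n^{-\epsilon}Q_n$ slack appears in the definition of $\widetilde{\cJ}_k$. Alternatively, the barrier argument could be run directly for $\rX$ using Proposition \ref{p:paraestimateconforming}, as Lemma \ref{l:barrier} asserts is possible. This gives each parallelogram event probability at least some $\eta(k,t,z)>0$. All these events are increasing, so FKG yields
$$\P(\widetilde{\cJ}_k)\ge \eta^{N(k,t)}=:\delta_*(k).$$

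The main obstacle is the single-parallelogram step with slope $m$ of order $t2^{7k/8}$, since the barrier lemmas are stated for rectangles. I would handle it by checking that every ingredient of the proof of Lemma \ref{l:barrier} is available uniformly over bounded slopes: Lemma \ref{l:righttailbasicpara} for the thin pieces, and Proposition \ref{p:para} together with Theorem \ref{t:tfold} applied to sheared parallelograms, which by rotation invariance are near-rectangles.
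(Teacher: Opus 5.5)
Your proposal is correct and is essentially the paper's argument: a lower bound $\delta_1(k,z)>0$ for each parallelogram event from the barrier Lemma \ref{l:barrier} at scale $2^{-k}r$ (the deficit $10zQ_r$ being a bounded multiple of $Q_{2^{-k}r}$), then FKG over the $r$-independent number of parallelograms, then Lemma \ref{l:proxy} to pass from $X$ to $\rX$ (subtracting the negligible failure probability $\exp(-n^{\theta_1})$, which your final FKG line omits). The slope you flag as the main obstacle does not require re-running the barrier proof: since $k\le k_1$, both sides of every $P\in\cP_k$ lie in a window $[0,L'W_{2^{-k}r}]$ with $L'=O(t2^{k})$ bounded, so Lemma \ref{l:barrier} applies verbatim, and changing the centering from $2^{-k}r$ to $|u-v|$ costs only $\frac12 (L')^2Q_{2^{-k}r}$, as noted right after that lemma.
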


\begin{proof}
    We have by Theorem~\ref{t:all} $Q_r/Q_{2^{-k}r}\le 2^{3k/4}$ for all $k$ sufficiently large, and therefore $10zQ_r\le (10z2^{3k/4})Q_{2^{-k}r}$. Recall the definition of $\widetilde{\cJ}_{k}$. For each $P\in \cP_k$, denote by $A_P$ the event 
    $$\bigg\{\inf_{\substack{u\in L_P\\ v\in R_P}} X_{uv}-|u-v| \ge 10zQ_r \bigg\}$$
    {By Lemma~\ref{l:barrier}} there exists $\delta_1=\delta_1(k,z)>0$ such that for all $P\in \cP_k$ we have 
    $\P(A_P)\ge \delta_1$. Notice also that by Lemma \ref{l:proxy} we have 
    $$\widetilde{\cJ}_k \supseteq \cap_{P} A_{P}.$$
    Observing that $A_P$ is an increasing event for each $P$ and $|\cP_k|\le t^{2}2^{3k}$ (here we used the fact that $W_{r}\le 2^{k}W_{2^{-k}r}$ for all $r$ sufficiently large) we get by the FKG inequality that 
    $$\P(\widetilde{\cJ}_{k}) \ge \delta_1^{t^22^{3k}}.$$
    This completes the proof. 
\end{proof}

\begin{lemma}
\label{l:jklarge}
    For any $t,s,z>0$ fixed, there exists $c,\theta'>0$ such that for $k_1\le k \le k_*$ 
    $$\P(\cJ_{k}) \ge 1- t2^{3k}\exp(-c2^{k\alpha_*\theta_2}).$$
\end{lemma}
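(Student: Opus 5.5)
The plan is a union bound over the parallelograms in $\cP_k$, combined with the parallelogram estimate of Proposition \ref{p:paraestimateconforming}(ii) applied at the scale $r_k:=2^{-k}r$. Here $\cJ_k$ is read as $\widetilde{\cJ}_k$ for $k_1<k\le k_*$.

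\textbf{Counting.} The relevant vertical lines are $x=i'2^{-k}r$ for $i'\le 2^k$. On each such line the number of intervals of length $W_{r_k}$ is at most $tW_r/W_{r_k}+1$. By Theorem \ref{t:all}, $W_r\le 2^kW_{r_k}$ for $r$ large, so each line carries at most $t2^k+1$ intervals. Hence $|\cP_k|\le 2^k(t2^k+1)^2$, which is at most $C t^2 2^{3k}$. The factor in the statement should be read with this polynomial dependence on $t$; it is absorbed by shrinking $c$, since $2^{k\alpha_*\theta_2}\gg k$.

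\textbf{Single parallelogram.} Fix $P\in\cP_k$ with left side $\ell_{(i'-1)r_k,hW_{r_k},(h+1)W_{r_k}}$ and right side $\ell_{i'r_k,h'W_{r_k},(h'+1)W_{r_k}}$. Proposition \ref{p:paraestimateconforming}(ii) is stated for $r$ a multiple of $n$ between $n$ and $Mn$. For $r_k=2^{-k}r$ with $k\le k_*=\lceil\varepsilon_*\log_2 r\rceil$, I would instead invoke its analogue at general scales. This analogue comes from Proposition \ref{p:para} and Lemma \ref{l:paraexplicit} via the proxy Lemma \ref{l:proxy}, exactly as the paper does after Lemma \ref{l:proxy}; the error $n^{-\epsilon}Q_n$ from the proxy is already built into the definition of $\widetilde{\cJ}_k$. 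The resulting bound is
\[
\P\Big(\inf_{u\in L_P,v\in R_P}\rX_{uv}-|u-v|\le -xQ_{r_k}-n^{-\epsilon}Q_n\Big)\le \exp(1-Cx^{\theta_2}).
\]
Take $x=2^{-\alpha_*k/2}Q_r/Q_{r_k}$. Theorem \ref{t:all} gives $Q_r\ge 2^{\alpha_*k}Q_{r_k}$, so $x\ge 2^{\alpha_*k/2}$. The failure probability for $P$ is therefore at most $\exp(1-C2^{\alpha_*k\theta_2/2})$.

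\textbf{Union bound and main obstacle.} Summing over $P\in\cP_k$ gives
\[
\P(\widetilde{\cJ}_k^c)\le Ct^22^{3k}\exp\big(-c2^{k\alpha_*\theta_2/2}\big),
\]
which is the claim after renaming constants (the exponent constant $\tfrac12$ is absorbed into $c$, or one relabels $\alpha_*$). The only delicate point, and the step I expect to be the main obstacle, is justifying the parallelogram estimate at the small scales $r_k\ge r^{1-\varepsilon_*}$. These lie below $n$ when $r=r_\ell$ with small $\ell$, and there the slope of $P$ could exceed what Proposition \ref{p:para} allows.

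Both issues are handled as follows:
\begin{itemize}
\item Since $|h-h'|\lesssim t2^k\ll r_k/W_{r_k}$, the slopes stay within the range $0\le k\le r_k/W_{r_k}$ required by Proposition \ref{p:para}.
\item The conforming structure is inherited at scale $r_k$. Paths crossing $P$ lie within one column $\Lambda_i$, or cross column boundaries in a way Lemma \ref{l:proxy} controls. So applying Proposition \ref{p:para} to $X$ and transferring via Lemma \ref{l:proxy}, at a cost of $n^{-\epsilon}Q_n$, suffices.
\end{itemize}
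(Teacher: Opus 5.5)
Your proposal is correct and is essentially the paper's argument: for each $P\in\cP_k$ you bound the event for the original passage times $X$ at scale $2^{-k}r$ with the parallelogram estimate, using $2^{-\alpha_*k/2}Q_r\ge 2^{\alpha_*k/2}Q_{2^{-k}r}$; you transfer to $\rX$ via Lemma \ref{l:proxy}, whose $n^{-\epsilon}Q_n$ slack is built into $\widetilde{\cJ}_k$; and you take a union bound over the $O(t^2 2^{3k})$ parallelograms. One correction: the factor $\tfrac12$ in the resulting exponent $2^{k\alpha_*\theta_2/2}$ cannot be absorbed into $c$, only into a relabeling of $\alpha_*$ or $\theta_2$, but the paper's own proof yields the same halved exponent, and that is all the proof of Lemma \ref{l:cJ.bound} needs.
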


\begin{proof}
    For $P\in \cP_k$, let $A_P$ denote the same event 
    \[
     \bigg\{\inf_{\substack{u\in L_P\\ v\in R_P}} X_{uv}-|u-v| \ge -2^{-\alpha_* k/2}Q_{r} \bigg\}.
    \]
    As in the previous lemma, by Lemma \ref{l:proxy}
    $$\widetilde{\cJ}_k \supseteq \cap_{P} A_{P}.$$
    Recall the choice of the constant $\alpha_*$ in the definition of $\widetilde{\cJ}_{k}$ for $k_1\le k \le k_*$ we have that 
    $2^{-\alpha_* k/2}Q_r \ge 2^{\alpha_* k/2}Q_{2^{-k}r}$. Therefore it follows from Proposition~\ref{p:paraestimateconforming} that 
    $$\P(A_P) \ge 1-\exp(-c2^{k\alpha_*\theta_2}).$$
    Taking a union bound over all $P\in \cP_k$ ($|\cP_k|\le  t^{2}2^{3k}$ as in the proof of the previous lemma) completes the proof. 
\end{proof}

We can now complete the proof of Lemma \ref{l:cJ.bound}. 

\begin{proof}[Proof of Lemma \ref{l:cJ.bound}]
    Choose $\varepsilon$ sufficiently small and  $k_1$ sufficiently large depending on $z,t,s$ such that the conclusions of Lemmas \ref{l:jbound1}, \ref{l:jbound2} hold for all $r$ sufficiently large and further we have by a union bound as Lemma \ref{l:jklarge} that 
    $$\P\left(\bigcap_{k_1<k\le k_*}\widetilde{\cJ}_{k}\right) \ge \frac{1}{2}.$$

    Since all the events $\widetilde{\cJ}_{k}$ and $\cJ^*$ are increasing, we get using Lemma \ref{l:jbound2} and the FKG inequality together with Lemmas \ref{l:jstar} and \ref{l:jksmall} that 
    $$\P(\cJ_{1,0,t,z,s}) \ge \prod_{k=0}^{k_1}\P(\widetilde{\cJ_{k}})\P\left(\bigcap_{k_1<k\le k_*}\widetilde{\cJ}_{k}\right)\P(\cJ^{*})\ge \frac{1}{4}\prod_{i=0}^{k_1}\delta_*(k).$$
    Since $k_1$ is fixed $\min_{k\le k_1} \delta_*(k)>0$, and we get the desired lower bound on $\P(\cJ_{1,0,t,z,s})$. The same lower bound on
    $\P(\cJ_{i,j,j',z,s})$ for any $i$ and $j'\le j+t$  can be obtained by the same argument with minimal changes. This completes the proof of the lemma.
\end{proof}

\subsection{Event $\cZ$ and Wing events}
Recall the definition of $\cZ_{i,j,k}$.
    $${\cZ_{i,j,k}}= \left\{\max_{\substack{|y|,|y'| \leq MW_n \\ u=(ir,y) \\ v= ((i+2^k L_2)r,y')}} |\hrX_{uv}| - (|\frac{y}{W_r}-j|^{\tfrac1{100}} +|\frac{y'}{W_r}-j|^{\tfrac1{100}})\frac{Q_r}{2^k L_2^2} \leq (2^k L_2)^{3/5} Q_{r}\right\}.$$

\begin{proof}[Proof of Lemma \ref{l:zbound}]
    
    We will first split the choices of $y,y'$ into intervals of length $W_r$.  For integers $h_1,h_2$ with {$|h_1|, |h_2|\le \frac{MW_n}{W_r}$}, write 
    $${\cZ_{i,j,k}}=\bigcup_{h_1,h_2} \cZ_{i,j,k,h_1,h_2}$$ where 

$$\cZ_{i,j,k,h_1,h_2}= \max_{\substack{|y-h_1W_r|\le W_r/2\\ |y'-h_2W_r|\le W_r/2  \\ u=(ir,y) \\ v= ((i+2^k L_2)r,y')}} |\hrX_{uv}| - (|\frac{y}{W_r}-j|^{\tfrac1{100}} +|\frac{y'}{W_r}-j|^{\tfrac1{100}})\frac{Q_r}{2^k L_2^2} \leq (2^k L_2)^{3/5} Q_{r}.$$

Therefore, our task reduces to getting lower bounds for $\cZ_{i,j,k,h_1,h_2}$. We shall do this in two parts. First let $h_1,h_2$ be such that $|h_1|^{1/100}+|h_2|^{1/100}\le 2^{k}L_2^2 (2^{k}L_2)^{3/5}$. Let us denote the set of all such pairs of $(h_1,h_2)$ by $\mathbf{H}$. For  $(h_1,h_2)\in \mathbf{H}$ we have 

$$\P(\cZ_{i,j,k,h_1,h_2})\geq \P\left( \max_{\substack{|y-h_1W_r|\le W_r/2\\ |y'-h_2W_r|\le W_r/2}}\max_{\substack{ u=(ir,y) \\ v= ((i+2^k L_2)r,y')}} |\hrX_{uv}| \leq (2^{k}L_2)^{3/5}Q_{r}\right).$$
Recall that we know from Proposition \ref{p:uij.bounds} that $Q_{2^{k}L_2r}\le C(2^{k}L_2)^{1/2}Q_r$. It follows from this and Proposition~\ref{p:paraestimateconforming} that for all $(h_1,h_2)\in \mathbf{H}$ we get 

$$\P(\cZ_{i,j,k,h_1,h_2})\ge 1-\exp(-c(2^{k}L_2)^{\theta'})$$
for some $c,\theta'>0$. Since the number of pairs of such $h_1,h_2$ is at most $$2^{2k+2}L^4_2 (2^{k}L_2)^{6/5} \le 4L_2^2 (2^{k}L_2)^{16/5}$$ it follows by taking a union bound over $(h_1,h_2)\in \mathbf{H}$ and using the fact that $k\ge 1$ that 

\begin{equation}
    \label{eq: zbound1}
    \P\left(\bigcup_{(h_1,h_2)\in \mathbf{H}} \cZ_{i,j,k,h_1,h_2}\right) \ge 1-  4L_2^2 (2^{k}L_2)^{16/5}\exp(-c(2^{k}L_2)^{\theta'}) \ge 1-\exp(-c'(2^{k}L_2)^{\theta'})
\end{equation}

for some $c',\theta'>0$. 

To deal with the other case, let $\mathbf{H}_{s}$ denote the set of all $h_1,h_2$ such that 

$$\min_{\substack{|y-h_1W_r|\le W_r/2\\ |y'-h_2W_r|\le W_r/2  \\ u=(ir,y) \\ v= ((i+2^k L_2)r,y')}} (|\frac{y}{W_r}-j|^{\tfrac1{100}} +|\frac{y'}{W_r}-j|^{\tfrac1{100}}) \in (s,s+1] 2^{k}L^2_2 (2^{k}L_2)^{3/5}.$$

It follows that for any $(h_1,h_2)\in \mathbf{H}_{s}$, 
$$\P(\cZ_{i,j,k,h_1,h_2})\ge \P\left( \max_{\substack{|y-h_1W_r|\le W_r/2\\ |y'-h_2W_r|\le W_r/2}}\max_{\substack{ u=(ir,y) \\ v= ((i+2^k L_2)r,y')}} |\hrX_{uv}| \le s(2^{k}L_2)^{3/5}Q_{r}\right).$$
Arguing as before, we get, for $s\ge 1$, and for all $(h_1,h_2)\in \mathbf{H}_{s}$  
$$ \P(\cZ_{i,j,k,h_1,h_2})\ge 1-\exp(-c(s2^{k}L_2)^{\theta'})$$
for some $\theta',c>0$. 

Now, the cardinality of $H_s$ is upper bounded by $$4(s+2)^22^{2k}L_2^{4}(2^{k}L_2)^{6/5} \le 4(s+2)^2 (2^{k}L_2)^{26/5}$$
and therefore by taking a union bound over all $(h_1,h_2)\in H_s$ that for all $s\ge 1$ 

$$\P\left(\bigcup_{{(h_1,h_2)\in \mathbf{H}_s}} \cZ_{i,j,k,h_1,h_2}\right) \ge 1-  4(s+2)^2 (2^{k}L_2)^{26/5} \ge 1-\exp(-c'(s2^{k}L_2)^{\theta'})$$
for some $c',\theta'>0$. 

The lemma follows now by taking a union bound over all $s$ and a further union bound using \eqref{eq: zbound1}.    
\end{proof}

Finally we prove the estimate for $\cW^{glo}$ in Lemma \ref{l:cW.global}.

\begin{proof}[Proof of Lemma \ref{l:cW.global}]
    Recall that 
$$\cW_{i,j}^{glo}= \cW^{*}_{i} \cap \bigcap_{k=(\log_2 \log_2 M)^2}^{\lfloor\log_2 (\frac12M^{99/100})\rfloor} \cZ_{i-2^{k+1}-3,j,k}\cap \cZ_{i-2^{k}-3,j,k} \cap \cZ_{i+2,j,k}\cap \cZ_{i+2^k +2 ,j,k}$$
and therefore the following lemma together with Lemma~\ref{l:zbound} immediately implies Lemma \ref{l:cW.global} via a union bound. 
\end{proof}

\begin{lemma}
    \label{l:wstar}
    For $M$ sufficiently large and $n$ sufficiently large depending on $M$, we have 
    $$\P(\cW^*_{i})\ge 1-M^{-1000}.$$
\end{lemma}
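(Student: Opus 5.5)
The statement is a union of polynomially many (in $M$) events, each a two-sided bound on centered passage times $\hrX_{u',u''}$ between vertical lines $x=i'r$ and $x=i''r$ at horizontal distance $(i''-i')r$. My plan is to prove a single-pair stretched exponential bound with threshold $\log^{100/\theta_2}(M)\,Q_{(i''-i')r}$ and then take a union bound over the at most $(Mn/r)^2\le M^2$ pairs $(i',i'')$. I would treat the four families separately, and do everything for $\rX$ and $\rX'$ simultaneously, since they have the same law.

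\emph{Upper and two-sided bound on $|\hrX|$ with $|y|,|y'|\le MW_n$.} Fix $i'<i''$ and put $R=(i''-i')r$, which is a multiple of $n$ between $n$ and $Mn$. I would tile the segments $\{i'r\}\times[-MW_n,MW_n]$ and $\{i''r\}\times[-MW_n,MW_n]$ into intervals of length $W_R$; there are at most $2MW_n/W_R+2\le 3M$ of them. For each pair of intervals I would apply Proposition~\ref{p:paraestimateconforming}(i), with $z=\log^{100/\theta_2}(M)/2$. The allowed range $|jW_R|\le \frac12 n^\beta W_n$ holds since $MW_n\ll n^\beta W_n$ for $n$ large. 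That proposition controls $|\rX_{uv}-|u-v||$. I would then pass from $|u-v|$ to $R+\frac{(y-y')^2}{2R}$ by Taylor expansion. The error is $O((y-y')^4/R^3)$, and using $|y-y'|\le 2MW_n$ with $R\ge n$ this is $O(M^4 W_n^4/n^3)=O(M^4 Q_n^2/n)$, which is $o(Q_n)$ for $n$ large depending on $M$. Each event then fails with probability at most $\exp(1-C\log^{100}M/2^{\theta_2})\le M^{-2000}$. A union bound over at most $9M^2$ interval pairs and $M^2$ column pairs costs only a factor $M^4$.

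\emph{Lower bound for $|y|,|y'|\le n^\beta W_n$.} Here the threshold is $(1\vee M^{-2}W_n^{-1}(|y|+|y'|))\log^{100/\theta_2}(M)Q_R$, and the number of intervals is now of order $n^\beta W_n/W_R$, which is unbounded in $n$. I would use Proposition~\ref{p:paraestimateconforming}(ii), which allows $|jW_R|\le n^\beta W_n$, on dyadic shells in $|y|+|y'|$. On the shell where $|y|+|y'|\in[2^mMW_n,2^{m+1}MW_n]$, the tolerance is at least $2^mM^{-1}\log^{100/\theta_2}(M)Q_R$ and there are at most $(2^{m+1}M)^2$ interval pairs. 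Hence the failure probability on that shell is at most $(2^{m+1}M)^2\exp(-c(2^m/M)^{\theta_2}\log^{100}M)$, and this sums in $m$ to at most $M^{-1500}$.

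\emph{Main obstacle.} The delicate points are these. First, Proposition~\ref{p:paraestimateconforming} is stated for parallelograms of aspect $W_R$ with large slopes, so I need its bound to remain uniform over slopes $|j_2-j_1|\le n^\beta W_n/W_R$. I would handle this by invoking it as stated, checking the range hypotheses, and absorbing the centering change into the quadratic term. Second, for very large $|y-y'|$ (of order $n^\beta W_n$) the Taylor error is no longer $o(Q_R)$. For the lower bound this is harmless, since the error has the favourable sign: $|u-v|\le R+\frac{(y-y')^2}{2R}$, so $\hrX\ge \rX-|u-v|$. I expect this sign check to be the only genuinely nontrivial step.
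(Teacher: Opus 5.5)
Your route is essentially the paper's. You discretize the two vertical segments into intervals, apply Proposition~\ref{p:paraestimateconforming} to each interval pair with threshold of order $\log^{100/\theta_2}(M)Q_{(i''-i')r}$, and take a union bound over interval pairs and the at most $M^2$ column pairs. The $n^\beta W_n$ family is split according to the size of $|y|+|y'|$. The paper uses intervals of length $W_n$ and splits at $|k|,|k'|\le M^2$; your $W_R$-intervals and dyadic shells are equivalent and somewhat cleaner bookkeeping. Your observation that $|u-v|\le R+\frac{(y-y')^2}{2R}$ gives the lower bound on $\hrX$ for free also makes explicit a step the paper leaves implicit.

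One estimate is wrong as written. On the shell $|y|+|y'|\in[2^mMW_n,2^{m+1}MW_n]$ you bound the tolerance below only by $2^mM^{-1}\log^{100/\theta_2}(M)Q_R$. For every $m$ with $2^m\le M^{1/2}$, say, the exponent satisfies $(2^m/M)^{\theta_2}\log^{100}M\le M^{-\theta_2/2}\log^{100}M\to 0$. So the per-pair failure bound is of order one, the shell bound exceeds $1$, and the sum over $m$ is not $\le M^{-1500}$.

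The fix is to keep the factor $1\vee$ from the definition of $\cW^*_i$. On that shell the tolerance is at least $\max(1,2^m/M)\log^{100/\theta_2}(M)Q_R$. The shells with $2^m\le M$ (i.e.\ $|y|+|y'|\le 2M^2W_n$) are then handled exactly like your first family: failure probability $\exp(-c\log^{100}M)$ per pair, with only $O(M^4)$ pairs. Only the shells with $2^m>M$ use the growing tolerance, and there your summation goes through.
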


\begin{proof}
    Recall that $\cW^*_{i}$ is an intersection of four events each of which is a further intersection of sub-events in the column $[i'r,i''r]$ where $i',i''$ varies over certain indices. In each of these cases, the number of pairs $(i',i'')$ is at most $M^2$, and therefore it suffices to show that each sub-event has probability at least $1-M^{-2000}$. Recall also that the four events are divided into two types depending on the vertical coordinates of the points $u',u''$ such that $\rX_{u'u''}$ are considered: one where there coordinates lie within $[-MW_n,MW_n]$ and the second where these coordinates take values in $[-n^{\beta}W_n,n^{\beta}W_n]$. We shall provide a proof for one event of each type, the proofs for the other events are identical and will be omitted. 

    For $0\le i' < i'' \le M\Phi^{-\ell}$, let $A_{i',i''}$ denote the event 
    $$A_{i',i''}=\left\{\max_{\substack{|y|,|y'| \leq MW_n \\ u'=(i'r,y)\\u''=(i''r,y') }} |\hrX_{u',u''}|  \leq  \log^{\frac{100}{\theta_2}} (M) Q_{(i''-i')r}\right\}.$$ We shall show that $\P(A_{i',i''})\ge 1-M^{-2000}$. For $k,k'\in [-M,M]$, let $B_{k,k'}$ denote the event 
    $$B_{k,k'}=\left\{\max_{\substack{|y-kW_n|\le W_n,|y'-k'W_n|\le W_n \\ u'=(i'r,y)\\u''=(i''r,y') }} |\hrX_{u',u''}|  \leq  \log^{\frac{100}{\theta_2}} (M) Q_{(i''-i')r}\right\}.$$
    It follows from Proposition \ref{p:paraestimateconforming} that for $M$ sufficiently large
    $$\P(B_{k,k'})\ge 1-M^{-10000}.$$ By taking a union bound over all $k,k'$ we get $\P(A_{i',i''})\ge 1-M^{-2000}$, as required. 

    Next, for {$0\le i'< i'' \le M\Phi^{-\ell}$}, let $C_{i',i''}$ denote the event
    $$C_{i',i''}=\left\{\max_{\substack{|y|,|y'| \leq n^\beta W_n \\ u'=(i'r,y)\\u''=(i''r,y') }}\hrX_{u',u''}  \geq - (1\vee M^{-2}W_n^{-1}(|y|+|y'|))\log^{\frac{100}{\theta_2}} (M) Q_{(i''-i')r}\right\}.$$
    We need to show that $\P(C_{i',i''})\ge 1-M^{-2000}$. For {$k,k'\in [-n^{\beta},n^{\beta}]$}, let $D_{k,k'}$ denote the event
    $$D_{k,k'}=\left\{\max_{\substack{|y-kW_n|\le W_n,|y'-k'W_n|\le W_n \\
    |y|, |y'|\le n^{\beta}W_n\\ u'=(i'r,y)\\u''=(i'' r,y') }} \hrX_{u',u''}  \geq - (1\vee M^{-2}W_n^{-1}(|y|+|y'|))\log^{\frac{100}{\theta_2}} (M) Q_{(i''-i')r}\right\}.$$
    Notice that for $|k|,|k'|\le M^2$ Proposition \ref{p:paraestimateconforming} implies that $\P(D_{k,k'})\ge 1-M^{-10000}$ for $M$ sufficiently large. While if $|k|\vee |k'|\ge M^2$ we get by Proposition \ref{p:paraestimateconforming} that 
    $$\P(D_{k,k'})\ge 1-M^{-10000(|k_1|+|k_2|)}$$
    for $M$ sufficiently large. By taking a union bound over $k,k'$, the result follows. 
\end{proof}

\subsection{Event $\cM$: proof of Lemma \ref{l:gadget}}

Conforming passage times are not in general translation invariant because of the restriction that conforming paths intersect the boundaries between columns at $|y|\leq n^\beta W_n$.  However, because the event $\cM_{i,j}$   concerns paths constrained to lie in a rectangle, when $|jW_r|\leq MW_n$ it is in fact invariant under $i$ and $j$ so without loss of generality, we shall prove the lemma for $i=1,j=0$. Define
\[
U_{r}(h) := \inf_{y,y'\in[0, h W_r]} \inf_{\substack{\zeta' \subset  [0,r]\times [0,h W_r] \\ \zeta'(0)=(0,y), \zeta'(1)=(r,y')}} \hrX_{\zeta'}
\]
Clearly $U_{r}(h)$ is decreasing in $h$.  
 We claim that there exists a constant $C_*$ such that
 \begin{equation}\label{eq:U.decreasing}
 r - C_* Q_r \leq \E  U_{r}(2) \leq \E  U_{r}(1) \leq r +C_* Q_r.
\end{equation}
We have 
$$\E U_{r}(2) \geq \E \inf_{y,y'\in[0, 2 W_r]}  \rX_{(0,y),(r,y')} - \E \sup_{y,y'\in[0, 2 W_r]} | \hrX_{(0,y),(r,y')} - \rX_{(0,y),(r,y')}| $$
and the lower bound follows from Proposition \ref{p:paraestimateconforming} and the fact that $W_r=\sqrt{rQ_r}$.  The upper bound on $\E [U_{r}(1)]$ is an easy consequence of Proposition~\ref{p:constraine}.

Now choose $h =h(n,M,\ell,t) \in [1,2]$ such that
\begin{equation}\label{eq:opt.h}
\E U_{r}(h) + 10 C_* Q_r h = \min_{h'\in[1,2]} [\E U_{r}(h') + 10 C_* Q_r h'].
\end{equation}
By equation~\eqref{eq:U.decreasing} we have that for $h'\in[\frac32,2]$,
\[
\E U_{r}(h') + 10 C_* Q_r h' \geq \E U_{r}(2) + 15 C_* Q_r > \E U_{r}(1) + 10 C_* Q_r
\]
and so $h\in[1,\frac32)$. By Markov's inequality we have that for $w\le 1/4$
\begin{align*}
\P[\cM^c_{0,0,h,z,w}] &\leq \frac1{zQ_r}\E\Bigg(\inf_{y,y'\in[0,h W_r]} \inf_{\substack{ \gamma' \subset  [0,r]\times [0,h W_r] \\ \gamma'(0)=(0,y) \\ \gamma'(1)=(r,y')}} \hrX_{\gamma'} 
- \inf_{y,y'\in[-w W_r,(h+w) W_r]} \inf_{\substack{\gamma' \subset  [0r,r]\times [-w W_r,(h+w) W_r] \\ \gamma'(0)=(0,y) \\ \gamma'(1)=(r,y')}} \hrX_{\gamma'} \Bigg) \\
&=\frac{\E U_{r}(h) - \E U_{r}(h+2w)}{zQ_r}\\
&=\frac{\E U_{r}(h) - (\E U_{r}(h+2w) + 10 C_* Q_r (h+2w)) + 10 C_* Q_r (h+2w)}{zQ_r}\\
&\leq\frac{\E U_{r}(h) - (\E U_{r}(h) + 10 C_* Q_r (h)) + 10 C_* Q_r (h+2w)}{zQ_r}=\frac{20 w C_*}{z},
\end{align*}
where the second inequality is by the optimality of the choice of $h$ in equation~\eqref{eq:opt.h}.  This establishes equation~\eqref{e:mboundM}.

Recall that we chose $\kappa$ such that $\kappa^{-1}$ is an integer.  With $\omega_s$ defined as in \eqref{eq:omega.interpolation}, for $i\in\{0,1,\ldots,\kappa^{-1}\}$ we let $\cX^{(i)}$ denote the conforming passage times with respect to the field $\omega_{i\kappa}$.  Between $\cX^{(i)}$ and $\cX^{(i+1)}$ each block is updated with probability $\kappa$ so the joint law $(\cX^{(i)},\cX^{(i+1)})$ is equal in distribution to $(\cX,\cX')$.

Define the event
\[
\cE = \bigg\{\omega_0 \in \cI^+_{0,0,h,0}, \omega_1 \in \cI^-_{0,0,h,-2\kappa^{-1} t} \bigg\}.
\]
Since $\omega_0$ and $\omega_1$ are independent,
\[
\P[\cE] = \P[\cI^+_{0,0,h,0}]\P[ \cI^-_{0,0,h,-2\kappa^{-1} t}] \geq \delta_1
\]
for some $\delta_1$ (independent of $r$) by Lemma~\ref{l:cI.bound}.  Let
\[
V^{(i)} = Q_r^{-1}\inf_{y,y'\in[0,h W_r]} \inf_{\substack{ \gamma' \subset  [0,r]\times [0,h W_r] \\ \gamma'(0)=(0,y) \\ \gamma'(1)=(r,y')}} \hrX^{(i)}_{\gamma'}.
\]
On the event $\cE$ we have that $V^{(0)}\geq 0, V^{(\kappa^{-1})}\leq -2\kappa^{-1} t$ and so we can always find $i_\star\in\{1,\ldots,\kappa^{-1}\}$ defined by
\[
i_\star = \inf\{i\geq 1:V^{(i)}  \leq - 2i t\}
\]
and have that $V^{(i_\star - 1)} \geq - 2(i_\star - 1) t$.
Hence
\begin{align*}
\sum_{i=1}^{\kappa^{-1}} \P[\cI^+_{0,0,h,-2(i - 1) t}, \cI^{-'}_{0,0,h,-2i t}] &= \sum_{i=1}^{\kappa^{-1}}  \P[ \omega_{(i-1)\kappa} \in \cI^+_{0,0,h,-2(i - 1) t}, \omega_{i\kappa} \in \cI^{-}_{0,0,h,-2i t}] \geq \P[\cE] \geq \delta_1
\end{align*}
since on $\cE$ at least one of the events in the second sum must occur.  So we can find some deterministic $i$ such that
\[
\P[\cI^+_{0,0,h,-2(i - 1) t}, \cI^{-'}_{0,0,h,-2i t}] \geq \kappa \delta_1.
\]
Setting $\alpha = 2(i - \frac12) t$ we have, provided $t$ is large enough, that
\[
2(i - 1) t\leq \alpha - \alpha^{9/10} \leq \alpha+ \alpha^{9/10} \leq 2i t
\]
and so
\[
\P[\cI^+_{0,0,h,-(\alpha - \alpha^{9/10})}, \cI^{-'}_{0,0,h,-(\alpha + \alpha^{9/10})}] \ge \delta
\]
for some $\delta>0$, completing the proof of \eqref{e:mboundI}. 
\qed

\appendix

\section{Constrained passage times and restricted distances}
\label{s:proxy}

This section provides two basic facts about conforming paths and restricted distances: Lemmas \ref{l:proxy} which states the restricted passage times $\rX$ approximate passage times $X$ sufficiently well  and \ref{l:strongly.conforming} which states the conforming geodesics can be arbitrarily well approximated by strongly conforming paths. The proof of Lemma \ref{l:proxy} will require the constrained passage time estimate Proposition~\ref{p:constraine} therefore we first provide the proof of that proposition. 

\subsection{Constrained passage times: Proof of Proposition \ref{p:constraine}}
The first step of the proof is to show that a similar estimate holds when the endpoints are slightly away from the boundary of the rectangle where the path is constrained to lie. 

\begin{center}
\begin{figure}
\includegraphics[width=5in]{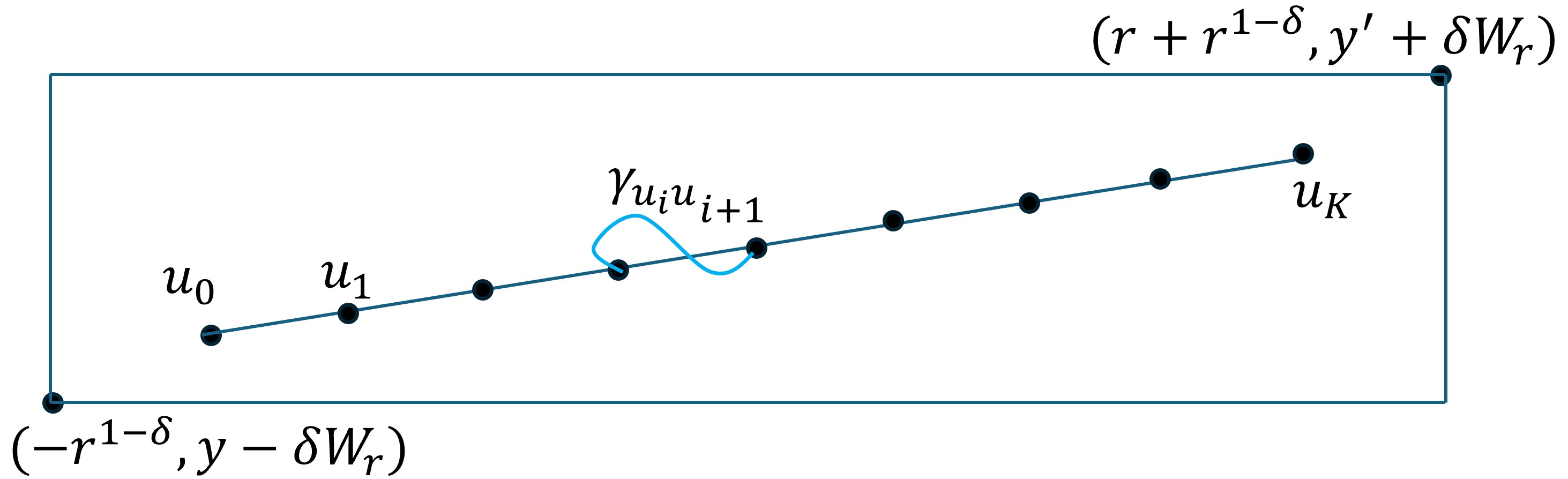}
\caption{Proof of Lemma \ref{l:consp2p}: we show that the passage time between $u_0$ and $u_k$ constrained to be in the rectangle is unlikely to be too large. This is done by considering a set of intermediate points $u_i$ on the straight line joining $u_0$ to $u_k$, and considering the concatenation of geodesics between $u_{i}$ and $u_{i+1}$. This is possible since it is unlikely that the geodesics $\gamma_{u_{i}u_{i+1}}$ are unlikely to exit the rectangle due to transversal fluctuation estimates.}
\label{f:constrained2}
\end{figure}
\end{center}
\begin{lemma}
    \label{l:consp2p}
    Let $L>0$ be fixed.  For $0\le y\le y'\le LW_r$, and $\delta$ sufficiently small let $R=R_{r,y,y',\delta}$ denote the rectangle with corners $(-r^{1-\delta}, y-\delta W_r), (-r^{1-\delta}, y'+\delta W_r), (r+r^{1-\delta}, y-\delta W_r), (r+r^{1-\delta}, y'+\delta W_r)$. There exists $C,\theta_7$ (depending on $L,\delta$) such that for all $r$ sufficiently large, all $y,y'$ as above and for all $z>0$ we have
    $$\P\left(\inf_{\gamma'(0)=(0,y), \gamma'(1)=(r,y')}\inf_{\gamma\subseteq R} X_{\gamma} \ge r+zQ_r \right) \le \exp(1-Cz^{\theta_7}).$$
\end{lemma}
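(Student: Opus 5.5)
The plan is to use the concatenation argument sketched in Figure~\ref{f:constrained2}. Fix a large integer $K$, to be chosen depending on $z$; the natural choice is $K=\lceil z^{\theta''}\rceil$ for a small exponent $\theta''>0$, and we cap it at $r^{\delta/2}$. Place points $u_0=(0,y),u_1,\ldots,u_K=(r,y')$ equally spaced on the segment joining $(0,y)$ to $(r,y')$. Consecutive points are then at horizontal distance $r/K$, and their vertical offset is at most $LW_r/K$. This offset is tiny compared with the slope scale: it is of order $(W_r/W_{r/K})K^{-1}W_{r/K}$, and $W_r/W_{r/K}\le K^{7/8}$ by \eqref{eq:basic.W.bounds}. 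The candidate path is the concatenation of the unconstrained geodesics $\gamma_{u_{i-1}u_i}$. On the event that every one of these geodesics stays inside $R$, the constrained infimum is at most $\sum_i X_{u_{i-1}u_i}$.

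The estimate then has two pieces.

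\textbf{Passage-time sum.} By Theorem~\ref{t:all}, together with Proposition~\ref{p:para} to handle the mild slope, each term satisfies $X_{u_{i-1}u_i}\le |u_{i-1}-u_i|+xQ_{r/K}$ with stretched-exponential tails in $x$. By Pythagoras,
$$\sum_i|u_{i-1}-u_i|=|u_0-u_K|\le r+\tfrac{L^2}{2}Q_r.$$
Since $Q_{r/K}\le K^{-\alpha_*}Q_r$, the total error is at most $K\cdot K^{-\alpha_*}\cdot xQ_r$. We use a union bound over the $K$ terms, taking $x=z K^{\alpha_*-1}/2$. Alternatively one can sum the independent-ish terms; the crude bound suffices because $K$ is only a small power of $z$. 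Either way this contributes a probability of at most $K\exp(1-c(zK^{\alpha_*-1})^{\theta})$, which is stretched exponential in $z$ once $\theta''$ is small.

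\textbf{Confinement.} By Theorem~\ref{t:tfold}, applied at scale $r/K$ after a rotation (which is allowed by rotational invariance), $\gamma_{u_{i-1}u_i}$ deviates from its chord by more than $sW_{r/K}$ with probability at most $\exp(1-Ds^{\theta_0})$. Staying inside $R$ requires deviation at most $\delta W_r$. Since $W_r\ge K^{1/2}W_{r/K}$, we can take $s=\delta K^{1/2}$. Near the endpoints, where the chord touches the vertical extent of $R$ only at distance $\delta W_r$, this is exactly the budget we have. The horizontal overshoot beyond $x=0$ or $x=r$ is controlled by the margin $r^{1-\delta}$: a geodesic of length of order $r/K$ backtracking horizontally by $r^{1-\delta}$ costs far more than $Q_r$, and this can be handled with the same transversal bound in rotated coordinates. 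A union bound over the $K$ segments gives a failure probability of $K\exp(1-D(\delta K^{1/2})^{\theta_0})$, which is again stretched exponential in $z$.

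For $z$ larger than $r^{c}$, we avoid letting $K$ exceed the allowed range. Instead we use the deterministic bound: the field is bounded by $d_2$, so the straight segment itself has passage time at most $d_2|u_0-u_K|$. This makes the event empty once $zQ_r\gtrsim r$, and in the intermediate range we simply fix $K=r^{\delta/2}$.

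The main obstacle is balancing the two error sources in $K$ uniformly over $y,y'$ and over the whole range of $z$, especially near the corners of $R$, where the vertical margin is only $\delta W_r$. We handle this by letting the constants in $C$ and $\theta_7$ depend on $\delta$ and $L$, and by accepting the loss of a power in the exponent.
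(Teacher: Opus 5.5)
Your proposal is correct and is essentially the paper's proof. Both concatenate unconstrained geodesics between $K$ equally spaced points on the chord, use Theorem~\ref{t:tfold} with $W_r\ge K^{1/2}W_{r/K}$ to keep them inside $R$, and control the passage times by a union bound via Theorem~\ref{t:all} and $Q_{r/K}\le K^{-\alpha_*}Q_r$, with $K$ a small power of $z$ (the paper takes $K=z^{1/10}$). Your explicit handling of the Pythagorean excess $\tfrac{L^2}{2}Q_r$ and of the large-$z$ range fills in details the paper leaves implicit.
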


\begin{proof}
    Observe that it suffices to prove the lemma for all $z\in (z_0,r^{\beta})$ for some $z_0$ sufficiently large. Let us fix $u=(0,y),v=(r,y')$ as in the statement of the lemma. Let $K=K(\delta)$ be a large integer to be fixed later. For $i=0,1,2, \ldots, K$, let us set $u_i=u+\frac{i}{K}(v-u)$; see Figure \ref{f:constrained2}. Observe now that on the event that the geodesics $\gamma_{u_{i},u_{i+1}}$ does not exit $R$, we have 
    $$\inf_{\gamma'(0)=u, \gamma'(1)=v}\inf_{\gamma\subseteq R} X_{\gamma}\le \sum_{i=0}^{K-1} X_{u_{i},u_{i+1}}.$$
    Notice that, the typical order of transversal fluctuations of $\gamma_{u_i,u_{i+1}}$ is $W_{r/K}$. Therefore using Theorem \ref{t:tfold} and the fact that $W_{r}\geq K^{1/2+\alpha_*/2}W_{r/K}$, and translation invariance it follows that for some $\theta'>0$
    $$\P\left(\inf_{\gamma'(0)=u, \gamma'(1)=v}\inf_{\gamma\subseteq R} X_{\gamma}\ge r+zQ_r\right)\le K\P(X_{u_i,u_{i+1}}\ge \frac{r}{K}+\frac{z}{K}Q_r)+K\exp(-K^{\theta'}).$$
    Using $Q_{r}\ge K^{\alpha_*}Q_{r/K}$  it follows by choosing $K=z^{1/10}$ and applying the tail estimates from Theorem \ref{t:all} that both terms on the right hand side above are upper bounded by $\exp(-Cz^{\theta_7})$ for some $C,\theta_7>0$ and this completes the proof of the lemma. 
\end{proof}

\begin{proof}[Proof of Proposition \ref{p:constraine}]
It suffices to prove the result for $z>z_0$ for some fixed $z_0$.  We shall prove 
\begin{equation}
    \label{e:halfconstrained}
    \P\left(\inf_{\substack{\gamma'(0)=(0,0)\\ \gamma'(1)=(r/2,W_r/2)\\ \gamma'\subset [0,r]\times[0,W_r]}} X_{\gamma'}\ge r/2+zQ_r\right)\le \exp(1-Cz^{\theta_6}).
\end{equation}
This together with triangle inequality and reflection symmetry will complete the proof of the lemma. To prove \eqref{e:halfconstrained} we define a sequence of points on the dyadic scale joining $(0,0)$ and $(r/2, W_r/2)$. Let $h$ denote the smallest integer such that $2^{-h}r\le Q_r$. For $k=1,2,\ldots, h$ define $u_k=(\frac{r}{2^{k}},\frac{1}{2}W_{r/2^{k-1}})$. Since $W_{r/2^{k}}/W_{r/2^{k-1}}$ is bounded above and below it follows that {for some $\delta>0$, for each pair $(u_{k+1},u_{k})$ we have $R'\subseteq [0,r]\times [0,W_r]$ where $R'$ is the image of $R_{2^{-(k+1)},\frac{1}{2}W_{r/2^k}, \frac{1}{2}W_{r/2^{k-1}},\delta}$ under the translation that takes $(0,\frac{1}{2}W_{r/2^k})$ to $u_{k+1}$ and $(2^{-(k+1)},\frac{1}{2}W_{r/2^{k-1}})$ to $u_{k}$, and hence for suitably chosen $L$ (and $\delta$ as above) one can use Lemma \ref{l:consp2p} to lower bound the probability of 
$$\inf_{\substack{\gamma'(0)=u_{k+1}\\ \gamma'(1)=u_{k}\\ \gamma'\subset [0,r]\times[0,W_r]}} X_{\gamma'}$$ being not too large.}

For $\alpha_*$ such that 
$Q_{r}\ge 2^{k\alpha_*}Q_{r/2^{k}}$, and $1\le k\le h-1$ let $A_{k}$ denote the event 
$$A_k=\left\{\inf_{\substack{\gamma'(0)=u_{k+1}\\ \gamma'(1)=u_{k}\\ \gamma'\subset [0,r]\times[0,W_r]}} X_{\gamma'}\le \frac{r}{2^{k+1}}+z\frac{2^{-\alpha_* (k+1)/2}}{2(1-2^{-\alpha_*/2})}Q_{r/2^{k+1}}\right\}.$$ Using Lemma \ref{l:consp2p}, it follows that 
\begin{equation}\label{eq:Ak.bound}
\P(A_k^c)\le \exp(-C(z2^{\alpha_* k/2})^{\theta_7}).
\end{equation}
Notice also that the straight line path joining $(0,0)$ to $u_{h}$ deterministically has length upper bounded by $C'Q_r$ for some $C$ by our choice of $h$ and set $z_0=2C$. This and the triangle inequality implies that we have on the event $\cap A_k$, 
$$\inf_{\substack{\gamma'(0)=(0,0)\\ \gamma'(1)=(r/2,W_r/2)\\ \gamma'\subset [0,r]\times[0,W_r]}} X_{\gamma'}\le r/2+(C+\frac12 z)Q_r\geq  r/2+zQ_r,$$
since $z\geq 2C$. By a union bound over $k$ and equation~\eqref{eq:Ak.bound}
\[
\P\Big[\bigcap_k A_k\Big] \geq 1-\sum_{k} \exp(-C(z2^{\alpha_* k/2})^{\theta_7}) \geq 1- \exp(-Cz^{\theta_7}) 
\]
which completes the proof of \eqref{e:halfconstrained}.  
\end{proof}

\subsection{Proof of Lemma \ref{l:proxy}}
We now move to the proof of Lemma \ref{l:proxy}. We shall first prove the second statement in the lemma, i.e., we prove that for some $\epsilon, \theta_1>0$ we have
\begin{equation}
    \label{e:proxy2}
    \P\Big[\sup_{u,v \in [0,nM]\times[- n^\beta W_n, n^\beta W_n]} X_{uv}-\rX_{uv} \geq n^{-\epsilon} Q_n\Big] \leq \exp(-n^{\theta_1}).
\end{equation}

For \eqref{e:proxy2} it suffices to consider points in the same column. We have the following lemma. 

\begin{lemma}
    \label{l:proxy2column}
    There exist $\epsilon, \theta_1>0$ such that for each $i$, we have for all $M$ and all $n\ge n(M)$
   \begin{equation*}
    {\P\Big[\sup_{u,v \in [(i-1)n,in]\times[- n^\beta W_n, n^\beta W_n]} X_{uv}-\rX_{uv} \geq n^{-2\epsilon} Q_n\Big] \leq \exp(-2n^{\theta_1})}.
\end{equation*} 
\end{lemma}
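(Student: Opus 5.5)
We need to bound, for u,v in the same column $\Lambda_i$ (restricted to $|y|\le n^\beta W_n$), the excess $X_{uv}-\rX_{uv}$. The two quantities differ in two ways. First, $\rX_{uv}$ only allows paths inside $\Lambda_i$. Second, it computes weights in the field $\omega^{\Lambda_i}$, which agrees with $\omega$ on $\Lambda_i$ but may differ within distance $1$ (the kernel range) of the two vertical boundaries $x=(i-1)n$ and $x=in$.

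The difference $X_{uv}-\rX_{uv}$ can be large only if the optimal conforming path $\zeta$ for $\rX_{uv}$ is much cheaper in $\omega^{\Lambda_i}$ than in $\omega$. Since $X_{uv}\le X_\zeta$ and $\psi$ is bounded in $[d_1,d_2]$, we have
\[
X_\zeta-\rX_\zeta\le (d_2-d_1)\,\mathrm{Leb}_1\big(\zeta\cap\{x:\ d(x,\partial\Lambda_i)\le 1\}\big).
\]
So it suffices to show that, with probability at least $1-\exp(-2n^{\theta_1})$, every optimal conforming path between such $u,v$ spends length at most $n^{-2\epsilon}Q_n/(d_2-d_1)$ within distance $1$ of the column boundary.

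The plan is to modify $\zeta$ instead: replace the boundary-hugging pieces by pieces that stay at distance greater than $1$ from the boundary. Consider the strip $S=\{(i-1)n\le x\le (i-1)n+2\}$ (and symmetrically on the right). Let $a$ be the first time $\zeta$ enters $S$ and $b$ the last time it leaves. Replace $\zeta|_{[a,b]}$ by a path that moves horizontally to $x=(i-1)n+2$, runs vertically there, and comes back. In this path only the two short horizontal pieces, of length $O(1)$ each, meet the boundary layer, so its $\omega$-weight and its $\omega^{\Lambda_i}$-weight differ by $O(1)$. But it may cost more than $\zeta|_{[a,b]}$ if the vertical range $|\zeta(a)_2-\zeta(b)_2|$ is large, since a vertical segment is in general not near-optimal.

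This leads to the first key step: controlling the vertical extent of boundary excursions. Points on the boundary near height $y$ and $y'$ with $|y-y'|\ge s$ are at distance about $s$ apart. Both $\rX$ and $X$ are bi-Lipschitz comparable to Euclidean distance, with constants $d_1,d_2$ against the limit $\mu=1$. So the plan is to bound the excess cost of a vertical detour by $(d_2-1)s+O(1)$ in $\omega$-weight, compared with the $\rX$-cost, which is at least $d_1 s$. This is only a constant-factor comparison and does not give an $o(Q_n)$ error directly.

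So the main obstacle is turning a constant-factor comparison into a $n^{-2\epsilon}Q_n$ bound. I would try first to show that, with overwhelming probability, no near-optimal conforming path touches the layer $\{d(x,\partial\Lambda_i)\le 1\}$ over a vertical range larger than $n^{\delta'}$, for small $\delta'$. The argument would compare the two options for a segment that hugs the boundary over vertical length $s\ge n^{\delta'}$ between two boundary points $p,q$. Following the boundary costs, in the $\omega^{\Lambda_i}$ field, at least the integral of $\psi$ along a length-$s$ curve in a width-$2$ strip. Alternatively, the unrestricted geodesic between $p$ and $q$ stays in the bulk.

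The fields in width-$2$ strips are i.i.d. over unit boxes. By a large-deviation estimate, the $\omega^{\Lambda_i}$-cost of any such strip-confined path of length $s$ exceeds $(\mu_{strip}-o(1))s$ with probability $1-e^{-cs}$, where $\mu_{strip}$ is the time constant of paths restricted to a width-$2$ strip. Also $\mu_{strip}>\mu=1$ strictly, by a standard argument that restricting to a strip strictly increases the time constant given the nondegenerate smooth field. Meanwhile $X_{pq}\le(1+o(1))s$ by the shape theorem together with Theorem~\ref{t:all}. Hence long hugging is suboptimal, for both $X$ and $\rX$.

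After a union bound over a polynomial grid of boundary points (using the Lipschitz continuity of passage times), every optimal conforming path hugs the boundary only in excursions of vertical length at most $n^{\delta'}$. Proposition~\ref{p:constraine}-type geometry, together with the fact that the path crosses the column at most once per vertical unit of hugging, bounds the number of excursions by $n^{\delta''}$. So the total boundary-layer length is at most $n^{\delta'+\delta''}$, which is $\le n^{-2\epsilon}Q_n$ because $Q_n\ge n^{\alpha_*}$, once $\delta',\delta'',\epsilon$ are small.

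The step I expect to be hardest is bounding the number of excursions. A path could touch the layer many times briefly. I would handle this by using the width-$2$ detour trick only for touches of length $O(1)$, which add $O(1)$ each, and by bounding their number via the total vertical range $n^\beta W_n$ of the box divided by a minimal spacing. If this count is too crude, I would fall back to a coarse-graining by unit boxes along the boundary combined with a Peierls-type counting of which boxes the path visits.
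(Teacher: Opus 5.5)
There is a genuine gap. Your reduction
$X_{uv}-\rX_{uv}\le X_\zeta-X^{\Lambda_i}_\zeta\le (d_2-d_1)\,\mathrm{Leb}_1(\zeta\cap\{d(\cdot,\partial\Lambda_i)\le 1\})$
is correct. But it turns the lemma into a much harder statement that your plan never establishes. You would need that, with probability $1-\exp(-2n^{\theta_1})$ and uniformly over all pairs in the column, the conforming geodesic spends total length at most $n^{-2\epsilon}Q_n$ within distance $1$ of the wall. The pairs include ones like $u=((i-1)n,0)$, $v=((i-1)n+\tfrac12,\,n^\beta W_n)$, whose straight segment lies entirely inside the boundary layer. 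This is a wall-repulsion statement for half-plane geodesics.

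Your strip argument does not deliver it. Note that $\mu_{\mathrm{strip}}>1$ and the strip large-deviation bound are asserted without proof, and the competitor for $\rX$ must be conforming, whereas the unrestricted $X$-geodesic between $p,q$ can leave $\Lambda_i$. Even granting all that, the argument only rules out long continuous hugs and says nothing about many short touches, which you correctly flag as the crux. Your fallback fails quantitatively. There is no minimal spacing between touches to appeal to, and even a spacing of $n^{\delta'}$ would give layer length of order $n^{\beta-\delta'}W_n\gg Q_n$, since $W_n=\sqrt{nQ_n}$. The claimed $n^{\delta''}$ bound on the number of excursions has no supporting argument.

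The missing idea is that you never need to analyse the conforming geodesic. Let $X^{\Lambda_i}_{uv}$ be the unconstrained passage time in the field $\omega^{\Lambda_i}$. Trivially $X^{\Lambda_i}_{uv}\le \rX_{uv}$, so it suffices to bound $X_{uv}-X^{\Lambda_i}_{uv}$. Because $(\omega,\omega^{\Lambda_i})$ is exchangeable (they agree on $\Lambda_i$ and use independent noise outside), this reduces to an upper bound on $X^{\Lambda_i}_{uv}-X_{uv}$. That bound comes from modifying the $X$-geodesic only in the strips of width $\tfrac14 n^\delta$ next to the two vertical boundaries; on the middle portion the two fields give identical weights.

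The key choice is $\delta<1$ with $n^\delta\ge n^{2\beta}W_n$. This makes each near-boundary strip, of height $4n^\beta W_n$, have diameter $O(n^\delta)$. By Theorem~\ref{t:all} and a union bound over integer points, all passage times between points of such a strip, in either field, are then within $Q_{n^\delta}\le n^{-(1-\delta)\alpha_*}Q_n\le n^{-4\epsilon}Q_n$ of Euclidean distance. Theorem~\ref{t:tfold} keeps the crossing heights of the lines $x=(i-1)n+\tfrac14 n^\delta$ and $x=in-\tfrac14 n^\delta$ below $2n^\beta W_n$. Pairs with $|u-v|\le n^\delta$ are handled directly by concentration. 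This gives Lemma~\ref{l:resamplecompare}, and a union bound over a grid finishes the proof, with no information needed about how conforming geodesics touch the wall.
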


We first complete the proof of \eqref{e:proxy2} assuming Lemma \ref{l:proxy2column}. 

\begin{proof}[Proof of \eqref{e:proxy2}]
    Let $A_i$ denote the following event for $1\le i \le M$:
    $$A_i=\biggl\{\sup_{u,v \in [(i-1)n,in]\times[- n^\beta W_n, n^\beta W_n]} X_{uv}-\rX_{uv} \geq n^{-2\epsilon} Q_n\biggr\}.$$
    
    We shall show that, for $n$ large enough depending on $M$, on the event $\cap A^c_i$ we have for all $u,v\in [0,nM]\times[- n^\beta W_n, n^\beta W_n]$ (except for the vertical boundary pairs) 
    \begin{equation}
        \label{e:proxy3}
        X_{uv}-\rX_{uv} \leq n^{-\epsilon} Q_n
    \end{equation}
    Clearly, \eqref{e:proxy2} follows from \eqref{e:proxy3}, using Lemma \ref{l:proxy2column}, taking a union bound over $1\le i \le M$ and choosing $n$ sufficiently large. 

    Let us now prove \eqref{e:proxy3}. Fix $u,v\in [0,nM]\times[- n^\beta W_n, n^\beta W_n]$ with $u\in[(i_--1)n,i_-n)\times [- n^\beta W_n, n^\beta W_n]$ and $v\in((i_+-1)n,i_+n]\times [- n^\beta W_n, n^\beta W_n]$. Let $u_{i_--1}=u,u_{i_+}=v$ and for $i\in\{i_-,\ldots,i_+-1\}$ let $u_i=(ir,y_i)$ be the intersection of the canonical path with the line $x=in$ such that 
    $$\rX_{uv}=\sum_i\rX_{u_{i-1}u_i}.$$
    By the triangle inequality it follows that 
    $$X_{uv}\le \sum_{i} X_{u_{i-1}u_{i}}\le \sum_i\rX_{u_{i-1}u_i}+Mn^{-2\epsilon}Q_n$$
    on the event $\cap A^c_i$. Choosing $n$ sufficiently large completes the proof of~\eqref{e:proxy3}.
\end{proof}

Before proving Lemma \ref{l:proxy2column} we prove an elementary lemma (this proves a stronger version of model Assumption 11 in \cite{BSS23}). 

\begin{lemma}
    \label{l:resamplecompare}
    There exists $\epsilon,\theta_1>0$ such that for a fixed pair $u,v\in [(i-1)n,in]\times[-n^{\beta}W_n,n^{\beta}W_n]$ we have 
    $$\P(|X_{uv}-X^{\Lambda_i}_{uv}|\ge n^{-3\epsilon}Q_n) \le \exp(-3n^{\theta}).$$
\end{lemma}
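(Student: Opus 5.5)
The plan is to compare the two passage times through a common bounded-length region. Recall that $X^{\Lambda_i}_{uv}$ is the passage time in the field $\omega^{\Lambda_i}$. This field agrees with $\omega$ on $\Lambda_i$ and is an independent copy outside it. Since $K$ is supported on the unit ball, $\Psi$ and $\Psi^{\Lambda_i}$ coincide on $[(i-1)n+1,in-1]\times\R$. The two fields can differ only in the two vertical strips of width $1$ on either side of each column boundary, and there both are bounded between $d_1$ and $d_2$.

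The first step is to reduce to geodesics that stay in a bounded window. Let $R=[(i-1)n-n^{1-\delta},in+n^{1-\delta}]\times[-2n^{\beta}W_n,2n^{\beta}W_n]$. Since $X_{uv}\le C|u-v|$ deterministically and the field is at least $d_1$, every geodesic for either field has Euclidean length $O(n)$. I expect that Theorem~\ref{t:tfold}, together with a union bound over a polynomial grid of starting and ending points, will place the geodesic for $u,v$ inside $R$ with probability at least $1-\exp(-n^{c})$. The same argument applies to $\omega^{\Lambda_i}$, since it has the same law. If the transversal bound is not uniform in the endpoint positions, I would fall back on the cruder fact that paths leaving $R$ are too long by Proposition~\ref{p:para}.

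The second step is the main one. We need to show that a geodesic spends only $o(n^{-3\epsilon}Q_n)$ length inside the strips $S=\{x:|x-(i-1)n|\le 1\text{ or }|x-in|\le 1\}$. Once this holds, the standard swap argument applies. Take the $\omega$-geodesic $\gamma$ and reroute each excursion of $\gamma$ into the strips by following a path whose weight is controlled in both fields. Simplest is to cap the length in the strips: $X^{\Lambda_i}_\gamma-X_\gamma\le (d_2-d_1)\,|\gamma\cap S|$, which gives $X^{\Lambda_i}_{uv}\le X_{uv}+(d_2-d_1)|\gamma\cap S|$, and symmetrically in the other direction.

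The hard part is bounding $|\gamma\cap S|$, since a geodesic may run vertically along a boundary strip. Here I would use Proposition~\ref{p:paraestimateconforming}-type concentration: a segment of length $\ell$ running inside a strip of width $2$ has excess over the straight-line cost of order $\ell$ at most. Time spent vertically in the strip, beyond $O(n^{\beta}W_n)$ total, costs transversal excess.

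For the alternative, I would replace $\gamma$ near each boundary crossing by a path that crosses the strip horizontally (length $\le 2$ per crossing) and otherwise stays in the interior, where the fields coincide. The number of crossings is the obstacle. A geodesic crossing a line $N$ times at vertical heights spread over $n^{\beta}W_n$ should be suboptimal. I would try to show, via Lemma~\ref{l:strongly.conforming}-style surgery plus the concentration estimates at scale $n^{\gamma}$, that the vertical extent of $\gamma\cap S$ is $O(n^{c'})$ with overwhelming probability. Then choose $\epsilon$ small enough that $d_2\,n^{c'}\le n^{-3\epsilon}Q_n$, using $Q_n\ge n^{\alpha_*}$. This crossing-count estimate is where I expect the real difficulty, and a union bound over $O(n^{C})$ local configurations with stretched-exponential tails should finish it.
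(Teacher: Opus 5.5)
There is a genuine gap, and it starts with the setup.

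\textbf{Where the fields differ.} The field $\omega^{\Lambda_i}$ is an independent copy of $\omega$ on all of $\R^2\setminus\Lambda_i$. So $\Psi^{\Lambda_i}$ and $\Psi$ agree only on $[(i-1)n+1,in-1]\times\R$ and differ everywhere else, not merely in the strips $S$. Every excursion of $\gamma$ to the left of $x=(i-1)n+1$ or to the right of $x=in-1$ enters $X^{\Lambda_i}_\gamma-X_\gamma$, and your window $R$ allows such excursions up to distance $n^{1-\delta}$. There is no cancellation to exploit: $\gamma$ is chosen to make $\Psi$ small, while $\Psi^{\Lambda_i}$ outside $\Lambda_i$ is independent of $\gamma$. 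The discrepancy is therefore generically comparable to the length of those excursions.

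\textbf{The central step fails.} You claim the geodesic spends length $o(n^{-3\epsilon}Q_n)$ near the boundary. This is false on part of the range of the lemma, and that range matters: the lemma is later union-bounded over \emph{all} lattice pairs in the column (proof of Lemma~\ref{l:proxy2column}). Take $u=((i-1)n,0)$ and $v=((i-1)n,n^{\beta}W_n)$. The geodesic has length at least $n^{\beta}W_n\gg Q_n$, and by reflection symmetry about $x=(i-1)n$, on average half of it lies left of that line. So no argument that transfers a single geodesic from one field to the other can work for such pairs.

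Even for pairs that cross the column, the estimate you need is deferred rather than proved. You would need the length in the boundary region to be at most $n^{c'}$ with $c'<\alpha_*$, with probability $1-\exp(-n^{\theta})$. The bound ``beyond $O(n^{\beta}W_n)$'' that you mention is useless here since $n^{\beta}W_n\gg Q_n$. Concentration at scale $Q_n$ cannot exclude detours of size below $n^{c}Q_n$, so you would need local transversal-fluctuation and no-backtracking estimates at polynomial scales near the endpoints.

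\textbf{The missing idea.} Compare passage \emph{times} at a mesoscopic scale, not the weight of a fixed path. Choose $\delta<1$ with $n^{\delta}\ge n^{2\beta}W_n$ and $Q_{n^{\delta}}\le n^{-4\epsilon}Q_n$.

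\emph{Short pairs.} If $|u-v|\le n^{\delta}$, which includes the example above, Theorem~\ref{t:all} shows that $X_{uv}$ and $X^{\Lambda_i}_{uv}$ each lie within $n^{\epsilon/2}Q_{n^{\delta}}\le n^{-7\epsilon/2}Q_n$ of $|u-v|$, with probability $1-\exp(1-n^{\epsilon\theta/2})$.

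\emph{Long pairs.} If $|u-v|\ge n^{\delta}$, exchangeability of $(X_{uv},X^{\Lambda_i}_{uv})$ reduces matters to an upper bound on $X^{\Lambda_i}_{uv}-X_{uv}$. Cut the $\omega$-geodesic at its last crossing $u'$ of $x=(i-1)n+n^{\delta}/4$ and its first crossing $v'$ of $x=in-n^{\delta}/4$.
\begin{enumerate}
\item The middle piece has the same weight in both fields.
\item By Theorem~\ref{t:tfold}, $|u'_2|,|v'_2|\le 2n^{\beta}W_n$ outside an event of probability $\exp(-n^{c})$.
\item The excess is therefore at most $(X^{\Lambda_i}_{uu'}-X_{uu'})+(X^{\Lambda_i}_{v'v}-X_{v'v})$. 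These are differences of point-to-point times over distances $O(n^{\delta})$, small for all such pairs at once by Theorem~\ref{t:all} and a union bound over lattice points.
\item When $u$ is at distance at least $n^{\delta}/2$ from the boundary, $u'=u$ with overwhelming probability.
\end{enumerate}
With this route, what the geodesic does inside the boundary strips never needs to be controlled.

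\textbf{Circularity.} Proposition~\ref{p:paraestimateconforming} cannot be used here. It is derived from Lemma~\ref{l:proxy}, whose proof relies on this lemma. Only the unrestricted estimates (Theorem~\ref{t:all}, Proposition~\ref{p:para}, Theorem~\ref{t:tfold}) are available.
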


\begin{proof}
    For $\epsilon$ sufficiently small choose $\delta<1$ such that $n^{\delta}\ge n^{2\beta}W_n$ and $Q_{n^{\delta}}\le n^{-4\epsilon}Q_n$ (this is possible by the properties of $Q_n$ and $W_n$ from Theorem \ref{t:all} and since $\beta$ is chosen small). Let us fix $u=(u_1,u_2)$ and $v=(v_1,v_2)$. If $|u-v|\le n^{\delta}$ it follows from Theorem \ref{t:all} and $Q_{n^{\delta}}\le n^{-4\epsilon}Q_n$ that 
    $$\P(|X_{uv}-X^{\Lambda_i}_{uv}|\ge n^{-3\epsilon}Q_n) \le \exp(-3n^{\theta})$$
    for some $\theta>0$. We therefore only need to deal with the case where $|u-v|\ge n^{\delta}$. By our choice of $\delta$ this also means $|v_1-u_1|\ge \frac{1}{2}n^{\delta}$. Fix such a pair $u,v$. We shall show that 
    \begin{equation}
        \label{e:exchange}
        \P(X^{\Lambda_i}_{uv}-X_{uv}\ge n^{-3\epsilon}Q_n)\le \exp(-3n^{\theta})
    \end{equation}
    for some $\theta>0$. Since $(X_{uv},X_{uv}^{\Lambda_i})$ is an exchangeable pair, this will complete the proof of the lemma by a union bound. 
    
    Let $\gamma_{uv}$ denote the geodesic attaining $X_{uv}$. Let $u'=(u'_1,u'_2)$ (resp.\ $v'=(v'_1,v'_2)$) be its last (resp.\ first) intersection with the line $x=(i-1)n+\frac{1}{4}n^{\delta}$ (resp.\ line $x=in-\frac{1}{4}n^{\delta}$). If no such intersection exists we set $u'=u$ (resp.\ $v'=v$). Without loss of generality we shall assume $u_1\le v_1$. Let $A$ denote the event that either $|u'_2|\ge 2n^{\beta}W_n$ or $|v'_2|\ge 2n^{\beta}W_n$. It follows from Theorem \ref{t:tfold} that $\P(A)\le \exp(-4n^{\theta})$ for some $\theta>0$. Let $\gamma_1,\gamma_2,\gamma_3$ denote the restrictions of $\gamma_{uv}$ between $u$ and $u'$, $u' and v'$ and $v'$ and $v$ respectively. By definition $X_{\gamma_2}=X_{\gamma_2}^{\Lambda_i}$ therefore to prove \eqref{e:exchange} it suffices to show that (just consider the path obtained by concatenating $\gamma'_1, \gamma_2, \gamma'_3$ in the environment $\omega^{\Lambda_1}$ where $\gamma'_1$ attains $X^{\Lambda_i}_{u,u'}$ and $\gamma'_3$ attains $X^{\Lambda_i}_{v,v'}$) 
    \begin{equation}
        \label{e:exchange1}
        \P(X^{\Lambda_i}_{u,u'}-X_{\gamma_1}\ge \frac{1}{2}n^{-3\epsilon}Q_n)\le \exp(-3n^{\theta});
    \end{equation}
    \begin{equation}
        \label{e:exchange2}
        \P(X^{\Lambda_i}_{\gamma_3}-X_{\gamma_3}\ge \frac{1}{2}n^{-3\epsilon}Q_n)\le \exp(-3n^{\theta}).
    \end{equation}
    We shall show \eqref{e:exchange1}. The other proof of the other one is identical. Let $B$ denote the event that for all $w,w'\in [(i-1)n, (i-1)n+\frac{1}{2}n^{\delta}]\times [-2n^{\beta}W_n,2n^{\beta}W_n]$ we have 
    $|X_{ww'}-X_{ww'}^{\Lambda_i}|\ le \frac{1}{4}n^{-3\epsilon}Q_n$. By our choice of $\delta$, and taking a union bound over all pairs of integer points $w,w'$ it follows from Theorem \ref{t:all} that 
    $\P(B)\le \exp(-3n^{\theta})$. Now, if $u_1\le (i-1)n+\frac{1}{2}n^{\delta}$, it follows that the probability on the left hand side of \eqref{e:exchange1} is upper bounded by $\P(B)$ and we are done. If $u_1\ge (i-1)n+\frac{1}{2}n^{\delta}$, the probability in \eqref{e:exchange1} is upper bounded by $\P(u\neq u')$ which is further upper bounded by $\exp(-3n^{\theta})$ by Theorem \ref{t:tfold} and we are done. 
\end{proof}

\begin{proof}[Proof of Lemma \ref{l:proxy2column}]
Using Lemma \ref{l:resamplecompare} and taking a union bound over all $u,v\in [(i-1)n,in]\times [-n^{\beta}W_n, n^{\beta} W_n]\cap \Z^2$ it follows that 

\begin{equation}
    \label{e:proxy4}
    \P\left(\max_{u,v\in [(i-1)n,in]\times [-n^{\beta}W_n, n^{\beta} W_n]} |X_{uv}-X^{\Lambda_i}_{uv}|\ge n^{-2\epsilon}Q_n\right)\le \exp(-2n^{\theta_1}).
\end{equation}

The lemma now follows from noticing that for $u,v\in [(i-1)n,in]\times [-n^{\beta}W_n, n^{\beta} W_n]$ we have 
$X^{\Lambda_i}_{uv}\le \rX_{uv}$. 
\end{proof}

Next we prove the first statement in Lemma \ref{l:proxy}. In conjunction with \eqref{e:proxy2}, it suffices to prove that 

\begin{equation}
    \label{e:proxy5}
    {\P\Big[\inf_{u,v \in [0,nM]\times[- \frac{1}{2}n^\beta W_n, \frac{1}{2}n^\beta W_n]} X_{uv}-\rX_{uv} \leq -n^{-\epsilon} Q_n\Big] \leq \exp(-n^{\theta_1})}.
\end{equation}

As before we shall first do a within column estimate. 

\begin{lemma}
    \label{l:proxy3column}
    There exist $\epsilon, \theta_1>0$ such that for each $i$, we have for all $M$ and all $n\ge n(M)$
   \begin{equation*}
    {\P\Big[\inf_{u,v \in [(i-1)n,in]\times[- n^\beta W_n, n^\beta W_n]} X_{uv}-\rX_{uv} \leq -n^{-2\epsilon} Q_n\Big] \leq \exp(-2n^{\theta_1})}.
\end{equation*} 
\end{lemma}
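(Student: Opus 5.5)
The approach is to compare $\rX_{uv}$ with $X_{uv}$ by taking the (unconstrained) geodesic $\gamma_{uv}$ and surgically replacing the parts of it that leave the column $\Lambda_i$ by paths constrained to stay inside $\Lambda_i$. The key scale observation is that every such excursion has small extent. Both endpoints have $|y|\le n^\beta W_n$, and the column has width $n\gg n^\beta W_n$. So the only way to exit $\Lambda_i$ is through the vertical lines $x=(i-1)n$ or $x=in$, and this can happen only near those lines. I fix $\delta<1$ as in the proof of Lemma \ref{l:resamplecompare}, so that $n^\delta\ge n^{2\beta}W_n$ and $Q_{n^\delta}\le n^{-4\epsilon}Q_n$. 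All errors will then be of order $Q_{n^\delta}$, which is $\ll n^{-2\epsilon}Q_n$.

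\emph{Step 1 (localisation).} By Theorem \ref{t:tfold}, rotational invariance, and a union bound over integer pairs $u,v$ in $[(i-1)n,in]\times[-n^\beta W_n,n^\beta W_n]$ (passing to general pairs using boundedness of $\Psi$), I get the following with probability at least $1-\exp(-4n^{\theta})$: every geodesic $\gamma_{uv}$ stays in the strip $|y|\le 2n^\beta W_n$. On this event, let $a$ and $b$ be the first and last intersection points of $\gamma_{uv}$ with the line $x=(i-1)n$, if $\gamma_{uv}$ crosses it. Then $|a-b|\le 4n^\beta W_n\le n^{\delta}$. I define $a',b'$ in the same way for the line $x=in$. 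Replacing the whole segment of $\gamma_{uv}$ between $a$ and $b$ (and between $a'$ and $b'$) handles all excursions, including repeated ones, with only two surgeries. If $u$ itself lies left of $a$ this cannot happen, since $u$ is in the column, so the pieces $u\to a$, $b\to a'$ (or similar) and $b'\to v$ are fine.

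\emph{Step 2 (cost of a surgery).} For a pair of points $a,b$ on a column boundary with $\ell=|a-b|\le n^\delta$, I consider the rectangle of length $\ell$ and width $W_\ell$ with side $[a,b]$, lying on the column side. Applying Proposition \ref{p:constraine} (rotated by $90^\circ$, using isotropy of the field) with $z=n^{\epsilon}$, the best path from $a$ to $b$ inside this rectangle has passage time at most $\ell+n^{\epsilon}Q_\ell$, except with probability $\exp(-cn^{\epsilon\theta_6})$. By Theorem \ref{t:all}, $X_{ab}\ge \ell-n^\epsilon Q_\ell$ with similar probability. So the constrained path costs at most $X_{ab}+2n^{\epsilon}Q_{n^\delta}\le X_{ab}+2n^{-3\epsilon}Q_n$. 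I apply this with a union bound over integer (then nearby) pairs $a,b$; there are polynomially many of them. I also take $\ell\ge 1$ and handle very short $\ell$ trivially using boundedness of $\Psi$. Since this rectangle lies inside $\Lambda_i$, where $\omega^{\Lambda_i}=\omega$, the constrained passage time is the same in both fields.

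\emph{Step 3 (assembling).} The concatenation of the unchanged pieces of $\gamma_{uv}$, which lie in $\Lambda_i$, with the two replacement paths is a path in $\Lambda_i$. Its $X$-weight is at most $X_{uv}+4n^{-3\epsilon}Q_n$. Because it lies in $\Lambda_i$, its $X^{\Lambda_i}$-weight equals its $X$-weight, so $\rX_{uv}\le X_{uv}+4n^{-3\epsilon}Q_n\le X_{uv}+n^{-2\epsilon}Q_n$. Summing the failure probabilities gives the bound $\exp(-2n^{\theta_1})$ after decreasing $\theta_1$. Lemma \ref{l:resamplecompare} is not even needed here: everything that leaves $\Lambda_i$ is removed, and everything kept is evaluated where the two fields agree.

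The main obstacle is Step 2. Proposition \ref{p:constraine} is stated for an axis-aligned rectangle with endpoints at corners, in the original model. It has to be transferred to a rectangle of arbitrary small length $\ell$ with a boundary-adjacent side, with the constant uniform in $\ell$ from $1$ up to $n^\delta$. For $\ell$ below a fixed large threshold I would instead use the deterministic bound $d_2\ell$ on the straight segment, which is $\ll Q_n$.
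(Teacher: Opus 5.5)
There is a genuine gap, at the end of Step 2 and in Step 3. You claim that a path contained in $\Lambda_i$ has the same weight under $X$ and under $X^{\Lambda_i}$, and this is what lets you drop Lemma~\ref{l:resamplecompare}. The noises $\omega$ and $\omega^{\Lambda_i}$ do agree on $\Lambda_i$, but weights are integrals of $\Psi=\psi(\Xi)$, where $\Xi(x)=\int K(x-y)\,\omega(dy)$ and $K$ is supported on the unit ball. So at points within distance $1$ of the lines $x=(i-1)n$ and $x=in$, the value of $\Psi^{\Lambda_i}$ depends on the independent copy $\omega_i$ outside the column. This is why the paper only identifies $X_{\gamma_2}$ with $X^{\Lambda_i}_{\gamma_2}$ for pieces at distance $\frac14 n^\delta$ or $n^\delta$ from the column boundary (Lemmas~\ref{l:resamplecompare} and~\ref{l:proxylocal1}).

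Your constructed path lies exactly in that boundary layer. The replacement paths start and end on the boundary lines and may hug them. The kept pieces of $\gamma_{uv}$ end at $a,b,a',b'$ and may run within distance $1$ of the boundary before reaching them. The only a priori bound on the discrepancy is $(d_2-d_1)$ times the length spent in that layer. This length can be of order $|a-b|\sim n^\beta W_n\gg Q_n$, so Step 3 does not yield $\rX_{uv}\le X_{uv}+n^{-2\epsilon}Q_n$.

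The paper avoids this by first using \eqref{e:proxy4} (from Lemma~\ref{l:resamplecompare}) to replace $X_{uv}$ by $X^{\Lambda_i}_{uv}$. After that, everything happens in the single field $\omega^{\Lambda_i}$, in three cases:
\begin{itemize}
\item For pairs in the middle region $[n^\beta W_n,n-n^\beta W_n]\times[-2n^\beta W_n,2n^\beta W_n]$, the $X^{\Lambda_1}$-geodesic does not leave $\Lambda_1$, by Theorem~\ref{t:tfold}.
\item For pairs in a boundary strip of width $n^\beta W_n$, it shows $\rX_{uv}\le |u-v|+n^\delta Q_{|u-v|}$ and $X^{\Lambda_1}_{uv}\ge |u-v|-n^\delta Q_{|u-v|}$. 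The first uses Proposition~\ref{p:constraine} on a $|u-v|\times W_{|u-v|}$ rectangle inside $\Lambda_1$ with side $[u,v]$; the second uses Proposition~\ref{p:para}.
\item Mixed pairs are handled by splitting the $X^{\Lambda_1}$-geodesic at $x=n^\beta W_n$ and $x=n-n^\beta W_n$.
\end{itemize}

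Your surgery can also be repaired without Lemma~\ref{l:resamplecompare}. Cut at the lines $x=(i-1)n+2$ and $x=in-2$ instead of at the boundary, letting the replacement start at $u$ (or end at $v$) when these lie within distance $2$ of $\partial\Lambda_i$. Then the kept pieces stay at distance at least $2$ from $\partial\Lambda_i$, where $\Psi=\Psi^{\Lambda_i}$. Bound the $X^{\Lambda_i}$-cost of each replacement directly by Proposition~\ref{p:constraine} applied to $\omega^{\Lambda_i}$, which has the law of $\omega$. Compare it with the $X$-cost of the excised piece via Theorem~\ref{t:all}. Without one of these modifications the argument is incomplete.
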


We shall first complete the proof of \eqref{e:proxy5} assuming Lemma \ref{l:proxy3column}.  

\begin{proof}[Proof of \eqref{e:proxy5}]
    Notice first that by discretizing space it suffices to prove that 
    \begin{equation}
    \label{e:proxy7}
    {\P\Big[\max_{u,v \in [0,nM]\times[- \frac{1}{2}n^\beta W_n, \frac{1}{2}n^\beta W_n]\cap \Z^2} X_{uv}-\rX_{uv} \leq n^{-3\epsilon/2} Q_n\Big] \leq \exp(-n^{\theta_1})}.
\end{equation}
Since the number of integer points in $[0,nM]\times[- \frac{1}{2}n^\beta W_n, \frac{1}{2}n^\beta W_n]$ is polynomial in $n$, it follows that by a union bound and by choosing $n$ sufficiently large depending on $M$, it suffices to prove the above estimate for each fixed pair of $u$ and $v$. Now, fix $u,v\in [0,nM]\times[- \frac12 n^\beta W_n, \frac12 n^\beta W_n]$ with $u\in[(i_--1)n,i_-n)\times [- \frac12 n^\beta W_n,\frac12 n^\beta W_n]$ and $v\in((i_+-1)n,i_+n]\times [- \frac12 n^\beta W_n, \frac12 n^\beta W_n]$. Let $A_{uv}$ denote the event that the geodesic (in the original model $X$) from $u$ to $v$ does not exit the strip $\R\times [-n^{\beta}W_n, n^{\beta} W_n]$. It follows from Theorem \ref{t:tfold} that $\P(A_{uv})\ge 1-\exp(-2n^{\theta_1})$ for some $\theta_1>0$. Therefore it suffices to restrict ourselves to the set $A_{uv}$.

Let $u_{i_--1}=u,u_{i_+}=v$ and for $i\in\{i_-,\ldots,i_+-1\}$ let $u_i=(ir,y_i)$ be the intersection of the optimal $X$ path with the line $x=in$ such that 
\[
X_{uv}=\sum_i X_{u_{i-1}u_i}.
\]
Denoting the event in Lemma \ref{l:proxy3column} by $B_i$, we have on $A_{uv}\cap \bigcap B^c_i$
$$X_{uv}=\sum_{i} X_{u_{i-1}u_i} \ge \sum_{i} \rX_{u_{i-1}u_i}+Mn^{-2\epsilon}Q_n\ge \rX_{uv}+Mn^{-2\epsilon}Q_n.$$
By choosing $n$ sufficiently large and using Lemma \ref{l:proxy3column}, and a union bound over $u,v$ we have that~\eqref{e:proxy5} follows. 
\end{proof}

It remains to prove Lemma \ref{l:proxy3column}. 

\begin{proof}[Proof of Lemma \ref{l:proxy3column}]
Let $i$ be fixed. As before, by a union bound it suffices to prove the lemma for each fixed $u,v\in [(i-1)n,in]\times [-n^{\beta}W_n,n^{\beta}W_n]\cap \Z^2$. Consider $u,v$ fixed as above. Also, by \eqref{e:proxy4} it suffices to replace $X_{uv}$ by $X_{uv}^{\Lambda_i}$, i.e., it suffices to show that

\begin{equation}
    \label{e:proxy10}
    \P(X^{\Lambda_i}_{uv}-\rX_{uv}\le -n^{-2\epsilon}Q_n)\le \exp(-3n^{\theta_1}).
\end{equation}

Without loss of generality we shall write the proof of \eqref{e:proxy10} only for the case $i=1$. We shall treat three cases separately. We prove that there exist $\beta,\varepsilon, \theta_1>0$ such that 

\begin{equation}
    \label{e:proxy8}
    \P\left(\sup_{u,v\in [0,n^{\beta}W_n]\times [-2n^{\beta}W_n,2n^{\beta}W_n]} X^{\Lambda_1}_{uv}-\rX_{uv}\le -n^{-3\epsilon}Q_n\right)\le \exp(-4n^{\theta_1});
\end{equation}

\begin{equation}
    \label{e:proxy9}
    \P\left(\sup_{u,v\in [n^{\beta}W_n, n-n^{\beta}W_n]\times [-2n^{\beta}W_n,2n^{\beta}W_n]} X^{\Lambda_1}_{uv}-\rX_{uv}\le -n^{-3\epsilon}Q_n\right)\le \exp(-4n^{\theta_1});
\end{equation}

\begin{equation}
    \label{e:proxy11}
    \P\left(\sup_{u,v\in [n-n^{\beta}W_n,n]\times [-2n^{\beta}W_n,2n^{\beta}W_n]} X^{\Lambda_1}_{uv}-\rX_{uv}\le -n^{-3\epsilon}Q_n\right)\le \exp(-4n^{\theta_1}).
\end{equation}

Let us first explain how to prove \eqref{e:proxy10} using \eqref{e:proxy8}, \eqref{e:proxy9}, \eqref{e:proxy11}. Observe that \eqref{e:proxy8}, \eqref{e:proxy9}, \eqref{e:proxy11} consider three regions (disjoint except at the boundary) whose union is $[0,n]\times [-2n^{\beta}W_n,2n^{\beta}W_n]$. Now if the points $u,v$ (in \eqref{e:proxy10}) belong to the same (out of the three) region, \eqref{e:proxy10} is a consequence of \eqref{e:proxy8} or \eqref{e:proxy9} or \eqref{e:proxy11}. So we need to show \eqref{e:proxy10} when $u$ and $v$ belong to different regions. Without loss of generality, we shall assume that $u\in [0,n^{\beta}W_n]\times [-n^{\beta}W_n,n^{\beta}W_n]$ and $v\in [n-n^{\beta}W_n,n]\times [-n^{\beta}W_n,n^{\beta}W_n]$; the other cases can be dealt with minor variations of the same argument. 

Let $A,B,C$ denote the events in \eqref{e:proxy8}, \eqref{e:proxy9}, \eqref{e:proxy11} respectively. Let $D$ denote the event that there exist points $u_1\in \{n^{\beta}W_n\}\times [-2n^{\beta}W_n, 2n^{\beta}W_n]$ and $v_1\in \{n-n^{\beta}W_n\}\times [-2n^{\beta}W_n, 2n^{\beta}W_n]$ such that 

$$X_{uv}^{\Lambda_1}=X_{uu_1}^{\Lambda_1}+X_{u_1v_1}^{\Lambda_1}+X_{v_1v}^{\Lambda_1}.$$
As $D^c$ implies that the geodesic from $u$ to $v$ has large transversal fluctuation it follows from Theorem \ref{t:tfold} that $P(D^c)\le \exp(-4n^{\theta_1})$ for some $\theta_1>0$ (depending on $\beta$). Observe next that on $A^c\cap B^c \cap C^c \cap D$ we have 
$$X_{uv}^{\Lambda_1}=X_{uu_1}^{\Lambda_1}+X_{u_1v_1}^{\Lambda_1}+X_{v_1v}^{\Lambda_1}\ge \rX_{uu_1}+\rX_{u_1v_1}+\rX_{v_1v}-3n^{-3\epsilon}Q_n\ge \rX_{uv}-n^{-2\epsilon}Q_n,$$
which, together with \eqref{e:proxy8}, \eqref{e:proxy9}, \eqref{e:proxy11} completes the proof since
\[
\P[A^c\cap B^c \cap C^c \cap D] \geq 1-\exp(-4n^{\theta_1})-3\exp(-4n^{\theta_1}).
\]

It remains to prove \eqref{e:proxy8}, \eqref{e:proxy9}, \eqref{e:proxy11}. By the underlying symmetries of the model the proof of \eqref{e:proxy11} is identical to that of \eqref{e:proxy8}, hence we shall only prove the first two. 

\emph{Proof of \eqref{e:proxy9}.}
As before, note that the result will follow by a union bound if we prove the same bound for every pair $u,v$ of integer points in $[n^{\beta}W_n, n-n^{\beta}W_n]\times [-2n^{\beta}W_n,2n^{\beta}W_n]$, so it suffices to prove the bound for a fixed $u,v$ as above. Fixing $u,v\in [n^{\beta}W_n, n-n^{\beta}W_n]\times [-2n^{\beta}W_n,2n^{\beta}W_n]$, it follows (for $\beta$ small) from Theorem \ref{t:tfold} that the probability that the geodesic $\gamma_{uv}$ attaining $X^{\Lambda_1}_{uv}$ exits $\Lambda_1$ is upper bounded by $\exp(-5n^{\theta_1})$ for some $\theta_1>0$. Noticing that, on the event above, we have $X^{\Lambda_1}_{uv}=\rX_{uv}$ the desired result follows.

\begin{center}
\begin{figure}
\includegraphics[width=5in]{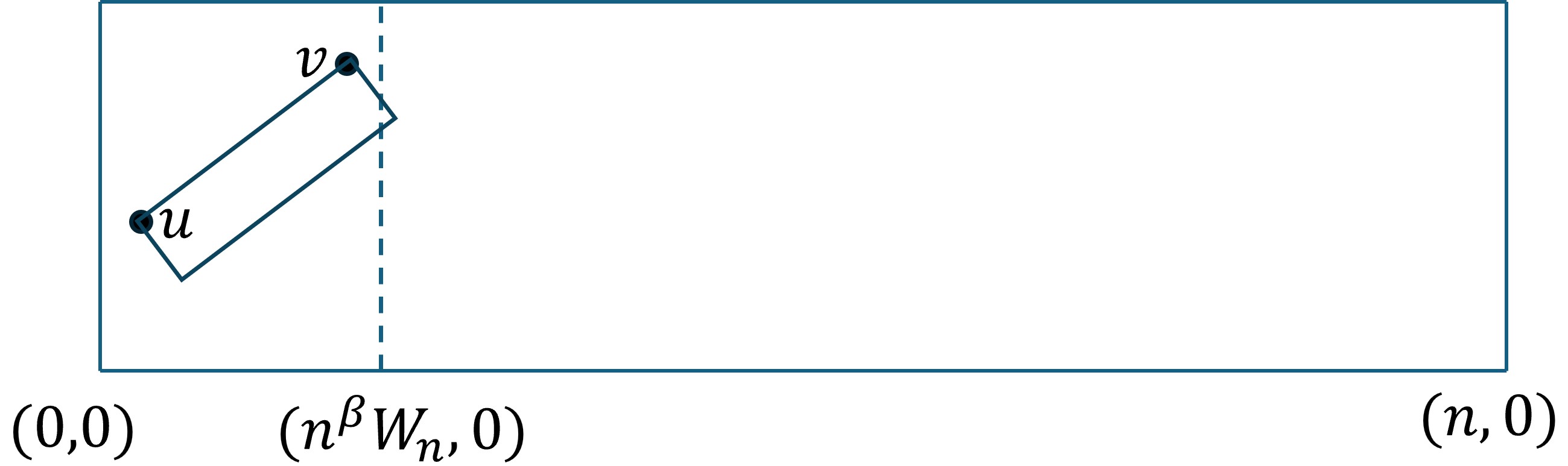}
\caption{The rectangle constructed in the proof of \eqref{e:proxy8}.}
\label{f:internal}
\end{figure}
\end{center}

\emph{Proof of \eqref{e:proxy8}.}
Again, it suffices to prove the bound for fixed $u,v\in [0,n^{\beta}W_n]\times [-2n^{\beta}W_n,2n^{\beta}W_n]$. For such $u,v$ notice that there exists an $|u-v|\times W_{|u-v|}$ rectangle $R$ contained in $\Lambda_1$ such that one of the sides of $R$ is the straight line segment joining $u$ and $v$; see Figure \ref{f:internal}. Let $A$ denote the event that
$\rX_{uv}\le |u-v|+n^{\delta}Q_{|u-v|}$ and let $B$ denote the event that 
$X^{\Lambda_1}_{uv}\ge |u-v|-n^{\delta}Q_{|u-v|}$. If $\beta,\delta>0$ are sufficiently small we have $n^{\delta}Q_{|u-v|}\ll n^{-3\epsilon}Q_n$ for some $\epsilon$ small enough and on the event $A\cap B$ we have 
$$X^{\Lambda_1}_{uv}-\rX_{uv}\ge -n^{-3\epsilon}Q_n.$$
So it only remains to upper bound $\P(A^c)$ and $\P(B^c)$. It follows from Proposition \ref{p:para} that $\P(B^c)\le \exp(-5n^{\theta_1})$ for some $\theta_1>0$. Notice that on the event $A^c$, every path $\gamma$ from $u$ to $v$ contained in $R$, must satisfy
 $$X_{\gamma}^{\Lambda_1}\ge |u-v|+n^{\delta}Q_{|u-v|}.$$
 It follows from Proposition~\ref{p:constraine} that $\P(A^c)\le \exp(-5n^{\theta_1})$ for some $\theta_1>0$. This completes the proof of \eqref{e:proxy8}.    
\end{proof}

\subsection{Proof of Lemma \ref{l:strongly.conforming}}
Finally we provide the proof of Lemma \ref{l:strongly.conforming}

\begin{proof}[Proof of Lemma \ref{l:strongly.conforming}]
Recall that the length of the path is given by
\[
\rX_\gamma = \inf_{\{t_i\}} \sum_{i} X_{\gamma([t_i,t_{i+1}])}^{\Lambda_i}
\]
By treating each segment of the path $\gamma([t_i,t_{i+1}])$ separately, we can reduce the problem to the case where $(i-1)n\leq \gamma_1(0)<\gamma_1(1)\leq in$ for some $n$.  Assume without loss of generality assume that $|\dot{\gamma}(t)|=c$ is constant.  
\[
\gamma^{(\delta)}_1(t)= (i-\frac12)n + (\gamma_1(t)-(i-\frac12)n)(1-\delta +2\delta|t-\frac12|)
\]
Then $\gamma^{(\delta)}=(\gamma^{(\delta)}_1,\gamma_2)$ is strongly conforming for all $\delta>0$ and 
\[
X_{\gamma^{(\delta)}}=   \int_{0}^{1} \Psi(\gamma^{(\delta)}(t))|\dot{\gamma}^{(\delta)}(t)| dt \to X_{\gamma}
\]
as $\delta\to 0$, so there exists $\delta>0$ with $\cX_\gamma\geq \cX_{\gamma^{(\delta)}}-\epsilon$.
\end{proof}

\section{Transversal fluctuations for conforming geodesics}
\label{s:proxytrans}
The purpose of this section is to prove results about transversal fluctuations conforming geodesics. We prove global transversal fluctuation bounds for conforming geodesics (Lemma \ref{l:proxytrans:intro}), local transversal fluctuation bounds (Lemma \ref{l:localtransproxy:intro}) and use the transversal fluctuation estimates to prove bounds on constrained passage time estimates for the restricted distances (Proposition \ref{p:constrainerX}). 

\subsection{Global transversal fluctuations} 
{We now prove Lemma \ref{l:proxytrans:intro}. Recall that Lemma \ref{l:proxytrans:intro} asks for bounding transversal fluctuations of geodesics $\gamma_{v_1,v_2}$ where $v_1\in \ell_{0,w,W_n}$ and $v_2\in \ell_{Mn,w,W_n}$ for some $w\in [-\frac{1}{2}n^{\beta}W_n, \frac{1}{2}n^{\beta}W_n]$. To reduce notational overhead we shall prove this result in the special case $w=0$. It is easy to see that the general case follows by the same arguments.}

We need to control the transversal fluctuation for conforming paths attaining the distance $\rX_{uv}$. To this end, let $\gamma_{uv}$ denote the canonical choice for the conforming geodesic attaining $\rX_{uv}$. Let us set 
$$\mathcal{W}_{n,M}=\sup_{u\in \{0\}\times [0,W_n], v\in \{Mn\}\times [0,W_{n}]} \sup_{t} \inf_{x\in [0,Mn]} |\gamma_{uv}(t)-(x,0)|;$$
that is, $\mathcal{W}_{n,M}$ denotes the maximal transversal fluctuation for all conforming geodesics $\gamma_{uv}$ from $u\in \{0\}\times [0,W_n]$ to $v\in \{Mn\}\times [0,W_{n}]$. 

The next result shows that typically $\mathcal{W}_{n,M}$ is of the order $W_{Mn}$ and proves Lemma \ref{l:proxytrans:intro} (for the special case $w=0$ as described above). 

\begin{lemma}
\label{l:proxytrans}
There exist $\epsilon>0, \theta_1>0, z_0>0$ such that for all integers $M\geq 1$ and all $n\geq n(M)$ and $z\in [z_0,n^{\epsilon}]$ we have that
\[
\P\Big[\mathcal{W}_{n,M} \ge zW_{Mn}\Big] \leq \exp(-z^{\theta_1}).
\]
\end{lemma}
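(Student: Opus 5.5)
The approach is a standard "large transversal fluctuation costs too much" argument. The transversal fluctuation bound for the original model (Theorem \ref{t:tfold}) is transferred to conforming geodesics. The key point is that $\rX$ and $X$ agree up to $n^{-\epsilon}Q_n\ll Q_{Mn}$ inside the strip (Lemma \ref{l:proxy}). Since $z\le n^{\epsilon}$ and $W_{Mn}\le M W_n\ll n^{\beta}W_n$, the deviation level $zW_{Mn}$ stays well inside $[-\frac12 n^\beta W_n,\frac12 n^\beta W_n]$ for $n\ge n(M)$ and $\epsilon$ small.

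\emph{Step 1 (excursions within the strip).} Suppose $\gamma_{uv}$ reaches height $zW_{Mn}\le\frac14 n^\beta W_n$ at some point $w$. We may take $w$ non-boundary after a negligible perturbation using Lemma \ref{l:strongly.conforming}. Then
\[
\rX_{uw}+\rX_{wv}=\rX_{uv}.
\]
Lemma \ref{l:proxy}, whose first estimate applies since all three points lie in the half-strip, gives
\[
X_{uw}+X_{wv}\le X_{uv}+3n^{-\epsilon}Q_n\le X_{uv}+Q_{Mn}.
\]
Hence the concatenation of the $X$-geodesics $\gamma^X_{uw}$ and $\gamma^X_{wv}$ is a path from $u$ to $v$ of excursion at least $zW_{Mn}$ whose $X$-length exceeds $X_{uv}$ by at most $Q_{Mn}$.

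This configuration is then ruled out with probability $1-\exp(-z^{\theta})$. Rather than Theorem \ref{t:tfold} itself, I would use the quantitative form underlying it in \cite{BSS23} (Lemma 5.3 there). It says that with high probability every path from $\ell_{0,0,W_n}$ to $\ell_{Mn,0,W_n}$ with excursion $zW_{Mn}$ has length at least $X_{uv}+cz^{c'}Q_{Mn}$, analogous to Lemma \ref{c:local.trans}. This follows from Proposition \ref{p:para} and Lemma \ref{l:paraexplicit} by a union bound over the grid of exit heights: the Pythagorean penalty $\sim z^2Q_{Mn}$ beats the fluctuations $\sim zQ_{Mn}$. For $z\ge z_0$ this contradicts the excess bound of $Q_{Mn}$.

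\emph{Step 2 (excursions outside the half-strip).} It remains to exclude conforming geodesics that go beyond $\frac14 n^\beta W_n$; by construction they can never exceed $n^\beta W_n$ at column boundaries. Here only the cost matters. The second estimate of Lemma \ref{l:proxy} holds on the whole strip $[-n^\beta W_n,n^\beta W_n]$ and gives $X\le \rX+n^{-\epsilon}Q_n$ there. Consequently, such a path $\gamma$ has $\rX_\gamma\ge X_\gamma-Mn^{-\epsilon}Q_n$, after splitting it at column crossings and applying the bound in each column. On the other hand, the cost of an $X$-path reaching height $\frac14 n^\beta W_n\gg W_{Mn}$ exceeds the optimal length by $n^{c}Q_{Mn}$, except with probability $\exp(-n^{c})$, again by the same Pythagoras/Proposition \ref{p:para} estimate. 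This rules out such excursions, and the bad probability is $\le\exp(-z^{\theta_1})$ since $z\le n^\epsilon$.

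\emph{Main obstacle.} The main difficulty is Step 2 together with the uniformity over all $u,v$. Paths inside columns are not controlled by $X$ alone: inside a column $\rX$ uses the field $\omega^{\Lambda_i}$, which differs from $\omega$ outside $\Lambda_i$. So I would need the per-column comparison (Lemma \ref{l:resamplecompare}), extended to points up to height $n^\beta W_n$ in the column interior, with the concrete lower bound taken from Proposition \ref{p:paraestimateconforming}(ii), which already covers $|j|W_r\le n^\beta W_n$. Uniformity in the endpoints is handled by discretizing and taking a union bound, using polynomial counting against stretched-exponential tails.
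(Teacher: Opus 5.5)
Your argument is correct, and your Step 1 alone already proves the lemma.

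A conforming geodesic with $\mathcal{W}_{n,M}\ge zW_{Mn}$ starts and ends at height in $[0,W_n]$. By continuity it therefore passes through a point $w$ at height exactly $\pm zW_{Mn}$. Since $z\le n^{\epsilon}$ with $\epsilon<\beta$, this point lies in $[0,Mn]\times[-\tfrac14 n^\beta W_n,\tfrac14 n^\beta W_n]$. Nothing in Step 1 needs the rest of $\gamma_{uv}$ to stay in the half-strip. Only the three points $u,w,v$ enter Lemma \ref{l:proxy}, and the excursion is carried by the $X$-path $\gamma^X_{uw}\oplus\gamma^X_{wv}$, to which \cite[Lemma 5.3]{BSS23} applies. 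The failure probability $\exp(-n^{\theta_1})$ from Lemma \ref{l:proxy} is absorbed into $\exp(-z^{\theta_1})$ using $z\le n^\epsilon$.

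So Step 2 and your ``main obstacle'' can simply be dropped. As written, Step 2 would not work anyway. Lemma \ref{l:proxy} compares point-to-point distances, so splitting at the column crossings $u_i$ only gives $\rX_\gamma\ge\sum_i X_{u_{i-1}u_i}-Mn^{-\epsilon}Q_n$. That is a statement about the concatenation of $X$-geodesics between crossings, not the path-level bound $\rX_\gamma\ge X_\gamma-Mn^{-\epsilon}Q_n$ that you claimed.

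This is a genuinely different route from the paper's. The paper does not pass to an intermediate point. It keeps the conforming geodesic $\gamma_{uv}$, splits it at its column crossings, and proves exactly the path-level comparison that Step 2 was missing. This is Lemma \ref{l:proxylocal1}: for every conforming geodesic segment in a column with endpoints anywhere in $[-n^\beta W_n,n^\beta W_n]$, the $\omega^{\Lambda_i}$-weight is at least the $\omega$-weight minus $n^{-\delta}Q_n$. It is proved by showing the segment moves only $O(n^\delta)$ vertically within distance $n^\delta$ of the column boundaries, which is the only region where the two fields differ.

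From this the paper gets $\rX_{uv}\ge X_{\gamma_{uv}}-Mn^{-\delta}Q_n$. Since $\gamma_{uv}\in\Upsilon_{Mn,u,v,z}$, \cite[Lemma 5.3]{BSS23} gives $X_{\gamma_{uv}}\ge X_{uv}+zQ_{Mn}$. This contradicts $\rX_{uv}\le X_{uv}+n^{-\delta}Q_n$.

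Your version needs only the already-established uniform estimate of Lemma \ref{l:proxy}, plus Lemma \ref{l:strongly.conforming} to keep $w$ off the lines $x=0$ and $x=Mn$, so it is shorter for this lemma. The paper's path-level comparison costs an extra lemma, but it is reused later, e.g.\ in Corollary \ref{c:proxytrans} and Lemma \ref{c:localproxytrans}, where weights of concatenated conforming segments are compared directly.
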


Recall also that, by definition, the canonical conforming geodesic from $u$ to $v$ where $u$ and $v$ are as above is a concatenation of paths $\gamma_i, i=1,2,\ldots, M$ where $\gamma_i$ is a path contained in $\Lambda_i$ between points $u_{i-1}$ and $u_i$ where $u_i\in \{in\}\times [-n^{\beta}W_n, n^{\beta} W_n]$ and $\gamma_i$ is a conforming geodesic between $u_{i-1}$ to $u_i$ (i.e., $\gamma_i$ minimises the length of all paths between $u_{i-1}$ to $u_i$ in the environment $\omega^{\Lambda_i}$).  Also, by definition 
$$\rX_{uv}=\sum_{i} X^{\Lambda_i}_{\gamma_i}.$$
For $i=1,2,\ldots, M$, and $\delta>0$ sufficiently small, let {$A_{i, \delta}$} denote the event that for each $u_{i-1}\in \{in\}\times [-n^{\beta}W_n, n^{\beta} W_n]$ and each $u_i\in \{in\}\times [-n^{\beta}W_n, n^{\beta} W_n]$ and the conforming geodesic $\gamma_{u_{i-1},u_{i}}$ from $u_{i-1}$ to $u_i$ we have 
$$\rX_{\gamma_{u_{i-1},u_{i}}}\ge X_{\gamma_{u_{i-1},u_{i}}}-n^{-\delta}Q_{n}.$$ 
We have the following result. 

\begin{lemma}
    \label{l:proxylocal1}
    There exists $\delta,\theta_1>0$ sufficiently small such that for all $n$ sufficiently large and for all $i$ we have
    $$\P(A_{i,\delta})\ge 1-\exp(-n^{\theta_1}).$$
\end{lemma}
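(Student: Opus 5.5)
The event $A_{i,\delta}$ concerns only paths inside the single column $\Lambda_i$ and endpoints on its two vertical sides at heights in $[-n^\beta W_n,n^\beta W_n]$. (The endpoints $u_{i-1},u_i$ should lie on $x=(i-1)n$ and $x=in$ respectively.) For such a pair the conforming geodesic $\gamma=\gamma_{u_{i-1},u_i}$ is a path in $\Lambda_i$, and $\rX_\gamma=X^{\Lambda_i}_\gamma$. So the statement reduces to a comparison between the weight of a fixed path under the field $\omega^{\Lambda_i}$ and under $\omega$. The plan is to exploit that these two fields agree on $\Lambda_i$, so the path weights can differ only through the part of $\gamma$ within distance $1$ (the support radius of $K$) of the two vertical boundary lines. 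The argument then reduces to controlling how much weight a geodesic accumulates near the boundary. Without loss of generality take $i=1$.

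First I would bound the difference deterministically. For any path $\gamma\subset\Lambda_1$, $\Psi$ under $\omega$ and under $\omega^{\Lambda_1}$ coincide at points at distance more than $1$ from $\R^2\setminus\Lambda_1$. Since $\Psi\in[d_1,d_2]$,
\[
|X_\gamma-X^{\Lambda_1}_\gamma|\le (d_2-d_1)\,\mathrm{len}\big(\gamma\cap([0,1]\cup[n-1,n])\times\R\big).
\]
It therefore suffices to show the following: with probability at least $1-\exp(-n^{\theta_1})$, for every pair of endpoints, the conforming geodesic spends Euclidean length at most $n^{-\delta}Q_n/(d_2-d_1)$ in the two boundary strips of width $1$.

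The crude bound fails, since length near the boundary could in principle be large. So instead I would use a step-back argument in the spirit of Lemma \ref{l:resamplecompare} and the proof of \eqref{e:proxy8}. Let $\tau$ be the last time $\gamma$ lies within the strip $[0,n^{\delta'}]\times\R$ with $\delta'$ small, and let $w=\gamma(\tau)$. By optimality, the portion of $\gamma$ from $u_0$ to $w$ is an $X^{\Lambda_1}$-geodesic in $\Lambda_1$ between points at distance $O(n^{\delta'}+$ vertical displacement$)$. Its length is at most $d_1^{-1}$ times its weight. By Theorem \ref{t:tfold} and the proxy bounds, the vertical displacement is at most $n^{2\beta}W_n\cdot$small, with overwhelming probability. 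This gives length $O(n^{\delta'}+n^{2\beta}W_n)$, which is too big compared with $Q_n$.

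So the real mechanism must be different. The plan is to avoid bounding length and instead compare weights via exchangeability, as in Lemma \ref{l:resamplecompare}. For a fixed integer pair $(u,v)$, consider $X^{\Lambda_1}_{uv}$ versus the weight under $\omega$ of the same geodesic. Discretize endpoints to integer points; continuity in the endpoints holds because $\Psi$ is bounded. Then apply \eqref{e:proxy4}, which controls $|X_{uv}-X^{\Lambda_1}_{uv}|\le n^{-2\epsilon}Q_n$ uniformly. Combined with optimality, this gives
\[
X_\gamma\le X^{\Lambda_1}_\gamma+\big(X_\gamma-X^{\Lambda_1}_\gamma\big).
\]
The excess term is the main obstacle. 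I would handle it by splitting $\gamma$ at its last exit from $[0,n^{\delta'}]\times\R$ and first entrance to $[n-n^{\delta'},n]\times\R$. The middle piece has equal weights under both fields. Each end piece is a short geodesic between points at distance at most about $n^{\delta'}+O(n^{2\beta}W_n)$. Its weight under either field is within $Q_{n^{\delta''}}\le n^{-4\epsilon}Q_n$ of the Euclidean distance, by Theorem \ref{t:all} for both fields together with a union bound over integer endpoints. This yields the claim with $\delta=3\epsilon$ and probability $1-\exp(-n^{\theta_1})$.
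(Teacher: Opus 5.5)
Your reduction to a bound on the Euclidean length of $\gamma$ near $\{x=0\}$ and $\{x=n\}$ is correct, but the final step does not supply that bound; it is circular. After splitting at the last exit $w$ from $[0,n^{\delta'}]\times\R$, you claim the end piece $\gamma_1$ from $u_0$ to $w$ has weight within $Q_{n^{\delta''}}$ of $|u_0-w|$ under \emph{both} fields by Theorem~\ref{t:all}. Under $\omega^{\Lambda_1}$ this is fine for the lower bound you need, since $X^{\Lambda_1}_{\gamma_1}$ is a (constrained) passage time. Under $\omega$, however, $X_{\gamma_1}$ is the weight of a fixed path that is not an $\omega$-geodesic. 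Theorem~\ref{t:all} controls only the infimum $X_{u_0w}\le X_{\gamma_1}$, so it gives a lower bound on $X_{\gamma_1}$, never the upper bound you need. Upper bounding $X_{\gamma_1}-X^{\Lambda_1}_{\gamma_1}$ for a piece of the $\omega^{\Lambda_1}$-geodesic is exactly the content of the lemma.

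The device of Lemma~\ref{l:resamplecompare} does not transfer. There one compares passage times, so the end piece of the $X$-geodesic can be replaced by the $X^{\Lambda_i}$-geodesic between the same points; in $A_{i,\delta}$ the path is fixed. Your display $X_\gamma\le X^{\Lambda_1}_\gamma+(X_\gamma-X^{\Lambda_1}_\gamma)$ is a tautology, and \eqref{e:proxy4} plays no role. Also, weights of geodesics are not continuous in the endpoints, since the geodesic can jump. The paper discretizes instead by sandwiching geodesics between those with integer endpoints, using planarity and the topmost canonical choice.

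The missing ingredient is the one you identified in your third paragraph and then abandoned: the end pieces must be shown to be genuinely short. Since $\gamma_1$ is an $X^{\Lambda_1}$-geodesic in the convex set $\Lambda_1$, $\mathrm{len}(\gamma_1)\le (d_2/d_1)|u_0-w|$. So it suffices that the vertical displacement of $\gamma$ between $x=0$ and $x=n^{\delta}$ (and between $x=n-n^\delta$ and $x=n$) is at most $n^\delta$. The weight change is then $O(n^\delta)\le n^{-\delta}Q_n$ once $n^{3\delta}\le Q_n$. As you noticed, Theorem~\ref{t:tfold} is far too weak for this.

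The paper instead uses the local transversal fluctuation estimate \cite[Lemma 8.2]{BSS23} at scale $n^\delta$ near each endpoint, exploiting $W_{n^\delta}=O(n^{3\delta/4})\ll n^\delta$. For fixed integer endpoints, with probability $1-\exp(-n^{\theta_1})$, every path crossing $x=n^\delta$ or $x=n-n^\delta$ more than $n^\delta/2$ away vertically from the corresponding endpoint has $X^{\Lambda_1}$-weight exceeding $X^{\Lambda_1}_{u_0u_1}+n^{\delta/100}Q_{n^\delta}$. Meanwhile, Theorem~\ref{t:all} and Proposition~\ref{p:constraine} produce a conforming path avoiding such crossings whose weight is below $X^{\Lambda_1}_{u_0u_1}+n^{\delta/200}Q_{n^\delta}$. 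Hence the conforming geodesic has small local displacement near both ends. A union bound over integer endpoints, plus the sandwiching, then covers all endpoints.
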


Before proving Lemma \ref{l:proxylocal1}, let us complete the proof of Lemma \ref{l:proxytrans}. 

\begin{proof}[Proof of Lemma \ref{l:proxytrans}]
    Suppose that, for $u\in \ell_{0,0,W_n}$ and $v\in \ell_{Mn,0,W_n}$ the conforming geodesic from $u$ to $v$, $\gamma_{uv}\in \Upsilon_{Mn,u,v,z}$. Let the canonical points where this geodesic intersects the lines $x=in$ be denoted $u_i$. It follows that 
    $$\rX_{uv}=\rX_{\gamma_{uv}}=\sum_{i} \rX_{\gamma_{u_{i-1},u_{i}}}=\sum_{i} X^{\Lambda_i}_{\gamma_{u_{i-1},u_{i}}}.$$
    It follows from Lemma \ref{l:proxylocal1} that on an event of probability at least $1-Me^{-n^{\theta_1}}$ we have 
    $$\rX_{uv}\ge \sum_{i} X_{\gamma_{u_{i-1},u_i}}-Mn^{-\delta}Q_n.$$
    Since the concatenation of $\gamma_{u_{i-1},u_{i}}$'s ($=\gamma_{uv}$) belong to $\Upsilon_{Mn,u,v,z}$ it follows from \cite[Lemma 5.3]{BSS23} (a strengthening of Theorem \ref{t:tfold}) that on an event of probability $1-\exp(-z^{\theta_1})$ we have 
    $$\sum_{i} X_{\gamma_{u_{i-1},u_i}} \ge X_{uv}+zQ_{Mn}.$$ Combining these two estimates it follows that on an event $A$ with least $\P(A)\ge 1-\exp(-z^{\theta_1})$ we have 
    $$\rX_{uv}\ge X_{uv}+zQ_{Mn}-Mn^{-\delta}Q_n.$$
    Also, by Lemma \ref{l:proxy} we have on an event $B$ with $\P(B)\ge 1-\exp(-n^{\theta_1})$ we have 
    $$\rX_{uv}\le X_{uv}+n^{-\delta}Q_n.$$
    We therefore have a contradiction on the event $A\cap B$, completing the proof of the lemma.  
\end{proof}

\begin{center}
\begin{figure}
\includegraphics[width=1.5in]{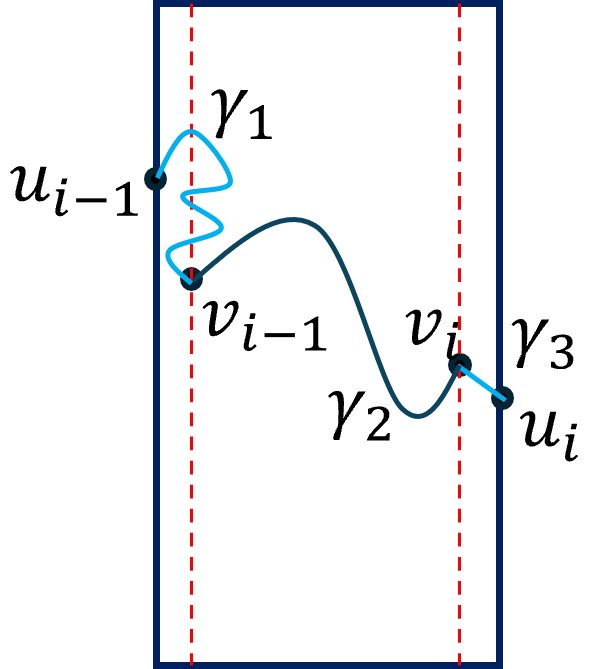}
\caption{Proof of Lemma \ref{l:proxylocal1}. The geodesic between $u_{i-1}$ and $u_i$ is divided into three parts $\gamma_1$ and $\gamma_3$ being the parts intersecting the boundary regions of width $n^\delta$, while $\gamma_2$ is the part in the middle. Since $\gamma_2$ has the same length in both the environments we need to show only that the lengths of $\gamma_1$ and $\gamma_3$ do not change much. This follows from showing that it is unlikely that the paths $\gamma_1$ or $\gamma_3$ are unlikely to travel much in the vertical direction.}
\label{f:transversal}
\end{figure}
\end{center}

\begin{proof}[Proof of Lemma \ref{l:proxylocal1}]
Let us fix $\delta>0$ sufficiently small such that $n^{3\delta}\le Q_{n}$.
The proof is done in a few steps. 

\noindent
\textbf{Step 1:} For the conforming geodesic $\gamma'=\gamma_{u_{i-1},u_i}$, let $v_{i-1}$ and $v_i$ denote points on $\gamma'\cap \{x=(i-1)n+n^{\delta}\}$ and $\gamma'\cap \{x=in-n^{\delta}\}$ such that the the restriction of $\gamma'$ between $v_{i-1}$ and $v_{i}$ is contained in the region $x\in [(i-1)n+n^{\delta}, in-n^{\delta}]$. Let $B_{i}$ denote the event that for all such $u_{i-1}, v_{i-1}$ , $u_{i}, v_{i}$ we have 
    $$|(u_{i-1}-v_{i-1})\cdot e_2|\le n^{\delta}; \quad |(u_{i}-v_{i})\cdot e_2|\le n^{\delta}.$$
    The first step is to show that $B_i\subseteq A_{i,\delta}$. 

    Indeed, observe that for each $u_{i-1},u_{i}$ as in the definition of $A_{i,\delta}$, denoting by $\gamma_1,\gamma_2, \gamma_3$ the parts of $\gamma'$ between $u_{i-1}$ to $v_{i-1}$, $v_{i-1}$ to $v_{i}$ and $v_{i}$ to $u_{i}$ (see Figure \ref{f:transversal}) respectively one has 
    $$\rX_{\gamma'}= X^{\Lambda_i}_{\gamma_1}+X^{\Lambda_i}_{\gamma_2}+X^{\Lambda_i}_{\gamma_3}.$$ 
    By definition of $v_{i-1}$ and $v_{i}$ we have that $X^{\Lambda_i}_{\gamma_2}=X_{\gamma_2}$. So it suffices to show that on the event $B_i$
    \begin{equation}
        \label{e:compare1}
        |X^{\Lambda_i}_{\gamma_1}+X^{\Lambda_i}_{\gamma_3}-X_{\gamma_1}-X_{\gamma_3}|\le n^{-\delta}Q_{n}.
    \end{equation}

 Observe that by the definition of our Riemannian FPP model, for any path $\zeta$, we have for constants $C_1,C_2\in (0,\infty)$
    $$C_1\ell(\zeta)\le X_{\zeta} \le C_2 \ell(\zeta)$$
    where $\ell(\zeta)$ denotes the Euclidean length of the curve $\zeta$
    and the same inequalities hold for $X_{\gamma}^{\Lambda_i}$ as well. By definition of $\gamma'$ (specifically the fact that $\gamma_1$ and $\gamma_3$ are geodesics between their respective endpoints) it therefore follows that there exists a constant $C_3$ such that $\ell(\gamma_1)\le C_3|u_{i-1}-v_{i-1}|, \ell(\gamma_3)\le C_3|u_{i}-v_{i}|$. Consequently we get for some $C_4>0$
     $$|X^{\Lambda_i}_{\gamma_1}-X_{\gamma_1}|\le C_4\ell(\gamma_1); \qquad |X^{\Lambda_i}_{\gamma_3}-X_{\gamma_3}|\le C_4\ell(\gamma_3).$$
    Observe now that by the definition of $B_i$ and the choice of $\delta$ we have 
    $|u_{i-1}-v_{i-1}|, |u_{i}-v_{i}|=O(n^{\delta})\ll n^{-\delta}Q_{n}$. This completes the proof of \eqref{e:compare1} and shows $B_i\subseteq A_{i,\delta}$. 
\medskip
    
    \noindent 
    \textbf{Step 2:} It remains to show that $\P(B_i)\ge 1-\exp(-n^{\theta_1})$. To this end we do a discretisation. Let $L_{i}$ denote the set $\{in\}\times ([-n^{\beta} W_n, n^{\beta} W_n] \cap \Z)$. For $u_{i-1}\in L_{i-1}, u_{i}\in L_{i}$, let $B_{u_{i-1},u_{i}}$ denote the event that $$|(u_{i-1}-v_{i-1})\cdot e_2|\le n^{\delta}/2; \quad |(u_{i}-v_{i})\cdot e_2|\le n^{\delta}/2$$
    where $v_{i-1}$ and $v_{i}$ are as defined above. Let 
    $$B'_{i}=\bigcap_{u_{i-1}\in L_{i-1}, u_{i}\in L_{i}} B_{u_{i-1},u_{i}} .$$
    Observe that for points $u_{i-1}, u_{i}$ and $u'_{i-1}, u'_{i}$ as in the statement of the lemma such that $u_{i-1}$ lies above $u'_{i-1}$ and $u_i$ lies above $u'_i$, planarity (and the fact that conforming geodesics are contained in $\Lambda_i$) implies that $\gamma_{u_{i-1}u_{i}}$ lies above $\gamma_{u'_{i-1}u'_{i}}$ (this is true because we always take the topmost geodesic as the canonical choice). Therefore, it follows that for $n$ sufficiently large,  $B'_{i}\subseteq B_{i}$.

    \medskip

   \noindent
   \textbf{Step 3:} We need to show $\P(B'_{i})\ge 1-\exp(-n^{\theta_1})$. Since the number of pairs $(u_{i-1},u_{i})$ in the definition of $B'_i$ is $o(n^2)$ it suffices to show that 
   $$\P(B_{u_{i-1},u_{i}})\ge 1-\exp(-n^{\theta_1})$$
   for each fixed such pair.  
   Fix $u_{i-1}=((i-1)n,y)\in L_{i-1}, u_{i}=(in,y')\in L_{i}$ for the rest of the proof. 
    Let {$H^+$} (resp.\ $H^{-}$) denote the 
   interval $\{(i-1)n+n^{\delta}\}\times [y-\frac{n^{\delta}}{2},y+\frac{n^{\delta}}{2}]$ (resp.\ $\{in-n^{\delta}\}\times [y-\frac{n^{\delta}}{2},y+\frac{n^{\delta}}{2}]$); see Figure \ref{f:transversal2}. Let $\upsilon_{u_{i-1},u_{i}}$ denote the set of all paths from $u_{i-1}$ to $u_{i}$ those intersect the lines $x=(i-1)n+n^{\delta}$ and $x=in-n^{\delta}$ only in the intervals $H^+$ and $H^{-}$ respectively. We shall show that on an event of probability at least $1-\exp(-n^{\theta_1})$ we have:
   \begin{enumerate}
       \item[(i)] $\inf_{\zeta: \zeta(0)=u_{i-1}, \zeta(1)=u_{i}:\zeta\notin \upsilon_{u_{i-1},u_{i},n^{\delta}, n^{\delta}/2}}X^{\Lambda_i}_{\zeta}> X^{\Lambda_i}_{u_{i-1},u_{i}}+ n^{\delta/100}Q_{n^{\delta}}$. 
       \item[(ii)] There is a conforming path $\zeta\in\upsilon_{u_{i-1},u_{i}}$ such that $X^{\Lambda_i}_{\zeta}<  X^{\Lambda_i}_{u_{i-1},u_{i}}+ n^{\delta/200}Q_{n^{\delta}}$. 
   \end{enumerate}

\begin{center}
\begin{figure}[htbp!]
\includegraphics[width=2in]{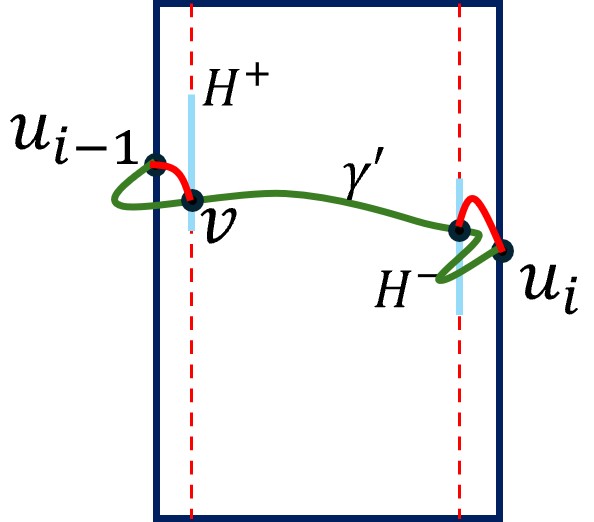}
\caption{Proof of Step 3 in Lemma \ref{l:proxylocal1}. We want to show that with large probability the conforming from $u_{i-1}$ to $u_{i}$ does not have too much of change in height within $n^{\delta}$ distance of either endpoint. This is shown by observing that any path from $u_{i-1}$ to $u_{i}$ which intersects the dotted red line outside the blue intervals will typically be much larger than the optimal path by local transversal fluctuation estimates. Then we also show that there exist with large probability a good conforming path from $u_{i-1}$ to $u_i$ without too much height change, completing the proof.} 
\label{f:transversal2}
\end{figure}
\end{center}
   
    Let us denote the event in (i) by $\widetilde{B}$.
    Since $W_{n^{\delta}}=O(n^{3\delta/4})$ it follows from \cite[Lemma 8.2]{BSS23} that $\P(\widetilde{B})\ge 1-\exp(-n^{\theta_1})$ for some $\theta_1>0$ (depending on $\delta$). 
    For (ii), let us denote by $\widetilde{H}^+$, (resp.\ $\widetilde{H}^-$) denote the subsets of $H^+$ (resp.\ $H^{-}$) with integer second co-ordinate. Let us consider the events 
    $$\widetilde{B}_1=\left\{\forall v\in \widetilde{H}^+: X^{\Lambda_i}_{u_{i-1},v} \ge |u_{i-1}-v|-n^{\delta/300}Q_{n^{\delta}}\right\};$$
    $$\widetilde{B}_2= \left\{\forall v\in \widetilde{H}^+, \exists \zeta\subset \Lambda_i, \gamma(0)=u_{i-1}, \gamma(1)=v~ \text{such that}~ X^{\Lambda_i}_{\zeta}\le |u_{i-1}-v|+ n^{\delta/300}Q_{n^{\delta}}\right\};$$
    $$\widetilde{B}_3=\left\{\forall v\in \widetilde{H}^-: X^{\Lambda_i}_{v,u_i} \ge |u_{i}-v|-n^{\delta/300}Q_{n^{\delta}}\right\};$$
    $$\widetilde{B}_4= \left\{\forall v\in \widetilde{H}^-, \exists \zeta\subset \Lambda_i, \gamma(0)=v, \gamma(1)=u_i~ \text{such that}~ X^{\Lambda_i}_{\zeta}\le |u_{i}-v|+ n^{\delta/300}Q_{n^{\delta}}\right\}.$$
    Observe that on $\widetilde{B}\cap \widetilde{B}_1\cap\widetilde{B}_2\cap\widetilde{B}_3\cap\widetilde{B}_4$ the event in (ii) happens. Indeed, $\widetilde{B}$ implies that the geodesic $\gamma'$ from $u_{i-1}$ to $u_{i}$ in the environment $\omega^{\Lambda_i}$ is in $\upsilon_{u_{i-1},u_{i}}$. Suppose its last intersection with $H^+$ is $v_1$ and its first intersection with $H^-$ is $v_2$. Now consider the conforming path obtained by concatenating the conforming path from $u_{i-1}$ to $v_1$ given by $\widetilde{B}_2$ (if needed, followed by a vertical segment of length at most 1), the restriction of $\gamma'$ between $v_1$ and $v_2$, and the conforming path from $v_2$ to $u_{i}$ given by $\widetilde{B}_4$. It is clear that on $\widetilde{B}\cap \widetilde{B}_1\cap\widetilde{B}_2\cap\widetilde{B}_3\cap\widetilde{B}_4$ this conforming path satisfies (ii). So it only remains to show that 
    $\P(\widetilde{B}_i)\ge 1-\exp(-n^{\theta_1})$ for each $i$. For $\widetilde{B}_1$ and $\widetilde{B}_3$ this follows from Theorem \ref{t:all} together with a union bound. For $\widetilde{B}_2$ and $\widetilde{B}_4$ this follows from Proposition \ref{p:constraine}. Combining these by a union bound the proof of the lemma is complete.  
\end{proof}

Observe that the proof above also shows the following. For any $1\le i <j \le M$, let $\zeta_{i}$ denote a conforming path between a point in $\ell_{(i-1)n,-n^{\beta}W_n, n^{\beta}W_n}$ and a point in $\ell_{in,-n^{\beta}W_n, n^{\beta}W_n}$. Let $\zeta$ denote a concatenation of such paths. Then by the proof above we have   
$$\rX_{\zeta}\ge X_{\zeta}-Mn^{-\delta}Q_{n}.$$
Since $Mn^{-\delta}Q_{n}\ll Q_{n}\le Q_{r}$ for all $r\ge n$ (and $n\gg M$) the same argument together Theorem \ref{t:tfold} gives the following corollary which gives a variant of Lemma \ref{l:proxytrans}. 

\begin{corollary}
    \label{c:proxytrans}
    For $k\in N$, $1\le i \le M-k$, $r=kn$, the interval $I_{k}= [-W_r, W_r]$ and for any $s\in [-n^{\beta} W_n/2, n^{\beta}W_n/2]$ let 
    $$\mathcal{W}_{n,r,s}:= \sup _{u\in \{in\}\times (s+I_{k}), u\in \{in+r\}\times (s+I_{k})}\sup_{t} |\gamma_{uv}(t)\cdot e_2-s|.$$
  There exists $\epsilon>0, \theta_1>0, z_0>0$ such that for all integers $M\geq 1$ and all $n\geq n(M)$ and $z\in [z_0,n^{\epsilon}]$ we have that
\[
\P\Big[\mathcal{W}_{n,r,s} \ge zW_{r}\Big] \leq \exp(-z^{\theta_1}).
\]
\end{corollary}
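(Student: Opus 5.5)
The plan is to follow the structure of the proof of Lemma \ref{l:proxytrans}, using the observation recorded just before the corollary. Fix $k,i,s$ and put $r=kn$. Let $\gamma_{uv}$ be a conforming geodesic with $u\in\{in\}\times(s+I_k)$ and $v\in\{in+r\}\times(s+I_k)$, and suppose it deviates vertically from height $s$ by at least $zW_r$. Write $u_{i'}$ for its canonical intersections with the lines $x=i'n$. Then $\gamma_{uv}$ is the concatenation of the column pieces $\gamma_{u_{i'-1},u_{i'}}$, each of which is a conforming geodesic within its column.

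On $\bigcap_{i'}A_{i',\delta}$, which by Lemma \ref{l:proxylocal1} and a union bound has probability at least $1-Me^{-n^{\theta_1}}$, we get
$$\rX_{uv}\ge \sum_{i'}X_{\gamma_{u_{i'-1},u_{i'}}}-Mn^{-\delta}Q_n = X_{\gamma_{uv}}-Mn^{-\delta}Q_n.$$
Since $\gamma_{uv}$ is a path in the original model from $u$ to $v$ with transversal fluctuation at least $zW_r$ around the segment at height $s$, translation invariance and \cite[Lemma 5.3]{BSS23} (the strengthening of Theorem \ref{t:tfold} used in Lemma \ref{l:proxytrans}, applied at scale $r$) show that, outside an event of probability $\exp(1-Dz^{\theta_0})$, every such path has $X_{\gamma_{uv}}\ge X_{uv}+cz^{\theta'}Q_r$. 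The endpoint windows have width $2W_r$ rather than $W_r$, which is handled by splitting them into two subintervals and taking a union bound over the four resulting pairs.

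On the other side, Lemma \ref{l:proxy} gives $\rX_{uv}\le X_{uv}+n^{-\epsilon}Q_n$ with probability at least $1-\exp(-n^{\theta_1})$. This needs the points to lie in $[0,Mn]\times[-\frac12 n^\beta W_n,\frac12 n^\beta W_n]$, which holds because $|s|\le \frac12 n^\beta W_n$ and $W_r\ll n^\beta W_n$. One should enlarge the strip slightly, or apply the second statement of Lemma \ref{l:proxy}, to absorb the extra $W_r$ in the endpoint heights.

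Combining the three estimates on their common good event gives
$$cz^{\theta'}Q_r\le Mn^{-\delta}Q_n+n^{-\epsilon}Q_n,$$
which is impossible for $z\ge z_0$ once $n\ge n(M)$, since $Q_r\ge Q_n$ and $Mn^{-\delta}\to0$. The resulting failure probability is $\exp(-z^{\theta_1})+2\exp(-n^{\theta_1})$. Because $z\le n^{\epsilon}$ (with $\epsilon$ small), this is at most $\exp(-z^{\theta_1'})$ after shrinking the exponent.

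The main step to check is the transversal-fluctuation input. The endpoints must lie within the $r\times W_r$ window, and the bound must control all such paths from the whole endpoint windows uniformly, not only geodesics; this is exactly the form of \cite[Lemma 5.3]{BSS23} used in Lemma \ref{l:proxytrans}. One also has to check that paths leaving the strip $|y|\le n^\beta W_n$ cannot occur for conforming paths. This holds automatically at column boundaries by definition, and inside columns it is covered by the same estimate.
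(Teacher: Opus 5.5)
Your proposal is correct and is essentially the paper's argument. The paper obtains the corollary by rerunning the proof of Lemma~\ref{l:proxytrans} at scale $r$: it combines the column-wise comparison $\rX_\zeta\ge X_\zeta-Mn^{-\delta}Q_n$ coming from Lemma~\ref{l:proxylocal1}, the transversal fluctuation estimate at scale $r$ (Theorem~\ref{t:tfold}, in the form of \cite[Lemma 5.3]{BSS23}) and Lemma~\ref{l:proxy}, with $Mn^{-\delta}Q_n\ll Q_n\le Q_r$ giving the contradiction.

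One small correction. The second statement of Lemma~\ref{l:proxy} bounds $X_{uv}-\rX_{uv}$ from above, which is the wrong direction here, since you need $\rX_{uv}\le X_{uv}+n^{-\epsilon}Q_n$. So use your other fix, enlarging the strip slightly. That fix is valid because the proof of \eqref{e:proxy5} goes through verbatim for heights up to $\frac12 n^\beta W_n+W_r$, as $W_r\le W_{Mn}\ll n^\beta W_n$.
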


\subsection{Local transversal fluctuations}
Next we prove Lemma \ref{l:localtransproxy:intro}, the local transversal fluctuation estimate for the conforming geodesic. As mentioned earlier we shall prove a stronger estimate. For an integer $i\in [\frac{M}{2},M]$ and $j\in [-M^{4/5},M^{4/5}]$, let $\gamma_{0,i,j}$ denote the conforming geodesic from $0$ to $(in,jW_n)$. For $r\le M^{1/100}n$ define the events 
$$A^{R,i,j}_{r,z}=\{\{\exists (r,s)\in \gamma_{0,i,j}: |s|\ge zW_r\}; $$
$$A^{L,i,j}_{r,z}=\{\{\exists (in-r,s)\in \gamma_{0,i,j}: |s-jW_n|\ge zW_r\}.$$
Similarly, for $i\in [0,\frac{M}{2}]$, and $j$ as before let $\gamma_{i,j,Mn}$ denote the conforming geodesic from $(in,jW_n)$ to $(Mn,0)$. For $r\le M^{1/100}n$ define the events 
$$\tilde{A}^{R,i,j}_{r,z}=\{\{\exists (in+r,s)\in \gamma_{i,j,Mn}: |s-jW_n|\ge zW_r\}; $$
$$\tilde{A}^{L,i,j}_{r,z}=\{\{\exists (Mn-r,s)\in \gamma_{0,i,j}: |s|\ge zW_r\}.$$
We have the following lemma. 

\begin{lemma}
    \label{c:localproxytrans}
    There exist $\epsilon'>0, \theta_2>0, z_0>0$ such that for all integers $M\geq 1$ and all $n\geq n(M)$ and $z\in [z_0,n^{\epsilon'}]$  we have 
    \begin{itemize}
        \item[(i)] For all $i\in [\frac{M}{2},M]$ and $j\in [-M^{4/5},M^{4/5}]$  
        $$\P(A^{R,i,j}_{r,z}), \P(A^{L,i,j}_{r,z})\le \exp(-z^{\theta_2}).$$
        \item[(ii)] For all $i\in [0,\frac{M}{2}]$ and $j\in [-M^{4/5},M^{4/5}]$  
        $$\P(\tilde{A}^{R,i,j}_{r,z}), \P(\tilde{A}^{L,i,j}_{r,z})\le \exp(-z^{\theta_2}).$$
    \end{itemize}
\end{lemma}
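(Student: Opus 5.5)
The plan is to transfer the local transversal fluctuation estimate of Lemma \ref{c:local.trans} from the original model to conforming geodesics. This mirrors the way Lemma \ref{l:proxytrans} was derived from \cite[Lemma 5.3]{BSS23}: a large local excursion makes a path pay a large excess weight in $X$, and Lemma \ref{l:proxylocal1} together with Lemma \ref{l:proxy} says $X$ and $\rX$ differ by only $Mn^{-\delta}Q_n$ along concatenations of column-wise conforming pieces. By reflection and translation symmetry it suffices to treat $A^{R,i,j}_{r,z}$ in part (i), i.e.\ the fluctuation of $\gamma_{0,i,j}$ at distance $r=kn$ from the origin. The other three events follow the same way, with the endpoint at height $jW_n$, $|j|\le M^{4/5}$, handled by translation.

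First I would check that $\gamma_{0,i,j}$ fits the setup of Lemma \ref{c:local.trans}, at scale $r$ rather than $n$ (after rescaling the grid). The start is $\origin\in\ell_{0,0,W_r}$. The end $(in,jW_n)$ lies at horizontal distance $in\ge Mn/2$, which is $\gg M^{99/100} r$ since $r\le M^{1/100}n$. Its height satisfies $|jW_n|\le M^{4/5}W_n$, which is well within $(in/r)^{89/100}W_r$ because $W_r\ge W_n$ and $(M^{99/100})^{89/100}\gg M^{4/5}$. Taking $2^{k'}\approx r/r=1$ in Lemma \ref{c:local.trans}, the set of paths whose height at $x=r$ exceeds $zW_r$ lies in $\Xi^{(r),R}$ up to constants. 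Hence with probability at least $1-\exp(1-Dz^{\theta_0})$ every such path $\zeta$ satisfies $X_\zeta\ge X_{\origin,(in,jW_n)}+cz^{1/3}Q_r$. For non-dyadic $r$ I would use the nearest dyadic multiple, which only changes constants.

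Next I would work on the event $A^{R,i,j}_{r,z}$. The conforming geodesic is a concatenation of conforming column pieces with endpoints in $[-n^\beta W_n,n^\beta W_n]$; the endpoint heights are justified by Lemma \ref{l:proxytrans} or Corollary \ref{c:proxytrans}, which say the geodesic stays within $M^{4/5+o(1)}W_n\ll n^\beta W_n$ off an event of probability $\exp(-z^{\theta_1})$. By Lemma \ref{l:proxylocal1}, applied to each of the $i\le M$ columns, we get $\rX_{\gamma_{0,i,j}}\ge X_{\gamma_{0,i,j}}-Mn^{-\delta}Q_n$. Combining with the previous step gives $\rX_{\origin,(in,jW_n)}\ge X_{\origin,(in,jW_n)}+cz^{1/3}Q_r-Mn^{-\delta}Q_n$. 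Lemma \ref{l:proxy} gives the upper bound $\rX\le X+n^{-\epsilon}Q_n$, so both points lie in the strip $[-\frac12 n^\beta W_n,\frac12 n^\beta W_n]$. These two bounds contradict each other once $n\ge n(M)$ and $z\ge z_0$, since $Q_r\ge Q_n$. The exceptional probabilities are $\exp(-n^{\theta_1})\le \exp(-z^{\theta_2})$ in the range $z\le n^{\epsilon'}$, plus $\exp(1-Dz^{\theta_0})$.

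I expect the main obstacle to be the geometric matching with Lemma \ref{c:local.trans}. That lemma is stated on the $n$-grid with endpoint window $(x'-x)^{89/100}$ and fluctuation $z2^{9k/10}W_n$, and it has to be re-expressed at scale $r$ with the far endpoint allowed at height $M^{4/5}W_n$. I would handle this by applying the lemma with $n$ replaced by $r$ and $x'-x=in/r$, and checking the height window inequality directly as above. If the window fails for some parameters, I would fall back on a chaining argument like the one in the proof of Lemma \ref{l:wing.bound}, using Proposition \ref{p:paraestimateconforming} across dyadic blocks $2^{k}r$.
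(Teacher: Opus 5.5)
Your proposal is correct and follows essentially the paper's argument. An original-model local transversal fluctuation bound gives excess weight $\gtrsim z^{1/3}Q_r$ to every path with a large excursion at $x=r$; the paper cites \cite[Lemma 8.2]{BSS23} directly, while you use its corollary Lemma \ref{c:local.trans} rescaled to scale $r$. Then Lemma \ref{l:proxylocal1}, applied column by column, together with Lemma \ref{l:proxy} gives the same contradiction as in the proof of Lemma \ref{l:proxytrans}.

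One small fix: Lemma \ref{c:local.trans} requires $k\ge 1$, so apply it with base scale $r/2$ and $k=1$ rather than $2^{k'}=1$. This only changes constants, since $W_{r/2}\asymp W_r$ and $Q_{r/2}\asymp Q_r$.
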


Notice that Lemma \ref{l:localtransproxy:intro} is a special case of Lemma \ref{c:localproxytrans} so the former lemma follows from the latter. 

\begin{proof}[Proof of Lemma \ref{c:localproxytrans}]
    Observe that by the reflection symmetry of the model (and since $M$ is an integer) it suffices to prove only $(i)$. Although due to the definition of conforming paths one cannot quite obtain the estimate for $\P(A^{L,i,j}_{r,z})$ from that of $\P(A^{R,i,j}_{r,z})$ by symmetry, we shall only provide the bound for $\P(A^{R,i,j}_{r,z})$ and it will be clear that the same argument would prove the upper bound for $\P(A^{L,i,j}_{r,z})$ as well. 

    Fix $i,j$ as in the definition of $A^{R,i,j}_{r,z}$. It follows from \cite[Lemma 8.2]{BSS23} that for $z$ sufficiently large, on an event of probability at least $1-\exp(-z^{\theta_2})$ we have for all paths $\zeta$ with $\zeta(0)=0$ and $\zeta(1)=(in,jW_n)$ such that there exists $(r,s)$ on $\zeta$ with $|s|\ge zW_r$ we have 
    $$X_{\zeta}\ge X_{0,(in,jW_n)}+z^{1/4}Q_r.$$ 
    Observe that the geodesic $\gamma_{0,i,j}$ is a union of paths $\{\gamma_{k}\}_{1\le k\le i}$ where each $\gamma_k$ is a conforming geodesic from a point on $(k-1)n\times \R$ to a point on $kn\times \R$. 
    It follows from Lemma \ref{l:proxylocal1} (and taking a union bound over all $k$) that on an event of probability $1-M\exp(-n^{\theta_2})$ one has that 
    $\rX_{\gamma_{0,i,j}}\ge X_{\gamma_{0,i,j}}-Mn^{-\epsilon}Q_n$. By taking a union bound (and adjusting the value of $\theta_2$) if necessary we get $\P(A^{R,i,j}_{r,z})\le \exp(-z^{\theta_2})$, as desired.
\end{proof}

Combining Lemma \ref{l:proxytrans} and Corollary \ref{c:localproxytrans} we get the following result which will be needed in the proof of Theorem \ref{t:Pplus}. 

\begin{lemma}
    \label{l:2.8}
    Fix $r=r_{\ell}=2^{\ell (\log_2\log_2 M)^5}n$ for $1\le \ell \le \ell_{\max}$. Recall the definition of $J_{i}^{n,M,\ell}$. Then there exists $H$ such that we have with probability at least $1-M^{-10000}$, for all $i$ with $2M^{99/100}n\le ir_{\ell}\le (M-2M^{99/100})n$, we have 
    $$W_{r_{\ell}}|J^{n,M,\ell}_{i}-J^{n,M,\ell}_{i+\Phi}| \le (\log M)^{H}W_{\Phi r_\ell}.$$
\end{lemma}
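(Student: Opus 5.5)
The plan is to reduce the statement to a transversal fluctuation bound for conforming geodesics over horizontal distance $\Phi r_\ell$, applied on each block $[ir_\ell,(i+\Phi)r_\ell]$, followed by a union bound over $i$. There are at most $M$ values of $i$, so it suffices to prove, for each fixed $i$,
\[
\P\big(W_{r_\ell}|J_i-J_{i+\Phi}|\ge (\log M)^H W_{\Phi r_\ell}\big)\le M^{-10001}.
\]

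Fix $i$ and first localise the entry height. By Lemma \ref{l:proxytrans} and \eqref{eq:basic.W.bounds}, with probability at least $1-M^{-20000}$ we have $|J_i|W_{r_\ell}\le M^{4/5}W_n$. Discretise the possible values of $J_i$ into windows $s+I$ of height $W_{\Phi r_\ell}$; there are polynomially many in $M$. On the event that $\gamma$ enters $x=ir_\ell$ in the window centred at $s$, the restriction of $\gamma$ between $x=ir_\ell$ and $x=(i+\Phi)r_\ell$ is the conforming geodesic between its endpoints.

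The key step is to show that the exit height $y_{(i+\Phi)\Phi^\ell}$ is within $(\log M)^H W_{\Phi r_\ell}/2$ of $s$ with probability at least $1-M^{-20000}$. Corollary \ref{c:proxytrans}, applied with $r=\Phi r_\ell$ and $z=(\log M)^{H}$, controls geodesics whose two endpoints both lie in $s+I_k$, but here the exit point is not a priori localised. To handle this, I would use the argument of Lemma \ref{l:wing.bound} at scale $\Phi r_\ell$. On the high-probability events $\cW^*$ and $\cZ$ at scale $\Phi r_\ell$, whose probability bounds come from Proposition \ref{p:paraestimateconforming} with $z=(\log M)^{H'}$, a large vertical displacement $D$ across a block of width $\Phi r_\ell$ costs about $D^2/(2\Phi r_\ell)$. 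This is a saving of order $(D/W_{\Phi r_\ell})^2 Q_{\Phi r_\ell}$ compared with the fluctuation tolerance $(\log M)^{100/\theta_2}Q_{\Phi r_\ell}$. Concretely, one splices the geodesic at $x=ir_\ell$ and $x=(i+\Phi)r_\ell$ and compares it, via the quadratic-term triangle inequality as in Case 1 of the proof of Lemma \ref{l:wing.bound}, with a path that reconnects at height $s$ over the neighbouring blocks of width $\Phi r_\ell$. This forces $D\le (\log M)^{H}W_{\Phi r_\ell}$ once $H$ is large compared with $100/\theta_2$. The needed $\hrX$ bounds hold with probability at least $1-\exp(-c(\log M)^{H'\theta_2})\le M^{-20000}$.

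I expect the main obstacle to be precisely this localisation of the exit point, since the corollaries only treat endpoint pairs that are already aligned. The fallback is a chaining argument over dyadic scales, as in Case 1 of Lemma \ref{l:wing.bound}, run with polylogarithmic tolerances instead of powers of $L_2$. Finally, a union bound over windows and over $i$ gives the lemma.
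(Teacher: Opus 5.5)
You identify the right obstacle, but your key step, as stated, does not overcome it. A displacement $D$ of $\gamma$ across the single block $[ir_\ell,(i+\Phi)r_\ell]$ has no excess cost by itself. The three-point convexity comparison of Case~1 of Lemma~\ref{l:wing.bound} only penalises a crossing of $\gamma$ that deviates from the chord joining two \emph{other} crossings of $\gamma$. If $\gamma$ crosses $x=(i+2\Phi)r_\ell$ (or $x=(i-\Phi)r_\ell$) at a height that continues the tilt, the three points are nearly collinear, and no path ``reconnecting at height $s$'' rejoins $\gamma$ with a gain. So the dyadic chaining you list as a fallback is not optional, and it must be run up to the global scale.

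Concretely, anchor at the crossing $u=((i+\Phi)r_\ell,y')$ and let $v_k=((i+\Phi-2^k\Phi)r_\ell,y_k)$ be the crossing of $\gamma$. Set $A_k=\{|y'-y_k|\ge(\log M)^H2^{9k/10}W_{\Phi r_\ell}\}$ for $0\le k\le k_{\max}$, where $2^{k_{\max}}\Phi r_\ell\asymp M^{99/100}n$; this is where $ir_\ell\ge 2M^{99/100}n$ is used. Let $k_\star$ be maximal with $A_{k_\star}$. Write $a=y'-y_{k_\star}$, $b=y'-y_{k_\star+1}$, $d=2^{k_\star}\Phi r_\ell$. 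The quadratic parts of $\rX_{v_{k_\star+1},v_{k_\star}}+\rX_{v_{k_\star},u}-\rX_{v_{k_\star+1},u}$ sum to $(a-b/2)^2/d$. When $k_\star<k_{\max}$ this is $\gtrsim(\log M)^{2H}2^{4k_\star/5}Q_{\Phi r_\ell}$, against a fluctuation cost $\lesssim(\log M)^{H'}2^{k_\star/2}Q_{\Phi r_\ell}$, which is a contradiction once $2H>H'$.

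The case $k_\star=k_{\max}$, the analogue of Case~2, needs its own argument. By \eqref{eq:basic.W.bounds} and $\Phi r_\ell\le M^{1/100+o(1)}n$, the threshold at $k_{\max}$ is at least $M^{0.88}W_n$. On the other hand, Lemma~\ref{l:proxytrans} confines \emph{every} crossing of $\gamma$ to heights at most $M^{4/5}W_n$ with probability at least $1-M^{-20000}$, not just the crossing at $x=ir_\ell$.

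The only fluctuation input is then a single $\cW^*$-type event with tolerance $(\log M)^{H'}Q$, over all pairs of lines in $r_\ell\Z$ and heights $|y|\le MW_n$. Note that the paper's $\cW^*_i$ omits pairs straddling column $i$, and the $\cZ$ events are not needed. This event holds with probability at least $1-M^{-20000}$ by Proposition~\ref{p:paraestimateconforming} and a union bound. Your window discretisation of $J_i$ then becomes unnecessary. With these additions your route is complete.

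The paper avoids the chaining by working with a sub-geodesic that has only \emph{one} random endpoint. Suppose $(i+\Phi)r_\ell\ge Mn/2$, and that $\gamma$ crosses $x=(i+\Phi)r_\ell$ in $\{(i+\Phi)r_\ell\}\times[jW_n,(j+1)W_n]$. Then the part of $\gamma$ from $\origin$ to that crossing is sandwiched, by ordering of geodesics, between the conforming geodesics from $\origin$ to the fixed points $((i+\Phi)r_\ell,jW_n)$ and $((i+\Phi)r_\ell,(j+1)W_n)$. A large value of $W_{r_\ell}|J_i-J_{i+\Phi}|$ forces one of these two to deviate by at least $(z+1)W_{\Phi r_\ell}$ at distance $\Phi r_\ell$ from its endpoint. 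That is the event $A^{L,(i+\Phi)r_\ell/n,j}_{\Phi r_\ell,z+1}$ controlled by Lemma~\ref{c:localproxytrans}. A union bound over the $O(M^{4/5})$ values of $j$ allowed by Lemma~\ref{l:proxytrans} and over $i$, with $z=(\log M)^H$, then finishes the proof. The case $(i+\Phi)r_\ell<Mn/2$ is symmetric, using geodesics from $(ir_\ell,jW_n)$ to $(Mn,0)$.

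The paper's route takes a few lines but relies on planarity and on the near-endpoint estimate imported from \cite[Lemma 8.2]{BSS23}. Your repaired route derives the needed local control directly from point-to-point concentration and the global transversal bound, without ordering of geodesics, at the cost of a full multi-scale argument.
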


\begin{proof}
    Observe first that Lemma \ref{l:proxytrans} implies that on an event $A$ of probability at least $1-M^{-100002}$ we have 
    $W_{r_\ell}|J^{n,M,\ell}_i|\le M^{4/5}W_n$ for all $i$ (here we have used that $W_{Mn}=O(M^{3/4}W_n)$ and $M$ is chosen sufficiently large). We next claim that for each $i$ as in the statement of the lemma we have for $z$ sufficiently large
    \begin{equation}
        \label{e:phitrans}
        \P\left(\{W_{r_{\ell}}|J^{n,M,\ell}_{i}-J^{n,M,\ell}_{i+\Phi}| \ge zW_{\Phi r_\ell}\} \cap A\right) \le 2M^{4/5}e^{-z^{\theta_2}}
    \end{equation}
    where $\theta_2$ is as in Lemma \ref{c:localproxytrans}.  
    Clearly, by choosing $H$ sufficiently large setting $z=(\log M)^{H}$ in \eqref{e:phitrans} and taking a union bound over all $i$ the lemma follows. 

    It therefore only remains to prove \eqref{e:phitrans}. Let us assume that $(i+\Phi)r_{\ell}\ge \frac{Mn}{2}$ (the other case can be treated by essentially the same argument, we shall omit the details).

    For $j\in [-M^{4/5},M^{4/5}]$, consider the event 
    $$B_j=A^{L,(i+\Phi)r_{\ell}/n,j}_{\Phi r_{\ell}, z+1}\cap A^{L,(i+\Phi)r_{\ell}/n,j+1}_{\Phi r_{\ell}, z+1}.$$
    Since the conforming geodesic from $0$ to any point in the interval $\{(i+\Phi)r_{\ell}\}\times [jW_n,(j+1)W_n]$ is sandwiched between the geodesics $\gamma_{0,(i+\Phi)r_{\ell}/n,j}$ and $\gamma_{0,(i+\Phi)r_{\ell}/n,j+1}$ it follows that 
    $$\{W_{r_{\ell}}|J^{n,M,\ell}_{i}-J^{n,M,\ell}_{i+\Phi}| \ge zW_{\Phi r_\ell}\} \cap A \subseteq \bigcup_{j=-M^{4/5}}^{M^{4/5}} B_j.$$
    The equation \eqref{e:phitrans} now follows from Lemma \ref{c:localproxytrans} by taking a union bound over all $j$. This completes the proof of the lemma. 
\end{proof}

Finally we prove Proposition \ref{p:constrainerX}. 

\begin{proof}[Proof of Proposition \ref{p:constrainerX}]

{Proof of this proposition follows by verbatim repeating the proof of Proposition \ref{p:constraine} with replacing $X$ by $\rX$. Each concentration estimate for $X$ (Theorem \ref{t:all}) is replaced by Proposition \ref{p:paraestimateconforming} and each transversal fluctuation estimate for $X$ (Theorem \ref{t:tfold}) is replaced by Lemma \ref{l:proxytrans:intro}. We omit the details.}
\end{proof}

\section{Stretched exponential polymer estimates}
\label{s:perc}
In this section we shall prove the polymer estimate Proposition \ref{p:perc1} and also use it to establish the $\tau_2$ fluctuation bound for the geodesic, Proposition \ref{p:tau2perc}.

Clearly it suffices to prove this lemma for $R$ and $z$ sufficiently large, we shall henceforth assume that. The first step is to  truncate $\mathcal{V}_{i,k_{i-1},k_{i}}$ into dyadic intervals. For $j\ge 1$, set $$\mathcal{V}^{j}_{i,k,k'}=2^{j+1}I(\mathcal{V}_{i,k,k'}\in [2^{j},2^{j+1}]).$$ Therefore we have,
\begin{equation}
    \label{e:zsum}
    \max_{\substack{\uk\in \mathfrak{K}_{M}\\ \tau_2(\uk)\leq R M}}\sum_{i=1}^{M} \mathcal{V}_{i,k_{i-1},k_i}\le M+\sum_{j\ge 1} \max_{\substack{\uk\in \mathfrak{K}_{M}\\ \tau_2(\uk)\leq R M}}\sum_{i} \mathcal{V}^{j}_{i,k_{i-1},k_i}.
\end{equation}
We shall bound the terms 
$$\max_{\substack{\uk\in \mathfrak{K}_{M}\\ \tau_2(\uk)\leq R M}}\sum_{i} \mathcal{V}^{j}_{i,k_{i-1},k_i}$$
separately for three different regimes of $j$. Let us set 
$$j_{\max}=\lceil \log_2 (\log^C(RM)+z)\rceil~\text {   and  }~ j_{\min}=\lfloor \log_2 (R^{3/4}) \rfloor$$
where $C$ is chosen sufficiently large (depending on $C_1,C_2,\xi$).
Notice that, we have, deterministically, for some $C_4>0$ 
\begin{equation}
    \label{e:jmin}
    \sum_{j<j_{\min}} \max_{\substack{\uk\in \mathfrak{K}_{M}\\ \tau_2(\uk)\leq R M}} \sum_{i} \mathcal{V}^{j}_{i,k_{i-1},k_i}\le C_4R^{3/4}M
\end{equation}
for all $R$ sufficiently large. 

For $j>j_{\max}$ we have the following lemma. 

\begin{lemma}
    \label{l:jmax}
    There exist $C_5,C_6>0$ depending only on $C_1$, $C_2$ and $\xi$ such that 
    $$\P\left(\sum_{j> j_{\max}} \max_{\underline{k}:\tau_2(\uk)\le RM}\sum_{i} \mathcal{V}^{j}_{i,k_{i-1},k_i}> R^{3/4}M\right)\le C_5\exp(-C_6z^{\xi/4}).$$
\end{lemma}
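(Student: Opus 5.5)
The idea is to show that with high probability the large-value terms are too sparse to matter. I would do this by bounding, for each dyadic level $j>j_{\max}$, the probability that any variable $\mathcal{V}_{i,k,k'}$ relevant to some admissible $\uk$ exceeds $2^j$. If no such exceedance occurs at any level $j>j_{\max}$, then $\mathcal{V}^{j}_{i,k_{i-1},k_i}=0$ for all these $j$ and every admissible $\uk$, so the whole sum vanishes and is in particular below $R^{3/4}M$.

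The first step is to determine which triples $(i,k,k')$ can occur. Any $\uk\in\mathfrak{K}_M$ with $\tau_2(\uk)\le RM$ has $k_0=0$ and $|k_i-k_{i-1}|\le \sqrt{RM}$. Hence $|k_i|\le \sum_i|k_i-k_{i-1}|\le \sqrt{M\tau_2}\le M\sqrt R$. The relevant triples therefore satisfy $1\le i\le M$, $|k|\le M\sqrt R$ and $|k-k'|\le \sqrt{RM}$. There are at most $M\cdot 3M\sqrt R\cdot 3\sqrt{RM}\le 9R M^{5/2}$ of them, which is polynomial in $RM$.

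The second step applies the hypothesis to each triple. We have
$$\P[\mathcal{V}_{i,k,k'}\ge 2^j]\le C_1\exp\big(-C_2(2^j/(1+(RM)^{(1+\delta)/2}))^{\xi}\big).$$
The difficulty is the denominator, which can be as large as $(RM)^{(1+\delta)/2}$, whereas $2^{j_{\max}}\approx \log^C(RM)+z$. A crude union bound therefore fails when $z$ is small compared with $M^{1/2}$. This is the main obstacle. To handle it I would not treat all jumps alike, but split triples by jump size $d=|k-k'|$.

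For a single path with $\tau_2\le RM$, the number of indices with $|k_i-k_{i-1}|\ge d$ is at most $RM/d^2$. A variable at jump size $d$ exceeds $2^j$ with probability roughly $\exp(-C_2(2^j/d^{1+\delta})^\xi)$. So for $2^j\ge d^{1+\delta}\log^{C}(RM)$ the union bound over the polynomially many triples (at most $9RM^{5/2}$) still succeeds: the exponent is then at least $\log^{C\xi}(RM)+(2^{j}/d^{1+\delta})^{\xi}$, and this beats the count once $C$ is large depending on $\xi$. Summing over $j$ and $d$ gives failure probability at most $C_5\exp(-C_6 z^{\xi/4})$, using $2^j\ge z$.

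For large jumps, $d^{1+\delta}\log^C(RM)>2^j$, this forces $d\gtrsim 2^{j/(1+\delta)}\log^{-C}$. I would bound their contribution deterministically via $\tau_2$ together with a moment estimate. Write $\mathcal{V}^j\le 2^{j+1}$ and use Cauchy–Schwarz on the indices with $d_i\ge D$, where there are at most $RM/D^2$ of them. With high probability each such $\mathcal{V}$ is at most $d_i^{1+\delta}(\log^{C}(RM)+z^{1/2})$ (by the same union bound), so the sum is at most
$$(\log^C(RM)+z^{1/2})\sum_i d_i^{1+\delta}\mathbf 1_{d_i\ge D}\le (\log^C(RM)+z^{1/2})\,(RM)^{(1+\delta)/2}(RM/D^2)^{(1-\delta)/2}.$$
Since $\delta<1/100$ and the levels $j>j_{\max}$ force $D$ to be at least a power of $\log^C(RM)+z$, this is $\le R^{3/4}M$ for $R$ large. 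Where this fails I would intersect with the good event and absorb the remainder into the tail by adjusting constants. I expect this bookkeeping of large jumps against the $|k-k'|^{1+\delta}$ tail degradation to be the delicate point, with $\delta<1/100$ being exactly what makes the exponents close.
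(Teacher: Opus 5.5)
There is a genuine gap, and it sits exactly at the cut you flag as delicate. With the threshold $2^j \gtrless d^{1+\delta}\log^C(RM)$, neither regime closes.

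\textbf{Large jumps.} Here $j>j_{\max}$ only forces $d^{1+\delta}>2^{j}/\log^C(RM)\ge 1+z/\log^C(RM)$. So when $z\lesssim\log^C(RM)$ and $j$ is near $j_{\max}$, your cutoff $D$ is of order $1$, not a power of $\log^C(RM)+z$. Your bound $(\log^C(RM)+z^{1/2})\sum_i d_i^{1+\delta}\mathbf 1_{d_i\ge D}$ is then, over admissible $\uk$, of order $\log^C(RM)\,R^{(1+\delta)/2}M$ at best. Since $R$ is fixed while $M\to\infty$, the factor $\log^C(RM)$ cannot be absorbed into $R^{3/4}M$. This is a deterministic overcount that holds on your good event, so ``intersecting with the good event and adjusting constants'' cannot repair it.

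\textbf{Small jumps.} Near the cut the ratio $2^j/(1+d^{1+\delta})$ is only of order $\log^C(RM)$. The union bound therefore gives $\exp(-c\log^{C\xi}(RM))$, with no dependence on $z$. The information $2^j\ge z$ is cancelled by the $d^{1+\delta}$ in the denominator, so you do not get $C_5\exp(-C_6z^{\xi/4})$ once $z\gg\log^{4C}(RM)$.

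\textbf{The missing idea.} Split on the size of $\mathcal{V}$ relative to a power of $d$ strictly between $1+\delta$ and $2$, with no logarithmic factor. The paper compares $\mathcal{V}_{i,k_{i-1},k_i}$ with $\frac12|k_i-k_{i-1}|^{3/2}$.
\begin{itemize}
\item Below this threshold, index $i$ contributes at most $|k_i-k_{i-1}|^{3/2}$ summed over all $j$. H\"older then gives $\sum_i|k_i-k_{i-1}|^{3/2}\le M^{1/4}\tau_2(\uk)^{3/4}\le R^{3/4}M$ deterministically.
\item Above it, combined with $\mathcal{V}\ge 2^j$ and $\delta<1/100$, one gets $\mathcal{V}\ge 2^{j/4}(1+d^{1+\delta})$. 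This has probability at most $C_1\exp(-C_2 2^{j\xi/4})$ uniformly in $d$. A union bound over the polynomially many admissible triples and over $j>j_{\max}$, using $2^{j_{\max}}\ge \log^C(RM)+z$, shows these terms all vanish with probability at least $1-C_5\exp(-C_6 z^{\xi/4})$.
\end{itemize}

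\textbf{Repairing your own scheme.} Cut at $d^{1+\delta}\le 2^{3j/4}$, so each small-jump triple exceeds $2^j$ with probability at most $C_1\exp(-c2^{j\xi/4})$. Bound the large jumps at level $j$ by pure counting, $2^{j+1}\min\{M,\,RM\,2^{-3j/(2(1+\delta))}\}$. The sum over $j$ is $O(R^{2(1+\delta)/3}M)\le R^{3/4}M$ for $R$ large. This per-level count also respects the order $\sum_j\max_{\uk}$ in the statement. No pointwise bound on $\mathcal{V}$ in terms of $d$ is used in the large-jump regime, and that pointwise bound is exactly where your $\log^C(RM)$ loss came from.
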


\begin{proof}
    First let us observe that for $\uk\in \fK$ with $\tau_2(\uk)\le RM$
    we have
    $$\sum_i\sum_{j>j_{\max}} \mathcal{V}^{j}_{i,k_{i-1},k_i}\le \sum_i |k_i-k_{i-1}|^{3/2}+\sum_{i} \sum_{j>j_{\max}} \mathcal{V}^{j}_{i,k_{i-1},k_i} I(\mathcal{V}_{i,k_{i-1},k_i} \ge \frac{1}{2}|k_i-k_{i-1}|^{3/2}).$$
    Further, notice that we have 
    $$\frac{1}{M}\sum_i |k_i-k_{i-1}|^{3/2} \leq \Big(\frac{1}{M}\sum_i |k_i-k_{i-1}|^{2}\Big)^{3/4}\le R^{3/4}$$
    since $\tau_2(\uk)\le RM$, and therefore we have 
    $$\sum_i\sum_{j>j_{\max}} \mathcal{V}^{j}_{i,k_{i-1},k_i}\le R^{3/4}M+\sum_{i} \sum_{j>j_{\max}} \widetilde{\mathcal{V}}^{j}_{i,k_{i-1},k_i}$$
    where 
    $$ \widetilde{\mathcal{V}}^{j}_{i,k_{i-1},k_i}=\mathcal{V}^{j}_{i,k_{i-1},k_i} I(\mathcal{V}_{i,k_{i-1},k_i} \ge \frac{1}{2}|k_i-k_{i-1}|^{3/2}).$$
    It therefore suffices to show that 
    $$\P\left(\sum_{j> j_{\max}} \max_{\substack{\uk\in \mathfrak{K}_{M}\\ \tau_2(\uk)\leq R M}} \sum_{i} \widetilde{\mathcal{V}}^{j}_{i,k_{i-1},k_i}> 0\right)\le C_5\exp(-C_6z^{\xi/4}).$$
    Now clearly, for $j>j_{\max}$,  we have 
    $$\P(\widetilde{\mathcal{V}}^{j}_{i,k_{i-1},k_i}>0)\le \P(\mathcal{V}_{i,k_{i-1},k_i}\ge 2^{j} \vee \frac{1}{2}|k_i-k_{i-1}|^{3/2}).$$
    Observe next that 
    $$2^{j} \vee \frac{1}{2}|k_i-k_{i-1}|^{3/2} \ge 2^{j/4}(1+|k_i-k_{i-1}|^{1+\delta})$$
    since $\delta<\frac{1}{100}$ and $R$ is sufficiently large. Indeed, if $2^{j}>\frac{1}{2}|k_i-k_{i-1}|^{3/2}$ then we have 
    $2^{3j/4}\ge (1+|k_i-k_{i-1}|^{1+\delta})$ since $\delta<1/100$ and $j_{\max}$ is sufficiently large (since $R$ is sufficiently large). On the other hand if $\frac{1}{2}|k_i-k_{i-1}|^{3/2}\ge 2^{j}$ we have 
    $$\frac{1}{2^{1/4}}|k_i-k_{i-1}|^{3/8}\ge 2^{j/4}$$ and the claim follows by observing 
    $$ \frac{1}{2^{3/4}}|k_i-k_{i-1}|^{9/8} \ge 1+|k_i-k_{i-1}|^{\delta}$$
    since $\delta<1/100$ and $j_{\max}$ is sufficiently large. It therefore follows from the hypothesis on the tails of $\mathcal{V}_{i,k,k'}$ that 

    $$\P(\widetilde{\mathcal{V}}^{j}_{i,k_{i-1},k_i}>0) \le C_1\exp(-C_22^{j\xi/4}).$$

    Since $k_0=0$, and $\tau_2(\uk)\le RM$, it is clear that $|k_i|\le M\sqrt{RM}$ for each $i$. So it is easy to see that there exists a deterministic set $\mathsf{A}=\mathsf{A}_{R,M}$ of triples $(i,k_{i-1},k_{i})$ of size at most $4R^{1/2}M^{5/2}$ such that for any $\underline{k}\in \mathfrak{K}_{M}$ with $\tau_2(\underline{k})\le RM$ we have $(i,k_{i-1},k_{i})\in \mathsf{A}$. Taking now a union bound over the elements of the set $\mathsf{A}$ it follows that for $j>j_{\max}$, 
    $$\P\left(\max_{\substack{\uk\in \mathfrak{K}_{M}\\ \tau_2(\uk)\leq R M}} \sum_{i} \widetilde{\mathcal{V}}^{j}_{i,k_{i-1},k_i}>0\right)\le \P\left(\max_{(i,k_{i-1},k_{i})\in \mathsf{A}} \widetilde{\mathcal{V}}^{j}_{i,k_{i-1},k_i}>0\right)\le 4R^{1/2}M^{5/2}C_1\exp(-C_22^{j\xi/4}).$$
    Summing over all $j>j_{\max}$ we get 
    $$\P\left(\sum_{j> j_{\max}} \max_{\substack{\uk\in \mathfrak{K}_{M}\\ \tau_2(\uk)\leq R M}}\sum_{i} \widetilde{\mathcal{V}}^{j}_{i,k_{i-1},k_i}> 0\right)\le 4R^{1/2}M^{5/2}C'_1\exp(-C'_22^{j_{\max}\xi/4}).$$
    Since $2^{j_{\max}}\ge \log^C (RM)$ where $C$ is sufficiently large (depending on $\xi$) it follows that 
    $$\log (4R^{1/2}M^{5/2}) \le \frac{1}{2}C'_22^{j_{\max}\xi/4}$$
    and consequently
    $$\P\left(\sum_{j> j_{\max}} \max_{\substack{\uk\in \mathfrak{K}_{M}\\ \tau_2(\uk)\leq R M}} \sum_{i} \widetilde{\mathcal{V}}^{j}_{i,k_{i-1},k_i}> 0\right)\le C_5\exp(-C_62^{j_{\max}\xi/4}).$$ 
    The lemma follows from observing that $2^{j_{\max}}\ge z$ from the definition of $j_{\max}$.
\end{proof}

The next lemma deals with the case $j_{\min} \le j \le j_{\max}$.

\begin{lemma}
    \label{l:minmax}
    There exist constant $C_3,C_4,C_5,C_6>0$ depending only on $C_1,C_2$ and $\xi$ such that 
    $$\P\left( \sum_{j=j_{\min}}^{j_{\max}} \max_{\underline{k}:\tau_2(\uk)\le RM} \sum_i \mathcal{V}^{j}_{i,k_{i-1},k_{i}} \ge (C_3+C_4R^{3/4})M+z \right)\le C_5\exp(-C_6z^{\xi/4}).$$
\end{lemma}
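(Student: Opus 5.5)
I will treat each dyadic level $j\in[j_{\min},j_{\max}]$ separately. For each one I will prove a tail bound on $\max_{\uk}\sum_i \mathcal{V}^j_{i,k_{i-1},k_i}$, and then sum over $j$. There are only $O(\log\log(RM)+\log z)$ such levels, so this sum costs little. For fixed $j$ the quantity $\sum_i \mathcal{V}^j_{i,k_{i-1},k_i}$ equals $2^{j+1}$ times the number $N_j(\uk)$ of indices $i$ with $\mathcal{V}_{i,k_{i-1},k_i}\ge 2^j$. So the task is to bound $\max_{\uk}N_j(\uk)$. As in the proof of Lemma~\ref{l:jmax}, I first remove the indices where the jump is large, namely $|k_i-k_{i-1}|\ge 2^{j/(1+2\delta)}$. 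Since $\tau_2(\uk)\le RM$, the number of such $i$ is at most $RM\,2^{-2j/(1+2\delta)}$. Each contributes at most $2^{j+1}$, so the total is at most $RM\,2^{1+j(1-2/(1+2\delta))}$. Because $\delta<1/100$, the exponent of $2^j$ here is negative. Summing over $j\ge j_{\min}$, with $2^{j_{\min}}\approx R^{3/4}$, then gives $O(R^{1-\frac34\cdot\frac{1-2\delta}{1+2\delta}})M=O(R^{3/4})M$ after adjusting constants. I will double-check this exponent; if it fails I will instead split at $|k_i-k_{i-1}|\ge 2^{j/2}$ and use $\tau_2$ again.

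For the remaining indices the jumps are small, $|k_i-k_{i-1}|\le 2^{j/(1+2\delta)}$. The tail hypothesis then gives
\[
p_j:=\P(\mathcal{V}_{i,k,k'}\ge 2^j)\le C_1\exp(-C_2 2^{j\delta\xi/(1+2\delta)}),
\]
which decays stretched-exponentially in $2^j$. Independence across $i$ then gives a Chernoff bound for a fixed path. The main obstacle is the union bound over paths, which is controlled through the entropy of $\uk$. Only the positions $i$ where the event occurs matter, together with the values $(k_{i-1},k_i)$ at those positions. So I will encode a path by a set $S$ of $m$ indices and the labels there. The number of choices of $S$ is $\binom{M}{m}$. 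The labels can be chosen in at most $\exp(Cm\log(RM/m))$ ways, because $\sum|k_i-k_{i-1}|\le\sqrt{R}M$ and $k_0=0$, which gives a stars-and-bars count on $|k_i|$ at the selected indices. This gives
\[
\P(\max_{\uk}N_j\ge m)\le \binom{M}{m}\exp(Cm\log(RM/m))\,p_j^m .
\]

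I will choose the threshold $m_j=M2^{-j}\,2^{-j}+z2^{-j}/j_{\max}^2$ so that $2^{j+1}m_j$ sums over $j$ to $O(M)+z$. The logarithmic entropy factor $\log(RM/m_j)\lesssim j+\log R+\log(1+z)$ is then dominated by $-\log p_j\gtrsim 2^{j\delta\xi/2}$. This holds as long as $2^j\ge R^{3/4}$ (for $R$ large) and $2^j\ge\log^{c}(RM)$. The second condition is the delicate point: the levels just above $j_{\min}$ may satisfy $2^{j}\ll\log(M)$. If so, I will use $m_j$ linear in $M$, for example $m_j=M2^{-2j}$, so that the binomial entropy $m\log(eM/m)=O(jm)$ is still beaten by $m\,2^{j\delta\xi/2}$ once $R$ (hence $j$) is large. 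In that case the label entropy per selected index is only $O(\log(RM/m))=O(j+\log R)$, which is also acceptable.

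Each level then fails with probability at most $\exp(-c\,m_j2^{j\delta\xi/2})\le\exp(-c(M+z)^{\xi/4})$, using $m_j2^{j\xi/4}\gtrsim z^{\xi/4}$ in the $z$-dominated regime, since $2^{j}\le 2^{j_{\max}}\approx z+\log^C(RM)$. A union bound over the at most $j_{\max}$ levels finishes the proof, with $C_3M$ coming from the $m_j$ contributions and $C_4R^{3/4}M$ coming from the large-jump indices.
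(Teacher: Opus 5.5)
\textbf{The gap is your choice of cutoff between large and small jumps.} Splitting at $|k_i-k_{i-1}|\ge 2^{j/(1+2\delta)}$ leaves "small" steps with $|k-k'|^{1+\delta}$ as large as $2^{j(1+\delta)/(1+2\delta)}$. For those steps the hypothesis only gives $p_j\le C_1\exp(-c\,2^{j\delta\xi/(1+2\delta)})$, a rate $\delta\xi/(1+2\delta)<\xi/100$.

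Take $z\gg\sqrt M$. At the top levels $2^j\asymp z$ your threshold satisfies $m_j=M2^{-2j}+z2^{-j}/j_{\max}^2<1$, so a single bad small step must already be ruled out. Your union bound then gives at best $\exp(-c\,z^{\delta\xi/(1+2\delta)})$, not $C_5\exp(-C_6z^{\xi/4})$.

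Your last paragraph replaces this rate first by $2^{j\delta\xi/2}$ and then by $2^{j\xi/4}$; neither follows from what you derived. Even at rate exactly $\xi/4$, the weights $z2^{-j}/j_{\max}^2$ only give $m_j2^{j\xi/4}\gtrsim z^{\xi/4}j_{\max}^{-\xi/2}$ near $2^j\approx z/j_{\max}^2$, which is a logarithmic loss in the exponent. Your fallback cutoff $2^{j/2}$ fails in the other direction: large jumps would then cost $2^{j+1}\cdot RM2^{-j}=2RM$ per level, which is not $O(R^{3/4}M)$ after summing over levels.

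\textbf{The repair.} Cut at $2^{aj}$ with $\tfrac23\le a<\tfrac{3}{4(1+\delta)}$, for example $a=\tfrac{7}{10}$, which is exactly the paper's $w_j=2^{7j/10}$.
\begin{itemize}
\item The large-jump cost becomes $\sum_{j\ge j_{\min}}2^{j+1}RM2^{-2aj}=O(R^{1-\frac34(2a-1)}M)=O(R^{3/4}M)$.
\item Small jumps satisfy $p_j\le C_1\exp(-c\,2^{j(1-a(1+\delta))\xi})$ with $(1-a(1+\delta))\xi\ge 0.29\,\xi>\xi/4$. This strict slack absorbs the $j_{\max}$ loss; alternatively use geometric weights $2^{\varepsilon(j-j_{\max})}$ as the paper does.
\end{itemize}

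\textbf{Comparison with the paper.} With that change your argument works, and it is a genuinely different, more elementary route. The paper coarsens entire paths into mesoscopic classes $\uk^w_{\rm M}$, with entropy $\exp(cRM\log w/w^2)$, and then dominates by $\mathrm{Bin}(M,8w^4p(w))$ within each class.

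You instead union-bound only over the set of bad indices and their labels, with entropy $\binom{M}{m}e^{Cm\log(RM/m)}$. That count is correct via $\sum_i|k_i-k_{i-1}|\le\sqrt{R}M$. Since $m_j\ge M2^{-2j}$ and $R\le 2^{4(j+1)/3}$, the entropy is $O(j)$ per selected index, which $\log(1/p_j)$ beats once $R$ is large. When $m_j<1$, the $O(\log(RM))$ cost of a single index is affordable because then $2^j>\max(\sqrt M,R^{3/4})$. Your deterministic large-jump term is the same as the paper's $2^{j+1}RM/w_j^2$.
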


Postponing the proof of Lemma \ref{l:minmax} momentarily, let us complete the proof of Proposition \ref{p:perc1}. 

\begin{proof}[Proof of Proposition \ref{p:perc1}]
The proof follows from \eqref{e:zsum}, together with \eqref{e:jmin}, Lemma \ref{l:jmax} and Lemma \ref{l:minmax} by taking $R$ sufficiently large. 
\end{proof}

\subsection{Proof of Lemma \ref{l:minmax}}
The proof of this lemma similar to the proof of \cite[Lemma 13.2]{BSS23}, but is somewhat more involved. We shall divide the proof into a number of steps. 

\noindent
\textbf{Step 1: Mesoscopic coarsening of $\underline{k}$:}
To reduce the entropy of the the number of sequences $\underline{k}$ we shall consider the following discretisation which we call the mesoscopic coarsening (or $j$-mesoscopic coarsening). Note that the notion here is slightly different from the one in \cite{BSS23}.   
Fix $j_{\min}\leq j\leq j_{\max}$. Let $w=w_j$ be a fixed and large integer, to be fixed later. 

For $\underline{k}\in \mathfrak{K}_{M}$ define 
$\underline{k}^{w}$ by $k^{w}_{i}= \lfloor \frac{k_{iw^2}}{w^3}\rfloor$ for $i=1,2,\ldots \lfloor\frac{M}{w^2}\rfloor$. Define $\underline{k}_{\rm large}$ by 
$$\underline{k}_{\rm large}=\{i: 1\le i \le M: |k_{i}-k_{i-1}|\ge w\},$$
i.e., the set of locations where $\underline{k}$ has a large jump. Finally, let 
$$\underline{k}^1_{\rm large}:=\{k_{i}:i\in \underline{k}_{\rm large}~\text{or}~(i+1)\in \underline{k}_{\rm large}\}.$$ 
The mesoscopic coarsening of $\underline{k}$ is given by the triple 
$$\underline{k}_{\rm M}^{w}=(\underline{k}^w,\underline{k}_{\rm large},\underline{k}^1_{\rm large}).$$ 

We need the following estimate to count the number of distinct $\underline{k}_{\rm M}^{w}$ as $\underline{k}$ varies over all $\underline{k}\in \mathfrak{K}_{M}$ with $\tau_2(\underline{k})\le RM$. 

\begin{lemma}
\label{l:mesocount}
Let $\mathsf{M}^{w}_{R}$ denote the set of all $\underline{k}_{\rm M}^{w}$
for $\underline{k}\in \mathfrak{K}_{M}$ with $\tau_2(\underline{k})\le RM$. Then there exists a constant $c>0$ such that 
$$|\mathsf{M}^w_{R}|\leq  \exp \left(cRM\log w/w^2\right).$$
\end{lemma}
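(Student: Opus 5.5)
The plan is to count the three components of the triple $\underline{k}_{\rm M}^{w}$ separately and multiply the three bounds, each of which should be of the form $\exp(O(RM\log w/w^2))$. Throughout, the only input is the constraint $\tau_2(\underline{k})=\sum_i|k_i-k_{i-1}|^2\le RM$ together with $k_0=0$.

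\emph{The set $\underline{k}_{\rm large}$.} Every $i\in\underline{k}_{\rm large}$ contributes at least $w^2$ to $\tau_2(\underline{k})$. Hence
\[
|\underline{k}_{\rm large}|\le RM/w^2 =: m .
\]
So $\underline{k}_{\rm large}$ is a subset of $\{1,\dots,M\}$ of size at most $m$. The number of such subsets is at most $\sum_{s\le m}\binom{M}{s}\le (eM/m)^{m}=\exp\big(m\log(ew^2/R)\big)$, which is at most $\exp(cRM\log w/w^2)$ (using $R\ge1$).

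\emph{The values $\underline{k}^1_{\rm large}$.} These are the values $k_i$ at the at most $2m$ indices adjacent to large jumps. Given $\underline{k}_{\rm large}$ and $\underline{k}^w$, each such value is pinned down up to a window. Let $i$ be such an index and let $i_0$ be the nearest multiple of $w^2$, so $|i-i_0|\le w^2$. Then $|k_i-k_{i_0}|\le\sum|k_{l}-k_{l-1}|$ over at most $w^2$ steps. This is bounded via Cauchy--Schwarz by $w\,(\tau_2 \text{ on that window})^{1/2}$, and $k_{i_0}$ is known to within $w^3$ from $\underline{k}^w$. Summing $\log(\text{window size})$ over the at most $2m$ indices and using concavity of $\log$ together with the total budget $\tau_2\le RM$, the total count should be $\exp(O(m\log w))$ after adjusting constants. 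If this localization is awkward, I would fall back on encoding each such value by its offset from the previous recorded one. The offsets $d_l$ satisfy $\sum_l d_l^2\lesssim w^2\cdot RM$ by the same Cauchy--Schwarz, and the number of integer vectors of length $2m$ with this $\ell^2$ bound is at most $(C\sqrt{w^2RM/m})^{2m}=(Cw^2)^{2m}$, again $\exp(O(m\log w))$.

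\emph{The coarse path $\underline{k}^w$.} This has $M/w^2$ entries. Its increments satisfy
\[
|k^w_{i}-k^w_{i-1}|\le 1+w^{-3}\textstyle\sum_{l=(i-1)w^2+1}^{iw^2}|k_l-k_{l-1}|\le 1+w^{-2}\big(\textstyle\sum_{\text{block}}|k_l-k_{l-1}|^2\big)^{1/2}.
\]
Writing $D_i$ for the increments, $\sum_i(|D_i|-1)_+^2\le w^{-4}RM$. The number of integer sequences of length $M/w^2$ with this constraint is bounded by a standard lattice-point count: most entries lie in $\{-1,0,1\}$, the nonzero excesses number at most $RM/w^4$, and their sizes are controlled by the $\ell^2$ budget.

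\emph{Main obstacle.} The delicate step is this count of $\underline{k}^w$. Naively the $3^{M/w^2}$ choices of $\{-1,0,1\}$ increments give $\exp(cM/w^2)$, which lacks the factor $R\log w$ but is still $\le\exp(cRM\log w/w^2)$ since $R\ge1$ and $w\ge 3$, so it is acceptable. The excess part contributes $\exp(O((RM/w^4)\log(w)))$ by the same subset-plus-$\ell^2$ counting as above. Multiplying the three bounds gives the claimed $|\mathsf{M}^w_R|\le\exp(cRM\log w/w^2)$.
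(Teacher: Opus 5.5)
Your counts of $\underline{k}_{\rm large}$ and of $\underline{k}^w$ are correct. The latter takes a different route from the paper: you bound $\sum_i(|k^w_i-k^w_{i-1}|-1)_+^2\le RM/w^4$ blockwise by Cauchy--Schwarz. The paper instead first fixes the signed sizes of the large jumps, at most $2^s\binom{RM+s}{s}$ choices, since $\sum_{i\in\underline{k}_{\rm large}}|k_i-k_{i-1}|\le\tau_2(\underline{k})\le RM$. It then uses that small jumps move $k$ by less than $w\cdot w^2=w^3$ across a block. So each increment of $\underline{k}^w$ takes at most $5$ values and each recorded value lies in a \emph{deterministic} window of size $4w^3$.

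The gap is in your middle step, the count of $\underline{k}^1_{\rm large}$. In the main version the window for $k_i$ has size about $w^3+w\sqrt{T(i)}$, where $T(i)$ is the $\tau_2$-mass between $i$ and its anchor. But $T(i)$ is not determined by $(\underline{k}_{\rm large},\underline{k}^w)$, so the product of these window sizes does not bound the number of configurations. Applying concavity of $\log$ to a configuration-dependent product does not turn it into such a bound. Moreover, windows of nearby recorded indices overlap, so the relevant budget is $O(w^2)\cdot RM$, not $RM$.

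The fallback is false as stated: $\sum_l d_l^2\lesssim w^2RM$ would require consecutive recorded indices to be within $w^2$ of each other. Take $k_1=w$, then $\lfloor RM/(2w^2)\rfloor$ steps of size $w-1$, then stay constant until a second jump of size $w$ near $i=M$. Then $\tau_2(\underline{k})\le RM$ (for $RM\ge 4w^2$), but one offset is about $RM/(2w)$, so $d_l^2\approx (RM)^2/(4w^2)\gg w^2RM$ once $RM\gg w^4$. With the true $\ell^2$ radius, which can be as large as $M\sqrt R$, the ball count is $\exp(O(m\log M))$, which is too large.

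The step is easy to repair in either of two ways.
\begin{itemize}
\item Follow the paper and fix the signed large-jump sizes. After that, all windows are deterministic of size $O(w^3)$.
\item Keep your offsets $d_l$ but measure them in $\ell^1$. You have $\sum_l|d_l|\le\sum_{t=1}^{M}|k_t-k_{t-1}|\le\sqrt{M\tau_2(\underline{k})}\le M\sqrt R$. So the number of offset vectors of length $N\le 2m$ is at most $2^N\binom{\lfloor M\sqrt R\rfloor+N}{N}\le\bigl(2e(1+M\sqrt R/N)\bigr)^N$. This is increasing in $N$, and at $N=2m$ it is $\exp(O(m\log w))$ because $M\sqrt R/(2m)=w^2/(2\sqrt R)$.
\end{itemize}
Your anchored version can also be made honest as a lattice-point count. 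Set $e(i):=k_i-w^3k^w_{i_0/w^2}$; then $\sum_i e(i)^2\le 10\,m w^6$, since each step lies in at most $2w^2+1$ windows, and $\Z^{2m}$ contains $\exp(O(m\log w))$ points of that norm. With any of these repairs, multiplying your three bounds gives the lemma.
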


\begin{proof}
First, let us fix the size of $\underline{k}_{\rm large}$ to be $s$; denote the corresponding subset of $\mathsf{M}^{w}_{R}$ by $\mathsf{M}^{w}_{R}(s)$. Clearly, $s\le RM/w^2$ since $\tau_2(\underline{k})\le RM$.

To bound $|\mathsf{M}^w_{R}(s)|$ for $s\le RM/w^{2}$, first fix the elements of $\underline{k}_{\rm large}$; clearly there are $\binom{M}{s}$ many choices. Then we fix the sizes of big jumps, i.e., ${k}_i-k_{i-1}$ for all $i\in \underline{k}_{\rm large}$. Since the sum total of the absolute values of these jumps can be at most $RM$ (actually one could improve upon this by Cauchy-Schwarz but we do not need it), by a standard counting argument, the number of choices here is at most $2^s \binom {RM +s}{s}$ (the factor $2^{s}$ comes from the fact that each jump can be either positive or negative). 
    
It remains to determine $k_{i-1}$'s for $i\in \underline{k}_{\rm large}$ and $\underline{k}^w$; we determine the number of choices for them together. Notice first that $k^{w}_{0}=0$ and if $k^{w}_{i_0}$ is fixed for some $i_0$, then we know $k_{i_0w^2}$ up to an error of $\pm w^{3}$. Now traversing the block $[i_0w^2,(i_0+1)w^2]$ from left to right, for the first $i\in \underline{k}_{\rm large}$, such that $i-1\in [i_0w^2,(i_0+1)w^2]$, we can determine $k_{i-1}$ up to an error of $\pm 2w^3$. Since we have already fixed the sizes of the large jumps, fixing $k_{i-1}$ as above also fixes $k_{i}$, and we can continue traversing the block $[i_0w^2,(i_0+1)w^2]$. Continuing this way we can determine $k_{(i_0+1)w^2}$ up to an error of $\pm 2w^3$ and hence $k^{w}_{i_0+1}$ is determined up to an error of $\pm 2$. Therefore, the total number of choices we have for this is at most $5^{M/w^2}(4w^3)^s$.   

Putting all these together, we get 
\begin{eqnarray*}
|\mathsf{M}^{w}_{R}(s)|  &\leq & 8^s5^{M/w^2}\binom{M}{s} \binom{RM+s}{s} w^{3s}\\
& \leq & \exp \biggl( c(s+M/w^2 +s\log (M/s)+s\log (RM/s +1)+3s\log w)\biggr)\\
& \leq & \exp \left(cRM \log w/w^2\right)
\end{eqnarray*}
for some constant $c>0$ where in the last inequality we have used the fact that $s\le RM/w^2$. Summing over $s$ from $0$ to $RM/w^2$ we get the required result. 
\end{proof}

\noindent
\textbf{Step 2: Bounding the tails for a fixed $j\in [j_{\min},j_{\max}]$:}
The usefulness of defining the mesoscopic coarsening is shown in the following lemma. 

\begin{lemma}
\label{l:fixedj}
Let $j\in [j_{\min}, j_{\max}]$ be fixed. Fix $\underline{k}^*\in \mathsf{M}^w_{R}$. Then, 
$$\max_{\underline{k}:\underline{k}_{\rm M}^{w}=\underline{k}^*} \sum_{i} \mathcal{V}^j_{i,k_{i-1},k_{i}}$$ 
is stochastically dominated by 
$$2^{j+1}\left( \frac{RM}{w^2}+\mathrm{Bin}(M,8w^4p(w))\right)$$
where $p(w)=\max_{|k-k'|\le w}\P(\mathcal{V}_{i,k,k'}\ge 2^{j})$.
\end{lemma}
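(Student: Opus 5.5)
Fix $j$ and $\underline{k}^*=(\underline{k}^w,\underline{k}_{\rm large},\underline{k}^1_{\rm large})\in\mathsf{M}^w_R$. For any $\underline{k}$ with $\underline{k}^w_{\rm M}=\underline{k}^*$, split $\sum_i \mathcal{V}^j_{i,k_{i-1},k_i}$ into indices $i\in\underline{k}_{\rm large}$ and indices $i\notin\underline{k}_{\rm large}$.

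\emph{Large-jump indices.} Each term is at most $2^{j+1}$, since $\mathcal{V}^j$ takes values in $\{0,2^{j+1}\}$. Because $\tau_2(\underline{k})\le RM$ and each large jump contributes at least $w^2$ to $\tau_2$, we have $|\underline{k}_{\rm large}|\le RM/w^2$. So this part is deterministically at most $2^{j+1}RM/w^2$. This gives the first summand of the dominating variable and needs no probability.

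\emph{Small-jump indices.} For $i\notin\underline{k}_{\rm large}$ we have $|k_i-k_{i-1}|<w$. The coarsening pins $k_{i-1}$ into a deterministic window around $w^3k^w_{\lfloor (i-1)/w^2\rfloor}$. Start from $k_{i_0w^2}\in[w^3k^w_{i_0},w^3(k^w_{i_0}+1))$ at the left end of the block $[i_0w^2,(i_0+1)w^2]$ containing $i$. Traverse the block and re-anchor at each large jump, using the exactly known values in $\underline{k}^1_{\rm large}$. In between, the steps are smaller than $w$ and there are at most $w^2$ of them. So every $k_{i-1}$ lies in a fixed interval $I_i$ determined by $\underline{k}^*$ alone; if needed I would enlarge $I_i$ slightly to length of order $w^3$.

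Define the indicator
\[
\xi_i=\max_{k\in I_i,\ |k'-k|\le w} I\big(\mathcal{V}_{i,k,k'}\ge 2^j\big).
\]
Then for every admissible $\underline{k}$, the small-jump contribution is at most $2^{j+1}\sum_i \xi_i$. By a union bound over the at most $|I_i|(2w+1)\le 8w^4$ pairs, $\P(\xi_i=1)\le 8w^4p(w)$. The $\xi_i$ depend on disjoint collections $\mathfrak{Z}_i$, so they are independent by hypothesis. Hence $\sum_i\xi_i\preceq \mathrm{Bin}(M,8w^4p(w))$. Since the bound on the maximum holds uniformly in $\underline{k}$ with coarsening $\underline{k}^*$, the claimed domination follows.

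\emph{Main obstacle.} The delicate point is the window size. Re-anchoring at the large jumps is exactly what keeps the windows of order $w^3$ rather than growing with the number of large jumps in a block. The constant $8$ may need adjusting, for instance to $8w^4$ versus $4w^3\cdot(2w+1)$, depending on the precise error $\pm2w^3$ from the counting in Lemma \ref{l:mesocount}.
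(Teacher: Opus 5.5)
Your proposal is correct and follows essentially the same route as the paper. You bound the at most $RM/w^2$ large-jump terms trivially by $2^{j+1}$. For the remaining indices, $\underline{k}^*$ confines $(k_{i-1},k_i)$ to a deterministic set of at most $8w^4$ pairs; a union bound against $p(w)$ and the independence of the $\mathfrak{Z}_i$ across $i$ then give the binomial domination. Your explicit indicators $\xi_i$, and the re-anchoring at the exactly recorded values in $\underline{k}^1_{\rm large}$, simply spell out what the paper compresses into ``$k_{i-1}$ is determined up to an error of $4w^3$ as in the proof of Lemma~\ref{l:mesocount}.''
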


\begin{proof}
Once $\underline{k}^*\in \mathsf{M}^w_{R}$ is fixed, we know that set $I$ of the locations of jumps larger than $w$. For each $i\in I$ we upper bound 
$\mathcal{V}^j_{i,k_{i-1},k_{i}}$ trivially by $2^{j+1}$. Since $|I|\le RM/w^2$ this gives the first term in the statement of the lemma. 
Clearly, it suffices to show now that for $i=1,2,\ldots, M$, $i\notin I$ 
$\max_{\underline{k}:\underline{k}_{\rm M}^{w}=\underline{k}^*}I(\mathcal{V}^j_{i,k_{i-1},k_{i}}\ne 0)$ are stochastically dominated by independent $\mbox{Ber}(8w^4p(w))$ variables.

Once $\underline{k}_{\rm M}^{w}=\underline{k}^*$, is fixed, for each $i\notin \underline{k}_{\rm large}$, the choice of $(k_{i-1},k_{i})$ is fixed in a deterministic set of size at most $8w^4$. To see this, if $i\notin \underline{k}_{\rm large}$, then given $\underline{k}^{w}$, $\underline{k}_{\rm large}$ and 
    $\underline{k}^1_{\rm large}$, $k_{i-1}$ is determined up to an error of $4w^3$ as in the proof of Lemma \ref{l:mesocount}, and fixing this, $k_{i}$ can take one of the $2w$ possible values since $|k_{i}-k_{i-1}|\le w$. 

Therefore, using the tail hypothesis on $\mathcal{V}_{i,k_{i-1},i}$ for each $i\notin I$, and the definition of $p(w)$ together with a union bound we get 
    $$\P\left(\max_{\underline{k}:\underline{k}_{\rm M}^{w}=\underline{k}^*}\mathcal{V}^j_{i,k_{i-1},k_{i}}>0\right)\le 8w^4p(w).$$
    Since $\mathcal{V}_{i,k_{i-1},i}$ are independent across $i$, the claim and hence the lemma follows. 
 \end{proof}

We shall next prove tail bounds for 
$$\max_{\underline{k}:\tau_2(\underline{k})\le RM} \sum_{i} \mathcal{V}^j_{i,k_{i-1},k_{i}}$$
for a fixed $j\in [j_{\min},j_{\max}]$. For this we need to specify the value of $w_j$. For $j_{\min}\le j\le j_{\max}$ we set 
$$w=w_j=2^{7j/10}.$$
Also, for $z>0$ we set 
$$z_j=\frac{2^{j+1} RM}{w_j^2}+ 2^{-j/100}R^{3/4}M + (z+\log^C(RM))2^{\varepsilon(j-j_{\max})}$$
for some $\varepsilon>0$ to be fixed later and where $C$ is as in the definition of $j_{\max}$. For notational convenience we shall write $\widetilde{z}=(z+\log^C(RM))$ and $\widetilde{z}_j=(z+\log^C(RM))2^{\varepsilon(j-j_{\max})}$. 
 Notice also that for this choice of $w$, we have, using the hypothesis on $\mathcal{V}_{i,k,k'}$ and the fact that $\delta<1/100$, that 
 $$8w_j^{4}p(w_j)\le C_1\exp(-C_22^{j\xi/4})$$
 for $j$ sufficiently large (which is ensured by taking $R$ sufficiently large and recalling $j\ge j_{\min}$.)

\begin{lemma}
    \label{l:minmaxj}
    There exists $c>0$, such that for $j\in [j_{\min},j_{\max}]$,
    $$\P\left(\max_{\underline{k}:\tau_2(\underline{k})\le RM} \sum_{i} \mathcal{V}^j_{i,k_{i-1},k_{i}} \ge z_j\right)\le \exp\left(-c2^{j_{\max}\xi/4} \right).$$ 
\end{lemma}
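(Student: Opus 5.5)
The natural route is a union bound over the mesoscopic coarsenings, with Lemma~\ref{l:fixedj} supplying a Chernoff bound for each fixed coarsening. Every $\underline{k}$ with $\tau_2(\underline{k})\le RM$ has its coarsening $\underline{k}^{w_j}_{\rm M}$ in $\mathsf{M}^{w_j}_R$. Hence
\[
\P\Big(\max_{\underline{k}:\tau_2(\underline{k})\le RM} \sum_{i} \mathcal{V}^j_{i,k_{i-1},k_{i}} \ge z_j\Big)\le |\mathsf{M}^{w_j}_R|\,\max_{\underline{k}^*\in \mathsf{M}^{w_j}_R}\P\Big(\max_{\underline{k}:\underline{k}^{w_j}_{\rm M}=\underline{k}^*} \sum_{i} \mathcal{V}^j_{i,k_{i-1},k_{i}} \ge z_j\Big).
\]
By Lemma~\ref{l:mesocount} with $w_j=2^{7j/10}$, the entropy factor is at most $\exp(cRM j\,2^{-7j/5})$.

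For a fixed coarsening $\underline{k}^*$, Lemma~\ref{l:fixedj} lets us replace the sum by $2^{j+1}(RM/w_j^2+\mathrm{Bin}(M,q_j))$, where $q_j\le C_1\exp(-C_2 2^{j\xi/4})$. The first term $2^{j+1}RM/w_j^2$ is exactly the first summand of $z_j$, so it cancels. What remains is to bound
\[
\P\big(\mathrm{Bin}(M,q_j)\ge m_j\big),\qquad m_j:=2^{-j-1}\big(2^{-j/100}R^{3/4}M+\widetilde z_j\big).
\]
For this I would use the standard estimate $\P(\mathrm{Bin}(M,q)\ge m)\le \binom{M}{m}q^m\le (eMq/m)^m$. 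Since $\log(1/q_j)\gtrsim 2^{j\xi/4}$, and $2^{j\xi/4}$ dominates $\log(eM/m_j)$ (the latter is at most $O(j+\log R)$ in the bulk regime, and $m_j\ge 1$ can be arranged), this gives a bound of order $\exp(-c\,m_j 2^{j\xi/4})$.

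The crux is comparing exponents. The bulk part of $m_j$ contributes
\[
m_j2^{j\xi/4}\gtrsim 2^{-j(1+1/100)}R^{3/4}M\,2^{j\xi/4},
\]
which must beat the entropy $RMj2^{-7j/5}$. Their ratio is of order $R^{-1/4}2^{j(2/5-1/100+\xi/4)}/j$. Because $j\ge j_{\min}\approx \tfrac34\log_2 R$, we have $2^{2j/5}\gtrsim R^{3/10}\gg R^{1/4}$, so the ratio exceeds $1$ for $R$ large, and half of the Binomial exponent absorbs the entropy. The other half, coming from the $\widetilde z_j$ part of $m_j$, gives
\[
\exp\big(-c\,2^{-j}\widetilde z_j\,2^{j\xi/4}\big).
\]
Here I expect the real obstacle. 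Since $\widetilde z\ge z$ and $2^{j_{\max}}\approx\widetilde z$, we get $2^{-j}\widetilde z_j\ge 2^{(j_{\max}-j)(1-\varepsilon)}$. Choosing $\varepsilon<\xi/4$ and working at exponent $\xi/4$ as in the statement, this is at least $2^{j_{\max}\xi/4}$ up to constants. The exponents are consistent, but the constant bookkeeping is delicate: one must handle the case $m_j<1$ (where the needed probability is $\P(\mathrm{Bin}\ge1)\le Mq_j$, which is fine since $\log M\ll 2^{j\xi/4}$ only if $j$ is large relative to $\log\log M$ — ensured by $2^{j_{\max}}\ge\log^C(RM)$ together with using $\widetilde z_j$), and one must also track the factor $j$ in the entropy. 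I would fix $\varepsilon$ small depending on $\xi$, then $C$ and $R$ large, to make all of these comparisons hold.
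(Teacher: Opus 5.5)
Your proposal is correct and follows essentially the same route as the paper's proof. Both arguments union bound over $\mathsf{M}^{w_j}_R$ via Lemma~\ref{l:mesocount}, use Lemma~\ref{l:fixedj} to reduce to a Binomial tail with a Chernoff-type bound, absorb the entropy $cRM\log w_j/w_j^2$ into the $2^{-1.01j}R^{3/4}M$ part using $R^{1/4}\le 2^{(j+1)/3}\le 2^{0.39j-1}$, and extract $\exp(-c2^{j_{\max}\xi/4})$ from the $\widetilde z_j$ part using $\widetilde z\ge 2^{j_{\max}-1}$ and $\varepsilon+\xi/4<1$. Your separate treatment of $m_j<1$ is unnecessary, since the real-parameter Chernoff bound covers it and $M/m_j\le 2^{1.01j+1}$ keeps $\log(eM/m_j)=O(j)$. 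Your choice $\varepsilon<\xi/4$ is stronger than needed but implies the required condition $\varepsilon+\xi/4<1$.
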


Postponing the proof of Lemma \ref{l:minmaxj} for now, we first complete the proof of Lemma \ref{l:minmax}. 

\noindent
\textbf{Step 3: Completing the proof of Lemma \ref{l:minmax}:}
First note the it suffices to prove the lemma for $z,R$ and $M$ sufficiently large. Notice that for $R$ and $M$ sufficiently large
$$\sum_{j=j_{\min}}^{j_{\max}} z_{j} \le C_4R^{3/4}M+C_7z$$
for some $C_4,C_7\in (0,\infty)$.
Therefore, it suffices to upper bound 
$$\sum_{j=j_{\min}}^{j_{\max}} \P\left(\max_{\underline{k}:\tau_1(\underline{k})\le RM} \sum_{i} \mathcal{V}^j_{i,k_{i-1},k_{i}} \ge z_j\right)$$
at which point we use Lemma \ref{l:minmaxj}.

Notice that by definition 
$$2\widetilde{z} \ge 2^{j_{\max}} \ge \widetilde{z}.$$ 
Therefore, we have 
$$\exp\left(-c2^{j_{\max}\xi/4} \right)\le \exp(-c\widetilde{z}^{\xi/4}).$$
Notice also that $j_{\max}\le \log_2 (2\widetilde{z})$ and we get by Lemma \ref{l:minmaxj}
$$\sum_{j=j_{\min}}^{j_{\max}} \P\left(\max_{\underline{k}:\tau_1(\underline{k})\le RM} \sum_{i} \mathcal{V}^j_{i,k_{i-1},k_{i}} \ge z_j\right) \le \log_2 (2\widetilde{z})\exp(-c\widetilde{z}^{\xi/4}).$$
The conclusion of the lemma follows from noting $\widetilde{z}\ge z$ and $z$ is sufficiently large. \qed

We finish by proving Lemma \ref{l:minmaxj}.

\noindent 
\textbf{Step 4: Proof of Lemma \ref{l:minmaxj}:}
Using Lemma \ref{l:fixedj}, definition of $z_j$, and the choice of  $w_j$,  the upper bound on $8w^{4}p(w)$, and taking a union bound over all choices of $\underline{k}^{w}_{\rm M}$, and using Lemma \ref{l:mesocount}, it follows that 
\begin{align*}
    \P\left(\max_{\underline{k}:\tau_2(\underline{k})\le RM} \sum_{i} \mathcal{V}^j_{i,k_{i-1},k_{i}} \ge z_j\right) & \le \exp(cRM\log w/w^2)\\ &\times \P\left(\mbox{Bin}(M,C_1\exp(-C_22^{j\xi/4}))\ge R^{3/4}M2^{-(1.01j+1)}+\widetilde{z}_j2^{-(j+1)}\right).
\end{align*}

Using a Chernoff bound we get that this is further upper bounded by

$$\exp\left(\frac{cRM\log w}{w^2}-\frac{R^{3/4}M\log (C_1^{-1}2^{-1.01j-1}\exp(C_22^{j\xi/4}))}{2^{1.01j+1}}-\frac{\widetilde{z}_j\log (C_1^{-1}2^{-1.01j-1}\exp(C_22^{j\xi/4}))}{2^{j+1}}\right).$$

Our first claim is that for $j$ sufficiently large (which is ensured by noting $j>j_{\min}$ and choosing $R$ sufficiently large) we have 
$$ \frac{R^{3/4}M\log (C_1^{-1}2^{-1.01j-1}\exp(C_22^{j\xi/4}))}{2^{1.01j+1}} \ge \frac{cRM \log w}{w^2}.$$
Indeed, note that by our choice of $w$, $\log w$ is linear in $j$, whereas 
$\log (C_1^{-1}2^{-1.01j-1}\exp(C_22^{j\xi/4}))$ grows exponentially in $j$. Therefore it suffices to verify that 
$$\frac{R^{1/4}}{w^2}\le \frac{1}{2^{1.01j+1}}.$$
Substituting the value of $w_j$ this reduces to 
$$R^{1/4}\le 2^{0.39j-1} $$
and this follows since $R^{1/4}\le 2^{(j_{\min}+1)/3}\le 2^{(j+1)/3}$ and $j$ is sufficiently large. 

Therefore the required probability is upper bounded by 

$$\exp\left(-\frac{\widetilde{z}_j\log (C_1^{-1}2^{-1.01j-1}\exp(C_22^{j\xi/4}))}{2^{j+1}}\right).$$
Again, for $j$ sufficiently large 
$$ \log (C_1^{-1}2^{-1.01j-1}\exp(C_22^{j\xi/4})) \ge c2^{j\xi/4}$$
for some $c>0$ and hence this is further upper bounded by 
$$\exp\left(-\frac{c\widetilde{z}2^{\varepsilon(j-j_{\max})}2^{j\xi/4}}{2^{j+1}}\right).$$
Using $\widetilde{z}\ge 2^{j_{\max}-1}$ the above expression is upper bounded by 

$$\exp\left(-\frac{c2^{(j-j_{\max})(\xi/4+\varepsilon-1)}2^{j_{\max}\xi/4}}{4}\right).$$

Now choose $\varepsilon$ sufficiently small so that $\varepsilon+\xi/4-1<0$, since $j\le j_{\max}$ this implies that the above expression is upper bounded by 
$$\exp\left(-\frac{c}{4}2^{j_{\max}\xi/4}\right)$$
and the proof is complete. 
\qed

\subsection{Bounding $\tau_2$ fluctuation of the conforming geodesic}
Recall that for the conforming geodesic $\gamma$ from $(0,0)$ to $(Mn,0)$ we defined $k_i=k_i(\gamma):=\lfloor \frac{y_i}{W_n} \rfloor$ where $(in,y_i)$ denote the {canonical} points where $\gamma$ intersects the line $x=in$. Recall that Proposition \ref{p:tau2perc} requires us to show that setting
$$\tau_2(\gamma):=\tau_2(\uk)= \sum_i (k_i-k_{i-1})^2$$ there exists some constant $C_7$ such that $\tau_2(\gamma)-C_7M$ has stretched exponential tails.

To prove Proposition \ref{p:tau2perc} we need the following corollary of Proposition \ref{p:perc1}. 

\begin{corollary}
\label{c:tau2perc}
For any $\cZ_{i,k,k'}$ satisfying the hypothesis of Proposition~\ref{p:perc1} and for $\lambda>0$, there exist $C'_7, C_8, C_9$ such that
\[
\P\left[\max_{\uk\in \mathfrak{K}_{M}} \bigg( \sum_{i=1}^M \mathcal{V}_{i,k_{i-1}, k_{i}} -  \lambda\tau_2(\uk)\bigg)\geq C'_7(1+\lambda^{-5})M  + z \right] \leq C_8\exp\left(-C_9 z^{\xi/4}\right ).
\]
\end{corollary}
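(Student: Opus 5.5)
The idea is to peel off dyadic shells in $\tau_2(\uk)$ and apply Proposition~\ref{p:perc1} on each shell with a $\tau_2$ budget adapted to that shell.

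For integers $m\ge 0$, set $R_m = R_0 2^m$, where $R_0\ge 1$ is a large constant depending on $\lambda$ and fixed below. Every $\uk\in\mathfrak{K}_M$ satisfies either $\tau_2(\uk)\le R_0 M$ (shell $m=0$) or $R_{m-1}M<\tau_2(\uk)\le R_m M$ for a unique $m\ge1$. On shell $m$ we have $\lambda\tau_2(\uk)\ge \lambda R_{m-1}M$ for $m\ge 1$, and $\lambda\tau_2\ge 0$ on shell $0$. Hence the event in question is contained in the union over $m$ of
\[
\Big\{\max_{\tau_2(\uk)\le R_m M}\sum_i \mathcal{V}_{i,k_{i-1},k_i}\ge C_7'(1+\lambda^{-5})M + z + \lambda R_{m-1}M\,I(m\ge1)\Big\}.
\]
By Proposition~\ref{p:perc1} with $R=R_m$, each such event has probability at most $C_5\exp(-C_6 z_m^{\xi/4})$ whenever
\[
C_7'(1+\lambda^{-5})M + z + \lambda R_{m-1}M\,I(m\ge 1) \ge (C_3+C_4R_m^{3/4})M + z_m .
\]
The key arithmetic is to pick $z_m=z+\tfrac12\lambda R_{m-1}M\,I(m\ge1)$ and to absorb $C_4R_m^{3/4}M$.

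For $m\ge1$, we need $C_4R_m^{3/4}\le \tfrac12\lambda R_m/2 + C_7'(1+\lambda^{-5})$. By Young's inequality, $C_4R^{3/4}\le \tfrac14\lambda R + C\lambda^{-3}$. Since $\lambda^{-3}\le 1+\lambda^{-5}$, this is absorbed once $C_7'$ is large; the generous power $\lambda^{-5}$ leaves slack for constants. For $m=0$, we choose $R_0=1$ and need $C_3+C_4\le C_7'$. With these choices the union bound gives
\[
\sum_{m\ge0} C_5\exp\big(-C_6(z+\tfrac12\lambda 2^{m-2}M\,I(m\ge1))^{\xi/4}\big).
\]
We bound this by splitting $(a+b)^{\xi/4}\ge \tfrac12(a^{\xi/4}+b^{\xi/4})$. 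The sum over $m$ of $\exp(-c(\lambda 2^mM)^{\xi/4})$ is at most a constant $C(\lambda)$. To get constants independent of $\lambda$, we note this sum is dominated by $\sum_m\exp(-c(\lambda 2^m)^{\xi/4})\lesssim 1+\log_+(1/\lambda)$, and we absorb that factor by reducing $C_9$ and enlarging $C_7'$ (for instance, shift $z$ by $M$ and use $\log_+(1/\lambda)\le \lambda^{-5}$).

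The main obstacle is keeping the constants $C_8,C_9$ uniform in $\lambda$, as the statement seems to demand. The summation over the shells $m$ with small $\lambda R_m M$ could otherwise produce a $\lambda$-dependent prefactor. I would handle this by starting the shells at $R_0=\lambda^{-4}$ rather than $1$. Then $\lambda R_m M\ge \lambda^{-3}2^m M$, which is large, so the geometric sum is uniformly bounded. Shell $0$ then costs $C_4R_0^{3/4}M=C_4\lambda^{-3}M$, which is still absorbed into $C_7'(1+\lambda^{-5})M$.
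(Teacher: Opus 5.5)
Your proposal is correct and follows essentially the same route as the paper. Like the paper, you split by dyadic shells of $\tau_2(\uk)$, apply Proposition~\ref{p:perc1} on each shell with $R$ equal to the shell's upper endpoint, use the penalty $\lambda\tau_2$ to absorb the $C_4R^{3/4}$ growth (you via Young's inequality, the paper via a direct case check), and sum the resulting tails.

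The paper allows $C_8,C_9$ to depend on $\lambda$, and only $\lambda=1/64$ is used later, so your $R_0=\lambda^{-4}$ refinement is not needed. It is nevertheless valid, provided you take $R_0=\max(1,\lambda^{-4})$ so that the requirement $R\ge 1$ in Proposition~\ref{p:perc1} still holds when $\lambda>1$.
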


\begin{proof}
Let $C_3,C_4$ be as in the statement of Proposition \ref{p:perc1}. If $\lambda\geq 1$ then for large enough $C'_7$ we have that for all $x\geq 1$,
\[
C'_7 + \frac14 \lambda x \geq C'_7 + \frac14  x > C_3 +C_4x^{3/4}.
\]
For $0<\lambda<1$, if $x\geq \lambda^{-5}>1$ then provided $C'_7$ is large enough,
\[
C'_7 + \frac14 \lambda x \geq C_3 +  C_4x^{3/4}
\]
while if $1\leq x\leq \lambda^{-5}$ then again provided $C'_7$ is large enough,
\[
C'_7 +\lambda^{-5} > C_3 + C_4x^{3/4}.
\]
Thus, by using these equations with $x=2^j$, we may pick $C'_7>0$ large enough depending only on $C_3$ and $C_4$ such that for all $\lambda>0$ and $j\geq 0$,
\[
C'_7(1+\lambda^{-5}) + \lambda 2^{j-2} > C_3+ C_4 2^{3j/4}.
\]
For $j\geq 1$, let $\cH_j$ be the event that there exists $\uk\in \mathfrak{K}_{M}$ with $2^{j-1} M \leq \tau_2(\uk)\leq 2^j M$ such that
\[
\sum_{i=1}^M \mathcal{V}_{i,k_{i-1}, k_{i}} > (C_3+C_42^{3j/4})M + \lambda 2^{j-2}M + z
\]
and let $\cH_0$ be the event that there exists $\uk\in \mathfrak{K}_{M}$ with $0\leq \tau_2(\uk)\leq M$ such that
\[
\sum_{i=1}^M \mathcal{V}_{i,k_{i-1}, k_{i}} > (C_3+C_4) M + z.
\]
By our choice of $C'_7$ sufficiently large, if
\[
\sum_{i=1}^M \mathcal{V}_{i,k_{i-1}, k_{i}} - \lambda \tau_2(\uk)\geq C'_7(1+\lambda^{-5})M +z
\]
holds for some $\uk$ then $\cH_j$ holds for $j=\lceil\log_2 (\tau_1(\uk)/M)\rceil\vee 0$. Therefore, the required probability is upper bounded by $\P[\cup_{j=0}^\infty \cH_j]$. By Proposition~\ref{p:perc1},
\begin{align*}
\P[\cup_{j=0}^\infty \cH_j] &\leq \sum_{j=0}^\infty  C_5\exp\left(-C_6 (\lambda 2^{j-2}M + z)^{\xi/4}\right )\\
&\leq C_8\exp\left(-C_9 z^{\xi/4}\right )
\end{align*}
for some $C_8,C_9$ (depending on $\lambda$),
completing the proof.
\end{proof}

\subsubsection{Proof of Proposition \ref{p:tau2perc}}
We shall prove the following statement which implies Proposition~\ref{p:tau2perc} and will be useful elsewhere in the paper. Let $M$ be a large integer and let $r$ be an integer multiple of $n$ with $r\le M^{1/100}n$. Let $D$ be an integer with $2^{\lfloor\log_2\log_2 M\rfloor^{5}}\le D \le M$.  

Let $s_1,s_2$ be fixed with $|s_1|\le \frac{1}{2}\frac{n^{\beta}W_n}{W_r}, |s_1-s_2|\le D^{9/10}$. For $u\in \ell_{0,s_1W_r,(s_1+1)W_r}$ and $v\in \ell_{Dr,s_2W_r, (s_2+1)W_r}$ let $\gamma_{uv}$ denote the conforming geodesic from $u$ to $v$. For $1\le i \le D$, set $J^{uv}_{i,r}=\lfloor \frac{y_{i}}{W_r} \rfloor$ where $(ir,y_{i})$ is the point where $\gamma_{uv}$ intersects the line $x=ir$. Set 
$$\tau_{2,r}(\gamma_{uv}):=\sum_{i} (J^{uv}_{i,r}-J^{uv}_{i-1,r})^2.$$
We have the following lemma. 

\begin{lemma}
    \label{l:tau2percgen}
    There exist $C_7,c,\theta_4>0$ such that for all $M$ sufficiently large, $n=n(M)$ sufficiently large and for all $s_1,s_2, D,r$ as above
    we have
    $$\P\left(\max_{u\in \ell_{0,s_1W_r,(s_1+1)W_r}} \max_{v\in \ell_{Dr,s_2W_r,(s_2+1)W_r}} \tau_{2,r}(\gamma_{uv}) \ge C_7D+z\right) \le \exp(1-cz^{\theta_4}).$$
\end{lemma}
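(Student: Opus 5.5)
The plan is to compare the geodesic's passage time with a penalized polymer sum and apply Corollary \ref{c:tau2perc}. For any conforming path $\zeta$ from $u$ to $v$ with column-crossing heights $k_0,\dots,k_D$ (in units of $W_r$), decompose $\rX_\zeta\ge\sum_{i=1}^D \rX_{u_{i-1}u_i}$, where $u_i=(ir,y_i)$ are its canonical crossings. Write $\rX_{u_{i-1}u_i}\ge r+\frac{(y_i-y_{i-1})^2}{2r}-\mathcal{V}_{i,k_{i-1},k_i}Q_r$. Here
\[
\mathcal{V}_{i,k,k'}:=Q_r^{-1}\sup_{u'\in\ell_{(i-1)r,kW_r,(k+1)W_r},\,v'\in\ell_{ir,k'W_r,(k'+1)W_r}}\Big(r+\tfrac{|u'-v'|_2^2-r^2}{2r}\cdot\mathbf{1}-\rX_{u'v'}\Big)^+,
\]
or more simply $\mathcal{V}_{i,k,k'}=Q_r^{-1}\sup(-\hrX_{u'v'})^+$ plus an error coming from the Taylor expansion of $|u'-v'|$. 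Since $(y_i-y_{i-1})^2/(2r)\ge \frac12((k_i-k_{i-1})^2-2|k_i-k_{i-1}|)Q_r$, we get
\[
\rX_{uv}\ge Dr+\tfrac14\tau_2(\uk)Q_r-C D Q_r-\sum_i\mathcal{V}_{i,k_{i-1},k_i}Q_r .
\]
The required tail hypothesis for Proposition \ref{p:perc1} is $\P[\mathcal{V}_{i,k,k'}\ge z]\le C_1\exp(-C_2(z/(1+|k-k'|^{1+\delta}))^\xi)$. This follows from Proposition \ref{p:paraestimateconforming}(ii), which gives a lower bound on $\rX-|u-v|$ at scale $Q_r$ uniformly over parallelograms. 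The Pythagoras correction $|u'-v'|-r-\frac{(y'-y)^2}{2r}$ is of order $O(|k-k'|^4Q_r^2/r)$, which is negligible for $|k-k'|\le r/W_r$. Columns are independent by construction of $\omega_*$. The range restriction $|j_1W_r|,|j_2W_r|\le n^\beta W_n$ will be handled by first restricting to the event, coming from Corollary \ref{c:proxytrans} with stretched exponential cost, that all geodesics $\gamma_{uv}$ stay within $|y|\le D^{9/10+}W_r\ll n^\beta W_n$. For the truncated paths, define $\mathcal{V}$ to be $0$ outside this range.

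For the upper bound I exhibit a competitor: the conforming path obtained by concatenating column geodesics between $u_i^*=(ir,(s_1+\frac{i}{D}(s_2-s_1))W_r)$. By Proposition \ref{p:paraestimateconforming}(i), summed with independent stretched-exponential tails, this gives $\rX_{uv}\le Dr+\frac{(s_2-s_1)^2W_r^2}{2Dr}+C D Q_r+zQ_r$, uniformly over $u,v$ in the unit segments. The second term is at most $D^{4/5}Q_r\le DQ_r$. Concentration of a sum of $D$ independent stretched-exponential variables at level $CD+z$ is standard.

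Combining the two bounds, on the good event the geodesic has
\[
\tfrac14\tau_2(\uk)\le CD+z+\max_{\uk}\Big(\sum_i\mathcal{V}_{i,k_{i-1},k_i}-\tfrac18\tau_2(\uk)\Big)+\tfrac18\tau_2(\uk).
\]
Rearranging and applying Corollary \ref{c:tau2perc} with $\lambda=1/8$ (with $D$ in place of $M$, after shifting so that $k_0$ ranges over one of boundedly many values, taking a union bound over it) gives $\tau_2\ge C_7D+z$ with probability at most $\exp(1-cz^{\theta_4})$.

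The main obstacle is making the maximum over $u,v$ and over non-unique canonical crossings clean. The approach is to bound every conforming path simultaneously via the maximum over $\uk$, so uniformity is automatic. The remaining care is in the truncation step and in checking the $|k-k'|^{1+\delta}$ tail, because Proposition \ref{p:paraestimateconforming} is stated for parallelograms of bounded slope.
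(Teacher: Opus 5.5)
Your proposal is correct in substance, and its skeleton is the paper's:
\begin{itemize}
\item lower-bound $\rX_{uv}$ by the sum of the $D$ column crossings;
\item write each crossing as $r$, plus a quadratic penalty in $k_i-k_{i-1}$, minus a column variable $\mathcal{V}_{i,k,k'}$ whose tail comes from Proposition~\ref{p:paraestimateconforming}(ii);
\item compare with an upper bound on $\max_{u,v}\rX_{uv}$;
\item finish with Corollary~\ref{c:tau2perc}.
\end{itemize}
The paper takes $\mathcal{V}_{i,k,k'}=-(Z_{i,k,k'}-r)/Q_r+\frac{(k-k')^2}{32}$, with $Z_{i,k,k'}$ the infimum of $\rX$ between the two unit segments. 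It then splits $\tau_2$ dyadically and uses $\lambda=1/64$; these differences are cosmetic.

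The genuine difference is the upper bound. The paper gets $\max_{u,v}\rX_{uv}\le Dr+(1+z^{1/5})D^{4/5}Q_r$ in one stroke from point-to-point concentration at scale $Dr$. It uses Theorem~\ref{t:all} and Lemma~\ref{l:proxy}, with $Q_{Dr}\le D^{3/4}Q_r$ and $|s_2-s_1|\le D^{9/10}$. You instead concatenate $D$ independent column crossings along the straight line, using only Proposition~\ref{p:paraestimateconforming}(i) and concentration of a sum of independent stretched-exponential variables. Your error $CDQ_r+zQ_r$ is linear in $D$ rather than of order $D^{4/5}$. Since the threshold is $C_7D$, this only forces a larger $C_7$. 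In exchange your argument never leaves the column scale or the restricted model. The paper's route is shorter and gives a sharper bound that is not needed here.

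One step is wrong as written and should be deleted rather than repaired. Restricting to the event that all $\gamma_{uv}$ stay within $D^{9/10+}W_r$ of the starting height costs about $\exp(-D^{c})$, since the natural scale is $W_{Dr}\le D^{7/8}W_r$. That cost does not decay in $z$, so for $z\gg D^{c'}$ it swamps the target $\exp(1-cz^{\theta_4})$.

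No truncation is needed, for three reasons:
\begin{itemize}
\item By definition, every crossing height of a conforming path lies in $[-n^\beta W_n,n^\beta W_n]$.
\item Proposition~\ref{p:paraestimateconforming}(ii) holds on that whole range for all $j_1<j_2$, with tails uniform in $j_2-j_1$. So the $|k-k'|^{1+\delta}$ allowance is not even used.
\item On that range the Pythagoras error is at most $a^4/(8r^3)$ with $a\le 2n^\beta W_n\le 2n^\beta W_r$. This gives at most $2n^{4\beta}Q_r^2/r=O(n^{4\beta-1/2})Q_r=o(Q_r)$ for $\beta<1/8$.
\end{itemize}
Your claim that this error is negligible for all $|k-k'|\le r/W_r$ is false: at slope of order one it is of order $r$. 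It is only the restriction to the conforming range that saves you.

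So set $\mathcal{V}_{i,k,k'}=0$ only for pairs $(k,k')$ that no conforming path can realize. The rest of your argument then gives the bound for every $z$ directly. For comparison, the paper truncates only to the strip $|y|\le\frac12 n^\beta W_n$. It does so after using the deterministic bound $\tau_2\le 2Dn^{2\beta}$ to reduce to $z\le n^{2\delta}$, where the cost $\exp(-n^{c})$ is harmless.
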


\begin{proof}
    Fix $M,s_1,s_2,D,r$ as in the statement of the lemma. To avoid notational clutter, we shall assume $s_1=0$ and $s_2=s$. It will be clear from the proof that the same argument works in general. We shall also drop the subscript $r$ from $J$ and $\tau_2$.  
     Observe first that by definition $\max |J^{uv}_i|\le n^{\beta}$ and hence $\max_{u,v} \tau_2(\gamma_{uv})\le 2Dn^{2\beta}$ and therefore it suffices to prove the lemma for values of $z\ll n^{2\delta}$ where $\delta\ll \beta$. Therefore it suffices to only consider paths $\gamma$ which do not exit that strip $\R\times [-\frac{1}{2}n^{\beta}W_n, \frac{1}{2}n^{\beta}W_n]$. 

    For $i=1,2,\ldots, D$ and $k,k'\in [-\frac{1}{2}n^{\beta}\frac{W_n}{W_r},\frac{1}{2}n^{\beta}\frac{W_n}{W_r}]$, let us set 

$$Z_{i,k,k'}:=\inf_{\substack{u\in \ell_{(i-1)r,kW_{r}, (k+1)W_r},\\ v\in \ell_{ir,k'W_{r}, (k'+1)W_r}}} \rX_{uv}.$$

We shall show that on a set of probability at least $1-\exp(1-cz^{\theta_4})$, for all $\uk=(k_0,\ldots, k_{M})$ with $k_0=0$ and $k_{D}=s$, $|k_i|\le \frac{1}{2}n^{\beta}\frac{W_n}{W_r}$ with $\tau_2(\uk)\ge C_7M+z$ we have 
$$\sum_{i} Z_{i,k_{i-1},k_i} > \max_{u\in \ell_{0,0,W_r}} \max_{v\in \ell_{Dr,sW_r,(s+1)W_r}} \rX_{uv};$$
clearly this suffices. Observe now that by Theorem \ref{t:all}, Lemma \ref{l:proxy}, the assumption on $s$ and the fact that $Q_{Dr}\le D^{3/4}Q_{r}$ we have that for some $\theta_4>0$
$$\P\left(\max_{u\in \ell_{0,0,W_r}} \max_{v\in \ell_{Dr,sW_r,(s+1)W_r}} \rX_{uv}\ge Dr+D^{4/5}Q_r+z^{1/5}D^{4/5}Q_r\right) \leq \exp(1-cz^{\theta_4}).$$

Observe also that it suffices to prove the above statement for $z$ sufficiently large. It therefore suffices to show that for $C_7$ chosen large enough

\begin{equation}
    \label{e:tau2suffice}
    \P\bigg(\min_{\substack{\uk: \tau_2(\uk)\ge C_7+z\\ |k_i|\le \frac{1}{2}n^{\beta}\frac{W_n}{W_r}}} \sum_{i=1}^{D}Z_{i,k_{i-1},k_i}\le Dr+\frac{z^{1/5}D^{4/5}Q_r}{10000} \bigg) \le \exp(1-cz^{\theta_4}). 
\end{equation}
To prove \eqref{e:tau2suffice}, set
    \[
\mathcal{V}_{i,k,k'}:=\left(-(Z_{i,k,k'} -r)/Q_r + \frac{(k-k')^2}{32}\right).
\]
We know by Proposition~\ref{p:paraestimateconforming} that $\mathcal{V}_{i,k,k'}$ satisfy the hypothesis of Proposition \ref{p:perc1} for $\xi=\theta_2$ (in fact it satisfies stronger tail estimates).
Plugging in the formula for $\mathcal{V}$, notice that 
$$\sum_{i} Z_{i,k_{i-1},k_i}=Dr -Q_r\left(\sum_{i}\mathcal{V}_{i,k_{i-1},k_i}-\frac{1}{32}\tau_2(\uk)\right).$$

Therefore, for $\ell\ge 1$, we get that 
 \[
 \P\left[\min_{\substack{\uk\in \mathfrak{K}_{D},|k_i|\le \frac{1}{2}n^{\beta}\frac{W_n}{W_r}\\C_7D+2^{\ell}z\ge \tau_2(\uk)\ge C_7D+2^{\ell-1}z}} \sum_{i=1}^D Z_{i,k_{i-1}, k_{i}} \le Dr+\frac{z^{1/5}D^{4/5}Q_{r}}{10000} \right]
 \]
equals 
 \[
 \P\left[\max_{\substack{\uk\in \mathfrak{K}_{D}, |k_i|\le \frac{1}{2}n^{\beta}\frac{W_n}{W_r}\\C_7D+2^{\ell}z\ge \tau_2(\uk)\ge C_7D+2^{\ell-1}z}} \sum_{i=1}^D \mathcal{V}_{i,k_{i-1}, k_{i}}-\frac{1}{32}\tau_2(\uk) \ge -\frac{z^{1/5}D^{4/5}}{10000}\right]
 \]
 which, using the range of $\tau_2(\uk)$ is further upper bounded by 
\[
 \P\left[\max_{\substack{\uk\in \mathfrak{K}_{D}, |k_i|\le \frac{1}{2}n^{\beta}\frac{W_n}{W_r},\\C_7D+2^{\ell}z\ge \tau_2(\uk)\ge C_7D+2^{\ell-1}z}} \sum_{i=1}^M \mathcal{V}_{i,k_{i-1}, k_{i}}-\frac{1}{64}\tau_2(\uk) \ge \frac{C_7D+2^{\ell-1}z}{64}-\frac{z^{1/5}D^{4/5}}{10000}\right].
 \]

 Observe now that by choosing $C_7$ sufficiently large and using H\H{o}lder's inequality it follows that 
$$\frac{C_7D+2^{\ell-1}z}{64}-\frac{z^{1/5}D^{4/5}}{10000}\ge C'_7(1+64^{5})D +\frac{2^{\ell-1}z}{10000}$$
where $C'_7$ is as in Corollary \ref{c:tau2perc}. Applying Corollary \ref{c:tau2perc} with $\lambda=1/64$ it follows that 
 \[
 \P\left[\min_{\substack{\uk\in \mathfrak{K}_{D} |k_i|\le \frac{1}{2}n^{\beta}\frac{W_n}{W_r},\\C_7D+2^{\ell}z\ge \tau_2(\uk)\ge C_7D+2^{\ell-1}z}} \sum_{i=1}^D Z_{i,k_{i-1}, k_{i}} \le Dr+\frac{z^{1/5}D^{4/5}Q_{r}}{10000} \right] \le \exp(1-c(z2^{\ell-1})^{\theta_2/4}).
 \]
Taking a union bound over all $\ell$ and reducing the value of $\theta_4$ if necessary, \eqref{e:tau2suffice} follows. This completes the proof of the lemma. 
\end{proof}

\begin{proof}[Proof of Proposition \ref{p:tau2perc}]
The proposition follows from applying Lemma \ref{l:tau2percgen} with $r=n$, $D=M$, $s_1=s_2=0$.     
\end{proof}

\bibliography{fpp}

\begin{thebibliography}{10}

\bibitem{ADS23}
Daniel Ahlberg, Maria Deijfen, and Matteo Sfragara.
\newblock Chaos, concentration and multiple valleys in first-passage percolation, 2024.

\bibitem{AZ23}
Kenneth Alexander and Nikolaos Zygouras.
\newblock {Subgaussian concentration and rates of convergence in directed polymers}.
\newblock {\em Electronic Journal of Probability}, 18(none):1 -- 28, 2013.

\bibitem{Ale20}
Kenneth.~S. Alexander.
\newblock Geodesics, bigeodesics, and coalescence in first passage percolation in general dimension.
\newblock {\em arXiv preprint arXiv:2001.08736}, 2020.

\bibitem{Ale21}
Kenneth.~S. Alexander.
\newblock Uniform fluctuation and wandering bounds in first passage percolation.
\newblock {\em arXiv preprint arXiv:2011.07223}, 2020.

\bibitem{ADH15}
Antonio Auffinger, Michael Damron, and Jack Hanson.
\newblock {\em 50 years of first-passage percolation}, volume~68.
\newblock American Mathematical Soc., 2017.

\bibitem{Bar05}
David Barbato.
\newblock {FKG Inequality for Brownian Motion and Stochastic Differential Equations}.
\newblock {\em Electronic Communications in Probability}, 10(none):7 -- 16, 2005.

\bibitem{BSS14}
Riddhipratim Basu, Vladas Sidoravicius, and Allan Sly.
\newblock Last passage percolation with a defect line and the solution of the {S}low {B}ond {P}roblem.
\newblock Preprint arXiv 1408.3464.

\bibitem{BSS23}
Riddhipratim Basu, Vladas Sidoravicius, and Allan Sly.
\newblock Rotationally invariant first passage percolation: Concentration and scaling relations.
\newblock {\em arXiv preprint arXiv:2312.14143}, 2023.

\bibitem{BR08}
Michel Bena{\"\i}m and Rapha{\"e}l Rossignol.
\newblock Exponential concentration for first passage percolation through modified poincar{\'e}inequalities.
\newblock {\em Annales de l'Institut Henri Poincar{\'e}, Probabilit{\'e}s et Statistiques}, 44(3):544--573, 6 2008.

\bibitem{BKS04}
Itai Benjamini, Gil Kalai, and Oded Schramm.
\newblock First passage percolation has sublinear distance variance.
\newblock {\em Ann. Probab.}, 31(4):1970--1978, 10 2003.

\bibitem{BT15}
Itai Benjamini and Romain Tessera.
\newblock {First passage percolation on nilpotent Cayley graphs and beyond}.
\newblock {\em Electronic Journal of Probability}, 20(none):1 -- 20, 2015.

\bibitem{CN19}
Van~Hao Can and Shuta Nakajima.
\newblock {First passage time of the frog model has a sublinear variance}.
\newblock {\em Electronic Journal of Probability}, 24(none):1 -- 27, 2019.

\bibitem{Cha08}
Sourav Chatterjee.
\newblock Chaos, concentration, and multiple valleys.
\newblock Arxiv 0810.4221, 2008.

\bibitem{Cha11}
Sourav Chatterjee.
\newblock The universal relation between scaling exponents in first-passage percolation.
\newblock {\em Ann. Math. (2)}, 177(2):663--697, 2013.

\bibitem{Chabook}
Sourav Chatterjee.
\newblock {\em Superconcentration and related topics}, volume~15.
\newblock Springer, 2014.

\bibitem{CL24}
Wei-Kuo Chen and Wai-Kit Lam.
\newblock Universality of superconcentration in the sherrington–kirkpatrick model.
\newblock {\em Random Structures \& Algorithms}, 64(2):267--286, 2024.

\bibitem{CD81}
J.~Theodore Cox and Richard Durrett.
\newblock {Some Limit Theorems for Percolation Processes with Necessary and Sufficient Conditions}.
\newblock {\em The Annals of Probability}, 9(4):583 -- 603, 1981.

\bibitem{DHS14}
Michael Damron, Jack Hanson, and Philippe Sosoe.
\newblock {Subdiffusive concentration in first passage percolation}.
\newblock {\em Electronic Journal of Probability}, 19:1 -- 27, 2014.

\bibitem{DHS13}
Michael Damron, Jack Hanson, and Philippe Sosoe.
\newblock Sublinear variance in first-passage percolation for general distributions.
\newblock {\em Probability Theory and Related Fields}, 163(1):223--258, 2015.

\bibitem{Dem24}
Barbara Dembin.
\newblock The variance of the graph distance in the infinite cluster of percolation is sublinear.
\newblock {\em ALEA}, 21:307--320, 2024.

\bibitem{DEP23}
Barbara Dembin, Dor Elboim, and Ron Peled.
\newblock On the influence of edges in first-passage percolation on $\mathbb{Z}^d$, 2023.
\newblock \url{https://arxiv.org/abs/2307.01162}.

\bibitem{DG24}
Barbara Dembin and Christophe Garban.
\newblock Superconcentration for minimal surfaces in first passage percolation and disordered ising ferromagnets.
\newblock {\em Probability Theory and Related Fields}, 190(3):675--702, 2024.

\bibitem{G12}
B.~T. Graham.
\newblock Sublinear variance for directed last-passage percolation.
\newblock {\em Journal of Theoretical Probability}, 25(3):687--702, 2012.

\bibitem{HW65}
J.~M. Hammersley and D.~J.~A. Welsh.
\newblock First-passage percolation, subadditive processes, stochastic networks, and generalized renewal theory.
\newblock In Jerzy Neyman and Lucien~M. Le~Cam, editors, {\em Bernoulli 1713 Bayes 1763 Laplace 1813: Anniversary Volume}, pages 61--110, 1965.

\bibitem{HN97}
C.~Douglas Howard and Charles~M. Newman.
\newblock Euclidean models of first-passage percolation.
\newblock {\em Probability Theory and Related Fields}, 108(2):153--170, 1997.

\bibitem{HN98}
C.~Douglas Howard and Charles~M. Newman.
\newblock From greedy lattice animals to euclidean first-passage percolation.
\newblock In Maury Bramson and Rick Durrett, editors, {\em Perplexing Problems in Probability: Festschrift in Honor of Harry Kesten}, pages 107--119. Birkh{\"a}user Boston, Boston, MA, 1999.

\bibitem{HN01}
C.~Douglas Howard and Charles~M. Newman.
\newblock {Special Invited Paper: Geodesics And Spanning Tees For Euclidean First-Passage Percolaton}.
\newblock {\em The Annals of Probability}, 29(2):577 -- 623, 2001.

\bibitem{J84}
S.~Janson.
\newblock Bounds on the distributions of extremal values of a scanning process.
\newblock {\em Stoc. Proc. Appl.}, 18(2):313--328, 1984.

\bibitem{KPZ86}
Mehran Kardar, Giorgio Parisi, and Yi-Cheng Zhang.
\newblock Dynamic scaling of growing interfaces.
\newblock {\em Phys. Rev. Lett.}, 56:889--892, 1986.

\bibitem{Kes86}
Harry Kesten.
\newblock Aspects of first passage percolation.
\newblock In P.~L. Hennequin, editor, {\em {\'E}cole d'{\'E}t{\'e} de Probabilit{\'e}s de Saint Flour XIV - 1984}, pages 125--264, Berlin, Heidelberg, 1986. Springer Berlin Heidelberg.

\bibitem{Kes93}
Harry Kesten.
\newblock On the speed of convergence in first-passage percolation.
\newblock {\em Ann. Appl. Probab.}, 3(2):296--338, 1993.

\bibitem{K73}
J.~F.~C. Kingman.
\newblock Subadditive ergodic theory.
\newblock {\em Ann. Probab.}, 1(6):883--899, 12 1973.

\bibitem{LW10}
T.~LaGatta and J.~Wehr.
\newblock A shape theorem for riemannian first-passage percolation.
\newblock {\em Journal of Mathematical Physics}, 51(5), 2010.

\bibitem{LW14}
Tom LaGatta and Jan Wehr.
\newblock Geodesics of random riemannian metrics.
\newblock {\em Communications in Mathematical Physics}, 327(1):181--241, 2014.

\bibitem{LNP96}
C.~Licea, C.~M. Newman, and M.~S.~T. Piza.
\newblock Superdiffusivity in first-passage percolation.
\newblock {\em Probability Theory and Related Fields}, 106(4):559--591, 1996.

\bibitem{New95}
Charles~M. Newman.
\newblock A surface view of first-passage percolation.
\newblock In S.~D. Chatterji, editor, {\em Proceedings of the International Congress of Mathematicians}, pages 1017--1023, Basel, 1995. Birkh{\"a}user Basel.

\bibitem{NP95}
Charles~M. Newman and Marcelo S.~T. Piza.
\newblock {Divergence of Shape Fluctuations in Two Dimensions}.
\newblock {\em The Annals of Probability}, 23(3):977 -- 1005, 1995.

\bibitem{Pit82}
L.~D. Pitt.
\newblock Positively correlated normal variables are associated.
\newblock {\em Ann. Probab.}, 10(2):496--499, 1982.

\bibitem{R73}
Daniel Richardson.
\newblock Random growth in a tessellation.
\newblock {\em Mathematical Proceedings of the Cambridge Philosophical Society}, 74(3):515--528, 1973.

\bibitem{VW90}
Mohammad~Q Vahidi-Asl and John~C Wierman.
\newblock First-passage percolation on the voronoi tessellation and delaunay triangulation.
\newblock In {\em Random graphs}, volume~87, pages 341--359, 1990.

\bibitem{VW92}
Mohammad~Q Vahidi-Asl and John~C Wierman.
\newblock A shape result for first-passage percolation on the voronoi tessellation and delaunay triangulation.
\newblock In {\em Random graphs}, volume~2, pages 247--262, 1992.

\bibitem{VW93}
MQ~Vahidi-Asl and JC~Wierman.
\newblock Upper and lower bounds for the route length of first-passage percolation in voronoi tessellations.
\newblock {\em Bull. Iranian Math. Soc}, 19(1):15--28, 1993.

\bibitem{Z08}
Yu~Zhang.
\newblock {Shape fluctuations are different in different directions}.
\newblock {\em The Annals of Probability}, 36(1):331 -- 362, 2008.

\end{thebibliography}
\bibliographystyle{plain}

\end{document}